\tikzstyle{every picture}=[> = to]
\tikzset{cdlabel/.style={execute at begin node=$\scriptstyle,execute at end node=$}}
\tikzset{implication/.style={double equal sign distance, -implies}}
\tikzset{biimplication/.style={double equal sign distance, implies-implies}}
\newtheorem{theorem}{Theorem}[section]
\newtheorem{lemma}{Lemma}[section]
\newtheorem{proposition}{Proposition}[section]
\newtheorem{corollary}{Corollary}[section]
\newtheorem{definition}{Definition}[section]
\newtheorem{remark}{Remark}[section]
\newtheorem{conjecture}{Conjecture}[section]
\newtheorem{question}{Question}[section]
\newtheorem*{theoremA}{Theorem A}
\newtheorem*{theoremB}{Theorem B}
\numberwithin{equation}{section}
\def\cbar{\overline{\C}}
\def\R{\mbox{$\mathbb R$}}
\def\Z{\mbox{$\mathbb Z$}}
\def\D{\mbox{$\mathbb D$}}
\def\C{\mbox{$\mathbb C$}}
\def\VV{\mathscr V}
\def\DD{\mathscr D}
\def\EE{\mathscr E}
\def\BB{\mathscr B}
\def\GG{\mathscr G}
\def\AAA{{\mathcal A}}
\def\BBB{{\mathcal B}}
\def\DDD{{\mathcal D}}
\def\EEE{{\mathcal E}}
\def\GGG{{\mathcal G}}
\def\III{{\mathcal I}}
\def\KKK{{\mathcal K}}
\def\LLL{{\mathcal L}}
\def\MMM{{\mathcal M}}
\def\NNN{{\mathcal N}}
\def\SSS{{\mathcal S}}
\def\UUU{{\mathcal U}}
\def\VVV{{\mathcal V}}
\def\g{\gamma}
\def\G{\Gamma}
\def\De{\Delta}
\def\de{\delta}
\def\wt{\widetilde}
\def\wh{\widehat}
\def\ov{\overline}
\def\sm{\setminus}
\def\pf{post-critically finite}
\def\Sie{Sierpi\'{n}ski}
\title{Invariant graphs in Julia sets  and decompositions of rational maps}
\author{Guizhen Cui}
\address{Guizhen Cui, School of Mathematical Sciences, Shenzhen University, Shenzhen 518061, China}
\email{gzcui@math.ac.cn}
\author{Yan Gao}
\address{Yan Gao, School of Mathematical Sciences, Shenzhen University, Shenzhen 518061, China}
\email{gyan@szu.edu.cn}
\author{Jinsong Zeng}
\address{Jinsong Zeng, School of Mathematical Sciences, Shenzhen University, Shenzhen 518061, China}
\email{jinsongzeng@163.com}
\begin{document}

\begin{abstract}
In this paper, we prove that for any \pf\ rational map $f$ on the Riemann sphere $\cbar$, and for each sufficiently large integer $n$, there exists a finite and connected graph $G$ in the Julia set of $f$ such that $f^n(G) \subset G$. This graph contains all post-critical points in the Julia set, while every component of $\cbar\sm G$ contains at most one post-critical point in the Fatou set. The proof relies on the cluster-\Sie\ decomposition of \pf\ rational maps.

\end{abstract}

\subjclass[2010]{Primary 37F20; Secondary 37F10}
\keywords{rational map,  Julia set, invariant graph, decomposition}
\thanks{
}

\maketitle

\tableofcontents

\section{Introduction}

Let $f$ be a rational map on the Riemann sphere $\cbar$ with $\deg f\ge 2$. The Fatou set and Julia set of $f$ are denoted by $F_f$ and $J_f$, respectively. Their definitions and basic properties can be found in \cite{Mi1}.  The set of {post-critical points} of $f$ is defined by
$$
P_f=\bigcup_{n>0}\{f^n(c):f^\prime(c)=0\}.
$$
In particular, the map $f$ is called {\bf post-critically finite}, or simply {\bf PCF}, if $\#P_f<\infty$. Generally, a {\bf marked rational map} $(f,P)$ is a  PCF  rational map $f$  with a finite set $P\subset\cbar$ such that $P_f\subset P$ and $f(P)\subset P$.

 In complex dynamics, a fundamental problem is understanding the structure of Julia sets for rational maps. Significant progress has been made in this area for polynomials, largely since the Julia set of a polynomial is the boundary of its  basin of infinity. However, for a general rational map, it is not possible to observe the entire Julia set from only a single Fatou domain. Therefore, we need to consider not only the boundary of each Fatou domain, but also the arrangement  of distinct Fatou domains.

An effective approach to this problem is to construct a suitable invariant graph. In this paper, the term {\bf graph} refers to a finite and connected graph in $\cbar$. For PCF polynomials, the well-known Hubbard trees are invariant and completely characterize the dynamics of the polynomials \cite{DH2,Poi2}. Invariant graphs for Newton maps and critically fixed rational maps have been studied by several groups \cite{DMRS,DS,LMS1,LMS,R,WYZ,CGNPP,H}.

The first breakthrough in the general situation was made by Cannon, Floyd, and Parry \cite{CFP} and Bonk and Meyer \cite{BM} independently. They proved that
\begin{theoremA}[{\cite[Theorem 3.1]{BM}}]\label{thm:A}
	 Any marked rational map $(f,P)$ with $J_f=\cbar$ admits an $f^n$-invariant Jordan curve passing through all points of $P$ for each sufficiently large integer $n$.
\end{theoremA}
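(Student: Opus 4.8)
\emph{Proof idea.} The plan is to reconstruct the argument of Bonk--Meyer (and, in a different language, Cannon--Floyd--Parry): exploit expansion to manufacture arbitrarily fine $f^{n}$-invariant cell decompositions of $\cbar$, locate inside a sufficiently fine one a Jordan curve through $P$ that is isotopic rel $P$ to a fixed reference curve, and then build the invariant curve as a Hausdorff limit along the iteration $f^{n},f^{2n},\dots$.

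\emph{Setup and expansion.} Since $(f,P)$ is \pf\ and $J_f=\cbar$, we have $F_f=\emptyset$; a periodic critical point of $f$ would be super-attracting and lie in a Fatou component, so $f$ has no periodic critical point. Consequently the orbifold of $f$ is hyperbolic or Euclidean, $f$ is expanding with respect to the corresponding orbifold metric, and this metric is comparable to the spherical metric off the finite set $P_f$. (Also $\#P\ge\#P_f\ge 3$, since $\#P_f\le 2$ would force $f$ to be conjugate to a power map, for which $J_f$ is a round circle.) Fix a Jordan curve $\mathcal C_{0}$ with $P\subset\mathcal C_{0}$, and for $m\ge 0$ let $\mathcal D^{m}$ be the cell complex with $1$-skeleton $f^{-m}(\mathcal C_{0})$ and vertex set $f^{-m}(P)\supseteq P$; its $2$-cells, the $m$-tiles, inherit the $2$-colouring of the two complementary domains of $\mathcal C_{0}$ by pulling back through $f^{m}$, which maps each $m$-tile homeomorphically onto a $0$-tile. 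Expansion yields $\operatorname{mesh}(\mathcal D^{m})\to 0$ as $m\to\infty$.

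\emph{The key combinatorial step.} The heart of the proof is the claim that there is an integer $n_{0}$ such that, for every $n\ge n_{0}$, the curve $\mathcal C_{0}$ is isotopic rel $P$ to a Jordan curve $\mathcal C_{1}$ contained in the $1$-skeleton of $\mathcal D^{n}$ (a loop made of $n$-edges and passing through the vertices $P$). I would prove this by first putting $\mathcal C_{0}$ in minimal position relative to $\mathcal D^{n}$---removing bigons formed by $\mathcal C_{0}$ and the $1$-skeleton via innermost-disc surgery, which preserves the isotopy class rel $P$---and then, once the $n$-tiles are small compared with the distances between consecutive points of $P$ along $\mathcal C_{0}$ and with the widths of the complementary domains of $\mathcal C_{0}$, pushing each of the arcs of $\mathcal C_{0}\sm P$ onto the $1$-skeleton, ensuring the resulting edge-paths are pairwise disjoint off $P$ so that their union is a single embedded loop in the right class. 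This is exactly where the quantitative expansion is used, and it is the main obstacle: the surgeries and the final push must be carried out globally consistently, so that the output is an honest Jordan curve rather than a graph or a curve with a pinch.

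\emph{Iteration and passage to the limit.} Granting the claim, suppose we have an edge-path Jordan curve $\mathcal C_{i}\subset\mathcal D^{in}$ through $P$ with $\mathcal C_{i}\subset f^{-n}(\mathcal C_{i-1})$ and $\mathcal C_{i}$ isotopic rel $P$ to $\mathcal C_{i-1}$. Choose an isotopy of $\cbar$ rel $P$ from $\mathcal C_{i-1}$ to $\mathcal C_{i}$; because the critical values of $f^{n}$ lie in $P_f\subset P$, it lifts through $f^{n}$ to an isotopy rel $f^{-n}(P)$, which carries the subcurve $\mathcal C_{i}\subset f^{-n}(\mathcal C_{i-1})$ to a Jordan curve $\mathcal C_{i+1}'\subset f^{-n}(\mathcal C_{i})\subset f^{-(i+1)n}(\mathcal C_{0})$ through $P$ and isotopic rel $P$ to $\mathcal C_{i}$; re-applying the claim I replace $\mathcal C_{i+1}'$ by an edge-path Jordan curve $\mathcal C_{i+1}\subset\mathcal D^{(i+1)n}$ in this class, kept within controlled Hausdorff distance of $\mathcal C_{i}$. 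Since consecutive curves then differ by at most $\operatorname{mesh}(\mathcal D^{in})\to 0$, the $\mathcal C_{i}$ converge in the Hausdorff metric to a compact connected set $\mathcal C\supseteq P$; and $\mathcal C_{i+1}\subset f^{-n}(\mathcal C_{i})$ gives $f^{n}(\mathcal C_{i+1})\subset\mathcal C_{i}$, hence $f^{n}(\mathcal C)\subset\mathcal C$ by continuity of $f^{n}$. The second delicate point is to check that $\mathcal C$ is a genuine Jordan curve---not pinched---and, concomitantly, that the iteration runs with uniform constants $n_{0}$; following Bonk--Meyer this is settled with the visual-metric estimates coming from the expanding structure together with the two-colouring of tiles, which is preserved under all the pull-backs. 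As this works for every $n\ge n_{0}$, the theorem follows.
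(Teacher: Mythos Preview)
The paper does not prove Theorem~A at all: it is stated as a known result, attributed to Bonk--Meyer \cite[Theorem~3.1]{BM} (and independently Cannon--Floyd--Parry), and is simply invoked in the proof of Theorem~\ref{thm:main} to dispose of the case $J_f=\cbar$. So there is no ``paper's own proof'' to compare against; you are reconstructing the cited external argument.

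Your sketch is a faithful outline of the Bonk--Meyer strategy (expansion, cell decompositions from pullbacks of a reference curve, the combinatorial approximation step, isotopy lifting, and Hausdorff convergence). You correctly flag the two genuine difficulties---the global consistency of the edge-path surgery and the non-degeneracy of the limit---and honestly defer their resolution to the visual-metric machinery of \cite{BM}. That is appropriate here, since a full self-contained proof would require reproducing a substantial portion of that monograph; for the purposes of this paper the citation suffices.
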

 The same conclusion was obtained for marked {\bf \Sie\ rational maps}, i.e., rational maps with \Sie\ carpet Julia sets, by Meyer, Ha\"{i}ssinsky and the last two authors of this paper \cite{GHMZ}. The following theorem is an enhanced version of \cite[Theorem 1.2]{GHMZ}.
 \begin{theoremB}[{\cite[Theorem 1.2]{GHMZ}}]\label{thm:B}
 Let $(f,P)$ be a marked \Sie\ rational map such that no points of $P$ lie on the boundaries of Fatou domains. Then for each sufficiently large integer $n$, there exists an $f^n$-invariant Jordan curve passing through all points of $P$, such that its intersection  with the closure of any Fatou domain is either empty or the union of two closed internal rays.
 \end{theoremB}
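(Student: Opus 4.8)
The plan is to carry out the iterated-preimage construction of invariant Jordan curves underlying Theorem~A --- Bonk and Meyer's construction for expanding Thurston maps --- inside the restricted class of Jordan curves that meet every closed Fatou domain either not at all or in a \emph{diameter}, by which I mean the union $\overline{R_D(t)}\cup\overline{R_D(t+1/2)}$ of two closed internal rays through the center $c_D$. First I would fix this internal-ray structure. Since $J_f$ is a \Sie\ carpet, every Fatou domain $D$ is a Jordan domain, and, $f$ being PCF, $D$ is eventually iterated onto a superattracting cycle; putting B\"ottcher coordinates on the periodic domains and pulling them back along $f$ gives in each $D$ a center $c_D$ and internal rays $R_D(t)$, $t\in\R/\Z$, with $f(c_D)=c_{f(D)}$ and with $f$ mapping $R_D(t)$ onto an internal ray of $f(D)$, the induced self-covering of the angle circle having degree equal to the local degree of $f|_D$ (hence degree $1$ for all but finitely many $D$). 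As no point of $P$ lies on a Fatou boundary, each point of $P$ is either a point of $J_f$ or a center $c_D$; call $D$ \emph{marked} in the latter case, so the sought curve must cross each marked $D$ along a diameter through $c_D$. I also record that a PCF \Sie\ rational map is hyperbolic, hence expanding on $J_f$, and that iterated $f$-preimages of a fixed internal ray shrink in diameter --- these expansion facts drive the convergence step.

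Next I would reduce to an honest Thurston map. Collapsing each closed Fatou domain to a point turns $\cbar$ into a $2$-sphere (Moore's theorem: the complementary disks of the carpet form a null sequence of non-separating continua), on which $f$ descends to a Thurston map $\hat f$ with finite forward-invariant marked set $\hat P$, the image of $P$; the hyperbolicity and carpet structure make $\hat f$ expanding. Bonk--Meyer's construction applied to $(\hat f,\hat P)$ then produces, for every large $n$, an $\hat f^n$-invariant Jordan curve $\widehat\CCC\supseteq\hat P$ as a Hausdorff limit of Jordan curves $\widehat\CCC_k\subseteq\hat f^{-n}(\widehat\CCC_{k-1})$ that are pairwise isotopic rel $\hat P$, together with the nested preimage cell decompositions witnessing this. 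I would then \emph{reinflate}: at each collapsed point through which $\widehat\CCC_k$ passes, reopen the corresponding Fatou domain $D$ and join the two prime ends at which the two edges of $\widehat\CCC_k$ meet that point by a diameter of $D$, choosing its angle so that (i) the diameter's two endpoints on $\partial D$ coincide with those prime ends, and (ii) the choice is dynamically compatible, i.e.\ $f^n$ sends the diameter in $D$ into the diameter in $f^n(D)$ --- onto a single internal ray of it when the relevant local degree is even. Carried out coherently over all Fatou domains, this yields genuine Jordan curves $\CCC_k\subseteq f^{-n}(\CCC_{k-1})$ in $\cbar$, pairwise isotopic rel $P$, each meeting every $\overline D$ in $\emptyset$ or a diameter. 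By the expansion facts above the $\CCC_k$ converge in the Hausdorff metric to a Jordan curve $\CCC$; it contains $P$, satisfies $f^n(\CCC)\subseteq\CCC$ and hence is $f^n$-invariant (the two complementary Jordan domains of $\CCC$ being permuted by $f^n$, as in Bonk--Meyer), and, since each $\CCC_k$ meets $\overline D$ in $\emptyset$ or a diameter and internal rays vary continuously, $\CCC\cap\overline D$ is empty or a single diameter --- the union of two closed internal rays.

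The hard part will be the coherent bookkeeping in the reinflation step: the diameters cannot be chosen one Fatou domain or one curve at a time. The angles must simultaneously respect the $f$-action on internal rays across the infinitely many Fatou domains, respect the Bonk--Meyer isotopy and cell data so that each $\CCC_k$ stays a Jordan curve of the prescribed type lying inside $f^{-n}(\CCC_{k-1})$, and converge as $k\to\infty$ in every $D$. This forces working throughout at the level of the preimage cell decompositions rather than with the curves alone --- recording which edge of each decomposition abuts which Fatou domain and at which prime end --- and propagating the angle assignments along the forward-orbit trees of the Fatou domains, anchored on the periodic cycles; this is in essence the content of \cite{GHMZ}, and the enhancement to ``two closed internal rays'' amounts to insisting from the outset that every cell ever built inside a Fatou domain be a union of internal rays. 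A subsidiary point to nail down is that $\hat f$ really is expanding (equivalently, that the mesh of its $n$-th preimage tilings tends to $0$); this rests on hyperbolicity of PCF \Sie\ maps, Whyburn's topological characterization of the carpet, and the null-sequence property of the Fatou domains.
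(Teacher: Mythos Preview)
This paper does not contain a proof of Theorem~B: the statement is quoted from \cite{GHMZ} and used as a black box (it enters only through Proposition~\ref{pro:pre} and the proof of Theorem~\ref{thm:main}). So there is no ``paper's own proof'' to compare against. That said, the paper does describe the mechanism behind \cite{GHMZ} in the paragraph preceding Conjecture~2: a PCF \Sie\ map descends, by collapsing each closed Fatou domain to a point, to an expanding Thurston map, and graphs for the quotient lift back to regulated graphs for $f$. Your plan is exactly this collapse--apply Bonk--Meyer--reinflate scheme, and you have correctly located the genuine work in the reinflation bookkeeping (coherent choice of diameters along the dynamics so that the lifted curves stay Jordan, stay in the right isotopy class, and converge). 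That is precisely what \cite[\S\S5--6]{GHMZ} carries out.

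Two small points to tighten. First, ``the two complementary Jordan domains of $\CCC$ being permuted by $f^n$'' is not how invariance is obtained---$f^n$ is not a homeomorphism, so it does not act on complementary components; rather, $f^n(\CCC)\subset\CCC$ follows directly from $\CCC_{k+1}\subset f^{-n}(\CCC_k)$ and Hausdorff convergence, and equality then comes from $\CCC$ being a Jordan curve with $f^n(\CCC)$ a non-degenerate subcontinuum. Second, showing that the quotient map is an \emph{expanding} Thurston map is not quite a ``subsidiary point'': it requires controlling the mesh of iterated preimage tilings after collapse, which in \cite{GHMZ} uses the hyperbolicity of PCF \Sie\ maps together with the null-sequence property of the Fatou domains; you have named the right ingredients, but this step carries real content.
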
  
 
   Recently, by extending the Bonk-Meyer method in \cite{BM}, the authors of this paper demonstrated that every PCF rational map $f$ admits an $f^n$-invariant graph containing $P_f$ for each sufficiently large integer $n$; see \cite[Theorem 1.1]{CGZ}.

However, not all invariant graphs are sufficient to capture the full complexity of the Julia set. For example, for a PCF polynomial without bounded Fatou domains, the union of external rays landing at the post-critical points forms an invariant graph. Unlike the Hubbard tree, this graph provides limited information about the Julia set. Therefore, to better address these limitations, we aim to confine the graphs within the Julia set.
\vspace{2pt}

The main result of this paper is as follows.

\begin{theorem}[Invariant graph in the Julia set]\label{thm:main}
Let $(f, P)$ be a marked rational map. Then, for each sufficiently large integer $n$, there exists a graph $G\subset J_f$ such that $f^n(G)\subset G$, $P\cap J_f\subset G$, and each component of $\cbar\sm G$ contains at most one point of $P$.
\end{theorem}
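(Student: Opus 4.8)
The plan is to build $G$ from the cluster-\Sie\ decomposition of $(f,P)$, solve the problem on each piece, and then glue. First I would recall (and, where needed, establish) that decomposition: there is an $f$-invariant system $\Gamma\subset J_f$ of finitely many pairwise disjoint Jordan curves, disjoint from $P$, such that $f$ permutes the closures of the components of $\cbar\sm\Gamma$ --- the \emph{pieces} --- and the first-return map of each cyclic class of pieces is realised by a marked rational map of one of two kinds: a \emph{\Sie\ piece}, namely a marked \Sie\ rational map (allowing the degenerate case $J=\cbar$) in which no marked point lies on a Fatou boundary; or a \emph{cluster piece}, whose Julia dynamics is a super-attracting cycle whose Fatou domains cluster along a periodic cycle of cut points. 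The decomposition should also supply two normalisations: each \Sie\ piece carries at most one Fatou domain of $f$ whose centre lies in $P$, and on a cluster piece the marked Fatou domains are exactly the domains of the cluster. After enlarging the marked set of each piece by finitely many points of $J_f$ --- one on each boundary curve of the piece, chosen off all Fatou boundaries --- I fix $m$ with $f^m$ fixing every cyclic class of pieces and put $n=km$ for large $k$.

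On a \Sie\ piece I would take the $f^n$-invariant Jordan curve $\gamma$ furnished by Theorem~A (when $J=\cbar$) or Theorem~B (for a genuine carpet) passing through all marked points of the piece; by Theorem~B, for every Fatou domain $D$ of $f$ met by $\gamma$ the set $\gamma\cap\ov D$ is a union of two closed internal rays. I then \emph{reroute} $\gamma$ around each such $D$: delete the open arc $\gamma\cap D$ and glue in one of the two arcs into which its endpoints split $\partial D$, choosing the arcs $f^n$-equivariantly (fix them on the periodic Fatou domains, where $f^n$ permutes the internal rays, and pull back to preimages). Since the Fatou domains of a carpet have pairwise disjoint closures these surgeries are independent, and the outcome $G_i$ is again a Jordan curve, now contained in $J_f$, still $f^n$-invariant, still containing every marked point of the piece lying in $J_f$ (the added points among them), with every Fatou domain inside the piece --- in particular the at most one marked one --- confined to a single complementary component of $G_i$ within the piece.

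On a cluster piece, $J_{g_i}$ is the union of the boundary curves $\partial D_1,\dots,\partial D_r$ of the clustered domains meeting along the cluster cycle of cut points; I take $G_i=\bigcup_j\partial D_j$ together with $f^n$-invariant arcs in $J_f$, lying in the gaps between the $D_j$, joining $\bigcup_j\partial D_j$ to the components of $\Gamma$ bounding the piece and to the added marked points. Then $G_i\subset J_f$ is a connected $f^n$-invariant graph containing all marked points of the piece in $J_f$, with $D_1,\dots,D_r$ lying in pairwise distinct complementary components of $G_i$ within the piece. Now set $G=\Gamma\cup\bigcup_i G_i$ (back in $\cbar$). Each $G_i$ meets the boundary of its piece, which lies in $\Gamma$, only at the added points, and $\Gamma$ and every $G_i$ lie in $J_f$, so $G$ is a connected graph contained in $J_f$; it is $f^n$-invariant since $\Gamma$ is $f$-invariant and each $G_i$ is $f^n$-invariant relative to its piece; and $P\cap J_f\subset G$ since each such point lies in the interior of a piece and hence on the corresponding $G_i$. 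Finally every component $U$ of $\cbar\sm G$ lies in a single piece (as $\Gamma\subset G$), so $U$ is a complementary component of the relevant $G_i$ inside that piece and, by the two normalisations, meets $P$ in at most one point. Taking $k$ large enough for Theorems~A and~B and the cluster construction to apply on all pieces at once completes the argument.

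I expect the main obstacles to be, first, establishing the cluster-\Sie\ decomposition together with its normalisations --- in particular that every piece is of \Sie- or cluster-type and that each \Sie\ piece carries at most one marked Fatou domain --- and, second, the two surgeries: rerouting the Theorem~B curve around \emph{all} the Fatou domains it meets while keeping the result a connected $f^n$-invariant Jordan curve, and constructing the cluster-piece graphs, which is the one place where several marked Fatou domains genuinely share boundary cut points, so that separating them forces one off a single Jordan curve and the $f^n$-equivariance of the chosen arcs becomes the delicate point. By comparison, matching the local graphs along $\Gamma$ --- the reason for enlarging the pieces' marked sets by the attaching points --- is then routine bookkeeping.
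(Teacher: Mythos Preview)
The decomposition you postulate does not exist in the form stated. The cluster-\Sie\ decomposition (Theorem~\ref{thm:cluster-exact}) yields not a system of Jordan curves $\Gamma\subset J_f$ but a stable set $\KKK\subset J_f$ whose components are continua with typically intricate topology (they contain the intersection of $J_f$ with marked maximal Fatou chains). The complement $\cbar\sm\KKK$ breaks into complex-type components $\VVV$, annular components $\AAA$, and simple-type components $\SSS$; the \Sie\ maps arise as \emph{blow-ups} of the sub-system $f:\VVV_1\to\VVV$ (Theorem~\ref{thm:blow-up}), living on their own spheres and related to $f$ only by a semi-conjugacy $\pi$ that collapses each boundary continuum of $V$ to the boundary of a Fatou disk of $g$. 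Your ``rerouting'' corresponds roughly to projecting the Theorem~B graph via $\pi$ and replacing its intersection with each collapsed continuum by a graph inside that continuum --- and those replacement graphs must come from the cluster side, not from arcs on a Jordan circle.

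This is where the real gap lies: your picture of a cluster piece as a few Fatou-domain boundaries meeting at cut points is far too simple. A cluster rational map is one whose entire sphere is a maximal Fatou chain, and Theorem~\ref{thm:example} shows that even the boundary of a single fixed Fatou domain may admit \emph{no} invariant graph, so ``take $\bigcup_j\partial D_j$'' fails already there. Constructing an invariant graph serving as a skeleton of a periodic maximal Fatou chain (Theorem~\ref{thm:graph-maximal}) is the technical heart of the paper, requiring the circle-tree analysis of Section~\ref{sec:2}, the theory of growing curves, accesses and links in Section~\ref{sec:topology}, and the inductive construction over extremal-chain levels in Section~\ref{sec:7}. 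Finally, the gluing is not along shared Jordan boundaries but across the annular system $\AAA$, and producing invariant arcs there needs its own lifting/convergence argument (Part~II of the proof of Proposition~\ref{pro:pre}). Your outline has the right large-scale architecture --- decompose, solve on pieces, glue --- and that is indeed the paper's strategy; but the Jordan-curve decomposition you assume is essentially the conclusion one is trying to reach, and the cluster-piece step is the bulk of the work rather than a side construction.
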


\begin{remark}\label{rem:main}
	{\rm
{(1)} Based on this theorem, we obtain an increasing sequence of invariant graphs  $\{f^{-kn}(G)\}_{k\ge 1}$  that approximate the Julia set from within.\vspace{2pt}

{(2)} Theorem \ref{thm:main} is essentially known for PCF polynomials. Specifically,  let $X$ be the union of $P_f$ and the branch points of the Hubbard tree $T$. 
If $f$ has no bounded Fatou domains, then $T$ itself serves as the desired graph. Otherwise, for each bounded Fatou domain $U$ that intersects $T$, if $\ov{U}\cap X\neq\emptyset$, we substitute $U\cap T$ with the Jordan curve $\partial U$; if $\ov{U}\cap X=\emptyset$, we replace the segment $U\cap T$ with a suitable choice of one of the two open arcs as the components of $\partial U\sm T$. The resulting graph satisfies the conditions of Theorem \ref{thm:main}.

\vspace{2pt}

{(3)} The proof of Theorem \ref{thm:main} is entirely independent of our earlier work \cite[Theorem 1.1]{CGZ} presented after Theorem B. Instead, \cite[Theorem 1.1]{CGZ} can be directly derived from Theorem \ref{thm:main}.

Indeed, we may mark one point on the boundary of each Fatou domain intersecting $P_f$ such that the union of these marked points, together with $P_f$, forms an $f$-invariant set, denoted by $P$.   By applying Theorem \ref{thm:main} to $(f,P)$, we obtain an $f^n$-invariant graph $G'\subset J_f$ such that $P\cap J_f\subset G'$, for each sufficiently large integer $n$. Thus, the union $G$ of $G'$ and all internal rays landing at points of $P$ is an $f^n$-invariant graph containing $P_f$.  }
\end{remark}
 
There exist several key ingredients in proving Theorem \ref{thm:main}, as outlined in the schematic diagram in Figure \ref{fig:laminationn} and summarized below.
\begin{figure}[http]
	\begin{tikzpicture}
		\node at (0,0){\includegraphics[width=16cm]{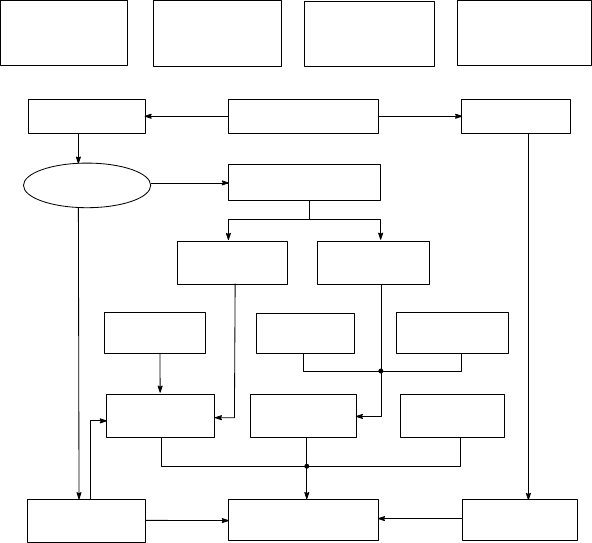}};
			\node at (-6.2,7){\footnotesize {\bf 1:} Dynamics on};
			\node at (-6.2,6.66){\footnotesize  boundaries of Fatou}; 
			\node at (-6.2,6.25){\footnotesize domains (Section 2)};
			\node at (-6.2,5.9){\footnotesize \bf \color{red} Theorem 1.2};
			
			\node at (-2.2,7){\footnotesize {\bf 2:} Dynamics on};
			\node at (-2.1,6.66){\footnotesize  maximal Fatou chains}; 
			\node at (-2.1,6.27){\footnotesize (Sections 3,\,6 and 7)};
			\node at (-2.1,5.9){\footnotesize \bf \color{red} Theorems 1.3--1.4};
			
				\node at (1.94,7){\footnotesize {\bf 3:} Decompose rational};
			\node at (2,6.65){\footnotesize  maps by Fatou chains}; 
			\node at (1.95,6.27){\footnotesize (Sections 4 and 5)};
			\node at (1.96,5.9){\footnotesize \bf \color{red} Theorems 1.5--1.7};
			
				\node at (6.1,7){\footnotesize {\bf 4:} Joining invariant};
			\node at (6.1,6.65){\footnotesize  graphs of sub-systems}; 
			\node at (6.1,6.27){\footnotesize (Section 8)};
			\node at (6.1,5.9){\footnotesize \bf \color{red} Proposition 1.1};
			
			\node at (0.1,4.33){\footnotesize A marked rational map};
			\node at (0.1,3.98){\footnotesize $(f,P)$};
			
			\node at(-5.62,2.45){\footnotesize Is $f$ a cluster};
			\node at(-5.62,2.15){\footnotesize rational map?};
			
			\node at(-6.25,-2){\small Yes};
			\node at(-3,2.6){\small No};
			
			\node at(0.3,2.65){\footnotesize Cluster-\Sie};
			\node at(0.3,2.38){\footnotesize decomposition};
			\node at(0.3,2.11){\footnotesize \color{red} \bf Theorem 1.7};
			
			\node at(-1.69,0.55){\footnotesize Stable sets with};
			\node at(-1.69,0.25){\footnotesize renormalizations};
			\node at(-1.69,-0.1){\footnotesize of cluster maps};
			
				\node at(2.05,0.53){\footnotesize Exact sub-systems};
			\node at (2.05,0.23){\footnotesize with blow-ups};
			\node at (2.05,-0.1){\footnotesize of \Sie\ maps};
			
			\node at (-3.75,-1.35){\footnotesize Dynamics on};
			\node at (-3.75,-1.65){\footnotesize a stable set};
			\node at (-3.75,-1.95){\footnotesize \color{red}\bf Theorem 1.5};
			
			\node at (0.28,-1.35){\footnotesize Dynamics on an};
			\node at (0.28,-1.67){\footnotesize  exact sub-system};
			\node at (0.28,-1.95){\footnotesize \color{red}\bf Theorem 1.6};
			
				\node at (4.22,-1.35){\footnotesize Invariant graphs};
			\node at (4.22,-1.67){\footnotesize  of \Sie\ maps};
			\node at (4.22,-1.95){\footnotesize \color{red}\bf Theorem B};
			
			\node at(-3.6,-3.6){\footnotesize Invariant graphs};
			\node at(-3.6,-3.92){\footnotesize on cluster-type};
			\node at(-3.6,-4.24){\footnotesize sub-systems};
			
			\node at(0.22,-3.6){\footnotesize Invariant graphs};
			\node at(0.22,-3.92){\footnotesize on \Sie};
			\node at(0.22,-4.24){\footnotesize sub-systems};
			
				\node at(4.17,-3.58){\footnotesize Joining ``small''};
			\node at(4.17,-3.9){\footnotesize invariant graphs};
			\node at(4.2,-4.24){\footnotesize\color{red}\bf Proposition 1.1};
			
			\node at(-5.58,-6.42){\footnotesize Invariant graphs};
			\node at(-5.58,-6.75){\footnotesize of cluster maps};
			\node at(-5.58,-7.05){\footnotesize \color{red}\bf Theorem 1.4};
			
				\node at(0.25,-6.42){\footnotesize Global invariant graphs};
			\node at(0.25,-6.73){\footnotesize in  Julia sets};
			\node at(0.25,-7.05){\footnotesize \color{red}\bf Theorem 1.1};
			
				\node at(6.1,-6.42){\footnotesize Invariant graphs};
			\node at(6.1,-6.75){\footnotesize of expanding maps};
			\node at(6.1,-7.05){\footnotesize \color{red}\bf Theorem A};
			
		\node at (-5.7,4.2){$J_f\not=\cbar$}; 
		\node at (6,4.2){$J_f=\cbar$};
		\node at (-4,6.2){$\Rightarrow$};
		\node at (0,6.2){$\Rightarrow$};
		\node at (4,6.2){$\Rightarrow$};
		\node at (0,5.){$\Downarrow$};
	\end{tikzpicture}
	\caption{An outline of the  procedure for proving Theorem \ref{thm:main}.}\label{fig:laminationn}
\end{figure}

 The first key ingredient refers to the invariant graphs on the boundaries of Fatou domains, serving as a semi-local counterpart to Theorem \ref{thm:main}.
Let $f$ be a PCF rational map, and let $U$ be a Fatou domain of $f$ with $f(U)=U$.
If $f$ is a polynomial, then  
$\partial U$ admits an invariant graph by  Remark \ref{rem:main}\,(2). It is natural to inquire whether this conclusion holds in general. 

The answer to this question is negative, as illustrated by a counterexample in Theorem \ref{thm:example}. On a positive note, we can construct an invariant graph associated with $\partial U$ within a larger invariant set, namely the {\bf Fatou chain generated by $\boldsymbol U$}, which is defined as
$
\ov{\bigcup_{k\ge 0} E_k},
$
where $E_k$ is the  component of $f^{-k}(\ov{U})$ containing $U$.


\begin{theorem}[Invariant graph associated with a Fatou domain]\label{thm:local}
Let $(f,P)$ be a marked rational map, and let $U$ be a fixed Fatou domain of $f$.  Then there exists a graph $G\subset J_f$ in the Fatou chain generated by $U$, such that $f(G)\subset G$ and $G$ is isotopic rel $P$ to a graph $G_0\subset \partial U$, which satisfies that $G_0\cap P=\partial U\cap P$, and that two points of $P$ lie in distinct components of $\cbar\setminus G_0$ provided that they belong to distinct components of $\cbar\setminus \partial U$.  
\end{theorem}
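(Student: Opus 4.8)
The plan is to transfer the problem to a combinatorial model on $\partial U$ via the B\"ottcher coordinate, build a finite subgraph $G_0\subset\partial U$ that records every separation of $P$ made by $\partial U$, and then realize an isotopic copy of $G_0$ inside $J_f$ by re-routing $G_0$ through the boundaries of the Fatou domains of the chain $\Omega$ generated by $U$, in such a way that the resulting finite graph becomes forward invariant. The point of this last step is precisely to evade the obstruction of Theorem \ref{thm:example}, which says that $\partial U$ itself need not contain any finite $f$-invariant graph.

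First I would fix the semi-local model. Since $f$ is PCF, $U$ is a component of a super-attracting basin containing a super-attracting fixed point $a$; there is a B\"ottcher coordinate conjugating $f$ near $a$ to $z\mapsto z^{d}$, and, $J_f$ being locally connected, this extends to a Carath\'eodory semiconjugacy $\gamma\colon(\partial\D, z\mapsto z^{d})\to(\partial U,f)$ with $\gamma(z^{d})=f(\gamma(z))$. (If $U$ carries critical points other than $a$, one replaces the model $z\mapsto z^{d}$ by the associated polynomial-like map on the boundary of its filled Julia set; the argument is unchanged.) Let $\mathcal L$ be the $\gamma$-lamination of $\partial\D$; since $f$ is PCF it has finitely many critical leaves and gaps and every leaf and gap is preperiodic, so $\partial U$ carries a self-similar Markov structure. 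I would also record the combinatorics of $\Omega=\overline{\bigcup_k E_k}$: each $E_k$ is a finite union of closures of Fatou domains, glued at finitely many points of $J_f$ in a tree pattern, $f(E_{k+1})\subseteq E_k$, every Fatou domain appearing in the chain eventually maps onto $U$, the internal rays of such a domain land, and two touching chain-domains share finitely many boundary points.

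Next I would build the model graph $G_0$. Choose a finite set $A\subset\partial\D$ that (i) contains the $\gamma$-fibres of $P\cap\partial U$ and of the critical points of $f$ on $\partial U$, together with all their forward images, (ii) is forward invariant under $z\mapsto z^{d}$ and saturated for $\mathcal L$, and (iii) contains in addition finitely many ``separating'' angles so that, for any two points of $P$ lying in distinct components of $\cbar\setminus\partial U$, some regulated arc between points of $\gamma(A)$ separates them. By PCF-ness the forward orbit of a finite set is finite, so $A$ is finite; let $G_0\subset\partial U$ be the union of $\gamma(A)$ with the regulated arcs (defined through $\mathcal L$) joining these points. I then need to check that $G_0$ is a \emph{finite} graph (it is spanned by a finite set, and by the Markov structure of $\mathcal L$ only finitely many gaps can contribute branch points), that $G_0\cap P=\partial U\cap P$ and $G_0$ has the asserted separation property (by (i) and (iii)), and that $G_0$ is connected and contains the critical values of $f$ on $\partial U$. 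In general $f(G_0)$ will \emph{not} be contained in $G_0$, and $f^{-1}(G_0)\cap\partial U$ need not even be a finite graph — this is the mechanism behind Theorem \ref{thm:example}, since $\bigcup_k(f^{-k}(G_0)\cap\partial U)$ is typically dense in $\partial U$.

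The heart of the matter is then to turn $G_0$ into a finite \emph{forward-invariant} graph inside $J_f\cap\Omega$. The key observation is that this obstruction is localized: the places where $f(G_0)$ escapes $G_0$, i.e.\ where the regulated hull fails to be $f$-stable, occur at points of $\partial U$ lying on the boundaries of the Fatou domains of the chain — precisely at the critical and periodic gaps of $\mathcal L$ that are mapped across one another. Near each such domain $V\subset\Omega$ I would replace the offending portion of $G_0$ by an arc of the Jordan-type curve $\partial V$; this is exactly the surgery of Remark \ref{rem:main}(2), now performed throughout the chain rather than only for the bounded Fatou domains of a polynomial, and it is set up so as to keep the graph isotopic rel $P$ to $G_0$. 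Because $f$ maps the chain-domains among themselves and there are only finitely many separation classes of $P$ and finitely many critical and periodic gaps to repair, this recruits only finitely many Fatou domains and the process closes up, producing a finite connected graph $G\subset J_f$, contained in $\Omega$, isotopic rel $P$ to $G_0$, with $f(G)\subseteq G$. The two steps I expect to require genuine work are the finiteness of $G_0$ (bounding its branch points) and, above all, the proof that the surgery stabilizes — that every way in which $f(G_0)$ escapes the hull is absorbed by finitely many, eventually repeating, chain-domains — and this is exactly where the hypothesis that $f$ is PCF (so that $\mathcal L$ and $\Omega$ are self-similar and eventually periodic) is indispensable.
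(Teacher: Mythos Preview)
Your proposal has a genuine gap at the ``surgery stabilizes'' step, and the gap is not just a missing detail but a wrong expectation about the shape of the argument.

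You assert that the repair ``recruits only finitely many Fatou domains and the process closes up''. This is false in general: the invariant graph $G$ the paper constructs may meet \emph{infinitely} many complementary components of $\overline{U}$ (equivalently, infinitely many Fatou domains of the chain). The mechanism is not a finite combinatorial surgery but an infinite pullback that \emph{converges}. Concretely, the paper first builds an $f$-invariant \emph{circle-tree} $T\subset\partial U$ (Theorem~\ref{thm:invariant-CT}), then defines $G_1$ by deleting one arc $C^-$ from each regular circle $C\subset T$. The failure of $f(G_1)\subset G_1$ is exactly that some edges $\alpha_1$ of $G_1$ map onto a deleted arc $C^-$; one replaces each such $\alpha_1$ by the lift $\alpha_1^+$ of $C^+$, obtaining $G_2$ isotopic to $G_1$ rel $P$ with $f(G_2)\subset G_1$. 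Lifting this isotopy repeatedly produces $G_n$ with $f(G_{n+1})\subset G_n$, and the limit $G=\lim G_n$ exists because the supports of the successive isotopies are nested closed disks $B(\alpha_n)$ whose diameters shrink under the orbifold metric (Lemma~\ref{thm:isotopy}, Lemma~\ref{lem:expanding}). The hard part is then showing the limit is still a \emph{graph}: this requires the nesting Proposition~\ref{prop:pearl} (any two deformation disks are either nested or meet in at most a point of $X_n$) and Proposition~\ref{prop:convergence} (fibres of the limiting quotient map on $G_1$ are arcs). None of this is a finite repair.

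A secondary but related issue: your $G_0$ is built from regulated arcs chosen to separate $P$, but you do not arrange any forward invariance of $G_0$ itself, so you have no starting relation like $f(G_2)\subset G_1$ from which to iterate. The paper's circle-tree $T$ already satisfies $f(T)\subset T$ (because images of marked circles and marked points are again such; Lemma~\ref{lem:tree-image} and Theorem~\ref{thm:invariant-CT}), and it is precisely this invariance of the \emph{infinite} object $T$ that makes the deformation-and-lift scheme work after passing to the finite graph $G_1\subset T$. Your plan to detect the failure points as ``critical and periodic gaps'' is on the right track combinatorially, but without the circle-tree structure and the contraction estimate you have no mechanism to force convergence, and the finiteness you invoke does not hold.
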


Theorem \ref{thm:local} is proved in Section \ref{sec:2}, based on an explicit study of the dynamics on $\partial U$. \vspace{3pt}

 We aim to extend the invariant graph in Theorem \ref{thm:local} to a broader setting. Inspired by the Fatou chain generated by a single Fatou domain, we introduce the concept of general Fatou chains. The second key ingredient involves constructing invariant graphs within  Fatou chains.

A {\bf continuum} is a connected and compact subset of $\cbar$ containing more than one point.
\begin{definition}
Let $f$ be a rational map with $J_f\neq\cbar$. A {\bf level-$\boldsymbol{0}$ Fatou chain} of $f$ is defined as the closure of a Fatou domain of $f$. A \emph{continuum}  $K\subset\cbar$ is a {\bf level-$\boldsymbol{1}$ Fatou chain} of $f$ if there exists a sequence of continua $\{E_k\}_{k\ge 0}$, each of which is the union of finitely many level-$0$ Fatou chains, such that
$$
E_k\subset E_{k+1}\quad\text{and}\quad K=\ov{\bigcup_{k\ge 0}E_k}.
$$

Inductively, a continuum $K\subset\cbar$ is a {\bf level-$\boldsymbol{(n+1)}$ Fatou chain} if there exists a sequence of continua $\{E_k\}$, each of which is the union of finitely many level-$n$ Fatou chains, such that $E_k\subset E_{k+1}$ and $K=\ov{\bigcup_{k\ge 0}E_k}$.\vspace{2pt}

A Fatou chain $K$ is {\bf maximal} if any Fatou chain intersecting $K$ is contained in $K$.
\end{definition}

By definition, a level-$n$ Fatou chain is also a level-$m$ Fatou chain if $n<m$, and the Fatou chain generated by a fixed Fatou domain is a level-$1$ Fatou chain. Moreover, for \Sie\ rational maps, any maximal Fatou chain is simply the closure of a Fatou domain, while for polynomials or Newton maps, the entire sphere is a maximal Fatou chain.

\begin{theorem}[Maximal Fatou chain]\label{thm:maximal}
Let $f$ be a rational map with $J_f\neq\cbar$. Then each Fatou domain of $f$ is contained within a maximal Fatou chain. Moreover, the image and components of the pre-image of a maximal Fatou chain under $f$ are also maximal Fatou chains.
\end{theorem}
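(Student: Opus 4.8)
The plan is to combine an equivalence relation on Fatou domains with a handful of soft closure properties of the class of Fatou chains, all of which are proved by induction on the level. \emph{(i) Saturation}: each Fatou chain $K$ equals the closure of the union of the closed Fatou domains it contains, and if $K$ meets the open Fatou domain $V$ then $\ov V\subset K$; the base case uses only $\partial V\subset J_f$. \emph{(ii) Finite union}: if $K_1,\dots,K_r$ are Fatou chains whose union is connected, then $\bigcup_iK_i$ is a Fatou chain (of level one more than $\max_i$ the level of $K_i$), via the constant exhausting sequence. \emph{(iii) Image}: $f(K)$ is a Fatou chain of the same level, using $f(\ov A)=\ov{f(A)}$ and that $f$ sends closed Fatou domains onto closed Fatou domains. \emph{(iv) Pre-image}: every component $\wt K$ of $f^{-1}(K)$ is a Fatou chain (of level at most one more), using that $f$ is open (so $f^{-1}(\ov A)=\ov{f^{-1}(A)}$), that $f^{-1}$ of a connected set has at most $\deg f$ components — hence each is open and closed in $f^{-1}(K)$ and is mapped \emph{onto} it — and a spanning-tree argument over the components of the $f^{-1}$ of the level-$n$ chains making up $E_k$ to see that $\wt K\cap f^{-1}(E_k)$ is a connected finite union of chains exhausting $\wt K$.

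Next, call two Fatou domains $V,W$ equivalent if some Fatou chain contains $\ov V\cup\ov W$; by (ii) this is an equivalence relation on the countably many Fatou domains (two witnessing chains share the middle domain, so their union is a Fatou chain). Fix a Fatou domain $U$, let $\mathcal S$ be its class, and put $M:=\ov{\bigcup_{V\in\mathcal S}\ov V}$. By (i) any Fatou chain containing $\ov U$ lies inside $M$; in particular the chain witnessing $V\sim W$ joins $\ov V$ to $\ov W$ inside the compact set $M$, so all the $\ov V$ lie in a single component of $M$ and $M$ is a continuum. Maximality: if a Fatou chain $L$ meets $M$, then $L\cup M$ is a Fatou chain by (ii) containing $\ov U$, so every closed Fatou domain in $L\cup M$ lies in $\mathcal S$, whence by (i) $L\subset\ov{\bigcup_{V\in\mathcal S}\ov V}=M$. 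Thus, \emph{provided $M$ is a Fatou chain}, it is the unique maximal Fatou chain containing $\ov U$, which will give the first assertion.

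For the second assertion, take a maximal Fatou chain $M$. By (iii) $f(M)$ is a Fatou chain; if a Fatou chain $L$ meets $f(M)$, let $\wh M$ be the component of $f^{-1}(f(M)\cup L)$ containing $M$. By (iv) it is a Fatou chain, and since $\wh M\supset M$, maximality of $M$ forces $\wh M=M$; but $\wh M$ maps onto the connected set $f(M)\cup L$, so $f(M)=f(M)\cup L\supset L$, i.e. $f(M)$ is maximal. Dually, let $\wt M$ be a component of $f^{-1}(M)$; by (iv) it is a Fatou chain with $f(\wt M)=M$. If a Fatou chain $L$ meets $\wt M$, then $\wt M\cup L$ is a Fatou chain, so $M\cup f(L)=f(\wt M\cup L)$ is a Fatou chain meeting $M$, whence $M\cup f(L)=M$ by maximality of $M$; then $f(L)\subset M$, so $L\subset f^{-1}(M)$, and $L$ being connected and meeting $\wt M$ gives $L\subset\wt M$. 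So $\wt M$ is maximal.

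The one genuinely hard point is that the saturated set $M$ above really is a Fatou chain, i.e. a level-$n$ chain for some \emph{finite} $n$ — a real concern, since already for a \Sie\ map $\cbar$ is the closure of an increasing union of finite unions of level-$0$ chains yet is not a Fatou chain (there the pieces cannot be taken connected). Enumerating $\mathcal S=\{V_1=U,V_2,\dots\}$ and iterating (ii) one builds an increasing sequence of Fatou chains $K_n\supset\ov{V_1}\cup\dots\cup\ov{V_n}$, each containing $\ov U$, with $M=\ov{\bigcup_nK_n}$; a diagonalisation then yields a \emph{connected} exhaustion of $M$ by finite unions of lower-level chains (connected because every piece contains $\ov U$). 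I expect the crux to be keeping the levels of these pieces bounded, equivalently choosing the chains witnessing $V_1\sim V_i$ with uniformly bounded level. The route I would take is to establish an a priori bound on the level of the Fatou chains of $f$: every Fatou domain is pre-periodic (Sullivan's no-wandering-domains theorem), there are only finitely many Fatou cycles, and by the second assertion one may pull back a maximal Fatou chain raising its level by at most one per iterate, so it reduces to bounding the level of the finitely many periodic maximal Fatou chains, on which the first-return map is a renormalised (cluster- or \Sie-type) system whose Fatou-chain hierarchy has controlled depth. With that bound in hand $M$ is a Fatou chain, the first assertion follows, and the theorem is complete.
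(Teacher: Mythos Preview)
Your closure properties (i)--(iv) and the equivalence-relation construction are sound, and your arguments for maximality of $M$ (conditional on $M$ being a chain) and for the image/pre-image assertions are correct --- indeed the latter are more direct than the paper's. One small correction: in (iv) the set $\wt K\cap f^{-1}(E_k)$ need not be connected for small $k$; you must first let $k$ grow until the number of components of $f^{-1}(E_k)$ stabilises, as in the paper's Lemma~\ref{lem:chain-map}.

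The real gap is exactly where you place it, but your proposed fix does not close it. You invoke ``the finitely many periodic maximal Fatou chains'' and their renormalised first-return systems in order to establish that maximal Fatou chains exist --- this is circular (your proof of the second assertion already assumes $M$ is a chain, so you cannot bootstrap from it). Moreover, the renormalisation and cluster/\Sie\ structure you appeal to (Theorems~\ref{thm:renorm} and~\ref{thm:cluster-exact}) require $f$ to be post-critically finite, whereas Theorem~\ref{thm:maximal} is stated and proved for an arbitrary rational map with $J_f\neq\cbar$.

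The paper sidesteps your saturated set $M$ entirely. It introduces, for each level $n$, the \emph{level-$n$ extremal chain} $K_n(U)$ through $U$: the unique level-$n$ chain containing every level-$n$ chain that meets it in $F_f$ (Lemma~\ref{lem:E-chain}). The key stabilisation $K_{n+1}(U)=K_n(U)$ for large $n$ (Lemma~\ref{lem:maximal}) is a \emph{degree argument}: once the number of periodic level-$n$ extremal chains has stabilised, Lemma~\ref{lem:dyn-def} shows that $K_{n+1}(U)$ is generated from $K_n(U)$ by pullback under $f^p$; hence if $K_n(U)\subsetneq K_{n+1}(U)$ then $\deg\big(f^p|_{K_{n+1}(U)}\big)>\deg\big(f^p|_{K_n(U)}\big)$, which cannot continue indefinitely since this degree is bounded by $(\deg f)^p$. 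No PCF hypothesis or renormalisation is needed. The level-$N$ extremal chain at which growth stops is then the maximal Fatou chain, and Lemma~\ref{lem:chain-map} gives the image/pre-image statements.
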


 The proof of Theorem \ref{thm:maximal} is presented in Section \ref{sec:chain}. In Section \ref{sec:topology}, we  revisit maximal Fatou chains, exploring their combinatorial and topological properties. With these foundations,  the following result  will be proved in Section \ref{sec:7}.

\begin{theorem}[Invariant graphs on maximal Fatou chains]\label{thm:graph-maximal}
Let $(f,P)$ be a marked rational map with $J_f\neq\cbar$, and let $K$ be the intersection of $J_f$ with an $f$-invariant maximal Fatou chain. Then there exists a graph $G\subset K$ such that $f(G)\subset G$, $G\cap P=K\cap P$, and two points of $P$ lie in distinct components of $\cbar\setminus G$ provided that they belong to distinct components of $\cbar\setminus K$.
\end{theorem}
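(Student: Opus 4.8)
The plan is to build $G$ by first producing, near each cycle of periodic Fatou domains of the chain, an $f$-invariant graph supplied by Theorem~\ref{thm:local}; then decomposing the remainder of the chain into finitely many model sub-systems and constructing an invariant graph on each; and finally gluing everything together along the dynamics.

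First I would produce the base pieces. Since $(f,P)$ is PCF, $f$ has no wandering Fatou domains, and the given $f$-invariant maximal Fatou chain $\Omega$ contains only finitely many cycles of periodic Fatou domains. Fix one such cycle $U\to f(U)\to\dots\to f^{p-1}(U)\to U$. As $(f^p,P)$ is again a marked rational map and $f^p(U)=U$, Theorem~\ref{thm:local} applied to $(f^p,P)$ and $U$ yields a graph $G_U\subset J_f$ contained in the Fatou chain of $f^p$ generated by $U$, with $f^p(G_U)\subset G_U$ and the stated position and separation properties relative to $\partial U$. By Theorem~\ref{thm:maximal} (applied to $f^p$, using that a maximal Fatou chain containing $\Omega$ must equal $\Omega$) this chain lies inside $\Omega$, so $G_U\subset K=\Omega\cap J_f$; moreover $H:=\bigcup_{j=0}^{p-1}f^j(G_U)\subset K$ is $f$-invariant because $f^p(G_U)\subset G_U$. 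Carrying this out for every cycle produces finitely many $f$-invariant graphs $H_1,\dots,H_s\subset K$.

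The union $H_1\cup\dots\cup H_s$ need not be connected, may miss the points of $K\cap P$ lying on no periodic Fatou domain, and need not yet separate all required pairs of $P$. To repair this I would invoke the cluster-\Sie\ decomposition of $\Omega$: isolate the points and continua of $\Omega$ at which infinitely many Fatou domains of $\Omega$ accumulate — the cluster locus — show it is carried by a finite $f$-invariant configuration that can be arranged to meet every point of $K\cap P$, and cut $K$ along it. After passing to a suitable iterate, each resulting piece is a sub-system of one of two kinds: a cluster-type piece, which is swallowed by the generated chain of a periodic cycle and hence already covered by the corresponding $H_i$; or a \Sie-type piece, near which the Fatou domains of $\Omega$ do not accumulate and on which, after collapsing the attached clusters, $f$ is modelled by a marked \Sie\ rational map. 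On a \Sie-type piece I would build an invariant graph from the boundaries of the Fatou domains it contains — in the extreme case the piece is a single Fatou closure and one simply takes its boundary Jordan curve — using Theorem~B when a Bonk-Meyer-type curve is genuinely needed and then replacing its internal-ray portions by boundary arcs, as in Remark~\ref{rem:main}\,(2), so that the graph lands in $K$; pulling back through finitely many preimage components, which are again maximal chains by Theorem~\ref{thm:maximal}, spreads this over the whole piece.

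Finally I would join the piece-graphs and the $H_i$ along the cut configuration, using the flexibility in Theorems~\ref{thm:local} and~B to make the two sides match (up to isotopy rel $P$) at each interface. The resulting $G\subset K$ is then connected, satisfies $f(G)\subset G$ (after symmetrizing the \Sie-type contributions over their cycles exactly as for the $H_i$), contains $K\cap P$, and separates any two points of $P$ lying in distinct components of $\cbar\setminus K$ — the last because the complementary components of $K$ are precisely those of $\Omega$ together with the Fatou domains contained in $\Omega$, and the construction is designed to fence off each of them by the appropriate piece-graph. The decisive difficulty is the decomposition step: showing that an arbitrary invariant maximal Fatou chain admits a \emph{finite}, $f$-invariant cluster-\Sie\ decomposition, that its \Sie-type pieces really are modelled by \Sie\ rational maps so Theorem~B applies, that its cluster-type pieces really sit inside one generated chain so Theorem~\ref{thm:local} applies, and — the most delicate point — controlling the accumulation of infinitely many Fatou domains at the cluster locus so that the cut configuration is finite and invariant. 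By comparison, the joining of the pieces is a combinatorial patching argument, though it must be carried out compatibly with the isotopies and with the passages to iterates.
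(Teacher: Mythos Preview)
Your proposal has a fundamental structural misunderstanding and, separately, an iterate problem that breaks the conclusion.

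First, the structural point. A maximal Fatou chain does not decompose into ``cluster pieces'' and ``\Sie\ pieces'' in the way you suggest. In the paper's architecture the maximal Fatou chains \emph{are} the cluster pieces: the \Sie\ sub-systems of Theorem~\ref{thm:cluster-exact} live in the \emph{complement} of the stable set $\KKK\supset\MMM_f$, not inside a single chain. So your proposed ``cluster locus'' cut inside $\Omega$ is not a meaningful object, and there is no \Sie-type piece within $\Omega$ on which Theorem~B could be invoked. The correct internal hierarchy of a maximal Fatou chain is the tower of level-$n$ extremal chains of Section~\ref{sec:chain}: a level-$(n+1)$ chain is a growing continuum generated by the union of periodic level-$n$ chains it contains (Lemma~\ref{lem:dyn-def}), and the paper's proof of Theorem~\ref{thm:graph-maximal} is an induction on this level.

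Second, even if a \Sie\ model were available somewhere, Theorem~B only produces an $f^n$-invariant curve for \emph{large} $n$; it cannot give an $f$-invariant graph. Theorem~\ref{thm:graph-maximal} asserts $f(G)\subset G$ with no iterate, and indeed this is exactly why Theorem~\ref{thm:graph-maximal} is the one place in the paper where no passage to a large iterate is needed (the iterate in Theorem~\ref{thm:main} comes entirely from the \Sie\ side via Theorem~B, applied outside the chains).

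What the paper actually does: the base case (level~$1$) is Proposition~\ref{prop:graph1}, an extension of Theorem~\ref{thm:local} to the union of all periodic level-$0$ chains in $K$, proved by circle-tree surgery. For the inductive step, the level-$(n+1)$ chain $K$ is treated as a growing continuum generated by the union $E$ of its periodic level-$n$ chains (Section~\ref{sec:topology}). The inductive invariant graph in $E$ is then enlarged by adjoining finitely many \emph{preperiodic growing arcs}: Lemma~\ref{lem:growing} supplies growing curves reaching every point of $K$ and separating prescribed pairs in $P$; finiteness of accesses (Lemma~\ref{lem:finite-access}) and the link machinery (Propositions~\ref{pro:criter}, \ref{pro:criter1}) force these curves into finitely many eventually periodic equivalence classes; and Proposition~\ref{lem:curve-to-arc} straightens each class into a genuinely $f$-invariant arc landing on the lower-level graph. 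This is the substitute for your missing ``joining'' step, and it is where the real work lies --- your proposal has no analogue of it.
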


\begin{remark}
{\rm If a PCF rational map has a maximal Fatou chain equal to $\cbar$, then Theorem \ref{thm:main} follows directly from Theorem \ref{thm:graph-maximal} since every Fatou domain contains at most one marked point. From the perspective of Julia set configurations, such a map can be viewed as a generalization of polynomials and Newton maps, and is referred to as a {\bf cluster rational map}.}
\end{remark}

The third key ingredient concerns the decomposition of a marked rational map. According to Theorem \ref{thm:graph-maximal}, in order 
 to construct a global invariant graph, it is necessary to investigate the dynamics  outside the union of marked maximal Fatou chains. This approach  leads to a decomposition of marked rational maps by maximal Fatou chains, which we present in a generalized form.

\begin{definition}
Let $f$ be a rational map, and let $\KKK$ be a union of finitely many pairwise disjoint continua. We call $\KKK$ a {\bf stable set} of $f$ if $f(\KKK)\subset\KKK$ and each component of $f^{-1}(\KKK)$ is either  a component of $\KKK$ or  disjoint from $\KKK$.
\end{definition}

According to Theorem \ref{thm:maximal},  the union of all periodic maximal Fatou chains is a specific example of a stable set. By definition, each component of a stable set is eventually periodic. Thus, the following result describes the dynamics on a stable set.

\begin{theorem}[Renormalization]\label{thm:renorm}
Let $f$ be a   PCF  rational map, and let $K\not=\cbar$ be a connected stable set of $f$. Then $f$ is {\bf renormalizable} on $K$, i.e., there exist a  rational map $g$ and a quasiconformal map $\phi$ of $\cbar$ such that $J_{g}=\phi(\partial K)$ and $\phi\circ f=g\circ\phi$ on $K$.
Moreover, the rational map $g$ can be taken to be PCF and is unique up to conformal conjugacy. We call $g$ the {\bf renormalization} of $f$ on $K$.
\end{theorem}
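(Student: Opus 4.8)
The plan is to realize the action of $f$ on $K$ by a post-critically finite branched covering of the whole sphere and then straighten that model to a rational map. First, since $K$ is connected it is a single component of $f^{-1}(K)$, so $f(K)\subset K$ and $f|_K$ is a well-defined self-map of $K$. Because $J_f$ has empty interior unless $J_f=\cbar$, one has $\mathrm{int}\,K\subset F_f$, and since $f$ is PCF every Fatou component of $f$ is a Jordan domain eventually iterated onto a superattracting cycle. With this I would establish the combinatorial picture of $\cbar\sm K$: every component $D$ of $\cbar\sm K$ is a Jordan domain; $f$ carries $\ov D$ properly onto the closure of another component of $\cbar\sm K$, or onto the closure of a Fatou component of $f$ contained in $\mathrm{int}\,K$; and there are only finitely many grand orbits of components of $\cbar\sm K$. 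This step uses the local connectivity of $J_f$ for PCF maps together with, in the situations arising elsewhere in the paper, the combinatorics of Fatou chains developed in Sections~\ref{sec:chain} and~\ref{sec:topology}.

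Next I would build the topological model. Keep $f$ unchanged on a neighbourhood of $K$ — in particular on every Fatou component of $f$ inside $\mathrm{int}\,K$ — and modify it on $\cbar\sm K$ as follows: replace each periodic cycle of components of $\cbar\sm K$ by a model in which the cycle becomes the immediate basin of an inserted superattracting cycle (a power map in B\"ottcher coordinates matching the boundary degrees of $f$), pull this model back along $f$ over the eventually periodic components, and over the components whose orbit enters $\mathrm{int}\,K$ simply compose with the unchanged dynamics. This yields a branched covering $F\colon\cbar\to\cbar$ with $\deg F=\deg f$ and $F\equiv f$ on $K$, post-critically finite with $P_F=P_f\cup\{\text{centres of the inserted cycles}\}$, and such that $\cbar\sm\partial K=\mathrm{int}\,K\sqcup(\cbar\sm K)$ is exactly the union of the superattracting basins of $F$.

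Granting that $F$ carries no Thurston obstruction, Thurston's characterization theorem produces a rational map $g$ Thurston-equivalent to $F$ (the flexible-Latt\`es exception is vacuous here, since it would force the Julia set to be $\cbar$, whereas the desired $J_g=\phi(\partial K)\subsetneq\cbar$); then $g$ is post-critically finite, and it is unique up to conformal conjugacy by Thurston rigidity — in the hyperbolic-orbifold case directly, and otherwise because a PCF rational map with $J\neq\cbar$ is never a flexible Latt\`es map and is hence rigid. The Thurston equivalence homeomorphism is upgraded, using $F\equiv f$ on $K$ together with the attracting dynamics of $F$ off $K$, to a quasiconformal map $\phi$ of $\cbar$ with $\phi\circ f=g\circ\phi$ on $K$ and $\phi(\partial K)=J_g$: concretely $\phi$ is the limit of the quasiconformal pull-back iteration, conformal on the Fatou parts and quasiconformal on the Julia part.

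The heart of the proof, and the main obstacle, is ruling out a Thurston obstruction for $F$. The idea is to exploit that $f$ is already rational and that $F$ differs from $f$ only inside the attracting region $\cbar\sm K$: a minimal obstruction whose curves all avoid $\cbar\sm K$ would yield an obstruction for $f$, which is impossible; and if some curve of a minimal obstruction meets $\cbar\sm K$, then the inserted superattracting basins prevent the leading eigenvalue from reaching $1$, by the standard annulus estimates for multicurves meeting periodic Fatou domains (Pilgrim--Tan Lei) or, equivalently, by the geometric finiteness of the model. Making this dichotomy precise — in particular controlling curves that run partly through $K$ and partly through $\cbar\sm K$ — is the delicate point, and it is here that the decomposition machinery of the earlier sections is needed.
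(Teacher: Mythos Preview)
The paper's proof is far shorter and takes a completely different route: after showing $\partial K\subset J_f$ (Lemma~\ref{lem:boundary}), it passes to $\wh K$ (the union of $K$ with all components of $\cbar\sm K$ missing $P_f$), so that $\cbar\sm\wh K$ has only finitely many components; then, using the expansion of $f$ in the orbifold metric near $J_f$, it attaches a thin annulus $A_D$ to each such component to produce domains $U\Subset V$ with $K\subset U$ and $f\colon U\to V$ a \emph{rational-like map} in the sense of \cite{CPT2}. The straightening theorem \cite[Theorem~5.2]{CPT2} then gives $g$ and $\phi$ directly. No Thurston theory, no global topological model, no obstruction analysis is needed.

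Your approach has genuine gaps. The degree claim $\deg F=\deg f$ is wrong: $K$ is only one component of $f^{-1}(K)$, so $\deg(f|_K)\le\deg f$ with strict inequality in general, and the renormalization must have degree $\deg(f|_K)$; in particular you cannot ``keep $f$ unchanged on a neighbourhood of $K$'' and still get a branched covering of the correct degree. The assertion that every Fatou domain of a PCF map is a Jordan domain is false --- Theorem~\ref{thm:example} of this paper gives an explicit counterexample --- and hence so is the claim that every component of $\cbar\sm K$ is a Jordan domain; $K$ is merely a continuum with $\partial K\subset J_f$, which you also do not establish. Likewise the finiteness of grand orbits of components of $\cbar\sm K$ is neither obvious nor proved. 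All of these difficulties are exactly what the rational-like-map formulation avoids: only the finitely many components of $\cbar\sm\wh K$ meeting $P_f$ matter, and no fine topological control on $\partial K$ is required.
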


Next, we consider the dynamics outside a stable set.


\begin{definition}\label{def:exact-system}
Let $(f,P)$ be a marked rational map, and let $\VVV_1\subset\VVV$ be open sets with $\partial\VVV\subset J_f$ such that each component of $\partial \VVV$ contains more than one point. We say $f:\VVV_1\to\VVV$ is an {\bf exact sub-system} of $(f,P)$ if
\begin{itemize}
\item [(1)]$\VVV$ has finitely many components, each of which is finitely connected;

\item [(2)] $\VVV_1$ is the union of some components of $f^{-1}(\VVV)$;

\item [(3)] each component of $\VVV\setminus\VVV_1$ is a  continuum disjoint from $P$.
\end{itemize}
\end{definition}

By definition, each component of $\VVV$ contains a unique component of $\VVV_1$.
Consequently, there exists a self-map $f_{\#}$ on the collection of components of $\VVV$ defined by $f_{\#}(V):=f(V_1)$, where $V_1$ is the unique component of $\VVV_1$ contained in $V$. Since $\VVV$ has finitely many components, every component of $\VVV$ is eventually $f_{\#}$-periodic.
Therefore, the dynamics of an exact sub-system is characterized by the following theorem.

\begin{theorem}[Blow-up]\label{thm:blow-up}
Let $(f,P)$ be a marked  rational map. Suppose that $f:V_1\to V$ is an exact sub-system of $(f,P)$ such that $V$ is connected. Denote
$$
V_n=(f|_{V_1})^{-n}(V)\quad\text{ and }\quad E=\bigcap_{n>0}\ov{V_n}.
$$
Then there exist a marked  rational map $(g,Q_g)$, a continuum $K_g\supset J_g$ with $g^{-1}(K_g)=K_g$, and a continuous onto map $\pi:\cbar\to\cbar$ such that
\begin{enumerate}
\item  components of $\cbar\setminus K_g$ are all Jordan domains with pairwise disjoint closures;
\item  $E=\pi(K_g)$ and $f\circ\pi=\pi\circ g$ on $K_g$;
\item  for any point $z\in\bigcap_{n>0} V_n$, the fiber $\pi^{-1}(z)$ is a singleton;
\item  for any component $B_n$ of $\cbar\setminus V_n$, the set $\pi^{-1}(B_n)$ is the closure of a component of $\cbar\sm K_g$;

\item  a point $x\in Q_g$ if and only if either $\pi(x)\in P\cap V$, or $x$ is the center in the B\"{o}ttcher coordinate of a component $D$ of $\cbar\setminus K_g$ such that $\pi(\ov{D})\cap P\not=\emptyset$. 
\end{enumerate}
\noindent Moreover, the marked rational map $(g,Q_g)$ is unique up to conformal conjugacy.
\end{theorem}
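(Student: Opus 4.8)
The plan is to build an explicit topological model by \emph{blowing up} the complementary continua of the nested domains $V_n$ into round disks, and then to realize that model by a rational map via a Thurston-type argument. First, I would analyze the combinatorics of these complementary continua. Since $V$ is connected and finitely connected and $f|_{V_1}\colon V_1\to V$ is a proper branched covering of degree $d\ge 2$, each $V_n$ is a finitely connected domain with $V_{n+1}\subset V_n$, and — using condition (3) together with properness — $V_n\setminus V_{n+1}$ is a finite union of continua compactly contained in $V_n$. Hence the components of $\cbar\setminus V_n$, ranging over all $n\ge 0$, form a countable collection $\mathscr G$ of pairwise disjoint continua (the \emph{gaps}): each gap is born at a unique level and then persists as a component of $\cbar\setminus V_m$ for all larger $m$. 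I would then record the induced dynamics: $f$ carries the closure of each gap of level $\ge 1$ onto a gap of strictly lower level, and carries the finitely many level-$0$ gaps (the components of $\cbar\setminus V$) among themselves, so $(\mathscr G,f)$ is governed by finite data. Replacing every gap $B\in\mathscr G$ by a closed round disk $D_B$, glued along $\partial B$ compatibly with this combinatorics, yields a topological $2$-sphere $S^2$, a continuous surjection $\pi_0\colon S^2\to\cbar$ collapsing each $D_B$ onto $B$ and restricting to a homeomorphism off $\bigcup_B D_B$, and a branched covering $\widehat g\colon S^2\to S^2$ of degree $d$ that is conjugate to $f$ over $\widehat K:=S^2\setminus\bigcup_B\operatorname{int}D_B$ via $\pi_0$ and is a power map in suitable coordinates on each $D_B$; thus $\widehat g^{-1}(\widehat K)=\widehat K$ and $f\circ\pi_0=\pi_0\circ\widehat g$. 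I mark $Q_0\subset S^2$ as $\pi_0^{-1}(P\cap V)$ together with the center of $D_B$ for each gap $B$ with $\pi_0(D_B)\cap P\ne\emptyset$; only level-$0$ gaps can meet $P$ (by condition (3) and the forward invariance of $P$), so $Q_0$ is finite, forward invariant, and contains the post-critical set of $\widehat g$.

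Next comes the heart of the argument: showing that $(\widehat g,Q_0)$ is Thurston-equivalent to a rational map. The key input is that $\widehat g|_{\widehat K}$ is expanding — because $f$ is PCF and the sub-system is exact, $E=\bigcap_n\overline{V_n}$ is a repelling part of the dynamics, so the spherical diameters of the gaps shrink to $0$ with their level. Consequently any $\widehat g$-stable multicurve can be isotoped into $\widehat K$ (the disks $D_B$ being filled by power maps), where it would descend to a Levy cycle or a Thurston obstruction for an expanding system, which is impossible; hence $(\widehat g,Q_0)$ carries no Thurston obstruction. Invoking the realization theorem in this geometrically finite setting (Thurston's theorem, in the form appropriate to maps with super-attracting Fatou disks, as used for \Sie\ maps earlier in the paper) produces a rational map $g$, a finite set $Q_g$, and a homeomorphism $\psi\colon S^2\to\cbar$ with $\psi(Q_0)=Q_g$ and, after the standard pullback upgrade of Thurston equivalence to genuine conjugacy on $\widehat K$, $\psi\circ\widehat g=g\circ\psi$ there. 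Writing $K_g:=\psi(\widehat K)$, this gives $K_g\supset J_g$, $g^{-1}(K_g)=K_g$, and the components of $\cbar\setminus K_g$ are the Jordan domains $\psi(\operatorname{int}D_B)$, with pairwise disjoint closures — which is conclusion (1).

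Finally I would set $\pi:=\pi_0\circ\psi^{-1}\colon\cbar\to\cbar$ and read off the remaining conclusions. Then $f\circ\pi=\pi\circ g$ on $K_g$ since $\psi$ conjugates $\widehat g$ to $g$ there; (2) holds because $\pi(K_g)=\pi_0(\widehat K)=E$; (3) holds because a point of $\bigcap_n V_n$ lies in no gap, so its $\pi_0$-fiber lies in the set where $\pi_0$ is injective; (4) is exactly the defining property of $\pi_0$ transported through $\psi$, since the component $B_n$ of $\cbar\setminus V_n$ is a gap $B\in\mathscr G$ and $\pi^{-1}(B)=\overline{\psi(\operatorname{int}D_B)}$; and (5) follows from the choice of $Q_0$, using that the center of $D_B$ corresponds under $\psi$ to the B\"ottcher center of $\psi(\operatorname{int}D_B)$. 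For uniqueness up to conformal conjugacy, given another quadruple $(g',Q_{g'},K_{g'},\pi')$ as in the statement, the semiconjugacies $\pi$ and $\pi'$ identify the actions of $g$ and $g'$ with the common combinatorial model of $(\mathscr G,f)$, so $g$ and $g'$ are quasiconformally conjugate; since the Julia sets are of carpet type and admit no invariant line fields, this conjugacy is promoted to a conformal one carrying $Q_g$ onto $Q_{g'}$.

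The hardest point is the realization step: verifying that the blown-up model carries no Thurston obstruction and selecting the correct realization theorem, because $\widehat g$ is in general only geometrically finite — the disks $D_B$ play the role of super-attracting cycles with preperiodic preimages — so the argument must be run in that category, and ``exactness of the sub-system'' has to be translated precisely into ``combinatorial expansion of $\widehat g$ on $\widehat K$''. A secondary technical burden is the combinatorial analysis of the gaps, in particular checking that $f$ maps (the closure of) each gap into a single gap so that the blow-up and its dynamics are well defined.
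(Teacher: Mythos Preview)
Your high-level strategy matches the paper's: build a model by blowing up the complementary continua of the $V_n$ into disks, rule out Thurston obstructions by comparison with $f$, realize the model as a rational map $g$, and extract the semiconjugacy $\pi$. But two of your steps are much harder than you indicate, and as written they are genuine gaps.

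First, your topological blow-up is not well defined. You propose to replace each gap $B\in\mathscr G$ by a disk ``glued along $\partial B$'', but $\partial B$ is in general only a locally connected continuum, not a Jordan curve, so there is no circle-to-$\partial B$ identification to glue along, and it is unclear that the result is a sphere or that $\widehat g$ extends continuously across the infinitely many seams. The paper avoids this by uniformizing $V$ conformally to a \emph{circular} domain $\hat\Omega$, so the level-$0$ gap boundaries become round circles; the model map $\tilde g$ is then built concretely (holomorphic on $\hat\Omega_1$ and on the disks, patched topologically near the new level-$1$ gaps), and higher-level disks appear automatically as $\tilde g$-preimages. This also yields an explicit quotient map $\xi_0$ with $\xi_0\circ\tilde g=f\circ\xi_1$, which is what the obstruction argument actually uses: a multicurve for $(\tilde g,\tilde Q)$ pushes forward via $\xi_0$ to one for $(f,P)$ with entrywise larger transition matrix, so no obstruction for $f$ forces none for $\tilde g$. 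Your version (``would descend to an obstruction for an expanding system'') needs exactly this comparison to be made precise.

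Second, the ``standard pullback upgrade of Thurston equivalence to genuine conjugacy on $\widehat K$'' is not standard---it is the technical heart of the proof. Thurston equivalence supplies $\phi_0,\phi_1$ with $\phi_0\circ\tilde g=g\circ\phi_1$; lifting gives a sequence $\phi_n$, but the paper does \emph{not} show that $\phi_n$ converges to a conjugating homeomorphism $\psi$ as you assume. Instead it forms $h_n:=\xi_n\circ\phi_n^{-1}$ (the $\xi_n$ being successive lifts of the collapsing map, only homotopic to one another) and proves directly that $h_n$ converges uniformly to a quotient map $\pi$, using expansion of $f$ in the orbifold metric. Your formula $\pi=\pi_0\circ\psi^{-1}$ presupposes a single homeomorphism $\psi$ the argument never produces. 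Relatedly, conclusion~(1) does not come for free from the model: the paper proves it dynamically (Proposition~5.2), via combinatorial landing criteria pushed down to $f$. Finally, your uniqueness argument invokes carpet Julia sets and absence of invariant line fields, but the theorem has no carpet hypothesis; uniqueness is immediate from Thurston rigidity, since $(\tilde g,\tilde Q)$ is determined by the data.
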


The marked rational map $(g,Q_g)$ is called the {\bf blow-up} of the exact sub-system $f:V_1\to V$ of $(f, P)$. Generally, if $f:\VVV_1\to\VVV$ is an exact sub-system of $(f,P)$, and $V$ is an $f_{ \#}$-periodic component of $\VVV$ with period $p$, then the blow-up of the exact sub-system $f^p:V_p\to V$ of $(f^p,P)$ is regarded as a {\bf blow-up} of $f:\VVV_1\to\VVV$ (associated with $V$). Here, $V_p$ denotes the component of $(f|_{\VVV_1})^{-p}(V)$ contained in $V$.

\vspace{2pt}

The primary result of the third key ingredient is the decomposition theorem below. 

\vspace{2pt}

A connected open or closed set $E$ is called {\bf simple-type} (rel $P$) if there is a simply connected domain $D\subset\cbar$ such that $E\subset D$ and $\#(D\cap P)\le 1$; or {\bf annular-type} if $E$ is not simple-type and there is an annulus $A\subset\cbar\setminus P$ such that $E\subset A$; or {\bf complex-type} otherwise.

\begin{theorem}[Cluster-\Sie\ decomposition]\label{thm:cluster-exact}
Let $(f,P)$ be a marked rational map with $J_f\neq\cbar$. Then there exists a stable set $\KKK\subset J_f$ such that
\begin{enumerate}
\item  for any periodic component $K$ of $\KKK$ with period $p$, the renormalization of $f^p$ on $K$ is a cluster rational map;
\item  either $\VVV=\emptyset$ or $f:\VVV_{1}\to\VVV$ is an exact sub-system of $(f,P)$, where $\VVV$ and $\VVV_1$ are the unions of complex-type components of $\cbar\sm\KKK$ and $\cbar\sm f^{-1}(\KKK)$, respectively.
\end{enumerate}
Moreover, each blow-up of $f:\VVV_{1}\to\VVV$ is a marked \Sie\ rational map. 
\end{theorem}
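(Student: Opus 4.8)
\emph{Plan.} My proposal is to build $\KKK$ out of a carefully chosen \emph{finite} family of maximal Fatou chains, using Theorem~\ref{thm:maximal} to control their dynamics; item~(1) then comes from Theorem~\ref{thm:renorm}, item~(2) from Definition~\ref{def:exact-system}, and the \Sie\ conclusion from Theorem~\ref{thm:blow-up}, with the genuine difficulty being the choice of family. By Theorem~\ref{thm:maximal} the collection of maximal Fatou chains is permuted by $K\mapsto f(K)$ and by taking components of preimages, and any two maximal Fatou chains that meet must coincide, so each Fatou domain lies in a unique maximal Fatou chain. A PCF rational map has only finitely many periodic Fatou domains, hence only finitely many periodic maximal Fatou chains, and each such chain is itself periodic; likewise, since $P$ is finite, only finitely many maximal Fatou chains meet $P$, and their forward orbits under $f$ are finite. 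Let $\mathscr M$ be the union of all these chains — the periodic ones, those meeting $P$, and all of their forward images — and set $\KKK:=\mathscr M\cap J_f$. A routine verification using Theorem~\ref{thm:maximal} and the ``meet $\Rightarrow$ coincide'' property shows that $\mathscr M$, and hence $\KKK$, is forward invariant and that every component of $f^{-1}(\KKK)$ is either a component of $\KKK$ or disjoint from $\KKK$; thus $\KKK$ is a stable set inside $J_f$. Throughout I would use the topological regularity of maximal Fatou chains established in Section~\ref{sec:topology} — their components are continua, locally connected, with finitely connected complementary components and no isolated boundary points — since this is what makes every ``component'' below behave.

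\emph{Item~(1).} Let $K$ be a periodic component of $\KKK$ of period $p$ and $K_0$ the maximal Fatou chain with $K=K_0\cap J_f$; since the interior of a Fatou chain is contained in its Fatou set, $K=\partial K_0$. As $K\neq\cbar$, Theorem~\ref{thm:renorm} gives a PCF rational map $g$ and a quasiconformal $\phi$ of $\cbar$ with $J_g=\phi(K)$ and $\phi\circ f^p=g\circ\phi$ on $K$. To see $g$ is a cluster rational map I would transport the defining structure $K_0=\overline{\bigcup_k E_k}$ through $\phi$: every Fatou domain $U\subset K_0$ of $f$ is a component of $\cbar\setminus\partial K_0$, as is every component of $\cbar\setminus K_0$, so $\phi$ sends each of them to a Fatou domain of $g$; enumerating the diameter-shrinking components $W_1,W_2,\dots$ of $\cbar\setminus K_0$, the continua $\phi(E_k)\cup\overline{\phi(W_1)}\cup\cdots\cup\overline{\phi(W_k)}$ form an increasing sequence of finite unions of closures of Fatou domains of $g$ with dense union in $\cbar$. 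Hence $\cbar$ is a level-$1$ Fatou chain of $g$, necessarily a maximal one, so $g$ is a cluster rational map; uniqueness up to conformal conjugacy is part of Theorem~\ref{thm:renorm}.

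\emph{Item~(2).} With $\VVV,\VVV_1$ as in the statement, $\KKK\subset f^{-1}(\KKK)$ forces each component of $\cbar\setminus f^{-1}(\KKK)$ into a unique component of $\cbar\setminus\KKK$; as simple- and annular-type are inherited by subsets, a complex-type component of $\cbar\setminus f^{-1}(\KKK)$ lies inside a complex-type component of $\cbar\setminus\KKK$, so $\VVV_1\subset\VVV$. Each component $V_1$ of $\cbar\setminus f^{-1}(\KKK)$ is mapped by $f$ properly onto a component of $\cbar\setminus\KKK$; since $f(P)\subset P$ and a proper branched covering cannot lower the topological type below complex, $f(V_1)\subset\VVV$ whenever $V_1$ is of complex type, which makes $\VVV_1$ a union of components of $f^{-1}(\VVV)$ — conditions~(1)–(2) of Definition~\ref{def:exact-system}, the required finiteness and finite connectivity coming again from Section~\ref{sec:topology}. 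Each component of $\VVV\setminus\VVV_1$ is assembled from Julia parts of maximal Fatou chains carried into $\KKK$ by $f$ together with adjacent simple-/annular-type pieces; it is a continuum by Section~\ref{sec:topology}, and it is disjoint from $P$ precisely because $\mathscr M$ was chosen to absorb every maximal Fatou chain and every Fatou domain meeting $P$ — this is condition~(3). Finally $\partial\VVV\subset\KKK\subset J_f$, and each component of $\partial\VVV$ has more than one point because the components of $\KKK$ are continua without isolated boundary points.

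\emph{\Sie\ blow-ups, and the main obstacle.} For an $f_{\#}$-periodic complex-type component $V$ of period $p$, Theorem~\ref{thm:blow-up} produces a PCF marked rational map $(g,Q_g)$ with a continuum $K_g\supset J_g$, $g^{-1}(K_g)=K_g$, and $\cbar\setminus K_g$ a union of Jordan domains with pairwise disjoint closures. Then $J_g$ is compact, nowhere dense (its Fatou set contains $\cbar\setminus K_g\neq\emptyset$) and locally connected; using that $V$ is of \emph{complex} type and that $\mathscr M$ already absorbed every periodic maximal Fatou chain, $g$ has no ``thick'' periodic Fatou dynamics, so the interior Fatou components of $K_g$ together with the complementary components of $K_g$ form a family of Jordan domains with pairwise disjoint closures, and $J_g$ is connected and is not a Jordan curve; by Whyburn's characterization $J_g$ is a \Sie\ carpet, and $(g,Q_g)$ is a marked \Sie\ rational map. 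The hard part of the whole argument is the construction of $\KKK$: it must be a \emph{finite} union of continua, so that only finitely many maximal Fatou chains are available, yet simultaneously large enough that (a) every maximal Fatou chain and every Fatou domain meeting $P$ is absorbed — needed for condition~(3) of the exact sub-system — and (b) the dynamics left exposed in $\VVV$ is purely carpet-like — needed so that the blow-ups are \Sie\ rather than polynomial-like. Showing that the single finite family $\mathscr M$ achieves both, together with the topological control on maximal Fatou chains underlying every ``component'' used above, is where essentially all the work lies; items~(1) and~(2) are then comparatively formal consequences of Theorems~\ref{thm:maximal}, \ref{thm:renorm} and \ref{thm:blow-up}.
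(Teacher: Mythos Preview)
Your construction of $\KKK$ as (the Julia part of) a finite family of maximal Fatou chains is essentially the paper's starting object $\MMM_f$ in Theorem~\ref{thm:cluster-exact0}, and the paper explicitly allows $\KKK\supsetneq\MMM_f$. The gap is in your verification of condition~(3) of Definition~\ref{def:exact-system}. Take a complex-type component $V$ of $\cbar\setminus\KKK$ with boundary component $\lambda$, and suppose some component $K'$ of $f^{-1}(\KKK)\setminus\KKK$ lying in $V$ is annular-type and separates $\lambda$ from the remaining marked points. The annular region $A\subset V$ between $\lambda$ and $K'$ is not complex-type, so $A\not\subset\VVV_1$; the component of $V\setminus\VVV_1$ containing $A\cup K'$ then accumulates on $\lambda\subset\partial V$ and is not compact, hence not a continuum. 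Your appeal to Section~\ref{sec:topology} does not help here: local connectivity of maximal Fatou chains says nothing about compactness of this piece inside the open set $V$. This is exactly the \emph{non-exact boundary component} phenomenon isolated in Section~4.3 (see the dichotomy before Proposition~\ref{pro:exact} and Figure~\ref{fig:system}), and it genuinely occurs.

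The paper's remedy is substantial and is where the real work lies. First one passes to $\KKK_0^N$, the union of components of $f^{-N}(\KKK_0)$ that intersect or separate $P$, to stabilise branched numbers (Lemma~\ref{lem:stable}). If non-exact boundaries persist, Lemma~\ref{lem:new} constructs a \emph{new} complex-type stable set $\KKK'$ as an intersection $\bigcap_n D_n$ of nested pullback domains; these components are not maximal Fatou chains (they have empty Fatou interior), so your $\mathscr M$ cannot contain them. This yields Theorem~\ref{thm:decomposition}, after which a further iteration inside the renormalisation of any non-cluster periodic component, terminated by Lemma~\ref{lem:nest}, gives Theorem~\ref{thm:cluster-exact0}. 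Your argument for item~(1) is fine for components that are maximal Fatou chains, but since the actual $\KKK$ may acquire other components along the way, the cluster property for those needs this additional descent.
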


\begin{remark}\label{rem:decomposition}
	{\rm
By Theorem \ref{thm:cluster-exact},  the dynamics of $(f,P)$ is essentially inherited by the sub-systems $f:\KKK\to\KKK$ and $f:\VVV_1\to\VVV$. In fact, the complement of $\KKK\sqcup \VVV$ can be expressed as $\AAA\sqcup \SSS$, where $\AAA$ and $\mathcal{S}$ denote the unions of all annular-type and simple-type components of $\cbar\setminus\KKK$, respectively. 

The set $\AAA$ has finitely many components, each of which is an annulus (see Theorem \ref{thm:cluster-exact0}). Let $\AAA_1$ be the union of all annular-type components of $f^{-1}(\AAA)$. It follows that $\AAA_1\subset\AAA$ and $f:\AAA_1\to\AAA$ forms an annular sub-system. The dynamics of an annular sub-system is straightforward and has been extensively studied in \cite{CPT2}.

Additionally, the dynamics of $f$ associated with $\mathcal{S}$ is trivial by the shrinking lemma (see Lemma \ref{lem:expanding}) since each component of $\SSS$ contains at most one point of $P_f$. }\end{remark}

 Theorem \ref{thm:cluster-exact}\,(1) and (2) and Theorem \ref{thm:renorm} are established in Section \ref{sec:4}. Theorem \ref{thm:blow-up} is proved in Section \ref{sec:5}, which immediately implies the remaining part of Theorem \ref{thm:cluster-exact}.\vspace{3pt}

Now, according to Theorem \ref{thm:cluster-exact}, any marked rational map with a non-empty Fatou set can be decomposed into several marked cluster or \Sie\ rational maps. The invariant graphs for marked cluster rational maps are established in Theorem \ref{thm:graph-maximal}, while those for marked \Sie\ rational maps appear in Theorem B.

 In the fourth and final key ingredient, we 
will connect the invariant graphs associated with these sub-systems  to derive a global invariant graph.  This can be accomplished by identifying invariant arcs within the annular sub-system described in Remark \ref{rem:decomposition}. The process is encapsulated in the following proposition, which is proved in Section \ref{sec:invariant-graphs}.

A graph is called {\bf regulated} for a PCF rational map if its intersection with the closure of any Fatou domain of the map is either empty or the union of finitely many closed internal rays.
\begin{proposition}\label{pro:pre}
	Let $(f, P)$ be a marked rational map with $J_f\not=\cbar$, and let $\KKK,\VVV,\VVV_1$ represent the sets specified in Theorem \ref{thm:cluster-exact}. Suppose  each blow-up $(g, Q_g)$ of the exact sub-system $f:\VVV_1\to\VVV$ admits a $g$-invariant regulated graph containing $Q_g$.  Then there exists an $f$-invariant graph $G\subset J_f$ such that $P\cap J_f\subset G$ and each component of $\cbar\setminus G$ contains at most one point of $P$. 
\end{proposition}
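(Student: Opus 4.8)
The plan is to assemble $G$ from invariant graphs on the pieces of the cluster--\Sie\ decomposition of Theorem~\ref{thm:cluster-exact}, connected to one another by invariant arcs running through the annular part of $\cbar\setminus\KKK$. Write $\cbar\setminus\KKK=\VVV\sqcup\AAA\sqcup\SSS$ as the disjoint union of its complex-type, annular-type and simple-type components, as in Remark~\ref{rem:decomposition}; every component of $\SSS$ contains at most one point of $P$ and so needs no graph. I will first build an $f$-invariant graph $G_\KKK\subset\KKK$ out of the cluster renormalizations, then an $f$-invariant graph $G_\VVV$ supported in the stable parts of the complex-type cells out of the \Sie\ blow-ups, both contained in $J_f$; finally I will splice $G_\KKK\cup G_\VVV$ into a single connected graph by inserting $f$-invariant arcs provided by the annular sub-system $f:\AAA_1\to\AAA$.

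\textbf{The cluster part.} The components of $\KKK$ are eventually periodic. For a periodic cycle $K=K_0,K_1=f(K_0),\dots,K_{p-1}$ of components of $\KKK$, Theorem~\ref{thm:cluster-exact}\,(1) and Theorem~\ref{thm:renorm} give the renormalization $g$ of $f^p$ on $K_0$, a cluster rational map with $J_g=\phi(\partial K_0)$ for a quasiconformal conjugacy $\phi$ on $\cbar$. Marking $g$ with a finite forward-invariant set $Q\supset P_g\cup\phi(P\cap K_0)$ — enlarged, if needed, by one boundary point of each Fatou domain of $g$ meeting $P_g$ and by the finitely many connection points fixed in the joining step below — makes $(g,Q)$ a marked rational map whose maximal Fatou chain is all of $\cbar$, so Theorem~\ref{thm:graph-maximal} yields a $g$-invariant graph $\wt G\subset J_g$ with $\wt G\cap Q=J_g\cap Q$ which separates any two points of $Q$ lying in distinct Fatou domains of $g$. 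Transporting $\wt G$ back by $\phi$, replacing it by the union of its first $p$ forward iterates, and then pulling that back along the appropriate branches of $f$ over the pre-periodic components of $\KKK$, produces an $f$-invariant graph $G_\KKK\subset\KKK$ with $G_\KKK\cap P=\KKK\cap P$ such that two points of $P$ separated by a component of $\KKK$ are separated by $G_\KKK$.

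\textbf{The \Sie\ part.} Let $V$ be an $f_{\#}$-periodic complex-type component of $\cbar\setminus\KKK$, of period $p$, and let $(g,Q_g)$ be the blow-up of $f^p:V_p\to V$, a marked \Sie\ rational map by Theorem~\ref{thm:cluster-exact}, together with the semiconjugacy $\pi$ and the backward-invariant continuum $K_g\supset J_g$ of Theorem~\ref{thm:blow-up}. By hypothesis $(g,Q_g)$ admits a $g$-invariant regulated graph $\wh G\supset Q_g$. Since $\wh G$ is regulated, inside each Fatou domain of $g$ it is a finite union of closed internal rays issuing from a single point; replacing those inside the Fatou domains contained in $K_g$ by suitable boundary arcs, as in Remark~\ref{rem:main}\,(2), and keeping the stars inside the complementary Jordan domains of $K_g$ (each centred at a point of $Q_g$), turns $\wh G$ into a graph whose trace on $K_g$ lies in $J_g$. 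Because $\pi$ is injective over $\bigcap_n V_n$, collapses $\ov D$ onto the corresponding complementary component of $\ov{V_n}$ for each complementary Jordan domain $D$ of $K_g$, and carries $J_g$ into $J_f$, the $\pi$-image of the part of this modified $\wh G$ that lies in $K_g$ over $\ov V$ is a finite graph $G_V\subset J_f\cap\ov V$ with $f^p(G_V)\subset G_V$ (using $f^p\circ\pi=\pi\circ g$ on $K_g$). Spreading $G_V$ over the $f_{\#}$-cycle of $V$ and pulling back over the pre-periodic complex-type cells yields an $f$-invariant graph $G_\VVV\subset J_f$; by Theorem~\ref{thm:blow-up}\,(3) and~(5), together with the fact that every point of $P\cap V$ lies in $\bigcap_n V_n$ (since $V\setminus V_p$ is disjoint from $P$ and $P$ is forward invariant), we have $P\cap\VVV\subset G_\VVV$.

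\textbf{Joining, conclusion, and the main obstacle.} At this stage $G_\KKK$ and $G_\VVV$ are $f$-invariant graphs in $J_f$, and between them they contain $P\cap J_f$; but their union need not be connected, because distinct cells of the decomposition are separated by components of $\AAA$, each an annulus disjoint from $P$ (Theorem~\ref{thm:cluster-exact0}). Now $f:\AAA_1\to\AAA$ is an annular sub-system, whose dynamics is completely understood~\cite{CPT2}; using this one can select, $f$-invariantly and compatibly with the $f_{\#}$-action on cells, a finite family of pairwise disjoint closed arcs inside $\ov\AAA$, each anchored at a connection point of $G_\KKK$ or $G_\VVV$ on the boundary of a cell and joining the graph of one cell to the graph of an adjacent one, so that $G:=G_\KKK\cup G_\VVV\cup(\text{these arcs})$ is a finite connected graph. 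Then $f(G)\subset G$, $G\subset J_f$ and $P\cap J_f\subset G$. Since $G$ is connected, every component $W$ of $\cbar\setminus G$ is a Jordan domain; every point of $P$ lying in $\KKK$ or in a complex-type cell lies in $G$, so $W\cap P$ can contain only the (unique) point of $P$ of some simple-type cell, and the inserted arcs together with $G_\KKK$ prevent $W$ from reaching two distinct simple-type cells, giving the proposition. The crux is exactly this joining step together with the last separation count: one must show the annular sub-system admits connecting arcs that are simultaneously pairwise disjoint, $f$-invariant, and pinned at prescribed boundary points of the cluster and \Sie\ cells — which forces the markings used in the two preceding steps to be enlarged, \emph{before} any local graph is built, to one globally coherent system of connection points compatible with $f_{\#}$ — and then one must verify that $G_\KKK$, $G_\VVV$ and the arcs jointly realise the tree-like adjacency pattern of the decomposition, so that each complementary Jordan domain of $G$ reaches only one $P$-carrying cell; this is precisely why the annular part of $\cbar\setminus\KKK$ cannot be ignored.
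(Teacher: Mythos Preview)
Your outline matches the paper's architecture, but the joining step you flag as ``the crux'' is where essentially all the work lies, and you have not carried it out. Two concrete points would make your argument fail as written.

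First, arcs across the annuli $A\in\textup{Comp}(\AAA)$ need neither be $f$-invariant nor lie in $J_f$: an annular-type component of $\cbar\setminus\KKK$ may well contain Fatou domains, and $f$ carries an annulus in $\AAA$ into another annulus only when $A$ avoids $f^{-1}(\KKK)$. The paper does \emph{not} find invariant arcs directly. Instead it fixes, \emph{before} building any cluster graph, an $f$-invariant finite set $Q\subset\KKK$ of anchor points (coming from the regulated graphs $G_g$ via $Y_\VVV$ together with assigned points on the periodic boundary pieces $\Lambda_*$), builds each $G_K$ as a skeleton of $K$ rel $P\cup Q$, and then picks an arbitrary arc $\gamma_A\subset A$ joining the two anchor points on $\partial A$. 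The resulting $G_0=\GGG_\EEE\cup\bigcup_A\gamma_A$ is not invariant; the point is that one can construct $G_1\subset f^{-1}(G_0)$ isotopic to $G_0$ rel $\cbar\setminus\AAA$ (by reassembling arcs through the annular pieces of $A\setminus f^{-1}(\KKK)$ and adjusting twists). Iterated lifting and Lemma~\ref{thm:isotopy} produce a limit $G'$; that $G'$ is $f$-invariant and lies in $J_f$ is then automatic, but that $G'$ is still a \emph{graph} (not a pinched continuum) requires a fiber analysis of the limiting quotient map using the nested annuli $\AAA_n$. None of this is supplied by the phrase ``the annular sub-system admits connecting arcs that are $f$-invariant''.

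Second, you dismiss $\SSS$ too quickly. A simple-type component of $\cbar\setminus\KKK$ may contain a point of $P\cap J_f$ (e.g.\ a repelling periodic point), and the proposition requires $P\cap J_f\subset G$. The paper's Part~III handles this separately: once $G'$ is built, each such marked point is joined to $G'$ by an $f^p$-invariant growing arc obtained from Proposition~\ref{lem:curve-to-arc}. Finally, in your \Sie\ step, replacing internal rays by ``boundary arcs as in Remark~\ref{rem:main}(2)'' is not what is needed: the paper replaces $\pi(G_g)\cap B$, for each complementary piece $B$ of some $V_n$, by a pullback of the already-built cluster graph $G_K$, so that $G_V$ and $G_\KKK$ mesh along $\partial V$. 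Your boundary-arc replacement would neither lie in $J_f$ nor be compatible with $G_\KKK$.
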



\begin{proof}[Proof of Theorem \ref{thm:main}]
	If $J_f=\cbar$, then Theorem \ref{thm:main} follows immediately from Theorem A. 
	
	Suppose that $J_f\neq \cbar$.  Let $\KKK,\VVV$ and $\VVV_1$ represent the sets specified in Theorem \ref{thm:cluster-exact}.
	For every $n\geq1$, the stable set $\KKK$ induces a cluster-\Sie\ decomposition of $(f^n,P)$. In particular, $f^n:\VVV_n\to\VVV$ is an exact sub-system of $(f^n,P)$, where $\VVV_n$ denotes the union of all complex-type components of $f^{-n}(\VVV)$.
	We will compare the  blow-ups of $f:\VVV_1\to\VVV$ and those of $f^n:\VVV_n\to\VVV$.
	
	Let $V$ be any $f_{ \#}$-periodic component of $\VVV$ with period $p$. Denote $(g, Q_g)$ as the  blow-up of the exact sub-system $f^p:V_p\to V$, where $V_p$ refers to the unique component of $\VVV_p$ contained in $V$. Fix any integer $n\geq 1$. Let $m=m(n, V)$ be the least common multiple of $n$ and $p$. Then the period of $V$ under $(f^n)_{ \#}$ is $m/{n}$. Moreover,  the blow-up of $f^n:\VVV_n\to\VVV$ associated with $V$ is the  blow-up of the exact sub-system $f^m:V_m\to V$ of $(f^m,P)$, which is exactly $(g^{m/p},Q_g)$.
	
	Since  $m(n, V)\to\infty$ as $n\to\infty$, it follows from Theorem B that each blow-up $(g^{m/p}, Q_g)$ of $f^n:\VVV_n\to\VVV$ admits a $g^{m/p}$-invariant and regulated graph passing through $Q_g$ for each sufficiently large integer $n$. Therefore, by applying Proposition \ref{pro:pre} to $(f^n, P)$ and $\KKK$, we obtain an $f^n$-invariant graph $G$ with all the properties stated in Theorem \ref{thm:main}.
\end{proof}

The standard spherical metric is denoted by $\sigma(z)|dz|$ with $\sigma(z)=1/(1+|z|^2)$. Without emphasis, the distance, diameter, convergence, etc., are all considered under the spherical metric. Thus, we use  simplified notations such as ${\rm dist}(\cdot,\cdot)$ and ${\rm diam}(\cdot)$ instead of ${\rm dist}_\sigma(\cdot,\cdot)$ and ${\rm diam}_\sigma(\cdot)$.

Another metric used in this paper is the \emph{orbifold metric} $\omega$ with respect to a   PCF  rational map. Its definition and properties are given in Appendix \ref{app:1}.
Under this metric, we typically use the \emph{homotopic length} $L_\omega[\cdot]$ and the \emph{homotopic diameter} $\text{H-diam}_\omega(\cdot)$ instead of the usual length and diameter for a smooth curve and a connected set in $\ov\C\setminus P_f$, respectively; see Appendix \ref{app:1} for their definitions and detailed discussions.

In Appendix \ref{app:2}, we introduce an isotopy lifting lemma under rational maps and a well-known convergence result for a sequence of isotopies obtained by lifting. Appendix \ref{app:3} includes three topological results related to local connectivity.

\subsection{Related work}\label{sec:work}
Theorem \ref{thm:cluster-exact} is closely related to Theorem C in a recent work \cite{DHS} by
Dudko, Hlushchanka and Schleicher. We first became aware of their work in 2022 from a slide
by Hlushchanka, by which time the main results of our paper had already been completed.

In our opinion, these two decomposition theorems are essentially the same, but with quite different formulations and approaches. 
In \cite{DHS}, the decomposition is by means of stable multicurves,
as done by Pilgrim in \cite{Pil}; while our decomposition directly utilizes stable sets. Netherless, both
of the starting points are the maximal Fatou chains (called \emph{maximal clusters} in \cite{DHS}). Another
relevant work can be found in \cite{CYY}.

Recently, several interesting results about PCF cluster maps were announced. For example, this type of map has a zero-entropy invariant graph containing $P_f$ (see \cite[Theorem B]{DHS}), and the Ahlfors-regular conformal
dimension of its Julia set is equal to one (see \cite[Theorem A]{P}).

D. Thurston posed a question (see \cite[Question 1.19]{Th}) regarding the identification of a preferred ``best'' spine of $\cbar\setminus P_f$ for a hyperbolic PCF cluster rational map. In this case, the invariant graph obtained in Theorem \ref{thm:graph-maximal}  appears to
 be a good candidate.

The existence of invariant graphs has also been studied beyond the rational case.  A {\bf Thurston map} is a PCF  branched covering on the $2$-sphere. Bonk and Meyer \cite{BM} proved that any \emph{expanding} Thurston map $f$ admits an $f^n$-invariant Jordan curve passing through all post-critical points for each sufficiently large integer $n$. More broadly, a Thurston map is \emph{B\"{o}ttcher expanding} if it has a certain ``expansion property'' near its Julia set (see \cite{BD2}). The dynamics of such maps is investigated in a series of works, including \cite{BD1,BD2,BM,FPP1,FPP2}. In particular, Floyd, Parry, and Pilgrim \cite{FPP2} showed that a suitable iterate of a {B\"{o}ttcher expanding} Thurston map admits an isotopy-invariant graph  containing all post-critical points.

Invariant graphs are extensively used in the study of the dynamics of PCF rational maps and Thurston maps. For example, Meyer \cite{M} investigated the unmating of PCF rational maps with empty Fatou sets using invariant Peano curves. Hlushchanka and Meyer employ the invariant Jordan curves from Theorems A and B to calculate the growth of iterated monodromy groups for certain PCF rational maps. Additionally, based on Theorem A, Li established the thermodynamic formalism \cite{Li1,Li2} and, in collaboration with Zheng, the prime orbit theorems \cite{LZ1,LZ2,LZ3} for expanding Thurston maps.

\subsection{Future directions}
First, a natural question arises regarding whether the iterate is strictly necessary in Theorem \ref{thm:main}. To address this question, we propose the following conjecture.

\begin{conjecture}\label{conj.1}
	 For any marked rational map $(f,P)$, Theorem \ref{thm:main} holds with $n=1$. In other words, there exists an $f$-invariant graph $G\subset J_f$ such that $P\cap J_f\subset G$ and each component of $\cbar\sm G$ contains at most one point of $P$.
\end{conjecture}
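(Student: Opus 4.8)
\medskip
\noindent\textit{Toward a proof of Conjecture \ref{conj.1}.}
The plan is to revisit the proof of Theorem \ref{thm:main} and isolate exactly where the passage to an iterate is used. If $J_f=\cbar$ one invokes Theorem~A, which is known only for large $n$; if $J_f\neq\cbar$ one applies Proposition \ref{pro:pre} to the cluster--\Sie\ decomposition of Theorem \ref{thm:cluster-exact}, and there the iterate enters \emph{only} to guarantee the hypothesis of Proposition \ref{pro:pre}, namely that each \Sie\ blow-up $(g,Q_g)$ of $f:\VVV_1\to\VVV$ carries a \emph{$g$-invariant} regulated graph containing $Q_g$: in the proof of Theorem \ref{thm:main} the integer $m(n,V)$ is chosen large so that Theorem~B applies to $g^{m/p}$, while for $n=1$ one has $m/p=1$, and everything else feeding Proposition \ref{pro:pre} --- the cluster pieces via Theorem \ref{thm:graph-maximal}, the invariant crossing arcs of the annular sub-system, the simple-type pieces --- is already exact in $n$. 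So Conjecture \ref{conj.1} would follow from two focused statements: (a) every marked rational map with $J_f=\cbar$ admits an $f$-invariant graph through $P$ with at most one point of $P$ in each complementary component (the $n=1$, graph-valued relaxation of Theorem~A); and (b) every marked \Sie\ rational map with $P$ off the Fatou boundaries admits an $f$-invariant regulated graph through $P$ (the $n=1$, graph-valued relaxation of Theorem~B). Given (a) and (b) the proof of Theorem \ref{thm:main} goes through verbatim with $n=1$; note also that the conjecture is already known when $f$ is a cluster rational map --- in particular for PCF polynomials --- since Theorem \ref{thm:graph-maximal} is exact in $n$.

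To attack (a) and (b) I would construct the invariant graph directly rather than as a limit of $f^n$-invariant curves. Fix a finite spine $G_0$ of $\cbar\sm P$ --- regulated, in case (b) --- replace it by its orbifold-geodesic representative in the orbifold metric of Appendix \ref{app:1}, and then iterate the operation that sends a graph $G$ isotopic rel $P$ to $G_0$ to the orbifold-geodesic representative of the suitable lift of $G$ under $f$, truncated so as to stay a finite regulated graph in the isotopy class of $G_0$. The hope is that this length-minimizing choice, together with the homotopic-length and homotopic-diameter estimates of Appendix \ref{app:1} and the Bonk--Meyer refinement mechanism of \cite{BM}, forces the sequence to stabilize or converge to an $f$-invariant graph; in case (b) the extra rigidity helps, since on each Fatou domain the graph is a union of internal rays and the internal-ray dynamics of $g$ is a rotation of B\"{o}ttcher coordinates, so that only the carpet part is genuinely delicate.

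The hard part --- and the reason the statement is still a conjecture --- is precisely this single-step control. For large $n$ the iterate $f^n$ is ``expanding enough'' that one lift already absorbs any defect and the limit is invariant, whereas for $n=1$ the lifted graph can acquire new edges or leave the isotopy class of $G_0$, and there is at present no known canonical $f$-invariant graph for the process to converge to; producing such a canonical object is essentially D.~Thurston's ``best spine'' question recalled in \S\ref{sec:work}. Any proof of Conjecture \ref{conj.1} would therefore have to supply either a genuinely new, intrinsically $f$-invariant construction of the graph or a combinatorial argument showing the $n=1$ lift stabilizes; I would expect the hyperbolic case with $P=P_f$, where no critical point lies in the Julia set and the lift dynamics is cleanest, to be the place to begin.
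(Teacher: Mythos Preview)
Your proposal is not a proof, and appropriately so: the statement is explicitly labeled a conjecture in the paper and is not proved there either. What the paper offers is a reduction, and your reduction to (a) and (b) matches it exactly --- the paper states that by Proposition~\ref{pro:pre} the conjecture follows once every marked rational map $(g,Q)$ with Julia set equal to the sphere or to a Sierpi\'nski carpet admits a $g$-invariant regulated graph containing $Q$.

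The paper does go one step further than you do, however. It observes that case (b) also reduces to case (a): a PCF Sierpi\'nski map $g$ descends to an expanding Thurston map $F$ by collapsing each closed Fatou domain to a point, and any $F$-invariant graph lifts to a regulated $g$-invariant graph (via \cite[Sections~5 and~6]{GHMZ}). Thus both (a) and (b) are consequences of a single statement about expanding Thurston maps --- that every marked expanding Thurston map $(F,Q)$ admits an $F$-invariant graph containing $Q$ --- which the paper records as Conjecture~1.2 and attributes to Bonk--Meyer \cite[Problem~2]{BM}. You should incorporate this: it collapses your two targets into one, and it situates the problem in the existing literature.

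Your proposed attack via orbifold-geodesic representatives and iterated lifting is plausible-sounding but entirely speculative; the paper does not discuss it, and your own final paragraph correctly identifies why it does not obviously work. In particular, there is no known reason the geodesic-tightening iteration should stabilize under a single pullback, and the ``best spine'' question of D.~Thurston that you invoke is itself open. So as written this is a roadmap with the correct reduction and an honest assessment of the obstacle, not a proof.
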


According to Proposition \ref{pro:pre},  this conjecture is true if we can confirm that any marked rational map $(g,Q)$ with its Julia set equal to  either the sphere or the \Sie\ carpet admits a $g$-invariant and  regulated graph containing $Q$. 

Every PCF rational map with the Julia set equal to $\cbar$ is an expanding Thurston map. In addition, each PCF \Sie\ rational map $f$ can  descend to an expanding Thurston map $F$ by collapsing the closure of each Fatou domain to a point, and any graph in the $F$-plane can be lifted to a  regulated graph for $f$; see \cite[Sections 5 and 6]{GHMZ}. 
Therefore, Conjecture \ref{conj.1} is implicated by the following conjecture,  which appeared in \cite[Problem 2]{BM}. 

\begin{conjecture}
	For any marked expanding Thurston map $(F,Q)$, there exists an $F$-invariant graph  containing $Q$. 
\end{conjecture}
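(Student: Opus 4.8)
The natural first attempt is to build such a graph from the Bonk--Meyer theorem. In its marked form---for $J_F=\cbar$ this is Theorem~A applied to $(F,Q)$, and in general it is the corresponding result of \cite{BM} applied to the marked map $(F^n,Q)$---it provides, for every sufficiently large $n$, an $F^n$-invariant Jordan curve $\mathcal{C}$ with $Q\subset\mathcal{C}$ separating the points of $Q$ as required. Given such a $\mathcal{C}$, put $G=\bigcup_{j=0}^{n-1}F^j(\mathcal{C})$. Then $F(G)=\bigcup_{j=1}^{n}F^j(\mathcal{C})\subset G$ because $F^n(\mathcal{C})\subset\mathcal{C}$; we have $Q\subset\mathcal{C}\subset G$; and $G$ is connected, since each $F^j(\mathcal{C})$ meets $\mathcal{C}$ in the nonempty set $F^j(Q)\subset Q\subset\mathcal{C}$. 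Each piece $F^j(\mathcal{C})$ is a subcontinuum of $F^{-(n-j)}(\mathcal{C})$, and the preimage of a Jordan curve under a branched covering of $\cbar$ is a finite graph (use the local model $z\mapsto z^k$ at the critical points together with a compactness argument to bound the number of edges), so each $F^j(\mathcal{C})$ is itself a finite graph and $\dim G\le 1$. Hence $G$ is an $F$-invariant continuum of dimension at most one containing $Q$. The single point this argument does not control is whether $G$ has \emph{finitely many} branch points: since $\mathcal{C}$ is only $F^n$-invariant and not $F$-invariant, the pieces $F^j(\mathcal{C})$ belong to distinct cell decompositions of $\cbar$ and may meet one another in an infinite (e.g.\ Cantor) set, in which case $G$ is not a finite graph. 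This is precisely the gap between the graph problem and the Jordan-curve problem, the latter of which remains open for $n=1$, and it is the crux of the difficulty.

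To remedy this one should not union translates of a single curve but construct the graph inside a single cell decomposition. Fix $n$ large and the $F^n$-invariant curve $\mathcal{C}$; since $\mathcal{C}\subset F^{-n}(\mathcal{C})$, $\mathcal{C}$ is a subcomplex of the level-$n$ decomposition $\mathcal{D}$ it determines, and $F$ carries level-$k$ cells of $\mathcal{D}$ to level-$(k-1)$ cells. The goal is to select a connected subgraph $G$ of the $1$-skeleton of $\mathcal{D}$ (or of a common refinement of finitely many such $1$-skeleta) with $Q\subset G$, with the prescribed separation of $Q$, of bounded combinatorial complexity, and with $F(G)\subset G$. The natural device is the isotopy-lifting and convergence machinery of Appendix~\ref{app:2} together with the expansion of $F$: begin from a finite graph $G_0\supset Q$ realizing the separation, combined with the isotopy-invariant graph supplied for a suitable iterate of $F$ by Floyd, Parry, and Pilgrim \cite{FPP2} (marked so as to contain $Q$); lift the witnessing isotopy repeatedly, use expansion to force the lifted graphs to converge in the Hausdorff metric, and identify the limit as an $F$-invariant graph containing $Q$. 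A complementary, purely combinatorial route would pass to the automaton or substitution model of the expanding Thurston map and phrase the existence of a bounded-complexity invariant subgraph as a fixed-point problem for the pullback operation on finite graphs modulo isotopy.

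In every version of the plan the main obstacle is the same: uniform control of combinatorial complexity under iterated pullback. A priori the invariant object one extracts is merely a continuum of dimension at most one, and ruling out infinitely many branch points is exactly the difficulty, as the failed union construction above makes explicit---one must show that the additional branch points created at each pullback step either cancel or eventually stabilize, rather than proliferating in the limit. Once the conjecture is known for all marked expanding Thurston maps, the $n=1$ case of Theorem~\ref{thm:main} follows as indicated before the statement: PCF rational maps with $J_f=\cbar$ are themselves expanding Thurston maps, each blow-up of the exact sub-system of a decomposable map is a PCF \Sie\ rational map that descends to an expanding Thurston map whose invariant graph lifts to a $g$-invariant regulated graph \cite[Sections~5 and 6]{GHMZ}, and feeding these into Proposition~\ref{pro:pre} yields Conjecture~\ref{conj.1}.
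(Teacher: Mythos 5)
The statement you are addressing is not a theorem of the paper but an open conjecture (it is a restatement of \cite[Problem 2]{BM}), and the paper offers no proof of it; it is only used as a sufficient condition, via Proposition \ref{pro:pre} and the descent procedure of \cite{GHMZ}, for the $n=1$ strengthening in Conjecture \ref{conj.1}. Your text, read as a proof, does not establish the statement, and you say so yourself: both of your constructions stop exactly at the point that makes the problem open. In the union construction $G=\bigcup_{j=0}^{n-1}F^j(\mathcal{C})$, each piece is indeed a finite graph (being a subcontinuum of $F^{-(n-j)}(\mathcal{C})$, and preimages of a Jordan curve under a Thurston map are finite $1$-complexes), but since $\mathcal{C}$ is only $F^n$-invariant the pieces are subcomplexes of mutually unrelated cell decompositions, and nothing prevents $F^i(\mathcal{C})\cap F^j(\mathcal{C})$ from being, say, a Cantor set, so $G$ need not have finitely many branch points and need not be a graph. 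In the isotopy-lifting construction, expansion together with the convergence statement of Appendix \ref{app:2} only produces an $F$-invariant \emph{continuum} as the image of a graph under a limiting quotient map; to conclude that this image is again a finite graph one needs fiber control of the kind the paper proves in Proposition \ref{prop:convergence} for rational maps, where the argument leans on circle-trees, the lamination of a fixed Fatou domain, and the orbifold metric --- structures that have no direct analogue for a general marked expanding Thurston map. So the gap you name (uniform control of combinatorial complexity, i.e.\ stabilization rather than proliferation of branch points under iterated pullback) is the genuine missing ingredient, not a detail to be routinely filled.

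Two smaller cautions if you pursue this further. First, the marked form of the Bonk--Meyer theorem you invoke for an arbitrary invariant marked set $Q\supset P_F$ should be quoted carefully: Theorem A in this paper is stated for marked \emph{rational} maps with $J_f=\cbar$, and for general expanding Thurston maps the Jordan-curve theorem of \cite{BM} is formulated for the post-critical set, so the extension to marked points needs its own justification. Second, your closing paragraph correctly records the implication (conjecture $\Rightarrow$ Conjecture \ref{conj.1}) already explained in the paper, but that implication of course does nothing toward proving the conjecture itself.
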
 

Another direction concerns the renormalizability of a rational map on stable sets. 
A classical result by McMullen asserts that any rational map is renormalizable on each of its fixed Julia components \cite[Theorem 3.4]{Mc2}.
It is worth noting that every fixed Julia component is a specific connected stable set. On the other hand, Theorem \ref{thm:renorm} shows that if the rational map is PCF, then it is renormalizable on any connected stable set, due to the expansion property near the Julia set.
\begin{question}
	Is every rational map renormalizable on any connected stable set or on any fixed maximal Fatou chain of the map?
\end{question}

The next direction examines the invariant graphs derived from Theorem \ref{thm:main} from the perspective of entropy.
According to W. Thurston, the \emph{core entropy} of a polynomial is  the topological entropy on its Hubbard tree, which is a very useful tool for studying the bifurcation locus of polynomials \cite{GT,Th+,Ti1,Ti2}. However, there exists currently no definition for the core entropy of a rational map.

Consider a marked rational map $(f,P_f)$, and let $\GG$ denote the collection of all graphs obtained in Theorem \ref{thm:main}. For polynomials, the topological entropy of $f$ on the graphs in $\GG$ remains constant, which equals the maximum of the core entropy of $f$ and $\log d_{ U}/p_{ U}$ for all periodic Fatou domains $U$, where $p_{ U}$ denotes the period of $U$ and $d_{ U}$ denotes the degree of $f^{p_{ U}}:U\to U$.
Based on this observation, a potential candidate for the core entropy of $f$ is given by
\[h(f)=\inf_{G\in\GG}\{h_{top}(f^n|_{G})/n:f^n(G)\subset G,n\geq1\},\]
where $h_{top}(f^n|_{G})$ denotes the topological entropy of $f^n:G\to G$. Indeed,  a motivation for us to construct invariant graphs within the Julia set is to define the core entropy of a rational map.

Additionally, when $f$ is a polynomial, the graphs in $\GG$ are isotopic rel $P_f$ under some natural restrictions. However, in the general case, the elements of $\GG$ are far from unique up to isotopy. Therefore, it is important to seek invariant graphs with canonical conditions. From the perspective of entropy, we may ask
\begin{question}
	 Is there a $($unique$)$ $f^n$-invariant graph $G\in\GG$ such that $h(f)=h_{top}(f^n|_{G})/n$\,?
	 \end{question}

The final direction is to generalize Theorem \ref{thm:main} to the non-rational case, specifically to B\"{o}ttcher expanding Thurston maps as mentioned in Section \ref{sec:work}.
These maps also have  Julia and Fatou sets and share several similarities with PCF rational maps. Hence, it is plausible to expect that Theorem \ref{thm:main} applies to B\"{o}ttcher expanding Thurston maps as well.
\begin{question}
Do $($any of\,$)$ the theorems listed in the Introduction still hold for B\"{o}ttcher expanding Thurston maps after appropriate revisions?
\end{question}

\vskip 0.3cm

\noindent {\bf Acknowledgements.} The authors are grateful for insightful discussions with Zhiqiang Li, Xiaoguang Wang, Yunping Jiang, Dylan Thurston, and Luxian Yang. The first author is supported by the National Key R\&D Program of China (Grant no.\,2021YFA1003203)
and the NSFC (Grant nos.\,12131016 and 12071303).
The second author is supported by the NSFC (Grant no.\,12322104) and the NSFGD (Grant no.\,2023A1515010058).
The third author is supported by the NSFC (Grant no.\,12271115).

\section{Invariant graphs associated with fixed Fatou domains}\label{sec:2}
In this section, we study the dynamics of a rational map $f$ on the boundary of a fixed Fatou domain $U$ of $f$. We begin by examining the mapping behavior of $f$ on $\partial U$. Next, we construct an invariant continuum on $\partial U$ with nice topological properties, called the \emph{circle-tree}.  Finally, we present the proof of Theorem \ref{thm:local}.

\subsection{Circle-trees}
Let $U\subset\cbar$ be a simply connected domain such that $T_0:=\partial U$ is a locally connected continuum. The following lemma is classical (see \cite[Chapter 2]{DH2}). In this paper,  a {\bf circle}  means a Jordan curve, and a {\bf disk} means a Jordan domain in $\cbar$. An {\bf arc} is a continuous injective map from $[0,1]$ into $\cbar$, and its restriction to $(0,1)$ is called an {\bf open arc}. 

\begin{lemma}\label{lem:topology}
The following statements hold:
\begin{enumerate}
\item  Both $T_0$ and $\cbar\sm U$ are arcwise connected;
\item  All components of $\cbar\sm\ov{U}$ are disks, whose diameters converge to zero;
\item  Each circle $C\subset T_0$ is the boundary of a component of $\cbar\sm\ov{U}$.
\end{enumerate}
\end{lemma}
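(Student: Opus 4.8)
The plan is to pull the picture back to the closed unit disk via a Riemann map and Carath\'eodory's boundary extension. Since $T_0=\partial U$ is a nondegenerate continuum we have $U\neq\cbar$, so there is a conformal isomorphism $\psi\colon\D\to U$; as $T_0$ is locally connected, Carath\'eodory's theorem gives a continuous surjection $\overline\psi\colon\overline\D\to\overline U$ extending $\psi$, with $\overline\psi(\T)=T_0$. Then every internal ray $R_t:=\psi([0,t))$ lands at $\gamma(t):=\overline\psi(t)$, the rays are pairwise disjoint, and their landing pattern is \emph{unlinked}: whenever $\gamma(a)=\gamma(a')$, $\gamma(b)=\gamma(b')$ and $\{a,a'\}\neq\{b,b'\}$, the chords $[a,a']$ and $[b,b']$ of $\overline\D$ do not cross. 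This follows by applying the Jordan curve theorem to the loop $\overline{R_a}\cup\overline{R_{a'}}$ (a Jordan curve through $\psi(0)$ and $\gamma(a)$) together with the connectedness of $U=\psi(\D)$, which must lie in a single one of the two complementary Jordan domains of that loop. Thus $\overline U=\overline\psi(\overline\D)$ is a ``pinched disk'': the image of $\overline\D$ under a map injective on $\D$ that collapses a closed, unlinked, interior-free equivalence relation on $\T$.

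The heart of the proof is (2), which I would approach as follows. Fix a component $V$ of $\cbar\setminus\overline U$. First, $V$ is simply connected: $\cbar\setminus V=\overline U\cup\bigcup_{V'\neq V}\overline{V'}$ is connected, since each $\overline{V'}$ meets the connected set $\overline U$ along $\partial V'\subseteq T_0$. Next, $\overline U=\overline\psi(\overline\D)$ is a Peano continuum, so the boundary $\partial V$ of the complementary component $V$ is a locally connected continuum; moreover $\operatorname{int}\overline V=V$ (a point of $\operatorname{int}\overline V\setminus V$ would lie in $\partial V\subseteq\partial U$ and yet have a neighbourhood disjoint from $U$), hence $\partial V=\partial\overline V=\partial(\cbar\setminus\overline V)$, and since every point of $\partial V\subseteq\partial U$ is a limit of points of $U\subseteq\cbar\setminus\overline V$, the continuum $\partial V$ is the common boundary of the two disjoint domains $V$ and $\cbar\setminus\overline V$. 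By the classical fact that a locally connected continuum whose complement has exactly two components, each with the continuum as its full boundary, is a Jordan curve, it then suffices to prove that $\cbar\setminus\overline V$ is connected. This is where I would bring in the unlinked ray structure: one shows that $\overline\psi^{-1}(\partial V)$ is a single closed sub-arc $A=[a,b]$ of $\T$ (or all of $\T$), so that $\partial V=\gamma(A)$ and $\overline{R_a}\cup\overline{R_b}$ is a Jordan curve separating $\overline V$ from $U$; then $\cbar\setminus\overline V$ is the union of the connected set $U$ with the $\psi$-image of the complementary sector of $\overline\D$ and of the components of $\cbar\setminus\overline U$ nested inside it, and one checks this is connected. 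Granting this, $V$ is a disk. The same circle of ideas gives (3): for a circle $C\subseteq T_0$, the connected set $U$ misses $C$, hence lies in one component $D$ of $\cbar\setminus C$; the other component $D'$ is disjoint from $\overline U$ (a point of $D'\cap T_0$ would have the open neighbourhood $D'$ meeting $U$), so $D'$ lies in a single component $V$ of $\cbar\setminus\overline U$; since $V$ misses $C$ and is connected, $V\subseteq D'$, whence $V=D'$ and $\partial V=C$.

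The genuine obstacle, I expect, is the claim that $\overline\psi^{-1}(\partial V)$ is a single arc — equivalently, that a component of $\cbar\setminus\overline U$ cannot be approached from $U$ through two separated bundles of internal rays. This is exactly the place where one must use that $U$ is \emph{connected}, not merely that $\partial U$ is locally connected: without that hypothesis, a component with figure-eight boundary — such as the unbounded complementary region of two externally tangent closed disks — could not be excluded. I anticipate that ruling this out requires a careful separation argument keeping track of on which sides of the ray bundles the connected set $U$ must lie. Once (2) and (3) are established, the rest is routine. For the diameters in (2): a standard Koebe-type estimate from the uniform continuity of $\overline\psi$ shows that a short sub-arc of $\T$ maps to a set of small diameter that cuts off a piece of $\overline U$ of small diameter; since only finitely many pairwise disjoint sub-arcs of $\T$ can have length bounded below, all but finitely many components $V$ satisfy $\partial V=\gamma(A_V)$ with $A_V$ short, so $\operatorname{diam}V\to0$. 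Finally, for (1): $T_0=\gamma(\T)$ is a locally connected metric continuum, hence arcwise connected; and $\cbar\setminus U=T_0\cup\bigcup_i\overline{V_i}$, where each $\overline{V_i}$ is a closed disk meeting $T_0$ along $\partial V_i\neq\emptyset$, so any two of its points can be joined by concatenating — and then thinning to an arc — a path inside some $\overline{V_i}$, a path in $T_0$, and a path inside another $\overline{V_j}$.
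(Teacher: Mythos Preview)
The paper does not prove this lemma; it is stated as classical with a pointer to the Orsay notes \cite{DH2}, so there is no argument in the paper to compare against. Your outline is sound: parts (1) and (3) are routine as you say, and for (2) the reduction --- $V$ simply connected, $\partial V$ locally connected by Torhorst, then invoke the converse of the Jordan curve theorem once $\cbar\setminus\overline V$ is known to be connected --- is correct.

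You are also right that the connectedness of $\cbar\setminus\overline V$ is the one genuine issue, and your single-arc claim $\overline\psi^{-1}(\partial V)=[a,b]\subset\T$ would settle it. That claim is true, but here is a route that sidesteps it and avoids the separation bookkeeping you anticipate. Work with a Riemann map for $V$ rather than for $U$: since $V$ is simply connected with locally connected boundary, let $\phi:\D\to V$ extend to $\overline\phi:\overline\D\to\overline V$, and show $\overline\phi|_{\T}$ is injective. If $\overline\phi(a)=\overline\phi(b)=p$ with $a\neq b$, the two internal rays of $V$ landing at $p$ form a Jordan curve $J\subset\overline V$. The connected set $U$ is disjoint from $\overline V\supset J$, hence lies in one side $D_1$ of $J$; since every point of $\partial V\setminus\{p\}\subset\partial U$ is a limit of points of $U\subset D_1$ and avoids $J$, we get $\partial V\setminus\{p\}\subset D_1$. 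But a neighbourhood of $\phi(0)\in J$ lies in $V$ and meets $D_2$, so one of the two sectors of $V$ cut out by $J$ lies in $D_2$; hence $\overline\phi$ sends the corresponding nondegenerate open arc of $\T$ into $\partial V\cap\overline{D_2}=\{p\}$, contradicting the F.\ and M.\ Riesz theorem. This shows $\partial V$ is a Jordan curve directly. For the diameters, rather than tracking arcs $A_V\subset\T$, just note that $\overline U=\overline\psi(\overline\D)$ is a locally connected continuum, so by the standard equivalence (e.g.\ \cite[Lemma~19.5]{Mi1}, also quoted in the paper's appendix) its complementary components automatically have diameters tending to zero.
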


\begin{lemma}\label{lem:circle}
Let $C\subset T_0$ be a circle. If $E\subset T_0$ is a continuum, then $C\cap E$ is connected. If $C'\neq C$ is also a circle in $T_0$, then $\#(C\cap C')\le 1$.
\end{lemma}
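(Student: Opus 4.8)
The plan is to prove the first assertion by reducing, via the Schoenflies theorem, to a statement about a continuum inside a closed disk, and then to deduce the second assertion from the first.

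\emph{Setup for the first assertion.} We may assume $C\cap E$ has at least two points, as otherwise it is trivially connected. By Lemma~\ref{lem:topology}, $C=\partial W$ for a component $W$ of $\cbar\sm\ov U$, and $W$ is a disk, so $V:=\cbar\sm\ov W$ is also a Jordan domain with $\partial V=C$. Since $U$ is connected and disjoint from both $W$ (a component of $\cbar\sm\ov U$) and $C\subset T_0=\partial U$, we get $U\subset V$, and likewise $E\subset\ov V$. Fix a homeomorphism $h\colon\ov V\to\ov\D$ with $h(C)=\partial\D$, and set $F:=h(E)$, a continuum in $\ov\D$. As $F\cap\partial\D=h(C\cap E)$, the set $C\cap E$ is connected if and only if $F\cap\partial\D$ is, so it suffices to prove the latter.

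\emph{Key step.} The domain $h(U)$ is connected and disjoint from $F$, hence lies in a single component $O$ of $\D\sm F$; then $\ov O\supset\ov{h(U)}=h(\ov U)\supset h(T_0)\supset h(C)=\partial\D$, so $\partial\D\subset\ov O$. I claim this forces $F\cap\partial\D$ to be connected. Suppose not; then $\partial\D\sm F$ has at least two components (a closed subset of a circle is connected iff its complement is), so we may pick $x,y$ in distinct ones. Since $x,y\notin F$ and $x,y\in\partial\D\subset\ov O$, a small half-disk of $\D$ abutting $x$ (resp.\ $y$) is disjoint from $F$ and therefore contained in $O$; joining these two half-disks by a path in the open connected set $O$ and passing to a subarc, we obtain a crosscut $\eta$ of $\D$ from $x$ to $y$ with $\eta\sm\{x,y\}\subset\D$ and $\eta\cap F=\emptyset$. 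Now $\eta$ divides $\D$ into two Jordan domains $D_+,D_-$ with $\partial D_\pm=\eta\cup\Gamma_\pm$, where $\Gamma_+,\Gamma_-$ are the two arcs of $\partial\D$ joining $x$ to $y$. Because $x,y$ lie in distinct components of $\partial\D\sm F$, each $\Gamma_\pm$ contains a point $f_\pm\in F$, necessarily distinct from $x$ and $y$, so that $f_+\in\ov{D_+}\sm\ov{D_-}$ and $f_-\in\ov{D_-}\sm\ov{D_+}$. Since $F$ is a continuum contained in $\ov\D=\ov{D_+}\cup\ov{D_-}$ that meets both $\ov{D_+}\sm\ov{D_-}$ and $\ov{D_-}\sm\ov{D_+}$, it must meet $\ov{D_+}\cap\ov{D_-}=\eta$, contradicting $\eta\cap F=\emptyset$. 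This crosscut argument, which genuinely uses planar topology (the Jordan curve theorem for crosscuts), is the only non-routine point; everything else is point-set bookkeeping about boundaries of components of open sets and the local structure of a Jordan domain.

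\emph{The second assertion.} Since $C'$ is a continuum contained in $T_0$, the first assertion gives that $C\cap C'$ is a connected closed subset of the circle $C$, i.e.\ it is empty, a single point, a proper closed subarc, or all of $C$. If $C\cap C'=C$ then $C\subset C'$, which forces $C=C'$ and is excluded; so if $\#(C\cap C')\ge2$ then $C\cap C'$ is a proper closed subarc $\alpha$ of $C$ with endpoints $x\ne y$. Likewise $\alpha\subsetneq C'$, so $\beta$, the closure of the arc $C'\sm\alpha$, is a closed subarc of $C'$ with endpoints $x,y$ whose interior $\beta\sm\{x,y\}$ misses $C$. As $\beta\sm\{x,y\}\subset C'\subset T_0$ is disjoint from $W$, it lies in $V$, so $\beta$ is a crosscut of $V$ and divides $V$ into Jordan domains $V_1,V_2$ with $\partial V_i=\beta\cup a_i$, where $a_1,a_2$ are the two subarcs of $C$ joining $x$ to $y$. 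Since $U\subset V$ is connected and disjoint from $\beta\sm\{x,y\}\subset T_0$, it lies in some $V_i$; then $C\subset T_0=\partial U\subset\ov{V_i}=V_i\cup\beta\cup a_i$, and since $C\cap V_i=\emptyset$ and $C\cap\beta\subset\{x,y\}\subset a_i$, this yields $C\subset a_i\subsetneq C$, a contradiction. Hence $\#(C\cap C')\le1$.
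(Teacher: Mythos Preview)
Your proof is correct. For the first assertion your argument is essentially the paper's: both pick $x,y$ in distinct components of $C\setminus E$ and separate $E$ by an arc through (the side containing) $U$; the paper closes this arc into a Jordan curve using a second arc through the opposite Jordan domain $D=W$, while you instead observe $E\subset\ov V$ and run the separation in the closed disk. The Schoenflies reduction to $\ov{\D}$ is an unnecessary detour---the same crosscut argument works verbatim in $\ov V$---but it does no harm. For the second assertion your route is genuinely different: the paper simply notes that an interior point of a nondegenerate arc $\gamma\subset C\cap C'$ has one side in $W$ and the other in $W'$ (both components of $\cbar\setminus\ov U$), hence is an exterior point of $U$, contradicting $\gamma\subset\partial U$. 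Your crosscut argument with the complementary arc $\beta=\ov{C'\setminus\alpha}$ is longer but equally valid and avoids the local two-sided analysis; the paper's version is quicker once one sees why both sides of $\gamma$ miss $U$.
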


\begin{proof}
Suppose, to the contrary, that $C\cap E$ is disconnected. Then $C\sm E$ has at least two components. Let $x$ and $y$ be two points  contained in two distinct components of $C\sm E$, respectively. Let $D$ be the component of $\cbar\sm C$ disjoint from $U$. Then there exist open arcs $\alpha\subset U$ and $\beta\subset D$, both joining the points $x$ and $y$. Now, $\alpha\cup\beta\cup\{x,y\}$ is a Jordan curve disjoint from $E$, and both of its complementary components intersect $E$. This contradicts  the connectivity of $E$.

 Suppose $C'\neq C$ is also a circle in  $T_0$. Then $I=C\cap C'$ is connected by the above discussion. If $I$ contains at least two points, then it contains an open arc $\gamma$. This implies that each point in $\g$ is an exterior point of $U$, which contradicts the fact that $\g\subset C\subset \partial U$.
\end{proof}

Motivated by the above results, we consider circles in $T_0$ as entire entities when discussing subsets of $T_0$.

\begin{definition}
A continuum $T\subset T_0$ is called a {\bf circle-tree} of $T_0$ if, for any circle $C\subset T_0$, either $C\subset T$ or $\#(C\cap T)\le 1$.
\end{definition}

Let $T$ be a circle-tree of $T_0$. A point $x\in T$ is a {\bf cut point} of $T$ if $T\sm\{x\}$ is disconnected. A circle $C\subset T$ is an {\bf end circle} of $T$ if $C$ contains at most one cut point of $T$. A point $x\in T$ is an {\bf endpoint} of $T$ if it is neither contained in a circle in $T$ nor a cut point. By an {\bf end}, we mean an endpoint or an end circle. We call $T$ a {\bf finite circle-tree} if $T$ has  finitely many ends.

In order to study circle-trees and their topology, one useful tool is the geodesic lamination introduced by W. Thurston. Let $\D$ denote the unit disk. Then there exists a conformal map $\phi: \C\sm\ov{\D}\to U$, which can be extended continuously to the boundary. For each point $x\in T_0$, denote by $H_{x}$ the convex hull within $\ov{\D}$ of $\phi^{-1}(x)$ under the Poincar\`{e} metric on $\D$. The basic observation of lamination theory is
$$
H_{x}\cap H_y=\emptyset\quad\text{ if }x\neq y.
$$
Note that $\partial H_x\cap\D$ consists of geodesics if it is non-empty. The {\bf lamination} $\LLL_{ U}$ induced by $U$ is defined as the union of all such geodesics, which are called {\bf leaves}. Then $\LLL_{U}$ is closed in $\D$, and the closure of a component of $\D\sm\LLL_{U}$ is a {\bf gap} of $\LLL_{U}$.

\begin{lemma}\label{lem:lamination}
Assume that $U$ is not a disk. Then the following statements hold:
\begin{enumerate}
\item For each gap $A$ of $\LLL_{U}$, $\phi(A\cap\partial\D)$ is either a point or a circle. Conversely, for any circle $C\subset T_0$, there exists a unique gap $A$ such that $\phi(A\cap\partial\D)=C$. Moreover, $C$ is an end circle of $T_0$ if and only if $A\cap\partial\D$ is connected.

\item A point $x\in T_0$ is an endpoint if and only if $\#\phi^{-1}(x)=1$, and there exists a sequence of leaves $\{L_n\}$ in $ \LLL_{U}$ converging to $\phi^{-1}(x)$, such that $L_{n}$ separates $L_{n-1}$ from $L_{n+1}$.

\item Let $x\in T_0$ be a point, and let $I_0$ be a component of $\partial\D\sm\phi^{-1}(x)$. Then either $\phi(\ov{I_0})$ is an end circle, or $\phi(I_0)$ contains an end.

\item Let $C\subset T_0$ be a circle, and let $I_0$ be a component of $\partial\D\sm\phi^{-1}(C)$. Then either $\phi(\ov{I_0})$ is an end circle, or $\phi(I_0)$ contains an end.
\end{enumerate}
\end{lemma}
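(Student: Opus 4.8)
\textbf{Proof strategy for Lemma \ref{lem:lamination}.}

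The plan is to prove the four statements by exploiting the one-to-one correspondence between the combinatorial structure of the lamination $\LLL_U$ in $\ov\D$ and the topological structure of $T_0=\partial U$ transported by the boundary extension $\phi$. Throughout I would keep in mind that $\phi$ identifies exactly the pairs of boundary points that are endpoints of a common leaf or belong to a common gap, and that a gap $A$ has $\phi(A\cap\partial\D)$ equal to a single point precisely when $A\cap\partial\D$ is a Cantor set arising from a single fiber $\phi^{-1}(x)$; otherwise $\phi(A\cap\partial\D)$ will be shown to be a circle. For (1), I would argue as follows. Given a gap $A$, the set $A\cap\partial\D$ is closed; if it is a single fiber then $\phi(A\cap\partial\D)$ is a point; otherwise, using Lemma \ref{lem:topology}(3) and Lemma \ref{lem:circle}, the only continua in $T_0$ that are not ``tree-like'' are circles, and $\phi(A\cap\partial\D)$ is a continuum each of whose points has a nontrivial fiber (the two sides of the gap), which forces it to be a circle bounding a component of $\cbar\sm\ov U$, namely the component $\phi$ ``spans'' across $A$. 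Conversely, given a circle $C\subset T_0$, which by Lemma \ref{lem:topology}(3) bounds a component $D$ of $\cbar\sm\ov U$, the preimage $\phi^{-1}(C)$ is closed in $\partial\D$ and the convex hulls $H_x$ for $x\in C$ together with the complementary leaves bound a single gap $A$ with $\phi(A\cap\partial\D)=C$; uniqueness follows since two distinct gaps meet $\partial\D$ in disjoint sets while a circle is connected and (by Lemma \ref{lem:circle}) meets any other circle in at most a point. The end-circle criterion: $C$ is an end circle iff it contains at most one cut point of $T_0$; a cut point of $T_0$ on $C$ corresponds to a point of $C$ whose fiber disconnects $\partial\D\sm(A\cap\partial\D)$ into more than the ``expected'' pieces, which happens exactly when $A\cap\partial\D$ is disconnected — so $C$ is an end circle iff $A\cap\partial\D$ is connected (a single arc's worth of boundary, i.e.\ $\partial\D$ minus one open arc).

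For (2), the forward direction: if $x$ is an endpoint of $T_0$ (not on any circle, not a cut point), then $x$ lies in no gap that maps to a circle, and $x$ cannot be an endpoint of a leaf whose other endpoint has nontrivial fiber without creating a cut point or forcing $x$ onto a circle — I would show $\#\phi^{-1}(x)=1$, say $\phi^{-1}(x)=\{\zeta\}$, and then the accessibility of $x$ only from ``one direction'' yields a nested sequence of leaves $L_n\to\zeta$ with $L_n$ separating $L_{n-1}$ from $L_{n+1}$; the separation property is the lamination translation of ``$x$ is a limit of branch structure approaching from a single side.'' The converse is the easier direction: if $\#\phi^{-1}(x)=1$ and such a nested sequence of leaves exists accumulating only at $\zeta$, then $x$ is not on a circle (it would have a fiber that is a whole arc's worth) and removing $x$ cannot disconnect $T_0$ because the nested leaves show $T_0$ locally ``hangs off'' $x$ on one side only. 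Here I would lean on local connectivity of $T_0$ (Lemma \ref{lem:topology}) to pass between ``small connected neighborhoods in $T_0$'' and ``small arcs in $\partial\D$.''

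Statements (3) and (4) are the two ``every complementary arc sees an end'' claims, and they are proved by the same dichotomy, so I would do (3) in detail and note (4) is identical with $\phi^{-1}(x)$ replaced by $\phi^{-1}(C)$. Given a component $I_0$ of $\partial\D\sm\phi^{-1}(x)$ (resp.\ $\partial\D\sm\phi^{-1}(C)$), consider the geodesic $L_0$ spanning the two endpoints of $I_0$ (these two endpoints have the same image $x$, resp.\ lie in $\phi^{-1}(C)$, so $L_0$ is a leaf or lies inside the gap associated to $C$). If $\phi(\ov{I_0})$ is not an end circle, I would extract an end inside $\phi(I_0)$ by a bisection/compactness argument: build a nested sequence of subintervals $I_0\supset I_1\supset I_2\supset\cdots$, at each stage choosing $I_{k+1}$ to be a component of $I_k$ minus the fiber (or the gap-trace) of a suitably chosen point of $\phi(I_k)$, arranged so that either the nested intersection is a single point $\zeta$ with $\#\phi^{-1}(\phi(\zeta))=1$ and the spanning leaves $L_k$ give the separation condition of part (2) — producing an endpoint — or the process stabilizes at a gap meeting $\partial\D$ in a single arc, producing an end circle via part (1). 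Finiteness of the diameters of complementary components (Lemma \ref{lem:topology}(2)) guarantees the leaves $L_k$ genuinely shrink, so the limit is a point of $\partial\D$ and not a degenerate situation. I expect \textbf{the main obstacle} to be the careful bookkeeping in part (2) and in the bisection argument of (3)--(4): one must ensure the chosen sequence of leaves really does accumulate at a single boundary point with trivial fiber (rather than at a gap, or at a point with a nontrivial fiber that is not a circle), and that ``$L_n$ separates $L_{n-1}$ from $L_{n+1}$'' holds on the nose; this is where the interplay between Lemma \ref{lem:topology}(2) (diameters $\to 0$), Lemma \ref{lem:circle} (circles meet in $\le 1$ point), and the convex-hull disjointness $H_x\cap H_y=\emptyset$ must be orchestrated precisely.
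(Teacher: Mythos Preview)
Your overall architecture matches the paper's: the bisection you sketch for (3)--(4) is exactly what the paper does (it considers the collection $\III$ of proper sub-arcs of $I_0$ whose endpoints are joined by a leaf, and either finds a minimal one giving an end circle, or halves the length at each step to produce a nested sequence converging to a single boundary point, which is then an endpoint by (2)). But two of the justifications you lean on in (1) and (2) are false and would not survive scrutiny.

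In (1), your claim that $\phi(A\cap\partial\D)$ is a circle because ``each of whose points has a nontrivial fiber (the two sides of the gap)'' is wrong: a point $y$ on a circle $C\subset T_0$ has $\#\phi^{-1}(y)\ge 2$ only when $y$ is a cut point of $T_0$; generic points of $C$ have singleton fibers. The paper's device avoids this entirely: it defines a continuous map $\phi_A:\partial A\to T_0$ by setting $\phi_A=\phi$ on $\partial A\cap\partial\D$ and collapsing each boundary leaf $L\subset\partial A$ to the single point $\phi(L\cap\partial\D)$. Since $\partial A$ is a Jordan curve and each fiber $\phi_A^{-1}(y)$ is connected, the image is either a point or a Jordan curve in $T_0$. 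In (2), your assertion that a point on a circle ``would have a fiber that is a whole arc's worth'' is false for the same reason. The paper instead argues the forward direction by an infimum: with $t=\phi^{-1}(x)$, set $|L|_t$ to be the length of the component of $\partial\D\setminus L$ containing $t$; if $\inf_L|L|_t>0$, closedness of $\LLL_U$ forces the infimum to be realized, placing $t$ in a gap and contradicting (1) since $x$ is an endpoint. Hence $\inf_L|L|_t=0$, and any nested realizing sequence gives the separating leaves. For the converse the paper just notes that the images $x_n=\phi(L_n\cap\partial\D)$ cut off nested pieces $B_n\ni x$ of $T_0$ with $\mathrm{diam}\,B_n\to 0$, which immediately makes $x$ an endpoint without any fiber reasoning.
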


\begin{figure}[http]
	\centering
	\begin{tikzpicture}
		\node at (0,0){ \includegraphics[width=6cm]{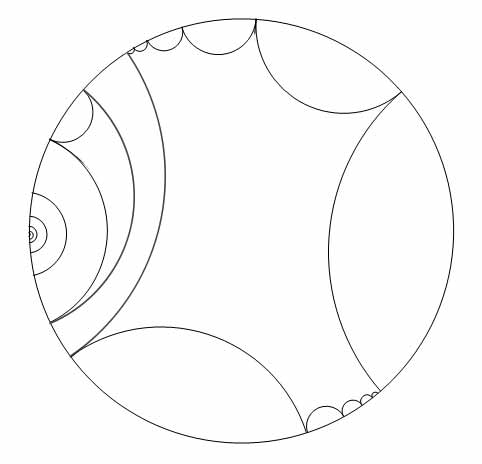}};
		\node at (-2.9,-0.1) {$t_0$};
		\node at (1.85,-0.125) {$A$};
	\end{tikzpicture}
	\caption{$\phi(\partial\mathbb{D}\cap A)$ is an end circle and $\phi(t_0)$ is an endpoint.}\label{fig:lamination}
\end{figure}
\begin{proof}
(1) Note that $\partial A$ is a Jordan curve. Define a map $\phi_{A}: \partial A\to T_0$ by $\phi_{A}=\phi$ on $\partial A\cap\partial\D$ and $\phi_{A}(L)=\phi(L\cap\partial\D)$ for any leaf $L\subset\partial A$. Then $\phi_{A}$ is continuous, and $\phi_{A}(\partial A)=\phi(A\cap\partial\D)$. Thus, $\phi_{A}(\partial A)\subset T_0$ is either a point or a closed curve. In the latter case, the curve is not self-intersecting since $\phi_{A}^{-1}(x)$ is connected for any  $x\in\phi_{A}(\partial A)$. Therefore, it is  a circle in $T_0$.

Conversely, let $C\subset T_0$ be a circle. For any point $x\in C$, $C\sm\{x\}$ is connected. Thus, $\phi^{-1}(C\sm\{x\})$ is contained in a component $A_{x}$ of $\ov{\D}\sm H_{x}$, and $C\subset\phi(\ov{A_{x}}\cap\partial \D)$. Let $A=\bigcap_{x\in C}\ov{A_{x}}$. Then $A$ is a gap, and $C\subset\phi(A\cap\partial\D)$. From the  discussion in the previous paragraph, $\phi(A\cap\partial\D)$ is either a point or a circle. Thus, we have $C=\phi(A\cap\partial\D)$.

If $A'\neq A$ is another gap, then there exists a leaf $L\subset\partial A$ that  separates the interior of $A$ from $A'$. Thus, $\phi(A\cap\partial\D)\cap\phi(A'\cap\partial\D)$ contains at most one point, and then $\phi(A'\cap\partial \D)\not=C$.

If $A\cap\partial\D$ is connected, then $\phi$ is injective in the interior of $A\cap\partial\D$, whose image contains no cut points, and $\phi$ maps the two endpoints of $A\cap\partial \D$ to a cut point. Thus, $C$ is an end circle.
Conversely, if $C$ is an end circle, let $x\in C$ be the unique cut point. Then $A\cap\partial\D=\ov{\phi^{-1}(C\sm\{x\})}$ is connected since $\phi^{-1}(y)$ is a point for $y\in C\sm\{x\}$.
\vspace{2pt}

(2) Denote $x_n=\phi(L_n\cap\partial\D)$. Let $B_n$ be the component of $T_0\sm\{x_n\}$ containing the point $x$. Then $B_{n+1}\subset B_n$, and the diameter of $B_n$ tends to $0$ as $n\to\infty$. Thus, $x$ is an endpoint.

Conversely, if $x\in T_0$ is an endpoint, then $\phi^{-1}(x)$ consists of a single point $t\in\partial\D$, and there exist no leaves landing on $t$. For each leaf $L$, denote by $|L|_t$ the length of the component of $\partial\D\sm L$ containing the point $t$. Assume, by contradiction,  that $\inf\{|L|_t\}>0$. Then there exists a leaf $L_0$ such that $|L_0|_t=\inf\{|L|_t\}$ since $\LLL_{ U}$ is closed. Let $D_0$ be the component of $\D\sm L_0$ whose boundary contains the point $t$. Then there exist no leaves in $D_0$ separating $L_0$ from the point $t$. Thus, there exists a gap $A$ containing the point $t$ and the leaf $L_0$. By statement (1), $\phi(A\cap\partial\D)$ is either a single point or a circle. Since $x\in\phi(A\cap\partial\D)$ is an endpoint, we obtain $x=\phi(A\cap\partial\D)$, which contradicts the condition that $\phi^{-1}(x)$ is a single point.
\vspace{2pt}

(3) By statement (1), the two endpoints of $I_0$ are connected by a leaf in $\LLL_{U}$. Denote by $\III$ the collection of all open arcs $I\subset I_0$ with $I\neq I_0$ such that the two endpoints of $I$ are connected by a leaf in $\LLL_{U}$. Then any two arcs in $\III$ are either disjoint or nested since any two distinct leaves are disjoint.

If $\III$ is empty, then $\phi(\ov{I_0})$ is an end circle by statement (1). If $|I|>|I_0|/2$ for all $I\in\III$, then there exists a unique arc $I^*\in\III$ such that $I^*\subset I$ for all $I\in\III$. This implies that $\phi(\ov{I^*})$ is an end circle. Otherwise, there exists an arc $I_1\in\III$ such that $|I_1|\le |I_0|/2$.

By iterating this process,  we have to either stop at some step, yielding an end circle, or  obtain an infinite sequence of arcs $\{I_n\}$ such that $I_{n+1}\subset I_n$ and $|I_{n+1}|\le |I_n|/2$. By the definition of lamination, at most two leaves share a common endpoint. Thus, $t=\bigcap I_n$ is a single point. By statement (2), $\phi(t)$ is an endpoint.

(4) The proof is similar to that of statement (3).
\end{proof}

The following result is a direct consequence of Lemma \ref{lem:lamination} (3) and (4).

\begin{corollary}\label{cor:end}
Let $x\in T_0$ be a point, and let $B$ be a component of $T_0\sm\{x\}$. Then either $\ov{B}$ is an end circle, or $B$ contains an end of $T_0$. Let $C\subset T_0$ be a circle, and let $B$ be a component of $T_0\sm C$. Then $\ov{B}\cap C$ is a singleton, and either $\ov{B}$ is an end circle or $B$ contains an end of $T_0$.
\end{corollary}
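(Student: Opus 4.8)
The statement to prove is Corollary~\ref{cor:end}, which records two ``each complementary piece reaches an end'' facts. The plan is to deduce it directly from parts (3) and (4) of Lemma~\ref{lem:lamination}, translating statements about components of $\partial\mathbb{D}\sm\phi^{-1}(x)$ (resp.\ $\partial\mathbb{D}\sm\phi^{-1}(C)$) into statements about components $B$ of $T_0\sm\{x\}$ (resp.\ $T_0\sm C$). The key bookkeeping point is the correspondence between these two collections of components. First I would dispose of the trivial case: if $U$ is a disk then $T_0$ is a Jordan curve, $T_0\sm\{x\}$ is connected, $T_0$ is itself its only end circle, and there is nothing to prove; so assume $U$ is not a disk and Lemma~\ref{lem:lamination} applies.

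For the first assertion, fix $x\in T_0$ and a component $B$ of $T_0\sm\{x\}$. The set $\phi^{-1}(x)=H_x\cap\partial\mathbb{D}$ is either a single point or the pair of endpoints of a leaf (or the finitely/countably many endpoints of the leaves of $\partial H_x$); in any case $\partial\mathbb{D}\sm\phi^{-1}(x)$ is a union of open arcs $I_0$, and each such $I_0$ satisfies $\phi(I_0)\subset T_0\sm\{x\}$ because $\phi^{-1}(x)$ is exactly the fibre over $x$ and distinct points of $T_0$ have disjoint hulls. Since $\phi$ is continuous and $\overline{I_0}$ is connected, $\phi(\overline{I_0})$ lies in a single component of $T_0\sm\{x\}$ together with its limit point $x$; conversely every point of $T_0\sm\{x\}$ is $\phi(t)$ for some $t$ lying in some $I_0$. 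Hence each component $B$ of $T_0\sm\{x\}$ is the union of the sets $\phi(I_0)$ over the arcs $I_0$ whose image lands in $B$, and in particular there is at least one such $I_0$ with $\phi(\overline{I_0})\subset\overline{B}$. Now apply Lemma~\ref{lem:lamination}(3) to this $I_0$: either $\phi(\overline{I_0})$ is an end circle of $T_0$ — and then $\overline{B}$ contains this end circle; if moreover $\overline{B}=\phi(\overline{I_0})$ we are in the case ``$\overline{B}$ is an end circle'', and otherwise $B$ properly contains it, so $B$ contains an end — or $\phi(I_0)$ contains an end of $T_0$, and then so does $B$. In every case $B$ contains an end of $T_0$ unless $\overline B$ itself is an end circle, which is the claim.

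For the second assertion, fix a circle $C\subset T_0$ and a component $B$ of $T_0\sm C$. By Lemma~\ref{lem:lamination}(1) there is a gap $A$ with $\phi(A\cap\partial\mathbb{D})=C$, so $\phi^{-1}(C)=\overline{A\cap\partial\mathbb{D}}$ and $\partial\mathbb{D}\sm\phi^{-1}(C)$ is the disjoint union of the open arcs $I_0$ complementary to the (closures of the) chords of $A$. The same reasoning as above shows $\phi(\overline{I_0})$ lies in the closure of a single component of $T_0\sm C$, and that every component $B$ arises from at least one such $I_0$. The fact that $\overline{B}\cap C$ is a singleton comes from Lemma~\ref{lem:circle}: $C\cap\overline B$ is connected (being the intersection of the circle $C$ with the continuum $\overline B\subset T_0$), and it cannot contain an open arc since such an arc would consist of exterior points of $U$ while lying in $\partial U$; hence $C\cap\overline B$ is a single point (it is non-empty because $B$ is a component of $T_0\sm C$ in the connected set $T_0$, so its closure meets $C$). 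Finally apply Lemma~\ref{lem:lamination}(4) to the chosen $I_0$: either $\phi(\overline{I_0})$ is an end circle — giving ``$\overline{B}$ is an end circle'' when $\overline B$ equals it, and ``$B$ contains an end'' otherwise — or $\phi(I_0)$, hence $B$, contains an end of $T_0$. This completes the deduction.

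\textbf{Main obstacle.} The only genuinely delicate point is the component-to-component correspondence, i.e.\ verifying that $\phi$ carries the partition of $\partial\mathbb{D}$ by $\phi^{-1}(x)$ (or $\phi^{-1}(C)$) onto the partition of $T_0$ by $\{x\}$ (or $C$) in the expected way — specifically that each $\phi(\overline{I_0})$ stays inside one component of $T_0\sm\{x\}$ and that no component of $T_0\sm\{x\}$ is ``missed.'' This is where one must use that the hulls $H_y$ are pairwise disjoint (so the fibre over $x$ is exactly $\phi^{-1}(x)$ and nothing more collapses), together with continuity and surjectivity of $\phi$ on $\overline{\mathbb{D}}$. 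Once this dictionary is in place, the corollary is an immediate case-analysis reading of Lemma~\ref{lem:lamination}(3)--(4), with Lemma~\ref{lem:circle} supplying the ``singleton'' refinement in the circle case.
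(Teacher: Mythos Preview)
Your overall approach is the paper's: the corollary is read off from Lemma~\ref{lem:lamination}(3)--(4), and you correctly flag the component-to-component dictionary as the only nontrivial point. There is, however, one step that fails as written. Your argument that $\overline B\cap C$ cannot contain an arc because ``such an arc would consist of exterior points of $U$ while lying in $\partial U$'' is borrowed from the second half of Lemma~\ref{lem:circle}, where \emph{both} sets are circles: there the arc lies on $\partial D\cap\partial D'$ for two distinct complementary disks of $\overline U$, so both local sides of the arc lie outside $\overline U$, contradicting $\gamma\subset\partial U$. Here $\overline B$ is not a circle, and there is no reason the $U$-side of such an arc should be blocked; the argument does not go through.

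The clean route to the singleton is precisely through the bijection you deferred. Once $B=\phi(I_0)$ for a single arc $I_0$, the endpoints $s,t$ of $I_0$ are joined by a leaf (this is established in the proof of Lemma~\ref{lem:lamination}), hence $\phi(s)=\phi(t)$ and $\overline B\cap C=\phi(\overline{I_0})\cap C=\{\phi(s)\}$. To get the bijection, take an arc $\alpha$ in $\mathbb C\setminus\overline{\mathbb D}$ from $s$ to $t$; then $J:=\phi(\alpha)\cup\{x\}$ is a Jordan curve with $J\setminus\{x\}\subset U$, and the two pieces of $(\mathbb C\setminus\overline{\mathbb D})\setminus\alpha$ map to the two complementary disks of $J$, forcing $\phi(I_0)$ and $\phi(\partial\mathbb D\setminus\overline{I_0})$ to lie on opposite sides of $J$. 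Thus $\phi(I_0)$ is clopen in $T_0\setminus\{x\}$ and connected, hence equals a single component $B$. The same crosscut argument handles the circle case. Establishing this also makes your sub-case ``$\overline B$ properly contains the end circle $\phi(\overline{I_0})$'' vacuous---which is good, since that end circle passes through $x$ and is not contained in $B$, so your conclusion ``$B$ contains an end'' would not have followed there.
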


A circle-tree can be characterized by the lamination $\LLL_{U}$.

\begin{lemma}\label{lem:circle-tree}
A continuum $T\subset T_0$ is a circle-tree of $T_0$ if and only if each component of $\partial H_{T}\sm\partial\D$ is a leaf in $\LLL_{U}$, where $H_{T}$ is the convex hull of $\phi^{-1}(T)$ within $\ov{\D}$.
\end{lemma}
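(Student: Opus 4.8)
\textbf{Proof plan for Lemma \ref{lem:circle-tree}.}

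The plan is to prove both implications by carefully translating the defining property of a circle-tree into a statement about which geodesics bound the convex hull $H_T$. Throughout, I would use the dictionary already established in Lemmas \ref{lem:topology}--\ref{lem:lamination}: points of $T_0$ correspond to convex hulls $H_x$, circles in $T_0$ correspond to gaps $A$ with $A\cap\partial\D$ disconnected-or-not recording whether the circle is an end circle, and $\phi$ is injective precisely where $\phi^{-1}$ is a single point.

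\emph{The ``only if'' direction.} Suppose $T$ is a circle-tree. Let $L$ be a component of $\partial H_T\sm\partial\D$; it is a chord of $\D$, and I must show it is a leaf of $\LLL_U$, i.e.\ that $\phi$ identifies its two endpoints $t_1,t_2\in\partial\D$ and that no leaf of $\LLL_U$ crosses it. Since $L\subset\partial H_T$, the open arc $I$ of $\partial\D\sm\{t_1,t_2\}$ on the side of $L$ away from $H_T$ is disjoint from $\phi^{-1}(T)$; hence $\phi(t_1),\phi(t_2)\in T$ while $\phi(I)\cap T=\emptyset$. First I would rule out that $L$ crosses the interior of some $H_x$ with $x\in T_0$: if it did, then $\phi^{-1}(x)$ would meet both sides of $L$, forcing $x\in T$ (as $\phi(I)\cap T=\emptyset$ would be violated on one side, or rather the convexity of $H_x\subset H_T$ since $x\in T$) — the point is that $H_x\subset H_T$ whenever $x\in T$, so $L\subset\partial H_T$ cannot pass through the interior of $H_x$. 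This shows $L$ does not cross any $H_x$, so in particular $t_1,t_2$ are either joined by a leaf of $\LLL_U$ or lie in the closure of a common gap. If they are joined by a leaf, we are done. If they lie in a common gap $A$, then by Lemma \ref{lem:lamination}(1), $\phi(A\cap\partial\D)$ is a point or a circle $C$; in the point case $\phi(t_1)=\phi(t_2)$ and $L$ is a ``degenerate'' leaf — here I would note $L\subset\partial H_{\phi(t_1)}$, contradicting that $L$ separates $H_T$ from $I$ unless $\phi(t_1)\in T$, in which case $L$ is interior to $H_T$, not on its boundary — so this case does not arise. In the circle case, $C$ meets $T$ in $\phi(t_1),\phi(t_2)$; but $T$ is a circle-tree, so either $C\subset T$ (then $H_C\subset H_T$ and $L$ cannot be on $\partial H_T$ unless $L\subset\partial H_C$ is an edge of the gap $A$, which means $L$ \emph{is} a leaf) or $\#(C\cap T)\le 1$ (contradicting that $C$ contains the two distinct points $\phi(t_1)\ne\phi(t_2)$ of $T$), or $\phi(t_1)=\phi(t_2)$ (then again $L$ is an edge of $A$, hence a leaf). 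Assembling these cases gives that $L$ is a leaf.

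\emph{The ``if'' direction.} Conversely, suppose every component of $\partial H_T\sm\partial\D$ is a leaf of $\LLL_U$, and let $C\subset T_0$ be a circle with $\#(C\cap T)\ge 2$; I must show $C\subset T$. Pick distinct $y_1,y_2\in C\cap T$. By Lemma \ref{lem:lamination}(1) there is a gap $A$ with $\phi(A\cap\partial\D)=C$, and $\phi^{-1}(y_i)$ is a single point or an edge-leaf of $A$; in all cases I can choose $t_i\in\phi^{-1}(y_i)\cap\partial A\cap\partial\D$. Since $y_i\in T$, we have $t_i\in\phi^{-1}(T)\subset H_T$, so $t_1,t_2\in\partial A\cap H_T$. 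Now suppose for contradiction that $C\not\subset T$, so there is $w\in C\setminus T$ with $t_w\in\phi^{-1}(w)\cap\partial A$. The two points $t_1,t_2$ split $\partial A\cap\partial\D$ (which is $\partial\D$ if $A\cap\partial\D$ is the whole circle, or an arc otherwise) into two sub-arcs of $\partial A$; say $t_w$ lies on the piece $\Gamma$. Since $t_1,t_2\in H_T$ and $H_T$ is convex, the chord $[t_1,t_2]$ lies in $H_T$; on the other hand $t_w\notin H_T$ since $w\notin T$ and $\phi^{-1}(w)=\{t_w\}$ or $\phi^{-1}(w)$ is an edge of $A$ disjoint from $\phi^{-1}(T)$. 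Therefore $\partial H_T$ must cross between $\Gamma$ and the rest, i.e.\ some component $L$ of $\partial H_T\sm\partial\D$ separates $t_w$ from $[t_1,t_2]$ inside $A$; but by hypothesis $L$ is a leaf of $\LLL_U$, and a leaf cannot enter the interior of the gap $A$ — contradiction. Hence no such $w$ exists and $C\subset T$, so $T$ is a circle-tree.

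\emph{Main obstacle.} I expect the delicate point to be the careful handling of \emph{degenerate leaves} — chords $L\subset\partial H_T$ whose endpoints are identified by $\phi$ (so they lie inside a single $H_x$ or inside a gap mapping to a point) — and, dually, making precise the claim ``a leaf of $\LLL_U$ cannot cross the interior of a gap $A$.'' Both require unwinding the definitions of $\LLL_U$ and its gaps, and checking that $H_T$, being the convex hull of the \emph{closed} set $\phi^{-1}(T)$, has boundary chords that are limits of chords between points of $\phi^{-1}(T)$; the closedness of $\LLL_U$ (Lemma-level fact already used in the proof of Lemma \ref{lem:lamination}) is what lets us pass to the limit and conclude such a boundary chord is genuinely a leaf or a degenerate leaf rather than something pathological. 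Once this topological bookkeeping is in place, both implications are essentially formal consequences of convexity of $H_T$ together with the circle-gap correspondence of Lemma \ref{lem:lamination}(1).
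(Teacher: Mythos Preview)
Your overall strategy matches the paper's: both directions come down to comparing a gap $A$ (corresponding to a circle $C$) with the convex set $H_T$, using the circle--gap dictionary of Lemma~\ref{lem:lamination}(1). Your ``if'' direction is essentially identical to the paper's, which phrases it as the clean trichotomy ``$A\subset H_T$, or $A\cap H_T=\emptyset$, or $A\cap H_T$ is a single leaf'' (since both regions are convex and bounded by leaves, which do not cross), immediately yielding $C\subset T$ or $\#(C\cap T)\le 1$.

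There is, however, a real gap in your ``only if'' direction. The assertion that ``$L$ does not cross any $H_x$'' is justified only when $x\in T$ (via $H_x\subset H_T$); the parenthetical ``forcing $x\in T$'' is not correct --- a point of $\phi^{-1}(x)$ lying on the $H_T$-side of $L$ need not belong to $\phi^{-1}(T)$, since that side of $\partial\D$ is $\overline{J}$, not $\phi^{-1}(T)$. What actually rules out the case $x\notin T$ is the \emph{connectivity of $T$}, and this is the key step in the paper's argument. Working with the complementary arc $I=(s,t)$ of $\partial\D\setminus\phi^{-1}(T)$ and assuming $\phi(s)\neq\phi(t)$, the paper observes that any leaf separating $H_{\phi(s)}$ from $H_{\phi(t)}$ must have an endpoint in $I$, hence lies on $\partial H_w$ for some $w\notin T$; but then $\phi(s)$ and $\phi(t)$ lie in different components of $T_0\setminus\{w\}$, contradicting that the connected set $T$ contains both and avoids $w$. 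Once no such separating leaf exists, $s$ and $t$ lie in a common gap, and your (and the paper's) circle-tree argument finishes. So the missing idea is connectivity of $T$, not the ``degenerate leaf'' or ``closedness of $\LLL_U$'' issues you flagged --- those play no role here.
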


\begin{proof}
For any circle $C\subset T_0$, there exists a unique gap $A$ such that $\phi(A\cap\partial\D)=C$ by Lemma \ref{lem:lamination}\,(1). Since each component of $\partial H_{T}\sm\partial\D$ is a leaf, either $A$ is contained in $H_{T}$, or $A\cap H_{T}=\emptyset$, or $A\cap H_{T}$ is a leaf. Thus, either $C\subset T$ or $\#(T\cap C)\le 1$. Therefore, $T$ is a circle-tree of $T_0$.

Conversely, assume that $T$ is a circle-tree of $T_0$. Let $I=(s,t)$ be a component of $\partial\D\sm\phi^{-1}(T)$. Denote $\phi(s)=x$ and $\phi(t)=y$. Then $x,y\in T$.

If $x\not=y$, then $H_x\cap H_y=\emptyset$. Note that there exist no leaves of $\LLL_{U}$ in $\D\sm(H_x\cup H_y)$ separating $H_x$ from $H_y$, since such a leaf would have an endpoint in $I$, which contradicts the connectivity of $T$. Thus, there exists a gap $A$ such that $s, t\in A\cap\partial \D$. By Lemma \ref{lem:lamination}\,(1), $\phi(A\cap\partial\D)$ is a circle in $T_0$, which contains the points $x,y\in T$. Thus, it is contained in $T$ since $T$ is a circle-tree, which implies that $A\subset H_{T}$. Hence, $I$ is a component of $\partial\D\sm A$. This implies that $s,t$ are connected by a leaf in $\partial A$, and hence $x=y$, a contradiction.

Since $x=y$,  either there exists a leaf joining the points $s$ and $t$, or $H_x\cap I\neq\emptyset$. The latter cannot happen since $I\cap H_{T}=\emptyset$. Thus, $s$ and $t$ are connected by a leaf in $\LLL_{U}$.
\end{proof}

\begin{corollary}\label{cor:circle-tree}
Let $T$ be a circle-tree of $T_0$. Then $T$ is locally connected, and there exists a simply connected domain $V\subset\cbar$ such that $\partial V=T$.
\end{corollary}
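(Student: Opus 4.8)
The plan is to prove the two assertions separately: local connectivity of $T$ through a continuous surjection from $\partial\D$, and the existence of $V$ by a purely topological argument about complementary components.

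For local connectivity, I would build a continuous surjection $\psi:\partial\D\to T$ and then invoke the classical fact that a continuous image of $[0,1]$ (equivalently, of any Peano continuum) is a Peano continuum, hence locally connected. Since $T_0$ is locally connected, $\phi$ extends continuously to $\partial\D$ with $\phi(\partial\D)=T_0$; as $T$ is a subcontinuum of $T_0$, the set $\phi^{-1}(T)$ is closed in $\partial\D$ and equals the union of the fibers $\phi^{-1}(x)$ over $x\in T$. Write $\partial\D\setminus\phi^{-1}(T)$ as a disjoint union of open arcs $\{I_n\}$, with endpoints $s_n,t_n\in\phi^{-1}(T)$. By Lemma \ref{lem:circle-tree}, the geodesic joining $s_n$ to $t_n$ is a component of $\partial H_{T}\setminus\partial\D$, hence a leaf of $\LLL_{U}$, so $\phi(s_n)=\phi(t_n)=:x_n\in T$. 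Now set $\psi:=\phi$ on $\phi^{-1}(T)$ and $\psi\equiv x_n$ on $\overline{I_n}$; this is well defined (the two prescriptions agree at $s_n,t_n$ since $\phi(s_n)=\phi(t_n)=x_n$) and surjective onto $T$ (because $\psi(\phi^{-1}(T))=\phi(\phi^{-1}(T))=T$). For continuity, note that the arcs $I_n$ are pairwise disjoint, so ${\rm diam}(I_n)\to 0$ and hence ${\rm diam}\,\phi(\overline{I_n})\to 0$ by uniform continuity of $\phi$ on $\partial\D$, while $\psi(\overline{I_n})=\{x_n\}\subset\phi(\overline{I_n})$; a short case analysis on a convergent sequence $z_k\to z$ in $\partial\D$ — according to whether infinitely many $z_k$ lie in $\phi^{-1}(T)$, lie in a single $\overline{I_m}$, or lie in arcs $\overline{I_{n_k}}$ with $n_k\to\infty$ — then gives $\psi(z_k)\to\psi(z)$.

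For the existence of $V$, let $V$ be the connected component of $\cbar\setminus T$ containing $U$; this is meaningful because $U$ is connected and $U\cap T=\emptyset$ (as $T\subset\partial U$ while $U$ is open). Being a component of the complement of the closed set $T$, the set $V$ is open and satisfies $\partial V\subset T$. Conversely, every $x\in T\subset\partial U=\overline U\setminus U$ is a limit of points of $U\subset V$ but does not belong to $V$, so $x\in\partial V$; hence $\partial V=T$. Finally, $\cbar\setminus V$ is the union of $T$ with the closures of the remaining components of $\cbar\setminus T$, each of which is connected and meets $T$ (its boundary is a nonempty subset of $T$); since $T$ is connected, $\cbar\setminus V$ is connected, and therefore $V$ is simply connected.

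The main obstacle is the continuity of $\psi$, and specifically excluding the degenerate possibility that a limit point of a sequence drawn from infinitely many distinct arcs $I_{n_k}$ falls into the interior of some other arc $I_m$; this is ruled out using that each $s_n,t_n$ lies in $\phi^{-1}(T)$ together with ${\rm diam}(I_n)\to 0$. The remaining inputs are standard: the continuous boundary extension of $\phi$ (already recorded before Lemma \ref{lem:lamination}), the preservation of the Peano property under continuous maps, Lemma \ref{lem:circle-tree}, and the criterion that an open subset of $\cbar$ is simply connected if and only if its complement is connected. Note that the circle-tree hypothesis is used only to obtain local connectivity via Lemma \ref{lem:circle-tree}; the construction of $V$ requires merely that $T$ be a subcontinuum of $\partial U$.
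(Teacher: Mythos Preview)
Your proof is correct and follows essentially the same approach as the paper: both construct a continuous surjection from a circle onto $T$ by collapsing each complementary arc of $\phi^{-1}(T)$ in $\partial\D$ to the common $\phi$-image of its endpoints (the paper parameterizes this via the Jordan curve $\partial H_T$, you via $\partial\D$ directly), and both take $V$ to be the component of $\cbar\setminus T$ containing $U$. Your continuity check and the verification that $V$ is simply connected are more detailed than the paper's, but the underlying argument is the same.
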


\begin{proof}
Note that $\partial H_{T}$ is a Jordan curve. By Lemma \ref{lem:circle-tree}, each component of $\partial H_{T}\sm\partial\D$ is a leaf. Define a map $\phi_{T}: \partial H_{T}\to T_0$ by $\phi_{T}=\phi$ on $\partial H_{T}\cap\partial\D$ and $\phi_{T}(L)=\phi(L\cap\partial\D)$ for any leaf $L\subset\partial H_{T}$. Then $\phi_{T}$ is continuous, and $\phi_{T}(\partial H_{T})=T$. Thus, $T$ is locally connected.

Let $V$ be the component of $\cbar\sm T$ containing $U$. Then $V$ is a simply connected domain, and $\partial V\subset T$. On the other hand, $T\subset\ov{U}\subset\ov{V}$. Thus, $T\subset\partial V$. Hence, we have $\partial V=T$.
\end{proof}

The following result provides a basic tool for constructing circle-trees.

\begin{lemma}\label{lem:span}
Let $x,y\in T_0$ be two distinct points. Then there exists a unique circle-tree $T[x,y]$ of $T_0$ such that any circle-tree of $T_0$ containing $x$ and $y$ contains $T[x,y]$. Moreover, each end of $T[x,y]$ intersects $\{x, y\}$.
\end{lemma}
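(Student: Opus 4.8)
The plan is to realize $T[x,y]$ on the lamination side via Lemma \ref{lem:circle-tree}, and then push it back to $T_0$ by $\phi$. Fix $t_x\in\phi^{-1}(x)$ and $t_y\in\phi^{-1}(y)$. The unit circle $\partial\D$ is split by $H_x$ and $H_y$ (which are disjoint by the lamination's basic property) into finitely many open arcs; more precisely, $\partial\D\sm(H_x\cup H_y)$ has exactly two components when neither $H_x$ nor $H_y$ is a single point touching the other's arc, and in general at most two ``sides''. Let $I_1,I_2$ be these two components, each having one endpoint in $\ov{H_x}$ and one in $\ov{H_y}$. The idea is: any circle-tree $T$ of $T_0$ containing both $x$ and $y$ must, by Corollary \ref{cor:circle-tree}, have $\partial H_T$ a Jordan curve whose non-$\partial\D$ arcs are leaves, and $H_T\supset H_x\cup H_y$; hence $\partial H_T$ separates $I_1$ from $I_2$ inside $\D$, so $H_T$ must contain a leaf crossing ``between'' the two sides. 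I would define $H^*$ to be the smallest convex subset of $\ov\D$ that contains $H_x\cup H_y$ and is bounded by leaves of $\LLL_U$; concretely, start from $H_x\cup H_y$ and, on each side $I_j$, either close it off by the leaf joining the two endpoints of $I_j$ (if they are joined by a leaf), or — following exactly the iteration in the proof of Lemma \ref{lem:lamination}(3)/(4) — successively shrink by the nested leaves inside $I_j$ until one stops at an end circle or converges to an endpoint. Then set $T[x,y]:=\phi(\ov{H^*}\cap\partial\D)$.

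The key steps, in order, are: (i) verify $H^*$ is well-defined, i.e. that on each side the shrinking procedure terminates canonically; this is immediate from Lemma \ref{lem:lamination}(1)–(2) (at most two leaves share an endpoint, and $\LLL_U$ is closed) combined with the diameter-to-zero fact in Lemma \ref{lem:topology}(2), which guarantees $\bigcap I_n$ is a single point whenever the process is infinite. (ii) Check that each component of $\partial H^*\sm\partial\D$ is a leaf — by construction it is either an ``outermost'' leaf in some $I_j$ or a leaf of $\partial H_x$ or $\partial H_y$ — and hence, by Lemma \ref{lem:circle-tree}, $T[x,y]=\phi(\ov{H^*}\cap\partial\D)$ is a circle-tree of $T_0$. (It is a continuum since $\ov{H^*}\cap\partial\D$ is compact and $\phi_{H^*}$ is continuous, as in Corollary \ref{cor:circle-tree}.) (iii) Minimality: let $T$ be any circle-tree containing $x$ and $y$. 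By Lemma \ref{lem:circle-tree}, $\partial H_T\sm\partial\D$ consists of leaves, and $\phi^{-1}(x),\phi^{-1}(y)\subset\phi^{-1}(T)$ forces $H_x,H_y\subset H_T$, so $H_x\cup H_y\subset H_T$. Since $H_T$ is convex and bounded by leaves, and $H^*$ is the smallest such set, $H^*\subset H_T$, whence $\ov{H^*}\cap\partial\D\subset\ov{H_T}\cap\partial\D$ and therefore $T[x,y]=\phi(\ov{H^*}\cap\partial\D)\subset\phi(\ov{H_T}\cap\partial\D)=T$. Uniqueness of $T[x,y]$ is then formal: if $T'$ and $T''$ both had the stated minimality property, each would contain $x,y$ and hence contain the other. (iv) The ``ends'' statement: an end $e$ of $T[x,y]$ corresponds to a component $B$ of $T[x,y]\sm\{z\}$ (for $z$ a cut point or $z\in$ a circle), and by Corollary \ref{cor:end} $\ov B$ is an end circle or $B$ contains an end of $T_0$. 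One checks from the construction of $H^*$ that the only ``outermost'' pieces produced are precisely the terminal end circles / endpoints arising over $I_1$ and $I_2$, together with the pieces of $H_x,H_y$ carrying $x$ and $y$ themselves; removing any other point of $T[x,y]$ would disconnect it into pieces each still meeting $\{x,y\}$ (otherwise a strictly smaller convex leaf-bounded set would still contain $H_x\cup H_y$, contradicting minimality of $H^*$). Hence every end of $T[x,y]$ meets $\{x,y\}$.

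I expect the main obstacle to be step (iii), the minimality argument, and in particular making rigorous the claim that $H^*$ is the smallest convex leaf-bounded superset of $H_x\cup H_y$ — one must be careful that ``smallest'' makes sense (the intersection of two convex leaf-bounded supersets of $H_x\cup H_y$ is again convex and leaf-bounded, so the family is closed under intersection and has a minimum), and that the explicit shrinking construction on the two sides $I_1,I_2$ actually produces this minimum rather than something larger. The potential subtlety is the boundary case where $H_x$ or $H_y$ is a single point of $\partial\D$, or where $x$ and $y$ lie on a common circle $C\subset T_0$ (so that $C\subset T[x,y]$ automatically, by the circle-tree condition, and the "two sides" degenerate); these should be handled by noting that any circle-tree containing two points of $C$ contains all of $C$, so $C\subset H^*$ as well, and the argument goes through unchanged. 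Everything else — continuity of $\phi_{H^*}$, local connectivity, the continuum property — is routine given Lemmas \ref{lem:topology}–\ref{lem:circle-tree} and Corollaries \ref{cor:end}–\ref{cor:circle-tree}.
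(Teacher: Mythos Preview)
Your approach via the lamination and Lemma~\ref{lem:circle-tree} is genuinely different from the paper's. The paper works entirely inside $T_0$: it picks an arc $\gamma\subset T_0$ from $x$ to $y$ (Lemma~\ref{lem:topology}(1)), defines $T[x,y]$ as $\gamma$ together with every circle $C\subset T_0$ meeting $\gamma$ in at least two points, and checks the circle-tree property directly using Lemma~\ref{lem:topology}(3). Minimality is then an elementary arc argument: any circle-tree $T_2\ni x,y$ contains an arc $\gamma'$ from $x$ to $y$, and each component of $\gamma\setminus\gamma'$ closes up with a sub-arc of $\gamma'$ to form a circle in $T_0$, hence lies in $T_2$. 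This is short and self-contained; your route, by contrast, leverages the correspondence of Lemma~\ref{lem:circle-tree} and the convex geometry of $\overline\D$, which is conceptually appealing for minimality (intersection of a family) but requires more infrastructure.

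That said, your explicit construction of $H^*$ has a real gap. The claim that $\partial\D\setminus(H_x\cup H_y)$ has ``at most two sides'' is wrong: if $\phi^{-1}(x)$ has $n$ points and $\phi^{-1}(y)$ has $m$ points, there are $n+m-1$ arcs, of which exactly two are ``bridging'' (one endpoint in $\overline{H_x}$, one in $\overline{H_y}$) and the rest are bounded by leaves of $\partial H_x$ or $\partial H_y$. More seriously, on each bridging arc $I_j$ the boundary of $H^*$ in $\D$ is not a single leaf to be found by the iteration of Lemma~\ref{lem:lamination}(3)/(4): that iteration locates \emph{ends of $T_0$}, not the boundary of a minimal leaf-bounded hull, and in general $\partial H^*\cap\D$ over $I_j$ consists of infinitely many leaves (one for each maximal ``cap'' disjoint from $H_x\cup H_y$). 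The correct salvage is the one you mention in passing in step~(iii): define $H^*$ directly as the intersection of all leaf-bounded convex supersets of $H_x\cup H_y$, and argue that this intersection is again leaf-bounded. This works because leaves of $\LLL_U$ are pairwise disjoint in $\D$, so caps are nested-or-disjoint; any increasing chain of caps avoiding $H_x\cup H_y$ has an upper bound (the limiting leaf exists since $\LLL_U$ is closed, and cannot degenerate to a point without forcing $\phi^{-1}(x)=\phi^{-1}(y)$), whence maximal caps exist and their complement is $H^*$. With $H^*$ thus defined, your steps~(ii)--(iv) go through, and the ends argument in~(iv) is essentially the same as the paper's.
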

We call $T[x,y]$ the {\bf circle-tree spanned by $\boldsymbol{\{x,y\}}$}.
\begin{proof}
By Lemma \ref{lem:topology}\,(1), there exists an arc $\gamma: [0,1]\to T_0$ with $\g(0)=x$ and $\g(1)=y$. Let $T_1$ be the union of $\gamma$ and all  circles $C\subset T_0$ with $\#(C\cap\gamma)\ge 2$. By Lemma \ref{lem:topology}\,(2), $T_1$ is a continuum.

We will show that $T_1$ is a circle-tree. By definition, it suffices to prove that
 for any circle $C\subset T_0$ with $\#(C\cap T_1)\ge 2$, it holds that $\#(C\cap \g)\geq 2$.

 Suppose, to the contrary, that $\#(C\cap \g)\leq 1$. Let $x_1,x_2\in C\cap T_1$ be two distinct points, and let $\alpha$ be an arbitrary component of $C\setminus\{x_1,x_2\}$.

 If $C\cap \g=\emptyset$, then there exist two distinct circles
 $C_1, C_2\subset T_1$ such that $x_1=C\cap C_1$ and $x_2=C\cap C_2$. By the definition of $T_1$, there exists an arc $\g_0\subset \g$ such that $y_1:=\g_0(0)\in C_1,y_2:=\g_0(1)\in C_2$, and $\g_0(0,1)$ is disjoint from $C_1\cup C_2$. For $i=1,2$, let $\beta_i$ be a component of $C_i\setminus \{x_i,y_i\}$ such that $\beta_1\cap \beta_2=\emptyset$. Then $\alpha,\beta_1,\beta_2$, and $\g_0$ are pairwise disjoint. It follows that
$$
\alpha\cup\beta_1\cup\beta_2\cup\gamma_0\cup\{x_1, x_2,y_1,y_2\}
$$
is a circle in $T_0$, a contradiction to  Lemma \ref{lem:topology}\,(3).

If $\#(C\cap\gamma)=1$, we may assume $x_1$ to be this intersection point, and there exists a circle $C_2\subset T_1$ with $x_2=C\cap C_2$.  A similar argument as above will also lead to a contradiction to Lemma \ref{lem:topology}\,(3).  Now, we have proved that $T_1$ is a circle-tree.

Let $T_2$ be a circle-tree containing the points $x$ and $y$. Then there exists an arc $\g'\subset T_2$ joining $x$ and $ y$. For any component $\g_1$ of $\g\setminus \g'$, denote by $\g_1'$ the sub-arc of $\g'$ with the same endpoints as those of $\g_1$. Thus, $\g_1\cup \g_1'$ is a circle in $T_0$. Since $\g_1'\subset T_2$, it follows that $\g_1\cup\g_1'\subset T_2$, and hence $\g\subset T_2$. By the definition of $T_1$, we have $T_1\subset T_2$. This implies the uniqueness of $T_1$.

By definition, any point of $T_1$ belongs to either $\g$ or a circle in $T_0$. Thus, an endpoint of $T_1$ must be $x$ or $y$.
If $C$ is an end circle of $T_1$ disjoint from $\{x,y\}$, then $T_1':=(T_1\setminus C)\cup\{z\}\subset T_1$ is a circle-tree containing $x$ and $y$, where $z$ is the unique cut point of $T_1$ on $C$. The uniqueness implies $T_1'=T_1$, a contradiction.
\end{proof}

\begin{lemma}\label{lem:operation}
Let $T_1$ and $T_2$ be circle-trees of $T_0$ such that $T_1\cap T_2\neq\emptyset$. 
\begin{enumerate}
\item  $T_1\cap T_2$ is either a singleton or a circle-tree of $T_0$.
\item  $T_1\cup T_2$ is a circle-tree of $T_0$, and each end of $T_1\cup T_2$ is an end of $T_1$ or $T_2$.
\end{enumerate}
\end{lemma}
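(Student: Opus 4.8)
The plan is to check, in each case, the two defining conditions of a circle-tree of $T_0$ — being a continuum contained in $T_0$, and the ``circle property'' that every circle $C\subset T_0$ is either contained in the set or meets it in at most one point — and, for (2), additionally to trace the ends. \emph{Part (1).} Put $S=T_1\cap T_2$, a closed subset of $T_0$; if $S$ is a singleton we are done, so assume it has more than one point. For any two distinct $a,b\in S$, the circle-tree $T[a,b]$ spanned by $\{a,b\}$ (Lemma \ref{lem:span}) is contained in every circle-tree of $T_0$ through $a$ and $b$; since $T_1$ and $T_2$ are such circle-trees, $T[a,b]\subset T_1\cap T_2=S$, and as $T[a,b]$ is a continuum containing $a$ and $b$, these points lie in one component of $S$. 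Since $a,b$ were arbitrary, $S$ is connected, hence a continuum. The circle property is immediate: if $\#(C\cap S)\ge 2$ then $\#(C\cap T_1)\ge 2$ and $\#(C\cap T_2)\ge 2$, so $C\subset T_1$ and $C\subset T_2$ (as $T_1,T_2$ are circle-trees), whence $C\subset S$. Thus $S$ is a circle-tree.

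\emph{Part (2), being a circle-tree.} Set $T=T_1\cup T_2$; since $T_1,T_2$ are continua with nonempty intersection, $T$ is a continuum in $T_0$. Let $C\subset T_0$ be a circle with $\#(C\cap T)\ge 2$. By Lemma \ref{lem:circle}, each of $C\cap T$, $C\cap T_1$, $C\cap T_2$ is connected, and $C\cap T=(C\cap T_1)\cup(C\cap T_2)$. If $\#(C\cap T_i)\ge 2$ for some $i$, then $C\subset T_i\subset T$; otherwise $\#(C\cap T_1),\#(C\cap T_2)\le 1$, which would make $C\cap T$ a connected set with both at least and at most two points — impossible. Hence $C\subset T$, and $T$ is a circle-tree.

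\emph{Part (2), ends.} By Corollary \ref{cor:circle-tree}, each of $T,T_1,T_2$ bounds a simply connected domain, so the results of this subsection (in particular Corollary \ref{cor:end}) apply to each of them in the role of $T_0$, and any circle contained in $T_i$ is contained in $T$. Let $e$ be an end of $T$. If $e$ is an end circle $C$, then $C\subset T_1$ or $C\subset T_2$ (if $\#(C\cap T_1)\le 1$ then $C\setminus T_1\subset T_2$ is dense in $C$, so $C\subset T_2$), say $C\subset T_1$. By Corollary \ref{cor:end} applied to $T$, every component $B$ of $T\setminus C$ has $\overline B\cap C$ a single point, and one checks that the cut points of $T$ on $C$ are exactly these attaching points; since $C$ is an end circle of $T$, they all coincide with one point $p$ (or there are no components). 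Now each component $B_1$ of $T_1\setminus C$ is connected and lies in $T_1\setminus C\subset T\setminus C$, hence in some component $B$ of $T\setminus C$, so $\emptyset\ne\overline{B_1}\cap C\subset\overline B\cap C=\{p\}$, giving $\overline{B_1}\cap C=\{p\}$. Then $T_1=C\cup\bigcup_{B_1}\overline{B_1}$ with each $\overline{B_1}=B_1\cup\{p\}$, so for $x\in C\setminus\{p\}$ the set $T_1\setminus\{x\}$ is connected (all pieces contain $p$); hence $p$ is the only possible cut point of $T_1$ on $C$, i.e.\ $C$ is an end circle of $T_1$. If instead $e$ is an endpoint, then $e\in T_1$ (say), and $e$ lies in no circle of $T_1$; it suffices to show $e$ is not a cut point of $T_1$. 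Suppose it were, so $T_1\setminus\{e\}=\bigsqcup_\alpha B_\alpha$ has $\ge 2$ components, each with $\overline{B_\alpha}=B_\alpha\cup\{e\}$ (by local connectivity). If some $B_{\alpha_0}$ misses $S$, it misses $T_2$, hence is clopen and proper in $T\setminus\{e\}$, making $e$ a cut point of $T$ — impossible. So every $B_\alpha$ meets $S$, forcing $e\in S\subset T_2$ (a connected $S$ avoiding $e$ would lie in a single branch). Write $T_2\setminus\{e\}=\bigsqcup_\beta C_\beta$; each $C_\beta$ meets $S$ (else it is disjoint from $T_1$, hence clopen and proper in $T\setminus\{e\}$). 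If some $C_{\beta_0}$ meets two distinct branches $B_1,B_2$ at points $s_1\in S\cap B_1$, $s_2\in S\cap B_2$, then joining $e$ to $s_1$ and to $s_2$ by arcs in $T_1$ (leaving $e$ into $B_1$ and $B_2$) and $s_1$ to $s_2$ by an arc in $\overline{C_{\beta_0}}$ avoiding $e$, one prunes a Jordan curve through $e$ inside $T$ — a circle through $e$, again impossible. Otherwise each $C_\beta$ meets only one branch $B_{\alpha(\beta)}$; since all overlaps in $T\setminus\{e\}=\bigcup_\alpha B_\alpha\cup\bigcup_\beta C_\beta$ lie in $S$, no $C_\beta$ connects two different $B_\alpha$'s, so the $\ge 2$ sets $B_\alpha$ fall into separate components and $e$ is a cut point of $T$ — impossible. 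Hence $e$ is an endpoint of $T_1$.

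The step I expect to be the crux is the last one — ruling out that a cut point of $T_1$ on the ``endpoint side'' gets absorbed into $T$ — and within it the pruning to an honest Jordan curve: the argument is delicate because the arc in $T_2$ may re-enter $T_1$ along $S$, so the arcs must be chosen with the same care as in the proof of Lemma \ref{lem:span}. An alternative is to work through the lamination $\LLL_U$ and verify that $\#\phi^{-1}(e)=1$ is inherited by the domain bounded by $T_1$, after which $e$ lying in no circle of $T_1$ forces it to be an endpoint of $T_1$ by Lemma \ref{lem:lamination}(2). The end-circle case, by contrast, requires no such care once it is phrased through the attaching points $\overline B\cap C$ of the complementary components.
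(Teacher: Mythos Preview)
Your argument is correct, including the acknowledged pruning step (which goes through exactly as in Lemma~\ref{lem:span}: take the last point of $\alpha_3$ on $\alpha_1$, then the first subsequent point on $\alpha_2$), but part~(2) diverges substantially from the paper's route. For the circle-tree property of $T_1\cup T_2$, the paper invokes the lamination characterization (Lemma~\ref{lem:circle-tree}): since each component of $\partial H_{T_i}\setminus\partial\D$ is a leaf and distinct leaves are disjoint in $\D$, the same holds for $\partial H_{T_1\cup T_2}$. Your direct argument via Lemma~\ref{lem:circle} is just as short and avoids the lamination. The real difference is in the end analysis. The paper proves a single lemma-within-the-proof: \emph{any cut point $x$ of $T_1$ is already a cut point of $T_1\cup T_2$}, because one can find a Jordan curve in $U\cup\{x\}$ separating the branches of $T_1\setminus\{x\}$, and such a curve meets $T_0\supset T_1\cup T_2$ only at $x$. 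This immediately disposes of both end-types in two lines (an endpoint of $T$ cannot be a cut point of $T_1$; an end circle of $T$ in $T_1$ cannot carry two cut points of $T_1$). You instead stay inside $T$ and carry out a careful branch-by-branch analysis of how the components $B_\alpha$ of $T_1\setminus\{e\}$ and $C_\beta$ of $T_2\setminus\{e\}$ interlock; this works, but the external-Jordan-curve observation buys the whole end discussion essentially for free.
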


\begin{proof}
(1) For any two distinct points $x,y\in T_1\cap T_2$, $T[x,y]\subset T_1\cap T_2$ by Lemma \ref{lem:span}. Thus, $T_1\cap T_2$ is a continuum. For any circle $C\subset T_0$ with $\#(C\cap T_1\cap T_2)\ge 2$, we have $\#(C\cap T_1)\ge 2$ and $\#(C\cap T_2)\ge 2$. Thus, $C\subset T_1\cap T_2$. Therefore, $T_1\cap T_2$ is a circle-tree of $T_0$.\vspace{2pt}

(2) By Lemma \ref{lem:circle-tree}, each component of $\partial H_{T_1}\sm\partial\D$ and $\partial H_{T_2}\sm\partial\D$ is a leaf in $\LLL_{U}$. Since any two distinct leaves are disjoint in $\D$, each component of $\partial H_{T_1\cup T_2}\sm\partial\D$ is a leaf in $\LLL_{U}$. Thus, $T_1\cup T_2$ is a circle-tree of $T_0$.

Let $x\in T_1\cup T_2$ be a point disjoint from any circle in $T_1\cup T_2$. Assume $x\in T_1$.  If $x$ is a cut point of $T_1$, then there exists a Jordan curve in $U\cup\{x\}$ that separates  $T_1\sm\{x\}$. Thus, $x$ is a cut point of $T_1\cup T_2$. Therefore, if $x$ is an endpoint of $T_1\cup T_2$, then it is an endpoint of $T_1$ or $T_2$.

Let $C\subset T_1\cup T_2$ be an end circle. Then either $C\subset T_1$ or $C\subset T_2$. Assume $C\subset T_1$. If $C$ contains two distinct cut points $x$ and $y$ of $T_1$, then $x$ and $y$ are also cut points of $T_1\cup T_2$. This is a contradiction. Thus, $C$ is an end circle of $T_1$.
\end{proof}

For any finite set $\{x_1,\ldots, x_n\}\subset T_0$ with $n\ge 2$, denote
$$
T[x_1,\ldots, x_n]=T[x_1, x_2]\cup\cdots\cup T[x_1, x_n].
$$
 Furthermore, let $\{x_1,\ldots,x_n,C_1,\ldots,C_m\}$ be a collection of points $x_i$ and circles $C_j$ in $T_0$. Pick two distinct points $y_j, z_j\in C_j$ for each circle $C_j$. Denote
$$
T[x_1,\ldots,x_n,C_1,\ldots,C_m]=T[x_1,\ldots,x_n,y_1,\ldots,y_m,z_1,\ldots,z_m].
$$
By Lemmas \ref{lem:span} and \ref{lem:operation}, it is a finite circle-tree and also the minimal circle-tree of $T_0$ containing $x_1,\ldots,x_n,C_1,\ldots,C_m$.
 We call it the {\bf circle-tree spanned by $\boldsymbol{\{x_1,\ldots,x_n,C_1,\ldots,C_m\}}$}.

\begin{lemma}\label{lem:coincide}
Let $T$ be a finite circle-tree of $T_0$, and let $T_1$ be the circle-tree spanned by the ends of $T$. Then $T_1=T$.
\end{lemma}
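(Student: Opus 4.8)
The plan is to show the two inclusions $T_1\subset T$ and $T\subset T_1$ separately, using the structure theory of circle-trees developed in the preceding lemmas.

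\textbf{The inclusion $T_1\subset T$.} By construction, $T_1$ is the circle-tree spanned by the ends of $T$, i.e.\ $T_1=T[x_1,\ldots,x_n,C_1,\ldots,C_m]$ where $x_1,\ldots,x_n$ are the endpoints of $T$ and $C_1,\ldots,C_m$ are the end circles of $T$ (a finite list, since $T$ is a finite circle-tree). Pick auxiliary points $y_j,z_j\in C_j$. Since $T$ is itself a circle-tree of $T_0$ containing all the points $x_i$ and all the points $y_j,z_j$, the minimality property of the spanned circle-tree, which follows from Lemma \ref{lem:span} applied to each pair together with Lemma \ref{lem:operation}(2) for the union, gives $T[x_1,\ldots,x_n,y_1,\ldots,y_m,z_1,\ldots,z_m]\subset T$. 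That is exactly $T_1\subset T$.

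\textbf{The inclusion $T\subset T_1$.} Here I would argue that $V:=T\setminus T_1$ is empty. Suppose not. By Corollary \ref{cor:circle-tree}, $T_1$ is a locally connected continuum, hence each component $B$ of $\cbar\setminus T_1$ is open, and I want to find an end of $T$ lying in such a $B$ but missing $T_1$, contradicting that all ends of $T$ were used to build $T_1$. More precisely: since $T_1\subset T$ is a proper inclusion of continua, there is a point $x\in T_1$ and a component $B$ of $T\setminus\{x\}$ (if $x$ is a cut point of $T$) or a component $B$ of $T\setminus C$ for some circle $C\subset T_1$, with $B\cap T_1=\emptyset$ or $B\cap T_1$ reducing to the one boundary point allowed. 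By Corollary \ref{cor:end} applied inside the ambient continuum $T_0$ — but I need it inside $T$ — either $\ov B$ is an end circle of $T$ or $B$ contains an end of $T$. In the first case $\ov B$ is one of the $C_j$, so $\ov B\subset T_1$, contradicting $B\cap T_1=\emptyset$; in the second case $B$ contains an endpoint $x_i$ or an end circle $C_j$ of $T$, which again lies in $T_1$ by construction, the same contradiction. Hence $T\setminus T_1=\emptyset$, i.e.\ $T\subset T_1$.

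\textbf{Main obstacle.} The delicate point is that Corollary \ref{cor:end} is stated for components of $T_0\setminus\{x\}$ and $T_0\setminus C$, whereas I need the analogous dichotomy ``$\ov B$ is an end circle of $T$, or $B$ contains an end of $T$'' for components $B$ of $T\setminus\{x\}$ or $T\setminus C$ relative to the sub-circle-tree $T$ itself. So the real work is to establish this relative version. I would do this by transporting the problem to the lamination: by Lemma \ref{lem:circle-tree}, $H_T$ is a convex region whose boundary outside $\partial\D$ consists of leaves of $\LLL_U$, and the circle-tree $T$ corresponds to $\phi_T(\partial H_T)$ as in the proof of Corollary \ref{cor:circle-tree}. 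A cut point $x$ of $T$, resp.\ a circle $C\subset T$, corresponds to a point $t_x\in\partial\D$ (or a leaf), resp.\ a gap $A$, and a component $B$ of $T\setminus\{x\}$ (or $T\setminus C$) corresponds to a component $I_0$ of $\partial H_T\cap\partial\D$ cut off by $t_x$ (resp.\ by $A$). Running verbatim the same arc-halving argument as in Lemma \ref{lem:lamination}(3)–(4), but with $\III$ restricted to arcs whose endpoints are joined by leaves lying in $\partial H_T$, produces either a terminal step giving an end circle of $T$ or a nested sequence shrinking to a single point which is an endpoint of $T$. Once this relative dichotomy is in hand, the argument above closes. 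The rest is bookkeeping: one should also check $T_1$ is genuinely a circle-tree with finitely many ends, but that is immediate from Lemmas \ref{lem:span} and \ref{lem:operation} as already noted before the statement.
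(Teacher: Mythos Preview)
Your proposal is correct and follows essentially the same route as the paper: minimality of the spanned circle-tree gives $T_1\subset T$, and for $T\subset T_1$ one locates an end of $T$ inside any putative piece of $T\setminus T_1$ via the dichotomy of Corollary~\ref{cor:end}. The paper's case split is a touch cleaner than yours---it takes a point $x\in T\setminus T_1$ lying on no circle of $T$ (then $x$ is a cut point since endpoints lie in $T_1$, and connectivity of $T_1$ forces a component of $T\setminus\{x\}$ to miss $T_1$), or a circle $C\subset T$ with $\#(C\cap T_1)\le 1$ (then $C$ is not an end circle, so some component of $T\setminus C$ misses $T_1$)---and it applies Corollary~\ref{cor:end} to $T$ without comment, the tacit justification being exactly what you spelled out: by Corollary~\ref{cor:circle-tree} one has $T=\partial V$ for a simply connected $V$, so the entire lamination apparatus (Lemma~\ref{lem:lamination} and Corollary~\ref{cor:end}) transfers verbatim with $T_0$ replaced by $T$.
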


\begin{proof}
By Lemma \ref{lem:span}, $T_1\subset T$. Assume that $x\in T\sm T_1$ is a point disjoint from all circles in $T$. Since $x$ is not an endpoint of $T$, there exists a component $T'$ of $T\sm\{x\}$ disjoint from $T_1$. By Corollary \ref{cor:end}, $\ov{T'}$ contains an end of $T$, a contradiction.

Assume that $C\subset T$ is a circle such that $C\cap T_1$ contains at most one point. Then $C$ is not an end circle of $T$. Thus,
$T\sm C$ has a component $T'$ disjoint from $T_1$. By Corollary \ref{cor:end}, $\ov{T'}$ contains an end of $T$, also a contradiction.
\end{proof}

Let $T$ be a finite circle-tree of $T_0$. By Corollary \ref{cor:circle-tree}, there exist a component $V$ of $\cbar\sm T$ and a conformal map $\psi: \C\sm\ov{\D}\to V$, which can be extended continuously to the boundary such that $\psi(\partial\D)=\partial V=T$. For each point $x\in T$, denote
$$
\mu_{T}(x)=\#\psi^{-1}(x).
$$
A point $x\in T$ is called either a {\bf cut point} of $T$ if $\mu_{T}(x)\ge 2$, or a {\bf branched point} of $T$ if $\mu_{T}(x)\ge 3$, or a {\bf locally branched point} of $T$ if, for any sufficiently small neighborhood $W$ of $x$, $(T\cap W)\sm\{x\}$ has at least three components. For any circle $C\subset T$, denote
$$
{\mu}_{ T}(C)=\#\{y\in C: \mu_{T}(y)\ge 2\}.
$$
A circle $C\subset T$ is called a {\bf cut circle} of $T$ if $\mu_{T}(C)\ge 2$, or a {\bf branched circle} of $T$ if $\mu_{T}(C)\ge 3$.

When $x\in T$ is not contained in any circle in $T$, then $x$ is a branched point if and only if it is a locally branched point. When $x\in T$ is contained in a circle in $T$, then $x$ is a locally branched point if and only if it is a cut point of $T$. If a circle $C\subset T$ contains no branched points of $T$, then ${\mu}_{ T}(C)$ is the number of components of $T\sm C$. In general, ${\mu}_{ T}(C)$ is the number of components of $\ov{T\sm C}$.
Refer to Figure \ref{fig:regular-set} for an example of finite circle-trees, where $p_1$ is an endpoint, $p_2$ is a cut point, and $p_3$ is a branched point; $C_1$ and $C_2$ are end circles, $C_3$ and $C_4$ are cut circles, and $C_5$ is a branched circle.

\begin{figure}[http]
	\centering
	\begin{tikzpicture}
		\node at (0,0){\includegraphics[width=5.5cm]{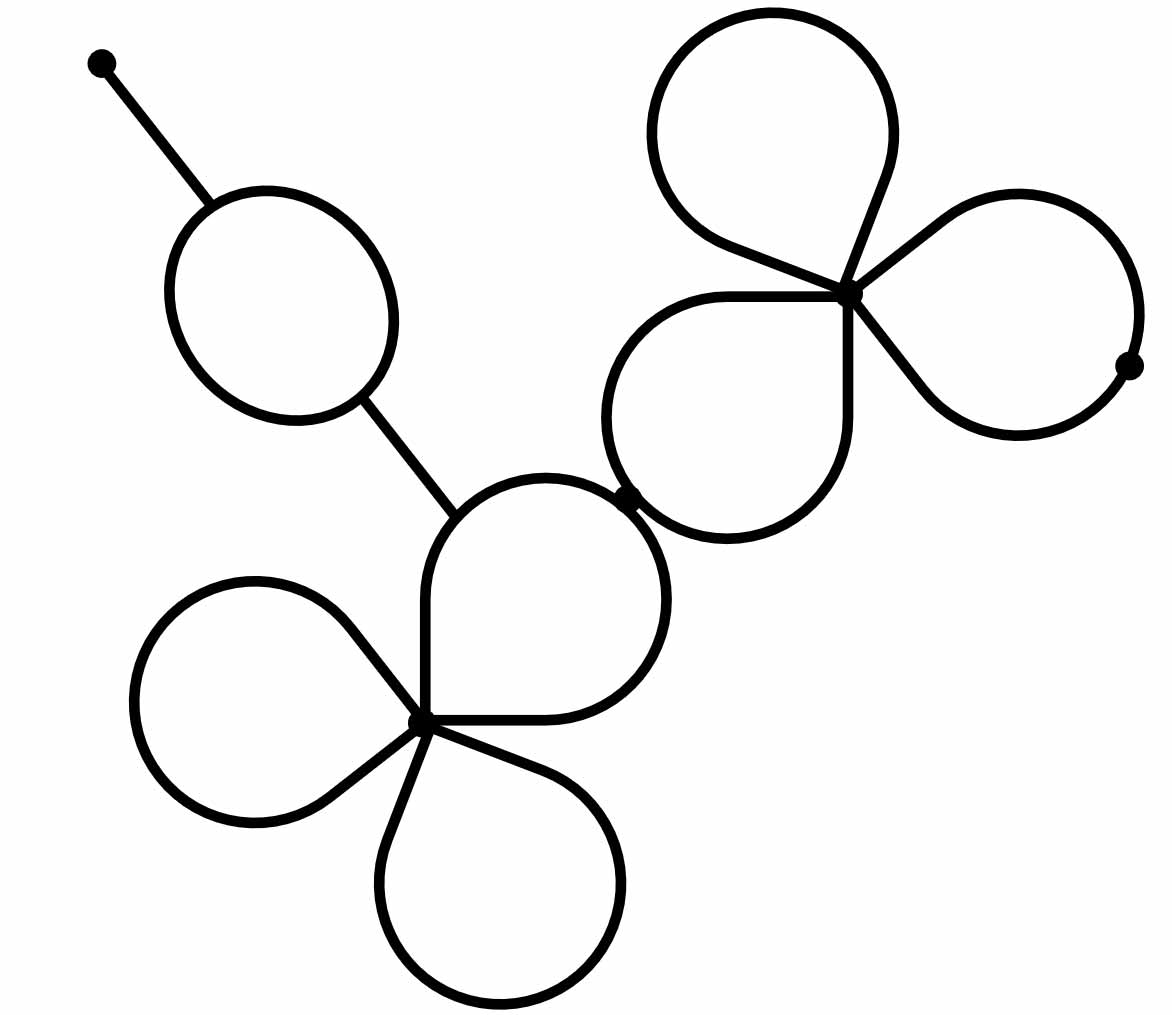}};
		\node at (-2.5,2){$p_1$};
		\node at (0.5,0.25){$p_2$};
		\node at (-0.43,-0.6){$p_3$};
		\node at (2.75, 0.5){$p_4$};
		\node at (-2, 0.5){$C_3$};
		\node at (0,1){$C_4$};
		\node at (-2.35,-1){$C_1$};
		\node at (0.5,-1.8){$C_2$};	
		\node at (0.6,-0.7){$C_5$};
	\end{tikzpicture}
\caption{Classification of points and circles in a circle-tree}\label{fig:regular-set}
\end{figure}


Note that any circle-tree $T\subset T_0$ has at least one end by Corollary \ref{cor:end}. If $T$ has only one end, then it is a circle.

\begin{lemma}\label{lem:number}
Let $T$ be a finite circle-tree of $T_0$ with $n\ge 2$ ends. Then $T$ has exactly $k$ branched points $\{x_i\}$ and $l$ branched circles $\{C_j\}$ such that
$$
\sum_{i=1}^k(\mu_{T}(x_i)-2)+\sum_{j=1}^l(\mu_{T}(C_j)-2)=n-2.
$$
\end{lemma}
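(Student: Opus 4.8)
The plan is to induct on the number $n\ge 2$ of ends of $T$, using Corollary \ref{cor:end} as the main tool. Throughout I use that $\mu_T(x)$ equals the number of components of $T\setminus\{x\}$ for every $x\in T$ (and, for $x$ on a circle $C$, also equals one plus the number of components of $T\setminus C$ whose closure meets $C$ at $x$) — this is immediate from the definition of $\mu_T$ via the conformal map and the discussion following it. For the base case $n=2$ I would show that $T$ has no branched point and no branched circle, so that both sums are empty and the identity reads $0=0$: a branched point $x$ would give at least three components of $T\setminus\{x\}$, each of which by Corollary \ref{cor:end} is an end circle or contains an end, forcing $n\ge 3$; a branched circle $C$ has at least three points $y$ with $\mu_T(y)\ge 2$, each carrying at least one component of $T\setminus C$, and Corollary \ref{cor:end} again forces $n\ge 3$.

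For the inductive step, assume $n\ge 3$ and the identity for all finite circle-trees of $T_0$ with fewer ends. First, a finite circle-tree with no branched point and no branched circle is a ``chain'' of arcs and circles, hence has exactly two ends (a direct consequence of Corollary \ref{cor:end}); so $T$ has a branched point or a branched circle. Fix an end $e$ and let $\beta$ be the first branched point or branched circle met on a path from $e$ into $T$; let $W$ be the ``twig'' cut off, i.e.\ the closure of the component of $T\setminus\beta$ containing $e$, and set $T'=\ov{T\setminus W}$. By Lemmas \ref{lem:span} and \ref{lem:operation}, $W$ is a chain with ends $e$ and some point $w\in\beta$, it meets $T'$ exactly in $w$, and $T'$ is a finite circle-tree. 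Since $\beta$ is a \emph{genuine} branching, $w$ carries exactly one component of $T\setminus\beta$ (otherwise $w$ itself would be a branched point reached before $\beta$); hence deleting $W$ lowers $\mu(\beta)$ by exactly $1$ and leaves all other $\mu$-values unchanged, with $\mu_{T'}(\beta)=\mu_T(\beta)-1\ge 2$. Tracking ends carefully — $e$ disappears; $\beta$ survives as a non-end cut point or cut circle because $\mu_{T'}(\beta)\ge 2$; no surviving circle changes status; and one checks each end of $T'$ is an end of $T$ and conversely — shows $T'$ has exactly $n-1$ ends. Meanwhile the left-hand side drops by exactly $1$ from $T$ to $T'$, since $t\mapsto\max(0,t-2)$ decreases by $1$ as $t$ goes from $\mu_T(\beta)\ge 3$ to $\mu_T(\beta)-1$ and nothing else is affected. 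Applying the inductive hypothesis to $T'$ gives the identity for $T$.

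The step I expect to be the main obstacle is exactly this bookkeeping: verifying that pruning the twig lowers \emph{both} $n$ and the left-hand side by exactly $1$. The delicate point is the behaviour of circles under pruning — an end circle may secretly carry a branched point, and a cut circle may become an end circle when a twig is removed — which is why one must prune all the way to the first \emph{genuine} branching (skipping over cut points and cut circles), and why one needs the component-count interpretations of $\mu_T(x)$ recalled above. An alternative that makes the same content more transparent is to collapse every circle of $T$ to a point: by Lemma \ref{lem:circle} (two distinct circles meet in at most one point) the result $\wh T$ is a finite topological tree, so the classical identity $\#\{\text{leaves of }\wh T\}-\sum_{\deg v\ge 3}(\deg v-2)=2$ applies; the component-count facts give $\deg_{\wh T}(p_C)$ as the number of components of $T\setminus C$, and a short computation matches $n$ and the left-hand side for $T$ against the leaf count and the branching excess of $\wh T$, the only discrepancy — end circles that carry a branched point — contributing equally to both sides.
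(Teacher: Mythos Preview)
Your approach is essentially the same as the paper's: induct on the number of ends, remove one end, and show the left-hand side drops by exactly one. The paper packages the pruning step slightly differently---rather than walking from an end $e$ to the first branching and cutting the twig there, it defines $T'=T[X_1,\ldots,X_n]$ as the circle-tree spanned by all ends except $X_0$, then identifies the point $y$ where $T[X_0,y]$ attaches to $T'$ (proving $T[X_0,y]\cap T'=\{y\}$). This $y$ is exactly your $w$, and the paper's $T[X_0,y]$ is your twig $W$; the two constructions agree by Lemma~\ref{lem:coincide}. The paper's version avoids having to formalize ``the first branched point or circle on a path from $e$'' and the case analysis at $w$ you flag as delicate, since the spanning-tree machinery (Lemmas~\ref{lem:span}--\ref{lem:operation}) handles the bookkeeping directly: one simply checks whether $y$ is a cut point of $T'$ (then $\mu_T(y)=\mu_{T'}(y)+1$) or lies on a non-end circle $C\subset T'$ (then $\mu_T(C)=\mu_{T'}(C)+1$). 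Your alternative via collapsing circles to a tree is a genuinely different route not taken in the paper.
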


\begin{proof}
If $n=2$, the circle-tree $T$ has neither branched points nor branched circles. In fact, if $z\in T$ is a branched point, then $T$ has at least three ends by Corollary \ref{cor:end}, a contradiction. Similarly, we obtain that $T$ has no branched circles.

Assume, by induction, that the lemma holds for an integer $n \ge 2$. Let $T$ be a circle-tree of $T_0$ with $n +1$ ends $X_0,\ldots, X_{n }$. Denote $T'=T[X_1,\ldots,X_{n }]$.

If $X_0\cap T'\neq\emptyset$, then $X_0$ is an end circle, and $T'$ intersects $X_0$ at a single point $y$.

 If $X_0\cap T'=\emptyset$, then there exists an arc $\g:[0,1]\to T$ such that $\g(0)\in X_0$, $y=\g(1)\in T'$, and $\g(t)\not\in T'$ for $t\in [0,1)$. We claim that $T[X_0,y]\cap T'=\{y\}$.

By the definition of $T[X_0,y]$ in the proof of Lemma \ref{lem:span}, it suffices to verify that for any circle $C\subset T_0$ with $\#(C\cap\g)\ge 2$, either $C\cap T'=\emptyset$ or $C\cap T'=\{y\}$. Since $\g[0,1)$ lies in a component of $T\setminus \{y\}$ disjoint from $T'$, there exists an open arc $\beta\subset U$
such that $$\lim_{t\to0}\beta(t)=\lim_{t\to 1}\beta(t)=y$$ and  $\ov{\beta}$ separates $\g[0,1)$ from $T'\setminus\{y\}$. Note that $C\subset T$ and $C\cap \g[0,1)\not=\emptyset$. Then $C\setminus\{y\}$ and $T'\setminus\{y\}$ are contained in distinct components of $\ov{\C}\setminus\ov{\beta}$.
 Thus, the claim is  proved.

In both cases, $y$ is not an endpoint of $T'$. If $y$ is a cut point of $T'$, then
$$
\mu_{T}(y)=\mu_{T'}(y)+1.
$$
Otherwise, $y$ is contained in a circle $C\subset T'$ that is not an end circle of $T'$. Thus,
$$
\mu_{T}(C)=\mu_{T'}(C)+1.
$$
For any branched point $x$ of $T'$ with $x\neq y$, it is also a branched point of $T$ with $\mu_{T}(x)=\mu_{T'}(x)$. If $C_1\not=C$ is a branched circle of $T'$, then it is also a branched circle of $T$ with $\mu_{T}(C_1)=\mu_{T'}(C_1)$. Finally, by the claim above, $T\setminus T'=T[X_0,y]\setminus\{y\}$, which contains neither branched points nor branched circles of $T$. Thus, the lemma is proved.
\end{proof}

\subsection{Images of circle-trees}
Let $f:\cbar\to\cbar$ be a branched covering, and let $U,V\subset\cbar$ be simply connected domains such that $U$ is a component of $f^{-1}(V)$ and $\partial V$ is locally connected. In particular, these conditions hold if $f$ is a rational map with a connected and locally connected Julia set, and $U$ is a Fatou domain of $f$.

A continuum $E\subset\cbar$ is {\bf full} if $\cbar\sm E$ is connected.
\begin{lemma}\label{lem:circle-image}
Let $C\subset\partial U$ be a circle. Then $f(C)$ is a finite circle-tree of $\partial V$. Moreover, each endpoint of $f(C)$ is a critical value of $f$, and if $f:C\to f(C)$ is not a homeomorphism, then each end circle of $f(C)$ either contains a critical value or separates a critical value  from $V$. 
\end{lemma}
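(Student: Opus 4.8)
\emph{Proof plan.} I would treat the four assertions in order. The first two are easy: since $f$ restricts to a proper map $U\to V$ one has $f(\partial U)\subset\partial V$, so $f(C)\subset\partial V$, and $f(C)$ is a locally connected continuum with more than one point, because $C$ is and $f$ is continuous while $f^{-1}$ of a point is finite (so $f$ is non-constant on $C$). By Lemma~\ref{lem:topology}\,(3) applied to $(U,\partial U)$ we have $C=\partial D$ for a component $D$ of $\cbar\sm\ov U$ with $\ov D\cap U=\emptyset$, and $f^{-1}(D')$ is disjoint from $\ov U$ for every component $D'$ of $\cbar\sm\ov V$ (as $f(\ov U)\subset\ov V$). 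All results of Subsection~2.1 apply with $(V,\partial V)$ in the role of $(U,T_0)$, and, by Corollary~\ref{cor:circle-tree}, with $(V_T,T)$ in that role for any circle-tree $T$ and complementary domain $V_T$ with $\partial V_T=T$.

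\emph{$f(C)$ is a circle-tree of $\partial V$.} Fix a circle $C'\subset\partial V$. By Lemma~\ref{lem:circle}, $C'\cap f(C)$ is connected, so it is empty, a point, all of $C'$, or a nondegenerate proper closed subarc $\alpha\subsetneq C'$; only the last case must be excluded. Suppose $C'\cap f(C)=\alpha$ with endpoints $p\neq q$, let $\beta$ be the complementary open arc, and let $D'$ be the component of $\cbar\sm\ov V$ with $\partial D'=C'$; then $\beta\subset\partial V=f(\partial U)$ but $\beta\cap f(C)=\emptyset$. Picking $a\in C$ with $f(a)=p$ and working in a local coordinate near $a$ in which $f$ is $z\mapsto z^d$, the set $f^{-1}(D')$ is disjoint from $\ov U$, so, since $a\in\partial U$ lies on $C=\partial D$, the arc $C$ near $a$, the $d$ local preimages of the arc $C'$ and the $d$ local preimages of $D'$ are fitted together in a restricted way; because $f(C)$ never meets $D'$ (as $D'\cap\partial V=\emptyset$), $C$ avoids $f^{-1}(D')$ near $a$. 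Counting the local degrees of $f$ at the preimages of $p$ lying in $\partial U$ — over each of which $f(\partial U)$ covers a full neighbourhood of $p$ in $\partial V$, in particular points of $\beta$ — should then force $f(C)$ itself to meet $\beta$ arbitrarily close to $p$, contradicting $\beta\cap f(C)=\emptyset$. (Equivalently, one can use that a subcontinuum $E$ of $\partial V$ is a circle-tree if and only if every component $B$ of $\partial V\sm E$ has $\#(\ov B\cap E)\le 1$, together with the fact that every such $B$ lies in the component of $\cbar\sm f(C)$ containing $V$; this reduces to the same local picture.) I expect this local/degree bookkeeping to be the main obstacle.

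\emph{Endpoints are critical values.} We may assume $T:=f(C)$ is not a single circle, for then it has no endpoints. Let $e$ be an endpoint of the circle-tree $T$ and suppose some $a\in C$ with $f(a)=e$ is not a critical point of $f$. Then $f$ is a homeomorphism near $a$, so, $C$ being a Jordan curve, $T$ contains a nondegenerate arc with $e$ in its relative interior. But a circle-tree cannot contain such an arc through one of its endpoints: applying Lemma~\ref{lem:lamination}\,(2) to $(V_T,T)$ yields cut points $x_n$ of $T$ with the components $B_n$ of $T\sm\{x_n\}$ containing $e$ satisfying $B_{n+1}\subset B_n$, $x_n\notin B_n$ and ${\rm diam}(B_n)\to 0$; an arc with $e$ as an interior point would, for $n$ large, leave $B_n$ on both sides of $e$ and hence meet the cut point $x_n$ twice, contradicting injectivity. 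Hence every preimage of $e$ in $C$ is critical and $e$ is a critical value.

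\emph{End circles.} Let $C''$ be an end circle of $T=f(C)$ and $D''$ the component of $\cbar\sm\ov V$ with $\partial D''=C''$, and assume $C''$ contains no critical value of $f$ and separates none from $V$, i.e.\ $\ov{D''}$ contains no critical value. Then $f$ is unramified over $\ov{D''}$, so $f\colon f^{-1}(\ov{D''})\to\ov{D''}$ is a finite covering over a closed disk, hence trivial: $f^{-1}(\ov{D''})$ is a disjoint union of closed disks $Z_1,\dots,Z_N$ ($N=\deg f$), each mapped homeomorphically onto $\ov{D''}$, and $f^{-1}(C'')=\bigsqcup_j\partial Z_j$ with each $f|_{\partial Z_j}$ a homeomorphism onto $C''$. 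Since $f^{-1}(D'')$ avoids $\ov U$ and each $\mathrm{int}\,Z_j$ is connected, $\mathrm{int}\,Z_j$ lies in one component $D_j$ of $\cbar\sm\ov U$, so $C\cap\partial Z_j\subset\partial D\cap\partial D_j$, which by Lemma~\ref{lem:circle} is at most one point unless $D_j=D$. If $Z_j=\ov D$ for some $j$, then $f|_{\ov D}$ is a homeomorphism onto $\ov{D''}$, so $f|_C=f|_{\partial D}$ is injective — contrary to hypothesis. The remaining configuration ($D_j=D$ with $Z_j\subsetneq\ov D$) has to be ruled out by a finer topological argument using $C''\subset f(C)$ and the fact that $C''$ is an end circle (so $f(C)$ attaches to $C''$ along at most one cut point), again forcing $f|_C$ to be injective; I regard this reduction as the second delicate point of the proof.
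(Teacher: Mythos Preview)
Your proposal has genuine gaps at exactly the two places you flag as delicate, and the missing idea is the same in both. For the circle-tree property, your local argument near a preimage $a$ of $p$ does not force $f(C)$ to meet $\beta$: nothing prevents $C$ near $a$ from mapping into $\alpha$ on one side and into a branch of $\partial V\setminus C'$ on the other. The paper instead sets $I_1=\{x\in C:f(x)\in C'\}$ and $I_0=C\setminus I_1$. Each component $\alpha_i$ of $I_0$ maps into a component $B_i$ of $\partial V\setminus C'$, and by Corollary~\ref{cor:end} one has $\overline{B_i}\cap C'$ a single point, so the two endpoints $x_i,x_i'$ of $\alpha_i$ satisfy $f(x_i)=f(x_i')$. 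The full continuum $\overline{E_i}\subset\cbar\setminus V$ containing $B_i$ must then contain a critical value (else a disk neighbourhood of $\overline{E_i}$ lifts homeomorphically, contradicting $f(x_i)=f(x_i')$). This bounds the number of components of $I_0$, hence of $I_1$; some component $\beta_j$ of $I_1$ is therefore a genuine arc, and since $f:\beta_j\to C'$ preserves the orientation induced by $U$ and $V$, already $f(I_1)=C'$, so $C'\subset f(C)$.

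The same decomposition dispatches the end circles. If an end circle $C'$ of $f(C)$ neither contains nor separates a critical value from $V$, your covering observation gives that every component of $f^{-1}(C')$ is a Jordan curve on which $f$ is injective. Because $C'$ is an \emph{end} circle, all components $B_i$ of $\partial V\setminus C'$ that meet $f(C)$ attach at the unique cut point of $f(C)$ on $C'$; hence $I_1$ has exactly one component $\beta$ that is not a singleton, and $f(\beta)=C'$. Injectivity on the component of $f^{-1}(C')$ containing $\beta$ then forces $\beta=C$, so $f:C\to C'$ is a homeomorphism. Your attempt via the disks $Z_j$ and their relation to $D$ never rules out $Z_j\subsetneq\overline D$; the $I_0/I_1$ viewpoint bypasses this entirely. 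You also omit the \emph{finiteness} of the circle-tree: the paper bounds the number of end circles by noting that each one that is not a homeomorphic copy of $C$ must contain or enclose a critical value, and for each critical value $v$ at most $\deg f$ circles of $\partial V$ through $v$ can lie in $f(C)$.
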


\begin{proof}
Let $C'\subset\partial V$ be a circle such that $\#(f(C)\cap C')\ge 2$. Denote $I_1=\{x\in C: f(x)\in C'\}$ and $I_0=C\sm I_1$. Denote by $\{\alpha_i\}$ the components of $I_0$. Then each $\alpha_i$ is an open arc, and $f(\alpha_i)$ is contained in a component $B_i$ of $\partial V\sm C'$. By Corollary \ref{cor:end}, $\ov{B_i}\cap C'$ consists of a single point, and hence $f(x_i)=f(x'_i)$, where $x_i$ and $x'_i$ are the endpoints of $\alpha_i$.

\begin{figure}[http]
\centering
\includegraphics[width=12cm]{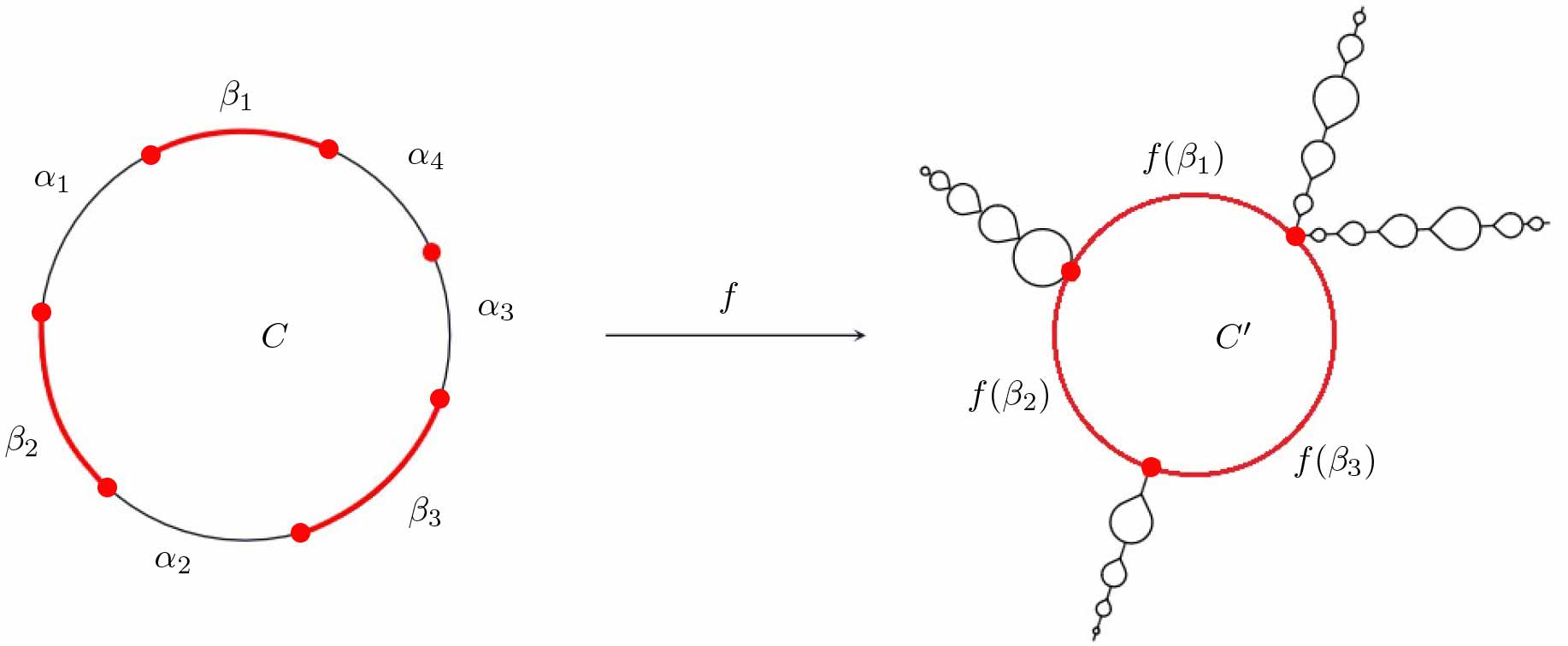}
\caption{The image of a circle. }\label{fig:circle-image}
\end{figure}

Let $E_i$ be the component of $(\cbar\sm V)\sm C'$ containing $B_i$. Then $\ov{E_i}$ is a full continuum, and $\ov{E_i}\cap C'=\{f(x_i)\}$. Moreover, $\ov{E_i}\cap\ov{E_j}=\emptyset$ if $f(x_i)\neq f(x_j)$. We claim that $\ov{E_i}$ contains critical values of $f$. Otherwise, there exists a disk $W\subset\cbar$ disjoint from the critical values of $f$ such that $\ov{E_i}\subset W$. Thus, $f$ is a homeomorphism on each component of $f^{-1}(W)$, which contradicts the assumption that $f(x_i)=f(x'_i)$.

Denote by $Z$ the set  of points $f(x_i)$ for all components $\alpha_i$. Since $\ov{E_i}\cap\ov{E_j}=\emptyset$ if $f(x_i)\neq f(x_j)$, we obtain $\# Z\le 2d-2$ by the above claim, where $d=\deg f$. For each point $z\in Z$, there exist at most $d$ components $\alpha_i$ such that $f(x_i)=z$. Therefore, $I_0$ has at most $d(2d-2)$ components. Consequently, $I_1$ has at most $d(2d-2)$ components.

By Lemma \ref{lem:circle}, $f(C)\cap C'$ is a continuum since $\#(f(C)\cap C')\ge 2$. Then at least one component $\beta_j$ of $I_1$ is an arc. Since $f: \beta_j\to C'$ preserves the orientation induced by $U$ and $V$, respectively, we obtain $f(I_1)=C'$. Thus, $C'\subset f(C)$, and hence $f(C)$ is a circle-tree of $\partial V$.

  Assume that $f:C\to f(C)$ is not a homeomorphism. Then each endpoint of $f(C)$ is a critical value of $f$. Let $C'$ be an end circle of $f(C)$. We claim that $C'$ either contains a critical value or separates critical values from $V$. If this claim is false, each component of $f^{-1}(C')$ is a Jordan curve on which the restriction of $f$ is injective. As above, denote $I_1=\{x\in C: f(x)\in C'\}$. Since $C'$ is an end circle of $f(C)$, $I_1$ has exactly one component $\beta$ that is not a single point. Thus, $f(\beta)=C'$. Since $f$ is injective on each component of $f^{-1}(C')$, it follows that $\beta=C$, and $f:C\to C'$ is a homeomorphism, a contradiction. The claim is proved.

There may exist infinitely many  circles in $\partial V$ containing critical values of $f$. However, for each critical value $v$ of $f$, there exist at most $\deg f$  circles of $\partial V$ containing $v$, which  are contained in $f(C)$. Therefore, $f(C)$ is a finite circle-tree.
\end{proof}

\begin{lemma}\label{lem:tree-image}
Let $T$ be a finite circle-tree of $\partial U$. Then $f(T)$ is a finite circle-tree of $\partial V$. Each endpoint of $f(T)$ is either the image of an endpoint of $T$ or a critical value of $f$. Each end circle of $f(T)$ either is the image of an end circle of $T$, or contains a critical value of $f$, or separates a critical value of $f$ from $V$.
\end{lemma}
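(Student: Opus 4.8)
The plan is to reduce Lemma~\ref{lem:tree-image} to Lemma~\ref{lem:circle-image} together with Lemma~\ref{lem:coincide} and the span operations from Lemmas~\ref{lem:span}--\ref{lem:operation}. The key structural input is that a finite circle-tree $T$ is built out of finitely many arcs and circles, and the image operation is compatible with unions (Lemma~\ref{lem:operation}\,(2)). So I would first treat the two basic building blocks separately: the image of an arc in $\partial U$, and the image of a circle in $\partial U$ (the latter is exactly Lemma~\ref{lem:circle-image}).

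First, I would establish the claim for an \emph{arc} $\gamma\subset\partial U$: its image $f(\gamma)$ is a finite circle-tree of $\partial V$, with each endpoint of $f(\gamma)$ being either the image of an endpoint of $\gamma$ (i.e.\ an endpoint of the arc) or a critical value, and each end circle of $f(\gamma)$ being the image of nothing arc-like but rather either containing or separating-off a critical value. The argument mirrors the proof of Lemma~\ref{lem:circle-image}: for a circle $C'\subset\partial V$ with $\#(f(\gamma)\cap C')\ge 2$, the preimage $\{x\in\gamma:f(x)\in C'\}$ has only finitely many components by the same critical-value counting (at most $d(2d-2)$ full continua $\overline{E_i}$ must each carry a critical value, since otherwise $f$ would be injective on a neighborhood, contradicting $f(x_i)=f(x_i')$ for the endpoints of a complementary arc $\alpha_i$), and at least one component maps onto $C'$ by the connectivity in Lemma~\ref{lem:circle}, so $C'\subset f(\gamma)$; finiteness of the circle-tree follows because each critical value lies on at most $\deg f$ circles of $\partial V$. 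The endpoint statement is immediate: an endpoint of $f(\gamma)$ lying in the relative interior image forces a local failure of injectivity unless it is a critical value. The end-circle statement is the same dichotomy argument as in Lemma~\ref{lem:circle-image}: if an end circle $C'$ of $f(\gamma)$ neither contains nor separates a critical value, then $f$ is injective on each component of $f^{-1}(C')$, and since $C'$ is an end circle only one preimage component can be non-degenerate, forcing that preimage arc to exhaust the relevant part of $\gamma$ and $f$ to be injective there, contradicting $\#(f(\gamma)\cap C')\ge 2$ (or giving that $f(\gamma)$ is only a homeomorphic image of an arc, which has no end circles at all).

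Next, I would assemble the general case. By Lemma~\ref{lem:coincide}, $T$ equals the circle-tree spanned by its ends $X_1,\dots,X_n$ (endpoints and end circles). For endpoints pick the point itself; for each end circle $X_j$ pick two points $y_j,z_j\in X_j$. Then $T=\bigcup_{i} T[p_1,q_i]$ over a finite family of pairs of chosen points, and moreover each $X_j$ (when it is a circle) is glued in as well. For a single span $T[x,y]$, its construction in the proof of Lemma~\ref{lem:span} exhibits it as the union of an arc $\gamma\subset T_0$ joining $x$ to $y$ and all circles $C\subset T_0$ with $\#(C\cap\gamma)\ge 2$. Its image is therefore $f(\gamma)\cup\bigcup f(C)$, a finite union of finite circle-trees of $\partial V$ by the arc case and Lemma~\ref{lem:circle-image}, hence a finite circle-tree by Lemma~\ref{lem:operation}\,(2). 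Repeated application of Lemma~\ref{lem:operation}\,(2) shows $f(T)$ is a finite circle-tree of $\partial V$, and crucially the ``each end of a union is an end of one of the pieces'' clause of that lemma lets me track provenance: every end of $f(T)$ is an end of $f$ of one of the building blocks (an arc inside $T$, a circle inside $T$). For an arc building block, by the arc case an endpoint-end is the image of an endpoint or a critical value, and an end-circle-end contains or separates a critical value. For a circle building block $C\subset T$, Lemma~\ref{lem:circle-image} already tells us: either $f:C\to f(C)$ is a homeomorphism---then $f(C)$ is a single circle, which as an end of $f(T)$ is the image of the circle $C\subset T$; this circle $C$ is then necessarily an end circle of $T$ (if it had two cut points of $T$, its image would have two cut points of $f(T)$, contradicting being an end)---or $f:C\to f(C)$ is not a homeomorphism, and each end circle of $f(C)$ contains or separates a critical value. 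Collating these cases gives exactly the three alternatives in the statement.

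\textbf{The main obstacle} I expect is the bookkeeping in the last paragraph: making precise that ``end of $f(T)$ $\Rightarrow$ end of an image of a building block'' and then pulling the property back through to either ``image of an end of $T$'' or ``critical value condition.'' The subtlety is that a single circle $C\subset T$ which is \emph{not} an end circle of $T$ could still, after a homeomorphic image, become part of $f(T)$ in a way that contributes to an end of $f(T)$ only if---and here is the point---$f(C)$'s role as an end of $f(T)$ forces a reduction of cut points, which must be accounted for using $\mu_T$ and the behavior of cut points under $f$ (a cut point of $T$ maps to a cut point of $f(T)$ since a separating Jordan curve in $U\cup\{x\}$ maps into $V\cup\{f(x)\}$). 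I would handle this by the clean observation that if $f(C)$ is a single circle that is an end circle of $f(T)$, then $C$ must be an end circle of $T$, proven by contraposition via the cut-point-pushforward just mentioned. The other potential friction point is degenerate spans and the case $\#(C\cap\gamma)$ large but $f(C)$ degenerating---but these are absorbed by Lemma~\ref{lem:circle}'s guarantee that intersections with circles are connected, exactly as in Lemma~\ref{lem:circle-image}.
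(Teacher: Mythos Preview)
Your decomposition strategy (reduce to arcs and circles, then reassemble via Lemma~\ref{lem:operation}\,(2)) is genuinely different from the paper's direct argument, but it breaks at the first step: the claim that $f(\gamma)$ is a circle-tree of $\partial V$ for an arbitrary arc $\gamma\subset\partial U$ is false. Take $\gamma$ to be a proper sub-arc of a circle $C\subset\partial U$ on which $f$ restricts to a homeomorphism onto a circle $C'\subset\partial V$; then $f(\gamma)$ is a proper sub-arc of $C'$, so $\#(C'\cap f(\gamma))=\infty$ while $C'\not\subset f(\gamma)$. Your mimicry of Lemma~\ref{lem:circle-image} fails exactly at ``at least one component maps onto $C'$ by the connectivity in Lemma~\ref{lem:circle}'': connectivity of $C'\cap f(\gamma)$ only tells you the intersection is a sub-arc, not all of $C'$. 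The orientation argument in the proof of Lemma~\ref{lem:circle-image} (``$f:\beta_j\to C'$ preserves the orientation induced by $U$ and $V$'') genuinely uses that the domain is a full circle, so that $f|_C$ has positive winding number about $C'$; an arc has no winding.

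The paper avoids arcs altogether. Its key observation is that a point $x\in T$ lying on \emph{no} circle of $\partial U$ cannot map into any circle $C'\subset\partial V$: otherwise an open arc in $\cbar\setminus\ov{V}$ landing at $f(x)$ would lift to one in $\cbar\setminus\ov{U}$ landing at $x$, forcing $x$ onto a circle of $\partial U$. With this in hand, if $\#(C'\cap f(T))\ge 2$ then all of $C'\cap f(T)$ comes from images $f(C)$ of circles $C\subset T$; if each such $f(C)$ met $C'$ in at most one point the intersection would be countable, contradicting Lemma~\ref{lem:circle}. Hence $C'\subset f(C)$ for some circle $C\subset T$, and Lemma~\ref{lem:circle-image} finishes. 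Your route could be repaired by inserting exactly this observation about $I_0$, but then the decomposition machinery becomes superfluous.

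There is a second gap in your end-tracking. The assertion that a cut point of $T$ maps to a cut point of $f(T)$, via pushing forward a separating Jordan curve in $U\cup\{x\}$, is not justified: $f|_U$ is typically not injective, so the image curve need not be Jordan, and in any case the two pieces of $T\setminus\{x\}$ may have images that meet elsewhere in $f(T)$. The paper handles end circles without this: once $C'\subset f(C)$ for some circle $C\subset T$, the trichotomy of Lemma~\ref{lem:circle-image} applied to $f|_C$ gives the conclusion directly.
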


\begin{proof}
Let $C'\subset\partial V$ be a circle such that $\#(C'\cap f(T))\ge 2$. We claim that there exists a circle $C\subset T$ such that $C'\subset f(C)$. By this claim, $C'\subset f(T)$, and then $f(T)$ is a circle-tree in $\partial V$.

To prove the claim, denote by $I_0\subset T$ the set of points that are not contained in any circle in $\partial U$. Then $f(I_0)\cap C'=\emptyset$, for otherwise, there exists an open arc $\beta\subset \cbar\sm\ov{V}$ that joins a point in $f(I_0)$ to a point in $\cbar\sm\ov{V}$. Thus, $f^{-1}(\beta)$ has a component in $\cbar\sm\ov{U}$ that joins a point in $I_0$ to a point in $\cbar\sm\ov{U}$, which is impossible.

Denote $I_1=T\sm I_0$. Then each point of $I_1$ is contained in a circle of $\partial U$.

Assume, by contradiction,  that $C'\not\subset f(C)$ for any circle $C\subset T$. It follows that $\#(C'\cap f(C))\le 1$ since $f(C)$ is a circle-tree. Thus, $C'\cap f(I_1)$ is a countable set, as $\partial U$ has only countably many circles. Since $C'\cap f(I_0)=\emptyset$, we know that $C'\cap f(T)=C'\cap f(I_1)$ is a countable set. On the other hand, by Lemma \ref{lem:circle}, $C'\cap f(T)$ is a continuum since $\#(C'\cap f(T))\ge 2$, a contradiction. Thus, the claim is proved.

Immediately, each endpoint of $f(T)$ is either a critical value of $f$ or  the image of an endpoint of $T$. Let $C'$ be an end circle of $f(T)$. By the claim above, there exists a circle $C\subset T$ such that $C'\subset f(C)$. Then $C'$ is also an end circle of $f(C)$. By Lemma \ref{lem:circle-image},  either $f:C\to C'$ is a homeomorphism, or $C'$ contains a critical value, or $C'$ separates a critical value from $V$.

The number of circles $C'$ in the last case is clearly finite since $f$ has a finite number of critical values. The circles $C'$ in the first case must be the images of end circles of $T$, and hence their number is finite. Note that there exist finitely many circles in $T$ containing a pre-image of the critical values of $f$. Then the number of $C'$ in the second case is also finite.
Therefore, $f(T)$ is a finite circle-tree in $\partial V$.
\end{proof}

\subsection{Invariant circle-trees}
Let $(f,P)$ be a marked rational map, and let $U$ be a fixed Fatou domain of $f$. We will construct an $f$-invariant and finite circle-tree of $\partial U$. The process is similar to the construction of the Hubbard tree for   PCF  polynomials \cite{DH2}.

We say a continuum $E$ {\bf separates} $P$ if there exist two points of $P$ in distinct components of $\cbar\setminus E$.
A circle $C\subset\partial U$ is called a {\bf marked circle} (rel $P$) if $C$ either intersects or separates $P$.

\begin{lemma}\label{lem:finite}
Any eventually periodic point in $\partial U$ receives finitely many internal rays in $U$. Consequently, there exist  finitely many marked circles in $\partial U$.
\end{lemma}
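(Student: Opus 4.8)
The plan is to combine the classical theory of internal rays with the expansion of a PCF map near its Julia set. Since $f$ is PCF, $J_f$ is connected and locally connected, and $U$, being a fixed Fatou domain, is the immediate basin of a superattracting fixed point; in particular $\partial U$ is a locally connected continuum, and we may take the conformal map $\phi\colon\cbar\sm\ov{\D}\to U$ to be the B\"{o}ttcher coordinate when it extends globally. By Carath\'{e}odory's theorem $\phi$ extends continuously to $\partial\D$, every internal ray $R_t:=\phi(\{re^{2\pi it}\colon r>1\})$ lands, distinct internal rays are disjoint, and a point $x\in\partial U$ receives $R_t$ precisely when $\phi(e^{2\pi it})=x$. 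Hence the number of internal rays landing at $x$ equals $\#\Theta_x$, where $\Theta_x:=\{t\in\R/\Z\colon\phi(e^{2\pi it})=x\}$ is a \emph{closed} subset of the circle. Finally, $\phi^{-1}\!\circ f\circ\phi$ is a proper holomorphic self-map of $\cbar\sm\ov{\D}$ of degree $d:=\deg(f|_U)\ge 2$; its boundary values form a $d$-to-one monotone circle map $B$ with $f\circ\phi=\phi\circ B$ on $\partial\D$, so that $f(R_t)=R_{B(t)}$ and $B(\Theta_x)\subset\Theta_{f(x)}$ for every $x\in\partial U$.

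First I would reduce to periodic points: since $B$ is $d$-to-one, $\#\Theta_x\le d\cdot\#\Theta_{f(x)}$ whenever $\Theta_{f(x)}$ is finite, and iterating this, if $x\in\partial U$ is eventually periodic with $f^{k}(x)=y$ periodic, then $\#\Theta_x\le d^{k}\cdot\#\Theta_{y}$. So it suffices to prove $\Theta_y$ is finite for every periodic $y\in\partial U$. Such a $y$ lies in $J_f$ and is hence repelling, since a PCF map has no non-repelling cycle in its Julia set; writing $p$ for the period of $y$, $g:=f^{p}$ and $D:=d^{p}$, the map $g$ is locally biholomorphic and expanding at $y$, so the inverse branch $g_{*}$ fixing $y$ is a strict contraction of some small linearizing disk $B(y,\ep)$ onto a relatively compact subdisk, while $g(R_t)=R_{B^{p}(t)}$ and $B^{p}(\Theta_y)\subset\Theta_y$.

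The heart of the matter is the finiteness of $\Theta_y$; this is the internal-ray counterpart of the Douady--Hubbard theorem that a repelling periodic point of a polynomial receives only finitely many external rays, and I would adapt that argument. Three preliminary facts turn $B^{p}|_{\Theta_y}$ into a cyclic-order-preserving homeomorphism of $\Theta_y$: it is injective, for if $B^{p}(s)=B^{p}(t)$ with $s\ne t$ in $\Theta_y$ then the disjoint arcs $R_s,R_t$ accumulating at $y$ are both mapped by $g$ onto the single ray $R_{B^{p}(s)}$ accumulating at $y$, contradicting the local injectivity of $g$ at $y$; each $t\in\Theta_y$ has a unique $B^{p}$-preimage in $\Theta_y$, namely the angle of the one ray landing at $y$ among the $D$ rays comprising $g^{-1}(R_t)$; and it preserves cyclic order, because the cyclic order of $\Theta_y$ on $\partial\D$ equals the cyclic order around $y$ of the accesses to $y$ from $U$ represented by the $R_t$, which the orientation-preserving local homeomorphism $g$ (recall $g(U)=U$) permutes order-preservingly. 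With these in hand one truncates each $R_t$, $t\in\Theta_y$, near $y$ to an arc $\alpha_t$ from $y$ to $\partial B(y,\ep)$, checks that $g_{*}(\alpha_{B^{p}(t)})=\alpha_t$, so that the sectors these arcs cut out of $B(y,\ep)$ are nested and contract under $g_{*}$, and then derives a contradiction from $\#\Theta_y=\infty$ by a length/expansion estimate: a longest complementary arc of $\Theta_y$ would have to be carried by $B^{p}$ --- which preserves the cyclic order of $\Theta_y$ and has derivative $D>1$ --- onto a strictly longer complementary arc. I expect this last estimate to be the principal obstacle; alternatively one may simply quote the classical landing theorem for repelling periodic points of rational maps with locally connected Julia sets.

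The ``consequently'' then follows easily. Let $C\subset\partial U$ be a marked circle. If $C$ separates $P$, then by Lemma \ref{lem:topology}\,(3) the component $D_C$ of $\cbar\sm\ov U$ with $\partial D_C=C$ contains at least one but not all points of $P$; since distinct components of $\cbar\sm\ov U$ are pairwise disjoint and $\#P<\infty$, only finitely many circles of $\partial U$ separate $P$. If instead $C$ meets $P$, pick $p\in C\cap P$; being eventually periodic, $p$ receives finitely many internal rays by the first part, so $\Theta_p$ is finite and its hyperbolic convex hull $H_p\subset\ov{\D}$ is a finite ideal polygon. By Lemma \ref{lem:lamination}\,(1) every circle of $\partial U$ through $p$ is $\phi(A\cap\partial\D)$ for a gap $A$ of $\LLL_U$ attached to $H_p$ at one of its finitely many vertices, and since at most two leaves of $\LLL_U$ meet at any point of $\partial\D$, only finitely many such gaps --- hence finitely many circles --- pass through $p$. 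As $\#P<\infty$, there are in total only finitely many marked circles in $\partial U$.
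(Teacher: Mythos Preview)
Your proposal is correct and follows essentially the same route as the paper. The paper's proof is much terser: it reduces to a fixed point $z\in\partial U$, observes that the angle set $\Theta\subset\partial\D$ is compact and that $p_d(w)=w^d$ restricts to an injection $\Theta\to\Theta$, and then simply invokes \cite[Lemma 18.8]{Mi1} to conclude $\Theta$ is finite---exactly the ``classical landing theorem'' you offer as an alternative to your spelled-out Douady--Hubbard argument. For the ``consequently'', the paper argues that $\partial U\sm\{z\}$ has finitely many components (since $\#\Theta<\infty$) and each such component together with $z$ contains at most one circle through $z$; your lamination/gap argument via Lemma~\ref{lem:lamination}(1) is an equivalent rephrasing. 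You are also more explicit than the paper in separating out the ``$C$ separates $P$'' case, which the paper leaves implicit.
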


\begin{proof}
It suffices to prove the lemma for a fixed point $z\in\partial U$. Let $\Theta\subset\partial\D$ be the set of angles corresponding to the internal rays in $U$ landing at $z$. Then $\Theta$ is compact,  and $p_d:\Theta\to\Theta$ is injective, where $p_d(z)=z^d$ and $d=\deg f|_{ U}$. By \cite[Lemma 18.8]{Mi1}, $\Theta$ is a finite set.

To show the finiteness of marked circles in $\partial U$, it suffices to prove that at most finitely many circles in $\partial U$ pass through an eventually periodic point $z\in \partial U$. According to the previous discussion, $\partial U\sm\{z\}$ has finitely many components, each of which, together with the point $z$, contains at most one circle in $\partial U$ passing through the point $z$. Thus, the lemma is proved.
\end{proof}

 For two continua $E_0\subset E$, we call $E_0$  a {\bf skeleton} of $E$ (rel $P$) if $E_0\cap P=E\cap P$ and any two points of $P$ in distinct components of $\ov{\C}\setminus E$  also lie in distinct components of $\ov{\C}\setminus E_0$.

\begin{theorem}\label{thm:invariant-CT}
Let $T$ be the finite circle-tree of $\partial U$ spanned by $P\cap\partial U$ together with all marked circles in $\partial U$. Then
\begin{enumerate}
\item each end of $T$ is a marked point or a marked circle;
\item $f(T)\subset T$, and  $T$ is a skeleton of $\partial U$ rel $P$.
\end{enumerate}

\end{theorem}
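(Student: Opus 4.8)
For Part (1) I would exploit the minimality of $T$. By definition $T=T[x_1,\dots,x_n,y_1,z_1,\dots,y_m,z_m]$ is the smallest circle-tree of $\partial U$ containing the marked points $\{x_i\}=P\cap\partial U$ and the marked circles $C_1,\dots,C_m$ (with $y_j,z_j\in C_j$), and it is a union of trees $T[x_1,\cdot]$ that pairwise share $x_1$. Given an end $X$ of $T$, iterating Lemma~\ref{lem:operation}(2) shows $X$ is an end of some $T[x_1,w]$, so $X\cap\{x_1,w\}\ne\emptyset$ by Lemma~\ref{lem:span}. If $X$ is an endpoint it lies on no circle of $T$; since each $C_j$ meets $T$ in at least two points it satisfies $C_j\subset T$, so $w\notin\{y_j,z_j\}$, and hence $X$ is one of the $x_i$, a marked point. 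If $X$ is an end circle $C$ with $C\ne C_j$ for all $j$, I would prune it: letting $z$ be the unique cut point of $T$ on $C$ (which exists unless $T$ is a single circle, in which case that circle is one of the $C_j$), the continuum $T'=(T\setminus C)\cup\{z\}$ is again a circle-tree by Lemma~\ref{lem:circle}; it contains every $C_j$ (as $C_j\cap C$ is at most $z$) and, if $C\cap P=\emptyset$, every $x_i$ as well, contradicting minimality since $C\subset T$ but $C\not\subset T'$. Hence $C$ meets $P$, i.e. it is a marked circle.

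The skeleton assertion of Part (2) is short: $T\cap P=\partial U\cap P$ because $P\cap\partial U\subset T\subset\partial U$, and if $p,q\in P$ lie in distinct components of $\cbar\setminus\partial U=U\sqcup(\cbar\setminus\overline U)$ then, $U$ being connected, one of them lies in a complementary disk $D$ of $\overline U$ (Lemma~\ref{lem:topology}(2)), whose boundary circle (Lemma~\ref{lem:topology}(3)) separates $p$ from $q$ and is therefore marked, hence contained in $T$; so $T$ already separates $p$ from $q$. For the invariance $f(T)\subset T$, note $f(\partial U)=\partial U$ since $f(U)=U$, so $f(T)$ is a finite circle-tree of $\partial U$ by Lemma~\ref{lem:tree-image}, and by Lemma~\ref{lem:coincide} it is the circle-tree spanned by its own ends; since $T$ is the span of all marked points and marked circles and the span is monotone, it suffices to show every end of $f(T)$ is a marked point of $P\cap\partial U$ or a marked circle. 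Two facts drive this: every critical value of $f$ lies in $P_f\subset P$; and $U$, being a fixed Fatou domain of a PCF map, is a superattracting basin, so its center $a$ is a fixed critical point and $a\in U\cap P$ — which means a circle $\partial D\subset\partial U$ (with $D$ the complementary disk of $\overline U$) is marked exactly when $\overline D\cap P\ne\emptyset$. Enumerating ends of $f(T)$ via Lemma~\ref{lem:tree-image}: an endpoint is the image of an endpoint of $T$ (marked by Part (1), hence carried into $P\cap\partial U$ since $f(P)\subset P$) or a critical value (in $P\cap\partial U$); an end circle that contains a critical value meets $P$, and an end circle $\partial D'$ separating a critical value $v$ from $U$ separates $v\in P$ from $a\in P$, so in both cases it is marked.

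The delicate case is an end circle $C'=f(C)$ coming from an end circle $C$ of $T$; here $f|_C$ is necessarily a homeomorphism, since otherwise $f(C)$ is a circle-tree with several ends by Lemma~\ref{lem:circle-image}. Writing $C=\partial D_C$ and $C'=\partial D'$, I would first run an orientation argument: $f$ carries the $U$-side of $C$ to the $U$-side of $C'$ (because $f(U)=U$) and is a local homeomorphism along $C$, so it carries a one-sided collar of $C$ inside $D_C$ over $D'$; let $W\subset D_C$ be the component of $f^{-1}(D')$ adjacent to $C$. If $W=D_C$, then $f|_{D_C}\colon D_C\to D'$ is proper, and since $\overline{D_C}$ meets $P$ ($C$ being marked, by the criterion above) I can push that point into $\overline{D'}$ — directly if it is on $C$, through the boundary-extending conformal isomorphism if $f|_{D_C}$ is unbranched, and through a critical point of $f$ in $D_C$ otherwise. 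If $W\subsetneq D_C$, then $\partial W\supsetneq C$; as a degree-one proper map onto the Jordan domain $D'$ would be a conformal isomorphism bounded by the single circle $C$ (forcing $W=D_C$), the map $f|_W$ has degree $\ge 2$, so has a critical point in $W\subset D_C$ whose image lies in $D'\cap P$. Either way $\overline{D'}\cap P\ne\emptyset$, so $C'$ is marked. Collecting the cases, every end of $f(T)$ is a marked point in $P\cap\partial U$ or a marked circle, whence $f(T)\subset T$.

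The main obstacle is precisely this homeomorphism case — controlling $f(C)$ when $f$ restricts to a homeomorphism on a marked end circle. Its treatment rests on the orientation analysis locating the image of a collar of $C$, together with the dichotomy $W=D_C$ versus $W\subsetneq D_C$ for the adjacent preimage component, the latter being forced to carry branching. A point to pin down with care is that a degree-one component of the preimage of a Jordan domain is again a Jordan domain bounded by a single preimage circle, which uses both the absence of branching in a degree-one holomorphic map and the local connectivity of $\partial D'\subset J_f$.
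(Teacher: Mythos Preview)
Your proof is correct and follows the same strategy as the paper: Part~(1) via the minimality characterization in Lemmas~\ref{lem:span} and~\ref{lem:operation}, and invariance by showing every end of $f(T)$ is a marked point or marked circle (via Lemma~\ref{lem:tree-image}) and then invoking Lemma~\ref{lem:coincide}. Your detailed treatment of the ``delicate case''---that the homeomorphic image of a marked end circle is again marked, via the orientation/degree analysis of the adjacent preimage component---spells out a step the paper dispatches in a single unjustified sentence (``In the latter case, $C$ is also a marked circle'').
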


\begin{proof} By Lemmas \ref{lem:span} and \ref{lem:operation}, each endpoint of $T$ is contained in $P\cap\partial U$, and each end circle of $T$ is a marked circle. By Lemma \ref{lem:tree-image}, for each endpoint $y$ of $f(T)$, either $y$ is a critical value, or there exists an endpoint $x$ of $T$ such that $f(x)=y$. In both cases, we have $y\in P\cap\partial U$. For each end circle $C$ of $f(T)$, either $C$ is a marked circle, or $C$ is the image of an end circle of $T$. In the latter case, $C$ is also a marked circle. Therefore, each end of $f(T)$ is contained in $T$. Thus, $f(T)\subset T$ by Lemma \ref{lem:coincide}.

Immediately, $T\cap P=\partial U\cap P$. If two points $a,b\in P$ are contained in distinct components of $\cbar\sm\partial U$, then there exists a unique circle $C\subset\partial U$ separating $a$ from $b$. Thus, $C\subset T$ since $C$ is a marked circle. It follows that $T$ is a skeleton of $\partial U$.
\end{proof}

The invariant circle-tree $T$ obtained in Theorem \ref{thm:invariant-CT} attracts every circle in $\partial U$.

\begin{lemma}\label{lem:eventually}
For any circle $C\subset\partial U$, there exists an integer $n\ge 0$ such that $f^n(C)\subset T$.
\end{lemma}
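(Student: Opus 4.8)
The plan is to follow the forward orbit of $C$ as a sequence of circles in $\partial U$, showing that unless the orbit eventually lands in $T$ it consists of disk components of $\cbar\sm\ov U$ on each of which a branch of $f$ is a homeomorphism --- which is incompatible with the expansion of the PCF map $f$ near $J_f$. If $C\subset T$ we are done with $n=0$, so assume $C\not\subset T$; since every marked circle of $\partial U$ lies in $T$, the circle $C$ is not marked. For a circle $C'\subset\partial U$ write $D_{C'}$ for the disk component of $\cbar\sm\ov U$ bounded by $C'$ (Lemma~\ref{lem:topology}). I would first record the elementary fact that $f(D_{C'})$ is either $U$ or another such disk $D''$, and that in the latter case $f(C')=\partial D''$ is again a single circle: $f(D_{C'})$ is open and connected with $\partial f(D_{C'})\subset f(C')\subset\partial U$ (the last inclusion because $f(U)=U$ forces $f(\partial U)\subset\partial U$), and since $\partial U$ is nowhere dense while $\cbar\sm\partial U$ is the disjoint union of $U$ with the disks $D_j$, this pins $f(D_{C'})$ down to one of $U,D_1,D_2,\dots$; when it equals $D''$, taking closures gives $f(C')\subset\ov{D''}\cap\partial U=\partial D''$ and $f(C')\supset\partial f(D_{C'})=\partial D''$. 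I would then exclude the case $f(D_{C'})=U$: there $D_{C'}$ is a Fatou component and $f|_{D_{C'}}\colon D_{C'}\to U$ is proper, so $f(C')=\partial U$, which by Lemma~\ref{lem:circle-image} is a finite circle-tree; but $f$ can have no critical point on $C'$ (openness of $f$ would otherwise push a neighbourhood of such a point into $\ov U$, impossible at a boundary point of $U$), so $f|_{C'}$ is a covering of $\partial U$ and hence $\partial U$ is a Jordan curve; then $\cbar\sm\ov U$ has the single component $D_{C'}$ with $f(D_{C'})=U=f(U)$, giving $f^{-1}(D_{C'})=\emptyset$, contradicting surjectivity of $f$.

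Thus the orbit of $C$ is a sequence of circles $C=C_0,C_1,\dots$ with $f(C_k)=C_{k+1}$, $f(D_{C_k})=D_{C_{k+1}}$, and $f\colon\ov{D_{C_k}}\to\ov{D_{C_{k+1}}}$ proper of some degree $m_{k+1}\ge1$. If some $C_k\subset T$ then $f^{k}(C)=C_k\subset T$ and we are done (by Theorem~\ref{thm:invariant-CT}, $f^{j}(C_k)\subset T$ for all $j$). So assume $C_k\not\subset T$, hence unmarked, for every $k$. If $m_{k+1}\ge2$ for some $k$, then by Riemann--Hurwitz (both sets are disks) the disk $D_{C_k}$ contains a critical point of $f$, so $D_{C_{k+1}}$ contains a point of $P_f\subset P$; since $(f,P)$ is a marked --- hence PCF --- map, the fixed Fatou domain $U$ is the immediate basin of a superattracting fixed point $p\in U\cap P$, lying in the component of $\cbar\sm C_{k+1}$ containing $U$, which is distinct from $D_{C_{k+1}}$. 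Then $C_{k+1}$ separates $P$, contradicting that it is unmarked. Hence $m_k=1$ for all $k$: each $f\colon\ov{D_{C_k}}\to\ov{D_{C_{k+1}}}$, and therefore each $f^{k}\colon\ov{D_{C_0}}\to\ov{D_{C_k}}$, is a homeomorphism, while $\ov{D_{C_k}}\cap P_f=\emptyset$ for every $k$.

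To finish I would invoke the shrinking lemma (Lemma~\ref{lem:expanding}), valid because $f$ is PCF with $J_f\neq\cbar$. If the disks $D_{C_k}$ are pairwise distinct, then ${\rm diam}(D_{C_k})\to0$ by Lemma~\ref{lem:topology}; fixing a finite cover of $\cbar$ by disks each meeting $P_f$ in at most one point, for all large $k$ one such disk $W$ contains $D_{C_k}$, so $D_{C_0}\subset f^{-k}(D_{C_k})$ lies in a component of $f^{-k}(W)$, whose diameter tends to $0$; hence ${\rm diam}(D_{C_0})=0$, absurd. Otherwise $(D_{C_k})$ is eventually periodic, say $D_{C_{k+p}}=D_{C_k}$ with $p\ge1$; then $f^{p}|_{\ov{D_{C_k}}}$ is a homeomorphism of $\ov{D_{C_k}}$ onto itself, so $D_{C_k}$ lies in a component of $f^{-np}(D_{C_k})$ for every $n$, and since $\ov{D_{C_k}}\cap P_f=\emptyset$ the shrinking lemma again forces ${\rm diam}(D_{C_k})=0$, absurd. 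In either case the standing assumption is contradicted, so some $C_k\subset T$, i.e.\ $f^{k}(C)\subset T$.

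I expect the main obstacle to be the last step --- turning non-recurrence of the orbit into a genuine contradiction via expansion, which is exactly where the PCF hypothesis and $J_f\neq\cbar$ are essential --- together with the dichotomy in the second paragraph, where ``unmarked'' rules out ramification and makes every branch of the orbit a homeomorphism of disks. The purely topological reductions in the first paragraph (each orbit step is a single circle; the configuration $f(D_{C'})=U$ cannot occur) are routine but need care with the nowhere-density of $\partial U$ and with how $f$ behaves near $\partial U$.
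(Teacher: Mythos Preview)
Your exclusion of the case $f(D_{C'})=U$ is where the argument breaks. The claim that $f$ has no critical point on $C'$ is not justified: a small neighbourhood of a point $c\in C'$ need not lie in $U\cup C'\cup D_{C'}$, since other pieces of $\partial U$ (and other complementary disks $D_j$) may accumulate at $c$, so you cannot conclude that its image lies in $\ov U$. More importantly, even granting no critical points on $C'$, the proper map $f\colon D_{C'}\to U$ may have degree $d'\ge 2$ with its critical points in the \emph{open} disk $D_{C'}$; this is perfectly compatible with $\ov{D_{C'}}\cap P=\emptyset$ since critical points need not be post-critical. In that situation $f|_{C'}$ is not a covering, $\partial U=f(C')$ need not be a Jordan curve, and your $f^{-1}(D_{C'})=\emptyset$ contradiction collapses. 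The upshot is that $f(C)$ is then not a single circle, and the iteration $C_0,C_1,\dots$ cannot proceed.

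The paper sidesteps this entirely by invoking Lemma~\ref{lem:circle-image} and Theorem~\ref{thm:invariant-CT}: $f(C)$ is always a finite circle-tree, and if $f|_C$ is not a homeomorphism then every endpoint of $f(C)$ is a critical value (hence in $P$) and every end circle contains or separates a critical value from $U$ (hence is marked), so by Lemma~\ref{lem:coincide} one gets $f(C)\subset T$ immediately. This yields the clean dichotomy ``$f(C)$ is a circle or $f(C)\subset T$'' without any separate analysis of $f(D_C)=U$. For the terminal contradiction the paper notes that $D=D_{C_0}$ is a Fatou domain and invokes Sullivan's no-wandering theorem: $f^N(D)$ is periodic for some $N$, hence contains a point of $P_f$, so $C_N$ is marked. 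Your shrinking-lemma alternative for this last step is a legitimate substitute; the periodic case is fine as written, and the wandering case is cleanest if you bound $\text{H-diam}_\omega(D_{C_k})$ via Lemma~\ref{lem:orbifold} and apply Lemma~\ref{lem:expanding} directly to the homeomorphism $f^k\colon D_{C_0}\to D_{C_k}$, rather than interposing a cover by disks meeting $P_f$ in at most one point.
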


\begin{proof}
By Lemma \ref{lem:circle-image} and Theorem \ref{thm:invariant-CT}, either $f(C)$ is still a circle in $\partial U$, or $f(C)\subset T$. Thus, it suffices to show that  $f^{N}(C)$ is a marked circle for some integer $N\ge 0$, under the assumption that $f^n(C)$ is always a circle for every $n\ge 0$. Otherwise, let $D_n$ be the disk bounded by $f^n(C)$ and disjoint from $U$ for $n\ge 0$. Then $\ov{D_n}\cap P=\emptyset$. Thus, $f^{n}(D)=D_n$, which implies $D$ is a Fatou domain of $f$. Consequently, there exists an integer $N\ge 0$ such that $f^N(D)$ is a periodic Fatou domain. Then $f^N(C)$ is a marked circle, a contradiction.
\end{proof}

As a by-product, we obtain the following result regarding the locally branched points on the boundaries of Fatou domains. This generalizes a well-known fact for polynomials.

A circle $C\subset T$ is called {\bf regular} if it is neither a marked circle nor a branched circle of $T$. Note that $T$ has only finitely many irregular circles.

\begin{theorem}\label{thm:eventually}
Every locally branched point of $\partial U$ is eventually periodic.
\end{theorem}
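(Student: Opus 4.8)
The plan is to reduce the statement about locally branched points of $\partial U$ to the finiteness of irregular circles and the dynamics on the invariant circle-tree $T$ obtained in Theorem \ref{thm:invariant-CT}. Recall that, by the classification discussed above, a locally branched point $x$ of $\partial U$ is either a branched point of $\partial U$ not lying on any circle, or a cut point of $\partial U$ lying on some circle $C\subset\partial U$. I would first reduce to the case $x\in T$: since $T$ is a skeleton of $\partial U$ and every circle in $\partial U$ eventually maps into $T$ by Lemma \ref{lem:eventually}, and since the forward orbit of a locally branched point consists of points each of which is locally branched or a critical value, it suffices to show that every locally branched point lying on $T$ is eventually periodic, together with the observation that there are only finitely many locally branched points \emph{not} in $T$ to begin with — or more precisely, that each such point maps into $T$ after finitely many steps.

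The heart of the argument is then a counting/pigeonhole step on $T$ itself. The circle-tree $T$ is finite, so it has finitely many branched points and finitely many branched circles by Lemma \ref{lem:number}; it also has only finitely many irregular circles by definition (a circle is irregular if it is marked or branched, and both types are finite in number — marked circles by Lemma \ref{lem:finite}, branched circles by Lemma \ref{lem:number}). The locally branched points of $\partial U$ that lie on $T$ split into: branched points of $T$ (finitely many); cut points of $T$ lying on branched circles of $T$ (finitely many, since each branched circle carries finitely many points with $\mu_T\ge 2$); and cut points of $T$ lying on \emph{regular} circles of $T$. A regular circle $C$ is neither marked nor branched, so $\mu_T(C)\le 2$, meaning $C$ meets the rest of $T$ in at most two cut points; since $f(C)$ is again a circle in $\partial U$ (it cannot be a nondegenerate circle-tree, as $C$ is not marked, using Lemma \ref{lem:circle-image} together with Theorem \ref{thm:invariant-CT} as in the proof of Lemma \ref{lem:eventually}), the map $f$ permutes-up-to-preperiodicity the regular circles among themselves and carries cut points to cut points. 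Because there are finitely many circles in $T$ total and $f$ maps circles of $T$ into circles of $T$, every circle of $T$ is eventually periodic; restricting $f^p$ to a periodic circle $C$ (with $f^p(C)=C$) gives a degree-$\ge 1$ self-covering of a Jordan curve carrying the finite set of cut points of $T$ on $C$ into itself, so by the standard argument (as in \cite[Lemma 18.8]{Mi1}, or directly by finiteness of the set and injectivity/eventual periodicity of the induced dynamics on angles) each such cut point is eventually periodic.

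The main obstacle I anticipate is the bookkeeping needed to show that the \emph{full forward orbit} of an arbitrary locally branched point $x$ of $\partial U$ (not just those already on $T$) eventually lands in $T$, and stays among locally branched points or critical values along the way, so that the eventual-periodicity conclusion on $T$ transfers back. The subtlety is that $f$ need not fix a locally branched point's status: a locally branched point could map to a non-branched point, and critical values enter the picture through Lemma \ref{lem:tree-image}. I would handle this by arguing that $x$ lies on or is ``captured by'' a circle or a branch structure of $\partial U$ that eventually maps into $T$ — concretely, using Lemma \ref{lem:eventually} when $x$ lies on a circle, and a separate argument bounding the number of branched points of $\partial U$ off all circles (their images are branched points or critical values, and there are finitely many critical values, so their orbits eventually enter a finite set) when $x$ lies on no circle. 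Once every locally branched point's orbit is trapped in a finite $f$-invariant set, eventual periodicity is automatic.

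Concretely, I would organize the proof as: (i) show every circle of $\partial U$ eventually maps into $T$ (Lemma \ref{lem:eventually}), hence so does every locally branched point lying on a circle; (ii) show the set of branched points of $\partial U$ lying on no circle has finite orbit closure, using that images land in the finite set of critical values or on $T$; (iii) on $T$, use finiteness (Lemmas \ref{lem:number}, \ref{lem:finite}) to see there are finitely many locally branched points and $f$ permutes circles of $T$ preperiodically, so all locally branched points of $T$ are eventually periodic; (iv) combine. This keeps the genuinely new content — the circle-tree counting in step (iii) — cleanly separated from the topological trapping arguments in steps (i)–(ii).
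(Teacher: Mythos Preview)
Your step (iii) rests on the assertion that the invariant circle-tree $T$ contains only finitely many circles, so that pigeonhole forces every circle of $T$ to be eventually periodic. This is false in general: a \emph{finite} circle-tree has finitely many \emph{ends}, not finitely many circles. The paper's own example (Theorem~\ref{thm:example} and Proposition~\ref{prop:concrete}) shows this concretely: there one checks $T=\partial U$, an infinite chain of circles $C_0,C_1,C_2,\ldots$ accumulating at the endpoint $a_1$, and none of these circles is periodic (indeed $f(C_n)=C_{n-1}$ for $n\ge 1$, while $f(C_0)=\partial U$ is not a single circle). So both premises of your pigeonhole step fail: there are infinitely many circles, and $f$ need not send a circle of $T$ to a single circle.

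The paper closes this gap by a different mechanism. After arranging that $f^n(x)$ is, for every $n\ge N$, a cut point of $T$ on some circle $C_n\subset T$, it argues by contradiction: if the $C_n$ are pairwise distinct, then eventually each $C_n$ is regular; for a regular circle $C$ either $D_C$ contains a component of $f^{-1}(U)$ (only finitely many such) or $f|_{\overline{D_C}}$ is a homeomorphism. In the latter regime $C_{n+1}=f(C_n)$ and $D_{C_{n+1}}=f(D_{C_n})$ for all large $n$, so the distinct domains $D_{C_n}$ form a wandering orbit --- contradicting Sullivan's no-wandering-domains theorem. Your steps (i)--(ii) are also incomplete: knowing that $f^N(x)\in T$ does not yet yield that $f^N(x)$ is locally branched \emph{in $T$} (only in $\partial U$), and the claim that branched points of $\partial U$ off all circles have finite orbit closure is unjustified (there is no bound on how many such points exist). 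The paper handles both issues at once by constructing an auxiliary finite circle-tree $T_1\ni x$ from nearby points of $\bigcup_k f^{-k}(T)\cap\partial U$ (which is dense in $\partial U$) and invoking Lemma~\ref{lem:tree-image} and Lemma~\ref{lem:coincide} to conclude $f^N(T_1)\subset T$, so that $f^N(x)$ is a locally branched point of $T$ or lies in $P_f\cap T$.
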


\begin{proof}
Let $x$ be any locally branched point of $\partial U$.
We first claim that there exists an integer $N>1$ such that $f^N(x)$ is either a locally branched point of $T$ or a point in $P_f\cap T$.

If $x$ is contained in a circle $C$ of $\partial U$, then there exists a component $E$ of $\partial U\sm C$ such that $\ov{E}\cap C=\{x\}$. Since $\bigcup_{n>0}(f^{-n}(T)\cap \partial U)$ is dense in $\partial U$, there exists a point $y\in E$ such that $f^{n_0}(y)\in T$ for some integer $n_0>0$. Then $x$ is a locally branched point of $T_1=T[y,C]$. By Lemma \ref{lem:eventually}, there exists an integer $N\ge n_0$ such that $f^N(x)\in f^N(C)\subset T$. It follows from Lemma \ref{lem:tree-image} that  $f^N(T_1)$ is a circle-tree whose ends are contained in $T$, and thus $f^N(T_1)\subset T$ by Lemma \ref{lem:span}.  Therefore, the claim holds.

If $x$ avoids any circle in $\partial U$, then $x$ is a branched point of $\partial U$. Thus, $\partial U\sm\{x\}$ has at least three components $E_1$, $E_2$, and $E_3$.
By a similar argument as above,  there exist a point $y_i\in E_i$ and an integer $n_i>0$ for each $i=1,2,3$ such that $f^{n_i}(y_i)\in T$, and the circle-tree $f^N(T_1)$ is contained in $T$ with $T_1:=T[y_1,y_2,y_3]\ni x$ and $N:=\max\{n_1,n_2,n_3\}$. Thus, the claim still holds.

Since $T$ has only finitely many branched points by Lemma \ref{lem:number}, it follows from the above claim that either $x$ is eventually periodic, or $f^n(x)$ is a locally branched point but not a branched point of $T$ for every $n\ge N$. It suffices to consider the latter. In this situation, each $f^{n}(x)$ is a cut point of $T$ and contained in a circle $C_n$ of $T$ for $n\ge N$.

  If $C_{n_i}=C$ for an infinite sequence $\{n_i\}$, then $x$ is eventually periodic since each circle contains finitely many cut points of $T$ by Lemma \ref{lem:number}. Thus, we may further assume that $C_n,n\geq N$ are pairwise different circles of $T$.

Since $T$ has finitely many irregular circles,  the circle $C_n$ is  regular for each sufficiently large integer $n$. For a regular circle $C$, there exists a dichotomy: either $D_{C}$ contains a component of $f^{-1}(U)$, or $f:\ov{D_{C}}\to f(\ov{D_{C}})$ is a homeomorphism, where $D_{C}$ denotes the component of $\cbar\setminus C$ disjoint from $U$. Clearly, there exist finitely many regular circles of the first type in $T$. It follows that
 $C_{n+1}=f(C_n)$ and $D_{C_{n+1}}=f(D_{C_n})$ for every sufficiently large integer $n$.  This implies the existence of wandering Fatou domains, a contradiction.
\end{proof}

\subsection{A Fatou domain without invariant graphs on the boundary}
In this subsection, we give an example of a   PCF  rational map with a fixed Fatou domain $U$, such that $\partial U$  admits no invariant graphs.

Let $X\subset \cbar$ be a compact set. A continuous map $\phi:\cbar\times [0,1]\to\cbar$ is an {\bf isotopy rel $\boldsymbol{X}$} if each map $\phi_s=\phi(\cdot,s)$ is a homeomorphism of $\cbar$ and $\phi_s(z)=z$ for every $z\in X$ and $s\in[0,1]$. In this case, we say the homeomorphisms $\phi_0$ and $\phi_1$ are isotopic rel $X$. Sometimes, we  write the isotopy $\phi$ as $\{\phi_s\}_{s\in [0,1]}$. 

Moreover, we say two subsets $E_1$ and $E_2$ of $\cbar$ are {\bf isotopic rel $\boldsymbol{X}$} if there exists a homeomorphism $h:\cbar\to\cbar$ that is isotopic to the identity map rel $X$ such that $h(E_1)=E_2$. In this paper, $E_1$ and $E_2$  are typically considered  Jordan curves, (open) arcs, or graphs.

\begin{theorem}\label{thm:example}
There exist a cubic   PCF  rational map $f$ and a fixed Fatou domain $U$ of $f$ such that $\partial U$ contains infinitely many circles, and for any arc $\gamma\subset\partial U$, $f^n(\gamma)=\partial U$ for some integer $n\ge 1$. Consequently, there exist no invariant graphs on $\partial U$.
\end{theorem}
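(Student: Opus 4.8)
The goal is a cubic rational map whose Julia set is a Sierpiński-type object in which one fixed Fatou domain $U$ has boundary $\partial U$ containing infinitely many circles, with the dynamics on $\partial U$ being in a precise sense ``topologically mixing on arcs.'' First I would fix a model: take the Fatou domain $U$ to be (after conjugation) the unit disk $\D$, with $f|_U$ conjugate to $z\mapsto z^2$ or $z\mapsto z^3$, so that $\partial U$ carries a lamination $\LLL_U$ as in Lemma~\ref{lem:lamination}. The point is to arrange that the quotient $\partial\D/\!\sim$ encoding $\partial U$ is a circle-tree with infinitely many end circles, \emph{and} that every circle $C\subset\partial U$ is eventually mapped onto a cut circle that re-expands. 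The cleanest way to force ``$f^n(\gamma)=\partial U$ for every arc $\gamma$'' is to guarantee that $\partial U$ has \emph{no} proper invariant sub-circle-tree other than $\partial U$ itself, i.e. that the invariant circle-tree $T$ from Theorem~\ref{thm:invariant-CT} is all of $\partial U$; then any arc $\gamma\subset\partial U$, being contained in some circle-tree spanned by two of its points, must under iteration fill out more and more of $\partial U$. Combined with Lemma~\ref{lem:eventually} (every circle is absorbed into $T$) and a compactness/connectedness argument on the finite-circle-tree exhaustion of $\partial U$, this yields $f^n(\gamma)=\partial U$.

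\textbf{Concretely, I would aim for a map with a ``blown-up'' dynamics on a self-similar carpet.} A natural candidate: start from a PCF polynomial-like model where a fixed Fatou domain is surrounded by infinitely many smaller Fatou domains arranged along a Cantor set of angles, then perturb into the rational (non-polynomial) regime so that the exterior is also a union of Fatou domains — making the map genuinely a Sierpiński rational map rather than a polynomial. The critical data would be: three simple critical points (cubic map, $2\cdot 3-2=4$ critical points counted with multiplicity, so generically four distinct ones — I may instead want two critical points each of multiplicity... no, a cubic has $2d-2=4$ critical points; to keep things clean I'd use a configuration with, say, one critical point in $U$ governing $z\mapsto z^3$ there, hence multiplicity $2$, and the remaining two simple critical points placed so their forward orbits land on $\partial U$ at cut points of the lamination). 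The combinatorial heart is choosing the angle data $\Theta$ (the set of landing angles / the identification on $\partial\D$) to be a $p_3$-invariant set whose induced quotient has infinitely many circles but whose only invariant circle-tree is the whole quotient; a self-similar Cantor set of angles with the doubling/tripling map acting minimally-on-ends does the job. One then checks the resulting branched cover $F$ has no Thurston obstruction — for expanding/Sierpiński-type combinatorics this follows from standard criteria (no obstruction because the map is ``expanding'' in the appropriate orbifold sense) — and Thurston rigidity produces the rational map $f$.

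\textbf{The main steps, in order:} (i) write down the formal branched covering $F:\cbar\to\cbar$ with the prescribed critical orbits and the prescribed action on a model circle-with-Cantor-set-of-identifications boundary; (ii) verify $F$ is PCF and the post-critical set is the finite set forced by the construction; (iii) check $F$ is unobstructed (this is where I expect to lean on an expansion argument: the map should be isotopic to an expanding Thurston map, or one invokes the characterization of Sierpiński rational maps / a pullback-contraction argument, so that no multicurve is a Thurston obstruction); (iv) realize $F$ by a cubic rational map $f$ via Thurston's theorem; (v) identify $U$ as the fixed Fatou domain and its lamination $\LLL_U$ with the model, so $\partial U$ has infinitely many circles by Lemma~\ref{lem:lamination}(1); (vi) show the invariant circle-tree $T=T[P\cap\partial U \cup \{\text{marked circles}\}]$ of Theorem~\ref{thm:invariant-CT} equals $\partial U$ — this reduces to the combinatorial fact that the chosen angle data has no proper invariant circle-tree containing the marked data; (vii) conclude via Lemma~\ref{lem:eventually} and a finite-exhaustion argument that $f^n(\gamma)=\partial U$ for every arc $\gamma\subset\partial U$, and hence no graph $G\subset\partial U$ can be $f$-invariant (an invariant graph would contain an arc, whose forward images would be all of $\partial U$, forcing $G\supset\partial U$, but $\partial U$ is not a graph since it has infinitely many circles).

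\textbf{The hard part} will be step (iii)–(vi) interlocked: one must choose the angle/identification data simultaneously so that (a) the branched cover is unobstructed (realizable), (b) the boundary quotient genuinely has infinitely many circles, and (c) there is no proper invariant circle-tree carrying the post-critical marking. Requirements (b) and (c) pull in opposite directions from naive attempts — having many circles suggests lots of invariant sub-structure — so the construction must use a \emph{minimal} or ergodic-type action of the angle-multiplication map on the set of ``gap-ends,'' while keeping the post-critical set finite. I would handle this by building the angle data as the attractor of an explicitly chosen iterated function system on $\partial\D$ compatible with $p_3$, engineered so that the only $p_3$-invariant closed union-of-gaps containing the (finite) post-critical angles is the full lamination; then verifying no-obstruction via the orbifold-expansion criterion, which applies because the post-critical set meets $J_f$ in a set on which the map is expanding and meets $F_f$ only in superattracting cycles. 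Once the combinatorial model is pinned down, steps (i), (ii), (iv), (v), (vii) are routine given the machinery already developed in Section~\ref{sec:2}.
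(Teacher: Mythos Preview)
Your general framework---construct a branched cover, realize it via Thurston's theorem, then analyze the lamination $\LLL_U$---matches the paper's. But the specific construction you propose heads in the wrong direction, and your step (vii) has a real gap.

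The paper's construction is far simpler than what you outline. It starts from the Chebyshev polynomial $g(z)=z^2-2$ (Julia set $[-2,2]$) and builds a degree-$3$ branched cover $\tilde f$ by ``blowing up'' the sub-interval $[-2,0]$ into a disk $D$, sending $D$ homeomorphically onto $\cbar\setminus[-2,2]$ and leaving $g$ unchanged elsewhere. The resulting $\tilde f$ has exactly three post-critical points $\{-2,2,\infty\}$, so there are \emph{no} essential multicurves at all and Thurston's theorem applies with no obstruction check---your step (iii) and the entire ``hard part'' you anticipate simply evaporate. The lamination $\LLL_U$ of the fixed Fatou domain is then computed explicitly (Proposition~\ref{prop:concrete}): it consists precisely of the countably many leaves $L_n$ with endpoints $e^{\pm\pi i/2^n}$, accumulating only at the single point $1\in\partial\D$.

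This \emph{sparseness} of the lamination is the entire mechanism, and it is the opposite of your plan. Because the identifications on $\partial\D$ are countable and nowhere dense, for any arc $\gamma\subset\partial U$ the preimage $\phi^{-1}(\gamma)\subset\partial\D$ automatically contains a non-trivial interval; angle-doubling then expands that interval to all of $\partial\D$, giving $f^n(\gamma)=\partial U$ in one line. Your proposed ``self-similar Cantor set of angles'' would give a lamination with uncountably many leaves, and then $\phi^{-1}(\gamma)$ need not contain any interval---the expansion argument breaks. Your fallback route to (vii), via ``$T=\partial U$'' together with Lemma~\ref{lem:eventually} and an unspecified compactness argument, does not close the gap either: once $T=\partial U$, Lemma~\ref{lem:eventually} is vacuous, and you have supplied no mechanism forcing a small sub-arc of a single circle $C\subset\partial U$ to eventually cover all of $\partial U$ rather than wandering among circles indefinitely. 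The property ``$T=\partial U$'' happens to hold in the paper's example, but it is not what drives the proof there.
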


Let $g(z)=z^2-2$. Its Julia set is $[-2,2]$. Let $D$ be the disk with diameter $[-2,0]$, and let $B$ be the domain bounded by the three external rays landing at the points $0$ and $-2$. Then there exists a homeomorphism $\varphi$ from $B\sm\ov{D}$ to $B\sm [-2,0]$, and  $\varphi$ can be  continuously extended to the boundary such that $\varphi=id$ on the three external rays and $\varphi(x+\textbf{i}y)=x$ on $\partial D$.

\begin{figure}[http]
\centering
\includegraphics[width=9.5cm]{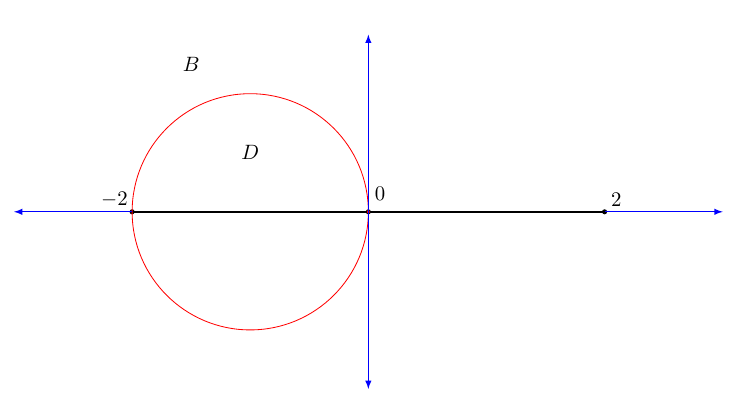}
\caption{The construction of $\tilde f$.}\label{figure:4}
\end{figure}

Let $h: D\to\cbar\sm [-2,2]$ be a homeomorphism such that $h=g\circ\varphi$ on $\partial D$. Define
$$
\tilde f=
\begin{cases}
g & \text{ on }\cbar\sm B, \\
g\circ\varphi & \text{ on }B\sm\ov{D}, \\
h & \text{ on }\ov{D}.
\end{cases}
$$
Then $\tilde f$ is a branched covering of $\cbar$ with $\deg\tilde f=3$. It has three critical points $-2$, $0$, and $\infty$, with $\deg(\tilde f|_{\tilde z=0})=3$ and $\deg(\tilde f|_{\tilde z=-2})=\deg(\tilde f|_{\tilde z=\infty})=2$. Its post-critical set is $P_{\tilde f}=\{-2,2,\infty\}$. Thus, $\tilde f$ is combinatorially equivalent to a rational map $f$ by the Thurston theorem (see \cite{DH1} or \cite{Mc3}). This means there exists a pair of orientation-preserving homeomorphisms $(\phi_0,\phi_1)$ of $\cbar$ such that $\phi_1$ is isotopic to $\phi_0$ rel $P_{\tilde f}$, and $f:=\phi_0\circ\tilde f\circ\phi_1^{-1}$ is a rational map.

Denote the $\phi_0$-image of $-2$, $0$, $2$, and $\infty$ by $a$, $b$, $a_1$, and $c$, respectively. Then  the critical points  of $f$ are $a,b,c$ with $\deg( f|_{z=b})=3$ and $\deg(f|_{z=a})=\deg(f|_{z=c})=2$. Moreover,
\[f(b)=a,\quad f(a)=a_1=f(a_1),\quad  \text{and} \quad f(c)=c.\] Thus, $P_f=\{a, a_1, c\}$. The map $f$ has exactly one periodic Fatou domain $U$ containing $c$. Then $f(U)=U$ and $\deg(f|_U)=2$. Thus, $f^{-1}(U)$ has another component, $U'$, in addition to $U$.

\begin{proposition}\label{prop:concrete}
The lamination $\LLL_{U}$ of $U$ consists of leaves $L_n,n\geq1$ such that the endpoints of $L_n$ are $e^{{\pi\textup{\bf i}}/{2^n}}$ and $e^{-\pi  \textup{\bf i}/{2^n}}$.
\end{proposition}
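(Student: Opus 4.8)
The plan is to recognise $f|_U$ as a degree-two superattracting basin, to read off from the explicit model $\tilde f$ one ``seed'' co-landing relation of internal rays (which produces $L_1$), and then to propagate it along the backward orbit of the critical value, using $\tilde f$ to resolve the combinatorial ambiguity at each step. Concretely: $c\in U$ is a superattracting fixed point of local degree $2$ and $\deg(f|_U)=2$, so a Riemann--Hurwitz count shows $U$ is simply connected with $c$ as its only critical point, whence $f|_U$ is conformally conjugate, via $\phi\colon\C\setminus\overline{\D}\to U$ with $\phi(\infty)=c$, to $z\mapsto z^{2}$ on $\C\setminus\overline{\D}$; that is, $f\circ\phi=\phi\circ(z\mapsto z^{2})$. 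Since $f$ is PCF, $J_f$ and hence $\partial U\subset J_f$ is locally connected, so $\phi$ extends continuously to $\partial\D$ and $\LLL_U$ is the lamination of Section~\ref{sec:2}. Writing $[\theta]:=\phi(e^{2\pi\mathbf i\theta})\in\partial U$, one has $f([\theta])=[2\theta]$, and $\LLL_U$ is the convex-hull lamination of the relation $[\theta]=[\theta']$. Since $e^{\pi\mathbf i/2^{n}}=e^{2\pi\mathbf i/2^{n+1}}$, the claim is equivalent to: for $\theta\ne\theta'$ one has $[\theta]=[\theta']$ exactly when $\{\theta,\theta'\}=\{1/2^{n+1},\,1-1/2^{n+1}\}$ for some $n\ge1$, and no three angles ever share a landing point.

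Next I would extract the seed leaf from the model. Using $f=\phi_0\circ\tilde f\circ\phi_1^{-1}$ and the explicit $\tilde f$, one locates in $\partial U$ the fixed point $a_1=\phi_0(2)$, which receives a single internal ray, hence $a_1=[0]$; the point $a=\phi_0(-2)$, with $f(a)=a_1$; and the critical point $b=\phi_0(0)$, with local degree $3$ and $f(b)=a$. A ray landing at $a$ has angle $\theta$ with $2\theta=0$, so $\theta\in\{0,1/2\}$; as $[0]=a_1\ne a$, we get $a=[1/2]$ and that is the only ray at $a$. Because $\deg f=3$ equals the local degree of $f$ at $b$, the point $b$ is the \emph{unique} $f$-preimage of $a$; therefore every ray landing at a preimage of $a$ lands at $b$, and these rays have angles $\theta$ with $2\theta=1/2$, namely $1/4$ and $3/4$. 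Hence $[1/4]=[3/4]=b$, exactly those two rays land at $b$, and the geodesic joining $e^{\pi\mathbf i/2}$ and $e^{-\pi\mathbf i/2}$ is a leaf of $\LLL_U$; moreover $b$ is the unique ``folding'' cut point of $\partial U$, i.e.\ the only point whose two rays have angles differing by $1/2$.

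To propagate, set $x_1:=b$ and, inductively, let $x_{n+1}$ be the $f$-preimage of $x_n$ lying on the arc $\Gamma\subset\partial U$ joining $a_1$ to $b$. A study of $\tilde f$---tracking the $\tilde f$-preimages of $\phi_1^{-1}(x_n)$ through the three pieces $\cbar\setminus B$, $B\setminus\overline{D}$, $\overline{D}$, equivalently exhibiting $\partial U$ as an infinite chain of circles $C_0,C_1,C_2,\dots$ glued successively at $x_1,x_2,\dots$ and accumulating at $a_1$---shows that each $x_n$ has exactly two $f$-preimages in $\partial U$, that only one of them lies on $\Gamma$, and that this one is $[1/2^{n+2}]=[1-1/2^{n+2}]$ (the rays at the other preimage carrying the mirror angles $1/2\mp 1/2^{n+2}$). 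This gives $L_{n+1}\in\LLL_U$ for all $n$. Conversely, by Theorem~\ref{thm:eventually} every locally branched point of $\partial U$ is eventually periodic; for any leaf $L$ of $\LLL_U$, its forward $f$-orbit is a sequence of leaves which must reach the unique folding cut point $b$, so $L$ arises from $L_1$ by these spine pull-backs and therefore equals some $L_n$; the same bookkeeping also forces $\#\phi^{-1}(x)\le2$ for every $x\in\partial U$.

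The one genuinely delicate step is showing that the ``correct'' $f$-preimage of $x_n$ is the symmetric pair $\{1/2^{n+2},1-1/2^{n+2}\}$ and not the other combinatorially admissible grouping of the four preimage angles: angle-doubling and non-crossing of leaves alone do not decide this, and pinning it down is exactly where the explicit model $\tilde f$ must be used---for instance by transporting the combinatorial picture of the basin of $\infty$ of $\tilde f$ onto $\partial U$ through the isotopy-lifting and convergence results of Appendix~\ref{app:2}. I expect this bookkeeping to be the bulk of the work; the rest (local connectivity of $\partial U$, the seed identifications at $a_1,a,b$, and the exhaustion via Theorem~\ref{thm:eventually}) is comparatively routine.
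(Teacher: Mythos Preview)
Your plan—identify the seed leaf at $b$ via the model and then pull back—matches the paper's outline, but there is a genuine gap in your converse. You invoke Theorem~\ref{thm:eventually} to argue that the forward orbit of an arbitrary leaf must eventually fold at $b$, but that theorem concerns \emph{locally branched} points of $\partial U$, where at least three local branches meet. A cut point receiving exactly two rays is \emph{not} locally branched, so Theorem~\ref{thm:eventually} says nothing about it, and there is no a priori reason the forward orbit of such a leaf should ever acquire diametrically opposite endpoints. The paper's converse is instead a short angular-length argument: once one knows that each $L_n$ is a leaf and, crucially, that no leaf joins the opposite arcs $(\pi/2^{n},\pi/2^{n-1})$ and $(-\pi/2^{n-1},-\pi/2^{n})$ for any $n$, it follows that every leaf has endpoints at angular distance strictly less than $\pi/2$; since the image of a leaf under $f$ is again a leaf (or a point) with doubled angular length, some forward iterate would have length in $(\pi/2,\pi]$, a contradiction.

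This also explains why the paper organises the forward step differently. Rather than chasing preimages of $x_n$ through the three pieces of $\tilde f$, the paper (after using isotopy-lifting once, as you anticipated, to show that $R_f(0)$ lands at $a_1$) pulls back a simply connected domain cut out by the most recently established co-landing rays; since this domain contains no critical values, one component of its preimage is again simply connected, yielding in one stroke both that the next pair $R_f(\pm\pi/2^{n})$ co-land and that rays in the two opposite intervening arcs land at distinct points. That second fact is precisely the input to the angular-length contradiction above. Your proposed bookkeeping through $\tilde f$ could in principle produce the $L_n$'s, but would not naturally deliver this ``no cross-leaves'' information, leaving the converse without its footing.
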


\begin{proof}
Let $W_0$ be a round disk under the B\"{o}ttcher coordinate of $U$ that is compactly contained in $U$, and let $W_n$ be the component of $f^{-n}(W_0)$ containing the fixed point $c$ for $n\ge 1$. Then $W_n\subset W_{n+1}$, and $\bigcup_{n\ge 0}W_n=U$.

Denote by $R_f(\theta)$ the internal ray of $f$ in $U$ with angle $\theta\in(-\pi,\pi]$, and by $R_g(\theta)$ the external ray of $g$ with angle $\theta\in(-\pi,\pi]$. Then
$\tilde f(R_g(0))=R_g(0)$. We may assume that $\phi_0(R_g(0))$ coincides with $R_f(0)$ in $W_0$. Then $\phi_1(R_g(0))$ coincides with $R_f(0)$ in $W_1$ since $f(\phi_{1}(R_g(0))=\phi_0(R_g(0))$. Thus, there exists an isotopy $\{\phi_s\}_{s\in[0,1]}$ rel $P_{\tilde f}$  such that $\phi_1=\phi_0$ on $R_g(0)\cap W_0$.

Lifting the isotopy $\{\phi_s\}_{s\in[0,1]}$ inductively by Lemma \ref{lem:lift}, we get a sequence of homeomorphisms $\{\phi_n\}$ of $\cbar$ such that $\phi_{n+1}$ is isotopic to $\phi_n$ rel $P_{\tilde f}$ and $f\circ\phi_{n+1}=\phi_n\circ\tilde f$. Thus, $\phi_{n+1}(R_g(0))$ coincides with $R_f(0)$ in $W_{n+1}$, and $f(\phi_{n+1}(R_g(0))=\phi_n(R_g(0))$. By Lemma \ref{lem:expanding}, $\phi_n(R_g(0))$ converges to $R_f(0)$. Thus, $R_f(0)$ lands at the point $a_1$.

Since $f^{-1}(a_1)=\{a,a_1\}$, the ray $R_f(\pi)$ lands at the point $a$, and $f^{-1}(R_f(0))$ has a component in $U'$ that joins the point $a$ and the  unique point $c'$ of $f^{-1}(c)$ in $U'$. Since $f^{-1}(a)=b$, both $R_f(\pm {\pi}/{2})$ land at the point $b$, and  a component of $f^{-1}(R_f(\pi))$ in $U'$ connects $c'$ and the critical point $b$. Consequently, $a,b\in\partial U\cap\partial U'$. It follows that $R_f(\theta_1)$ and $R_f(\theta_2)$ land at distinct points if $\theta_1\in(\pi/2,\pi)$ and $\theta_2\in(-\pi, -\pi/2)$.

Consider the simply connected domain bounded by $R_f(\pi)$ and $R_f(\pm{\pi}/{2})$. It contains no critical values of $f$. Thus, its pre-image has three components, one of which is bounded by $R_f(\pm{\pi}/{2})$ and $R_f(\pm{\pi}/{4})$. Thus, $R_f(\pm{\pi}/{4})$ land at the same point. Moreover, $R_f(\theta_1)$ and $R_f(\theta_2)$ land at distinct points if $\theta_1\in(\pi/4,\pi/2)$ and $\theta_2\in(-\pi/2, -\pi/4)$.

Inductively taking pre-images as above, the rays $R_f(\pm{\pi}/{2^n})$ land at the same point, but $R_f(\theta_1)$ and $R_f(\theta_2)$ land at distinct points if $\theta_1\in(\pi/2^n,\pi/2^{n-1})$ and $\theta_2\in(-\pi/2^{n-1}, -\pi/2^n)$ for $n\ge 2$.

 Now, we have proved that $L_n$ is a leaf of $\LLL_{U}$, and there exists no leaf that joins $e^{\textup{\bf i}\theta_1}$ to $e^{ \textup{\bf i}\theta_2}$ if $\theta_1\in(\pi/2^n,\pi/2^{n-1})$ and $\theta_2\in(-\pi/2^{n-1}, -\pi/2^n)$ for $n\ge 1$. It follows that if $L$ is a leaf of $\LLL_{U}$ that joins $e^{ \textup{\bf i}\theta_1}$ to $e^{ \textup{\bf i}\theta_2}$, then $|\theta_1-\theta_2|<\pi/2$.

Assume that $L$ is a leaf of $\LLL_{U}$ that joins $e^{\textup{\bf i}\theta_1}$ to $e^{\textup{\bf i}\theta_2}$. Then there exists a leaf of $\LLL_{U}$ that joins $e^{2^n  \textup{\bf i}\theta_1}$ to $e^{2^n  \textup{\bf i}\theta_2}$ for $n\ge 1$, except when $2^n(\theta_1-\theta_2)\equiv 0~\textup{mod}~{2\pi}$. In particular, there exists an integer $n\ge 1$ such that $\pi/2<2^n|\theta_1-\theta_2|\le\pi$. This is a contradiction.
\end{proof}

\begin{proof}[Proof of Theorem \ref{thm:example}]
Denote by $\phi:\D\to U$ the inverse of the B\"{o}ttcher coordinate for $U$. It can be extended continuously to the boundary. For any arc $\gamma\subset\partial U$,  Proposition \ref{prop:concrete} implies that $\phi^{-1}(\gamma)$ must contain a non-trivial interval. Thus, $f^n(\gamma)=\partial U$ for some integer $n\ge 1$.
\end{proof}

Up to conformal conjugacy, the rational map $f$ constructed above has the form
$$
f(z)=(z^2-6z+9-8/z)/3
$$
with the critical points $-1,2,$ and $\infty$; see Figure \ref{fig:dia} for its Julia set.

\begin{figure}[http]
	\centering
	\includegraphics[width=12cm]{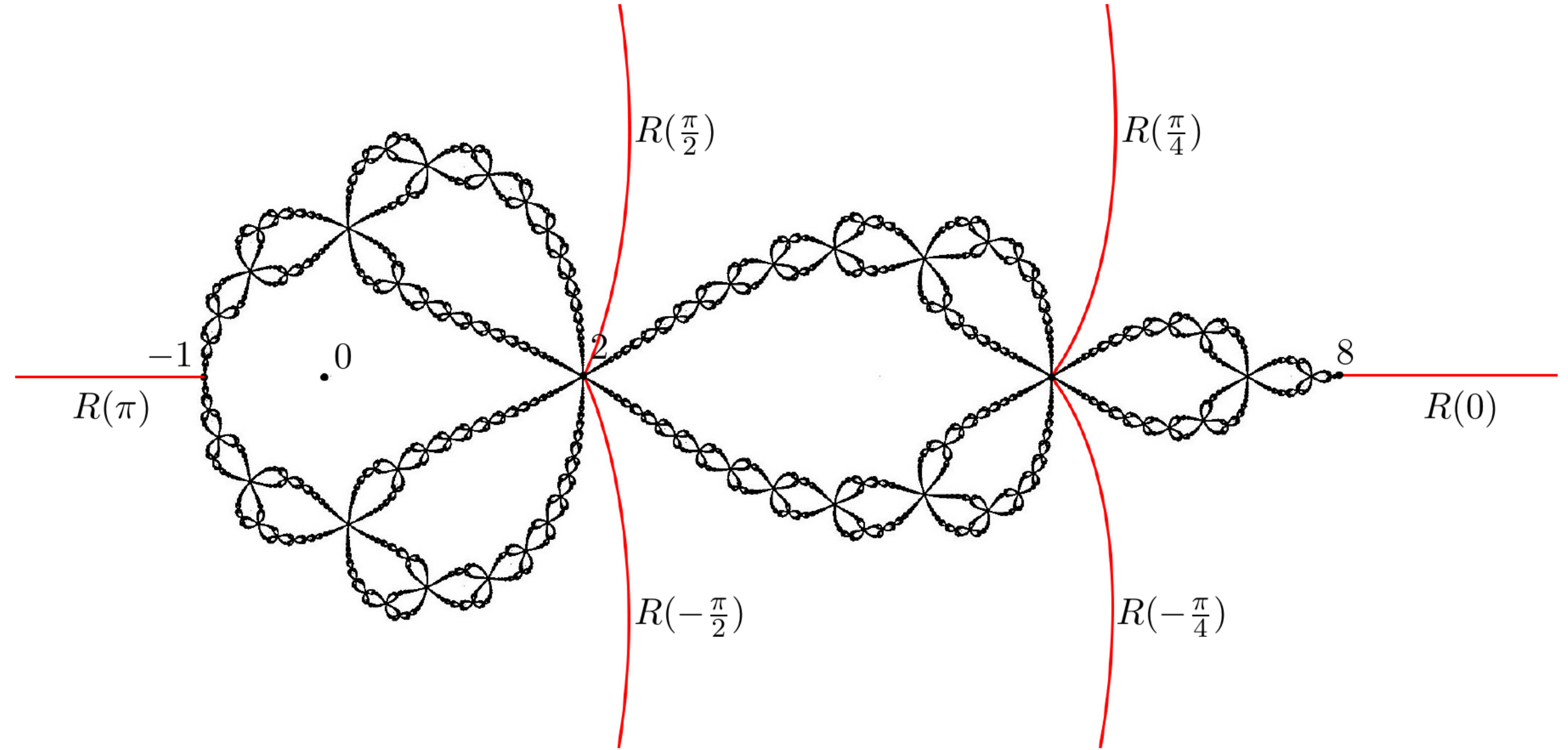}
\caption{The Julia set of $f(z)=(z^2-6z+9-8/z)/3$.}\label{fig:dia}
\end{figure}

\subsection{Proof of Theorem \ref{thm:local}}
Let $(f,P)$ be a marked rational map, and let $U$ be a fixed Fatou domain of $f$. Let $T\subset\partial U$ be the $f$-invariant circle-tree obtained in Theorem \ref{thm:invariant-CT}. Our proof strategy is as follows. First, we will find a graph $G_1$ serving as a skeleton of $T$ rel $P$ such that $f^{-1}(G_1)$ contains a graph $G_2$ that is isotopic to $G_1$ rel $P$. Then, by lifting, we obtain a sequence of graphs $\{G_n\}$, and finally, we will prove that $\{G_n\}$ converges to an invariant graph $G$. 

Let $X_0\subset T$ be the union of $P$ together with the set of cut points of $T$. Then $X_0$ is a compact set containing all endpoints of $T$, and $f(X_0)\subset X_0$. Each component of $T\sm X_0$ is an open arc in a circle of $T$. Denote $X_n:=f^{-n}(X_0)$ for $n\ge 0$. Then $X_n\subset X_{n+1}$.

Recall that a circle $C\subset T$ is regular if it is neither a marked circle nor a branched circle of $T$. Thus, each regular circle $C$ contains exactly two points of $X_0$, which cut $C$ into two open arcs $C^{+}$ and $C^-$.  Set
$$
G_1:=T\sm\bigcup_{C}C^-,
$$
where $C$ ranges over all regular circles in $T$. Then $G_1$ is a graph  since there exist finitely many irregular circles in $T$, and $G_1$ is a skeleton of $\partial U$ by Theorem \ref{thm:invariant-CT}.

To construct $G_2\subset f^{-1}(G_1)$, we need to go beyond $\partial U$. Let $\alpha_1$ be a component of $G_1\sm X_1$. Its image $f(\alpha_1)$ is a component of $T\sm X_0$. Thus, there exists a circle $C\subset T$ such that $f(\alpha_1)$ is a component of $C\sm X_0$. If $C$ is irregular, then $f(\alpha_1)\subset C\subset G_1$. If $C$ is regular, then $f(\alpha_1)$ equals either $C^+$ or $C^-$.
\begin{itemize}
\item If $f(\alpha_1)=C^+$, we still have $f(\alpha_1)\subset G_1$. 
\item If $f(\alpha_1)=C^-$, since $C^+$ and $C^-$ are isotopic rel $X_0$, there exists  a unique component $\alpha_1^+$ of $f^{-1}(C^+)$  isotopic to $\alpha_1$ rel $X_1$. Let $B(\alpha_1)$ denote the closed disk bounded by $\alpha_1$ and $\alpha_1^+$ disjoint from $U$. Then $B(\alpha_1)\cap G_1=\ov{\alpha_1}$ and $B(\alpha_1)\cap X_1=\{\alpha_1(0),\alpha_1(1)\}$. Such a component $\alpha_1$ of $G_1\setminus X_1$ is called a {\bf deformation arc} of $G_1$. 
\end{itemize}

Define the graph $G_2$ as 
$$
G_2:=\left(G_1\sm\bigcup\alpha_1\right)\cup\bigcup\alpha^+_1,
$$
where the union is taken over all deformation arcs of $G_1$. From the previous discussion, we have $f(G_2)\subset G_1$, and
 there exists an isotopy $\Theta^1:\cbar\times [0,1]\to\cbar$ rel $P$ such that $\Theta^1_t:=\Theta^1(\cdot,t)$ satisfies
\begin{enumerate}
\item $\Theta^1_0=id$ on $\cbar$;

\item $\Theta^1_t(z)=z$ on a neighborhood of the attracting cycles of $f$ for $t\in [0,1]$;

\item if $z\in G_1$ is not in any deformation arc, then $\Theta^1_t(z)=z$ for $t\in [0,1]$; and

\item if $\alpha_1$ is a deformation arc of $G_1$, then $\Theta^1_1(\alpha_1)=\alpha_1^+$, and $\Theta^1(\ov{\alpha_1}\times[0,1])=B(\alpha_1)$. 
\end{enumerate}

\noindent Consequently, we have $\theta_1(G_1)=G_2$ with $\theta_1:=\Theta^1_1$.

By inductively applying Lemma \ref{lem:lift}, we obtain an isotopy $\Theta^n:\cbar\times [0,1]\to\cbar$ rel $P$ and a graph $G_{n+1}$ for each $n\geq1$, such that $\Theta^n_0=id$ and  $\Theta^{n}_t\circ f(z) =f\circ \Theta^{n+1}_t(z)$  for all $z\in\ov\C$ and $t\in [0,1]$, and that $G_{n+1}=\theta_n(G_n)$ with $\theta_n:=\Theta^{n}_1$.
Thus, $f(G_{n+1})\subset G_{n}$. In addition,  there exist some components of $G_n\setminus X_n$, called the {\bf deformation arcs} of $G_n$ (under $\Theta^n$), such that
\begin{itemize}
\item [(a)] if $z\in G_n$ is not in any deformation arc of $G_n$, then $\Theta^n_t(z)=z$ for $t\in [0,1]$;
\item [(b)] if $\alpha_n$ is a deformation arc of $G_n$, then 
 the deformation  of $\ov{\alpha_n}$ under $\Theta^n$, denoted by $B(\alpha_n)$, is a closed disk satisfying $B(\alpha_n)\cap G_n=\ov{\alpha_n}$ and $B(\alpha_n)\cap X_n=\{\alpha_n(0),\alpha_n(1)\}$.
 \end{itemize}

\begin{figure}[http]
\centering
\includegraphics[width=11cm]{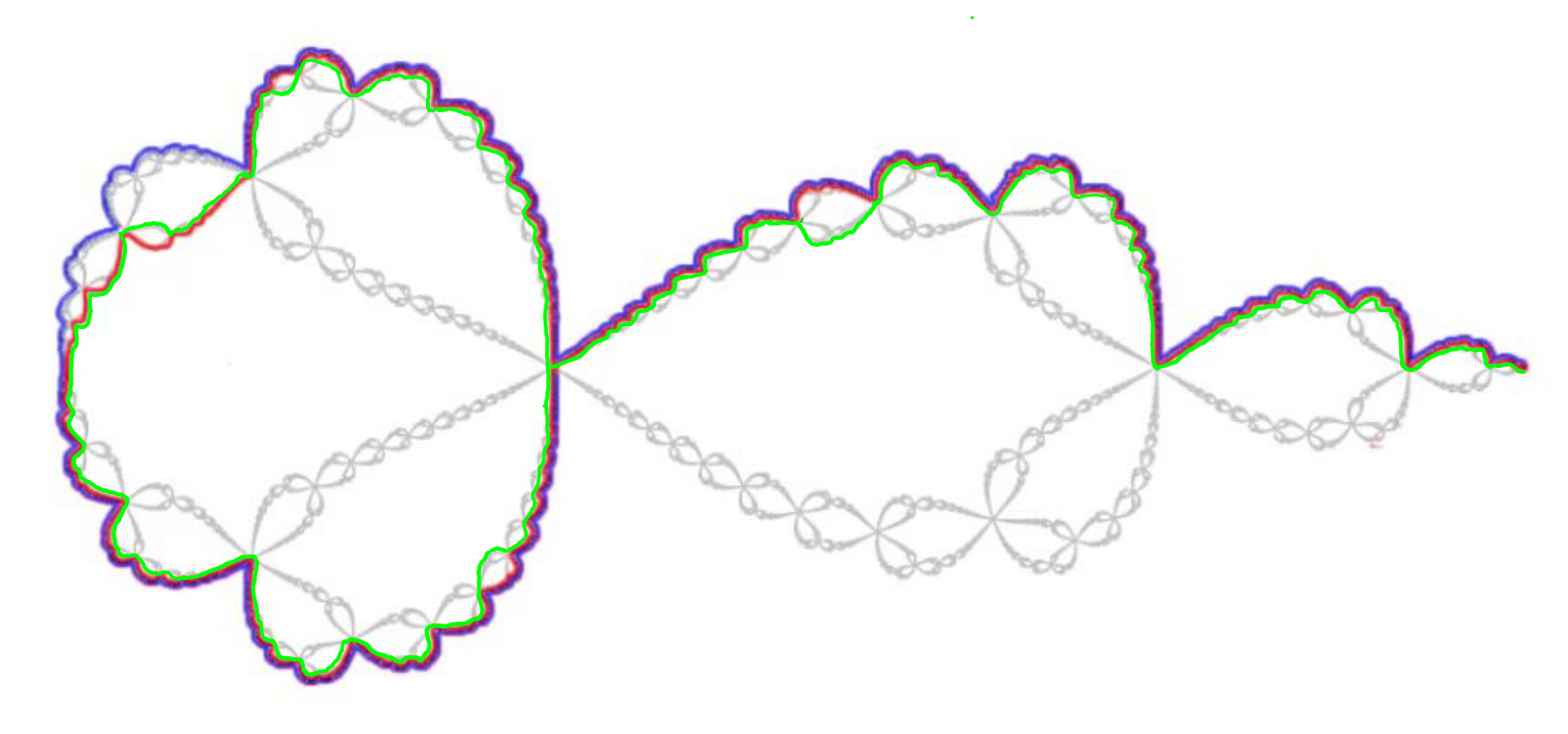}
\caption{The images of $\{G_n\}$.}
\end{figure}

Denote $\phi_n=\theta_{n-1}\circ\cdots\circ\theta_0$ for $n\ge 1$ with $\theta_0:=id$. Then $G_{n}=\phi_n(G_1)$. By Lemma \ref{thm:isotopy}, $\{\phi_n\}$ uniformly converges to a quotient map $\varphi$ of $\cbar$. Consequently, $f(G)\subset G$, where $G$ is defined as  $G:=\varphi(G_1)$.
In order to show that $G$ is a graph, we need to clarify the relation between the deformation arcs of $G_m$ and $G_n$ for $m>n\ge 1$.

Fix a deformation arc $\alpha_n$ of $G_n$ with $n\geq1$. Set $\alpha_{n-k}:=f^k(\alpha_n)$ for $0\leq k\leq n$. From the lifting construction of $\Theta^n$, it follows that, for $0\leq k\leq n-1$, $\alpha_{n-k}$ is a deformation arc of $G_{n-k}$ and  $f^k(B(\alpha_n))=B(\alpha_{n-k})$, and that $\alpha_0=C^-$ for a regular circle $C$ of $T$ and
 $f^n:B(\alpha_n)\to B(\alpha_0)$ is a homeomorphism. Here, $B(\alpha_0)=B(C^-)$ refers to the closure of the component of $\overline{\mathbb{C}}\sm C$ disjoint from $U$.

\begin{proposition}\label{prop:pearl}
Let $\alpha_m$ and $\beta_n$ be distinct deformation arcs of $G_m$ and $G_n$, respectively,  with $m\geq n\geq1$. Then either $B(\alpha_m)\subset B(\beta_n)$, or $B(\alpha_m)\cap B(\beta_n)=\emptyset$, or $B(\alpha_m)$ intersects $B(\beta_n)$ at a single point of $X_n$.
\end{proposition}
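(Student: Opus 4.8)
\noindent\emph{Proof proposal.} The plan is to transport the picture down to ``level zero'' by the branched covering $f^n$, where $n\le m$ is the smaller index, so that the disks to be compared become the closed Jordan regions $\ov{D_C}$ bounded by the regular circles $C$ of $T$ (recall $B(\alpha_0)=\ov{D_C}$, where $\alpha_0=C^-$). I would first record three facts. \emph{(i)} By Lemma~\ref{lem:topology}\,(3) each $\ov{D_C}$ with $C$ regular is the closure of a component of $\cbar\sm\ov U$, so distinct ones are disjoint by Lemma~\ref{lem:topology}\,(2), and by Lemma~\ref{lem:circle} two of them meet at most in one point $p$; such a $p$ lies on two distinct circles of $T$, hence is a locally branched point of $T$ lying on a circle, hence a cut point of $T$, so $p\in X_0$. \emph{(ii)} The endpoints of every deformation arc avoid $P$: if $\delta$ is a deformation arc of $G_k$, then $f^k$ sends its endpoints to the two $X_0$-points of a regular circle, which are cut points of $T$ and hence avoid $P$, so $f(P)\subset P$ forces the endpoints of $\delta$ to avoid $P$; therefore the Jordan curve $\partial B(\delta)=\ov\delta\cup\ov{\delta^+}$ is disjoint from $P$, and so is $C$ for any regular circle. \emph{(iii)} Since the critical values of every iterate $f^j$ lie in $P_f\subset P$, fact (ii) makes $(f^j)^{-1}(\partial\Omega)$ a disjoint union of Jordan curves whenever $\Omega$ is some $B(\delta)$ or some $\ov{D_C}$ with $C$ regular; consequently distinct components of $(f^j)^{-1}(\mathrm{int}\,\Omega)$ have disjoint closures.

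Now $f^n$ restricts to a homeomorphism of $B(\beta_n)$ onto $\ov{D_{C_\beta}}$, where $f^n(\beta_n)=C_\beta^-$ with $C_\beta$ regular, and to a homeomorphism of $B(\alpha_m)$ onto $B(\alpha_{m-n})$, which is a lower-level deformation-arc disk when $m>n$ (and is $\ov{D_{C_\alpha}}$ when $m=n$). If $m=n$, then $f^n$ carries $B(\alpha_n)\cap B(\beta_n)$ into $\ov{D_{C_\alpha}}\cap\ov{D_{C_\beta}}$, which by (i) is empty, or one point $p\in X_0$, or all of $\ov{D_{C_\alpha}}=\ov{D_{C_\beta}}$; in the first two cases the injectivity of $f^n|_{B(\beta_n)}$ makes $B(\alpha_n)\cap B(\beta_n)$ empty or a single point of $X_n$, and in the third $\mathrm{int}\,B(\alpha_n)$ and $\mathrm{int}\,B(\beta_n)$ are distinct components of $(f^n)^{-1}(D_{C_\alpha})$ — distinct because they meet $G_n$ in the different arcs $\ov{\alpha_n}\ne\ov{\beta_n}$ — hence have disjoint closures by (iii).

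If $m>n$, then $f^{m-n}$ maps $\mathrm{int}\,B(\alpha_{m-n})$ onto $D_{C_\alpha}$, which is disjoint from $\partial U$, while $f^{m-n}(C_\beta)\subset\partial U$ since $f(\partial U)\subset\partial U$; hence the connected set $\mathrm{int}\,B(\alpha_{m-n})$ misses $C_\beta$, so it lies either in $D_{C_\beta}$ or in the complement of $\ov{D_{C_\beta}}$. In the first subcase $\mathrm{int}\,B(\alpha_m)$ lies in some component $A$ of $(f^n)^{-1}(D_{C_\beta})$: if $A=\mathrm{int}\,B(\beta_n)$ then $B(\alpha_m)\subset B(\beta_n)$ on taking closures, and otherwise $\ov A$ is disjoint from $B(\beta_n)$ by (iii), so $B(\alpha_m)\cap B(\beta_n)=\emptyset$. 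In the second subcase $\mathrm{int}\,B(\alpha_m)$ is disjoint from $(f^n)^{-1}(\ov{D_{C_\beta}})\supset B(\beta_n)$, so any point of $B(\alpha_m)\cap B(\beta_n)$ lies on $\partial B(\alpha_m)\cap\partial B(\beta_n)$ and is carried by $f^n$ into $B(\alpha_{m-n})\cap C_\beta$; here I would bound $B(\alpha_{m-n})\cap C_\beta$ — pushing forward by $f^{m-n}$ and comparing $C_\alpha$ with the circle-tree $f^{m-n}(C_\beta)$ via Lemma~\ref{lem:circle}, possibly after an auxiliary induction on $m-n$ — and combine it with $B(\beta_n)\cap X_n=\{\beta_n(0),\beta_n(1)\}$ to conclude that $B(\alpha_m)\cap B(\beta_n)$ is at most a single point of $X_n$.

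I expect this last subcase to be the main obstacle: there the two collars genuinely sit at different dynamical levels, so $f^n$ does not separate them, and one must pin down exactly where the Jordan curve $\partial B(\alpha_m)$ can meet $\partial B(\beta_n)$, forcing such a point simultaneously into $f^{-n}$ of a single point of $C_\beta$ and into the two-element set $B(\beta_n)\cap X_n$, and ruling out transverse crossings. The remaining steps are bookkeeping built on Lemmas~\ref{lem:topology} and \ref{lem:circle}, the locally-branched-versus-cut-point dichotomy for finite circle-trees, and the homeomorphism property $f^k:B(\alpha_k)\to B(\alpha_0)$.
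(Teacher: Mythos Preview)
Your overall plan—push forward by $f^{n}$ and compare at level zero—is exactly the paper's, and your facts (i)–(iii), the case $m=n$, and your first subcase of $m>n$ are all fine. The gap is precisely your ``second subcase'', and it exists only because you stop one step short of the decisive observation.

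You prove that $\mathrm{int}\,B(\alpha_{m-n})$ misses the single circle $C_{\beta}$ by noting that $f^{m-n}$ carries it into $D_{C_{\alpha}}$ while $f^{m-n}(C_{\beta})\subset\partial U$. Run the \emph{same} argument with $U$ in place of $C_{\beta}$: since $f(U)=U$, a point of $\mathrm{int}\,B(\alpha_{m-n})\cap U$ would map under $f^{m-n}$ into $U$, contradicting $f^{m-n}\bigl(\mathrm{int}\,B(\alpha_{m-n})\bigr)=D_{C_{\alpha}}\subset\cbar\setminus\ov U$. Hence $\mathrm{int}\,B(\alpha_{m-n})$ is disjoint from all of $\ov U$, so it lies in a \emph{single component} $D$ of $\cbar\setminus\ov U$, and $B(\alpha_{m-n})\subset\ov D$. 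This is the whole content of the paper's proof: now $\ov D$ and $B(\beta_{0})=\ov{D_{C_{\beta}}}$ are closures of components of $\cbar\setminus\ov U$, and Lemmas~\ref{lem:topology} and~\ref{lem:circle} give the trichotomy $\ov D=\ov{D_{C_{\beta}}}$, or $\ov D\cap\ov{D_{C_{\beta}}}=\emptyset$, or $\ov D\cap\ov{D_{C_{\beta}}}$ is a single point, uniformly in $m-n$.

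With this in hand your second subcase dissolves: when $D\neq D_{C_{\beta}}$ one has $B(\alpha_{m-n})\cap B(\beta_{0})\subset\ov D\cap\ov{D_{C_{\beta}}}$, which is empty or a singleton, and the injectivity of $f^{n}|_{B(\beta_{n})}$ transfers this to $B(\alpha_{m})\cap B(\beta_{n})$. Your fact (iii) handles the remaining case $D=D_{C_{\beta}}$ exactly as you wrote. There is no need for a case split on $m=n$ versus $m>n$, no need to study $f^{m-n}(C_{\beta})$ as a circle-tree, and no auxiliary induction on $m-n$.
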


\begin{proof}
Set $\beta_0:=f^n(\beta_n)$ and $\alpha_{m-n}:=f^n(\alpha_m)$. By definition, $B(\beta_0)$ is the closure of a component of $\cbar\setminus \ov{U}$, and the interior of $B(\alpha_{m-n})$ is contained in a component $D$ of $\ov{\C}\setminus \ov{U}$. Then by Lemma \ref{lem:topology},  either $\ov{D}=B(\beta_0)$, or $\ov{D}\cap B(\beta_0)=\emptyset$, or $\ov{D}\cap B(\beta_0)$ is a singleton in $X_0$. It follows that either $B(\alpha_{m-n})\subset B(\beta_0)$, or $B(\alpha_{m-n})\cap B(\beta_0)=\emptyset$, or $B(\alpha_{n-m})$ intersects $B(\beta_0)$ at a single point of $X_0$.  Thus, this proposition can be proved by a pullback argument.
\end{proof}

\begin{proposition}\label{prop:hn}
Let $m>n\geq1$ be integers, and let $\alpha_n$ be any deformation arc of $G_n$.
\begin{enumerate}
\item Let $x\in G_1$ be a point such that $\phi_n(x)\in \alpha_n$. Then $\phi_{m}(x)\in B(\alpha_n)$. Consequently, if $\phi_m(x)$ is contained in a deformation arc $\alpha_m$ of $G_m$,  then $B(\alpha_{m})\subset B(\alpha_{n})$.
\item Let $\alpha\subset G_1$ be an open arc such that $\phi_n(\alpha)=\alpha_n$. Then $G_{m}\cap B(\alpha_{n})=\phi_{m}(\ov\alpha)$.
\end{enumerate}

\end{proposition}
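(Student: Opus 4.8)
The plan is to establish both parts by induction on $m$, the engine being Proposition \ref{prop:pearl}, which controls how the closed disks $B(\cdot)$ of deformation arcs at different levels sit relative to one another. Two elementary observations will be used throughout. First, each $\phi_m=\theta_{m-1}\circ\cdots\circ\theta_0$ is a homeomorphism of $\cbar$, since every $\theta_i=\Theta^i_1$ is. Second, $\Theta^n$ fixes every point of $G_n$ lying in $X_n$, because such a point lies in no deformation arc of $G_n$ (the deformation arcs being components of $G_n\sm X_n$), so property (a) applies; iterating this, if $p\in X_n$ and $q:=\phi_n^{-1}(p)\in G_1$, then $\phi_m(q)=p$ for every $m\ge n$ (at each step $k\ge n$ the point $\phi_k(q)=p$ lies in $X_n\subset X_k$ and on $G_k$, so $\theta_k$ fixes it).

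For part (1) I would show by induction on $m\ge n$ that $\phi_m(x)\in B(\alpha_n)$; the case $m=n$ holds since $\phi_n(x)\in\alpha_n\subset\ov{\alpha_n}\subset B(\alpha_n)$. For the inductive step, write $\phi_{k+1}(x)=\theta_k(\phi_k(x))$ with $\phi_k(x)\in G_k\cap B(\alpha_n)$. If $\phi_k(x)$ lies in no deformation arc of $G_k$, then $\theta_k$ fixes it and we are done; otherwise $\phi_k(x)$ lies in a deformation arc $\delta_k$ of $G_k$, so $\phi_{k+1}(x)\in B(\delta_k)$. If $\delta_k=\alpha_n$ then $B(\delta_k)=B(\alpha_n)$; if $\delta_k\ne\alpha_n$, Proposition \ref{prop:pearl} (applicable since $k\ge n$) leaves three options for $B(\delta_k)$ against $B(\alpha_n)$, and the ``disjoint'' and ``single point of $X_n$'' options are impossible because $\phi_k(x)\in\delta_k\cap B(\alpha_n)$ while $\phi_k(x)\notin X_k$, hence $\phi_k(x)\notin X_n$; so $B(\delta_k)\subset B(\alpha_n)$. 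In every case $\phi_{k+1}(x)\in B(\alpha_n)$. The ``consequently'' clause follows from the same trichotomy: if $\phi_m(x)$ lies in a deformation arc $\alpha_m$ of $G_m$, then $\phi_m(x)\in B(\alpha_m)\cap B(\alpha_n)$ with $\phi_m(x)\notin X_m\supseteq X_n$, which forces $B(\alpha_m)\subset B(\alpha_n)$.

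For part (2), observe first that $\ov\alpha=\phi_n^{-1}(\ov{\alpha_n})$ since $\phi_n$ is a homeomorphism with $\phi_n(\alpha)=\alpha_n$; because $\phi_m$ is a bijection carrying $G_1$ onto $G_m$, the identity $G_m\cap B(\alpha_n)=\phi_m(\ov\alpha)$ is equivalent to the claim: for all $x\in G_1$, $\phi_m(x)\in B(\alpha_n)$ iff $\phi_n(x)\in\ov{\alpha_n}$. The ``if'' direction splits into $\phi_n(x)\in\alpha_n$, handled by part (1), and $\phi_n(x)\in\{\alpha_n(0),\alpha_n(1)\}\subset X_n$, in which case the iterated fixed-point observation gives $\phi_m(x)=\phi_n(x)\in\ov{\alpha_n}\subset B(\alpha_n)$. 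For the ``only if'' direction I would prove its contrapositive by induction on $m\ge n$: assuming $\phi_n(x)\notin\ov{\alpha_n}$, show $\phi_m(x)\notin B(\alpha_n)$. The base case uses $B(\alpha_n)\cap G_n=\ov{\alpha_n}$. In the step, if $\phi_k(x)$ lies in no deformation arc of $G_k$, then $\phi_{k+1}(x)=\phi_k(x)\notin B(\alpha_n)$ by the induction hypothesis; if it lies in a deformation arc $\delta_k$, then $\delta_k\ne\alpha_n$ (else $\phi_k(x)\in\ov{\alpha_n}\subset B(\alpha_n)$), so Proposition \ref{prop:pearl} applies: $B(\delta_k)\subset B(\alpha_n)$ is impossible as it would contradict the induction hypothesis; $B(\delta_k)\cap B(\alpha_n)=\emptyset$ gives $\phi_{k+1}(x)\notin B(\alpha_n)$ directly; and if $B(\delta_k)\cap B(\alpha_n)=\{p\}$ with $p\in X_n$, then $p\in B(\alpha_n)\cap X_n=\{\alpha_n(0),\alpha_n(1)\}$ by property (b), so $\phi_{k+1}(q)=p$ for $q=\phi_n^{-1}(p)$, and were $\phi_{k+1}(x)\in B(\alpha_n)$ it would equal $p=\phi_{k+1}(q)$, whence injectivity of $\phi_{k+1}$ forces $x=q$ and $\phi_n(x)=p\in\ov{\alpha_n}$, a contradiction.

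The step I expect to require the most care is this last point in (2): ruling out that a distant portion of $G_m$ creeps into $B(\alpha_n)$ through one of the two corner points $\alpha_n(0),\alpha_n(1)$. The device that closes this gap is precisely the combination of property (a)—no isotopy ever moves a point of $X_n$ lying on the graph—with the injectivity of $\phi_m$; granting Proposition \ref{prop:pearl}, the remaining inductions are routine.
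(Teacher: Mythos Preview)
Your proposal is correct and follows essentially the same approach as the paper: both proofs are driven by Proposition~\ref{prop:pearl} (the trichotomy for the disks $B(\cdot)$), and both track how $\theta_k$ moves a point depending on whether it sits in a deformation arc. The only difference is organizational: you carry out a straight step-by-step induction on $m$, while the paper groups the steps by recording the subsequence of levels $n=n_1<\cdots<n_s<m$ at which $\phi_{n_i}(x)$ lands in a deformation arc and chains the inclusions $B(\alpha_{n_s})\subset\cdots\subset B(\alpha_{n_1})$; for part~(2) the paper similarly jumps to the \emph{first} level $n_1\ge n$ at which $\phi_{n_1}(z)$ enters a deformation arc and then invokes part~(1) once, rather than inducting again. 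Your treatment of the ``single point of $X_n$'' case in~(2) via injectivity of $\phi_{k+1}$ is exactly the argument implicit in the paper's terse ``Thus, $\phi_m(z)\notin B(\alpha_n)$,'' and spelling it out is a genuine improvement in clarity.
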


\begin{proof}
(1) Let $n=n_1<\cdots<n_s< n_{s+1}:=m$ be all integers such that $\phi_{n_i}(x)$ belongs to a deformation arc $\alpha_{n_i}$ of $G_{n_i}$ for $i=1,\ldots, s$. For each $i\in\{1,\ldots,s\}$ and any $n_i< k\leq n_{i+1}$, it follows from the definition of $\phi_n$ and properties (a) and (b) of $\Theta^n$ that
$$\phi_k(x)=\theta_{k-1}\circ\cdots\circ\theta_{n_i}\circ\phi_{n_i}(x)=\theta_{n_i}\circ\phi_{n_i}(x)\in\theta_{n_i}(\alpha_{n_i})\subset B(\alpha_{n_i}).$$
Thus, $\phi_{n_{i+1}}(x)\in B(\alpha_{n_i})$, and furthermore $\phi_{n_{i+1}}(x)\in B(\alpha_{n_i})\cap B(\alpha_{n_{i+1}})$ for $i\in\{1,\ldots,s-1\}$.
This implies $B(\alpha_{n_{i+1}})\subset B(\alpha_{n_i})$  for $i\in\{1,\ldots,s-1\}$  by Proposition \ref{prop:pearl}, since
$\phi_{n_{i+1}}(x)\in \alpha_{n_{i+1}}$, which is disjoint from $X_{n_{i+1}}$.  Therefore, $\phi_m(x)=\phi_{n_{s+1}}(x)\in B(\alpha_{n_s})\subset\cdots\subset B(\alpha_{n})$.

\vspace{2pt}

(2) By statement (1), we  immediately get that $\phi_{m}(\ov\alpha)\subset B(\alpha_n)$. Therefore, to prove $\phi_m(\ov{\alpha})=G_m\cap B(\alpha_n)$, it suffices to show that $\phi_m(z)\not\in B(\alpha_n)$ for any $z\in G_1\setminus \ov\alpha$.

 First, note that $\phi_{n}(z)\notin B(\alpha_n)$ since $G_{n}\cap B(\alpha_n)=\ov\alpha_n$. If $\phi_{k}(z)$ does not belong to any deformation arc of $G_k$ for every $n\leq k<m$, then $$\phi_{m}(z)=\phi_{m-1}(z)=\cdots=\phi_{n}(z)\notin B(\alpha_n).$$ 
 Otherwise, let $n_1\in[n,m)$ be the smallest integer such that $\phi_{n_1}(z)$ belongs to a deformation arc $\alpha_{n_1}$ of $G_{n_1}$. Then $\phi_{n}(z),\phi_m(z)\in B(\alpha_{n_1})$ by statement (1). Since $\phi_n(z)\not\in B(\alpha_n)$, it follows from Proposition \ref{prop:pearl} that $B(\alpha_n)\cap B(\alpha_{n_1})$ is either empty or a singleton in $X_n$. Note also that $B(\alpha_n)\cap X_n=\{\alpha_n(0),\alpha_n(1)\}$ by property (b) above. Thus, $\phi_m(z)\not\in B(\alpha_n)$.
\end{proof}

\begin{corollary}\label{cor:C}
For each point $z\in G\sm\partial U$, there exist an integer $n\ge 1$ and a component $D$ of $\cbar\sm\ov{U}$, such that $f^n(z)\in\ov{D}$ and $\partial D$ is a regular circle of $\partial U$.
\end{corollary}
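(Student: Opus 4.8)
# Proof Proposal for Corollary \ref{cor:C}

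The plan is to trace the definition of $G = \varphi(G_1)$ backwards and use the structure we have established for the deformation arcs. Fix a point $z \in G \setminus \partial U$. Since $G = \varphi(G_1)$ and $\varphi = \lim_n \phi_n$, there is a point $x \in G_1$ with $\varphi(x) = z$. The key observation is that if $\phi_n(x)$ never entered any deformation arc of $G_n$ for any $n \ge 1$, then by property (a) of the isotopies $\Theta^n$ we would have $\phi_m(x) = \phi_1(x) = x$ for all $m$, hence $z = \varphi(x) = x \in G_1 \subset T \subset \partial U$, contradicting $z \notin \partial U$. Therefore there exists a least integer $n \ge 1$ such that $\phi_n(x)$ lies in some deformation arc $\alpha_n$ of $G_n$.

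The second step is to locate $z$ inside $B(\alpha_n)$ and then push forward by $f^n$. By Proposition \ref{prop:hn}\,(1), once $\phi_n(x) \in \alpha_n$ we have $\phi_m(x) \in B(\alpha_n)$ for all $m > n$; passing to the limit, $z = \varphi(x) \in B(\alpha_n)$ since $B(\alpha_n)$ is closed. Now recall from the discussion preceding Proposition \ref{prop:pearl} that for a deformation arc $\alpha_n$ of $G_n$, setting $\alpha_0 := f^n(\alpha_n)$, we have $\alpha_0 = C^-$ for a regular circle $C$ of $T = \partial U$ (more precisely, a regular circle of the invariant circle-tree $T \subset \partial U$), and $f^n : B(\alpha_n) \to B(\alpha_0)$ is a homeomorphism, where $B(\alpha_0) = B(C^-)$ is the closure of the component $D$ of $\cbar \setminus C$ disjoint from $U$. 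Thus $f^n(z) \in B(\alpha_0) = \ov{D}$, and $\partial D = C$ is a regular circle of $\partial U$, which is exactly the assertion. (One should note $D$ is indeed a component of $\cbar \setminus \ov{U}$: since $C \subset \partial U$ is a circle, by Lemma \ref{lem:topology}\,(3) it bounds a component of $\cbar \setminus \ov{U}$, namely $D$.)

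I expect the only subtle point to be making sure the "least $n$" argument is airtight — specifically, that the hypothesis $z \notin \partial U$ genuinely forces $\phi_n(x)$ to eventually meet a deformation arc. The risk is that $x$ could lie on the boundary between the stationary part of $G_1$ and a deformation arc, or that different preimages $x \in \varphi^{-1}(z)$ behave differently. This is handled by observing that $\varphi^{-1}(z)$ is either a single point or, when $z$ is a limit of deformed arcs, a continuum contained in some $B(\alpha_n)$; in the latter case we may pick $x$ in the interior of a deformation arc directly. If $\varphi^{-1}(z) = \{x\}$ is a singleton and $x$ is fixed by all $\phi_m$, then $z = x \in G_1 \subset \partial U$, the excluded case. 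So for $z \notin \partial U$, some $\phi_n(x)$ does enter a deformation arc, and the argument above applies. Everything else is a direct unwinding of the constructions in the proof of Theorem \ref{thm:local} and the homeomorphism property $f^n : B(\alpha_n) \to B(\alpha_0)$, so no genuinely new estimate is needed.
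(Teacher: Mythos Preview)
Your proof is correct and follows essentially the same route as the paper's: pick $x\in G_1$ with $\varphi(x)=z$, use the fact that $z\notin\partial U$ to find a least $n$ with $\phi_n(x)$ in a deformation arc $\alpha_n$, invoke Proposition~\ref{prop:hn}\,(1) to place $z$ in $B(\alpha_n)$, and then push forward by $f^n$ to the closure of a component of $\cbar\sm\ov{U}$ bounded by a regular circle. Your added justification for why such an $n$ must exist (otherwise $\phi_m(x)\equiv x$ and $z\in G_1\subset\partial U$) makes explicit what the paper leaves implicit.
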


\begin{proof}
Let  $x\in G_1$ be a point such that $\varphi(x)=z$. Since $z\not\in \partial U$, there exists a smallest integer $n_0\ge 1$ such that $\phi_{n_0}(x)$ belongs to a deformation arc $\alpha_{n_0}$ of $G_{n_0}$. It then follows from Proposition \ref{prop:hn}\,(1) that $z=\varphi(x)\in B(\alpha_{n_0})$. By the discussion before Proposition \ref{prop:pearl}, $f^{n_0}(B(\alpha_{n_0}))$ is the closure of a component of $\ov{\C}\setminus \ov{U}$ bounded by a regular circle of $\partial U$.
\end{proof}

The following result is a key part of the proof of Theorem \ref{thm:local}.

\begin{proposition}\label{prop:convergence}
For any two distinct points $x,y\in G_1$ with $\varphi(x)=\varphi(y)$, there exists an arc $\beta\subset G_1$ connecting $x$ and $y$ such that $\varphi(\beta)=\varphi(x)$.
\end{proposition}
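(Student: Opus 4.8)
The plan is to analyze what it means for two points $x,y\in G_1$ to be identified by $\varphi$ in terms of the deformation arcs, and to build the connecting arc $\beta$ by gluing together pieces of $G_1$ that map into a common disk. First I would argue that the identification $\varphi(x)=\varphi(y)$ forces $x$ and $y$ to lie in the closures of open arcs $\alpha,\alpha'\subset G_1\setminus X_1$ that are eventually mapped, under the sequence $\{\phi_n\}$, into deformation arcs whose associated disks $B(\cdot)$ eventually nest inside a single disk $B(\alpha_N)$ for some $N$. The mechanism is Proposition \ref{prop:hn}: if $\phi_n(x)$ enters a deformation arc $\alpha_n$ of $G_n$ then $\phi_m(x)\in B(\alpha_n)$ for all $m\ge n$, and the sequence of disks containing the forward images of $x$ is decreasing by Proposition \ref{prop:pearl}. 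Since $\varphi(x)=\varphi(y)$ and $x\ne y$, the two decreasing sequences of disks $\{B(\alpha^x_{n_i})\}$ and $\{B(\alpha^y_{m_j})\}$ must have intersecting, hence (by Proposition \ref{prop:pearl}) nested, members at every sufficiently deep stage; otherwise $\varphi(x)$ and $\varphi(y)$ would be separated. So there is a first disk $B(\alpha_N)$ (coming from a deformation arc $\alpha_N$ of $G_N$) containing both $\varphi(x)$ and $\varphi(y)$, while neither $x$ nor $y$ is already identified to a point of $\partial U$ "above" it.

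Next I would invoke Proposition \ref{prop:hn}\,(2): if $\alpha\subset G_1$ is the open arc with $\phi_N(\alpha)=\alpha_N$, then $G_m\cap B(\alpha_N)=\phi_m(\ov\alpha)$ for every $m>N$, and hence $\varphi(\ov\alpha)=G\cap B(\alpha_N)$ after passing to the limit (using the uniform convergence $\phi_m\to\varphi$ from Lemma \ref{thm:isotopy} and the fact that $B(\alpha_N)$ is closed). Both $\varphi(x)$ and $\varphi(y)$ lie in $G\cap B(\alpha_N)=\varphi(\ov\alpha)$, so there are points $x',y'\in\ov\alpha\subset G_1$ with $\varphi(x')=\varphi(x)$ and $\varphi(y')=\varphi(y)$. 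Since $\ov\alpha$ is an arc, the sub-arc $\beta_0\subset\ov\alpha$ joining $x'$ to $y'$ satisfies $\varphi(\beta_0)\subset\varphi(\ov\alpha)=G\cap B(\alpha_N)$; but I will need $\varphi$ to be constant on $\beta_0$, which is not yet clear — $\varphi(\ov\alpha)$ is a whole piece of $G$, not a point. The resolution is to recurse: the pair $(x,x')$ and the pair $(y,y')$ each lie in the same $\varphi$-fiber, and each of $x',y'$ sits in the closure of an arc of $G_1\setminus X_1$ that enters a deformation arc of $G_N$ strictly later (the forward orbit of $x'$ first hits a deformation disk at a stage $>N$, since $\phi_N(x')\in\alpha_N$ is disjoint from $X_N$ and lies in no deformation arc of $G_N$). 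So I would set up a downward induction on the first entry stage $n_0(x)$ from Corollary \ref{cor:C}: the base case $n_0=\infty$ means $x\in\partial U$, where $\varphi$ is the identity, so $x=y$ and $\beta$ is trivial; in the inductive step one produces $x',y'$ as above with strictly larger first-entry stages, connects $x$ to $x'$ and $y$ to $y'$ by constant-$\varphi$ arcs inside $\ov\alpha$ (these are immediate because $x,x'$ and $y,y'$ lie on the boundary arcs of a single deformation disk and are glued by the collapse $\varphi$), and applies the inductive hypothesis to the pair $(x',y')$, whose connecting arc lies in $G_1$; concatenating gives $\beta$.

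The main obstacle, and the place requiring the most care, is the last recursion: I must ensure the process terminates and that the concatenated arc is genuinely an arc in $G_1$ on which $\varphi$ is constant. Termination needs the first-entry stages $n_0(x')$ to increase without bound along the recursion; if they stayed bounded, infinitely many nested deformation disks $B(\alpha_{n_i})$ would occur with the same $B(\alpha_{n_s})$ appearing infinitely often, and one would have to check this still forces $\varphi$ to collapse an entire arc — here I would use that $f^n:B(\alpha_n)\to B(\alpha_0)=B(C^-)$ is a homeomorphism for a regular circle $C$, together with the shrinking lemma (Lemma \ref{lem:expanding}) applied to the orbifold metric to see that $\mathrm{diam}(B(\alpha_{n_i}))\to 0$, so the nested intersection is a point and $x=y$, contradiction. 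The arc-property is handled by noting that at each stage the new arc is attached to the previous one at the common endpoint $x'$ (resp. $y'$), and that the pieces live in disjoint or singleton-meeting disks by Proposition \ref{prop:pearl}, so no unwanted self-intersections are created; a final appeal to $G$ being a continuum and $\varphi$ monotone lets one extract an actual arc from the resulting connected set on which $\varphi$ is constant.
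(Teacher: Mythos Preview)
Your recursive scheme has a genuine gap: the induction is not well-founded, and in fact contains a logical slip. You claim that $x'$ has strictly larger first-entry stage than $N$ because ``$\phi_N(x')\in\alpha_N$ \dots\ lies in no deformation arc of $G_N$'', but $\alpha_N$ \emph{is} a deformation arc of $G_N$, so this shows $n_0(x')\le N$, not $n_0(x')>N$. More seriously, even if you repair the direction of the inequality, the pair $(x,x')$ you must handle next is the same kind of problem as $(x,y)$, and there is no decreasing invariant to recurse on: the entry stages can be arbitrary, and concatenating an a priori infinite chain of arcs need not yield an arc in $G_1$.

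The paper avoids recursion entirely by a pigeonhole argument you are missing: $G_1$ is a finite graph, so there are only finitely many arcs in $G_1$ joining the fixed points $x$ and $y$. One splits into cases according to whether $x,y$ are ``finitely'' or ``infinitely'' deforming. When $x$ is infinitely deforming, one has a sequence $n_i\to\infty$ with $\phi_{n_i}(x)\in\alpha_{n_i}$ and $B(\alpha_{n_{i+1}})\subset B(\alpha_{n_i})$, and by Lemma~\ref{lem:expanding} the intersection $\bigcap_i B(\alpha_{n_i})=\{\varphi(x)\}$. One then shows (using Proposition~\ref{prop:hn}\,(2) as you suggested) that for each $i$ there is a sub-arc $\gamma_i\subset\overline{\alpha_{n_i}}$ with $\phi_{n_i}(x),\phi_{n_i}(y)\in\gamma_i$, and sets $\beta_i:=\phi_{n_i}^{-1}(\gamma_i)\subset G_1$. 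Since the $\beta_i$ range over finitely many arcs joining $x$ to $y$, some arc $\beta$ occurs for infinitely many $i$; for that $\beta$ one has $\phi_{n_i}(\beta)\subset B(\alpha_{n_i})$ along the subsequence, hence $\varphi(\beta)\subset\bigcap_i B(\alpha_{n_i})=\{\varphi(x)\}$. The remaining subcase (both $x,y$ infinitely deforming with the two disk-chains meeting only at a single point of $X_n$) reduces to the previous case via an intermediate point. The finiteness of arcs in $G_1$ is the missing idea that makes the argument terminate.
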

\begin{proof}
A point $z\in G_1$ is called \emph{finitely deforming} (\emph{under $\{\phi_n\}$}) if there exists an integer $n(z)\geq1$ such that $\phi_n(z)$ does not belong to any deformation arc of $G_n$ for every $n\geq n(z)$.
Thus, if $z\in G_1$ is \emph{infinitely deforming}, we can find an increasing sequence $\{n_i\}_{i\ge 1}$ such that $\phi_{n_i}(z)$ belongs to a deformation arc $\alpha_{n_i}$ of $G_{n_i}$ for all $i\ge 1$. In this case, it holds that $B(\alpha_{n_{i+1}})\subset B(\alpha_{n_i})$ by Proposition \ref{prop:hn}\,(1). According to Lemma \ref{lem:orbifold}, the homotopic diameters of $B(C)$ for all regular circles $C$ of $T$ are bounded above. Thus, Lemma \ref{lem:expanding} implies  $\bigcap_{i\ge 1}B(\alpha_{n_i})=\{\varphi(z)\}$.

Since $\varphi(x)=\varphi(y)$, at least one of $\{x, y\}$, say $x$, is infinitely deforming. As above, there exist an  increasing sequence $\{n_i\}_{i\ge 1}$ and a deformation arc $\alpha_{n_i}$ of $G_{n_i}$ for each $i\geq1$, such that $\phi_{n_i}(x)\in\alpha_{n_i}$, $B(\alpha_{n_{i+1}})\subset B(\alpha_{n_i})$, and $\bigcap_{i\ge 1}B(\alpha_{n_i})=\{\varphi(x)\}$.
\vskip 0.15cm

Case 1. The point $y$ is finitely deforming. 
\vskip 0.15cm

In this case, we have $\varphi(y)=\phi_n(y)$ for every $n\ge n(y)$. Since $\varphi(x)=\varphi(y)$, it follows that $\phi_{n_i}(y)=\varphi(y)\in B(\alpha_{n_i})$ for $n_i>n(y)$. Then $\phi_{n_i}(y)\in G_{n_i}\cap B(\alpha_{n_i})=\ov{\alpha_{n_i}}$. Hence, $\phi_{n_i}(y)$ is an endpoint of $\alpha_{n_i}$.

Let $\g_{i}$ be the sub-arc of $\alpha_{n_i}$ connecting $\phi_{n_i}(y)$ and $\phi_{n_i}(x)$. Then $\beta_{i}:=\phi_{n_i}^{-1}(\g_{i})$ is an arc in $G_1$ connecting $x$ and $y$. Since there exist only finitely many distinct arcs in $G_1$ connecting $x$ and $y$, by passing to a subsequence of $\{i\}$, we have $\beta=\beta_i$ and $\phi_{n_i}(\beta)=\gamma_i\subset \alpha_{n_i}$ for every $i\geq1$. This implies  $\varphi(\beta)=\varphi(x)$.
\vskip 0.15cm

Case 2. The point $y$ is infinitely deforming.
\vskip 0.15cm

 In this case, we obtain another  increasing sequence $\{m_j\}_{j\ge 1}$ and a deformation arc $\delta_{m_j}$ of $G_{m_j}$ for each $j$, such that $\phi_{m_j}(y)\in\delta_{m_j}$, $B(\delta_{m_{j+1}})\subset B(\delta_{m_j})$, and $\{\varphi(y)\}=\bigcap_{j\ge 1} B(\delta_{m_j})$.  Since $\varphi(x)=\varphi(y)$, it follows from Proposition \ref{prop:pearl} and Proposition \ref{prop:hn}\,(1) that, if $m_j\ge n_i$,  either $B(\delta_{m_j})\subset B(\alpha_{n_i})$, or $B(\delta_{m_j})$ intersects $B(\alpha_{n_i})$ at a single point in $X_{n_i}$.
\vskip 0.15cm

Case 2.1. There exist $m_j\geq n_i$ such that $B(\delta_{m_j})\cap B(\alpha_{n_i})$ is a singleton $w\in X_{n_i}$. \vskip 0.15cm

Since $\phi_n(x)\in B(\alpha_{n_i})$ and $\phi_n(y)\in B(\de_{m_j})$ for each sufficiently large integer $n$ by Proposition \ref{prop:hn} (1), it follows that $\varphi(x)\in B(\alpha_{n_i})$ and $\varphi(y)\in B(\delta_{m_j})$. Thus, $\varphi(x)=\varphi(y)=w$. Assume $\phi_{n_i}(z)=w$. Then $\varphi(z)=\phi_n(z)=w$ for every $n\ge n_i$. By applying Case 1 to $\{x,z\}$ and $\{z,y\}$, respectively, we obtain the required arc $\beta$.
\vskip 0.15cm

Case 2.2. For each pair $m_j\geq n_i$, it holds that $B(\delta_{m_j})\subset B(\alpha_{n_i})$.\vskip 0.15cm

Let $\g_i:=\phi_{n_i}^{-1}(\alpha_i)\subset G_1$ be the arc containing $x$. For any pair $m_j\geq n_i$, by Proposition \ref{prop:hn} (2), we have $\phi_{m_j}(x)\in \phi_{m_j}({\overline{\g_i}})= G_{m_j}\cap B(\alpha_{n_i})$. Note also that  $\phi_{m_j}(y)\in G_{m_j}\cap B(\de_{m_j})\subset G_{m_j}\cap B(\alpha_{n_i})$. Thus, $\phi_{m_j}(x), \phi_{m_j}(y)\in \phi_{m_j}({\overline{\g_i}})$. This implies that there exists a sub-arc $\beta_i\subset \g_i$ joining $x$ to $y$ such that $\phi_{m_j}(\beta_i)\subset B(\alpha_{n_i})$.
Since there exist finitely many arcs in $G_1$ joining $x$ to $y$, by passing to a subsequence, we may assume that $\beta_i=\beta$ for all $i\geq1$. Then $\varphi(\beta)=\lim_{j\to\infty}\phi_{m_j}(\beta)$ coincides with $\bigcap_{i\geq 1} B(\alpha_{n_i})=\{\varphi(x)\}$.
\end{proof}

\begin{proof}[Proof of Theorem \ref{thm:local}]
Clearly, $G=\lim_{n\to\infty} G_n=\varphi(G_1)$ is an $f$-invariant continuum. Note that $G_{n+1}$ lies in the component $E_n$ of $f^{-n}(\ov{U})$ containing $\ov{U}$. Then $G\subset K_{U}=\ov{\bigcup_{n\geq1} E_n}$.

 We claim that $\varphi(\alpha)$ is not a singleton for any component $\alpha$ of $G_1\setminus X_1$. If $\alpha$ has two distinct endpoints, then the claim is immediate since $\varphi=id$ on $X_1\cap G_1$. In the remaining case, $\ov\alpha$ is a circle in $G_1$. If $\phi_n(x)$ does not belong to the deforming arcs of $G_n$ for any $x\in\alpha$ and every $n\geq 1$, we have $\varphi(\ov\alpha)=\ov\alpha$, and the claim holds.  Otherwise, there exist a point $x\in\alpha$ and a smallest integer $n_0\geq 1$ such that $\phi_{n_0}(x)$ belongs to a deformation arc $\alpha_{n_0}$ of $G_{n_0}$. Then $\alpha_{n_0}\subset \alpha$, and $\phi_{n_0}=id$ on $\alpha_{n_0}$. This implies that $\varphi=id$ on the two endpoints of $\alpha_{n_0}$. Thus, the claim is proved.

Since $\varphi$ is the identity on $X_1\cap G_1$, which divides $G_1$ into open arcs, by Proposition \ref{prop:convergence} and the claim above, the pre-image of each point of $G$ under $\varphi|_{G_1}$ is either a singleton or an arc in $G_1$. This implies that $G$ is a graph homeomorphic to $G_1$.

Finally, to prove that $G$ is isotopic to $G_1$ rel $P$, it suffices to show $G_1\cap P=G\cap P$, as $G_n$ is isotopic to $G_1$ rel $P$ for every $n\geq1$. Since $\varphi$ is the identity on $P$ and $G_1\cap P=\partial U\cap P$, it follows that
$G_1\cap P=\partial U\cap P\subset G\cap \partial U\cap P\subset \partial U\cap P=G_1\cap P$. On the other hand, we have $(G\setminus \partial U)\cap P=\emptyset$ by Corollary \ref{cor:C}. Thus, $G_1\cap P=G\cap P$.
\end{proof}

\section{Fatou chains}\label{sec:chain}
In this section, we  establish some basic properties of Fatou chains and prove Theorem \ref{thm:maximal}.

Throughout this section, let $f$ be a rational map with $J_f\not=\cbar$. Recall that a level-$0$ Fatou chain of $f$ is the closure of a Fatou domain of $f$. By induction, define a continuum  $K\subset\cbar$ as a level-$(n+1)$ Fatou chain of $f$ if there exists a sequence $\{E_k\}_{k\ge 0}$ of continua, each composed of finitely many level-$n$ Fatou chains, such that
$$
E_k\subset E_{k+1}\quad\text{and}\quad K=\ov{\bigcup_{k\ge 0}E_k}.
$$

\begin{definition}\label{def:extremal}
 A level-$n$ $(n\ge 0)$ Fatou chain $K$ is called a level-$n$ {\bf extremal (Fatou) chain}  if any level-$n$ Fatou chain that intersects $K$ at a point in $F_f$ is contained in $K$.
\end{definition}

By definition, each level-$0$ extremal  chain is the closure of a Fatou domain.

\begin{lemma}\label{lem:E-chain}
For every $n>0$ and any Fatou domain $U$ of $f$, there exists a unique level-$n$ extremal chain $K$ containing $U$. Moreover, there exists a sequence $\{E_k\}$ of continua, each of which is the union of finitely many level-$(n-1)$ extremal chains,  such that $E_k\subset E_{k+1}$ and $K=\ov{\bigcup_{k\geq0} E_k}$.
\end{lemma}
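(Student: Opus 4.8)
The plan is to induct on $n$, and in the inductive step to mimic (at the level of chains) the same ``agglomeration'' procedure used to define maximal Fatou chains. Assume the statement for level $n-1$, so that each Fatou domain $U$ lies in a unique level-$(n-1)$ extremal chain, call it $\KKK_{n-1}(U)$; by the last part of the inductive hypothesis, $\KKK_{n-1}(U)$ is an increasing union (closure of the union) of continua that are each finite unions of level-$(n-2)$ extremal chains, so in particular $\KKK_{n-1}(U)$ is itself a level-$(n-1)$ Fatou chain. The first step is then to build, given a Fatou domain $U$, a candidate for the level-$n$ extremal chain: let $\mathcal{E}_0$ be the family consisting of the single chain $\KKK_{n-1}(U)$; having defined a finite family $\mathcal{E}_k$ of level-$(n-1)$ extremal chains whose union $E_k$ is a continuum, let $\mathcal{E}_{k+1}$ be the (finite) family of all level-$(n-1)$ extremal chains that meet $\ov{E_k}$ at a point of $F_f$, together with those already in $\mathcal{E}_k$, and set $E_{k+1}=\bigcup\mathcal{E}_{k+1}$. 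One must check that $\mathcal{E}_{k+1}$ is finite and that $E_{k+1}$ is again a continuum: finiteness follows because distinct level-$(n-1)$ extremal chains can only intersect at points of $J_f$ (two of them meeting at a Fatou point would, by extremality, be equal), so the points where new chains are attached are Julia points, and at any such attaching point only finitely many Fatou domains — hence finitely many extremal chains through it (using Lemma \ref{lem:finite} applied to each relevant Fatou domain, or more simply the fact that each Julia point lies on the boundary of finitely many Fatou domains of a given ``generation'') — can be glued; and since each new chain shares a point with the connected set $E_k$, the union $E_{k+1}$ stays connected and compact. Then define $K:=\ov{\bigcup_{k\ge 0}E_k}$, which by construction is a level-$n$ Fatou chain containing $U$, and the displayed sequence $\{E_k\}$ already witnesses the ``moreover'' clause.

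The second step is to verify that this $K$ is a level-$n$ extremal chain, i.e.\ that every level-$n$ Fatou chain $K'$ meeting $K$ at a Fatou point is contained in $K$. Write $K'=\ov{\bigcup_j E'_j}$ with each $E'_j$ a finite union of level-$(n-1)$ Fatou chains; it suffices to show every level-$(n-1)$ extremal chain $L$ contained in $K'$ lies in $K$. If $L\cap K$ contains a Fatou point then $L$ is one of the chains adjoined at some stage of the construction, hence $L\subset K$ by definition of the $E_k$'s. The delicate case is when $L$ and $K$ meet only along $J_f$, yet $K'$ (being connected) forces a ``chain of overlaps'' linking $L$ back to the piece of $K'$ that touches $K$ at a Fatou point — here one propagates along $K'$: starting from a level-$(n-1)$ extremal sub-chain of $K'$ that meets $K$ at a Fatou point (so it is in $K$), and using connectivity of $K'$ to pass from one level-$(n-1)$ extremal sub-chain to an adjacent one sharing a common point; if that common point is a Fatou point the next sub-chain is again captured by the agglomeration, and one argues that it can always be taken to be a Fatou point because two level-$(n-1)$ extremal chains overlapping along a Julia point would have been merged already at level $n-1$ into a single extremal chain (contradicting their being distinct), so overlaps within a single level-$n$ chain are essentially along Fatou points up to re-describing the pieces. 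Some care with the definitions is needed here, and I expect this merging/propagation argument — making precise exactly why two distinct level-$(n-1)$ extremal chains inside one level-$n$ chain can only touch at points where the agglomeration step already acts — to be the main obstacle; it is really the engine behind the whole lemma.

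The third step is uniqueness: if $K_1,K_2$ are two level-$n$ extremal chains both containing $U$, then $K_1\cap K_2\supset \ov{U}$ contains Fatou points, so by extremality of $K_1$ applied to $K_2$ (which is in particular a level-$n$ Fatou chain meeting $K_1$ at a Fatou point) we get $K_2\subset K_1$, and symmetrically $K_1\subset K_2$, whence $K_1=K_2$. Finally, the ``moreover'' statement has already been recorded during the construction of $K$, since each $E_k$ is by definition a finite union of level-$(n-1)$ extremal chains and $E_k\subset E_{k+1}$. Combining the three steps closes the induction and proves the lemma; Theorem \ref{thm:maximal} will then follow by taking $n$ large enough that level-$n$ extremal chains stabilize (together with the stability of the notion under $f$ and pre-images, to be handled separately).
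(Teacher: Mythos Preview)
Your construction in Step~1 never gets off the ground. You define $\mathcal{E}_{k+1}$ to consist of all level-$(n-1)$ extremal chains meeting $E_k$ at a point of $F_f$, but you also correctly observe that two \emph{distinct} level-$(n-1)$ extremal chains can only meet at points of $J_f$ (if they met at a Fatou point, extremality would force them to coincide). Consequently no new chain is ever adjoined, $E_k=\KKK_{n-1}(U)$ for all $k$, and your candidate $K$ is just the level-$(n-1)$ extremal chain itself --- which is in general strictly smaller than the level-$n$ extremal chain. The same confusion reappears in Step~2, where you assert that ``two level-$(n-1)$ extremal chains overlapping along a Julia point would have been merged already at level $n-1$''; this is false, since the definition of extremal only merges chains that share a \emph{Fatou} point.

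The paper's construction avoids this by taking a different connectivity notion. One lets $\Sigma(\sigma)$ be the collection of all level-$(n-1)$ extremal chains $\sigma'$ for which there exists a continuum $E(\sigma,\sigma')$ consisting of finitely many level-$(n-1)$ extremal chains and containing both $\sigma$ and $\sigma'$ (so consecutive chains in such a continuum intersect --- necessarily at Julia points). This collection is countable because there are only countably many Fatou domains; enumerating it as $\{\sigma_i\}$ and setting $E_k=\bigcup_{i\le k}E(\sigma,\sigma_i)$ gives the required increasing sequence of finite unions. Extremality is then easy: if a level-$n$ Fatou chain $K'$ meets $K$ at a Fatou point, they share an entire Fatou domain $V$; writing $K'=\ov{\bigcup E'_m}$, each $E'_m$ containing $V$ is itself a continuum of finitely many level-$(n-1)$ extremal chains containing $\sigma$ (via $V\in\Sigma(\sigma)$), so every constituent of $E'_m$ lies in $\Sigma(\sigma)$ and hence in $K$. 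Your finiteness argument (invoking ``finitely many Fatou domains at a Julia point'') and your chain-by-chain propagation are both unnecessary once the construction is set up this way.
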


\begin{proof}
We first prove the lemma in the case of $n = 1$.

Let $\Sigma(U)$ denote the collection of Fatou domains $U'$ for which both $U$ and $U'$ are contained in a continuum $E(U, U')$ consisting of finitely many level-$0$ chains.

Enumerate the elements of $\Sigma(U)$ by $U_i$, $i \geq 0$, and fix $E(U, U_i)$ for each $i$. For every $k \geq 0$, define
\[
E_k = \bigcup_{0 \leq i \leq k} E(U, U_i) \quad \text{and} \quad K = \ov{\bigcup_{k \geq 0} E_k}.
\]
Then $K$ is a level-$1$ Fatou chain by definition. It remains to verify that $K$ is extremal.

Now, consider any other level-$1$ Fatou chain $K'$ such that $(K' \cap K) \cap F_f \neq \emptyset$. Then $K' \cap K$ contains a Fatou domain $V$. By definition, assume $K' = \ov{\bigcup_{k \geq 0} E'_m}$, where $E'_m$ is the union of a finite number of level-$0$ Fatou chains, and $E'_m \subset E'_{m+1}$ for every $m \geq 0$.

Since $V \subset K'$, it follows that $V \subset E'_m$ for any sufficiently large  integer $m$. Similarly, we have $V \in \Sigma(U)$.  
Hence, each level-$0$ Fatou chain in $E'_m$ is contained in $\Sigma(U)$.  
By the construction of $E_k$, we obtain $E'_m \subset E_k$ for a sufficiently large integer $k$. This implies $K' \subset K$. Therefore, $K$ is a level-$1$ extremal chain.

Assume that the lemma holds for some $n \geq 1$. Then there exists a unique level-$n$ extremal chain $\sigma$ containing $U$. Similarly, as in the case of $n = 1$, let $\Sigma(\sigma)$ be the collection of all level-$n$ extremal chains $\sigma'$ for which both $\sigma$ and $\sigma'$ are contained in a continuum $E(\sigma, \sigma')$ consisting of finitely many level-$n$ extremal chains.

Note that $\Sigma(\sigma)$ is a finite or countable collection. Thus, $\Sigma(\sigma) = \{\sigma_i\}_{i \geq 0}$. Fix $E(\sigma, \sigma_i)$ for each $\sigma_i$. For every $k \geq 0$, define
\[
E_k = \bigcup_{0 \leq i \leq k} E(\sigma, \sigma_i) \quad \text{and} \quad K = \ov{\bigcup_{k \geq 0} E_k}.
\]
By definition, $K$ is a level-$(n+1)$ Fatou chain.  
Finally, applying a similar argument as in the case of $n = 1$, we can show that $K$ is an extremal chain of level-$(n+1)$.
\end{proof}

Here are some examples of extremal chains.
For a polynomial, the entire Riemann sphere $\cbar$ is its level-$1$ extremal chain.
On the other hand, any level-$n$ extremal chain ($n\geq0$) of a \Sie\ rational map is the closure of a Fatou domain.

If $f$ is a Newton map, the union of the attracting basins for all attracting fixed points is contained in a level-$1$ extremal chain of $f$. This chain contains $J_f$. Thus, $\cbar$ is a level-$2$ extremal  chain of $f$.

\begin{lemma}\label{lem:chain-map}
Let $K\subset\cbar$ be a level-$n$ extremal chain $(n\geq0)$ of $f$. Then 
\begin{enumerate}
\item $f(K)$ is also a level-$n$ extremal chain; and
\item $f^{-1}(K)$ has a unique decomposition $f^{-1}(K)=\bigcup_{i=1}^m K_i$ such that each $K_i$ is a level-$n$ extremal chain with $f(K_i)=K$.
\end{enumerate}
Moreover, $\deg(f|_{K_i}):=\#(f^{-1}(w)\cap K_i)$ is constant if $w\in K\cap F_f$ is not a critical value.
\end{lemma}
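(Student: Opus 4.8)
My plan is to prove (1), (2) and the degree statement together by induction on $n$, using throughout two consequences of the definitions and Lemma \ref{lem:E-chain}: \textbf{(a)} a level-$n$ extremal chain is the closure of the union of the Fatou domains it contains (so these are dense in it); and \textbf{(b)} if a Fatou domain $W$ meets a level-$n$ extremal chain $\sigma$, then $\ov W\subset\sigma$ --- because $\ov W$ is a level-$0$, hence level-$n$, Fatou chain meeting $\sigma$ at a point of $F_f$ --- and $\sigma$ is then the unique level-$n$ extremal chain containing $W$.

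\emph{Base case $n=0$.} Here $K=\ov U$ for a Fatou domain $U$. Since $f$ is open and proper, $f(\ov U)=\ov{f(U)}$ is the closure of the Fatou domain $f(U)$, and $f^{-1}(\ov U)=\ov{f^{-1}(U)}=\bigcup_{i=1}^m\ov{U_i}$, where $U_1,\dots,U_m$ are the finitely many components of $f^{-1}(U)$, each a Fatou domain with $f\colon U_i\to U$ proper and onto. Each $\ov{U_i}$ is a level-$0$ extremal chain with $f(\ov{U_i})=\ov U$, and this decomposition is unique because any level-$0$ extremal chain inside $f^{-1}(\ov U)$ mapping onto $\ov U$ is forced to be some $\ov{U_i}$. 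For $w\in K\cap F_f=U$ away from the critical values, $f^{-1}(w)\subset\bigsqcup_iU_i$, so $\#(f^{-1}(w)\cap\ov{U_i})$ is the covering degree of $f\colon U_i\to U$, which is independent of $w$.

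\emph{Inductive step.} Assuming the lemma for level $n-1$, I would first isolate the auxiliary fact: \textbf{(i)} if a continuum $\Gamma$ is a finite union of level-$(n-1)$ extremal chains and contains a Fatou domain $V$, then $\Gamma$ lies in the level-$n$ extremal chain through $V$ --- immediate from the construction in Lemma \ref{lem:E-chain}, since every level-$(n-1)$ extremal chain appearing in $\Gamma$ is joined to the one containing $V$ inside the continuum $\Gamma$ and hence belongs to the relevant collection $\Sigma(\cdot)$. Now write $K=\ov{\bigcup_kE_k}$ as in Lemma \ref{lem:E-chain}, with the $E_k$ an increasing sequence of continua, each a finite union of level-$(n-1)$ extremal chains, and fix a Fatou domain $W_K\subset K$; then $W_K\subset E_k$ for all large $k$. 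By the inductive form of (1), $f(E_k)$ is a continuum that is a finite union of level-$(n-1)$ extremal chains and contains $f(W_K)$, so by (i) it lies in the level-$n$ extremal chain $\widehat K$ through $f(W_K)$; hence $f(K)=\ov{\bigcup_kf(E_k)}\subset\widehat K$. For the reverse inclusion I would induct along a finite chain of level-$(n-1)$ extremal chains (furnished by Lemma \ref{lem:E-chain}) joining the level-$(n-1)$ extremal chain through $f(W_K)$ to an arbitrary level-$(n-1)$ extremal chain contained in $\widehat K$, showing that each such chain lies in $f(K)$ and that it has a level-$(n-1)$ extremal chain in $K$ mapping onto it: the preimage pieces come from the inductive form of (2), and (i) together with the uniqueness in (b) keeps them inside $K$ when passing from one link to the next. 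This yields $f(K)=\widehat K$, a level-$n$ extremal chain, proving (1). For (2), let $\mathcal W$ be the set of Fatou domains $W'$ with $f(W')\subset K$; by (b) a Fatou domain is either inside $f^{-1}(K)$ or disjoint from it, and for $W'\in\mathcal W$ the level-$n$ extremal chain $K_{W'}$ through $W'$ satisfies $f(K_{W'})=K$ by (1). Thus $\bigcup_{W'}K_{W'}\subset f^{-1}(K)$; conversely, for $x\in f^{-1}(K)$, if $f(x)\in F_f$ then $x$ lies in a domain of $\mathcal W$, while if $f(x)\in J_f$ one approximates $f(x)$ by points of Fatou domains of $K$ (density, by (a)) and lifts by openness of $f$ to points $x_j\to x$ lying in domains of $\mathcal W$, so $x\in\bigcup_{W'}K_{W'}$, a finite union of closed sets. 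Uniqueness of the decomposition then follows as in the base case: any level-$n$ extremal chain inside $f^{-1}(K)$ mapping onto $K$ contains some $W'\in\mathcal W$ and hence equals $K_{W'}$.

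\emph{Degree, and the main obstacles.} Finally, $w\mapsto\#(f^{-1}(w)\cap K_i)$ is locally constant on $K\cap F_f$ off the finitely many critical values: near such a $w$ the $\deg f$ preimages lie in disjoint neighbourhoods, each either contained in the open set $\bigcup\{W'\subset K_i : W'\text{ a Fatou domain}\}$ or disjoint from $K_i$. Since $K$ is a Fatou chain, any two of its Fatou domains are joined inside $K$ by a finite chain of level-$(n-1)$ extremal chains with successive links meeting, and the count attached to $K_i$ agrees at a meeting point of two links (read off as a limit from the Fatou side), so $\deg(f|_{K_i})$ is well defined. I expect the real friction to be twofold: keeping the preimage level-$(n-1)$ extremal chains inside $K$ along the connecting chain in the proof of (1)--(2), since distinct extremal chains may touch along $J_f$ and so extremality cannot be invoked directly --- one must lean on (i) and the uniqueness in (b); and verifying that $\#(f^{-1}(w)\cap K_i)$ does not jump as the Fatou domain of $K$ containing $w$ is moved across a Julia point shared by $K_i$ with another preimage chain.
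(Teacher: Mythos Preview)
Your approach is correct in spirit and would ultimately work, but it is considerably more laborious than the paper's, and the very ``friction'' you flag at the end is exactly where the paper's route is cleaner.

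The paper also inducts on $n$, but organises the inductive step differently. For (1), instead of chaining along level-$(n-1)$ extremal chains to reach an arbitrary piece of $\widehat K$, it writes $\widehat K=\ov{\bigcup_j E'_j}$ via Lemma~\ref{lem:E-chain}, takes the component $E''_j$ of $f^{-1}(E'_j)$ containing $E_0$, and observes that $E''_j$ is a level-$(n+1)$ Fatou chain (by induction) meeting $K$ at Fatou points, so $E''_j\subset K$ by extremality of $K$; hence $E'_j=f(E''_j)\subset f(K)$. No link-by-link lifting is needed. For (2), the paper tracks the number $m(k)$ of components of $f^{-1}(E_k)$; since $m(k)$ is decreasing it stabilises at some $m$, the components $E_{1,k},\dots,E_{m,k}$ are nested in $k$, and $K_i:=\ov{\bigcup_k E_{i,k}}$ gives the decomposition directly. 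Your version, taking $K_i=K_{W'}$ for $W'\in\mathcal W$, is fine but you should say why there are only finitely many distinct $K_{W'}$ (fix one Fatou domain $U\subset K$ and note that any $K_{W'}$ contains some component of $f^{-1}(U)$, hence equals some $K_{U_j}$).

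The substantive point is the degree. Your plan is to show $\#(f^{-1}(w)\cap K_i)$ is constant on each Fatou domain of $K$ and then glue across touching level-$(n-1)$ extremal chains ``as a limit from the Fatou side''. The trouble is precisely that distinct $K_i$ may meet at Julia points, so a limiting argument at a Julia touching point does not obviously pin down which $K_i$ a nearby preimage belongs to. The paper sidesteps this entirely: since the $E_{i,k}$ are \emph{components} of $f^{-1}(E_k)$, they are pairwise disjoint, and for $w\in K\cap F_f$ one has $w\in E_k$ for large $k$, hence $f^{-1}(w)\cap K_i=f^{-1}(w)\cap E_{i,k}$ (using that Fatou points of $K_i$ lie in some $E_{i,k}$), and the cardinality of the latter is the well-defined degree $d_i=\deg(f|_{E_{i,k}})$ of $f$ on a genuine component of a preimage --- no gluing across Julia points is ever needed. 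If you want to salvage your route, the clean fix is the same idea one level down: for two touching level-$(n-1)$ chains $\sigma_1,\sigma_2$ in $K$, work with the (disjoint) components of $f^{-1}(\sigma_1\cup\sigma_2)$, each of which is a connected finite union of level-$(n-1)$ extremal chains and hence sits in a single $K_i$ by your fact (i); the preimage count on such a component is constant because components are disjoint and preimage paths in $f^{-1}(\sigma_1\cup\sigma_2)$ cannot jump between them.
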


\begin{proof}
 If $n=0$, the lemma holds since any level-$0$ extremal chain is the closure of a Fatou domain.

Suppose that the lemma holds for level-$n$ extremal chains with $n\geq0$. Let $K$ be a level-$(n+1)$ extremal chain. By Lemma \ref{lem:E-chain}, there exists a sequence of continua $\{E_k\}$ such that each $E_k$  consists of finitely many level-$n$ extremal chains, $E_k\subset E_{k+1}$, and $K=\ov{\bigcup_{k\geq0} E_k}$.
\vspace{2pt}

(1) By induction, each $f(E_k)$ consists of finitely many level-$n$ extremal chains. Then $f(K)=\ov{\bigcup_{k\geq0}f(E_k)}$ is a level-$(n+1)$ Fatou chain and is contained in a  level-$(n+1)$ extremal chain, denoted by $K'$. Lemma \ref{lem:E-chain} implies $$K'=\ov{\bigcup_{j\ge 0} E'_j},$$ where each $E'_j$ consists of finitely many level-$n$ extremal chains and $E'_j\subset E'_{j+1}$.
Thus, there exists an integer $j_0\ge 0$ such that $f(E_0)\subset E'_j$ for $j\ge j_0$.

Let $ E_j''$ be the component of $f^{-1}(E'_j)$ containing $E_0$. By induction, the continuum $ E_j''$ consists of finitely many level-$n$ extremal chains and thus forms a level-$(n+1)$ Fatou chain.  Since $K$ is extremal, we have ${E}_j''\subset K$. Consequently, $E'_j=f(E_j'')\subset f(K)$ for all $j\ge j_0$. It follows that $f(K)=K'$ is a level-$(n+1)$ extremal chain.\vspace{2pt}

(2) Let $m(k)$ denote the number of components of $f^{-1}(E_k)$. Then $m(k)$ is decreasing. Thus, there exists an integer $k_0 \ge 0$ such that $m(k) = m$ is constant for $k \ge k_0$. Let $E_{i,k}$, $1 \le i \le m$, be the components of $f^{-1}(E_k)$ such that $E_{i,k} \subset E_{i,k+1}$. It follows that $d_i := \deg(f|_{E_{i,k}})$ is constant for $k \ge k_0$.

Set $K_i := \ov{\bigcup_{k \ge k_0} E_{i,k}}$. Then $f^{-1}(K) = \bigcup_{i=1}^m K_i$, and $f(K_i) = K$. By induction, each $E_{i,k}$ is the union of finitely many level-$n$ extremal chains, so $K_i$ is a level-$(n+1)$ Fatou chain.

Let $K'_i$ denote the level-$(n+1)$ extremal chain containing $K_i$. Then $f(K'_i) \supset f(K_i) = K$. By statement (1), the continuum $f(K'_i)$ is a level-$(n+1)$ extremal chain. Thus, $f(K'_i) = f(K_i) = K$, which implies $\bigcup_{i=1}^m K'_i = \bigcup_{i=1}^m K_i$. Since $E_{i,k}$ is disjoint from $E_{j,k}$ if $i \neq j$, any level-$n$ extremal chain in $K_i$ is disjoint from that in $K_j$ if $i \neq j$. Thus, we obtain $K'_i = K_i$ for $1 \le i \le m$.

Finally, let $w$ be a point in $K \cap F_f$. Then $w \in E_k$ for every sufficiently large integer $k$. Furthermore, if $w$ is not a critical value, we have
$$
\#(f^{-1}(w) \cap K_i) = \#(f^{-1}(w) \cap E_{i,k}) = \deg(f|_{E_{i,k}}) = d_i.
$$
Thus, the lemma is proved.
\end{proof}

According to Lemma \ref{lem:chain-map}, every level-$n$ extremal chain is eventually periodic. Moreover, for any level-$n$ extremal chain $K \neq \cbar$, its boundary and interior are contained in the Julia set and Fatou set of $f$, respectively. To see this, first note that $\partial K \subset J_f$. If the interior of $K$ contains a point in the Julia set, then $f^m(K) = \cbar$ for a sufficiently large integer $m$. Since $f^m(K)$ is a level-$n$ extremal chain, we obtain $K = f^m(K) = \cbar$ by Definition \ref{def:extremal}.

The following result provides a dynamical construction of  periodic extremal chains.

\begin{lemma}\label{lem:dyn-def}
Let $K$ be a periodic level-$(n+1)$ extremal chain of $f$ with period $p\ge 1$, and let $E_0$ be the union of all periodic level-$n$ extremal chains in $K$. Then $E_0$ is connected, $f^p(E_0)=E_0$, and
$$
K=\ov{\bigcup_{k\ge 0}E_k},
$$
where $E_k$ is the component of $f^{-kp}(E_0)$ containing $E_0$.
\end{lemma}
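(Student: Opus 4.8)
The plan is to prove the three assertions---connectivity of $E_0$, the invariance $f^p(E_0) = E_0$, and the exhaustion formula $K = \ov{\bigcup_{k\ge 0} E_k}$---by combining Lemma \ref{lem:E-chain} (the exhaustion of a level-$(n+1)$ extremal chain by unions of level-$n$ extremal chains) with Lemma \ref{lem:chain-map} (the image and preimage behavior of extremal chains under $f$). First I would fix a sequence $\{F_k\}$ of continua, each a finite union of level-$n$ extremal chains, with $F_k \subset F_{k+1}$ and $K = \ov{\bigcup_k F_k}$, as furnished by Lemma \ref{lem:E-chain}. Since $f^p(K) = K$ and, by Lemma \ref{lem:chain-map}(1) applied $p$ times, $f^p$ carries level-$n$ extremal chains to level-$n$ extremal chains, each $f^p(F_k)$ is again a finite union of level-$n$ extremal chains contained in $K$; moreover every level-$n$ extremal chain $\sigma \subset K$ satisfies $f^p(\sigma) \subset F_k$ for large $k$, so iterating shows $\sigma$ is eventually periodic under $f^p$ and, conversely, $f^{-p}$-preimages of periodic level-$n$ chains in $K$ stay in $K$ because $K$ is extremal. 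This should pin down $E_0$ as the union of all $f^p$-periodic level-$n$ extremal chains in $K$, which is a \emph{finite} union because $K$ contains only countably many level-$n$ extremal chains and each periodic one lies in a finite $f^p$-cycle of them; hence $E_0$ is compact and $f^p(E_0) = E_0$ is immediate from the definition once we know $f^p$ permutes these periodic chains.

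For the connectivity of $E_0$, the idea is to argue that if $E_0$ were disconnected, say $E_0 = A \sqcup B$ with $A, B$ disjoint nonempty compact sets, then since $K$ is connected and $E_0 \subset K = \ov{\bigcup_k F_k}$, some $F_k$ would have to meet both $A$ and $B$ and, being connected, would contain a ``bridge'' of level-$n$ extremal chains joining $A$ to $B$. I would then push this bridge forward by $f^p$: by Lemma \ref{lem:chain-map}(1) its image is again a connected union of level-$n$ extremal chains in $K$ joining $f^p(A) = A$ to $f^p(B) = B$, and by the finiteness of the collection of level-$n$ extremal chains in $K$ (together with the pigeonhole on the finitely many possible ``bridges''), after iterating we would obtain a connected union of \emph{periodic} level-$n$ extremal chains joining $A$ and $B$. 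But such a union lies in $E_0$, contradicting $E_0 = A \sqcup B$. The technical care here is to make precise that ``a bridge of level-$n$ extremal chains'' can be chosen with bounded combinatorial complexity so that the pigeonhole applies; this is the step I expect to be the main obstacle, and I would handle it by observing that two points lying in distinct components of $\cbar \sm (\text{a finite union})$ versus the same component is a finite amount of data, controlled by how the finitely many extremal chains in each $F_k$ intersect, so one can extract a subsequence along which the bridge stabilizes.

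Finally, for the exhaustion formula, set $E_k$ to be the component of $f^{-kp}(E_0)$ containing $E_0$ (noting $E_0 \subset f^{-p}(E_0)$ since $f^p(E_0) = E_0$, so these $E_k$ increase and are well defined). By Lemma \ref{lem:chain-map}(2), each $f^{-p}(E_0)$ decomposes into level-$n$ extremal chains mapping onto the level-$n$ extremal chains of $E_0$, so inductively $E_k$ is a connected union of level-$n$ extremal chains, hence a level-$(n+1)$ Fatou chain, and $E_k \subset K$ because $K$ is extremal and $E_k$ meets $K$ (it contains $E_0$). This gives $\ov{\bigcup_k E_k} \subset K$. For the reverse inclusion, take any level-$n$ extremal chain $\sigma \subset K$; since $\sigma$ is eventually periodic under $f^p$ (as established above), there is $j$ with $f^{jp}(\sigma)$ periodic, hence $f^{jp}(\sigma) \subset E_0$, so $\sigma$ lies in a component of $f^{-jp}(E_0)$; and since $K$ is connected one checks that this component is the one containing $E_0$ (any continuum in $K$ meeting both $\sigma$ and $E_0$ maps into $E_0$ under enough iterates of $f^p$, forcing the components to coincide), so $\sigma \subset E_j$. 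As the level-$n$ extremal chains in $K$ have dense union in $K$ (their union contains $\bigcup_k F_k$, whose closure is $K$), we conclude $K \subset \ov{\bigcup_k E_k}$, completing the proof.
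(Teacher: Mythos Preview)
Your overall plan and your exhaustion argument (the inclusion $K \subset \ov{\bigcup_k E_k}$) are essentially the paper's: pick a connected finite union $E'$ of level-$n$ extremal chains in $K$ containing both $E_0$ and a given $\sigma$, push it forward by $f^p$ until everything is periodic so that it lands in $E_0$, then observe $\sigma \subset E' \subset E_{k_1}$.

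Two points need correction, though. First, your justification that $E_0$ is a finite union is invalid: ``countably many chains, each in a finite $f^p$-cycle'' does not imply finitely many periodic ones. The correct reason is that every periodic level-$n$ extremal chain contains a periodic Fatou domain (iterate any Fatou domain inside it), and $f$ has only finitely many periodic Fatou domains.

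Second, and this is the real gap, your connectivity argument both overshoots and breaks. You invoke ``the finiteness of the collection of level-$n$ extremal chains in $K$,'' which is false in general---there can be infinitely many---so the pigeonhole on bridges has no footing. The paper bypasses this entirely with the very same ``push forward'' trick you already use for the reverse inclusion: since $E_0$ is a \emph{finite} union of level-$n$ extremal chains, Lemma \ref{lem:E-chain} furnishes a single connected $E := F_{k}$ (a finite union of level-$n$ extremal chains in $K$) containing all of $E_0$. Every level-$n$ extremal chain is eventually periodic, so $f^{k_0 p}(E) \subset E_0$ for some $k_0$; combined with $E_0 = f^{k_0 p}(E_0) \subset f^{k_0 p}(E)$, this yields $E_0 = f^{k_0 p}(E)$, connected as the continuous image of a connected set. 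No bridges, no pigeonhole.
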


\begin{proof}
First, note that $f^p(E_0)=E_0$ since the image of a periodic level-$n$ extremal chain is also a periodic level-$n$ extremal chain. By Lemma \ref{lem:E-chain}, $E_0$ is contained in a continuum $E\subset K$ that is the union of finitely many level-$n$ extremal chains. Since $f^p(E_0)=E_0$, it follows that $E_0\subset f^{kp}(E)$ for every $k>0$. On the other hand, since each level-$n$ extremal chain is eventually periodic, we obtain $f^{k_0p}(E)\subset E_0$ for some integer $k_0\ge 0$. Therefore, $E_0=f^{k_0p}(E)$ is connected. By Lemma \ref{lem:chain-map}\,(2), each $E_k$ is a level-$(n+1)$ Fatou chain, and $E_0\subset E_k$ contains Fatou domains. Thus, $\ov{\bigcup_{k\ge0}E_k}\subset K$ by the definition of extremal chains.

Conversely, for any level-$n$ extremal chain $\sigma\subset K$, there exists a continuum $E'$ such that $E_0\cup\sigma\subset E'$ and $E'$ is the union of finitely many level-$n$ extremal chains. As above, we have $f^{k_1p}(E')\subset E_0$ for an integer $k_1>0$. Then $\sigma\subset E'\subset E_{k_1}$, and  therefore $K\subset\ov{\bigcup_{k\ge0}E_k}$.
\end{proof}

By definition, every level-$n$ extremal chain is contained in a level-$(n+1)$ extremal chain. The following result shows that the growth of extremal chains will stop at a certain level.

\begin{lemma}\label{lem:maximal}
There exists an integer $N\ge 0$ such that any level-$n$ extremal chain of $f$ is a level-$N$ extremal chain for $n\geq N$.
\end{lemma}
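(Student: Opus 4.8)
The plan is to show that the sequence of extremal-chain structures stabilizes by a counting argument on the periodic Julia components and Fatou-chain nuclei. First I would observe that, by Lemma \ref{lem:chain-map}, every level-$n$ extremal chain is eventually periodic under $f$, and by Lemma \ref{lem:dyn-def} a periodic level-$(n+1)$ extremal chain $K$ of period $p$ is the closure of the increasing union of the components $E_k$ of $f^{-kp}(E_0)$, where $E_0$ is the (connected) union of the periodic level-$n$ extremal chains inside $K$. Thus the passage from level $n$ to level $n+1$ ``strictly grows'' only when several distinct periodic level-$n$ extremal chains get merged into one periodic level-$(n+1)$ extremal chain — equivalently, when the number of periodic extremal chains of a given level strictly drops. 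The key finiteness input is: \emph{there are only finitely many periodic Fatou domains of $f$} (a classical consequence of the no-wandering-domains theorem together with the fact that each periodic Fatou cycle absorbs a critical point, so their number is at most $2\deg f-2$ plus finitely many Herman-ring and parabolic contributions — in any case finite).

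The main steps, in order. \textbf{Step 1.} Let $m$ be the (finite) number of periodic Fatou domains of $f$, together with the finitely many periodic Julia components of $f$ that are not singletons; call this finite collection $\mathcal{P}$. Every periodic level-$n$ extremal chain contains at least one element of $\mathcal{P}$ in its interior-or-boundary data, in fact contains a periodic Fatou domain. \textbf{Step 2.} For each $n$, let $a_n$ denote the number of periodic level-$n$ extremal chains. Distinct periodic level-$n$ extremal chains are disjoint (their interiors are disjoint and, being extremal, they cannot share a Fatou-boundary point without coinciding), and each contains at least one periodic Fatou domain; since a periodic level-$(n+1)$ extremal chain is a union of periodic level-$n$ extremal chains (Lemma \ref{lem:dyn-def}, applied after passing to the appropriate iterate), the map sending a periodic level-$n$ chain to the periodic level-$(n+1)$ chain containing it is well-defined and surjective, so $a_{n+1}\le a_n$. \textbf{Step 3.} The sequence $\{a_n\}$ is a non-increasing sequence of positive integers bounded below by $1$, hence eventually constant: there is $N_0$ with $a_n=a_{N_0}$ for all $n\ge N_0$. \textbf{Step 4.} Once $a_{n+1}=a_n$, the merging map in Step 2 is a bijection, so every periodic level-$(n+1)$ extremal chain \emph{equals} the periodic level-$n$ extremal chain it contains; by Lemma \ref{lem:dyn-def} this forces the nucleus $E_0$ to already be a single periodic level-$n$ chain, and then the increasing union $\ov{\bigcup_k E_k}$ defining it as a level-$(n+1)$ chain coincides with its realization as a level-$n$ chain. \textbf{Step 5.} Since every level-$n$ extremal chain is preperiodic and its forward orbit consists of periodic level-$n$ extremal chains together with finitely many strictly pre-periodic ones, and since by Lemma \ref{lem:chain-map} the preimage structure of a stabilized periodic chain does not grow either, I would conclude that for $n\ge N$ (with $N$ slightly larger than $N_0$ to absorb the pre-periodic tails) every level-$n$ extremal chain is already a level-$N$ extremal chain.

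I expect \textbf{Step 4} (and its bookkeeping in Step 5) to be the main obstacle: one must argue carefully that ``no further merging at the periodic level'' genuinely implies ``no growth at all'', i.e. that a level-$(n+1)$ extremal chain cannot be strictly larger than the level-$n$ extremal chain containing a given Fatou domain even when the collection of periodic pieces is stable. The point is that the defining sequence $\{E_k\}$ for a level-$(n+1)$ chain, with each $E_k$ a finite union of level-$n$ extremal chains, is forced by Lemma \ref{lem:dyn-def} to have connected nucleus $E_0$ equal to a union of \emph{periodic} level-$n$ extremal chains; if that union is a single chain and if moreover the preimage components $E_k=$ (component of $f^{-kp}(E_0)$) introduce no new level-$n$ chains beyond those already inside the level-$n$ extremal chain, then the two descriptions agree. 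Establishing this last ``no new preimage pieces'' claim again reduces to the same non-increasing-count argument applied to all extremal chains (not just periodic ones) inside a fixed periodic extremal chain, which is finite and hence stabilizes. Apart from this, the argument is routine finiteness.
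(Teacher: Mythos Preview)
Your Steps 1--3 match the paper's proof: one counts periodic level-$n$ extremal chains, observes the count $a_n$ is non-increasing, and fixes $n_0$ with $a_n=a_{n_0}$ for $n\ge n_0$. Your Step 5 is also essentially the paper's argument, carried out via Lemma \ref{lem:chain-map}(2).

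The gap is in Step 4, exactly where you flag it. Once $a_{n+1}=a_n$, each periodic level-$(n+1)$ chain $K_{n+1}(U)$ contains a unique periodic level-$n$ chain $K_n(U)$, and by Lemma \ref{lem:dyn-def} one has $K_{n+1}(U)=\ov{\bigcup_k E_k}$ with $E_k$ the component of $f^{-kp}(K_n(U))$ containing $K_n(U)$. To get $K_{n+1}(U)=K_n(U)$ you must show $K_n(U)$ is already a full component of $f^{-p}(K_n(U))$, and this does \emph{not} follow from the bijection on periodic pieces: by Lemma \ref{lem:chain-map}(2), $f^{-p}(K_n(U))$ decomposes into level-$n$ extremal chains that may overlap along $J_f$, so the component $E_1$ containing $K_n(U)$ can be a strictly larger connected union of $K_n(U)$ with \emph{preperiodic} level-$n$ extremal chains. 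Your proposed fix---``the same non-increasing-count argument applied to all extremal chains inside a fixed periodic extremal chain, which is finite''---fails because a periodic level-$(n+1)$ chain typically contains \emph{infinitely} many level-$n$ extremal chains (e.g.\ a level-$1$ chain generally contains infinitely many closures of Fatou domains), so there is no finite quantity to run the count on.

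The paper closes this gap with a degree argument, not a counting argument. Using Lemmas \ref{lem:chain-map} and \ref{lem:dyn-def}, if $K_n(U)$ is not a component of $f^{-p}(K_n(U))$ then $\deg(f^p|_{K_{n+1}(U)})>\deg(f^p|_{K_n(U)})$; since these degrees are bounded above by $(\deg f)^p$, there is $n(U)\ge n_0$ beyond which the degree stabilizes, forcing $K_n(U)$ to be a component of $f^{-p}(K_n(U))$ and hence $K_{n+1}(U)=K_n(U)$. This monotone-degree trick is the missing ingredient in your plan.
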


\begin{proof}
Let $k(n)$ denote the number of periodic level-$n$ extremal chains of $f$. Then $k(n)$ is decreasing. Thus, there exists an integer $n_0$ such that $k(n)$ is constant for $n\ge n_0$.
This implies that two distinct periodic level-$n$ extremal chains are disjoint for $n\ge n_0$.

For each periodic Fatou domain $U$ of $f$ with period $p\ge 1$, denote by $K_n(U)$ the level-$n$ extremal chain containing $U$. Then $f^p(K_n(U))=K_n(U)$, and $K_n(U)$ is the unique periodic level-$n$ extremal chain contained in $K_{n+1}(U)$ for $n\ge n_0$. If $K_{n}(U)$ is not a component of $f^{-p}(K_{n}(U))$, we have
$$
\deg(f^p|_{K_{n+1}(U)})>\deg(f^p|_{K_{n}(U)})
$$
by Lemmas \ref{lem:chain-map} and  \ref{lem:dyn-def}. On the other hand, since $\deg(f|_{K_{n+1}}(U))\le\deg f$, there exists an integer $n(U)\ge n_0$ such that $\deg(f^p|_{K_{n}(U)})$ is constant for $n\ge n(U)$. Thus, $K_{n}(U)$ must be a component of $f^{-p}(K_{n}(U))$ for $n\ge n(U)$. It then follows from Lemma \ref{lem:dyn-def} that $K_{n+1}(U)=K_n(U)$ for $n\ge n(U)$.

Let $N_1$ be the maximum of $\{n(U)\}$ for all periodic Fatou domains $U$ of $f$. Then every periodic level-$n$ extremal chain is a level-$N_1$ extremal chain for $n\geq N_1$.

For any level-$N_1$ extremal chain $K$,  there exists an integer $q\ge 0$ such that $f^q(K)$ is a periodic level-$N_1$ extremal chain. Let $K_{i}$ denote the level-$(N_1+i)$ extremal chain containing $K$ for $i>0$. Then $f^q(K_{i})$ is a periodic level-$(N_1+i)$ extremal chain containing $f^q(K)$, and hence $f^q(K_{i})=f^q(K)$.  Applying Lemma \ref{lem:chain-map}\,(2) to $f^q$, we obtain that $K_{i}=K_{1}$ for $i\ge 1$. Therefore, the lemma holds if we define $N:=N_1+1$.
\end{proof}

\begin{proof}[Proof of Theorem \ref{thm:maximal}]
By Lemma \ref{lem:maximal}, there exists an integer $N\ge 0$ such that any level-$n$ extremal chain is a level-$N$ extremal chain for every $n\ge N$. For any Fatou domain $U$ of $f$, let $K(U)$ denote the level-$N$ extremal chain containing $U$. If a Fatou chain $K$ intersects $K(U)$, then $K\cup K(U)$ is contained in an extremal chain of level $N+1$. This implies $K\subset K(U)$. Thus, $K(U)$ is a maximal Fatou chain.
By Lemma \ref{lem:chain-map}, the image and components of the pre-image of a maximal Fatou chain are still maximal Fatou chains.
\end{proof}

\section{Decompositions of rational maps}\label{sec:4}

In this section, we establish the \emph{cluster-exact decomposition} (Theorem \ref{thm:cluster-exact0}) for marked rational maps. This  decomposition theorem corresponds to Theorem \ref{thm:cluster-exact}\,(1) and (2), and the remaining part of Theorem \ref{thm:cluster-exact} follows from Theorem \ref{thm:blow-up}, which
will be proved in the next section.

In Section 4.1, we study the combinatorics of planar continua and domains by their \emph{branched numbers}. In Section 4.2, we characterize the dynamics of stable sets by proving Theorem \ref{thm:renorm}. In Section 4.3, we obtain an important result, called the \emph{exact decomposition}, which serves as a key step toward the cluster-exact decomposition. Finally, we complete the proof of the cluster-exact decomposition in Section 4.4.

\subsection{Branched numbers}\label{sec:branched-number}
Let $P\subset\cbar$ be a finite marked set, and let $E\subset\cbar$ be a connected open or closed set.
Recall that $E$ is {simple-type} (rel $P$) if there exists a simply connected domain $D\subset\cbar$ such that $E\subset D$ and $\#(D\cap P)\le 1$; or {annular-type} if $E$ is not simple-type and there exists an annulus $A\subset\cbar\setminus P$ such that $E\subset A$; or {complex-type} otherwise.

The {\bf branched number} of $E$ (rel $P$) is defined by
$$
b(E):=\#(E\cap P)+\kappa(E),
$$
where $\kappa(E)$ is the number of components of $\cbar\sm E$ that intersect $P$. By definition, $E$ is complex-type if and only if $b(E)\ge 3$, and $b(E)=2$ if $E$ is annular-type.

Let $K_0\subset K$ be continua in $\cbar$. Recall that $K_0$ is a skeleton of $K$ (rel $P$) if $K_0\cap P=K\cap P$ and any two points of $P$ in distinct components of $\cbar\sm K$ are contained in distinct components of $\cbar\sm K_0$. It is easy to verify that
\begin{equation}\label{eq:skeleton}
\text{$K_0$ is a skeleton of $K$ $\Longleftrightarrow$ $b(K_0)=b(K)$ and $\#(K_0\cap P)=\#(K\cap P)$}.
\end{equation}

\begin{lemma}\label{lem:closed-open}
The following statements hold:
\begin{enumerate}
\item For any continuum $E\subset\cbar$, there exists a domain $U\supset E$ such that $b(U)=b(E)$;
\item For any domain $U\subset \cbar$, there exists a continuum $E\subset U$ such that $b(U)=b(E)$.
\end{enumerate}
\end{lemma}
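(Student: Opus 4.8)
\textbf{Proof proposal for Lemma \ref{lem:closed-open}.}

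The plan is to treat the two statements as approximation arguments in the spherical metric, exploiting that the branched number $b(E)$ only depends on finitely many pieces of combinatorial data, namely which points of the finite set $P$ meet $E$ and how $P$ is distributed among the complementary components of $E$. In both directions I would first identify the relevant finite set of points of $P$ and then thicken or shrink the set in question by a small enough amount that none of this data changes, using that $P$ is finite and that $E$ (resp.\ $U$) together with the points of $P$ are disjoint closed sets when they should be.

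For statement (1), let $E$ be a continuum. Write $P\cap E = P_1$ and $P\sm E = P_2$. Since $E$ is closed and $P_2$ is a finite set disjoint from $E$, there is $\ep>0$ with $\mathrm{dist}(p,E)>2\ep$ for every $p\in P_2$. Let $U$ be the open $\ep$-neighbourhood of $E$; then $U$ is a domain (an open connected set, since $E$ is connected) containing $E$, and $U\cap P = P_1 = E\cap P$, so $\#(U\cap P)=\#(E\cap P)$. It remains to see that $\kappa(U)=\kappa(E)$, i.e.\ the complementary components of $U$ that meet $P$ correspond bijectively to those of $E$ that meet $P$, for $\ep$ small. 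Here I would argue: each component $W$ of $\cbar\sm E$ that meets $P$ contains a point of $P_2$, hence (for $\ep$ small, using that $P_2$ is finite) contains a point at distance $>2\ep$ from $E$, hence meets $\cbar\sm U$; and the component of $\cbar\sm U$ through that point is contained in $W$. Conversely every component of $\cbar\sm U$ lies in a unique component of $\cbar\sm E$. This gives a surjection from $P$-meeting components of $\cbar\sm U$ onto $P$-meeting components of $\cbar\sm E$; injectivity follows because two distinct components of $\cbar\sm E$ stay ``separated at scale $\ep$'' by the connected set $E$, so for $\ep$ small they cannot merge into one component of $\cbar\sm U$ — more carefully, if $W_1\neq W_2$ are complementary components of $E$, a path in $\cbar\sm U$ joining a point of $W_1$ to a point of $W_2$ would have to cross $\overline{W_1}\cap\overline{W_2}\subset E$, contradicting that it avoids the $\ep$-neighbourhood of $E$. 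Hence $\kappa(U)=\kappa(E)$ and $b(U)=b(E)$.

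For statement (2), let $U$ be a domain. Now $\cbar\sm U$ is a closed set, and the points of $P$ lying in $\cbar\sm U$ sit in finitely many complementary components; pick one point $p_W$ of $P$ in each such component $W$ that meets $P$, and note $P\cap U$ is also a finite set. I would exhaust $U$ by an increasing sequence of continua $E_n\subset U$ with $\bigcup_n E_n = U$ (for instance, closures of the components through a fixed basepoint of the sets $\{z\in U:\mathrm{dist}(z,\cbar\sm U)>1/n\}$, which are connected for $n$ large once they are nonempty, and whose union is $U$). For $n$ large enough, $E_n$ contains every point of $P\cap U$, so $E_n\cap P = U\cap P$. Moreover, for $n$ large, each complementary component of $E_n$ that meets $P$ contains exactly one of the representatives $p_W$: indeed $\cbar\sm E_n\supset \cbar\sm U$, so each $W$ lies in a component of $\cbar\sm E_n$; and distinct $W_1,W_2$ end up in distinct components of $\cbar\sm E_n$ for $n$ large because the connected set $E_n$ grows to fill the ``channel'' of $U$ between them — formally, a path in $\cbar\sm E_n$ from $p_{W_1}$ to $p_{W_2}$ must leave $\cbar\sm U$ and hence cross $U$; but it stays within distance $1/n$ of $\cbar\sm U$, so for $n$ small enough relative to the ``width'' of $U$ this is impossible. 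This yields a bijection between $P$-meeting complementary components of $E_n$ and those of $U$, so $\kappa(E_n)=\kappa(U)$ and $b(E_n)=b(U)$; take $E=E_n$.

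The main obstacle I anticipate is making the ``channel-crossing'' separation arguments rigorous and uniform: I need that distinct complementary components of a continuum (resp.\ of a domain) cannot be joined by a path staying in a thin neighbourhood of that continuum (resp.\ of the complement). This is intuitively clear on the sphere but requires care — the cleanest route is probably to invoke the connectedness of $E$ directly (any arc joining two distinct complementary components of $E$ must meet $\overline{E}=E$) together with compactness of $P$ to extract the single small parameter $\ep$ (resp.\ large $n$) that works simultaneously for all the finitely many relevant components. A minor additional point to verify is that the thickened set in (1) is genuinely a domain and the shrunken sets in (2) are genuinely continua (connectedness of $\ep$-neighbourhoods of a continuum, and of the large-$n$ members of the exhaustion), both of which are standard.
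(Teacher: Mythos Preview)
Your approach via metric $\ep$-thickening and exhaustion is different from the paper's, which builds $U$ and $E$ directly: for (1) it picks a full continuum $K_i$ inside each $P$-meeting component $V_i$ of $\cbar\sm E$ with $P\cap K_i = P\cap V_i$, and sets $U=\cbar\sm\bigcup K_i$; for (2) it surrounds each $P$-meeting complementary component $E_j$ of $U$ by a disk $V_j$ with $\partial V_j\subset U$, then takes $E$ to be a graph in $U$ containing $P\cap U$ and all the $\partial V_j$. This makes $\kappa(U)=\kappa(E)$ immediate by construction, with no limiting argument needed.

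Your route can be made to work but has a real gap in (1). Your ``injectivity'' paragraph argues that a path in $\cbar\sm U$ cannot run from one component $W_1$ of $\cbar\sm E$ to a different one $W_2$; but that is just well-definedness of the map $C\mapsto W$ (each component of $\cbar\sm U$ sits in a single component of $\cbar\sm E$), which is automatic since $\cbar\sm U\subset\cbar\sm E$. The actual injectivity you need is the opposite: that a \emph{single} $P$-meeting component $W$ of $\cbar\sm E$ does not break into two $P$-meeting components of $\cbar\sm U$. Your choice of $\ep$, based only on $\mathrm{dist}(p,E)$ for $p\in P$, does not prevent this: an $\ep$-collar of $E$ can pinch off a narrow channel of $W$ and separate two marked points that were in the same $W$. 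The fix is easy once identified: for each pair $p,p'\in P$ lying in the same component $W$, pick an arc in $W$ joining them and shrink $\ep$ below the distance from that arc to $E$; there are finitely many pairs. A parallel sharpening is needed in (2): the phrase ``stays within distance $1/n$ of $\cbar\sm U$'' is correct but does not by itself give separation; what you want is a Jordan curve $\gamma\subset U$ separating $W_1$ from $W_2$ (which exists since $W_1,W_2$ are distinct components of the compact set $\cbar\sm U$), and then $\gamma\subset E_n$ for $n$ large forces $W_1,W_2$ into distinct components of $\cbar\sm E_n$. With these two repairs your argument goes through; the paper's construction simply avoids the issue altogether.
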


\begin{proof}
(1) Let $V_i$, $1\le i\le n$, be the components of $\cbar\sm E$ containing points of $P$. Then there exists a full continuum $K_i\subset V_i$ such that $P\cap K_i=P\cap V_i$. Set $U=\cbar\sm\bigcup_{i=1}^n K_i$. Then $U\supset E$ is a domain, and $b(U)=b(E)$.

(2) Let $E_j$, $1\le j\le m$, be the components of $\cbar\sm U$ that intersect $P$. Then there exist disks $V_j\supset E_j$ with pairwise disjoint closures such that $\partial V_j\subset U$ and $P\cap E_j=P\cap V_j$. Since $U$ is a domain, there exists a graph $E\subset U$ containing $P\cap U$ and all $\partial V_j$, $j=1,\ldots,m$. It follows that  $b(U)=b(E)$.
\end{proof}

\begin{lemma}\label{lem:complexity}
Suppose that $V\subset\cbar$ is a complex-type domain and $\KKK\subset V$ is a compact set. Let $\EE$ be the collection of all complex-type components of either $V\sm\KKK$ or $\KKK$. Then
$$
\sum_{ E\in\EE} (b(E)-2)=b(V)-2.
$$
\end{lemma}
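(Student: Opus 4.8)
The plan is to prove the identity by an induction on the number of continua in $\mathcal{E}$, using the skeleton characterization \eqref{eq:skeleton} and Lemma \ref{lem:closed-open} to reduce everything to a single combinatorial move: cutting along one separating set. First I would pass to a convenient model. Using Lemma \ref{lem:closed-open}(1) applied to each component of $\mathcal{K}$ and Lemma \ref{lem:closed-open}(2) applied to $V$, together with the equivalence \eqref{eq:skeleton}, I can replace $V$ by a graph and each component of $\mathcal{K}$ by a graph without changing any branched number or the complex-type classification of the relevant pieces; so from now on $V$ is a graph-complement domain and the components of $\mathcal{K}$ are finitely many disjoint graphs inside $V$. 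The point of this reduction is that the sets $V\setminus\mathcal{K}$ and the components of $\mathcal{K}$ now have finitely many components, and each of them is again (the interior of, or a) graph-type set, so I can compute $b$ for all of them by elementary counting of marked points and complementary components.

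The engine of the induction is the following one-step lemma: if $W$ is a complex-type connected set and $L\subset W$ is a connected set such that $W\setminus L$ has components $W_1,\dots,W_r$, then
\[
\sum_{i:\, W_i\ \text{complex-type}} (b(W_i)-2) \;+\; (b(L)-2)^+ \;=\; b(W)-2,
\]
where $(\cdot)^+$ records that $L$ contributes only when it is itself complex-type — but more precisely I would phrase it so that the simple-type and annular-type pieces contribute exactly $0$ to the left side and drop out. The content is bookkeeping of marked points: $\#(W\cap P) = \#(L\cap P) + \sum_i \#(W_i\cap P)$; and of complementary components: each component of $\overline{\mathbb C}\setminus W$ meeting $P$ is ``inherited'' by exactly one of $L, W_1,\dots,W_r$, while each of the newly created complementary components (the ``holes'' cut out by $L$) meeting $P$ is counted by exactly one $W_i$ on its boundary side and contributes $+1$ to that $\kappa(W_i)$; simultaneously $L$ having $\kappa(L)$ complementary components meeting $P$ corresponds to grouping the $W_i$'s and the old holes. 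Making this precise is where one must be careful, because a single complementary component of $W$ can be adjacent to several of the $W_i$, and the annular-type pieces must be shown to contribute zero. The cleanest way is to first handle the case $r=1$ (so $W\setminus L$ is connected), where the count is transparent, and then iterate: cut $\mathcal{K}$ into its components one at a time, and cut the complement one component at a time, each cut being a set that is either a whole component of $\mathcal{K}$ or one component of the complement of what has been removed so far.

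Concretely, I would run the induction as follows. List the finitely many pieces of $V$ obtained by successively removing: first the closure (in the graph model, the set itself) of one component $\mathcal{K}_1$ of $\mathcal{K}$, leaving the complex-type components among $V\setminus \mathcal{K}_1$ together with $\mathcal{K}_1$ itself; then inside whichever piece contains $\mathcal{K}_2$, remove $\mathcal{K}_2$; and so on. At each stage apply the one-step lemma with $W$ the current piece (which I must check remains complex-type — if it were simple- or annular-type it could not contain a complex-type $\mathcal{K}_j$ or produce one, so the removal is trivial and $b-2$ is $\le 0$, contributing nothing, consistent with the claimed formula once one checks the boundary cases don't create spurious terms). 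Telescoping all these one-step identities gives exactly $\sum_{E\in\mathcal{E}}(b(E)-2) = b(V)-2$, since every complex-type component of $V\setminus\mathcal{K}$ and every complex-type component of $\mathcal{K}$ appears once as a final piece, and all intermediate $b(W)-2$ terms cancel in pairs.

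\textbf{Main obstacle.} The genuinely delicate point is the one-step lemma, specifically verifying that annular-type and simple-type sub-pieces contribute exactly zero and do not ``leak'' marked complementary components into the count — equivalently, that $b$ is additive in the stated sense across a single cut. This requires a careful case analysis of how a complementary component of $W$ meeting $P$ is distributed among $L$ and the $W_i$: it is claimed by $L$ if $L$ separates it from the rest, by a single $W_i$ otherwise, and one must rule out double counting. A secondary, more routine nuisance is the reduction to the graph model: one must check that replacing $V$ and the components of $\mathcal K$ by skeleta preserves not just the branched numbers but the correspondence between complex-type components before and after — this follows from \eqref{eq:skeleton} applied componentwise, but it should be stated explicitly.
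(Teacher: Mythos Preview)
Your inductive strategy is genuinely different from the paper's argument, and the gap you yourself flag is real and not easily closed along the lines you suggest.

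The paper does not induct.  After first reducing to the case $V\cap P=\emptyset$ (by deleting small disks around the marked points in $V$ and in $\mathcal K$, which shifts each $\#(E\cap P)$ into $\kappa(E)$ without disturbing adjacencies), it builds a tree $T$ whose vertices are the complex-type pieces $E\in\mathcal E$ together with the components of $\overline{\mathbb C}\setminus V$ meeting $P$, with an edge between two pieces exactly when no element of $\mathcal E$ separates them.  The degree of $v(E)$ in $T$ is then $\kappa(E)$, so the tree identity $\sum_v(\deg v-2)=-2$ gives $\sum_{E\in\mathcal E}(\kappa(E)-2)=\kappa(V)-2$ in one stroke.  Simple- and annular-type pieces never appear as vertices at all; they are absorbed into the edges.

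In your scheme, the entire content sits in the one-step lemma for a \emph{connected} $L$ with $r>1$ complementary pieces, and your proposed reduction to $r=1$ does not go through: taking $L'=L\cup W_2\cup\cdots\cup W_r$ (or its closure) does not produce a compact set inside $W$, since the $\overline{W_i}$ typically meet $\partial W$; and ``removing one component of the complement at a time'' forces you to apply the identity to a set that is no longer a domain, so you would need a version of the lemma for closed $W$ as well, which you have not set up.  The $r=1$ case you call transparent is indeed checkable by hand (and you are right that the simple/annular pieces drop out there), but it is strictly easier than the connected-$L$, $r>1$ case.  Your graph-model reduction is also shakier than you indicate: Lemma~\ref{lem:closed-open} hands you skeleta with the right branched number, but you must verify that replacing $V$ and each component of $\mathcal K$ simultaneously preserves the \emph{adjacency pattern} and hence the complex-type classification of every piece of $V\setminus\mathcal K$; this is exactly what the paper's disk-removal reduction makes transparent.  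If you want to salvage the inductive approach, the cleanest fix is to prove the connected-$\mathcal K$ case directly by the tree argument and then telescope over components---but at that point you have essentially reproduced the paper's proof.
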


\begin{proof}
There exist at most $\#P$ elements of $\EE$ intersecting $P$ and  at most $\#P-2$ elements  disjoint from $P$ since each divides $P$ into at least three parts. Thus, $\EE$ is a finite collection.

In order to prove the equality, define a graph $T$ as  follows. Let $\EE_1$ be the collection of all components of $\cbar\sm V$ intersecting $P$.  There exists a bijection $v$ from $\EE_1\cup \EE$ onto the set of vertices of $T$. Two vertices $v(E_1)$ and $v(E_2)$ of $T$ are connected by an edge if and only if $E_1$ and $E_2$ are \emph{adjacent}, i.e., no elements of $\EE$ separate $E_1$ from $E_2$. Then $T$ is a tree.

Note that for any element $E\in\EE_1\cup \EE$, the number of edges of $T$ connecting to the vertex $v(E)$ is exactly $\kappa(E)$, i.e., the number of components of $\ov{\C}\setminus E$ intersecting $P$. Thus, $v(E)$ is an endpoint of $T$ precisely if $\kappa(E)=1$. In particular, $v(E)$ is an endpoint if $E\in \EE_1$.

Let $k_0\ge 0$ denote the number of elements of $\EE$ with $\kappa(E)=1$. Then $T$ has exactly $\kappa(V)+k_0$ endpoints. Since $T$ is a tree, we have
$$
\kappa(V)+k_0-2=\sum(\kappa(E)-2),
$$
where the summation is taken over all elements of $\EE$ with $\kappa(E)\geq 2$.
It follows immediately that $$\kappa(V)-2=\sum(\kappa(E)-2),$$
where the summation is taken over all elements of $\EE$. Thus, the lemma holds if $V\cap P=\emptyset$.

In the general case, without loss of generality, we  assume that all marked points in $\KKK$ are  interior points of $\KKK$. Then there exists a small number $r>0$ such that $\D(z,3r)\subset V$
for each point $z\in P\cap V$, and  $\D(z,3r)\subset \KKK$ for $z\in P\cap\KKK$.

Set $V':=V\setminus \bigcup_{z\in P\cap V}\ov{\D(z,r)}$ and $\KKK':=\KKK\setminus\bigcup_{z\in P\cap \KKK}\D(z,2r)$. Let $\EE'$ be the collection of all complex-type components of either $V'\sm\KKK'$ or $\KKK'$. It follows that
\begin{itemize}
\item $\sum_{E'\in \EE'}(b(E')-2)=b(V')-2$ since $V'\cap P=\emptyset$; and \vspace{1pt}

\item $b(V)=b(V')$ and each $E'\in\EE'$ is contained in a unique element $E \in\EE$ with $b(E')=b(E)$.\vspace{2pt}
\end{itemize}
\noindent Therefore, we have $\sum_{E\in \EE}(b(E)-2)=b(V)-2$. The lemma is proved.
\end{proof}

\begin{corollary}\label{cor:monotone}
The following statements hold:
\begin{enumerate}
\item  Let $K_0\subset K$ be continua in $\cbar$. Then $b(K_0)\le b(K)$.
\item  Let $\{K_n\}$ be a sequence of continua in $\cbar$ such that $K_{n}\subset K_{n+1}$ for all $n\ge 0$. Then there exists $N\geq0$ such that $b(K_n)=b(K_N)$, and $K_N$ is a skeleton of $K_n$ for every $n\geq N$.
\item  Let $\{K_n\}$ be a sequence of continua in $\cbar$ such that $K_{n+1}\subset K_n$ for all $n\ge 0$, and set $K:=\bigcap_{n\ge 1}K_n$. Then $b(K)=b(K_n)$ for sufficiently large $n$.
\end{enumerate}
\end{corollary}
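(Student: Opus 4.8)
The three parts will be established in order, with (1) serving as the workhorse.

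\textbf{Part (1).} For $b(K_0)\le b(K)$ the plan is to track how the components of $\cbar\sm K$ meeting $P$ behave when we shrink $K$ to $K_0$. Since $\cbar\sm K\subset\cbar\sm K_0$, each component $W$ of $\cbar\sm K$ lies in a unique component $\Phi(W)$ of $\cbar\sm K_0$. In each component $V$ of $\cbar\sm K_0$ that meets $P$, choose a point $q_V\in V\cap P$; distinct such $V$ carry distinct $q_V$. Either $q_V\in K$, and then $q_V\in(K\cap P)\sm K_0$, a set of size $\#(K\cap P)-\#(K_0\cap P)$ since $K_0\cap P\subset K\cap P$; or $q_V\notin K$, and then $q_V$ lies in a component $W$ of $\cbar\sm K$ meeting $P$, with $V=\Phi(W)$. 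Both $V\mapsto q_V$ on the first class and $V\mapsto W$ on the second are injective, so $\kappa(K_0)\le(\#(K\cap P)-\#(K_0\cap P))+\kappa(K)$, which rearranges to $b(K_0)\le b(K)$. (When $K$ is complex-type one may instead invoke Lemma \ref{lem:complexity} with a domain $V\supset K$ satisfying $b(V)=b(K)$ from Lemma \ref{lem:closed-open}\,(1) and $\KKK=K_0$, using that each summand $b(E)-2$, $E\in\EE$, is positive; but the direct count avoids the extra cases $b(K)\le 2$.)

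\textbf{Part (2).} Here I would simply observe that both $\#(K_n\cap P)$ and $b(K_n)$ are non-decreasing in $n$ — the former because $K_n$ is increasing, the latter by part (1) — and both are bounded above by $\#P$ (indeed $b(E)\le\#P$ for any connected $E$, since $\kappa(E)\le\#(P\sm E)$). Hence both stabilize; taking $N$ past the common point of stabilization, for every $n\ge N$ one has $K_N\subset K_n$, $\#(K_N\cap P)=\#(K_n\cap P)$ and $b(K_N)=b(K_n)$, and the characterization \eqref{eq:skeleton} says precisely that $K_N$ is then a skeleton of $K_n$.

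\textbf{Part (3).} First recall that $K=\bigcap_n K_n$ is a non-empty continuum (nested non-empty compacta, nested continua). By part (1), $b(K)\le b(K_n)$, and $b(K_n)$ is non-increasing, hence eventually constant; since $P$ is finite, the decreasing sets $K_n\cap P$ stabilize at $K\cap P$, so $\#(K\cap P)=\#(K_n\cap P)$ for $n$ large, and it remains to show $\kappa(K)=\kappa(K_n)$ for $n$ large. The inequality $\kappa(K)\le\kappa(K_n)$ (for $n$ large) is already contained in the above. For the reverse I would argue by contradiction: if $\kappa(K)$ were strictly less than the eventual value of $\kappa(K_n)$, then for each large $m$ the finite set $Q:=P\sm K=P\sm K_m$ would occupy more components of $\cbar\sm K_m$ than of $\cbar\sm K$, so some pair of points of $Q$ would lie in one component of $\cbar\sm K$ but in two distinct components of $\cbar\sm K_m$; as $Q$ is finite, one fixed pair $p,p'$ works for infinitely many $m$. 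Such $p,p'$ are joined by a path $\gamma\subset\cbar\sm K$, and since $\gamma$ is compact and disjoint from $K=\bigcap K_m$ we get $\gamma\cap K_m=\emptyset$ for all large $m$, so $p,p'$ lie in the same component of $\cbar\sm K_m$ — a contradiction. Hence $\kappa(K)=\kappa(K_n)$, and $b(K)=\#(K\cap P)+\kappa(K)=b(K_n)$ for $n$ large. The only step that is more than bookkeeping is this last one — that $\kappa(\cdot)$ cannot drop in a decreasing intersection — and the point requiring care is fixing the offending pair $(p,p')$ before running the compactness argument, so that a single compact path settles all large $m$ at once.
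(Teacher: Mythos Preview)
Your proof is correct. Part (2) matches the paper exactly, but in parts (1) and (3) you take a more direct route than the paper does.

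For (1), the paper embeds $K$ in a domain $U$ with $b(U)=b(K)$ via Lemma~\ref{lem:closed-open}\,(1) and then invokes Lemma~\ref{lem:complexity} to conclude $b(K_0)\le b(U)$. Your component-counting argument bypasses both lemmas and, as you note, also sidesteps the implicit complex-type hypothesis that Lemma~\ref{lem:complexity} carries. For (3), the paper again passes through an open domain $U\supset K$ with $b(U)=b(K)$, uses the nested-compacta fact that $K_n\subset U$ eventually, and then appeals once more to Lemma~\ref{lem:complexity} to get $b(K_n)\le b(U)=b(K)$. Your path-plus-compactness argument for $\kappa$ achieves the same end by elementary means. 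The paper's approach has the virtue of reusing its structural lemmas (which are needed elsewhere anyway), while yours is self-contained and slightly cleaner as a standalone proof.
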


\begin{proof}
(1) By Lemma \ref{lem:closed-open}, there exists a domain $U\subset\cbar$ such that $b(U)=b(K)$. It follows from Lemma \ref{lem:complexity} that $b(K_0)\le b(U)=b(K)$.

(2) Note that the numbers $b(K_n)$ and $\#(K_n\cap P)$ are increasing and bounded above by $\# P$. Thus, there exists an integer $N\geq0$ such that both $b(K_n)$ and $\#(K_n\cap P)$ are constant for every $n\geq N$. By relation \eqref{eq:skeleton}, $K_N$ is a skeleton of $K_n$ for every $n\geq N$.

(3) By statement (1), the number $b(K_n)$ is decreasing. Thus, $b(K_n)$ becomes a constant $b\ge 1$ for sufficiently large $n$. Since $K$ is a connected closed set, we have $b(K)\le b$. On the other hand, by Lemma \ref{lem:closed-open}, there exists a domain $U\supset K$ such that $b(U)=b(K)$. Since $K_n\subset U$ for every sufficiently large  integer $n$, it follows from Lemma \ref{lem:complexity} that $b(K)=b(U)\ge b(K_n)=b$.
\end{proof}

 Now, let $(f,P)$ be a marked rational map. Since $f(P)\subset P$, we immediately obtain the following \emph{pullback principle}.
\begin{lemma}\label{lem:pullback}
 Let $(f, P)$ be a marked rational map. Suppose that $E\subset \ov{\C}$ is a connected open or closed set. If $E$ is simple-type, then each component of $f^{-1}(E)$ is simple-type. If $E$ is annular-type, then each component of $f^{-1}(E)$ is either annular-type or simple-type.
\end{lemma}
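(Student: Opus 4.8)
The plan is to prove both assertions by pulling back the ``witnessing'' simply connected domain (resp.\ annulus) and controlling the chosen component of the preimage by Riemann--Hurwitz. Three elementary ingredients are used: (i) for an open connected set $V$ and a component $\widetilde V$ of $f^{-1}(V)$, the restriction $f|_{\widetilde V}:\widetilde V\to V$ is proper and onto; (i') for any set $V$ and any component $\widetilde V$ of $f^{-1}(V)$ one has $P\cap\widetilde V\subset f^{-1}(P\cap V)$, since $f(P)\subset P$; (ii) every critical value of $f$ lies in $P_f\subset P$. One first remarks that if $\#P\le 1$ every connected set is simple-type and there is nothing to prove, so assume $\#P\ge 2$.

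Suppose $E$ is simple-type, say $E\subset D$ with $D$ simply connected and $\#(D\cap P)\le 1$; since $\#P\ge 2$ this forces $D\neq\cbar$, so $f^{-1}(D)$ is a proper subset of $\cbar$. Fix a component $\widetilde E$ of $f^{-1}(E)$ and let $\widetilde D$ be the component of $f^{-1}(D)$ containing $\widetilde E$, with $f|_{\widetilde D}:\widetilde D\to D$ proper of degree $d'$. By (ii), every critical point of $f$ in $\widetilde D$ maps to a critical value inside $D\cap P$, hence to the unique point $q$ of $D\cap P$ when it exists; thus $f|_{\widetilde D}$ is branched at most over $q$. Writing $s=\#(f^{-1}(q)\cap\widetilde D)$ and using that the local degrees of $f|_{\widetilde D}$ over the fibre of $q$ sum to $d'$, Riemann--Hurwitz yields $\chi(\widetilde D)=d'-(d'-s)=s$ (and $\chi(\widetilde D)=d'$, with no branching, if $D\cap P=\emptyset$). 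Since $f|_{\widetilde D}$ is onto one has $s\ge1$ (resp.\ $d'\ge1$), while $\chi(\widetilde D)\le1$ as $\widetilde D$ is a proper subdomain of $\cbar$; hence $\chi(\widetilde D)=1$, so $\widetilde D$ is simply connected. By (i'), $\widetilde D\cap P\subset f^{-1}(q)\cap\widetilde D$, which has at most one point (and is empty if $q$ does not exist), so $\#(\widetilde D\cap P)\le1$. Therefore $\widetilde D$, and a fortiori $\widetilde E\subset\widetilde D$, is simple-type.

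Suppose $E$ is annular-type, say $E\subset A$ with $A$ an annulus and $A\cap P=\emptyset$. By (ii), $A$ contains no critical value, so $f^{-1}(A)\to A$ is a finite unramified covering. Hence the component $\widetilde A$ of $f^{-1}(A)$ containing a chosen component $\widetilde E$ of $f^{-1}(E)$ is a finite connected covering of an annulus, and therefore (not being the universal cover) is again an annulus; moreover $\widetilde A\cap P\subset f^{-1}(A\cap P)=\emptyset$ by (i'). Thus $\widetilde E\subset\widetilde A$, an annulus disjoint from $P$, so by the definition of the three types $\widetilde E$ is either simple-type or annular-type.

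The mathematical substance is slight---which is why the paper records the statement as immediate---and the only care required is routine: that the restriction of $f$ to a component of a preimage of an open connected set is proper and onto; that Riemann--Hurwitz is legitimate here, $\widetilde D$ having finite topological type because it is a branched covering of a disk with finitely many branch points; that a finite connected covering of a planar annulus inside $\cbar$ is again an annulus; and that being contained in a simply connected domain meeting $P$ in at most one point (resp.\ in an annulus disjoint from $P$) is inherited by subsets. The one place to organize carefully is the simple-type case: one must correctly single out the lone possible branch value $q$ so that the Riemann--Hurwitz count forces $\chi(\widetilde D)=1$. Everything else is bookkeeping.
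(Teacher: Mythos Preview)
Your proof is correct and supplies exactly the standard Riemann--Hurwitz argument that the paper omits: the paper records this lemma as an immediate consequence of $f(P)\subset P$ without writing out a proof, and your pullback of the witnessing disk/annulus together with the critical-value containment $P_f\subset P$ is the natural way to fill in the details.
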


\begin{lemma}\label{lem:deg}
Let $(f, P)$ be a marked rational map. Let $E\subset E'$ be connected open or closed sets in $\cbar$ with $b(E)=b(E')$. Let $E_1'$ be a component of $f^{-1}(E')$. Then  $E_1:=E_1'\cap f^{-1}(E)$ is connected.  Moreover, if $E$ is a skeleton of $E'$, then $E_1$ is a skeleton of $E'_1$.
\end{lemma}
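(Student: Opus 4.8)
The plan is to reduce to the case where $E$ and $E'$ are domains, use the additivity of branched numbers (Lemma~\ref{lem:complexity}) to see that $E'\setminus E$ has no complex-type component, transfer this to the pre-image via the pullback principle (Lemma~\ref{lem:pullback}), and then rule out a disconnection of $E_1$ by a planar separation argument. First I would reduce to the situation $E\subset E'$ with both domains: using Lemma~\ref{lem:closed-open} one replaces a continuum by a domain with the same branched number (enlarging $E$ inside $E'$, or enlarging $E'$), and the remaining cases follow from the domain case by approximation together with the stabilization in Corollary~\ref{cor:monotone}(2)--(3), the equality $b(E)=b(E')$ being preserved throughout. So assume $E\subset E'$ are domains; then $E_1'$ is a component of $f^{-1}(E')$, the map $f|_{E_1'}\colon E_1'\to E'$ is proper of some degree $d\ge1$, and a routine check gives $f(E_1)=E$ with $f|_{E_1}\colon E_1\to E$ proper. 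Writing $E_1=A_1\sqcup\dots\sqcup A_m$ for the components of $E_1$, each $A_i$ maps properly onto $E$, and the goal is to show $m=1$.

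For the combinatorial input, choose (Lemma~\ref{lem:closed-open}(2)) a continuum $\tilde E\subset E$ with $b(\tilde E)=b(E)$. When $b(E')=b(E)\ge3$, apply Lemma~\ref{lem:complexity} with $V=E'$ and $\KKK=\tilde E$: the piece $\tilde E$ is complex-type and already accounts for the entire sum $b(E')-2$, so $E'\setminus\tilde E$ has no complex-type component; since each component of $E'\setminus E$ lies in a component of $E'\setminus\tilde E$, none of them is complex-type either. (The cases $b(E')\le2$ are treated separately and more easily, using that $E'$ then lies in a simply connected domain, or in an annulus disjoint from $P$, so that the available number of marked points forces every relative gap to be simple- or annular-type.) Hence every component of $E'\setminus E$ is simple-type or annular-type, and by Lemma~\ref{lem:pullback}, applied componentwise via $E_1'\setminus E_1=E_1'\cap f^{-1}(E'\setminus E)$, so is every component of $E_1'\setminus E_1$.

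To prove connectedness, suppose for contradiction that $m\ge2$. Since $E_1'$ is connected and $E_1$ is the disjoint union of the open sets $A_i$, some component $W$ of $E_1'\setminus E_1$ has closure meeting two distinct $A_i$, say $A_1$ and $A_2$ (otherwise the $A_i$ would be separated and $E_1'$ disconnected). One verifies that $W$ is in fact a component of $f^{-1}(W')$, where $W'$ is the component of $E'\setminus E$ with $f(W)\subset W'$, so $W$ is simple-type or annular-type. But $\overline W$ meets the two distinct components $A_1$ and $A_2$ of $f^{-1}(E)$ while $E$ is connected; joining $A_1$ to $A_2$ through $W$ by an arc and projecting by $f$ yields a Jordan-curve configuration in which $W'$, and hence $W$, must separate $P$ into at least three groups, i.e.\ is complex-type --- a contradiction. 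Therefore $m=1$ and $E_1$ is connected.

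For the ``moreover'' part, assume $E$ is a skeleton of $E'$; by \eqref{eq:skeleton} this means $b(E)=b(E')$ and $E\cap P=E'\cap P$, so the first part gives that $E_1$ is connected. Since $f(P)\subset P$, if $p\in E_1'\cap P$ then $f(p)\in f(E_1')\cap P\subset E'\cap P=E\cap P$, whence $p\in f^{-1}(E)\cap E_1'=E_1$; thus $E_1\cap P=E_1'\cap P$. It remains to check $\kappa(E_1)=\kappa(E_1')$: each component of $\cbar\setminus E_1$ meeting $P$ contains at least one component of $\cbar\setminus E_1'$ meeting $P$ (a marked point outside $E_1$ cannot lie in $E_1'$), and at most one by the same separation argument (two such would be bridged through a simple- or annular-type gap component of $E_1'\setminus E_1$, which is impossible). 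By \eqref{eq:skeleton} again, $E_1$ is a skeleton of $E_1'$. The main obstacle is the planar separation argument of the last two paragraphs --- turning ``a relative gap $W$ abuts two distinct components of $f^{-1}(E)$'' (or ``separates two complementary components meeting $P$'') into ``$W$, or its image $W'$, is complex-type''. This needs careful Jordan-curve bookkeeping, in particular handling annular-type gaps, where a single extra marked point ``inside'' the annulus is exactly what pushes the branched number up to $3$; one must also make sure the reduction to domains in the first step tracks the specific component $E_1'$ correctly.
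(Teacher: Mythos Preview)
Your setup is right up to the point where you show every component of $E'\setminus E$ (and hence of $E_1'\setminus E_1$) is simple- or annular-type. The gap is in the contradiction step. You assert that if a gap $W\subset E_1'\setminus E_1$ touches two components $A_1,A_2$ of $E_1$, then ``projecting by $f$ yields a Jordan-curve configuration in which $W'$, and hence $W$, must separate $P$ into at least three groups, i.e.\ is complex-type''. But $W'=f(W)$ is a component of $E'\setminus E$, which you have already shown is \emph{not} complex-type; so this line cannot produce a contradiction, and the Jordan-curve heuristic you sketch does not force $b(W')\ge 3$. The same unjustified step reappears in your argument for $\kappa(E_1)=\kappa(E_1')$.

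The missing idea is a boundary-count, not a branched-number count. Because $E$ (or rather a continuum $K\subset E$ with $b(K)=b(E)$) is \emph{connected}, every gap $U$ of $V\setminus K$ has \emph{exactly one} boundary component in $K$: distinct boundary components of $U$ lie in distinct components of $\cbar\setminus U$, and the connected $K$ lies in only one of these. Now lift: since $U$ is simple- or annular-type (so contains at most one critical value in the first case and none in the second), Riemann--Hurwitz forces every component $\tilde U$ of $f^{-1}(U)$ to again have exactly one boundary component in $f^{-1}(K)$. Hence no $\tilde U$ can touch two distinct components of $f^{-1}(K)$, and $V_1$ contains a unique one; connectedness of $E_1$ follows. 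This is the paper's route: it passes to $K\subset E\subset E'\subset V$ with $K$ a continuum and $V$ a domain (no reduction of $E,E'$ themselves to domains is needed), and for the skeleton statement it simply sandwiches
\[
b(K_1)\le b(E_1)\le b(E_1')\le b(V_1)=b(K_1),
\]
together with your (correct) observation that $E_1\cap P=E_1'\cap P$.
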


\begin{proof}
By Lemma \ref{lem:closed-open}, there exist a domain $V\supset E'$ and a compact connected set $K\subset E$ such that $b(V)=b(K)$. Let $V_1$ be the component of $f^{-1}(V)$ containing $E_1'$.

According to Lemma \ref{lem:complexity}, each component $U$ of $V\sm K$ is either simple-type or annular-type, and $\partial U$ has exactly one component contained in $K$. Consequently, any component of $f^{-1}(U)$ is either simple-type or annular-type by Lemma \ref{lem:pullback}, and its boundary has exactly one component contained in $f^{-1}(K)$. This implies that $V_1$ contains exactly one component $K_1$ of $f^{-1}(K)$ and $b(V_1)=b(K_1)$. Thus, the former part of the lemma holds.

 Furthermore, if $E$ is a skeleton of $E'$, then $E\cap P=E'\cap P$, which implies  $E_1\cap P=E_1'\cap P$. Note also that $b(K_1)\le b(E_1)\leq b(E_1')\leq b(V_1)=b(K_1)$. Thus, $E_1$ is a skeleton of $E_1'$ by \eqref{eq:skeleton}.
\end{proof}

\subsection{Stable sets}
 Recall that a stable set $\KKK$ of a rational map $f$ is a non-empty and finite disjoint union of continua such that $f(\KKK)\subset\KKK$ and each component of $f^{-1}(\KKK)$ is either a component of $\KKK$ or disjoint from $\KKK$. By definition, each component of $\KKK$ is eventually periodic, and $\partial\KKK$ is also a stable set of $f$ provided that $\KKK\not=\cbar$.

Throughout this subsection, let $f$ be a given PCF  rational map.

\begin{lemma}\label{lem:boundary}
Let $K\subsetneq\cbar$ be a connected stable set of $f$. Then $\partial K\subset J_f$.
\end{lemma}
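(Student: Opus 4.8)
The plan is to show that the interior of a connected stable set $K \subsetneq \cbar$ contains no Julia points, which forces $\partial K \subset J_f$ since $\partial K$ is automatically contained in the closure, and every point of $K$ is then either interior (hence Fatou) or boundary. First I would argue by contradiction: suppose the interior $K^\circ$ of $K$ meets $J_f$. Since Julia sets of rational maps contain no isolated points and $J_f$ is the closure of the repelling periodic points (and also of the backward orbit of any point), I would use the well-known expansion property near the Julia set to conclude that some forward iterate blows up a small neighborhood. Concretely, pick $z_0 \in K^\circ \cap J_f$ and a small open disk $D \subset K^\circ$ around $z_0$. Because $z_0 \in J_f$, the family $\{f^n|_D\}$ is not normal, so by Montel's theorem the union $\bigcup_{n \ge 0} f^n(D)$ omits at most two points of $\cbar$; in particular for some $m$ the set $f^m(D)$ covers all but finitely many points, and in fact $\bigcup_n f^n(D) \supset J_f$ and meets every Fatou component.

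The key step is then to leverage the stability condition. Since $f(\KKK) \subset \KKK$ and each component of $f^{-1}(\KKK)$ is either a component of $\KKK$ or disjoint from it, the forward orbit of $K$ stays inside $\KKK$, and moreover $f^{-n}(K) \cap \KKK$ consists of whole components of $\KKK$. In particular $f^m(D) \subset f^m(K) \subset \KKK$. But $\KKK$ has finitely many components, each a continuum strictly smaller than $\cbar$; since $K$ is connected, $f^m(K)$ is a connected subset of $\KKK$, hence contained in a single component $K'$ of $\KKK$, which is a proper continuum. This contradicts the fact that $f^m(D)$ — hence $f^m(K)$ — omits at most two points of $\cbar$ once $m$ is large enough (a proper continuum in $\cbar$ has complement with nonempty interior, so it omits a whole open set). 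Therefore $K^\circ \cap J_f = \emptyset$, i.e. $K^\circ \subset F_f$.

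To finish, I would note that $K = K^\circ \cup \partial K$ and $K^\circ \subset F_f$ gives $\partial K \supset K \cap J_f$; conversely $\partial K \subset J_f$ follows because any point of $\partial K$ lies in the complement of $K^\circ$ and is a limit of points of $\cbar \setminus K$, so if it were in $F_f$ it would lie in a Fatou component $U$, and the stability condition would force $U$ (being connected, meeting $K$, and with $f^{-n}(\KKK)$-structure) to lie entirely inside $\KKK$ — more carefully, a Fatou component whose image chain stays appropriately related to $\KKK$ cannot straddle $\partial K$. The cleanest route is: a point $w \in \partial K \cap F_f$ would have a neighborhood basis of disks on which iterates behave tamely, but $\partial K \subset \KKK$ and stability imply $\partial K$ is itself a stable set (as noted in the text right after the definition), so one can instead directly invoke that $\partial K \subset \partial \KKK \subset J_f$; indeed for a stable set $\KKK \ne \cbar$, the fact that $f^{-1}(\KKK) \cap \KKK$ is a union of components of $\KKK$ means $\partial \KKK$ has empty interior and is forward/backward invariant in the strong sense, and a standard argument (no wandering, expansion) places it in $J_f$.

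The main obstacle I anticipate is making the last paragraph rigorous without circularity: one must carefully rule out that $\partial K$ contains a Fatou point, and the honest way to do this is precisely the Montel/expansion argument of the first two paragraphs applied to a disk straddling a putative boundary Fatou point $w$ — any such disk has part of its forward orbit escaping $K$ (since points just outside $K$ near $w$ need not stay in $\KKK$), yet if $w \in F_f$ the whole orbit of a small disk around $w$ has bounded geometry and cannot be forced out, while if instead all nearby points do stay in $\KKK$ then $w$ is interior, a contradiction. So the real content is a single normality-versus-stability dichotomy, and the rest is bookkeeping; I would structure the proof around that dichotomy and cite the shrinking lemma (Lemma~\ref{lem:expanding}) and Montel's theorem as the analytic inputs.
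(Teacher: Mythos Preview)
Your Montel argument correctly establishes $K^\circ \subset F_f$, but this is \emph{not} the statement of the lemma and does not imply it. The lemma asserts $\partial K \subset J_f$; knowing that interior points are Fatou says nothing about whether boundary points are Julia. Your opening claim that $K^\circ \cap J_f = \emptyset$ ``forces $\partial K \subset J_f$'' is a non sequitur, and the later attempts to close this gap are either circular (invoking $\partial K \subset \partial\KKK \subset J_f$ when $\KKK = K$ is the very set in question) or appeals to an unspecified ``standard argument.'' The assertion that a Fatou component meeting $\partial K$ must lie inside $K$ is precisely the nontrivial content, and it does not follow from stability alone.

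The paper's proof is quite different and uses the standing PCF hypothesis in an essential way. Assuming $\partial K \cap F_f \neq \emptyset$, forward invariance of $\partial K$ (from $K$ being a component of $f^{-1}(K)$) and compactness force a super-attracting periodic point $a$ onto $\partial K$. The paper then works in B\"ottcher coordinates near $a$: since $K$ is a full component of $f^{-1}(K)$, local preimages in the immediate basin of points of $K\cap\Delta$ stay in $K$; connectedness of $K$ guarantees every small B\"ottcher circle $\gamma_t$ meets $K$, and pulling back by the local model $z\mapsto z^d$ shows the preimages of these intersection points are dense in $\gamma_t$, forcing $\gamma_t\subset K$ and contradicting $a\in\partial K$. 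Your sketch never engages with this mechanism, and your Montel step is orthogonal to what is actually needed.
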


\begin{proof}
 Choose a domain $W\supset K$ such that $b(K)=b(W)$. Then each component of $f^{-1}(W)$ contains exactly one component of $f^{-1}(K)$ by Lemma \ref{lem:deg}. In particular, the component $W_1$ of $f^{-1}(W)$ containing $K$ is disjoint from $f^{-1}(K)\sm K$.

 Suppose, to the contrary, that $\partial K\cap F_f\neq\emptyset$. Since $K$ is a component of $f^{-1}(K)$, we have $f(\partial K)=\partial K$. Thus, there exists a super-attracting periodic point $a\in\partial K$.  Without loss of generality, we may assume $f(a)=a$. Let $U$ be the Fatou domain containing $a$. Then there exists a disk $\Delta\subset U$ such that it is a round disk in the B\"{o}ttcher coordinate and $\Delta\subset  W$. This implies that if $z\in K\cap\Delta$, then $f^{-1}(z)\cap U\subset K$.

Let $\g_t\subset\Delta$ be the Jordan curve corresponding to the circle with radius $t\in (0,1)$ in the B\"{o}ttcher coordinate.  Since $K$ is connected and $a\in K$,  there exists a point $t_0\in (0,1)$ such that $\g_{t_0}\cap K\neq\emptyset$ and $\g_{t_0}\subset\Delta$. It follows that $\g_{t}\cap K\neq\emptyset$ for all $t\in (0,t_0)$ since  $\g_t$ separates $\g_{t_0}$  from  $a$. In particular, given any $t\in (0,t_0)$, $f^{k}(\g_t)\cap K\neq\emptyset$ for all $k\ge 1$.

Pick a point $z_k\in f^{k}(\g_t)\cap K$. Then $f^{-k}(z_k)\cap U\subset\g_t\cap K$. Since $\g_t\cap K$ is compact and $\bigcup_{k\ge 1}(f^{-k}(z_k)\cap U)$ is dense in $\g_t$, we obtain $\g_t\subset K$ for all $t\in (0,t_0)$, a contradiction.
\end{proof}

The following lemma offers a way to obtain stable sets.
\begin{lemma}\label{lem:new-stable}
	 Let $\{V_n\}_{n\geq 0}$ be a sequence of domains in $\ov{\mathbb{C}}$ such that $V_{n+1}\subset V_n$ and $f:V_{n+1}\to V_n$ is proper. If, for any $n\geq 0$, there exists an integer $m>n$ such that  $\ov{V_m}\subset V_n$, then $K=\bigcap_{n>0} V_n$ is a stable set of $f$ when $K$ is not a singleton.
\end{lemma}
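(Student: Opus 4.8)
The plan is to show that $K=\bigcap_{n>0}V_n$ is a continuum, that $f(K)\subset K$, and that every component of $f^{-1}(K)$ is either a component of $K$ or disjoint from $K$. First I would record the elementary consequences of the hypotheses: since $f:V_{n+1}\to V_n$ is proper and $V_{n+1}\subset V_n$, we have $f(V_{n+1})=V_n$ and, more importantly, $f(\ov{V_{n+1}})=\ov{V_n}$ and $f^{-1}(V_n)\cap V_{n+1}=V_{n+1}$; in particular $f(K)\subset\bigcap_n f(V_{n+1})\subset\bigcap_n\ov{V_n}$, and using the "eventually compactly contained" hypothesis one upgrades $\bigcap_n\ov{V_n}$ to $\bigcap_n V_n=K$, so $f(K)\subset K$. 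The set $K$ is closed as an intersection of closed sets (again using that $\ov{V_m}\subset V_n$ forces $\bigcap V_n=\bigcap\ov{V_n}$), and it is nonempty and compact. Connectedness of $K$ follows because $K$ is a nested intersection of the connected sets $\ov{V_n}$ (the $V_n$ are domains, hence connected, so their closures are continua), and a decreasing intersection of continua in $\cbar$ is a continuum; combined with the standing assumption that $K$ is not a singleton, $K$ is a continuum.

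The substantive point is the stability condition: I must show that if $W$ is a component of $f^{-1}(K)$ meeting $K$, then $W\subset K$ (equivalently $W$ is a component of $K$). Fix such a component $W$ and a point $z_0\in W\cap K$. Since $f(W)\subset K\subset V_{n+1}$ for all $n$, the component $W$ is contained in $f^{-1}(V_{n+1})$; I claim it is actually contained in $V_{n+2}$. Indeed, $f^{-1}(V_{n+1})\supset V_{n+2}$, and $f^{-1}(V_{n+1})$ may a priori have other components, but the component containing $z_0\in K\subset V_{n+2}$ must be the one meeting $V_{n+2}$. Here one uses that $f:V_{n+2}\to V_{n+1}$ is proper, hence $V_{n+2}$ is a full preimage component over $V_{n+1}$ in the sense that $\partial V_{n+2}\subset f^{-1}(\partial V_{n+1})$, so $V_{n+2}$ is both open and closed in $f^{-1}(V_{n+1})$, i.e. it is a union of components of $f^{-1}(V_{n+1})$; the component of $f^{-1}(V_{n+1})$ through $z_0$ therefore lies inside $V_{n+2}$. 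Hence $W\subset V_{n+2}$ for every $n$, so $W\subset\bigcap_n V_{n+2}=K$, as desired. Finally, if a component $W'$ of $f^{-1}(K)$ is disjoint from $K$ there is nothing to prove; this dichotomy together with $f(K)\subset K$ is exactly the definition of a stable set, and since $K$ is a single continuum it is trivially a finite disjoint union of continua.

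I expect the main obstacle to be the claim that a component of $f^{-1}(V_{n+1})$ meeting $V_{n+2}$ is contained in $V_{n+2}$ — i.e. that properness of $f:V_{n+2}\to V_{n+1}$ forces $V_{n+2}$ to be a union of connected components of the full preimage $f^{-1}(V_{n+1})$, rather than a proper subset of some larger component. The cleanest way to nail this down is the boundary argument: properness gives $f(\partial V_{n+2})=\partial V_{n+1}$ and $f^{-1}(V_{n+1})\cap\partial V_{n+2}=\emptyset$, so $\partial V_{n+2}\subset f^{-1}(\partial V_{n+1})=\partial\big(f^{-1}(V_{n+1})\big)$, which makes $V_{n+2}$ relatively clopen in $f^{-1}(V_{n+1})$. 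A secondary technical nuisance is the careful handling of closures — one must repeatedly invoke the hypothesis "for each $n$ there is $m>n$ with $\ov{V_m}\subset V_n$" to pass freely between $\bigcap V_n$ and $\bigcap\ov{V_n}$ and to conclude that the nested intersection is a continuum rather than merely a closed set; but this is routine point-set topology once the clopen-component claim is in hand.
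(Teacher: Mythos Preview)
Your proposal is correct and follows essentially the same approach as the paper. The paper's proof is a one-liner (``It follows from the known condition that $K$ is a component of $f^{-1}(K)$''), and your argument is precisely the unpacking of that sentence: properness of $f:V_{n+2}\to V_{n+1}$ makes $V_{n+2}$ relatively clopen in $f^{-1}(V_{n+1})$, hence a union of components, so the component of $f^{-1}(K)$ through any point of $K$ lies in every $V_{n+2}$ and therefore in $K$.
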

\begin{proof}
	It follows from the known condition that  $K$ is a component of $f^{-1}(K)$. Hence, $K$ is a stable set unless it is a singleton.
\end{proof}

\begin{proof}[Proof of Theorem \ref{thm:renorm}]
 Let $\wh{K}$ be the union of $K$ and all  components of $\cbar\sm K$ disjoint from $P_f$. If $\wh K=\cbar$, then $f^{-1}(K)=K$, and thus $\wh{K}=K=\cbar$, which contradicts the condition that $K\neq \cbar$.

Now, assume $\wh{K}\not=\ov{\C}$. Let $\DD$ denote the collection of components of $\cbar\sm\wh K$. Define a self-map $f_*$ on $\DD$ as follows. If $D\in\DD$ is disjoint from $f^{-1}(K)$,  then $f(D)\in\DD$ and we set $f_*(D):=f(D)$. Otherwise, let $D'$ be the component of $D\sm f^{-1}(K)$ such that $\partial D'\supset\partial D$. In this case, $f(D')$ is an element of $\DD$, and we define $f_*(D):=f(D')$.

Since $\DD$ is a finite collection, each of its elements is eventually periodic under $f_*$. Assume that $D_i,0\le i<p,$ forms a cycle in $\DD$ with $D_i=f_*^i(D_0)$ and $D_0=f_*^p(D_0)$. Since $f$ is expanding in a neighborhood of $J_f$ under the orbifold metric, and $\partial K\subset J_f$ by Lemma \ref{lem:boundary}, for each $0\le i< p$, there exists an annulus $A_{D_i}=A_i\subset D_i\sm P_f$ with $\partial D_i\subset\partial A_i$, such that $\ov{A_i^1}\subset A_i\cup\partial D_i$, where $A_i^1$ is the component of $f^{-1}(A_{i+1})$ (with $A_p=A_0$) such that  $\partial A_i^1\supset\partial D_i$. Applying a similar argument, we can assign an annulus $A_{D}$ to every periodic element $D\in\DD$.

 If $D'\in\DD$ is not $f_*$-periodic but $f_*(D')=D$ is periodic, we assign an annulus $A_{D'}\subset D'\sm P_f$ with $\partial D'\subset\partial A_{D'}$, such that $\ov{A_D^1}\subset A_{D'}\cup\partial D'$, where $A_{D}^1$ is the component of $f^{-1}(A_{D})$ with $\partial D'\subset A_{D}^1$. Repeating this process, we assign an annulus $A_{D}$ to each element $D\in\DD$.

Let $V$ be the union of $\wh K$ and $A_{D}$ for all $D\in\DD$. Then $V$ is a finitely connected domain with $V\cap P_f=K\cap P_f$. Moreover, the component $U$ of $f^{-1}(V)$ containing $K$ is compactly contained in $V$ by the construction of $A_{D}$.   Since $K$ is not a singleton, it follows from \cite[Lemma 18.8]{Mi1} that $\deg f|_K\ge 2$. Thus, $f: U\to V$ is a rational-like map (see \cite[Definition 4]{CPT2}). Then the theorem follows directly from \cite[Theorem 5.2]{CPT2}.
\end{proof}

\begin{lemma}\label{lem:nest}
Let $\{\KKK_n\}_{n\ge 0}$ be a sequence of stable sets of $f$ such that $\KKK_{n+1}\subset\KKK_n$. Then there exists an integer $N\ge 0$ such that $\KKK_n=\KKK_N$ for every $n\ge N$.
\end{lemma}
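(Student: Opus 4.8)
The plan is to imitate the proof of Lemma~\ref{lem:maximal}: exhibit finitely many non-negative integer quantities attached to a stable set that are monotone along $\{\KKK_n\}$, and show that once all of them have become constant the sequence itself is constant.

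First the easy reductions. If $\KKK_n=\cbar$ for all $n$ there is nothing to prove, so after deleting finitely many terms we may assume $\KKK_n\neq\cbar$. Since each component of $\KKK_n$ is eventually periodic, a periodic component of $\KKK_n$ (together with the iterate fixing it) is a connected stable set $\neq\cbar$ for some $f^{p}$; hence $\partial\KKK_n\subset J_f$ and $\operatorname{int}\KKK_n\subset F_f$ by Lemma~\ref{lem:boundary} and backward invariance of $J_f$, and consequently $\partial\KKK_{n+1}\subset\partial\KKK_n$ and $\operatorname{int}\KKK_{n+1}\subset\operatorname{int}\KKK_n$. Also $\#(\KKK_n\cap P_f)$ is non-increasing, hence eventually constant; delete more terms so that it is constant from the start. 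An elementary bookkeeping --- using that a component of a stable set is a component of each of its iterated $f$-preimages --- shows that every periodic component of $\KKK_{n+1}$ lies in a periodic component of $\KKK_n$, respecting the cyclic structure, with the period of the inner component a multiple of that of the outer one; and that once the periodic components have stabilized so have the pre-periodic ones (they are components of boundedly-iterated preimages of the fixed periodic part inside the fixed set $\KKK_0$), and the filling $\operatorname{int}\KKK_n$ stabilizes as well (each relevant Fatou component then lies in a now-fixed component of $\KKK_n$).

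It thus remains to prove that the periodic components stabilize. For a cycle $C$ of periodic components of $\KKK_n$ of length $p_C$, Theorem~\ref{thm:renorm} applied to $f^{p_C}$ supplies a PCF renormalization of degree $d_C:=\deg(f^{p_C}|_K)$ ($K\in C$), and $d_C\ge 2$ by \cite[Lemma~18.8]{Mi1}; set $\mathcal D_n:=\sum_C(d_C-1)$. Inside the renormalization piece of a fixed cycle $\tilde C$ of $\KKK_n$, the cycles of $\KKK_{n+1}$ lying in it correspond to pairwise disjoint renormalizations, of degrees $\delta_1,\dots,\delta_s$ say, and a Riemann--Hurwitz count --- each degree-$\delta_i$ renormalization consumes $\delta_i-1$ of the $d_{\tilde C}-1$ critical ``slots'' of the ambient renormalization, and disjoint pieces consume disjoint slots --- gives $\sum_i(\delta_i-1)\le d_{\tilde C}-1$; summing over $\tilde C$ yields $\mathcal D_{n+1}\le\mathcal D_n$. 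Being a non-increasing sequence of non-negative integers, $\mathcal D_n$ is eventually constant; adjoining one or two further monotone counts (period sums, numbers of periodic cycles, branched numbers of suitable skeletons) should pin down the renormalization data completely, whereupon the periodic components coincide for all large $n$. At that point Corollary~\ref{cor:monotone}, applied to the nested continua that exhaust each periodic component, together with the identification in Theorem~\ref{thm:renorm} of a periodic component with the filled Julia set of its renormalization, gives $\KKK_n=\KKK_N$ for all large $n$.

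The main obstacle is twofold. First, the Riemann--Hurwitz inequality $\sum_i(\delta_i-1)\le d_{\tilde C}-1$ for disjoint renormalizations inside one renormalization piece: this requires that all critical points of each renormalization lie in its filled Julia set --- where the hypothesis that $f$ is PCF, so that every critical orbit is finite, is essential --- and that distinct renormalization pieces carry distinct critical points. Second, and more delicate, is assembling a complete list of monotone invariants: $\mathcal D_n$ by itself does not forbid replacing a periodic component by a proper periodic subcontinuum carrying a renormalization of the same degree but larger period, so one must add further integer invariants and verify that their simultaneous constancy genuinely forces equality of the periodic parts.
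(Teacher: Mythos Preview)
Your proposal is honest about its own gaps, and those gaps are real. The degree invariant $\mathcal D_n$ does not do the job, and the ``one or two further monotone counts'' you hope to adjoin are exactly where the proof lives. The difficulty you flag---a periodic component could in principle be replaced by a proper periodic subcontinuum with the same degree data---is not a technicality; it is the heart of the matter, and your outline gives no mechanism for ruling it out.

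The paper's argument is organized around branched numbers rather than degrees. One first separates the simple/annular-type periodic components (handled directly: their renormalizations are $z^{\pm d}$, so the only proper stable subset is the boundary circle) from the complex-type ones. For the complex-type part, the total branched number $b(\KKK_n')=\sum_K(b(K)-2)+2$ is monotone (Lemma~\ref{lem:complexity}) and bounded by $\#P_f$, hence eventually constant; then the number of components is monotone and bounded, hence eventually constant; so eventually each complex-type component $K_n$ contains a unique $K_{n+1}$ with $b(K_{n+1})=b(K_n)$. The decisive step, which has no analogue in your sketch, is this: since $K_{n+1}$ is a stable set of the same degree inside $K_n$ (by Lemma~\ref{lem:deg}, equal branched number forces the preimage to stay connected), the renormalization on $K_n$ (Theorem~\ref{thm:renorm}) makes $\bigcup_{k}(f^p|_{K_n})^{-k}(\partial K_{n+1})=\partial K_{n+1}$ dense in $\partial K_n$, whence $\partial K_{n+1}=\partial K_n$. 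Thus $K_n\setminus K_{n+1}\subset F_f$, and since $f$ has only finitely many Fatou cycles this can happen only finitely often. This density argument is what converts ``equal invariants'' into ``equal sets'', and it relies on the branched-number framework; your Riemann--Hurwitz count, even if made rigorous, would not supply it.
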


\begin{proof}
By the pullback principle (Lemma \ref{lem:pullback}), we can split each stable set $\KKK_n$ into two stable sets, $\KKK^0_n$ and $\KKK'_n$, such that each periodic component of $\KKK^0_n$ is either simple-type or annular-type, and each periodic component of $\KKK'_n$ is complex-type. Then $\KKK'_{n+1}\subset\KKK'_n$ by Corollary \ref{cor:monotone}\,(1).

We first assume that the components of $\KKK'_n$ are all complex-type for every $n\ge 0$. The branched number of $\KKK_n'$ is defined by
$$
b(\KKK'_n)=\sum (b(K)-2)+2,
$$
where the summation is taken over all components of $\KKK_n'$. Then $b(\KKK'_{n+1})\le b(\KKK'_n)$ by Lemma \ref{lem:complexity}. Thus, there exists an integer $n_1\ge 0$ such that $b(\KKK'_n)$ is constant for $n\ge n_1$. This implies that, for $n\ge n_1$, each component of $\KKK'_n$ contains at least one component of $\KKK'_{n+1}$.

Let $k(n)$ be the number of components of $\KKK'_n$ for $n\ge n_1$. As argued above,  $k(n)$ is increasing. However, Lemma \ref{lem:complexity} implies $k(n)\le \#P_f-2$. Thus, there exists an integer $n_2\ge n_1$ such that $k(n)$ is constant for $n\ge n_2$. Consequently, each component $K_n$ of $\KKK'_n$ contains exactly one component $K_{n+1}$ of $\KKK'_{n+1}$ for $n\ge n_2$. Since $b(\KKK'_n)$ is constant for $n\ge n_2$, it follows that $b(K_n)=b(K_{n+1})$.

To complete the proof, we need to show that for each periodic component $K_n$ of $\KKK'_n$, it holds that $K_{n+1}=K_n$ for sufficiently large $n>n_2$. Without loss of generality, we may assume $f(K_n)=K_n$. Then $f(K_{n+1})=K_{n+1}$.

By Theorem \ref{thm:renorm} and Lemma \ref{lem:deg}, we know that $\bigcup_{k\ge 0} (f|_{K_n})^{-k}(\partial K_{n+1})=\partial K_{n+1}$ is dense in $\partial K_n$. Hence, $\partial K_{n+1}=\partial K_n$. If $K_{n+1}\neq K_n$, it implies that $K_n\sm K_{n+1}\subset F_f$. Since $f$ has at most $2\deg f-2$ cycles of Fatou domains, the inequality $K_{n+1}\neq K_n$ can  occur only finitely many times. Hence, there exists an integer $n_3\ge n_2$ such that $\KKK'_n=\KKK'_{ n_3}$ for $n\ge n_3$.

In general, let $\KKK''_n$ be the union of all complex-type components of $\KKK'_n$. Then $\KKK''_n$ is also a stable set of $f$, and $\KKK''_{n+1}\subset\KKK''_n$ for all $n\ge 0$. Based on the previous discussion, we can find an integer $N_0\ge 0$ such that $\KKK''_n=\KKK''_{ N_0}$ for every $n\ge N_0$.

Note that $\KKK_n''$ contains all periodic components in $\KKK_n'$, which means that any component of $\KKK_n'$ is eventually iterated into $\KKK_n''$. Thus, for any $m\geq N_0$ and any component $K$ of $\KKK_m'$, either $K$ is a component of $\KKK_n'$ for every $n\geq m$, or $K\cap \KKK_n'=\emptyset$ for sufficiently large $n$.
Consequently,  the number $l(n)$ of components of $\KKK'_n$ (for $n\ge N_0$) is decreasing. Therefore, there exists an integer $N\geq N_0$ such that $l(n)=l(N)$ for every $n\geq N$. This implies $\KKK'_n=\KKK'_N$ for $n\ge N$.

Since $\KKK'_{n}=\KKK'_N$ for $n\ge N$, it follows that $\KKK^0_{n+1}\subset\KKK^0_n$ for $n\ge N$. For any periodic component $K$ of $\KKK^0_n$, the renormalization of $f^p$ on $K$ is conformally conjugate to either $z\mapsto z^d$ or $z\mapsto 1/z^d$ with $d\ge 2$. Thus, $K$ is either a Jordan curve or the closure of a periodic Fatou domain of $f$. In the former case, the cycle of $K$ contains no other stable set of $f$ except itself. In the latter case, the cycle of $\partial K$ is the unique stable set of $f$ properly contained in the cycle of $K$. Thus, we have $\KKK^0_{n+1}=\KKK^0_n$ for sufficiently large $n\ge N$.
\end{proof}

\subsection{Exact decomposition}
Let $(f,P)$ be a marked rational map.
Suppose that $\KKK$ is a stable set of $f$. Let $\VVV$ and $\VVV_1$ be the union of all  complex-type components of $\cbar\sm\KKK$ and $\cbar\sm f^{-1}(\KKK)$, respectively. By the pullback principle (Lemma \ref{lem:pullback}), it holds that $f(\VVV_1)\subset \VVV$.

We say $\KKK$ induces an {\bf exact decomposition} of $(f,P)$ if either $\VVV=\emptyset$, or $f:\VVV_1\to\VVV$ is an exact sub-system of $(f,P)$, i.e., each component of $\VVV\setminus \VVV_1$ is a full continuum disjoint from $P$; see Definition \ref{def:exact-system}.

The following result serves as a key step toward the cluster-exact decomposition.
By an {\bf exceptional stable set}, we mean a stable set containing the Julia set.

\begin{theorem}[Exact decomposition]\label{thm:decomposition}
Let $(f,P)$ be a marked rational map, and let $\KKK_0$ be a non-exceptional stable set of $f$. Then there exists a non-exceptional stable set $\KKK\supset\KKK_0$  that induces an exact decomposition of $(f,P)$. Moreover, if each component of $\KKK_0$ intersects or separates $P$ $($as defined before Lemma \ref{lem:finite}$)$, then so does each component of $\KKK$.
\end{theorem}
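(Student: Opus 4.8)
The plan is to build $\KKK$ by enlarging $\KKK_0$ through an increasing sequence of non-exceptional stable sets $\KKK_0\subset\KKK_1\subset\cdots$ and to prove this sequence is eventually constant. For any non-exceptional stable set $\KKK$, let $\VVV$ and $\VVV_1$ denote the unions of the complex-type components of $\cbar\sm\KKK$ and of $\cbar\sm f^{-1}(\KKK)$ respectively. Since $f(\KKK)\subset\KKK$ forces $\KKK\subset f^{-1}(\KKK)$, and since, by the pullback principle (Lemma \ref{lem:pullback}), a non-complex-type component of $\cbar\sm\KKK$ can contain only non-complex-type components of $\cbar\sm f^{-1}(\KKK)$ (and a complex-type component of $\cbar\sm f^{-1}(\KKK)$ maps onto a complex-type component of $\cbar\sm\KKK$), one always has $\VVV_1\subset\VVV$, both with finitely many components, and $f(\VVV_1)\subset\VVV$. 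Hence the only obstruction to $f\colon\VVV_1\to\VVV$ being an exact sub-system is the failure of condition (3) of Definition \ref{def:exact-system}: some component $W$ of $\VVV\sm\VVV_1$ fails to be a continuum disjoint from $P$. I would first record the anatomy of such a $W$: it lies in a single complex-type component $V$ of $\cbar\sm\KKK$, and is a finite union of components of $f^{-1}(\KKK)\sm\KKK$ (continua compactly contained in $V$) together with simple-type and annular-type components of $\cbar\sm f^{-1}(\KKK)$ that lie in $V$, glued along shared boundary arcs; in particular $W$ always contains a continuum, so $W$ is \emph{bad} --- i.e.\ fails to be a continuum disjoint from $P$ --- exactly when $W\cap P\neq\emptyset$, or $\ov W$ meets $\partial\KKK$ (so $W$ is not compact).

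The enlargement step. If $\KKK_n$ does not induce an exact decomposition, let $\KKK_{n+1}$ be the smallest stable set of $f$ containing $\KKK_n$ and the closures of all bad components of $\VVV\sm\VVV_1$ (finitely many by Lemma \ref{lem:complexity}). This is well defined because each bad component is eventually periodic for the self-correspondence induced by $f$ on the finitely many components of $\cbar\sm\KKK_n$, so the closure of a bad component has only finitely many forward and backward relatives relative to $\KKK_n$; adjoining all of these to $\KKK_n$ and closing up yields a set whose preimage-component condition is inherited from that of $\KKK_n$. One checks that $\KKK_{n+1}$ is again non-exceptional (the enlargement merely replaces each affected complex-type component $V$ of $\cbar\sm\KKK_n$ by the nonempty union of the components of $V$ minus the absorbed continua, which still meets $J_f$; and the total enlargement is finite by the termination argument below, so no iterate can engulf $J_f$), and that $\KKK_{n+1}$ inherits the ``moreover'' property: a bad $W$ with $W\cap P\neq\emptyset$ is absorbed into a component of $\KKK_{n+1}$ which therefore meets $P$, whereas a bad $W$ with $\ov W\cap\partial\KKK_n\neq\emptyset$ is attached to a component of $\KKK_n$ that, by hypothesis, already intersects or separates $P$.

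It remains to show that $\KKK_0\subset\KKK_1\subset\cdots$ stabilizes; this is the heart of the matter, and I would handle it by a multi-stage monotonicity argument mirroring the proof of Lemma \ref{lem:nest}. Absorbing a bad component that meets or separates $P$ strictly decreases the total branched number $\beta(\KKK_n):=\sum_V(b(V)-2)$ over the complex-type components $V$ of $\cbar\sm\KKK_n$, by Lemma \ref{lem:complexity} together with monotonicity of $b(\cdot)$ under inclusion (Corollary \ref{cor:monotone}); since $\beta(\KKK_n)\le\#P-2$, after finitely many steps only bad components disjoint from $P$ and meeting $\partial\KKK_n$ are absorbed, and then the complex-type components of $\cbar\sm\KKK_n$ stabilize up to skeletons by Corollary \ref{cor:monotone}\,(2), their number $c(\KKK_n)\le\#P-2$ becoming eventually constant. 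Once this combinatorial type of the pair $(\VVV,\VVV_1)$ is stationary, each further absorption only enlarges the components of $\KKK_n$ abutting the simple-type and annular-type components of $\cbar\sm f^{-1}(\KKK_n)$; that this terminates follows as in Lemma \ref{lem:nest} from the PCF hypothesis: for each periodic complex-type component $V$, say $f^p(V)=V$, and each such abutting piece, the preimages under $f^p|_V$ of its boundary are dense in $\partial V$ by Theorem \ref{thm:renorm} and Lemma \ref{lem:deg}, which forces $\partial\KKK_{n+1}=\partial\KKK_n$, and the finiteness of the Fatou cycles then bounds the number of strict enlargements. Putting $\KKK:=\KKK_N$ for large $N$: if $\KKK$ did not induce an exact decomposition there would still be a bad component, hence $\KKK_{N+1}\supsetneq\KKK_N$, a contradiction; so $\KKK$ has all the required properties.

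The main obstacle, as the previous paragraph indicates, is this termination argument --- making precise what it means for the combinatorial type of $(\VVV,\VVV_1)$ to be stationary and, above all, upgrading combinatorial stabilization to the actual equality $\KKK_{n+1}=\KKK_n$ through the PCF hypothesis and the renormalization theorem, exactly the kind of multi-stage bookkeeping carried out in the proof of Lemma \ref{lem:nest}. A secondary technical point is the careful verification that ``the smallest stable set containing $\KKK_n$ and the bad components'' is genuinely a non-exceptional stable set, which relies on tracking the grand orbits, relative to $\KKK_n$, of the absorbed continua. Finally, the strengthening from ``continuum disjoint from $P$'' in Definition \ref{def:exact-system}(3) to the ``full continuum'' wording in the statement is obtained by one last filling-in of the $P$-free complementary components of any non-full good component, which affects neither stability nor non-exceptionality and, since such fillings attach to the former boundary of $\VVV$, preserves the ``moreover'' clause.
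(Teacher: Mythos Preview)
Your enlargement-and-termination scheme does not work as stated; the gap is precisely the phenomenon the paper isolates as a \emph{non-exact boundary component}. Consider a fixed complex-type component $V$ of $\cbar\sm\KKK_n$ with boundary piece $\lambda\subset\KKK_n$ such that the parallel boundary piece $\lambda_1$ of the complex-type component $V_1\subset V$ of $\cbar\sm f^{-1}(\KKK_n)$ is strictly interior to $V$. Then the ``bad'' component $W$ between $\lambda$ and $\lambda_1$ meets $\partial\KKK_n$ but need not meet or separate $P$, so your $\beta(\KKK_n)$ does not drop. After absorbing $W$ into $\KKK_{n+1}$, the new boundary is $\lambda_1$, and the same picture recurs with $\lambda_2$ strictly inside, producing a new bad component; this continues indefinitely. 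Your appeal to the Lemma~\ref{lem:nest} mechanism cannot rescue this: that argument shows $\partial K_{n+1}=\partial K_n$ for a \emph{decreasing} chain of stable sets by renormalizing on the larger $K_n$ and using density of backward orbits of $\partial K_{n+1}$ in $\partial K_n$. For your increasing chain the roles are reversed and the conclusion is simply false in the scenario above---the boundaries genuinely move. A related difficulty is that your ``smallest stable set containing $\KKK_n$ and the bad components'' is not shown to exist as a non-exceptional stable set: the forward orbit of $\ov W$ under $f$ is not controlled by any periodicity of $W$ itself.

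The paper's proof takes a different route that sidesteps this. First it replaces $\KKK_0$ by a deep pullback $\KKK_0^N$ so that the complex-type complementary components and their boundary pieces are combinatorially stable (Lemma~\ref{lem:stable}); this already absorbs all preimage components meeting or separating $P$ and handles your ``Phase~1''. For the remaining non-exact boundary pieces it does \emph{not} absorb one annular layer at a time: instead Lemma~\ref{lem:new} takes the nested intersection $\bigcap_n D_n$ of the shrinking domains and shows, via Lemma~\ref{lem:new-stable} and Corollary~\ref{cor:monotone}, that this limit is a genuinely new \emph{complex-type} stable set $\KKK'$ disjoint from $\KKK_0$. Adjoining $\KKK'$ (after a bounded pullback) strictly increases the branched number $b(\KKK_n)=\sum(b(K)-2)+2$ of the stable set itself, which is bounded by $\#P$; that is the monotone quantity, and it gives termination in finitely many steps. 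The key idea you are missing is this passage to the limit to manufacture a new complex-type piece in a single step, rather than an infinite sequence of annular absorptions.
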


The condition that each component of $\mathcal{K}_0$ intersects or separates $P$ is equivalent to $\kappa(U)=\#\textup{Comp}(\partial U)$ for any component $U$ of $\cbar\setminus \mathcal{K}_0$. In particular, annular-type components of $\cbar\setminus \mathcal{K}_0$ are annuli. Recall that $\kappa(U)$ denotes the number of components of $\cbar\setminus U$ intersecting $P$, and ${\rm Comp}(\cdot)$ denotes the collection of all components of the corresponding set.

We can always choose an $f$-invariant and finite set $P_1\supset P$ such that $P_1\setminus P\subset \KKK_0$ and each component of $\KKK_0$ intersects or separates points of $P_1$. Immediately, any complex-type domain rel $P$ is still complex-type rel $P_1$. By definition,  if $\KKK$ induces an exact decomposition of $(f,P_1)$,  it also induces an exact decomposition of $(f,P)$.
Thus, it suffices to prove the theorem for $(f,P_1)$. Therefore, we can  assume that each component of $\KKK_0$ intersects or separates  $P$.

For any stable set $\BBB$ of $f$, denote by $\BBB^n$  the union of all components of $f^{-n}(\BBB)$ that intersect or separate $P$. By Lemma \ref{lem:pullback}, each $\BBB^n$ is a stable set of $f$, and $\BBB^n\subset\BBB^{n+1}$. 

For each $n\geq0$, let $\UUU_n$ be the union of all complex-type components of $\cbar\setminus\KKK^n_0$. It follows immediately that $\UUU_{n+1}\subset \UUU_n$.

\begin{lemma}\label{lem:stable}
Assume that $\UUU_n\neq\emptyset$ for all $n\ge 0$. Then there exists a positive integer $N_0$ such that any component $U_{N_0}$ of $\UUU_{N_0}$ contains a unique component $U_n$ of $\UUU_n$ for every $n\geq N_0$. Moreover, it holds that $$\#(U_n\cap P)=\#(U_{N_0}\cap P)  \quad \text{and} \quad \#{\rm Comp}(\partial U_n)=\#{\rm Comp}(\partial U_{N_0}).$$
\end{lemma}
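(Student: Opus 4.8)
The strategy is to prove that the decreasing sequence $\UUU_0\supset\UUU_1\supset\cdots$ stabilizes combinatorially, by running a monotonicity argument on the ``total branched number'' $\mathbf b(\UUU_n):=\sum_U(b(U)-2)$, the sum over components $U$ of $\UUU_n$, and then exploiting the equality case. First I record the structure of the nesting. Since $\KKK^n_0\subset\KKK^{n+1}_0$, each component of $\UUU_{n+1}$ lies in a unique component of $\UUU_n$, and this containing component is again complex-type: a complex-type set contained in a simply connected domain meeting $P$ in at most one point, or in an annulus disjoint from $P$, would itself be simple- or annular-type. Fix a component $U_n$ of $\UUU_n$. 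Because $\partial U_n\subset\KKK^n_0\subset\KKK^{n+1}_0$, the components of $U_n\sm\KKK^{n+1}_0$ coincide with the components of $\cbar\sm\KKK^{n+1}_0$ contained in $U_n$, so those among them that are complex-type are precisely the components $U_{n+1}^{(1)},\dots,U_{n+1}^{(j)}$ of $\UUU_{n+1}$ inside $U_n$. Applying Lemma \ref{lem:complexity} to the complex-type domain $U_n$ (replacing $\KKK^{n+1}_0\cap U_n$, which is only relatively closed in $U_n$, by a genuine compact subset of $U_n$ with the same complex-type complementary components, furnished by Lemma \ref{lem:closed-open}) gives
$$\sum_{i=1}^{j}\bigl(b(U_{n+1}^{(i)})-2\bigr)\le b(U_n)-2,$$
with equality if and only if no complex-type component of $\cbar\sm\KKK^{n+1}_0$ other than the $U^{(i)}_{n+1}$ is contained in $U_n$. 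Summing over the components of $\UUU_n$ yields $\mathbf b(\UUU_{n+1})\le\mathbf b(\UUU_n)$.

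Since $\UUU_n\neq\emptyset$, each of its components $U$ has $b(U)\ge 3$, so $\mathbf b(\UUU_n)\ge\#{\rm Comp}(\UUU_n)\ge 1$; moreover $\mathbf b(\UUU_0)<\infty$ because $\cbar\sm\KKK_0$ has only finitely many complex-type components (the tree estimate in the proof of Lemma \ref{lem:complexity}) and each $b(U)\le\#P$. Hence $\mathbf b(\UUU_n)$ is a non-increasing integer sequence bounded below, so it is constant for all $n$ beyond some $N_1$; in particular $\#{\rm Comp}(\UUU_n)\le\mathbf b(\UUU_0)$ stays bounded. For $n\ge N_1$ every per-component inequality above is an equality, so for each component $U_n$ we have $\sum_i(b(U_{n+1}^{(i)})-2)=b(U_n)-2\ge 1$; in particular $j\ge 1$, i.e.\ no component of $\UUU_n$ fails to contain a component of $\UUU_{n+1}$. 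Therefore $\#{\rm Comp}(\UUU_n)$ is non-decreasing for $n\ge N_1$, hence eventually equal to a constant $c$, say for $n\ge N_2\ge N_1$. For such $n$ the map from ${\rm Comp}(\UUU_{n+1})$ to ${\rm Comp}(\UUU_n)$ sending a component to the one containing it is surjective (by the no-extinction property) between $c$-element sets, hence bijective: each component $U_n$ of $\UUU_n$ contains exactly one component $U_{n+1}$ of $\UUU_{n+1}$, and the equality, now with a single summand, gives $b(U_{n+1})=b(U_n)$.

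Now enlarge $N_2$ to an $N_0$ for which $\#(U_n\cap P)$ is also constant in $n\ge N_0$ along each of the finitely many resulting nested chains of components; this is possible since $U_{n+1}\subset U_n$ makes $\#(U_{n+1}\cap P)\le\#(U_n\cap P)$, a non-increasing sequence of non-negative integers. Fix a component $U_{N_0}$ of $\UUU_{N_0}$. Iterating the bijections produces, for every $n\ge N_0$, a unique component $U_n\subset U_{N_0}$ of $\UUU_n$, and no other component of $\UUU_n$ lies in $U_{N_0}$; along this chain $b(U_n)=b(U_{N_0})$ and $\#(U_n\cap P)=\#(U_{N_0}\cap P)$. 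Finally, each component of $\KKK^n_0$ intersects or separates $P$ by the definition of $\KKK^n_0$, so by the characterization recorded just after Theorem \ref{thm:decomposition} we have $\kappa(U_n)=\#{\rm Comp}(\partial U_n)$; since $b(U_n)=\#(U_n\cap P)+\kappa(U_n)$, this gives $\#{\rm Comp}(\partial U_n)=b(U_n)-\#(U_n\cap P)=b(U_{N_0})-\#(U_{N_0}\cap P)=\#{\rm Comp}(\partial U_{N_0})$, completing the proof.

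The conceptual heart is the equality-case step: it upgrades ``the total branched number does not drop'' to ``no complex-type component disappears'', and this no-extinction property, together with the a priori bound on the number of components, is precisely what forces the component count to become constant and hence the parent-child correspondence to be a bijection. The only genuinely technical point is making the application of Lemma \ref{lem:complexity} legitimate, since its hypothesis requires a compact set inside the open domain $U_n$ whereas $\KKK^{n+1}_0\cap U_n$ may accumulate on $\partial U_n$; this is a routine fix via Lemma \ref{lem:closed-open}, replacing $\KKK^{n+1}_0\cap U_n$ by a compact subset of $U_n$ carrying the same pattern of complex-type complementary domains.
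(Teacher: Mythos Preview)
Your proof is correct and follows essentially the same strategy as the paper's: both use Lemma~\ref{lem:complexity} to establish monotonicity, stabilize the component count, and then read off the equalities for $b$, $\#(U_n\cap P)$, and $\#{\rm Comp}(\partial U_n)$; you package this via the single quantity $\mathbf b(\UUU_n)$, whereas the paper first stabilizes the number $k(n)$ of complex-type components of $\KKK_0^n$ and then the component count $v(n)$ of $\UUU_n$. One small slip: your stated equality criterion (``no complex-type component of $\cbar\sm\KKK^{n+1}_0$ other than the $U_{n+1}^{(i)}$'') is vacuous as written---the correct condition is that no complex-type component of $\KKK_0^{n+1}$ lies in $U_n$---but you never actually use the criterion, only the fact that once $\mathbf b$ is constant each per-component inequality is an equality, so the argument is unaffected.
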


\begin{proof}
Let $k(n)$ denote the number of complex-type components of $\KKK_0^n$. Then $k(n)$ is increasing, and $k(n)\leq \#P-2$
 by Lemma \ref{lem:complexity}. Thus, there exists an integer $n_0$ such that $k(n)=k(n_0)$ for all $n\ge n_0$. Therefore, $\UUU_{n_0}$  contains no complex-type components of $\KKK_0^n$ for all $n>n_0$.

Fix a component $U_n$ of $\UUU_n$ with $n\ge n_0$. Since $U_n$ contains no complex-type components of $\KKK_0^m$ for $m>n$, it follows from Lemma \ref{lem:complexity} that
$$
\sum(b(U)-2)=b(U_n)-2>1,
$$
where the summation is taken over all  components of $\UUU_m$ contained in $U_n$. Thus, $U_n$ contains at least one component of $\UUU_m$. Consequently, the number $v(n)$ of components of $\UUU_n$ is increasing for $n\ge n_0$.

 Note that $\#(\UUU_n\cap P)$ is decreasing. Then there exists an integer $n_1\ge n_0$ such that both $v(n)$ and $\#(\UUU_n\cap P)$ are constant for $n\ge n_1$. Thus, each component $U_{n_1}$ of $\UUU_{n_1}$ contains a unique component $U_n$ of $\UUU_{n}$ for every $n>n_1$ such that
$\#(U_n\cap P)=\#(U_{n_1}\cap P)$. Since $b(U_n)$ is decreasing, there exists an integer $N_0>n_1$ such that $b(U_n)=b(U_{N_0})$ for all $n\geq N_0$.

Finally, since each component of $\KKK_0^n$ intersects or separates $P$,  all complementary components of $U_n$ intersect $P$, i.e., $\#{\rm Comp}(\partial U_n)=\kappa(U_n)$. It follows that $\#{\rm Comp}(\partial U_n)=b(U_n)-\#(U_n\cap P)$ is constant for $n\geq N_0$ by the choice of $N_0$.
\end{proof}

According to Lemma \ref{lem:stable}, any component $U_{N_0}$ of $\UUU_{N_0}$ and any component $\lambda_{N_0}$ of $\partial U_{N_0}$ determine a sequence of pairs $(U_n, \lambda_n)$ for $n\geq N_0$, where $U_n$ is the component of $\UUU_{n}$ contained in $U_{N_0}$, and $\lambda_n$ is the component of $\partial U_n$ such that either $\lambda_{n+1}=\lambda_{n}$, or $\lambda_{n+1}$ is disjoint from $\lambda_n$ but separates $\lambda_n$ from $U_{n+1}$.

Since $\UUU_{N_0}$ has  finitely many components, all of which are finitely connected, there exists an integer $N \ge N_0$ such that, for any determined sequence $\{(U_n,\lambda_n), n\geq N\}$, exactly one of the following two cases occurs:
\begin{itemize}
\item $\lambda_n=\lambda_N$ for all $n\ge N$;
\item for any $n\ge N$, there exists an integer $m>n$ such that $\lambda_m$ is disjoint from $\lambda_n$ and separates $\lambda_n$ from $U_m$.\vspace{2pt}
\end{itemize}
\noindent We call $\lambda_N$  an {\bf exact boundary component} of $U_N$ in the first case.

\vskip 0.1cm
From now on,  write $\VVV=\UUU_N$, and denote by $\VVV_n$ the union of all complex-type components of $f^{-n}(\VVV)$. Then $\VVV_n$ coincides with the union of all complex-type components of  $\cbar\setminus f^{-n}(\KKK^N_0)$. This implies $\VVV_n\subset \UUU_{N+n}$.

Note that  any component of $f^{-n}(\KKK^N_0)\setminus \KKK^{N+n}_0$ neither intersects nor separates $P$, while each component of $\partial\UUU_{N+n}$ intersects or separates $P$. It follows that $\UUU_{N+n}\setminus \VVV_n$ consists of pairwise disjoint full continua disjoint from $P$.
Therefore,
\begin{enumerate}
\item each component $V=U_N$ of $\VVV$ contains a unique component $V_n$ of $\VVV_n$ such that  $U_{N+n}\setminus V_n$ consists of pairwise disjoint full continua that avoid $P$;

\item for any boundary component $\lambda$ of $V$, there exists a unique boundary component $\lambda_n$ of $V_n$ {\bf parallel to} $\lambda$ in the sense that either $\lambda_n=\lambda$ or $\lambda_n$  separates $\lambda$ from $V_n$.\vspace{2pt}
\end{enumerate}

We say $V$ is an {\bf exact} (resp., {\bf renormalizable}) component of $\VVV$ if all components of $\partial V$ are exact (resp., non-exact) boundary components of $V$; see Figure \ref{fig:system} (where the pants represent $V$, and the domains colored yellow correspond to $V_1$).

\begin{figure}[http]
\centering
\begin{tikzpicture}
\node at (0,0){ \includegraphics[width=15cm]{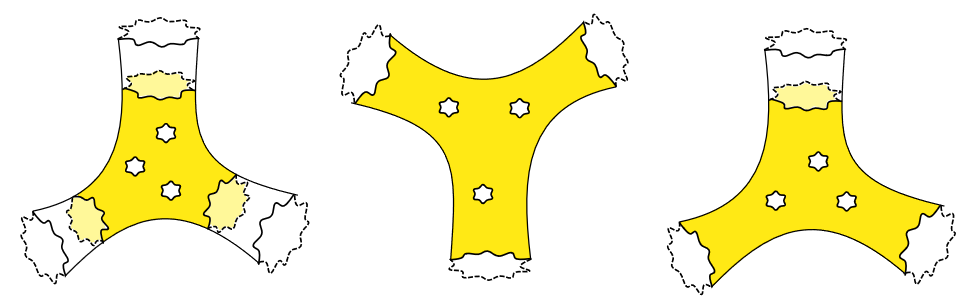}};
\node at (-5,-2.8) {\sf renormalizable};
\node at (0.125,-2.8) {\sf exact};
\node at (5,-2.8) {\sf mixing};
\end{tikzpicture}
\caption{Classification of components of a sub-system. }\label{fig:system}
\end{figure}

If $V=U_N$ is exact, then $V=U_{N+1}$. By this point and statement (1) above, it follows that $V\setminus V_1$  consists of full continua disjoint from $P$. This immediately implies the following:
\begin{proposition}\label{pro:exact}
The stable set $\KKK^N_0$ induces an exact decomposition of $(f,P)$  if every component of $\VVV$ is exact.
\end{proposition}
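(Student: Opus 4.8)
The plan is to verify the defining condition of an exact decomposition directly. If $\VVV=\emptyset$ there is nothing to prove, so assume $\VVV\neq\emptyset$ and that every component of $\VVV$ is exact. It then suffices to show that each component of $\VVV\setminus\VVV_1$ is a full continuum disjoint from $P$, since the remaining clauses of Definition~\ref{def:exact-system} are clear from the construction of the sets $\UUU_n$ (namely: $\VVV=\UUU_N$ has finitely many components, each finitely connected; $\VVV_1$ is a union of components of $f^{-1}(\VVV)$; $\partial\VVV\subset\partial\KKK^N_0\subset J_f$ by Lemma~\ref{lem:boundary}; and each component of $\partial\VVV$, being a boundary component of a complex-type component of $\cbar\setminus\KKK^N_0$, contains more than one point). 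First I would localize to a single component of $\VVV$: since $\VVV_1\subset\VVV=\UUU_N$ and, by the enumerated statement~(1) established just above, each component $V$ of $\VVV$ contains exactly one component $V_1$ of $\VVV_1$, the set $\VVV_1$ is the disjoint union of the $V_1$ as $V$ ranges over the components of $\VVV$, and hence $\VVV\setminus\VVV_1$ is the disjoint union of the sets $V\setminus V_1$ over those $V$. As the $V$'s are pairwise disjoint open sets, the components of $\VVV\setminus\VVV_1$ are exactly the components of the individual sets $V\setminus V_1$, so I may treat one exact component $V=U_N$ at a time.

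The central step is to show that an exact component satisfies $V=U_{N+1}$, where $U_{N+1}$ denotes the unique component of $\UUU_{N+1}$ contained in $U_N$. By hypothesis every component $\lambda$ of $\partial V=\partial U_N$ is an exact boundary component, so in the associated determined sequence $\{(U_n,\lambda_n)\}_{n\ge N}$ one has $\lambda_{N+1}=\lambda_N=\lambda$; in particular $\lambda$ is a boundary component of $U_{N+1}$. Thus every boundary component of $U_N$ is a boundary component of $U_{N+1}$, and since $U_N$ and $U_{N+1}$ have the same number of boundary components by Lemma~\ref{lem:stable}, their boundaries coincide: $\partial U_{N+1}=\partial U_N$. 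Because every domain equals the connected component of the complement of its own boundary that contains it, and $U_{N+1}\subseteq U_N$, it follows that $U_{N+1}=U_N=V$.

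Granting this, the enumerated statement~(1) applied with $n=1$ gives that $V\setminus V_1=U_{N+1}\setminus V_1$ is a union of pairwise disjoint full continua, each disjoint from $P$. Letting $V$ run over the (exact) components of $\VVV$, every component of $\VVV\setminus\VVV_1$ is such a full continuum disjoint from $P$. Combined with the clauses listed in the first paragraph, this shows that $f:\VVV_1\to\VVV$ is an exact sub-system of $(f,P)$, hence $\KKK^N_0$ induces an exact decomposition of $(f,P)$, proving the proposition.

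I do not expect a genuine obstacle here: the argument is essentially an unwinding of the definitions of \emph{exact boundary component} and \emph{exact decomposition}, resting on the enumerated statement~(1) and on Lemma~\ref{lem:stable}. The one point needing care is the implication ``$V$ exact $\Rightarrow V=U_{N+1}$'', namely that matching the boundary components of the two nested domains, together with matching their number, forces the domains to coincide; this is precisely what the preparatory analysis of the sequences $\{(U_n,\lambda_n)\}$ and Lemma~\ref{lem:stable} were designed to deliver.
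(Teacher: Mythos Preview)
Your proposal is correct and follows exactly the paper's approach: the paper's entire argument is the two-sentence remark immediately preceding the proposition (``If $V=U_N$ is exact, then $V=U_{N+1}$. By this point and statement (1) above, it follows that $V\setminus V_1$ consists of full continua disjoint from $P$''), and you have simply unpacked this, supplying the justification via Lemma~\ref{lem:stable} for why exactness forces $\partial U_{N+1}=\partial U_N$ and hence $U_{N+1}=U_N$.
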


Let $\VV$ be the collection of all components of $\VVV$. Then $f:\VVV_1\to \VVV$ induces a self-map $f_{\#}$ on $\VV$, defined by $f_{\#}(V):=f(V_1)$, where $V_1$ is the unique component of $\VVV_1$ contained in $V$. Since $\VV$ is a finite collection, each component of $\VVV$ is eventually $f_{\#}$-periodic.

The map $f:\VVV_1\to\VVV$ also induces a self-map $f_*$ on the collection $\partial\VV$ of the boundary components of $V$ for all $V\in\VV$. This self-map is defined by $f_*(\lambda):=f(\lambda_1)$, where $\lambda_1$ is the unique boundary component of $V_1$ parallel to $\lambda$.  Since $\partial\VV$ is a finite collection, its elements are eventually $f_*$-periodic.

\begin{proposition}\label{prop:exact}
Let $V$ be a component of $\VVV$, and let $\lambda$ be a component of $\partial V$.
Then $\lambda$ is an exact boundary component of $V$ if and only if $f_*(\lambda)$ is an exact boundary component of $f_{\#}(V)$.
Consequently, if $V$ is non-exact, then $f_{\#}(V)$ is also non-exact.
\end{proposition}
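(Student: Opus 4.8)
The plan is to reduce the equivalence to one genuinely combinatorial statement — the $f$-equivariance of the ``parallel boundary component'' relation — and to obtain everything else from the pullback principle together with Lemma~\ref{lem:deg}. Throughout, put $W:=f_\#(V)$ and $\mu:=f_*(\lambda)=f(\lambda_1)$, where $\lambda_1$ is the boundary component of $V_1$ parallel to $\lambda$; and for $n\ge 0$ write $V_n\subset V$ and $W_n\subset W$ for the distinguished components of $\VVV_n$ lying inside $V$ and inside $W$ respectively, so that $V_0=V$ and $W_0=W$.

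First I would record the basic dynamical relation: for every $n\ge 0$, $f$ restricts to a proper branched covering $V_{n+1}\to W_n$. The sequence $\{V_n\}$ is decreasing, since $\KKK_0^N$ is stable, hence $f^{-n}(\KKK_0^N)\subset f^{-(n+1)}(\KKK_0^N)$, and a complex-type component of the complement of the larger set lies inside the (necessarily complex-type) component of the complement of the smaller one, which by uniqueness is $V_n$. Because $f$ is proper from $\cbar\setminus f^{-(n+1)}(\KKK_0^N)$ onto $\cbar\setminus f^{-n}(\KKK_0^N)$, carrying components onto components, $f(V_{n+1})$ is a component of $\cbar\setminus f^{-n}(\KKK_0^N)$; it is complex-type, for otherwise the pullback principle (Lemma~\ref{lem:pullback}) applied to $f(V_{n+1})$ would force its component $V_{n+1}$ of $f^{-1}(f(V_{n+1}))$ to be simple- or annular-type; hence $f(V_{n+1})$ is a component of $\VVV_n$, and $V_{n+1}\subset V_1$ gives $f(V_{n+1})\subset f(V_1)=W$, so $f(V_{n+1})=W_n$. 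Being proper, $f\colon\overline{V_{n+1}}\to\overline{W_n}$ sends each boundary component of $V_{n+1}$ onto a full boundary component of $W_n$ and covers every boundary component of $W_n$ this way. I would also note that, since all boundary components of the relevant $\UUU$-domains meet or separate $P$ while the continua removed in passing to the $\VVV$-domains avoid $P$, a boundary component $\lambda$ of $V$ is exact precisely when $\lambda$ is itself a boundary component of $V_n$ for every $n$ (equivalently $\lambda_n=\lambda$ for all $n$), and likewise for $\mu$ and $W$.

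The heart of the argument is that $f$ respects parallelism: if $\lambda_{n+1}$ is the boundary component of $V_{n+1}$ parallel to $\lambda$, then $f(\lambda_{n+1})$ is the boundary component of $W_n$ parallel to $\mu$. Here I would use Lemma~\ref{lem:deg}. Let $D\subset V$ be the domain obtained from $V$ by replacing everything beyond $\lambda_{n+1}$ on the side containing $V_{n+1}$ with a single full continuum; then $b(D)=b(V)$ and $D$ is a skeleton of $V$ rel $P$, because $\lambda_{n+1}$ separates the same marked complementary components as $\lambda$ does. Applying Lemma~\ref{lem:deg} to the pair $D\subset V$ against the component $V_1$ of $f^{-1}(W)$ matches, in an $f$-equivariant way, the collar cut off by $\lambda_{n+1}$ in $V$ with the collar cut off by $f(\lambda_{n+1})$ in $W$; since the collar of $\mu$ in $W$ is by definition the one cut off by the parallel boundary component, this is exactly the claim. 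Granting this, the parallelism bijections ``in $V_{n+1}$'' and ``in $W_n$'' conjugate the map $f$ on boundary components $\{V_{n+1}\}\to\{W_n\}$ to a map from boundary components of $V$ to boundary components of $W$ that is independent of $n$ and equals $f_*$. Hence $\lambda_n=\lambda$ for all $n$ if and only if $\mu_n=\mu$ for all $n$, i.e.\ $\lambda$ is exact if and only if $\mu$ is exact. The last assertion is then immediate: if $V$ is non-exact it carries a non-exact boundary component $\lambda$, so $f_*(\lambda)$ is a non-exact boundary component of $f_\#(V)$, whence $f_\#(V)$ is non-exact.

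The hard part will be the equivariance-of-parallelism step. The difficulty is bookkeeping but real: the parallel boundary component lives in $\VVV_{n+1}$, which differs from the more naturally $f$-pulled-back object $\UUU_{N+n+1}$ by the removal of full continua disjoint from $P$, and one must ensure these removals are harmless at every level $n$ at once. Lemma~\ref{lem:deg} — a preimage of a skeleton rel $P$ is again a skeleton rel $P$ — is precisely the tool that licenses ignoring those continua, but applying it cleanly requires fixing, level by level, which preimage component and which complementary component one selects, and checking that these choices are compatible with the definitions of $\lambda_n$ and of $f_*$.
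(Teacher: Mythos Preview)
Your overall plan---reduce the proposition to the equivariance statement ``$f(\lambda_{n+1})$ is the boundary component of $W_n$ parallel to $\mu$''---is sound, and that equivariance is in fact true. But your execution of the key step misfires. You propose to apply Lemma~\ref{lem:deg} to a pair $D\subset V$ ``against the component $V_1$ of $f^{-1}(W)$.'' Lemma~\ref{lem:deg} takes a pair $E\subset E'$ with $b(E)=b(E')$ \emph{downstairs} and pulls it back through a component of $f^{-1}(E')$. Here $V_1$ is a component of $f^{-1}(W)$, not of $f^{-1}(V)$, so the lemma cannot be applied to a pair sitting inside $V$. The correct application is to the pair $W_n\subset W$ (which has $b(W_n)=b(W)$), yielding $f^{-1}(W_n)\cap V_1=V_{n+1}$; this is exactly what the paper uses, and it is what makes the topological collar argument go through. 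Your construction of $D$ is also unclear (``replacing everything beyond $\lambda_{n+1}$ with a full continuum'' does not obviously define a subdomain of $V$), and in any case it lives on the wrong side of $f$.

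The paper does not attempt the full equivariance. It treats the two directions asymmetrically: if $\lambda$ is exact then $\lambda_{n+1}=\lambda_1$, so $f(\lambda_{n+1})=\eta$ is already a boundary component of $W_n$, forcing $\eta_n=\eta$. For the converse it fixes an $n$ with $\lambda_{n+1}\cap\lambda_1=\emptyset$, takes an annulus $A\subset W\setminus P$ between $\eta$ and a Jordan curve in $W_n$, pulls it back to an annulus $A_1\subset V_1$ bounded by $\lambda_1$, observes that $\lambda_{n+1}\subset A_1$, and concludes $\eta_n=f(\lambda_{n+1})\subset A$ is separated from $W_n$ by nothing but $\eta$, hence $\eta_n\neq\eta$. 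This annulus argument is precisely what is needed to salvage your equivariance claim as well; once you have $f^{-1}(W_n)\cap V_1=V_{n+1}$ from Lemma~\ref{lem:deg} applied the right way, the same annulus shows $f(\lambda_{n+1})=\eta_n$ for every $n$. So your strategy can be completed, but only after reversing the direction in which you invoke Lemma~\ref{lem:deg}.
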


\begin{proof}
For each $n\geq0$, denote by $V_n$ the unique component of $\VVV_n$ contained in $V$, and by $\lambda_n$ the unique boundary component of $\partial V_n$ parallel to $\lambda$. 
Set $W=f_{\#}(V)$ and $\eta=f_*(\lambda)$.  Similarly, we  define $W_n$ and $\eta_n$ for $n\geq0$. By definition, it holds that $f(V_1)=W$ and $f(\lambda_1)=\eta$.

If $\lambda$ is exact, then $\lambda_{n+1}=\lambda$ and $\eta=f(\lambda_{n+1})=\eta_n$ for all $n\geq 0$. Thus, $\eta$ is exact.

If $\lambda$ is non-exact, there exists an $n\geq0$ such that $\lambda_{n+1}\cap \lambda_1=\emptyset$. 
Choose an annulus $A\subset W\setminus P$ that is bounded by $\eta$ and a Jordan curve in $W_n$. Since $b(W_{n})=b(W)$, it follows from Lemma \ref{lem:deg} that $f^{-1}(W_n)\cap V_1=V_{n+1}$. 

Let $A_1\subset V_1$ be the component of $f^{-1}(A)$ with $\lambda_1\subset \partial A_1$. Then $A_1$ is an annulus disjoint from $P$ and the boundary component of $A_1$ other than $\lambda_1$ is contained in $V_{n+1}$. Since $\lambda_{n+1}\cap \lambda_1=\emptyset$, we have $\lambda_{n+1}\subset A_1$. It follows that $A$ contains a boundary component of $W_n$ parallel to $\eta$, which can only be $\eta_n$. Thus, $\eta$ is non-exact
by the choice of $N$.
\end{proof}

According to Proposition \ref{pro:exact}, if all components of $\VVV$ are exact, then Theorem \ref{thm:decomposition} holds by defining $\KKK=\KKK^N_0$.  If the components of $\VVV$ are either exact or renormalizable, denote by $\VVV'$  the union of all renormalizable components of $\VVV$, and by $\VVV_n'$ the union of all components of $\VVV_n$ within $\VVV'$. By Proposition \ref{prop:exact}, the map $f_{\#}$ is invariant on both the collection of all renormalizable components  and the collection of all exact components of $\VVV$. Thus, $f:\VVV_1\setminus \VVV_1'\to \VVV\setminus \VVV'$ is an exact sub-system, and $\KKK':=\bigcap_{n\ge 1}\VVV_n'$ is a  stable set of $f$ disjoint from $\KKK_0$ by Lemma \ref{lem:new-stable}. Therefore, Theorem \ref{thm:decomposition} holds if we set $\KKK:=\KKK^N_0\cup\KKK'$.

However, $\VVV$ might contain components that are neither exact nor renormalizable; see Figure \ref{fig:system}. In this case, we need to combine these components to obtain a renormalization domain.

\begin{lemma}\label{lem:new}
Suppose that $V$ is an $f_{\#}$-periodic and non-exact component of $\VVV$.  Then there exists a non-exceptional stable set $\KKK'$ of $f$, whose components are all complex-type, such that
$
\bigcap_{n\geq0}\ov{V_n}\subset\KKK',
$
where  $V_n$ denotes the component of $\VVV_n$ contained in $V$. Moreover, each component of $\KKK_0$ is either contained in $\KKK'$ or disjoint from $\KKK'$.
\end{lemma}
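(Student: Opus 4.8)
\textbf{Proof proposal for Lemma \ref{lem:new}.}

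The plan is to build the stable set $\KKK'$ as a nested intersection of domains, using Lemma \ref{lem:new-stable}. The central object is the $f_\#$-periodic non-exact component $V$, say of period $p\ge 1$; replacing $f$ by $f^p$ and $V$ by the associated sub-system, we may assume $f_\#(V)=V$, so $V_1\subset V$ and $f\colon V_1\to V$ is proper. Since $V$ is non-exact, at least one boundary component $\lambda$ of $V$ is non-exact, meaning the parallel boundary components $\lambda_n$ of $V_n$ eventually detach from $\lambda_1$ and separate $\lambda$ from $V_n$. The first step is to understand the combinatorial dynamics of the self-map $f_*$ on the (finite) set $\partial\VV$ of boundary components: by Proposition \ref{prop:exact}, the collection of non-exact boundary components of periodic components of $\VVV$ is $f_*$-invariant, and each such $\lambda$ is eventually $f_*$-periodic. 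I would pass to an iterate so that every non-exact boundary component of $V$ that is $f_*$-periodic is actually $f_*$-fixed, and track the orbit structure of $\partial V$ under $f_*$.

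The second and main step is to produce the nested sequence of domains. The idea is to enlarge $V$ slightly across its \emph{exact} boundary components (which behave rigidly, $\lambda_n=\lambda$ for all $n$) using thin annuli disjoint from $P$ — exactly as in the proof of Theorem \ref{thm:renorm}, assigning an annulus $A_\lambda$ to each periodic exact boundary component and pulling back — while leaving the non-exact boundary components alone. Let $W_0$ be the resulting slightly enlarged domain, and let $W_{n+1}$ be the component of $f^{-1}(W_n)$ containing $V_1$; the annulus construction guarantees $\ov{W_{n+1}}\subset W_n$ near the exact boundary components, and near each non-exact boundary component $\lambda$ the separation property of the $\lambda_n$'s forces the relevant piece of $\partial W_{n}$ to eventually be surrounded, giving $\ov{W_m}\subset W_n$ for some $m>n$. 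Then Lemma \ref{lem:new-stable} applies: $\KKK'_V:=\bigcap_{n>0}W_n$ is a stable set (it is not a singleton because it contains $\partial K$-type structure from the non-exact boundary, where $f$ is expanding on the Julia set and $V$ is complex-type so $b(V)\ge 3$ cannot degenerate — here one uses Corollary \ref{cor:monotone}\,(3) to see $b(\KKK'_V)\ge 3$ eventually, so $\KKK'_V$ is genuinely complex-type). Clearly $\bigcap_{n\ge0}\ov{V_n}\subset\KKK'_V$ since $V_n\subset W_n$. Finally I would set $\KKK'$ to be the union of $\KKK'_V$ over a full $f$-orbit of such periodic non-exact components, together with the finitely many components of $f^{-1}$-orbits needed to make it forward invariant; that $\KKK'$ is non-exceptional follows because each $W_n$, being complex-type rel $P$ with $P\neq\emptyset$, omits part of the Julia set (alternatively, $\KKK'\subset\VVV\subset\cbar\setminus\KKK_0$ and $\KKK_0$ already meets $J_f$).

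For the last assertion — that each component of $\KKK_0$ is either contained in or disjoint from $\KKK'$ — I would argue that $\KKK'_V\subset V\subset\cbar\setminus\KKK_0^N\subset\cbar\setminus\KKK_0$, so in fact $\KKK'$ is disjoint from $\KKK_0$ entirely; this is cleaner than the stated dichotomy and certainly implies it. (One should double-check that the annuli used to enlarge across exact boundary components stay inside $V$, hence outside $\KKK_0$; this is arranged by taking them thin enough and inside the complementary components of $V$ in the direction away from $P$, exactly as in Theorem \ref{thm:renorm}.)

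The main obstacle I anticipate is the verification that the enlarged domains $W_n$ genuinely nest (the $\ov{W_m}\subset W_n$ condition of Lemma \ref{lem:new-stable}) simultaneously at \emph{all} boundary components: at exact boundary components this comes from the annulus/expansion argument, but at non-exact boundary components one must carefully combine the combinatorial separation statement ``$\lambda_m$ separates $\lambda_n$ from $V_m$'' with the fact that we did \emph{not} enlarge there, to conclude that the boundary piece of $W_n$ near $\lambda$ is still swallowed by $W_m$; keeping the bookkeeping straight for components of $\VVV$ that are neither exact nor renormalizable (where different boundary components have different behaviour) is the delicate part. A secondary point requiring care is ensuring $\KKK'_V$ is not a singleton and is complex-type, for which the branched-number monotonicity results of Section 4.1 (Corollary \ref{cor:monotone} and Lemma \ref{lem:complexity}) are the right tool.
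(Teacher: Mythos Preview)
Your nested-domain strategy via Lemma \ref{lem:new-stable} is natural, but there is a genuine gap in the compatibility argument, and it stems from a confusion about which side of the exact boundaries your annuli live on. When you ``enlarge $V$ across'' an exact boundary component $\mu$, the annulus $A_\mu$ lies in the complementary component $B_\mu$ of $V$, i.e.\ \emph{outside} $V$ --- this is what the analogy with Theorem \ref{thm:renorm} dictates, and your parenthetical ``double-check that the annuli stay inside $V$'' is self-contradictory. Since every $W_n$ is then a two-sided neighbourhood of $\mu$, one gets $\mu\subset\bigcap_n W_n=\KKK'_V$, so $\KKK'_V\not\subset V$. Worse, $\mu\subset\KKK_0^N$ and in general $\mu$ is only a proper subset of the component $K$ of $\KKK_0^N$ containing it (picture $K$ a figure-eight and $V$ one of the bounded complementary disks); a component of $\KKK_0$ inside $K$ may then meet $\mu$ without being contained in it, and the required dichotomy fails. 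A related second gap: after reducing to period $p$, you pass to a stable set of $f$ by taking $\bigcup_{i=0}^{p-1}f^i(\KKK'_V)$, but you do not check these images are pairwise disjoint --- and they need not be, for instance if $V$ and $f_\#^i(V)$ share an exact boundary component.

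The paper sidesteps both issues by a genuinely different construction: rather than enlarging $V$, it works with the simply connected one-sided domain $D_{i,0}$, the component of $\cbar\setminus\lambda_i$ containing $V$, one for each $\lambda_i$ in the $f_*$-cycle of a single non-exact boundary component $\lambda$. All exact boundaries of $V$ are then \emph{interior} to $D_{i,0}$, so no enlargement is needed, and the compatibility with $\KKK_0$ follows because $\partial D_n$ eventually avoids $\KKK_0$. The price is that the continua $K_i=\bigcap_n D_{i,n}$ for different $i$ can overlap; the paper controls this with an intersection analysis (Proposition \ref{prop:intersection}), extracting the correct sub-intersection $E=\bigcap_{i\in Z}K_i$ and the correct period $s\mid p$ so that $E,f(E),\dots,f^{s-1}(E)$ are pairwise disjoint and their union is a genuine stable set of $f$. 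This overlap analysis is precisely the content you are missing, and it is not avoidable by the enlargement trick.
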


We can quickly deduce Theorem \ref{thm:decomposition} from Lemma \ref{lem:new}. 

\begin{proof}[Proof of Theorem \ref{thm:decomposition}]
We adhere to the notations mentioned above.
If $\VVV=\emptyset$ or  $\VVV$ contains only exact components,  the theorem holds by taking $\KKK=\KKK^N_0$, according to Proposition \ref{pro:exact}. Otherwise, $\VVV$ has an $f_{\#}$-periodic and non-exact component $V$ by Proposition \ref{prop:exact}. 

Let $\KKK'$ be the non-exceptional stable set obtained in Lemma \ref{lem:new}. Then there exists a sufficiently large integer $N'$ such that $(\KKK')^{ N'+1}\sm (\KKK')^{ N'}$ is disjoint from $\KKK_0$.

Set $\KKK_1=\KKK_0\cup (\KKK')^{ N'}$. It is a non-exceptional stable set of $f$, and its components all intersect or separate $P$.  Since $\bigcap_{n\ge 0}\ov{V_n}$ is a complex-type continuum (by Corollary \ref{cor:monotone}\,(3)) not contained in $\KKK_0$, it follows from Lemma \ref{lem:complexity} that
$$
b(\KKK_0):=\sum (b(K)-2)+2<b(\KKK_1):=\sum (b(K_1)-2)+2,
$$
where the first and second summations are taken over all complex-type components of $\KKK_0$ and $\KKK_1$, respectively.

If $\KKK_1^{ N_1}$ induces an exact decomposition of $(f,P)$ for an integer $N_1$, the theorem holds by taking $\KKK=\KKK_1^{ N_1}$. Otherwise, we can repeat the argument above by replacing $\KKK_0$ with $\KKK_1$ and obtain a non-exceptional stable set $\KKK_2\supset\KKK_1$  such that $b(\KKK_2)>b(\KKK_1)$ and each component of $\KKK_2$ intersects or separates $P$.

By iterating this process, we obtain an increasing sequence of non-exceptional stable sets $\{\KKK_n\}$ such that $b(\KKK_{n+1})>b(\KKK_n)$. Since $b(\KKK_n)\le\#P$ by Lemma \ref{lem:complexity}, this process must stop after a finite number of steps. This completes the proof.
\end{proof}

\begin{proof}[Proof of Lemma \ref{lem:new}]
According to Proposition \ref{prop:exact}, there exists an $f_*$-periodic and non-exact boundary component $\lambda$ of $V$. Its period is denoted by $p$.

For each $0\le i<p$, set $V_{i,0}:=f_{\#}^i(V)$ and  $\lambda_i:=f_*^i(\lambda)$. Then $f^p_{\#}(V_{i,0})=V_{i,0}$, and each $\lambda_i$ is a non-exact boundary component of $V_{i,0}$ by Proposition \ref{prop:exact}. For every $n\geq0$, denote by $V_{i,n}$ the unique complex-type component of $f^{-np}(V_{i,0})$ contained in $V_{i,0}$. Equivalently, $V_{i,n}$ is the component of $\VVV_{np}$ contained in $V_{i,0}$.

Let $D_{i,0}$ be the component of $\cbar\sm\lambda_i$ containing $V_{i,0}$. Then $f^{-p}(D_{i,0})$ has a unique component $D_{i,1}$ containing $V_{i,1}$, and  $\ov{{D}_{i,1}}\subset D_{i,0}$ since $\lambda_i$ is non-exact. Inductively, for each $n\ge 1$, $f^{-p}(D_{i,n})$ has a component $D_{i,n+1}$ containing $V_{i,n+1}$, and $\overline{D_{i,n+1}}\subset D_{i,n}$.  By Corollary \ref{cor:monotone},
$$
K_i:=\bigcap_{n\ge 1}D_{i,n}
$$
is a complex-type continuum. Moreover, it is a stable set of $f^p$ by Lemma \ref{lem:new-stable}, and $K_i\not\supset J_f$ since $\lambda_i$ is disjoint from $\ov{D_{i,k}}$ for a sufficiently large integer $k$. Thus, $\partial K_i\subset J_f$ by Lemma \ref{lem:boundary}.

Let $r\in[1,p]$ be the smallest integer such that $K_0=K_r$.  From the above construction, we obtain that $K_{i+1}=f(K_i)$ and $K_{i+r}=K_i$ for every $i\in\{0,\ldots,p-1\}$. Then each of $K_0,\ldots,K_{r-1}$ is a stable set of $f^r$, and $r$ is a factor of $p$. Moreover, $K_0,\ldots,K_{r-1}$ are pairwise distinct. In order to obtain a stable set of $f$, we need to consider the intersections of $K_i$ with $K_j$.

\begin{proposition}\label{prop:intersection}
Suppose $K_i\cap K_j\neq\emptyset$ for distinct $i,j\in\{0,\ldots,r-1\}$. Then
\begin{enumerate}
\item $\lambda_j\subset D_{i,0}$ and $\lambda_i\subset D_{j,0}$;
\item $V_{i,n}\cup V_{j,n}\subset D_{i,n}\cap D_{j,n}$ for all $n\ge0$; and
\item if $K_\ell$ intersects $K_i$ for some $\ell\in\{0,\ldots,r-1\}$, then $K_\ell$ also intersects $K_j$.
\end{enumerate}
\end{proposition}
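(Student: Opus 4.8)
The plan is to prove the three assertions in the order (1) $\Rightarrow$ (2) $\Rightarrow$ (3): I will deduce (2) from (1) by replaying the argument of (1) at every pull-back level, and (3) from (2) by a short nested-compactness argument, so the genuine content lies in (1). Throughout I fix a point $z\in K_i\cap K_j$; since $K_i=\bigcap_{n\ge1}D_{i,n}$ and $K_j=\bigcap_{n\ge1}D_{j,n}$, this gives $z\in D_{i,n}\cap D_{j,n}$ for all $n\ge1$. Two elementary facts will be used constantly. First, $D_{i,n}\cap\lambda_i=\emptyset$ for every $n\ge1$ (indeed $D_{i,0}$ is a component of $\cbar\sm\lambda_i$, so $D_{i,0}\cap\lambda_i=\emptyset$, and the $D_{i,n}$ are nested inside $D_{i,0}$); in particular $K_i\cap\lambda_i=\emptyset$, and symmetrically $K_j\cap\lambda_j=\emptyset$. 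Second, $V_{i,0}$ is a connected subset of the open set $\cbar\sm\KKK^N_0$, whereas $\lambda_j\subset\KKK^N_0$, being a boundary component of the component $V_{j,0}$ of $\cbar\sm\KKK_0^N$; hence $V_{i,0}\cap\lambda_j=\emptyset$, so $V_{i,0}$ lies in a single component of $\cbar\sm\lambda_j$, and symmetrically $V_{j,0}$ in a single component of $\cbar\sm\lambda_i$. I will also invoke the position properties of boundary components of complex-type components of $\cbar\sm\KKK_0^N$ from Sections \ref{sec:chain} and \ref{sec:topology}, in particular that the distinct boundary components $\lambda_i$ and $\lambda_j$ are disjoint.

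For (1), I would first dispose of the easy case $V_{i,0}=V_{j,0}=:W$: then $\lambda_i,\lambda_j$ are distinct boundary components of $W$, so $\lambda_i$ lies on the $W$-side of $\lambda_j$, i.e.\ $\lambda_i\subset D_{j,0}$, and symmetrically $\lambda_j\subset D_{i,0}$. When $V_{i,0}\ne V_{j,0}$, the second fact above leaves two possibilities for $V_{i,0}$: either $V_{i,0}\subset D_{j,0}$, in which case $\lambda_i\subset\ov{V_{i,0}}\subset\ov{D_{j,0}}$ and therefore $\lambda_i\subset D_{j,0}$ (as $\lambda_i\cap\lambda_j=\emptyset$); or $V_{i,0}$ sits in a component of $\cbar\sm\lambda_j$ other than $D_{j,0}$, i.e.\ $\lambda_j$ separates $V_{i,0}$ from $V_{j,0}$. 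The crux is to exclude this ``separated'' configuration (and, by the $i\leftrightarrow j$ symmetry, the analogous one for $\lambda_i$), and this is precisely where the common point $z$ enters: in the separated configuration the continuum $\ov{D_{i,0}}$ is confined to a single complementary continuum of $V_{j,0}$ bounded by some boundary component $\mu\ne\lambda_j$ of $V_{j,0}$, so $K_i$ lies there as well; tracking the pull-backs $D_{j,n}$, whose closures nest down to $K_j$, together with the facts that $K_j$ is connected, disjoint from $\lambda_j$, and contains $z\in K_i$, forces $K_j$ to meet $\lambda_i$, contradicting $K_j\subset D_{j,n}\subset\cbar\sm\lambda_i$ for large $n$. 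Once the separated configurations are excluded we have $V_{i,0}\subset D_{j,0}$ and $V_{j,0}\subset D_{i,0}$, whence $\lambda_i\subset D_{j,0}$ and $\lambda_j\subset D_{i,0}$, and then $V_{i,0}\cup V_{j,0}\subset D_{i,0}\cap D_{j,0}$ is immediate.

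For (2), I will observe that the configuration at level $n$ — the complex-type pieces $V_{i,n}\subset D_{i,n}$, $V_{j,n}\subset D_{j,n}$, their distinguished boundary components $\lambda_{i,n},\lambda_{j,n}$, the collars $\ov{D_{i,n+1}}\subset D_{i,n}$, and the relation $V_{i,n}\subset\cbar\sm f^{-np}(\KKK_0^N)$ versus $\partial D_{j,n}\subset f^{-np}(\KKK_0^N)$ — is structurally identical to the level-$0$ configuration, with the same point $z\in K_i\cap K_j=\bigcap_{m\ge n}(D_{i,m}\cap D_{j,m})$. Running the argument of (1) with the index shift $0\mapsto n$ then yields $V_{i,n}\cup V_{j,n}\subset D_{i,n}\cap D_{j,n}$ for every $n\ge0$. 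For (3), suppose in addition $K_\ell\cap K_i\ne\emptyset$. Applying (2) to the pairs $(i,j)$ and $(i,\ell)$ gives $V_{i,n}\subset D_{j,n}$ and $V_{i,n}\subset D_{\ell,n}$ for all $n$, hence $\emptyset\ne V_{i,n}\subset D_{j,n}\cap D_{\ell,n}$. Since $\ov{D_{j,n+1}}\subset D_{j,n}$ and $\ov{D_{\ell,n+1}}\subset D_{\ell,n}$, the sets $\ov{D_{j,n}\cap D_{\ell,n}}$ form a decreasing sequence of non-empty compacta, so $\emptyset\ne\bigcap_{n\ge1}\ov{D_{j,n}\cap D_{\ell,n}}\subset\bigl(\bigcap_n\ov{D_{j,n}}\bigr)\cap\bigl(\bigcap_n\ov{D_{\ell,n}}\bigr)=K_j\cap K_\ell$, which is what is claimed.

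The main obstacle is the exclusion of the linked configuration in (1): passing from the mere existence of one common point $z\in K_i\cap K_j$ to the global nesting $\lambda_j\subset D_{i,0}$, $\lambda_i\subset D_{j,0}$. This is a purely planar-topological step, and the delicate part is to combine the non-crossing of $\lambda_i$ and $\lambda_j$ with the collar inclusions $\ov{D_{i,1}}\subset D_{i,0}$ and $\ov{D_{j,1}}\subset D_{j,0}$ so as to contradict the connectedness of $K_i$ and $K_j$ whenever the pieces $V_{i,0},V_{j,0}$ are in ``linked'' position. Once (1) is established in a shift-invariant form, assertions (2) and (3) follow routinely.
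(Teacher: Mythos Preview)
Your argument for (3) matches the paper's, and your plan for (2) is reasonable once (1) is in hand. The genuine gap is in (1), precisely at the point you flag as ``the main obstacle.''

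In your ``separated'' configuration ($V_{i,0}$ on the opposite side of $\lambda_j$ from $V_{j,0}$), your key assertion that ``$\ov{D_{i,0}}$ is confined to a single complementary continuum of $V_{j,0}$ bounded by some $\mu\ne\lambda_j$'' is false. Run the planar topology carefully: since $z\in D_{i,0}\cap D_{j,0}$ and $\lambda_i\cap\lambda_j=\emptyset$, the separated configuration forces $\lambda_j\subset D_{i,0}$ and then $D_{j,0}\subset D_{i,0}$; in particular $V_{j,0}\subset D_{i,0}$, so $D_{i,0}$ is not confined at all. The subsequent claim that this ``forces $K_j$ to meet $\lambda_i$'' has no justification: $K_j\subset D_{j,0}\subset D_{i,0}\subset\cbar\sm\lambda_i$, so $K_j\cap\lambda_i=\emptyset$ automatically, and there is no contradiction. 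Purely planar reasoning cannot exclude $D_{j,0}\subsetneq D_{i,0}$, because a priori nothing prevents $K_j$ from being a proper sub-continuum of $K_i$.

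What the paper does instead is prove the dynamical claim $D_{i,n}\nsubseteq D_{j,0}$ for all $n\ge0$: if $D_{i,m}\subset D_{j,0}$ for some $m$, then pulling back gives $D_{i,m+n}\subset D_{j,n}$ for all $n$, hence $K_i\subset K_j$; but then $\deg(f^p|_{K_i})=\deg(f^p|_{K_j})$, and Theorem~\ref{thm:renorm} (renormalization) gives that $\bigcup_n(f^p|_{K_j})^{-n}(\partial K_i)$ is dense in $\partial K_j$, forcing $\partial K_i=\partial K_j$ and then $K_i=K_j$, contradicting $i\ne j$. With this claim in hand, the trichotomy for two intersecting complementary domains of disjoint continua immediately gives (1), and the same claim is reused to prove (2) directly (if $V_{i,n}\cap D_{j,n}=\emptyset$, a boundary component of $D_{j,n}$ separates $D_{j,n}$ from $\lambda_i$, whence $D_{j,n}\subset D_{i,0}$, contradicting the claim). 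The dynamical input via Theorem~\ref{thm:renorm} is the missing ingredient in your outline.
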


\begin{proof}
We first claim that $D_{i,n}\nsubseteq D_{j,0}$ for any $n\geq 0$. Assume, by contradiction,  that $D_{i,m}\subseteq D_{j,0}$ for some $m\geq 0$. Then, for all $n\geq 1$, $D_{i,m+n}$ lies in a component of $f^{-np}(D_{j,0})$. This component must be $D_{j,n}$, for otherwise, it would contradict the condition that $K_i\cap K_j\neq \emptyset$. Therefore, we have $D_{i,m+n}\subset D_{j,n}$ for all $n$. This implies $K_i\subset K_j$.

Since $\deg(f^p|_{{K_i}})=\deg(f^p|_{{K_j}})$ and  both $K_i$ and $K_j$ are stable sets of $f^p$, we have
$$\bigcup_{n>0}(f^p|_{K_j})^{-n}(K_i)=K_i.$$
 Furthermore, since $f^p:\partial K_j\to \partial K_j$ is quasi-conformally conjugate to the restriction of a rational map on its Julia set (Theorem \ref{thm:renorm}), the set $\bigcup_{k>0}(f^p|_{K_j})^{-k}(\partial K_i)$ is dense in $\partial K_j$. This implies  $\partial K_i=\partial K_j$.
 Then each component of $K_j\setminus K_i$, if it exists, would be a Fatou domain. However, since $\overline{D_{i,n+1}}\subset D_{i,n}$, no component of $\partial D_{i,n}$ for any $n\geq0$ forms the boundary of a Fatou domain in $K_j\setminus K_i$. Thus, $K_i=K_j$.
 The claim is proved.\vspace{2pt}
	
(1) Since $K_i\cap K_j\neq\emptyset$, we have  either $D_{i,0}\subset D_{j,0}$, or $D_{j,0}\subset D_{i,0}$, or $\lambda_j\subset D_{i,0}$ and $\lambda_i\subset D_{j,0}$. Then statement (1) follows directly from the above claim by setting $n=0$.
\vspace{2pt}
	
(2) It suffices to show that $V_{i,n}\subset D_{j,n}$ for all $n\geq0$. By statement (1), we have $V_{i,0}\subset D_{j,0}$. Consequently, for each $n>0$, either $V_{i,n}\subset D_{j,n}$ or $V_{i,n}\cap D_{j,n}=\emptyset$.
If $V_{i,n}\cap D_{j,n}=\emptyset$ for some $n>0$, according to the construction of $V_{i,n}$ and $D_{j,n}$, there exists a component $\eta$ of $\partial D_{j,n}$ that separates $D_{j,n}$ from $V_{i,n}$. In particular, $\eta$ separates $D_{j,n}$ from $\lambda_i$. By statement (1), it follows that $D_{j,n}\subset D_{i,0}$, which contradicts the claim above.
\vspace{2pt}

(3) Without loss of generality, we assume that $K_\ell$ is distinct from both $K_i$ and $K_j$. Then by applying statement (2) to $\{K_i,K_j\}$ and $\{K_i,K_\ell\}$, we obtain that $V_{i,n}\subset D_{j,n}\cap D_{\ell,n}$ for all $n>0$. This implies $K_j\cap K_\ell\not=\emptyset$.
\end{proof}



Let $s\in[1,r]$ be the smallest integer such that $K_0\cap K_s\not=\emptyset$. Then $s$ is a factor of $r$.  Set $Z:=\{ks:0\leq k <r/s\}$. By Proposition \ref{prop:intersection}\,(3), we have
\begin{itemize}
\item [(a)] $K_i\cap K_j\not=\emptyset$ for any pair $i,j\in Z$; and
\item [(b)] $K_i\cap K_\ell =\emptyset$ if $i\in Z$ and $\ell\in\{0,\ldots,r-1\}\setminus Z$.
\end{itemize}

 Let $D_0$ be the intersection of all $D_{i,0}$ with $i\in Z$. Applying Proposition \ref{prop:intersection}\,(1) to each pair $\{K_i,K_j\}$ with distinct $i,j\in Z$,  we conclude that $D_0$ is the domain with boundary components $\{\lambda_i:i\in Z\}$, and $V_{i,0}\subset D_0$ for every $i\in Z$.

 For every $n\geq1$,  denote by $D_{n}$  the component of $f^{-pn}(D_0)$ containing $V_{0,n}$. By point (a) above and Proposition \ref{prop:intersection}\,(2),
 it holds that $\bigcup_{i\in Z}V_{i,n}\subset \bigcap_{i\in Z} D_{i,n}$ for every $n\geq0$. Moreover, since
$f^{np}\big(\bigcap_{i\in  Z}D_{i,n}\big)\subset \bigcap_{i\in  Z}D_{i,0}=D_0$ and $f^{np}(D_{n})=D_0$, it follows that
$$\bigcup_{i\in Z}V_{i,n}\subset \bigcap_{i\in  Z}D_{i,n}\subset D_n$$ for all $n\geq0$.
This inclusion also implies $D_n\subset D_{i,n}$ for any $i\in Z$ and $n\geq0$. Thus
\begin{itemize}
	\item [(c)] for every $n\geq 0$, the equality $\bigcap_{i\in  Z}D_{i,n}= D_n$ holds.
\end{itemize}

This equality implies $\overline{{D}_{n_1}}\subset D_{n_2}$ for sufficiently large $n_2-n_1$.  Then $$E:=\bigcap_{n\geq0}D_{n}=\bigcap_{n\geq0}\ov{D_{n}}$$ is a stable set of $f^p$ by Lemma \ref{lem:new-stable}. Moreover, $\partial D_n$ is disjoint from $\KKK_0$ for every sufficiently large integer $n$. Thus, each component of $\KKK_0$ is either contained in $E$ or disjoint from $E$.  Since $E$ contains $\bigcap_{n\geq0} \ov{V_{0,n}}$, it follows from Corollary \ref{cor:monotone} that $E$ is complex-type. Additionally, since $\lambda_0=\lambda\subset J_f$ is disjoint from $E$, we have  $J_f\not\subset E$.

Finally, point (c)  implies  $E=\bigcap_{i\in Z} K_i$. Therefore, $f^s(E)\subset E$, and hence $E$ is also a stable set of $f^s$.
Combining this with point (b) above,  we deduce that $E,f(E),\ldots, f^{s-1}(E)$ are pairwise disjoint.  Thus, $\KKK':=\bigcup_{i=0}^{s-1} f^i(E)$ is a stable set of $f$ and satisfies all the conditions of Lemma \ref{lem:new} according to the previous discussion.
\end{proof}

\subsection{Cluster-exact decomposition}
Let $(f,P)$ be a marked rational map. A continuum $K\subset J_f$ is called a {\bf cluster} if it is a stable set of $f^p$ for some  $p\ge 1$, and the renormalization of $f^p$ on $K$ is a cluster rational map, i.e., the sphere is a Fatou chain of this rational map.

\begin{theorem}[Cluster-exact decomposition]\label{thm:cluster-exact0}
Let $(f,P)$ be a marked rational map with $J_f\neq\cbar$, and let $\MMM_f$ be the intersection of $J_f$ with the union of all maximal Fatou chains of $f$ intersecting $P$. Then there exists a stable set $\KKK$ of $f$ with $\MMM_f\subset \KKK\subset J_f$ such that 
\begin{enumerate}
\item every periodic component of $\KKK$ is a cluster; and
\item $\KKK$ induces an exact decomposition of $(f,P)$.
\end{enumerate}
\noindent Moreover, each component of $\KKK$ intersects or separates $P$. 
\end{theorem}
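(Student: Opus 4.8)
The plan is to construct $\KKK$ in two stages. First I build an initial stable set $\KKK_0\subset J_f$ with $\MMM_f\subset\KKK_0$ all of whose periodic components are clusters; then I enlarge $\KKK_0$, using the exact decomposition (Theorem \ref{thm:decomposition}) together with a finite recursion, to a stable set that in addition induces an exact decomposition, while keeping the periodic components clusters.

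For the first stage, recall from Theorem \ref{thm:maximal} that distinct maximal Fatou chains are disjoint (two meeting chains each contain the other), and that images and preimage‑components of maximal Fatou chains are again maximal Fatou chains; hence at most $\#P$ of them meet $P$. Let $\KKK_0$ be the union of $\partial\mathcal{C}$ over all maximal Fatou chains $\mathcal{C}$ that meet $P$ or are periodic, using that each such $\partial\mathcal{C}=\mathcal{C}\cap J_f$ is a finite union of continua (a topological property of maximal Fatou chains, since $\operatorname{int}\mathcal{C}\subset F_f$ and $\partial\mathcal{C}\subset J_f$). From $f(P)\subset P$, Theorem \ref{thm:maximal}, and disjointness of maximal chains one gets $f(\KKK_0)\subset\KKK_0$ and that each component of $f^{-1}(\KKK_0)$ is a component of $\KKK_0$ or disjoint from it, so $\KKK_0$ is a stable set with $\MMM_f\subset\KKK_0\subset J_f$. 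The key point is that every periodic component $K$ of $\KKK_0$ is a cluster: up to an iterate $K$ is a component of $\partial\mathcal{C}$ for a periodic maximal Fatou chain $\mathcal{C}$, and one checks it is a connected stable set of $f^{q}$ ($q$ its period), so by Theorem \ref{thm:renorm} it has a renormalization $g$ with $J_g=\phi(K)$ (note $\partial K=K$ since $K\subset J_f\neq\cbar$); the complementary components of $K$ correspond under $\phi$ to the Fatou domains of $g$, and they inherit from the chain structure of $\mathcal{C}$ a connected ``touching pattern'' (the Fatou domains of $f$ inside $\mathcal{C}$ have mutually touching closures, and the complementary components of $\mathcal{C}$ each touch one of them along $K$), so enumerating their closures along this pattern realizes $\cbar$ as a Fatou chain of $g$; thus $g$ is a cluster rational map and $K$ is a cluster.

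For the second stage, replace $P$ by a larger $f$‑invariant finite set $P_1$ with $P_1\setminus P\subset\KKK_0$ so that every component of $\KKK_0$ intersects or separates $P_1$ (an exact decomposition rel $P_1$ is one rel $P$), and apply Theorem \ref{thm:decomposition} to $\KKK_0$, getting a non‑exceptional stable set $\KKK_1\supset\KKK_0$ inducing an exact decomposition of $(f,P_1)$ with every component intersecting or separating $P_1$. Its periodic components are those of $\KKK_0$ (clusters) together with finitely many new complex‑type continua $E$ produced by Lemma \ref{lem:new}. For each such $E$ that is not already a cluster, its renormalization $h$ (Theorem \ref{thm:renorm}) is a PCF rational map that is not a cluster map, so $\MMM_h\neq\emptyset$ for the appropriate marking; applying the first stage to $h$ and pulling the resulting continua back through the conjugacy $\phi_E$ yields clusters of $f$ inside $E$ (here one uses the claim, proved above, that the boundary of a maximal Fatou chain renormalizes to a cluster map, together with the fact that a renormalization of a renormalization is again a renormalization), forming an $f^{p_E}$‑stable subset of $E$. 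Adjoining the grand orbits of these continua inside $\KKK_1$, re‑enlarging $P$, and re‑applying Theorem \ref{thm:decomposition}, one iterates; each genuine application of Theorem \ref{thm:decomposition} strictly increases the branched number of the stable set, which is bounded by $\#P_{\mathrm{final}}$, so the procedure stabilizes at a stable set $\KKK\subset J_f$ inducing an exact decomposition, containing $\MMM_f$, with all periodic components clusters and all components intersecting or separating $P$.

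The main obstacle is controlling this recursion. One must show that the splitting step — replacing a non‑cluster periodic component by the pulled‑back boundaries of the maximal Fatou chains of its renormalization — does not force an unbounded number of branched‑number‑increasing re‑applications of Theorem \ref{thm:decomposition}, and that identifying the new ``small Julia'' pieces with genuine renormalization data of $f$ avoids circularity. Secondary technical points are: staying inside $J_f$ (handling any Fatou domains that the Lemma \ref{lem:new} continua might a priori enclose, which must in fact come from periodic maximal chains already recorded in $\KKK_0$); the bookkeeping to keep ``every component intersects or separates $P$'' valid while only enlarging $P$ finitely often; and the topological input that the boundary of a maximal Fatou chain is a finite union of continua.
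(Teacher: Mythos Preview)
Your Stage 2 recursion has a genuine gap. After applying Theorem \ref{thm:decomposition} to your $\KKK_0$, the periodic components of the resulting $\KKK_1$ are \emph{not} simply ``those of $\KKK_0$ together with new continua $E$'': the stable set $\KKK'$ produced inside the proof of Theorem \ref{thm:decomposition} via Lemma \ref{lem:new} can \emph{swallow} components of $\KKK_0$ (that lemma explicitly allows each component of $\KKK_0$ to be contained in $\KKK'$), so a periodic component of $\KKK_1$ may strictly contain one of your clusters without itself being a cluster. Worse, your step ``adjoining the grand orbits of these continua inside $\KKK_1$'' is ill-posed: the pulled-back clusters already lie inside $E\subset\KKK_1$, so adjoining them changes nothing, while \emph{replacing} $E$ by them would destroy the exact decomposition you just obtained and force another application of Theorem \ref{thm:decomposition}---whose output could again swallow your clusters. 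Your termination via branched number bounded by $\#P_{\mathrm{final}}$ is circular, since you enlarge $P$ at every step and $P_{\mathrm{final}}$ only exists once the process halts.

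The paper's argument runs in the opposite direction. It first applies Theorem \ref{thm:decomposition} to $\MMM_f$ to obtain $\KKK_1$ inducing an exact decomposition, and then \emph{shrinks}: for a non-cluster periodic component $K_*$ it passes to the renormalization $(g,Q)$, applies Theorem \ref{thm:decomposition} \emph{in the $g$-world} to get $\KKK_g\subsetneq J_g$, pulls back $\EEE=\phi^{-1}(\KKK_g)\subsetneq K_*$, and replaces $K_*$ and its preimages by the appropriate pieces of $\EEE$. The crucial technical point---absent from your proposal---is Proposition \ref{prop:also-exact}, which shows this replacement \emph{preserves} the exact-decomposition property for $(f,P)$. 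One thus obtains a strictly \emph{decreasing} nest $\KKK_1\supsetneq\KKK_2\supsetneq\cdots$ of stable sets, each inducing an exact decomposition, and termination follows from Lemma \ref{lem:nest} (any decreasing nest of stable sets stabilizes), with no need to enlarge $P$ along the way.
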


\begin{proof}
If $J_f= \MMM_f$,  the theorem holds by taking $\KKK=J_f$. Thus, we assume $\MMM_f\subsetneq J_f$. Note that $\MMM_f$ is a stable set of $f$.
Then by applying Theorem \ref{thm:decomposition} to $\KKK_0=\MMM_f$, we obtain a stable set $\KKK_1$ with $ \MMM_f\subset \KKK_1\subsetneq J_f$ such that $\KKK_1$ induces an exact decomposition of $(f,P)$, and each component of $\KKK_1$ intersects or separates $P$. 

If every periodic component of $\KKK_1$ is a cluster, the theorem holds by taking $\KKK=\KKK_1$.

Now, suppose that $K_*$ is a periodic component of $\KKK_1$ with period $p\ge 1$ such that $K_*$ is not a cluster. By Theorem \ref{thm:renorm}, there exist a marked rational map $(g,Q)$ and a quasiconformal map $\phi$ of $\cbar$, such that $J_{g}=\phi(K_*)$ and $\phi\circ f^p=g\circ\phi$ on $K_*$.
Here, $Q$ is the union of $\phi(P\cap K_*)$ together with all centers of Fatou domains $U$ of $g$ such that $\phi^{-1}(U)$ contains a point of $P$. In particular, $g$ is not a cluster rational map.

 As before, we can define $\MMM_g$ for $(g,Q)$. Then $\MMM_g\subsetneq J_g$.
By applying Theorem \ref{thm:decomposition} to $(g,Q)$ and $\MMM_g$, we obtain a stable set $\KKK_g$ of $g$ with $\MMM_g\subset \KKK_g\subsetneq J_g$ such that $\KKK_g$ induces an exact decomposition of $(g,Q)$, and each component of $\KKK_g$ intersects or separates $Q$. Set $\EEE=\phi^{-1}(\KKK_g)$. Then $\EEE\subsetneq K_*$ is a stable set of $f^p$, and  we have the following commutative diagram:
\begin{equation}\label{eq:commutative}
\begin{array}{ccc}
(K_*,\EEE) &\xrightarrow[]{\ \ f^p\ \ }  & (K_*,\EEE) \\
\phi\Big\downarrow &&\Big\downarrow \phi  \\
(J_g,\KKK_g) &  \xrightarrow[]{\ \ g\ \ } & (J_g,\KKK_g)\vspace{-0.1cm}.
\end{array}
\end{equation}
\vspace{1mm}

From the choice of $Q$, it follows that each component of $\EEE$ intersects or separates $P$. It is worth noting that $ \MMM_f\cap K_*$ is also a stable set of $f^p$.

For any continuum $E\subset \cbar$,  denote by $\wh{E}$ the union of $E$ and all components of $\ov{\C}\setminus E$ disjoint from $P$.

\begin{proposition}\label{prop:B}
Both $\MMM_f\cap K_*$ and $\partial\wh{K}_*$ are contained in $\EEE$.
\end{proposition}

\begin{proof}
It suffices to prove that $\phi(\MMM_f\cap K_*)$ and $\phi(\partial\wh K_*)$ are contained in $\MMM_g\, (\subset \KKK_g)$. Recall that $\phi$ sends a  component of $\ov{\C}\setminus K_*$ onto a Fatou domain of $g$.

Let $B$ be a marked maximal Fatou chain of $(f,P)$ such that $\partial B$ is a component of $\MMM_f$ contained in $K_*$. Note that each component of $B\setminus \partial B$ is a Fatou domain of $f$, and hence a  component of $\cbar\setminus K_*$. This implies that $\phi(B)$ lies in a marked maximal Fatou chain of $(g,Q)$. Hence,  $\phi(\partial B)=\partial\phi(B)\subset \MMM_g$.

For any point $z\in\partial \wh K_*$, there exists a component $D$ of $\cbar\setminus \wh K_*$ with $z\in\partial D$, and such a $D$ must intersect $P$.  Then  $\phi(\partial D)$ is the boundary of a marked Fatou domain of $(g,Q)$. It follows immediately that $\phi(z)\in\MMM_g$.
\end{proof}

Let $K_1,\ldots,K_m$ be all components of $\KKK_1$ whose orbits pass through $K_*$. For each $K_i$, there exists a smallest integer $k_i\ge 0$ such that $f^{k_i}(K_i)=K_*$. Thus, $K_i$ is a component of $f^{-k_i}(K_*)$. Let $\EEE_i$ denote the union of all components of $f^{-k_i}(\EEE)\cap K_i$ that either intersect or separate $P$. Then both $ \MMM_f\cap K_i$ and $\partial\wh{K}_i$ are contained in $\EEE_i$ for each $i\in\{1,\ldots,m\}$ by Proposition \ref{prop:B}. Set
$$
\KKK_2=\bigg(\KKK_1\sm\bigcup_{i=1}^m K_i\bigg)\cup\bigcup_{i=1}^{m}\EEE_i.
$$
The previous discussion shows that $\KKK_2$ is a stable set of $f$ with $ \MMM_f\subset\KKK_2\subsetneq J_f$, and each component of $\KKK_2$ intersects or separates $P$. Moreover, it holds that
\begin{equation}\label{eq:444}
\bigcup_{K\in{\rm\, Comp}(\KKK_1)}\partial \wh K\ \subset\ \KKK_2\subsetneq\KKK_1.
\end{equation}

\begin{proposition}\label{prop:also-exact}
The stable set $\KKK_2$ induces an exact decomposition of $(f,P)$.
\end{proposition}

\begin{proof}
Suppose that $\BBB$ is a stable set of $f$. From the definitions, we deduce the following:
\begin{enumerate}
\item The stable set $\BBB$ induces an exact decomposition of $(f, P)$ if and only if, for any complex-type component $V$ of $\ov{\C}\sm \mathcal{B}$, whenever a component $B_1$ of $f^{-1}(\mathcal{B})$  lies in $V$, it neither intersects nor separates $P$;
\item  For any component $B$ of $\mathcal{B}$, a component $B_1$ of $f^{-1}(\mathcal{B})$ that intersects $\wh{B}$ is either equal to $B$ or contained in a component of $\wh{B}\sm B$, which is simply connected and avoids $P$.
\end{enumerate}

We shall use statement (1) to prove this proposition.

Let $V$ be any complex-type component of $\cbar\setminus \KKK_2$.	
By the construction of $\KKK_2$ and the inclusion relation \eqref{eq:444}, the domain $V$ is either  a  complex-type component of $\ov{\C}\sm\KKK_1$ or a complex-type component of $\wh{K}_i\setminus \EEE_i$ for some $i\in\{1,\ldots,m\}$.

Let $E$ be a component of $f^{-1}(\KKK_2)$ that lies in $V$. 
 Since $\KKK_2\subset \KKK_1$, the continuum $E$ is contained in a component of $f^{-1}(\KKK_1)$, denoted by $K(E)$. The purpose is to verify that $E$ neither intersects nor separates $P$.
	
Case 1. The domain $V$ is also a component of $\ov{\C}\sm \KKK_1$. 
Since $\KKK_1$ induces an exact decomposition of $(f,P)$, by statement (1) above, $K(E)$ neither intersects nor separates $P$. So does $E$.

Case 2. The domain $V$ is a complex-type component of $\wh{K}_i\setminus \EEE_i$ for some $1\leq i\leq m$.
In this case, $K(E)$ intersects $\wh K_i$.
Then by statement (2),  either $K(E)=K_i$, or $K(E)$ is contained in a component $D$ of $\wh K_i\setminus K_i$. The domain $D$ is simply connected and disjoint from $P$. Moreover, we have $D\subset V$ since $E\subset V$. Thus, it suffices to consider the former case.

 The equality $K(E)=K_i$ implies that $E\subset K_i$ and $f(E)\subset f(K_i)=K_{j}$ for some $j$. Thus, $f(E)$ is a component of $\EEE_j\subset K_j$. 
 Since $E\subset V $ is disjoint from $\EEE_i$, by the definition of $\EEE_i$, exactly one of the following two situations occurs:
 \begin{itemize}
\item $K_i\not= K_*$, and  $E$ neither intersects nor separates $P$;
\item $K_i=K_*$, and $E$ is a component of $(f^p|_{K_*})^{-1}(\EEE)$ that lies in $V$.
\end{itemize}
\indent Thus, it suffices to deal with the second situation.

By the commutative diagram \eqref{eq:commutative}, $\phi(E)$ is a component of $g^{-1}(\KKK_g)$.
Note also that $\phi(V)$ is a complex-type component of $\ov{\C}\sm \KKK_g$. Since $\KKK_g$ induces an exact decomposition of $(g, Q)$, it follows from statement (1) that $\phi(E)$  neither intersects nor separates $Q$. Thus, $E$ neither intersects nor separates $P$.
\end{proof}

By Proposition \ref{prop:also-exact}, if every periodic component of $\KKK_2$ is a cluster, then Theorem \ref{thm:cluster-exact0} holds by choosing $\KKK=\KKK_2$. Otherwise, we can repeat the above argument by replacing $\KKK_1$ with $\KKK_2$ and obtain a 
 stable set  $\KKK_3$ with $\MMM_f\subset\KKK_3\subsetneq\KKK_2$ such that $\KKK_3$ induces an exact decomposition of $(f,P)$, and each component of $\KKK_3$ intersects or separates $P$. 

By iterating this process, we obtain a sequence of stable sets $\{\KKK_n\}$ with $ \MMM_f\subset\KKK_{n}\subsetneq\KKK_{n-1}$. This process must stop after a finite number of steps by Lemma \ref{lem:nest}. This completes the proof of Theorem \ref{thm:cluster-exact0}.
\end{proof}

The subsequent corollary of Theorem \ref{thm:cluster-exact0} will be used in Section 8.
\begin{corollary}\label{cor:desired}
	Let $(f, P)$ be a marked rational map with $J_f\neq\cbar$. Then there exist an $f$-invariant and finite set $P'\supset P$ and a stable set $\KKK'\subset J_f$ such that 
	\begin{enumerate}
	
\item  the stable set $\KKK'$ induces a cluster-exact decomposition of $(f,P')$, and each of its components intersects $P'$;
	
\item every complex-type component of $\cbar\setminus \KKK'$ rel $P'$ is disjoint from attracting cycles of $f$;
	
\item every simple-type component of $\cbar\setminus\KKK'$ rel $P'$ is a simply connected domain; and 
	
\item every annular-type component $A$ of $\cbar\setminus\KKK'$ rel $P'$ is an annulus, and moreover, if $A\cap f^{-1}(\KKK')\neq \emptyset$, then $A$ contains an annular-type component of $f^{-1}(\KKK')$. 
	\end{enumerate}
\end{corollary}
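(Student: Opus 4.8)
The plan is to obtain $\KKK'$ and $P'$ by refining the output of Theorem \ref{thm:cluster-exact0}: first run that theorem, then enlarge the marked set by finitely many eventually periodic points of $J_f$, and re-run the theorem. The mechanism that keeps everything under control is the reduction already exploited in the proof of Theorem \ref{thm:decomposition}: if $P'\supseteq P$ is finite and $f$-invariant with $P'\setminus P$ contained in the stable set we are working with, then which complementary domains are complex-type is unaffected and an exact (hence cluster-exact) decomposition survives; in particular, running Theorem \ref{thm:cluster-exact0} with the enlarged marked set still returns a stable set inside $J_f$ inducing a cluster-exact decomposition, and now each of its components must \emph{intersect} $P'$ once we have planted a marked point on every one of them.

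\textbf{Step 1.} Apply Theorem \ref{thm:cluster-exact0} to $(f,P)$ to get a stable set $\KKK_0$ with $\MMM_f\subseteq\KKK_0\subseteq J_f$ inducing a cluster-exact decomposition of $(f,P)$, each of whose components intersects or separates $P$.

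\textbf{Step 2.} Build $P'$ by adjoining finitely many repelling periodic points lying in $J_f$. For every component $K$ of $\KKK_0$ that merely separates $P$, note $K\subseteq J_f$ has empty interior and is a non-degenerate continuum, and by Theorem \ref{thm:renorm} (applied to the cycle of $K$) contains eventually periodic points of $f$; adjoin the full forward orbit of one such point, placing a marked point on $K$. Next, as in the normalization ``$\kappa(U)=\#{\rm Comp}(\partial U)$'' used for $\KKK_0$ in Theorem \ref{thm:decomposition}, mark further repelling periodic points on the boundary components of the complementary domains of $\KKK_0$, so that each complementary domain $U$ satisfies $\kappa(U)=\#{\rm Comp}(\partial U)$; by Lemma \ref{lem:pullback} the same persists under $f^{-1}$. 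Finally, to prepare item (2), mark one extra boundary point on each periodic Fatou domain that carries an attracting cyclic point; such a Fatou domain is simply connected, being a super-attracting basin component of a PCF map (Riemann--Hurwitz gives $\chi\ge1$), and its boundary lies in $\MMM_f\subseteq\KKK_0$. Let $P'$ be the resulting finite $f$-invariant set.

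\textbf{Step 3 and verification.} Apply Theorem \ref{thm:cluster-exact0} to $(f,P')$, feeding in $\KKK_0$ together with the finitely many preimage pieces adjoined in the $\KKK_1\mapsto\KKK_2$ reduction of that theorem's proof, and call the output $\KKK'\subseteq J_f$. Then $\KKK'$ is a stable set inducing a cluster-exact decomposition of $(f,P')$, every component of which meets $P'$ (every component of $\KKK_0$ does, and $\MMM_f$ for $(f,P')$ still lies in $\KKK'$): this is item (1). Items (3) and (4) follow from the $\kappa(U)=\#{\rm Comp}(\partial U)$ normalization — which forces each simple-type component to be a simply connected domain and each annular-type component to be an annulus — together with the parallel-boundary-component analysis behind Lemma \ref{lem:stable} and Proposition \ref{prop:exact}: a boundary component of an annular-type complementary domain $A$ that is parallel to a boundary component of the image of the relevant sub-system produces an annular-type component of $f^{-1}(\KKK')$ inside $A$. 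For item (2): every attracting cycle lies in a periodic Fatou domain $U$ which is simply connected and has $\partial U\subseteq\MMM_f\subseteq\KKK'$, so $U$ is itself a component of $\cbar\setminus\KKK'$; the extra boundary point marked in Step 2 is there to make $U$ simple-type, whence no complex-type component meets an attracting cycle.

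\textbf{Main obstacle.} The crux is the interplay of Steps 2 and 3: enlarging $P'$ to split off an attracting cycle into a simple-type piece, or to turn a would-be annulus into a genuine annulus, can promote other complementary domains to complex-type and thereby alter the sub-system $f\colon\VVV_1\to\VVV$, so one must check at each stage that the hypotheses of Theorem \ref{thm:cluster-exact0} and Definition \ref{def:exact-system} still hold — this is where the additivity Lemma \ref{lem:complexity} and the remark after Theorem \ref{thm:decomposition} do the real work. The most delicate case is a periodic Fatou domain that carries an attracting cycle and \emph{already} contains several post-critical points of $f$ (as happens for certain capture-type PCF maps): one must verify, via the choice of $P'$ and, if necessary, by adjoining further pieces of $J_f$ to $\KKK'$, that such a domain does not remain forced to be complex-type, and that none of the added marked points of $J_f$ disturb the conclusions of Theorem \ref{thm:cluster-exact0}. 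Handling this case cleanly — choosing all the extra marked points at once before invoking Theorem \ref{thm:cluster-exact0} (and again for the cluster renormalizations that arise) — is the step I expect to require the most care.
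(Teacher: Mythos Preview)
Your overall strategy—enlarge $P$ by finitely many eventually periodic points sitting on $\KKK_0$, so that $\kappa(U)=\#\mathrm{Comp}(\partial U)$ for every complementary domain, and deduce (1)–(3) plus the first half of (4)—is the right one, and it is essentially what the paper does. But the paper does it more directly: it keeps the \emph{same} stable set $\KKK$ from Theorem~\ref{thm:cluster-exact0}, adds an $f$-invariant finite set $Q_0\subset\KKK$ with at least two points on every component, and observes that the complex-type components of $\cbar\setminus\KKK$ rel $P$ and rel $P\cup Q_0$ coincide. There is no need to re-run Theorem~\ref{thm:cluster-exact0}. Your re-running creates exactly the difficulty you flag in your ``Main obstacle'': you then have to argue that the output of the second application is compatible with the first. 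The paper avoids this entirely.

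Two specific points. First, your worry about item~(2) is unfounded: for a PCF map every periodic Fatou domain $U$ is simply connected and contains exactly one point of any finite forward-invariant set (the Böttcher center), and since $\partial U\subset\MMM_f\subset\KKK$, the domain $U$ is itself a simple-type component of $\cbar\setminus\KKK$. No extra boundary marking is required, and the scenario ``$U$ already contains several post-critical points'' cannot occur.

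Second, and more seriously, your argument for the second half of (4) does not work. Lemma~\ref{lem:stable} and Proposition~\ref{prop:exact} concern the complex-type pieces $\VVV$, not the annular pieces $\AAA$; the ``parallel boundary component'' analysis gives no information about whether a component of $f^{-1}(\KKK')$ sitting inside an annulus $A$ is itself essential in $A$. It can perfectly well be a contractible continuum. The paper handles this by an explicit iteration: if $A$ contains a component $K_A$ of $f^{-1}(\KKK)$ but no annular-type one, adjoin $K_A$ to $\KKK$ (mark two points on it from $f^{-1}(Q_0)$) and repeat. One checks $\KKK\cup K_A$ is still a stable set inducing a cluster-exact decomposition, and the number of annular-type complementary components does not increase, so the process terminates. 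This iterative step is the missing idea in your sketch.
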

\begin{proof}
	Let $\KKK$ be the stable set obtained in Theorem \ref{thm:cluster-exact0}.
	Consider a finite and $f$-invariant set $Q_0\subset \KKK$ such that each component of $\KKK$ contains at least two points of $Q_0$. It is important to note that the complex-type components of $\cbar\setminus\KKK$ rel $P$ coincide with those rel $P\cup Q_0$.
Hence, items (1)--(3) and the former part of $(4)$ hold for the stable set $\KKK$ rel $P\cup Q_0$.
	 
	 If the latter part of item (4) is false for an annular-type component $A$ of $\cbar\setminus \KKK$ rel $P\cup Q_0$,  let $K_{A}$ be a component of $f^{-1}(\KKK)\cap A$.  We can select two points from $f^{-1}(Q_0)$ within $K_{A}$ and denote by $Q_1$ the union of these two points  with $Q_0$. Then the stable set $\KKK_1:=\KKK\cup K_{A}$ satisfies items (1)--(3) and the former part of $(4)$ rel $P\cup Q_1$.
	 Moreover, the number of annular-type components of $\cbar\setminus \KKK_1$ rel $P\cup Q_1$ is bounded above by that of $\cbar\setminus \KKK$ rel $P\cup Q_0$.

	  If  the latter part of item (4) is still false for $\KKK_1$ rel $P\cup Q_1$, we can repeat the argument above, replacing $\KKK$ and $Q_0$ with $\KKK_1$ and $Q_1$, respectively. Thus, we obtain a sequence of stable sets $\{\KKK_n\}$ and a sequence of  $f$-invariant finite sets $\{Q_n\}$ such that $\KKK_n$ satisfies items (1)--(3) and the former part of $(4)$ rel $P\cup Q_n$, and the number of annular-type  components of $\cbar\setminus \KKK_n$ rel $P\cup Q_n$ is strictly decreasing as $n$ increases.
	  Consequently, this process must stop after $N$ steps for an integer $N\geq0$. Then $\KKK'=\KKK_N$ and $P'=P\cup Q_N$ satisfy items (1)--(4).
\end{proof}

\section{Blow-up of an exact sub-system}\label{sec:5}
In this section, we will prove Theorem \ref{thm:blow-up} and complete the proof of Theorem \ref{thm:cluster-exact}.

Throughout this section, let $(f,P)$ be a marked rational map, and let $V\subset\cbar$ be a domain such that $\partial V\subset J_f$ consists of finitely many pairwise disjoint continua. We also assume that $f:V_1\to V$ is an exact sub-system of $(f,P)$, i.e.,  $V_1$ is a component of $f^{-1}(V)$ contained in $V$, and each component of $V\setminus V_1$ is a full continuum disjoint from $ P $.

For two topological spaces $X$ and $Y$, a {\bf homotopy} from $X$ to $Y$ is a continuous map $\xi:X\times [0,1]\to Y$. We usually write the homotopy as $\{\xi_t\}_{t\in[0,1]}$.

\subsection{Construction of the blow-up map}\label{sec:blow-up}
Let $\lambda$ be a component of $\partial V$. Since $V\setminus V_1$ is compact, we have $\lambda\subset\partial V\subset\partial V_1$.  Thus, $f(\lambda)$ is also a component of $\partial V$. Let $E_{\lambda}$ be the component of $\cbar\setminus V$ containing $\lambda$. If $E_{f(\lambda)}$ is disjoint from $ P $, then $f(E_{\lambda})=E_{f(\lambda)}$, and $E_{\lambda}$ is also disjoint from $ P $.

Let $\lambda$ be a periodic component of $\partial V$ with period $p\ge 1$. Since $f$ is  expanding in a neighborhood of $J_f$ under the orbifold metric, there exists an annulus $A\subset V\setminus  P $ such that $\lambda$ is a component of $\partial A$, and $\ov{A_1}\subset A\cup\lambda$, where $A_1$ is the component of $f^{-p}(A)$ with $\lambda\subset\partial A_1$. A folklore argument implies that $E_{\lambda}$ is locally connected and $E_{\lambda}\cap  P \neq\emptyset$.
Since each component $\lambda$ of $\partial V$ is eventually periodic, it follows that each component of $\cbar\sm V$ is locally connected.

\vskip 0.1cm
Now, we begin to construct the blow-up map. Let $\chi$ be a conformal map from $V$ onto a circular domain $\hat\Omega\subset\cbar$, i.e., each component of $\cbar\sm\hat\Omega$ is a closed round disk in $\C$. Let $\hat\Omega_1:=\chi(V_1)$. Then
$$
\hat{g}:=\chi\circ f\circ\chi^{-1}: \hat{\Omega}_1\to\hat\Omega
$$
is a holomorphic and proper map, which can be continuously extended to $\partial\hat\Omega$ such that $\hat{g}(\partial\hat\Omega)\subset\partial\hat\Omega$. By the symmetry principle and the expanding property of $f$, the map $\hat g$ is holomorphic and expanding in a neighborhood of $\partial\hat\Omega$.

Denote $\hat\DDD=\cbar\setminus\hat\Omega$. Define a map $\wp: \hat\DDD\to\hat\DDD$ by $\wp(\hat{D}_i)=\hat{D}_j$ if $\hat{g}(\partial \hat{D}_i)=\partial \hat{D}_j$, where $\hat{D}_i$ and $\hat{D}_j$ are components of $\hat{\DDD}$, and
$$
\wp(z)=r_j\left(\frac{z-a_i}{r_i}\right)^{d_i}+a_j\quad\text{ if }z\in \hat{D}_i,
$$
where $a_i$ and $r_i$ are the center and the radius of the closed round disk $\hat{D}_i$, respectively, and $d_i=\deg(\hat{g}|_{{\partial \hat{D}_i}})$. Since $\hat{g}$ is expanding on $\partial\hat{\Omega}=\partial\hat{\DDD}$, if $\partial \hat{D}_i$ is periodic with period $p_i\ge 1$, then there exists a quasi-symmetric map $w_i: \partial \hat{D}_i\to\partial \hat{D}_i$ such that $\wp^{p_i}\circ w_i=w_i\circ {\hat{g}}^{p_i}$ on $\partial \hat{D}_i$. By pullback, we obtain a quasi-symmetric map $w:\partial\hat{\Omega}\to\partial\hat{\Omega}$ such that
$$
\wp\circ w=w\circ {\hat{g}}\quad \text{ on }\partial\hat{\Omega}.
$$

Consider the conformal welding induced by $w$. There exist two conformal maps $\zeta: \hat{\Omega}\to\tilde{\Omega}\subset \cbar\textup{ and }\eta: {\rm int}(\hat{\DDD})\to {\rm int}(\tilde{\DDD})$ such that $\zeta=\eta\circ w$ on $\partial\hat{\Omega}$, where $\tilde{\DDD}:=\cbar\setminus\tilde{\Omega}$, and the notation ${\rm int}(\cdot)$ represents the interior of the corresponding set. Define
$$
\tilde{g}_0:=\begin{cases}
	\zeta\circ\hat{g}\circ\zeta^{-1} & \text{ on } \zeta(\hat{\Omega}_1)\subset \tilde{\Omega}, \\
	\eta\circ\wp\circ\eta^{-1}&\text{ on }\eta(\hat{\DDD})=\tilde{\DDD}.
\end{cases}
$$
Then $\tilde{g}_0$ is a holomorphic map on $\zeta(\hat{\Omega}_1)\cup\eta(\hat{\DDD})$. Set $\xi_0:=\chi^{-1}\circ\zeta^{-1}: \tilde{\Omega}\to V$, and continuously extend it to a quotient map (defined in \ref{app:2}) of $\cbar$, due to the local connectivity of $\partial V$. Then
$$
\xi_0\circ\tilde{g}_0=f\circ\xi_0\quad\text{ on }\tilde{\Omega}_1^*:=\zeta(\hat{\Omega}_1).
$$

For each $n\geq1$, set $V_n:=(f|_{V_1})^{-1}(V)$. Then $f:V_{n+1}\to V_n$ is an exact sub-system for each $n\geq1$.
By replacing $V$ with some $V_n$, we may assume that $V\sm V_1$ is disjoint from $f^{-1}( P )$. This means that $f$ sends a neighborhood of each component of $V\sm V_1$ homeomorphically onto a neighborhood of a complementary component of $V$.

For each component of $\tilde{\Omega}\sm\tilde{\Omega}_1^*$, we pick a small disk in $\tilde{\Omega}\sm\xi_0^{-1}( P \cap V)$ as a neighborhood of this component, such that these disks have pairwise disjoint closures. Let $\NNN$ denote their union. Then $\tilde{g}_0$ is injective on $\partial{\NNN}$.  Define a new map $\tilde{g}:\cbar\to\cbar$ such that $\tilde{g}$ is continuous and injective on $\NNN$, and $ \tilde{g}(z)=\tilde{g}_0(z)$ for all  $z\in\cbar\sm\NNN$. 

It is easy to verify that $ \tilde{g}$ is a   PCF  branched covering with ${\rm deg}(\tilde{g})=\deg f|_{ V_1}$ and it is holomorphic on $\cbar\sm \ov{\NNN}$. Note that the interior of each component $\tilde{D}$ of $\cbar\sm\tilde{\Omega}$ contains a unique eventually periodic point $z(\tilde{D})$ of $ \tilde{g}$. Set 
\[\tilde{Z}=\{z(\tilde{D}): \xi_0(\tilde{D})\cap  P \neq \emptyset\}\quad \text{ and }\quad \tilde{Q}=\xi_0^{-1}( P \cap V)\cup \tilde{Z}.\]
It follows that $\tilde{g}(\tilde{Z})\subset \tilde{Z}$, $\tilde{g}(\tilde{Q})\subset \tilde{Q}$, and $P_{\tilde{g}}\subset \tilde{Q}$.

\begin{figure}[http]
\centering
\begin{tikzpicture}
	\node at (0,0){\includegraphics[width=13cm]{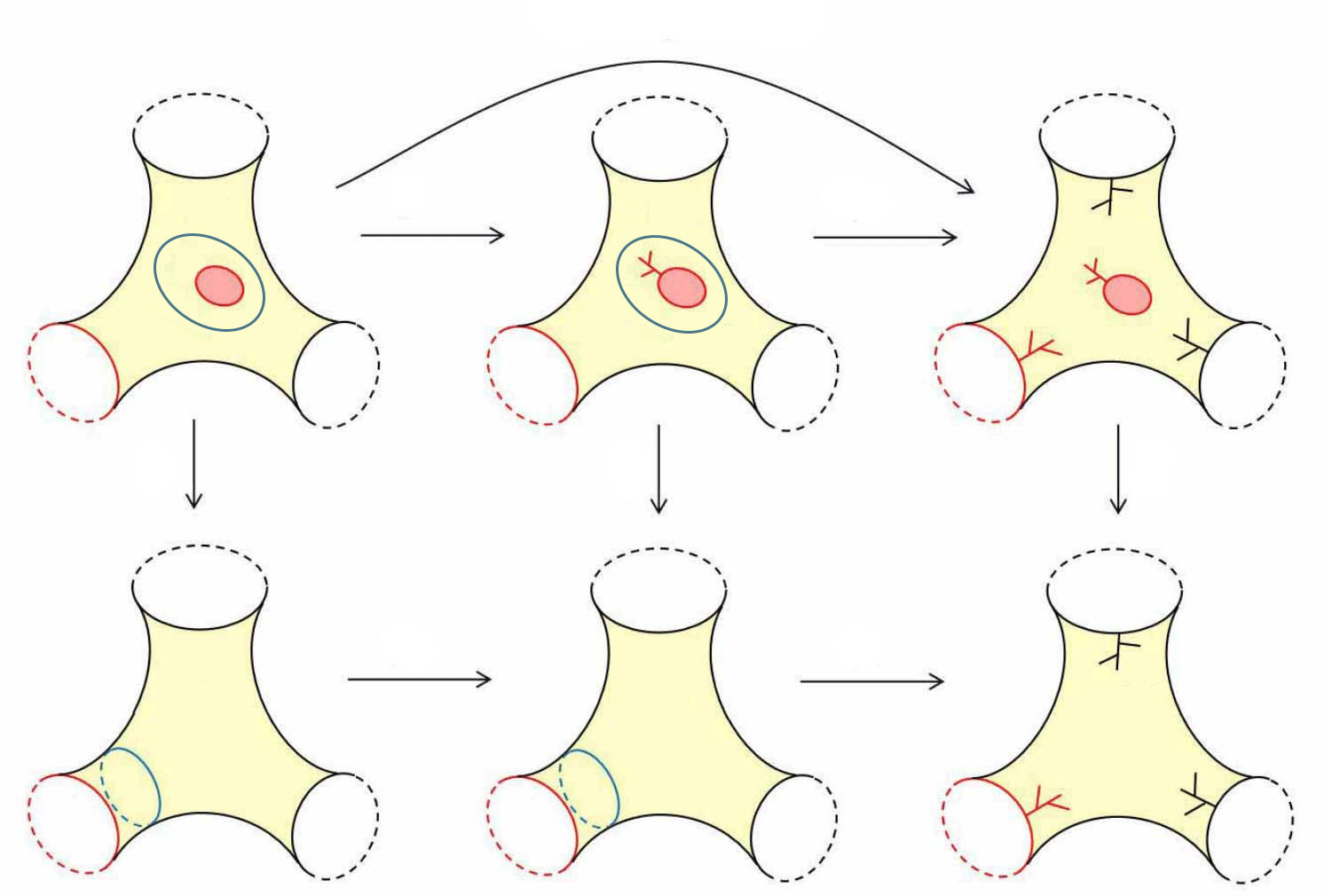}};
	\node at (-2.5,2.45){$\theta$};
	\node at (2, 2.45){$\xi_0$};
	\node at (-2.5,-2){$\textup{id}$};
	\node at (2,-2){$\xi_0$};
	\node at (0, 4.25){$\xi_1=\xi_0\circ\theta$};
	\node at (4.8, -0.2){$f$};
	\node at (-0.3, -0.2){$\tilde{g}_0$};
	\node at (-5,-0.2){$\tilde{g}$};
	\node at (4.75, 2.1){$V_1$};
	\node at (4.5, -2.5){$V$};
	\node at (0.1,2.35){$\tilde{\Omega}_1^*$};
	\node at (0.1, -2.5){$\tilde\Omega$};
	\node at (-4.5,2.3){$\tilde{\Omega}_1$};
	\node at (-4.5, -2.5){$\tilde{\Omega}$};
	\node at (0.1,1.7){$\mathcal{N}$};
	\node at (0.15,-3.25){$\tilde{g}_0(\partial\mathcal{N})$};
\end{tikzpicture}
	\caption{The construction of $\tilde{g}_0$, $\tilde{g}$, $\xi_0$, and $\xi_1$.}\label{fig:xi}
\end{figure}

Denote $\tilde{\Omega}_1=\tilde{g}^{-1}(\tilde{\Omega})$. Then $\tilde{\Omega}\sm \tilde{\Omega}_1$ consists of pairwise disjoint  closed disks in $\NNN$. Moreover, by lifting, there exists a homeomorphism $\theta:\tilde{\Omega}_1\to\tilde{\Omega}_1^*$ such that $\theta=id$ on $\tilde{\Omega}\sm\NNN$ and $\tilde{g}=\tilde{g}_0\circ\theta$ on $\tilde{\Omega}_1$; see Figure \ref{fig:xi}. Since each component of $\partial\tilde{\Omega}_1$ is a Jordan curve and $ \tilde{g}$ is injective on $\partial\tilde{\Omega}_1\sm\partial\tilde{\Omega}$, we can continuously extend $\theta$ to a quotient map of $\cbar$. This extended map, still denoted by $\theta$, sends $\tilde{\Omega}\sm\tilde{\Omega}_1$ onto $\tilde{\Omega}\sm\tilde{\Omega}_1^*$.

Define $\xi_1:=\xi_0\circ\theta$. Then $\xi_1$ is a quotient map of $\cbar$ such that $\xi_1(\tilde{\Omega}_1)=V_1$, $\xi_1=\xi_0$ on $\cbar\sm\NNN$, and
$$
\xi_0\circ \tilde{g}=f\circ\xi_1\quad\text{ on }\tilde{\Omega}_1.
$$
Moreover, there exists a homotopy $\xi_t:\cbar\to\cbar$, $t\in[0,1]$, such that $\xi_t$ is a quotient map of $\cbar$ and $\xi_t(z)=\xi_0(z)$ for all $z\in\cbar\sm\NNN$ and $t\in [0,1]$. In particular, $\xi_t(\tilde{Q}\cap \tilde{\Omega})= P \cap V$.

Since $ \tilde{g}: \tilde{\Omega}_1\setminus  \tilde{g}^{-1}(\tilde{Q})\rightarrow \tilde{\Omega}\setminus \tilde{Q}$ and $f: V_{1}\setminus f^{-1}( P )\rightarrow V\setminus  P $ are both coverings, and
 $$\{\xi_t^{-1}(z):t\in[0,1]\}$$ 
 is a singleton in $\tilde{Q}\cap \tilde{\Omega}$ for every $z\in  P \cap V$, the homotopy $\xi_t:\tilde{\Omega}\setminus \tilde{Q}\to V\setminus  P $ can be lifted by $f$ and $ \tilde{g}$ to a homotopy $\xi_t:\tilde{\Omega}_1\setminus  \tilde{g}^{-1}(\tilde{Q})\to V_1\setminus f^{-1}( P )$, $t\in [1,2]$, by the general homotopy lifting theorem; see \cite[Proposition 1.30]{Ha}. Furthermore, this homotopy can be extended to a homotopy
$\xi_t: \cbar\to\cbar$, $t\in[1,2],$ such that  each $\xi_t$ is a quotient map and $\xi_t(z)=\xi_1(z)$ on $\ov\C\setminus  \tilde{g}^{-1}(\NNN)$ for every $t\in[1,2]$.

Inductively applying the above argument, we obtain a sequence of quotient maps $\{\xi_n\}$ of $\cbar$ such that $\xi_n(\tilde{\Omega}_n)=V_n$, $\xi_{n+1}=\xi_{n}$ on $\cbar\sm  \tilde{g}^{-n}(\NNN)$, and
$$
\xi_n\circ \tilde{g}=f\circ\xi_{n+1}\quad\text{ on }\tilde{\Omega}_{n+1},
$$
where $\tilde{\Omega}_n= \tilde{g}^{-n}(\tilde{\Omega})$ and $V_n=(f|_{V_1})^{-n}(V)$.

\begin{proposition}\label{prop:realization}
	The marked branched covering $( \tilde{g}, \tilde{Q})$ is combinatorially equivalent to a marked rational map $(g, Q)$.
\end{proposition}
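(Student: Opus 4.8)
The plan is to invoke Thurston's characterization of rational maps \cite{DH1} (see also \cite{Mc3}): a post-critically finite branched covering with hyperbolic orbifold is combinatorially equivalent to a rational map, uniquely up to conformal conjugacy, precisely when it carries no Thurston obstruction. The construction already supplies that $\tilde{g}$ is a PCF branched covering with $P_{\tilde{g}}\subset\tilde{Q}$, $\tilde{g}(\tilde{Q})\subset\tilde{Q}$ and $\deg\tilde{g}=\deg f|_{V_1}\ge 2$, so $(\tilde{g},\tilde{Q})$ is a marked branched covering, and it suffices to check that (i) the orbifold $\OOO_{(\tilde{g},\tilde{Q})}$ is hyperbolic and (ii) $(\tilde{g},\tilde{Q})$ carries no Thurston obstruction. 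Granting these, Thurston's theorem produces a rational map $g$ and homeomorphisms $(\psi_0,\psi_1)$ of $\cbar$, isotopic rel $\tilde{Q}$, with $g=\psi_0\circ\tilde{g}\circ\psi_1^{-1}$; setting $Q:=\psi_0(\tilde{Q})$, the inclusions $\tilde{g}(\tilde{Q})\subset\tilde{Q}$ and $P_{\tilde{g}}\subset\tilde{Q}$ transport to $g(Q)\subset Q$ and $P_g\subset Q$, so $(g,Q)$ is a marked rational map combinatorially equivalent to $(\tilde{g},\tilde{Q})$. Its uniqueness up to conformal conjugacy, needed for the remaining part of Theorem \ref{thm:blow-up}, is then Thurston rigidity.

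For (i), the argument is routine. Since every component of $\cbar\setminus V$ is eventually periodic, so is every component $\tilde{D}$ of $\tilde{\DDD}$; fixing a periodic one, $\tilde{g}$ is holomorphic and expanding near $\partial\tilde{D}$ and the corresponding iterate maps $\tilde{D}$ into itself with super-attracting centre $z(\tilde{D})$. Hence $\tilde{g}$ has a periodic component of its ``Fatou part'' with non-empty interior, so it is not combinatorially equivalent to a flexible Latt\`es map; this excludes the signature $(2,2,2,2)$, and the remaining parabolic signatures are excluded using in addition that $\#\tilde{Q}\ge 3$ (as $V$ is complex-type, $b(V)\ge 3$, which forces at least three points among $\xi_0^{-1}(P\cap V)$ and the centres of $P$-relevant disks). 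Even if a parabolic signature did occur, $\tilde{g}$ would be forced to be (combinatorially) one of the corresponding rigid rational models, so the conclusion of the proposition would still hold; thus we may freely assume $\OOO_{(\tilde{g},\tilde{Q})}$ is hyperbolic.

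For (ii), suppose for contradiction that $\Gamma$ is a Thurston obstruction for $(\tilde{g},\tilde{Q})$. Because each component $\tilde{D}$ of $\tilde{\DDD}$ contains at most one point of $\tilde{Q}$, no essential curve of $\cbar\setminus\tilde{Q}$ is isotopic into a $\tilde{D}$; after an isotopy rel $\tilde{Q}$, and shrinking the disks $\NNN$ if necessary, we may assume $\Gamma\subset\tilde{\Omega}$ and $\Gamma\cap\NNN=\emptyset$. The restriction $\xi_0\colon\tilde{\Omega}\to V$ is a homeomorphism carrying $\tilde{Q}\cap\tilde{\Omega}$ onto $P\cap V$ and the complementary disks of $\tilde{\Omega}$ onto the complementary continua of $V$, and the inclusion $\tilde{\Omega}\setminus\tilde{Q}\hookrightarrow\cbar\setminus\tilde{Q}$ faithfully records isotopy classes of essential curves; hence $\Gamma':=\xi_0(\Gamma)$ is a multicurve in $\cbar\setminus P$ all of whose curves are essential there. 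To transport stability and the transition matrix, use that $f^{-1}(V)\cap V=V_1$, so every preimage component of $\Gamma'$ lying in $V$ lies in $V_1$, and that $f\circ\xi_1=\xi_0\circ\tilde{g}$ on $\tilde{\Omega}_1$ with $\xi_1\colon\tilde{\Omega}_1\to V_1$ a homeomorphism agreeing with $\xi_0$ off $\NNN$: these components are exactly the $\xi_0$-images of the components of $\tilde{g}^{-1}(\Gamma)$, with matching local degrees. Allowing for the extra preimage components of $\Gamma'$ supported outside $V$ (which only enlarge the relevant counts) and completing $\Gamma'$ to an $f$-stable multicurve in the standard way, one obtains a Thurston transition matrix that dominates that of $\Gamma$ entrywise, hence has leading eigenvalue $\ge 1$. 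This yields a Thurston obstruction for the rational map $f$, which is impossible: a rational map with $J_f\neq\cbar$ has hyperbolic orbifold and is not Latt\`es, so carries no obstruction. The contradiction proves (ii).

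I expect the main obstacle to be exactly the transport of the obstruction through the quotient map $\xi_0$ in the last step: the delicate points are confining the relevant curve isotopies to $\tilde{\Omega}$ (so that essentialness and isotopy type are preserved under $\xi_0$), reconciling $\xi_0$ with $\xi_1$ on the small disks $\NNN$, and controlling the preimage components of $\Gamma'$ created by components of $f^{-1}(V)$ other than $V_1$ when verifying stability and comparing transition matrices. Everything else is a routine application of Thurston's theorem.
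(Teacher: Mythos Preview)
Your proposal is correct and follows essentially the same route as the paper: push any candidate multicurve $\Gamma$ into $\tilde{\Omega}\setminus\NNN$ (possible since each complementary disk of $\tilde{\Omega}$ contains at most one point of $\tilde{Q}$), transport it via $\xi_0$ to a multicurve $\xi_0(\Gamma)$ of $(f,P)$, and observe that the transition matrix of $\xi_0(\Gamma)$ under $f$ dominates that of $\Gamma$ under $\tilde{g}$ entrywise, contradicting that the rational map $f$ has no Thurston obstruction.

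The only notable difference is packaging. The paper cites \cite[Theorem~2.1 and Theorem~2.3]{BCT}, which handle the \emph{marked} version of Thurston's theorem (allowing $\tilde{Q}\supsetneq P_{\tilde{g}}$) and the statement that marked rational maps carry no Thurston obstruction, thereby bypassing your orbifold-hyperbolicity discussion (i) entirely. Your detour through \cite{DH1} and the parabolic-signature case analysis is correct in spirit but unnecessary once one has the marked formulation; also note that the fact ``$f$ has no Thurston obstruction'' follows from $f$ being rational and does not require $J_f\neq\cbar$, so that hypothesis in your last paragraph is superfluous. Your caution about reconciling $\xi_0$ with $\xi_1$ on $\NNN$ is well placed: since $\xi_1=\xi_0$ off $\NNN$ and the curves can be isotoped to avoid $\NNN$, one has $\xi_0\circ\tilde{g}=f\circ\xi_0$ on the relevant preimages, which is exactly how the paper phrases it.
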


\begin{proof}
	Let $\Gamma=\{{\g_k}\}$ be a multicurve of $( \tilde{g},\tilde{Q})$. Its transition matrix $(a_{kl})$ is defined by
	$$
	a_{kl}=\sum\frac 1{\deg \tilde{g}:\delta\to{\g}_l},
	$$
	where the summation is taken over all components $\delta$ of $ \tilde{g}^{-1}({\g}_l)$ isotopic to ${\g}_k$ rel $ \tilde{Q}$.
	
	Since each component of $\cbar\setminus\tilde{\Omega}$ contains at most one point of $ \tilde{Q}$, we may require that each curve in $\G$ is contained in $\tilde{\Omega}\sm\NNN$. Thus
	$\xi_0\circ \tilde{g}=f\circ\xi_0$ on $ \tilde{g}^{-1}({\g_k})$ for each ${\g}_k\in{\G}$. By the choice of $ \tilde{Q}$, the collection of curves $\xi_0(\G)=\{\xi_0({\g}_k)\}$ forms a multicurve of the rational map $f$. Moreover, each entry of the transition matrix of $\xi_0(\G)$ under $f$ is greater than or equal to the corresponding entry of the transition matrix of $\Gamma$ under $( \tilde{g}, \tilde{Q})$.  Then $( \tilde{g},  \tilde{Q})$ has no Thurston obstruction since $f$ has no Thurston obstruction by \cite[Theorem 2.3]{BCT}. Therefore, $( \tilde{g},  \tilde{Q})$ is combinatorially equivalent to a marked rational map $(g,Q)$
	by \cite[Theorem 2.1]{BCT}.
\end{proof}

\subsection{Dynamics of the blow-up map}
According to Proposition \ref{prop:realization}, there exists an isotopy $\phi_t:\cbar\to\cbar$ rel $ \tilde{Q}$, $t\in[0,1]$ such that $\phi_0( \tilde{Q})=Q$ and $g\circ \phi_1=\phi_0\circ \tilde{g}$ on $\ov{\C}$. Recall that $\tilde{Z}= \tilde{Q}\setminus \tilde{\Omega}$ and set $Z=\phi_0(\tilde{Z})$.

\begin{proposition}\label{prop:disk}
	Each Fatou domain of $g$ with the center in $Z$ is a disk whose boundary is disjoint from $Q$, and any two such Fatou domains  have disjoint closures. In particular, $g$ is a \Sie\ rational map if its attracting periodic points are all contained in $Z$.
\end{proposition}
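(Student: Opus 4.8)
The plan is to read off the Fatou domains of $g$ in question from the complementary disks of $\tilde\Omega$ in the dynamical plane of $\tilde g$ --- where $\tilde g$ is already holomorphic and in power-map normal form --- and to push this picture through the realization of Proposition \ref{prop:realization} by the same pullback-and-convergence mechanism used elsewhere in the paper (Lemmas \ref{lem:lift} and \ref{thm:isotopy}).

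\emph{Step 1.} I would first analyse the complementary disks $\tilde D=\eta(\hat D_i)$ of $\tilde\Omega$. Since every component of $V\setminus V_1$ is compactly contained in $V$, the set $\NNN$ may be chosen disjoint from $\partial\tilde\Omega$ and from $\tilde Q$, so $\tilde g=\tilde g_0$ is holomorphic on a neighbourhood of $\tilde\DDD=\cbar\setminus\tilde\Omega$ and, in the coordinate $\eta^{-1}$, equals $\wp$ on each $\tilde D$. Composing $\wp$ around a periodic cycle $\hat D_{i_0}\to\cdots\to\hat D_{i_{p-1}}\to\hat D_{i_0}$ gives, in $\zeta=(z-a_{i_0})/r_{i_0}$, exactly $\zeta\mapsto\zeta^{D}$ with $D=d_{i_0}\cdots d_{i_{p-1}}$. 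If $D\ge 2$, then $z(\tilde D_0)$ is a superattracting periodic point of $\tilde g$, $\tilde D_0$ is a closed round disk in the B\"ottcher coordinate of $\tilde g^{p}$, $\partial\tilde D_0$ is a quasicircle, and $\tilde D_0\cap\tilde Q=\emptyset$ (the only possible point of $\tilde Q$ inside a complementary disk is its centre, and $\tilde Q\cap\tilde\Omega=\xi_0^{-1}(P\cap V)$ is a finite subset of the open set $\tilde\Omega$). If $D=1$, the centre is not critical for $\tilde g$, hence not critical for $g$, hence repelling since $g$ is PCF; such a disk carries no Fatou domain of $g$ and is irrelevant. I would also record that each $\tilde\Omega_n=\tilde g^{-n}(\tilde\Omega)$ is a domain obtained from $\tilde\Omega$ by deleting finitely many disjoint closed disks inside $\tilde g^{-(n-1)}(\NNN)$ (which stays off $\partial\tilde\Omega$), so $\tilde E:=\bigcap_n\overline{\tilde\Omega_n}$ is a continuum with $\partial\tilde\Omega\subset\tilde E$ and $\tilde g^{-1}(\tilde E)=\tilde E$, and that $\cbar\setminus\tilde E$ is a countable union of Jordan domains whose closures are pairwise disjoint --- this last point coming from the expansion of $\tilde g$ along $\partial\tilde\Omega$ and the shrinking lemma (Lemma \ref{lem:expanding}), just as in the paper's other convergence arguments.

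\emph{Step 2.} Lifting the isotopy $\phi_t$ of Proposition \ref{prop:realization} inductively (Lemma \ref{lem:lift}) yields homeomorphisms $\phi_n$ with $g\circ\phi_n=\phi_{n-1}\circ\tilde g$, all isotopic rel $\tilde Q$; by the expansion of $\tilde g$ near $\partial\tilde\Omega$ and Lemma \ref{thm:isotopy}, $\{\phi_n\}$ converges uniformly to a continuous onto map $\Psi$ with $g\circ\Psi=\Psi\circ\tilde g$. Because $\tilde g$ is already holomorphic and in $z\mapsto z^{D}$ form on a neighbourhood of each periodic $\tilde D_0$ (with $D\ge 2$), I expect $\Psi$ to be a homeomorphism there, carrying $\tilde D_0$ onto the immediate superattracting basin $U_0$ of $g^{p}$ at $w_0:=\phi_0(z(\tilde D_0))\in Z$ and $\partial\tilde D_0$ onto $\partial U_0$; since $\partial\tilde D_0\cap\tilde Q=\emptyset$ and $Q=\Psi(\tilde Q)$, this makes $U_0$ a Jordan domain with $\partial U_0\cap Q=\emptyset$. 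For a non-periodic Fatou domain $U$ of $g$ with centre in $Z$, some iterate $g^{k}(U)$ is one of these, and both properties propagate backward under $g^{-1}$: $\partial U\cap Q=\emptyset$ forces $\partial U$ to miss the critical values of $g$, so each component of $g^{-1}(U)$ is again a Jordan domain with boundary contained in $g^{-1}(\partial U)\subset\cbar\setminus g^{-1}(Q)\subset\cbar\setminus Q$. Pairwise disjointness of the closures then follows, for the periodic ones, from the injectivity of $\Psi$ on the disks $\tilde g^{-n}(\tilde D)$ together with the disjoint-closures statement of Step 1, and for the backward orbit from $g^{-1}(\overline U)\cap g^{-1}(\overline{U'})=g^{-1}(\overline U\cap\overline{U'})=\emptyset$.

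\emph{Step 3 and the obstacle.} For the final assertion, assume every attracting periodic point of $g$ lies in $Z$. A superattracting cycle of $g$ corresponds through the $\phi_n$ to a superattracting cycle of $\tilde g$; if it were not a cycle of centres $z(\tilde D_0)$ it would lie in $\tilde\Omega$ and hence, after $\phi_0$, outside $Z$, contradicting the hypothesis. So every superattracting cycle of $g$ is centred in $Z$; as $g$ is PCF, every Fatou domain is eventually periodic and every periodic Fatou domain is an immediate superattracting basin, so every Fatou domain of $g$ has its centre in $Z$ and is therefore, by Steps 1--2, a Jordan domain, these domains having pairwise disjoint closures. In particular each is simply connected, so $J_g$ is connected; being PCF, $g$ has locally connected Julia set; and since its Fatou set is non-empty, $J_g$ has empty interior. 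A connected, locally connected continuum with empty interior whose complementary domains are Jordan domains with pairwise disjoint closures is a \Sie\ carpet, so $g$ is a \Sie\ rational map. The crux --- and the only genuinely non-routine step --- is the transport in Step 2: upgrading the purely combinatorial equivalence of Proposition \ref{prop:realization} to a limit $\Psi$ that is honestly a homeomorphism near each complementary disk and matches the power-map behaviour of $\tilde g$ on $\tilde D_0$ with the genuine superattracting dynamics of $g$ on $U_0$. This is exactly where the expansion of $\tilde g$ near $\partial\tilde\Omega$, the convergence lemma of Appendix \ref{app:2}, and the care that the deformation supports $\tilde g^{-n}(\NNN)$ shrink and stay off the complementary disks all come into play.
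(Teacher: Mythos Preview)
Your approach has a genuine circularity at the step you yourself flag as the crux. You want the limit $\Psi=\lim\phi_n$ to be a \emph{homeomorphism} carrying $\tilde D_0$ onto the closure $\overline{U_0}$ of the immediate basin. But Lemma~\ref{thm:isotopy} only produces a \emph{quotient} map. Arranging $\phi_0$ holomorphic on $\tilde D_0$ and the isotopy constant there makes $\Psi$ conformal on $\mathrm{int}(\tilde D_0)$, but its image is then some round B\"ottcher sub-disk $W_0\subsetneq U_0$, with $\Psi(\partial\tilde D_0)=\partial W_0$ an invariant equipotential circle inside the Fatou set, not $\partial U_0$. You have no mechanism to show $\Psi(\tilde D_0)=\overline{U_0}$, nor that $\Psi$ is injective on $\partial\tilde\Omega$ --- and injectivity of $\Psi$ on the boundary circles, together with their images lying in $J_g$, \emph{is} the statement that the $U_0$ are Jordan domains with pairwise disjoint closures. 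So Step~2 assumes exactly what you are asked to prove. (Indeed, in Section~5.3 the paper constructs the analogous semi-conjugacy $\pi$ only \emph{after} Proposition~\ref{prop:disk}, and explicitly invokes it to identify the components of $\DDD$ with Fatou domains.)

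The paper sidesteps this by never building or analysing $\Psi$. Instead it proves an isotopy-invariant combinatorial criterion (Lemma~\ref{lem:criterion}): a marked repelling point lies on $\partial U$, two basins touch, or $U$ fails to be a disk, if and only if a certain open arc $\beta$ in $\cbar\setminus P_g$ has a pullback component isotopic to itself. These criteria depend only on the Thurston class, so one may check them for $(\tilde g,\tilde Q)$. The paper then pushes the hypothetical arc $\beta$ to the $f$-plane via $\xi_0$ (not to the $g$-plane), decomposes it as a piece in $\cbar\setminus\tilde\Omega$ and a piece $\delta$ in $\tilde\Omega$, and applies the shrinking lemma to $f^{-kp}(\xi_0(\delta))$: since one endpoint of each lift lies on $\partial V$ while the diameter goes to zero, the other endpoint $\xi_0(b)$ is forced onto $\partial V$, contradicting $b\in\tilde\Omega$. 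This argument uses only the already-available map $\xi_0$ and the expansion of $f$, and avoids entirely the question of whether any limit map is injective.
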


To prove this proposition, we need a combinatorial criterion to determine whether the boundary of a Fatou domain contains marked points, whether it is a Jordan curve, and whether two Fatou domains have disjoint closures.

\begin{lemma}\label{lem:criterion}
	Let $R$ be a   PCF  rational map, and let $U$ be a periodic Fatou domain of $R$ with center $a$.
	\begin{enumerate}
\item  A repelling periodic point $b$ lies in $\partial U$ if and only if there exists an open arc $\beta\subset\cbar\sm P_{R}$ joining $a$ and $b$, such that $R^{-p}(\beta)$ has a component isotopic to $\beta$ rel $P_{R}$ for some  $p\ge 1$.
\item  Let $U'\subset\cbar$ be another periodic Fatou domain of $R$ with center $a'$. Then $\partial U\cap\partial U'\neq\emptyset$ if and only if there exists an open arc $\beta\subset\cbar\sm P_{R}$ joining $a$ and $a'$, such that $R^{-p}(\beta)$ has a component isotopic to $\beta$ rel $P_{R}$ for some integer $p\ge 1$.
\item  Assume that $\partial U\cap P_{R}=\emptyset$. Then $U$ is not a disk if and only if there exists an open arc $\beta\subset\cbar\sm P_{R}$ that joins $a$ to itself, such that $\ov{\beta}$ separates $P_{R}$, and $R^{-p}(\beta)$ has a component isotopic to $\beta$ rel $P_{R}$ for some integer $p\ge 1$.
	\end{enumerate}
\end{lemma}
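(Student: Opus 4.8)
The plan is to prove the three equivalences by one mechanism: a geometric incidence on $\partial U$ is translated into a self-reproducing arc under pullback, and conversely such an arc is promoted to a genuine internal ray by a pullback limit. Throughout I pass to a suitable iterate of $R$ — harmless since $R$ is PCF and all objects in sight are eventually periodic — so that $U$ (and $U'$ in (2)) is fixed, the repelling periodic point under consideration is fixed, and $p=1$. I use the standing facts that for PCF maps every cycle in $J_R$ is repelling and $R$ is expanding near $J_R$ in the orbifold metric (Appendix \ref{app:1}), together with the classical landing theorem (valid since there are no parabolic cycles): a repelling periodic point lies on $\partial U$ if and only if it is the landing point of a necessarily periodic internal ray of $U$, and periodic internal rays land at repelling periodic points.

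For the forward implications in (1) and (2): if $b\in\partial U$, take a periodic internal ray $R_U(\theta)$ landing at $b$ and let $\beta$ be this ray, regarded as an open arc from the centre $a$ of $U$ to $b$; with $d=\deg(R|_U)$ the relation $d\theta\equiv\theta$ forces $R_U(\theta)$ itself to be one of the components of $R^{-1}(\beta)$, so $\delta=\beta$ works trivially. Statement (2) is identical with $\beta=R_U(\theta)\cup R_{U'}(\theta')$ a pair of periodic rays with a common landing point. For (3): if $U$ is not a disk then $\partial U$ is not a Jordan curve, so it has a cut point, which is eventually periodic by Theorem \ref{thm:eventually}; after iterating we get a fixed repelling point $w\in\partial U$ receiving two distinct periodic internal rays $R_U(\theta_1),R_U(\theta_2)$ of $U$. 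Their union $\beta$ is an open arc joining $a$ to itself whose closure is a Jordan curve through $a$, and it separates $P_R$: since $\partial U\cap P_R=\emptyset$, the point $a$ is the only post-critical point in $\overline U$, while the two components of $\cbar\setminus\overline U$ abutting $w$ on the two sides of $\beta$ are periodic and hence cannot shrink under iteration, so by the shrinking lemma (Lemma \ref{lem:expanding}) each meets $P_R$. Finally $R^{-1}(\beta)$ contains the ray-pair of suitable $d$-th preimage angles, which is isotopic to $\beta$ rel $P_R$.

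For the converse implications, suppose $\beta_0:=\beta$ and an isotopic component $\delta=\beta_1$ of $R^{-1}(\beta_0)$ with the same endpoints are given, realised by an isotopy $h_t$ rel $P_R$. Lifting $h_t$ through $R$ repeatedly with the isotopy lifting lemma (Lemma \ref{lem:lift}) and keeping the supports away from the attracting cycles exactly as in Section \ref{sec:2}, we obtain homeomorphisms $\psi_n$, each a composition of maps isotopic to the identity rel $P_R$ and fixing a neighbourhood of $a$, with $\beta_n:=\psi_n(\beta_0)$ a component of $R^{-1}(\beta_{n-1})$ isotopic to $\beta_0$ rel $P_R$ and $R(\beta_{n+1})=\beta_n$; since the relevant far endpoint is a fixed repelling point, the canonical inverse branch there lets all the endpoints be kept fixed. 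By the convergence result for lifted isotopies (Lemma \ref{thm:isotopy}), $\psi_n$ converges uniformly to a quotient map $\varphi$ of $\cbar$, and $\beta_\infty:=\varphi(\overline{\beta_0})$ is a connected $R$-invariant set ($R(\beta_\infty)\subseteq\beta_\infty$) that starts at $a$ along an internal ray of $U$ and, by the shrinking lemma, accumulates at $b$ through $\overline U$; hence $\beta_\infty\cap U$ contains an internal ray of $U$ landing at $b$, so $b\in\partial U$, which is (1). In case (2) the same argument yields internal rays of $U$ and $U'$ with a common landing point, so $\partial U\cap\partial U'\neq\emptyset$. In case (3), $\beta_\infty$ contains a Jordan curve through $a$ that is a union of two internal rays of $U$ landing at a common repelling periodic point, and since $\varphi$ fixes $P_R$ and $\overline{\beta_0}$ separates $P_R$, so does this curve; in particular the two rays are distinct, $\partial U$ has a cut point, and $U$ is not a disk.

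I expect the main obstacle to be the last step of the converse: showing that the abstract limit $\beta_\infty$ meets $\partial U$ in the prescribed way. This requires (i) controlling $\beta_\infty$ near the superattracting centre $a$ via the Böttcher coordinate, so that $\beta_\infty\cap U$ is a single internal ray rather than a fatter invariant set — here one uses that all $\beta_n$ are isotopic rel $P_R$ to pin down the angle at the $a$-end — and (ii) controlling $\beta_\infty$ near the repelling periodic endpoint, combining the expansion of $R$ there with the shrinking lemma and the homotopic-diameter bounds of Appendix \ref{app:1} to ensure the limit accumulates at $b$ through $\overline U$ rather than escaping into $J_R$ or degenerating. Tracking the endpoints of the $\beta_n$ when $b\notin P_R$ is the delicate bookkeeping point, handled via the canonical inverse branch at the fixed repelling endpoint.
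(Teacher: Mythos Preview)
Your converse direction is considerably heavier than needed, and the obstacles you flag are genuine. The paper sidesteps the entire quotient-limit construction with a one-line decomposition: write $\beta=\alpha\cup\delta$ (in (2), $\beta=\alpha\cup\delta\cup\alpha'$) where $\alpha$ is the sub-arc near $a$ lying inside $U$ (and $\alpha'\subset U'$) and $\overline\delta$ is disjoint from the super-attracting cycles. The lifts split accordingly as $\beta_k=\alpha_k\cup\delta_k$; the piece $\alpha_k$ stays in $U$ automatically (it is the lift of $\alpha$ through the branch fixing $a$), while $\mathrm{diam}(\delta_k)\to 0$ by Lemma~\ref{lem:expanding}. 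Since $\delta_k$ joins a point of $U$ to $b$, we get $b\in\overline U$ and hence $b\in\partial U$. There is no limit curve to analyse, no ray to identify, and no endpoint bookkeeping when $b\notin P_R$.

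The forward directions for (2) and (3) share a gap: you assume the relevant boundary point (the common landing point in (2), the cut point in (3)) can be taken periodic. In (2), $\partial U\cap\partial U'$ is a closed forward-invariant subset of $J_R$, but its containing a periodic point is not immediate and needs an expansion/shadowing argument you have not supplied. In (3), your appeal to Theorem~\ref{thm:eventually} is misplaced: that result concerns \emph{locally branched} points, whereas a cut point of $\partial U$ need not be locally branched (e.g.\ interior points of $\partial U=[-2,2]$ for Chebyshev, though admittedly that example violates $\partial U\cap P_R=\emptyset$). The paper avoids periodicity of the boundary point altogether: it takes \emph{any} suitable $z$, pushes the ray-arc $\beta'$ forward, and uses that only finitely many isotopy classes rel $P_R$ arise among the $R^k(\beta')$; this yields $q,p$ with $R^{q+p}(\beta')$ isotopic to $R^q(\beta')$, and one sets $\beta=R^{q+p}(\beta')$. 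In (3) one must also check that some iterate $R^k(\overline{\beta'})$ separates $P_R$; the paper does this by noting that the complementary disks of $\overline{\beta'}$ meet $J_R$, so their forward images cannot all avoid $P_R$.
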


\begin{proof}
	(1) If $b\in\partial U$,  the internal ray in $U$ that lands at $b$ satisfies the condition.
	
	Conversely, the arc $\beta$ can be decomposed into two sub-arcs $\beta=\alpha\cup\delta$, such that $\alpha\subset U$ and $\ov{\delta}$ is disjoint from the super-attracting cycles of $R$. By successive lifting, $R^{-kp}(\beta)$ has a component $\beta_k$ isotopic to $\beta$ rel $P_{R}$, and  $\beta_k$ has a  decomposition  $\beta_k=\alpha_k\cup\delta_k$ such that $R^{kp}(\alpha_k)=\alpha$ and $R^{kp}(\delta_k)=\delta$. Observe that $\alpha_k\subset U$, and ${\rm diam}(\delta_k)\to 0$ as $k\to\infty$ by Lemma \ref{lem:expanding}. Hence, $b\in\partial U$.
	
	(2) First, assume that $\partial U\cap\partial U'\neq\emptyset$. We choose an open arc $\beta'$ that joins $a$ and $a'$ and passes through a point $z\in\partial U\cap\partial U'$ such that $\beta'\setminus \{z\}$ consists of two internal rays in $U$ and $U'$, respectively.
	
	If $R^k(z)\notin P_{R}$ for all $k\ge 1$, since $\# P_{R}<\infty$, there exist integers $q,p\ge 1$ such that  $R^{q+p}(\beta')$ is isotopic to $ R^{p}(\beta')$ rel $P_{R}$. Let $\beta=R^{q+p}(\beta')$. Then $R^{-p}(\beta)$ has a component isotopic to $\beta$ rel $P_{R}$.
	
	If $R^k(z)\in P_{R}$ for some integer $k\ge 1$, then by Lemma \ref{lem:finite}, there exist integers $q,p\ge 1$ such that $R^{q+p}(\beta')=R^{q}(\beta')$. Note that $R^q(z)$ is a repelling periodic point in $P_{R}$. Let $\beta$ be an open arc obtained by modifying $R^{q}(\beta')$ in a small neighborhood of the point $R^q(z)$ such that $R^q(z)\notin\beta$. Then $R^{-2p}(\beta)$ has a component isotopic to $\beta$ rel $P_{R}$.
	
	Conversely, we decompose $\beta$ into three sub-arcs $\beta=\alpha\cup\delta\cup\alpha'$, such that $\alpha\subset U$, $\alpha'\subset U'$, and $\ov{\delta}$ is disjoint from the super-attracting cycles of $g$. By successive lifting, $R^{-kp}(\beta)$ has a component $\beta_k$ isotopic to $\beta$ rel $P_{R}$, and $\beta_k$ can be decomposed as $\beta_k=\alpha_k\cup\delta_k\cup\alpha_k'$ such that $R^{kp}(\alpha_k)=\alpha$, $R^{kp}(\de_k)=\delta$, and $R^{kp}(\alpha_k')=\alpha'$. Observe that $\alpha_k\subset U$, $\alpha_k'\subset U'$, and  ${\rm diam}(\delta_k)\to 0$ as $k\to\infty$ by Lemma \ref{lem:expanding}. Thus, $\partial U\cap\partial U'\neq\emptyset$.
	
	(3) First, assume that $U$ is not a disk. Then there exist two internal rays in $U$ landing at a common point $z\in\partial U$. Let $\beta'$ be the union of these two internal rays together with the point $z$. For simplicity, we assume $R(U)=U$. Since $\partial U\cap P_{R}=\emptyset$, it follows that all $R^{k}(\beta')$ are open arcs in $\cbar\setminus P_{R}$ with the same endpoints $a$.
	
	If $\cbar\setminus R^{k+1}(\ov{\beta'})$ has a component $D_{k+1}$ disjoint from $P_{R}$, then $\cbar\setminus R^{k}(\ov{\beta'})$ also has a component $D_k$ disjoint from $P_{R}$, and $R(D_k)=D_{k+1}$. It follows that $R^k(\ov{\beta'})$ separates $P_{R}$ for each sufficiently large integer $k$. Otherwise, there would be a sequence $\{k_n\}$ of integers tending to $\infty$ such that $R^{k_n}(D_1)\cap P_{R}=\emptyset$ for all $n\ge 1$. This is impossible as $D_1\cap J_{R}\neq\emptyset$.
	
	Since $\# P_{R}<\infty$, there exist integers $q,p\ge 1$ such that  $R^{q+p}(\beta')$ is isotopic to $ R^{p}(\beta')$ rel $P_{R}$. Let $\beta=R^{q+p}(\beta')$. Then $R^{-p}(\beta)$ has a component isotopic to $\beta$ rel $P_{R}$.
	
	Conversely, by a similar argument as in the proof of statement (2), we can obtain two distinct internal rays in $U$ with the same landing point. Hence, $U$ is not a disk.
\end{proof}

\begin{proof}[Proof of Proposition \ref{prop:disk}]
	To prove the proposition, it suffices to verify the combinatorial conditions in Lemma \ref{lem:criterion} for the  branched covering $ \tilde{g}$. Let $a\in \tilde{Z}$ be a periodic point of $ \tilde{g}$.
	
	Let $\beta\subset\cbar\sm \tilde{Q}$ be an open arc joining the point $a$ to a repelling periodic point $b\in \tilde{Q}$ that belongs to $\tilde{\Omega}$. Assume, by contradiction,  that $ \tilde{g}^{-p}(\beta)$ has a component $\beta_1$ isotopic to itself rel $\tilde{Q}$ for some integer $p\ge 1$. By  isotopy lifting, $ \tilde{g}^{-kp}(\beta)$ has a component $\beta_{k}$ isotopic to $\beta$ rel $\tilde{Q}$.
	
	We  adjust the arc $\beta$ within its isotopic class so that  $\beta=\alpha\cup\delta$ with $\alpha\subset\cbar\sm\tilde{\Omega}$ and $\delta\subset\tilde{\Omega}$. This allows us to write $\beta_{k}=\alpha_{k}\cup\delta_{k}$ with $\alpha_{k}\subset\cbar\sm \tilde{g}^{-kp}(\tilde{\Omega})$ and $\delta_k\subset \tilde{g}^{-kp}(\tilde{\Omega})$, where $ \tilde{g}^{kp}(\alpha_k)=\alpha$ and $ \tilde{g}^{kp}(\delta_k)=\delta$. In particular, one endpoint of $\delta_k$ lies in $\partial\tilde{\Omega}$ and the other is $b$.
	
	Recall that $\{\xi_n\}$ is a sequence of quotient maps of $\cbar$ such that $\xi_0(\tilde{Q}\cap \tilde{\Omega})= P \cap V$, $\xi_n(\tilde{\Omega}_n)=V_n$, $\xi_{n+1}=\xi_{n}$ on $\cbar\sm\tilde{\Omega}_n$, and
	$$
	\xi_n\circ \tilde{g}=f\circ\xi_{n+1} \quad\text{ on }\tilde{\Omega}_{n+1},
	$$
	where $\tilde{\Omega}_n= \tilde{g}^{-n}(\tilde{\Omega})$ and $V_n=(f|_{V_1})^{-n}(V)$. Thus, $\xi_{kp}(\delta_{k})$ is a component of $f^{-kp}(\xi_0(\delta))$, such that one endpoint of $\xi_{kp}(\delta_k)$ lies in $\partial V$ and the other is $\xi_0(b)$. By Lemma \ref{lem:expanding}, the diameter of $\xi_{kp}(\delta_{k})$ tends to $0$ as $k\to\infty$. It follows that  $\xi_0(b)\in\partial V$, which contradicts the assumption that $b\in\tilde{\Omega}$. Hence, condition (1) holds.
	
	The verification of conditions (2) and (3) is similar. Thus, we omit the details.
\end{proof}

\subsection{Fibers of the semi-conjugacy}
Recall that $\tilde{\DDD}=\cbar\setminus \tilde{\Omega}$ consists of pairwise disjoint closed disks, and 
 $\tilde{g}$ is holomorphic in a neighborhood of $\tilde{\DDD}$ with $\tilde{g}(\tilde{\DDD})\subset \tilde{\DDD}$. Each component of ${\rm int}(\tilde{\DDD})$ contains a unique preperiodic point of $\tilde{g}$.
 Moreover, there exists a small neighborhood $\tilde{\NNN}_a$ of the  attracting cycles of $ \tilde{g}$ that are contained in $\tilde{\Omega}$ such that $ \tilde{g}:\tilde{\NNN}_a\to \tilde{\NNN}_a$ is holomorphic.
 
 Recall also that the marked branched covering $(\tilde{g},\tilde{Q})$ is combinatorially equivalent to a marked rational map $(g,Q)$ by a pair of homeomorphisms $\phi_0,\phi_1$ of $\cbar$, which are connected by an isotopy
  $\{\phi_t\}_{t\in [0, 1]}$  rel $ \tilde{Q}$. 
 
  By Proposition \ref{prop:disk}, the homeomorphism $\phi_0$ sends the preperiodic points of $\tilde g$ in ${\rm int}(\tilde\DDD)$ to the centers  of some Fatou domains of $g$, which are disks with pairwise disjoint closures. Note that the closure $\DDD$ of the union of these Fatou domains is invariant under $g$.

  We may specify the isotopy $\phi_t$ such that $\phi_0$ is holomorphic in  $\tilde{\NNN}_a\cup {\rm int}(\tilde{\DDD})$ with $\phi_0(\tilde{\DDD})=\DDD$, and $\phi_t=\phi_0$ on $\tilde{\NNN}_a\cup\tilde{\DDD}$ for $t\in[0, 1]$.

By successively applying Lemma \ref{lem:lift}, for every $n\geq0$, we have an isotopy $\{\phi_t\}_{t\in[n, n+1]}$ rel $ \tilde{g}^{-n}(\tilde{\DDD}\cup\tilde{\NNN}_a\cup \tilde{Q})$, such that $\phi_{n}\circ  \tilde{g}=g\circ\phi_{n+1}$ on $\ov{\C}$. Set $\Omega_n:=\phi_n(\tilde{\Omega}_n)$.

Recall that in Section \ref{sec:blow-up}, we obtained a homotopy $\{\xi_t\}_{t\in[n,n+1]}$ on $\cbar$ for every $n\geq0$, such that $\xi_n( \tilde{\Omega}_n)=V_n$, $\xi_n=\xi_{n+1}$ on $\cbar\setminus \tilde{\Omega}_n$, and
$
\xi_n\circ \tilde{g}=f\circ\xi_{n+1}\text{ on } \tilde{\Omega}_{n+1},
$
where $ \tilde{\Omega}_n= \tilde{g}^{-n}( \tilde{\Omega})$ and $V_n=(f|_{V_1})^{-n}(V)$. Then we have the following commutative diagram:
\begin{equation*}\label{eq:diagram}
	\xymatrix{
		\Omega_{n+1}\ar[d]_{g} & \ar[l]_{\phi_{n+1}} \tilde{\Omega}_{n+1}\ar[d]_{ \tilde{g}}\ar[r]^{\xi_{n+1}} & V_{n+1}\ar[d]^{f} \\
		\Omega_n &               \ar[l]_{\phi_n} \tilde{\Omega}_n\ar[r]^{\xi_n} & V_n}
\end{equation*}

Set $\BBB_n:=\cbar\sm V_n$, $ \DDD_n:=\cbar\setminus \Omega_n$,  and $\NNN_a:=\phi_0( \tilde{\NNN}_a)$.
Then for every $n\geq0$, the family of maps $\{h_t:=\xi_t\circ \phi_t^{-1}\}_{t\in[n, n+1]}$ is a homotopy on $\cbar$ such that the following conditions hold:
\begin{enumerate}
\item  $h_t(z):\ov\C\to\ov\C$ is a quotient map;
\item  $h_{t}(z)=h_n(z)$ for $z\in \DDD_n\cup g^{-n}(\NNN_a)\cup g^{-n}(Q)$;
\item  $h_t^{-1}(\BBB_n)=\DDD_n$;
\item  $h_{n}\circ g=f\circ h_{n+1}$ on $\Omega_{n+1}$.
\end{enumerate}

\begin{proposition}\label{prop:quotient}
	The sequence of  maps $\{h_n\}$ uniformly converges to a quotient map of $\cbar$.
\end{proposition}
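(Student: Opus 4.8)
The strategy is the one already used for the convergence $\phi_n\to\varphi$ in the proof of Theorem~\ref{thm:local}: first show that $h_n$ and $h_{n+1}$ differ only on sets whose images on the $f$-side shrink geometrically, so that $\{h_n\}$ is uniformly Cauchy, and then verify that the uniform limit has connected fibres. For the first part I would set $X:=\DDD\cup\NNN_a\cup Q$ and record the bookkeeping facts: $\DDD$ is $g$-invariant, $g(\NNN_a)\subset\NNN_a$, and $g(Q)\subset Q$, so $g(X)\subset X$ and the preimages $g^{-n}(X)$ increase with $n$; since $\DDD_n=g^{-n}(\DDD)$, the ``frozen set'' $\DDD_n\cup g^{-n}(\NNN_a)\cup g^{-n}(Q)$ appearing in property~(2) equals $g^{-n}(X)$, so $h_m=h_n$ on $g^{-n}(X)$ for all $m\ge n$. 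I would also note that the semi-conjugacy $h_{s-1}\circ g=f\circ h_s$ holds for \emph{every} $s$ (not only integers), since it descends from the relations $\xi_{s-1}\circ\tilde g=f\circ\xi_s$ and $g\circ\phi_s=\phi_{s-1}\circ\tilde g$ that come out of the lifting construction; iterating gives $h_{t-n}\circ g^n=f^n\circ h_t$ on $\Omega_n$ for $t\in[n,n+1]$. Finally, because $\DDD$ is a union of finitely many Fatou disks of $g$ with pairwise disjoint closures (Proposition~\ref{prop:disk}), $\NNN_a$ has finitely many components, and $Q$ is finite, the complement $\cbar\setminus X$ has finitely many, finitely connected, components.

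For the geometric decay, fix $z$. If $z\in g^{-n}(X)$ then $h_n(z)=h_{n+1}(z)$. Otherwise $z$ lies in a component $W$ of $\cbar\setminus g^{-n}(X)\subset\Omega_n$, and $W':=g^n(W)$ is one of the finitely many components of $\cbar\setminus X$; by the all-times semi-conjugacy the connected set $\{h_t(z):t\in[n,n+1]\}$ lies in $(f^n)^{-1}\!\big(\bigcup_{s\in[0,1]}h_s(W')\big)$, hence in a single component of it. One checks, from the normalization of the homotopies $\{\xi_t\}$ and $\{\phi_t\}$ (in particular $\phi_t$ agrees with $\phi_0$ on $\tilde\DDD\cup\tilde\NNN_a$ and fixes $\tilde Q$ setwise, while $\NNN$ avoids $\xi_0^{-1}(P)$), that $\bigcup_{s\in[0,1]}h_s(W')$ is a connected subset of $\cbar\setminus P_f$, and as $W'$ ranges over the finitely many components of $\cbar\setminus X$ these sets have uniformly bounded homotopic diameter. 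The shrinking lemma (Lemma~\ref{lem:expanding}), together with the expansion of the orbifold metric, then produces a sequence $\delta_n\downarrow 0$ with $\sum_n\delta_n<\infty$ such that every component of the $f^{-n}$-preimage of a set of bounded homotopic diameter has spherical diameter at most $\delta_n$. Hence ${\rm dist}(h_n(z),h_{n+1}(z))\le\delta_n$ for all $z$, so $\{h_n\}$ converges uniformly to a continuous map $h:\cbar\to\cbar$, which is onto because $\cbar$ is compact.

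It remains to show that every fibre $h^{-1}(y)$ is connected. Each $h_n$ is a quotient map, and for a map with connected point-fibres any clopen splitting $h_n^{-1}(C)=A\sqcup B$ of the preimage of a connected set $C$ yields disjoint closed sets $h_n(A),h_n(B)$ covering $C$, which forces $A$ or $B$ empty; thus $h_n^{-1}(C)$ is connected for every connected $C$. Writing $\delta_n:=\sup_z{\rm dist}(h_n,h)$, one has $h^{-1}(\overline{B(y,r)})\subset h_m^{-1}(\overline{B(y,r+\delta_m)})$ for all $m$, so each $\bigcup_{m\ge n}h_m^{-1}(\overline{B(y,r+\delta_m)})$ is a union of connected sets all containing the common set $h^{-1}(\overline{B(y,r)})$, hence connected; passing to closures and intersecting over $n$ shows $h^{-1}(\overline{B(y,r)})=\bigcap_n\overline{\bigcup_{m\ge n}h_m^{-1}(\overline{B(y,r+\delta_m)})}$ is a decreasing intersection of continua, hence a continuum, and then $h^{-1}(y)=\bigcap_k h^{-1}(\overline{B(y,1/k)})$ is connected. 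Therefore $h$ is a quotient map of $\cbar$ in the sense of Appendix~\ref{app:2} (alternatively one may invoke Lemma~\ref{thm:isotopy} and the topological results of Appendix~\ref{app:3}, exactly as for $\phi_n\to\varphi$ in Section~\ref{sec:2}). The only genuinely substantive point is in the middle step: making precise that $\bigcup_{s\in[0,1]}h_s(W')$ avoids $P_f$ and has homotopic diameter bounded independently of $W'$ and $n$, since it is this uniformity that lets a single application of the shrinking lemma deliver a summable rate; everything else is bookkeeping with the identities built into the construction of the blow-up map.
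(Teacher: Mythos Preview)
Your strategy is the same as the paper's: freeze $h_t$ on $g^{-n}(\DDD\cup\NNN_a\cup Q)$, use the iterated semi-conjugacy to pull the track $t\mapsto h_t(z)$ back to level $0$, and apply Lemma~\ref{lem:expanding} to get a geometric rate. Two points of comparison are worth noting.

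First, the paper works with \emph{curves} rather than sets: for $w\in\Omega\setminus(\NNN_a\cup Q)$ it defines $\gamma_w(t):=h_t(w)\subset V\setminus P$, observes that $L_\omega[\gamma_w]$ is continuous in $w$ and tends to $0$ as $w\to\partial\Omega\cup\partial\NNN_a\cup Q$, hence is bounded by some $M_1$; then for $z\in\Omega_n$ the track $\beta(t)=h_t(z)$, $t\in[n,n+1]$, satisfies $f^n\circ\beta=\gamma_{g^n(z)}$, so the curve version of Lemma~\ref{lem:expanding} gives $L_\omega[\beta]\le M_1\rho^{-n}$ directly. Your formulation via the set $\bigcup_{s}h_s(W')$ and ``every component of the $f^{-n}$-preimage of a set of bounded homotopic diameter has diameter $\le\delta_n$'' is not what Lemma~\ref{lem:expanding} says: the set version there requires $f^n:E_n\to E$ to be a \emph{homeomorphism}, which you have not checked (and which can fail, since $\bigcup_s h_s(W')$ need not be simply connected and may contain critical values of intermediate iterates). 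The content is the same, but the clean way to carry it out is exactly the curve argument above; your own semi-conjugacy bookkeeping already gives $f^n\circ\beta=\gamma_{g^n(z)}$, so you should invoke the curve half of Lemma~\ref{lem:expanding} rather than the set half.

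Second, for the limit being a quotient map the paper simply cites \cite[Lemma~3.1]{CPT} (uniform limits of quotient maps of $\cbar$ are quotient maps), which also delivers that fibers are \emph{full} continua, not merely connected. Your direct nested-intersection argument shows connectedness of $h^{-1}(y)$ but not fullness; since the paper's definition of quotient map requires full fibers, you should either add that step or, more simply, invoke \cite[Lemma~3.1]{CPT} as the paper does (your parenthetical pointing to Lemma~\ref{thm:isotopy} is not quite right, as that lemma is tailored to iterated lifts of a single isotopy, but its proof rests on the same \cite{CPT} lemma).
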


\begin{proof}
	The argument is similar as in \cite[Theorem 1.1]{CPT}. By \cite[Lemma 3.1]{CPT}, the limit of a sequence of quotient maps is still a quotient map. Thus, it suffices to show that there exist constants $M>0$ and $\rho>1$ such that $
	\text{dist}(h_{n+1}(z),h_{n}(z))\le M\rho^{-n}
	$ for every $n\geq1$.
	
	Recall that the homotopic length of a curve $\g$ is the infimum among the lengths  of smooth curves homotopic to $\g$ rel $ P $ with endpoints fixed under the orbifold metric; see Appendix \ref{app:1}.
	
	For any point $z\in \Omega\sm(\NNN_a\cup Q)$,  define a curve $\g_z:[0,1]\to V\setminus  P $ as $\g_z(t):=h_t(z)$ for $t\in[0,1]$.
	Since the homotopic length of $\g_z$ is continuous with respect to $z$ and converges to zero as $z\to\partial\Omega\cup\partial \NNN_a\cup Q$, it is bounded above by a constant $M_1$ for all points $z\in \Omega\sm(\NNN_a\cup Q)$.
	
	Fix an integer $n\geq1$ and a point $z\in \ov\C$. If $z\in \DDD_n\cup g^{-n}(\NNN_a)\cup g^{-n}(Q)$, then $${\rm dist}(h_n(z),h_{n+1}(z))=0$$ by point (2) above.
	If $z\in \Omega_n\setminus(g^{-n}(\NNN_a)\cup g^{-n}(Q))$, then $w=f^n(z)\in \Omega\setminus(\NNN_a\cup Q)$.
	In this case, the curve $\beta=\{h_t(z):t\in[n,n+1]\}$ is a lift of $\g_w$ by $f^n$ based at $h_n(z)$. Consequently,
	$$
	\text{dist}(h_{n}(z),h_{n+1}(z))\le C\cdot L_\omega[\beta]\leq CM_1\rho^{-n}
	$$
	by \eqref{eq:777} and Lemma \ref{lem:expanding}. This completes the proof of Proposition \ref{prop:quotient}.
\end{proof}

\begin{proof}[Proof of Theorem \ref{thm:blow-up}]
	Let $\pi$ be the limit quotient map of the sequence $\{h_n\}$, and set  $K_g=\bigcap_{n>0}\ov{\Omega_n}$. By Proposition \ref{prop:quotient}, we have $\pi(\ov{\Omega_n})\subset\ov{V_n}$, $\pi({\DDD_n})=\BBB_n$, and $\pi(\partial\DDD_n)=\partial\BBB_n$ for all $n>0$. It follows that $\pi(K_g)\subset E:=\bigcap_{n\geq0}\ov{V_n}$. Since $\pi$ is surjective, we obtain $\pi(K_g)= E$. Moreover, the properties of $h_n$ also imply that $\pi\circ g=f\circ\pi$ on $K_g$ and that $\pi:K_g\cap F_g\to E\cap F_f$ is a conformal homeomorphism.
	
	Suppose that $B$ is a component of $\BBB$ such that $f^p(\partial B)=\partial B$. Due to the properties of $\pi$ mentioned above, there exists a unique component $D$ of $\DDD$ such that $\partial D\subset \pi^{-1}(\partial B)\cap K_g$, and $\pi^{-1}(\partial B)\cap K_g\subset J_g$ is a stable set of $g^p$ of simple type. Then by Theorem \ref{thm:renorm}, $\pi^{-1}(\partial B)\cap K_g$ is the boundary of a Fatou domain of $g$, which implies $\pi^{-1}(\partial B)\cap K_g=\partial D$. Since $\pi({D})=B$, it follows that $\pi^{-1}(B)=D$. By pullback, we obtain $\pi^{-1}(\BBB_n)=\DDD_n$ for every $n>0$.
	
	Now, consider an arbitrary point $z\in \bigcap_{n>0}V_n$. Then $\pi^{-1}(z)\subset\bigcap_{n>0} \Omega_n$ is a full and connected compact set of simple type. If $z\in F_f$, then $\pi^{-1}(z)$ is a singleton. If $z\in J_f$ is eventually periodic, then $\pi^{-1}(z)\subset J_f$ is eventually periodic under $g$, and thus a singleton by Lemma \ref{lem:expanding}.
	
	Assume that $z\in J_f$ is wandering, i.e., $f^i(z)\not=f^j(z)$ for any $i\neq j\geq 0$. Then the $\omega$-limit set $\omega(z)$ contains  infinitely many points. Otherwise, since $f(\omega(z))\subset \omega(z)$, the orbit of $z$ would converge to repelling cycles, a contradiction. Thus, we may choose a point $z_{\infty}\in\omega(z)\setminus  P $ and a subsequence $\{f^{n_k}(z)\}$ such that $f^{n_k}(z)\to z_{\infty}$ as $k\to\infty$.
	
	Let $U$ be a disk such that $z_{\infty}\in U$ and $\ov{U}\cap  P =\emptyset$. Then $f^{n_k}(z)\in U$ for every sufficiently large integer $k$.  It follows that $g^{n_k}(\pi^{-1}(z))\subset\pi^{-1}(\ov{U})$ for every sufficiently large integer $k$. Since $\pi^{-1}(\ov{U})$ is a full continuum disjoint from $P_g$,  by Lemma \ref{lem:expanding}, the diameters of components of $g^{-n}(\pi^{-1}(\ov{U}))$ tend to $0$ as $n\to\infty$.  Thus, $\pi^{-1}(z)$ is a singleton.

	Finally, the uniqueness of the rational map $g$ is deduced directly from \cite[Theorem 1]{DH1}. Then we complete the proof of Theorem \ref{thm:blow-up}.
\end{proof}

\begin{proof}[Proof of Theorem \ref{thm:cluster-exact}]
	By Theorem \ref{thm:cluster-exact0}, there exists a stable set $\KKK$ of $f$ that induces a cluster-exact decomposition of $(f,P)$. Moreover, the union $\VVV$ of all complex-type components of $\cbar\setminus \KKK$ avoids the attracting cycles of $f$.  It then follows from Theorem \ref{thm:blow-up} that each blow-up of the induced exact sub-system $f:\VVV_1\to\VVV$ has the  \Sie\ carpet Julia set.
\end{proof}

\section{Topology of growing continua}\label{sec:topology}
To construct invariant graphs in extremal chains, we first study their topology.

Let $f$ be a  rational map with $J_f\not=\cbar$. Suppose that $K$ is a periodic level-$(n+1)$ ($n\ge 0$) extremal chain of $f$ with period $p\ge 1$, and $E$ is the union of all periodic level-$n$ extremal chains contained in $K$. By Lemma \ref{lem:dyn-def}, $E$ is an $f^p$-invariant continuum, and $K$ is generated by $E$ in the sense that
$
K=\ov{\bigcup_{k\ge 0}E_k},
$
where $E_k$ is the component of $f^{-kp}(E)$ containing $E$.

Due to the  inductive construction mentioned above, all results about extremal chains can be proved by induction on levels. To improve the clarity of the proofs and ensure wider accessibility, we will adopt a more general framework for our discussions in this section.

By a {\bf growing continuum} of $f$, we mean a continuum $K\subset\cbar$ together with a continuum $E\subset\cbar$ such that $\partial E\subset J_f$, $f(E)\subset E$, and
\begin{equation}\label{eq:123}
	K=\ov{\bigcup_{k\ge 0}E_k},
\end{equation}
where $E_k$ is the component of $f^{-k}(E)$ containing $E$. We call $E$ the {\bf generator} of $K$.

Let $P$ be a finite marked set. Since $E_{k}\subset E_{k+1}$, according to Corollary \ref{cor:monotone}\,(2), there exists an integer $k_0\ge 0$ such that $E_{k_0}$ is a skeleton of $E_k$ rel $P$ for all $k>k_0$. Note that $f(E_{k_0})\subset E_{k_0}$. Then $K$ is also a growing continuum generated by $E_{k_0}$. Therefore, we may always assume that $E$ is a skeleton of $E_k$ for all $k>0$.

\subsection{Local connectivity of extremal chains}
Let $f$ be a   PCF  rational map. By Theorem \ref{thm:renorm}, the maximal Fatou chains of $f$ are locally connected since they are stable sets. In this subsection, we aim to prove the local connectivity of extremal chains, or more generally, growing continua.

\begin{lemma}\label{lem:top}
	Let $K\subset\cbar$ be a growing continuum generated by  $E$. Suppose that $E$ is locally connected. Then $K$ is locally connected.
\end{lemma}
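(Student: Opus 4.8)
The plan is to prove that the continuum $K=\overline{\bigcup_k E_k}$ is locally connected by verifying \emph{Property~S}: for every $\varepsilon>0$, $K$ is a finite union of subcontinua of diameter $<\varepsilon$ (for a continuum this is equivalent to local connectivity). If $K=\cbar$ the statement is trivial, so assume $K\ne\cbar$. Two facts will be combined: first, that for $N$ large every ``decoration of level $>N$'' — i.e.\ every connected piece of $K\setminus E_N$, where $E_N$ is the component of $f^{-N}(E)$ containing $E$ — has spherical diameter $<\varepsilon$; second, that $E_N$ itself is covered by finitely many subcontinua of diameter $<\varepsilon$. Gluing these gives the finite $\varepsilon'$-cover.

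The second fact is the easy one. Since $E$ is locally connected and $f^N$ is a branched covering, the component $E_N$ of $f^{-N}(E)$ is again locally connected (local connectivity is inherited by components of preimages under branched coverings, using that $f$ is locally conjugate to $z\mapsto z^d$), so $E_N=F_1\cup\cdots\cup F_m$ with each $F_i$ a subcontinuum of diameter $<\varepsilon$. Each decoration $D_\alpha$ of level $>N$ is a continuum whose closure meets $E_N$ (as $E_{k}\subset E_{k+1}$ and $K$ is connected), hence meets some $F_{i(\alpha)}$; assigning $D_\alpha$ to such an $F_i$ gives $K=\bigcup_{i=1}^m\overline{\,F_i\cup\bigcup_{i(\alpha)=i}D_\alpha\,}$, where each set on the right is a subcontinuum of $K$ of diameter at most $\operatorname{diam}F_i+2\varepsilon<3\varepsilon$. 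Since $\varepsilon$ was arbitrary, this establishes Property~S, hence local connectivity.

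The first fact is the substance, and this is where the standing hypotheses enter. Recall $E_{k+1}$ is the component of $f^{-1}(E_k)$ containing $E$; one shows recursively that, up to attached ``satellites'' (components of $f^{-1}(E_{k-1})$ that get incorporated into $E_{k+1}$ via the growing chain), the closure $\overline{E_{k+1}\setminus E_k}$ of the $k$-th decoration is a union of components of $f^{-1}\big(\overline{E_k\setminus E_{k-1}}\big)$, so that every decoration of level $k+1$ is, after the bookkeeping, a component of $f^{-(k-1)}$ of the first decoration $D:=\overline{E_1\setminus E}$. Because $E$ is a skeleton of every $E_k$ we have $E_k\cap P=E\cap P$, so $D$ and all decorations are disjoint from $P$, in particular from $P_f$; and by Lemma~\ref{lem:orbifold} the homotopic diameters of the components of $D$ are bounded by a common constant $M$. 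The shrinking lemma (Lemma~\ref{lem:expanding}), in its uniform form, then forces the spherical diameters of the components of $f^{-j}(D)$, hence of all decorations of level $\ge j$, to tend to $0$ uniformly as $j\to\infty$; choose $N$ accordingly. (The finitely many periodic ``large'' complementary components of $E$ and the satellites lying in them are controlled inside this same estimate, since each such satellite maps by an iterate of $f$ onto a piece of $E$ and its further preimages shrink; local connectivity of the periodic pieces themselves is supplied by Theorem~\ref{thm:renorm}.)

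The main obstacle is exactly the combinatorial bookkeeping in the third paragraph: giving a clean recursive description of $\overline{E_{k+1}\setminus E_k}$ together with its satellites as preimage components of the first decoration $D$, and checking that every piece so produced satisfies the hypotheses of the shrinking lemma uniformly — disjointness from $P_f$ (immediate from the skeleton hypothesis) and a uniform bound on homotopic diameter (Lemma~\ref{lem:orbifold}). Everything else — local connectivity of $E$ and of $E_N$, the elementary planar topology of complementary domains (Lemma~\ref{lem:topology} and Appendix~\ref{app:3}), and the final regrouping of small decorations onto the finite cover of $E_N$ — is routine. One could alternatively argue via the classical criterion that a planar continuum is locally connected iff its complementary domains have diameters tending to $0$ and locally connected boundaries, the diameter control again coming from Lemma~\ref{lem:expanding}, but the Property~S route above seems the most direct.
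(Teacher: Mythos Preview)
Your Property~S strategy has a genuine gap at the marked ends, and this is precisely the place where the paper has to do real work.

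Your central claim is that for large $N$ every component of $K\setminus E_N$ has diameter $<\varepsilon$. This is false in general. Consider a marked periodic end $\{\Omega_k\}$ (with $\Omega_k\cap P_f\neq\emptyset$ for all $k$, period one say). The paper's isotopy-lifting argument produces an $f$-invariant locally connected curve $\gamma=\lim\partial\Omega_k\subset K$ of positive diameter. One checks that $\gamma\subset\bigcap_k\overline{\Omega_k}$ and that, for $z\in\gamma$, the condition $z\in E_N$ is equivalent to $z\in\partial\Omega_k$ for \emph{all} $k\ge N$; hence $\gamma\cap\big(\bigcup_N E_N\big)=\gamma\cap\liminf_k\partial\Omega_k$, which in general is a proper subset of $\gamma$. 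Thus $\gamma$ has points in $\Omega_N$ for every $N$, and the component of $K\setminus E_N$ containing such a point has diameter bounded below by a constant independent of $N$. Your recursive picture of decorations as components of $f^{-j}(D)$ (plus satellites) simply does not capture this limit object: $\gamma$ is neither a piece of any $E_k\setminus E_{k-1}$ nor a satellite, so the shrinking lemma never applies to it. The parenthetical appeal to Theorem~\ref{thm:renorm} does not help, since $\gamma$ is not a stable set and the issue is not its local connectivity but its positive diameter inside every $\Omega_N$.

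The paper avoids this obstruction by working with the complementary-components criterion (via Lemma~\ref{lem:orbifold}) rather than Property~S. For each marked periodic end it constructs $\gamma$ explicitly via Lemma~\ref{thm:isotopy}, proves $\gamma$ is locally connected, and then shows directly that every component $D$ of $\cbar\setminus K$ inside that end is already a component of $\cbar\setminus\gamma$; this gives local connectivity of $\partial D$ without any smallness claim about $K\cap\Omega_N$. The non-marked ends are handled by your shrinking argument (this is the paper's Lemma~\ref{lem:eventually-periodic}), and the diameters of complementary components are controlled separately. Your ``alternative'' route is in fact the one that works; the Property~S route, as you have outlined it, does not.
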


According to Lemma \ref{lem:orbifold}, we need to consider the components of $\cbar\sm K$.  It is worth noting that any component of $\cbar\sm K$ is contained in a unique component of $\cbar\sm E_k$ for every $k\geq 0$.

A nested sequence $\{\Omega_k\}$ is called an {\bf end} of $K$ if $\Omega_k$ is a component of $\cbar\sm E_k$ and $\Omega_{k+1}\subset\Omega_k$ for every $k\geq0$. An end $\{\Omega_k\}$ is called {\bf marked} if $\Omega_k\cap P_f\not=\emptyset$ for all $k\ge 0$. There exist finitely many marked ends.

Since $E_{k+1}$ is a component of $f^{-1}(E_{k})$, for each component $\Omega_{k+1}$ of $\cbar\sm E_{k+1}$, there exists a unique component $\Omega_k'$ of $\cbar\sm E_k$ such that $f(\partial\Omega_{k+1})=\partial\Omega_k'$. Moreover, $f:\Omega_{k+1}\to\Omega_k'$ is a homeomorphism if $\Omega_k'\cap P_f=\emptyset$.

\begin{proposition}\label{prop:end-map}
	Let $\{\Omega_k\}$ be an end of $K$. For each $k\geq0$, let $\Omega_k'$ be the component of $\cbar\sm E_k$ such that $f(\partial\Omega_{k+1})=\partial\Omega_k'$. Then $\Omega'_{k+1}\subset\Omega'_k$ for every sufficiently large integer $k$.
\end{proposition}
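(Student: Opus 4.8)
The plan is to compare, for each large $k$, the complementary component $\Omega_k'$ of $\cbar\setminus E_k$ (the one with $f(\partial\Omega_{k+1})=\partial\Omega_k'$) with the unique component of $\cbar\setminus E_k$ that contains $\Omega_{k+1}'$, and to show that these two coincide. First I would record two reductions. Since $\Omega_{k+1}'$ is a component of $\cbar\setminus E_{k+1}$ and $E_k\subseteq E_{k+1}$, it lies inside a single component of $\cbar\setminus E_k$; moreover $\Omega_{k+1}'$ is disjoint from $E_{k+1}$, hence from $E_k$, hence from $\partial\Omega_k'\subseteq E_k$, so, being connected, it lies entirely in $\Omega_k'$ or entirely in $\cbar\setminus\overline{\Omega_k'}$. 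Thus it suffices to produce one point of $\partial\Omega_{k+1}'=f(\partial\Omega_{k+2})$ lying in the open set $\Omega_k'$: any small neighbourhood of such a point meets $\Omega_{k+1}'$, forcing $\Omega_{k+1}'\subseteq\Omega_k'$. The input for this is the fact recorded just before the statement, used in the slightly strengthened form that $f(\Omega_{j+1})\supseteq\Omega_j'$ and that, locally along $\partial\Omega_{j+1}$, the map $f$ carries the $\Omega_{j+1}$-side onto the $\Omega_j'$-side of $\partial\Omega_j'$ (this is exactly what makes $\Omega_j'$ well defined, and it is the homeomorphism statement in the case $\Omega_j'\cap P_f=\emptyset$).

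The core of the argument is then a dichotomy. Since $\Omega_{k+2}\subseteq\Omega_{k+1}$, either $\partial\Omega_{k+2}$ meets the open set $\Omega_{k+1}$, or $\partial\Omega_{k+2}\subseteq\partial\Omega_{k+1}$; in the second case $\Omega_{k+2}$ is relatively open and relatively closed in the connected set $\Omega_{k+1}$, whence $\Omega_{k+2}=\Omega_{k+1}$. In the first case I pick $z\in\partial\Omega_{k+2}\cap\Omega_{k+1}$; then $f(z)\in\partial\Omega_{k+1}'$ and $f(z)\in f(\Omega_{k+1})$, and using the local side-picture along $\partial\Omega_{k+2}$ (the $\Omega_{k+2}$-side maps onto the $\Omega_{k+1}'$-side, which sits in $f(\Omega_{k+1})$ on the $\Omega_k'$-side of $\partial\Omega_k'$) one can arrange $f(z)\in\Omega_k'$, finishing this case. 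In the second case $\partial\Omega_{k+1}'=f(\partial\Omega_{k+2})=f(\partial\Omega_{k+1})=\partial\Omega_k'$, and by the identification $\Omega_{k+1}'=f(\Omega_{k+2})=f(\Omega_{k+1})=\Omega_k'$ one gets $\Omega_{k+1}'=\Omega_k'\subseteq\Omega_k'$ directly.

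The delicate point, and the reason for the clause ``for every sufficiently large integer $k$'', is that the clean identification $f(\Omega_{j+1})=\Omega_j'$ and the control over which side of $\partial\Omega_j'$ images land on are transparent only when $\Omega_j'\cap P_f=\emptyset$: if $\Omega_j'$ is a marked domain then $f$ is merely a branched cover there, and, worse, $\Omega_{j+1}$ may contain ``islands'' which are other components of $f^{-1}(E_j)$, so that $f(\Omega_{j+1})$ meets $E_j$ and the naive side analysis fails. The remedy is stabilization. There are only finitely many marked ends, and since $E$ is a skeleton of every $E_k$ (available via \eqref{eq:123} and the remark following it, and in any case by Corollary \ref{cor:monotone}) the branched numbers $b(E_k)$ and the counts $\#(E_k\cap P_f)$ are constant; from this I would deduce that for $k$ beyond some level $N$ the combinatorial pattern describing how $E_{k+1}$ sits in $\overline{\Omega_k}$ near the given end — in particular the marked complementary components and the island components that intervene — is the same as at level $N$. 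Granting this stabilization, the dichotomy above applies verbatim for $k\ge N$, proving the proposition. Establishing this stabilization (and upgrading the preceding fact to the assertion $f(\Omega_{j+1})=\Omega_j'$ in the marked case) is the main obstacle.
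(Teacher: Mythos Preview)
Your side-analysis in the first case of your dichotomy is not self-contained. Knowing that $z\in\partial\Omega_{k+2}\cap\Omega_{k+1}$ gives $f(z)\in\partial\Omega_{k+1}'\subset E_{k+1}$, but this point may well lie in $E_k$, and the assertion that ``the $\Omega_{k+1}'$-side sits on the $\Omega_k'$-side of $\partial\Omega_k'$'' is exactly the inclusion $\Omega_{k+1}'\subset\Omega_k'$ you are trying to prove. The local side-picture near $\partial\Omega_{k+1}$ tells you nothing at an interior point of $\Omega_{k+1}$. So the argument is circular in the very case that matters, namely when $\Omega_{k+1}$ contains other components of $f^{-1}(E_k)$.

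Your proposed remedy, a combinatorial stabilization coming from the skeleton property $b(E_k)\equiv b(E)$, does not control the unmarked island components of $f^{-1}(E_k)$ inside $\Omega_{k+1}$: those islands need not intersect or separate $P_f$, so skeleton bookkeeping does not see them. The paper avoids all of this with a short and different idea. It splits according to whether $\Omega_{k+1}$ meets $f^{-1}(E_k)\setminus E_{k+1}$. If not, then $f(\Omega_{k+1})=\Omega_k'$ for all large $k$ and the nesting is immediate. If so, let $W_k$ be the component of $\Omega_{k+1}\setminus f^{-1}(E_k)$ bordered by $\partial\Omega_{k+1}$; then $f:W_k\to\Omega_k'$ is proper and $W_k$ is not simply connected, so by Riemann--Hurwitz $W_k$ contains critical points of $f$. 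Since $f$ has finitely many critical points, the set of critical points in $W_k$ stabilizes, and their images are critical values lying in every $\Omega_k'$ for large $k$. A single common point in $\Omega_k'\cap\Omega_{k+1}'$ forces $\Omega_{k+1}'\subset\Omega_k'$. The use of critical points is the missing ingredient; it replaces your unproved stabilization with a two-line finiteness argument.
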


\begin{proof}
	There exists an integer $k_0\ge 0$ such that, either $\Omega_{k+1}$ avoids $f^{-1}(E_{k})$ for each $k\geq k_0$ and hence $f(\Omega_{k+1})=\Omega'_{k}$, or $\Omega_{k+1}$ contains a component of $f^{-1}(E_{k})$ for each $k\geq k_0$. 
	
	In the former case, it is clear that $\Omega'_{k+1}\subset\Omega'_{k}$ for all $k\ge k_0$. 
	
	In the latter case, let $W_k$ be the component of $\Omega_{k+1}\sm f^{-1}(E_k)$ whose boundary contains $\partial\Omega_{k+1}$. Then $f:W_k\to\Omega'_{k}$ is proper, and $W_k$ contains critical points of $f$. Note that there exists an integer $k_1\ge k_0$ such that each $W_k$ contains the same critical points of $f$ for all $k\ge k_1$. Thus, all $\Omega'_{k}$ share common critical values of $f$. This implies that $\Omega'_{k+1}\subset\Omega'_k$ for $k\ge k_1$.
\end{proof}

By Proposition \ref{prop:end-map}, we obtain a self-map $f_\star$  on the collection of ends of $K$. This map is defined by $f_\star\{\Omega_k\}=\{\Omega_k'\}$ if $f(\partial\Omega_{k+1})=\partial\Omega_k'$ for each sufficiently large integer $k$. The proof of Proposition \ref{prop:end-map} shows that the image of a marked end remains marked. Hence, marked ends are eventually $f_\star$-periodic. Moreover, if $\{\Omega_k'\}=f_\star^N\{\Omega_k\}$ is not marked, then for each  sufficiently large integer $k$, the map $f^N:\Omega_{k+ N}\to\Omega'_{k}$ is conformal. 

\begin{lemma}\label{lem:eventually-periodic}
	There exist constants $M>0$ and $\rho>1$ with the following properties. Let $\{\Omega_k\}$ be an end of $K$ such that $f_\star^N\{\Omega_k\}$ is not marked for an integer $N\geq 1$. Then
	$$
	\textup{diam}\bigg(\bigcap_{k\ge 0} \ov{\Omega_k}\bigg)\le M\rho^{-N}.
	$$
	Consequently, $\bigcap_{k\ge 0}\ov{\Omega_k}$ is a singleton if $\{\Omega_k\}$ is $f_\star$-wandering.
\end{lemma}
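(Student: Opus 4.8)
The plan is to use the displayed property recorded just before the statement — that a non‑marked image end $\{\Omega_k'\}=f_\star^N\{\Omega_k\}$ is reached from $\{\Omega_k\}$ by conformal branches of $f^N$ — to realize $W:=\bigcap_{k\ge 0}\ov{\Omega_k}$ as sitting inside a single $N$‑fold preimage of a uniformly controlled domain, and then to invoke the expansion of $f$ near $J_f$. Concretely, since $\{\Omega_k'\}$ is not marked there is $k_0$ with $\Omega_k'\cap P_f=\emptyset$ for all $k\ge k_0$, and by the quoted property one may fix $k_1\ge k_0$ so that $f^N:\Omega_{k_1+N}\to\Omega_{k_1}'$ is conformal; in particular $f^N(\Omega_{k_1+N})=\Omega_{k_1}'$, so $\Omega_{k_1+N}$ lies in a component of $f^{-N}(\Omega_{k_1}')$. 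Since $\ov{\Omega_{k_1+N}}$ is one of the nested closed sets whose intersection is $W$, we have $W\subset\ov{\Omega_{k_1+N}}$, and it therefore suffices to bound $\mathrm{diam}(\Omega_{k_1+N})$.

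The first step would be the uniform estimate: there is a constant $M_0>0$, depending only on $f$ and $E$, such that every component $\Omega$ of $\cbar\setminus E_k$ (over all $k\ge 0$) with $\Omega\cap P_f=\emptyset$ satisfies $\text{H-diam}_\omega(\Omega)\le M_0$. This is where local connectivity of $K$ (hence of each $E_k$, by Lemma \ref{lem:top}) and the finiteness of the collection of marked ends enter: such an $\Omega$ lies in a component $\Omega^{(0)}$ of $\cbar\setminus E$, and either $\Omega^{(0)}$ is itself disjoint from $P_f$, so that Lemma \ref{lem:orbifold} bounds its homotopic diameter, or $\Omega^{(0)}$ is one of the finitely many components of $\cbar\setminus E$ meeting $P_f$ and $\Omega$ is cut off from $P_f$ inside $\Omega^{(0)}$ by $E_k$, which again yields a uniform bound from Lemma \ref{lem:orbifold}. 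I expect this homotopic‑diameter estimate to be the main obstacle; once it is in place the remainder is soft.

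Granting it, one applies the shrinking lemma (Lemma \ref{lem:expanding}): because $\Omega_{k_1+N}$ is contained in a component of $f^{-N}$ of the connected set $\Omega_{k_1}'\subset\cbar\setminus P_f$, the orbifold homotopic diameter contracts geometrically, $\text{H-diam}_\omega(\Omega_{k_1+N})\le C\rho^{-N}\,\text{H-diam}_\omega(\Omega_{k_1}')\le CM_0\rho^{-N}$ for constants $C>0,\ \rho>1$ depending only on $f$; comparing the orbifold and spherical metrics on $\cbar\setminus P_f$ then gives $\mathrm{diam}(W)\le\mathrm{diam}(\Omega_{k_1+N})\le M\rho^{-N}$ with $M:=C'M_0$, which is the asserted inequality. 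For the consequence, note that if $\{\Omega_k\}$ is $f_\star$‑wandering then $f_\star^N\{\Omega_k\}$ can never be marked: otherwise, since the image of a marked end is marked (from the proof of Proposition \ref{prop:end-map}) and there are only finitely many marked ends, the forward $f_\star$‑orbit of $\{\Omega_k\}$ would eventually be periodic, contradicting wandering. Hence the estimate above holds for every $N\ge 1$, forcing $\mathrm{diam}(W)=0$, i.e. $\bigcap_{k\ge 0}\ov{\Omega_k}$ is a singleton.
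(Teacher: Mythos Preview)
Your overall strategy—realize $\bigcap_k\ov{\Omega_k}$ inside a conformal $f^N$-preimage of a domain of uniformly bounded homotopic diameter and invoke Lemma~\ref{lem:expanding}—is the same as the paper's. The step you flag as ``the main obstacle'' is where the argument actually breaks: your appeal to Lemma~\ref{lem:orbifold} in the case where the containing component $\Omega^{(0)}$ of $\cbar\setminus E$ is marked yields, for each fixed $k$, only a $k$-dependent bound, since that lemma controls the complement of a \emph{single} locally connected continuum, not a nested family. Because your index $k_1$ depends on the end (and on $N$), a bound on $\text{H-diam}_\omega(\Omega'_{k_1})$ uniform in $k_1$ is exactly what is required and is not supplied. (Separately, invoking local connectivity of $K$ via Lemma~\ref{lem:top} is circular, as that lemma's proof uses the present one; fortunately you only need local connectivity of $E_1$, which follows directly from that of $E$.)

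The paper resolves this by iterating beyond $N$ steps. For fixed large $k$ it follows the sequence $W_i$ (the component of $\cbar\setminus E_i$ with $\partial W_i=f^{k-i}(\partial\Omega_k)$) down to the \emph{smallest} index $n_k\ge1$ with $W_{n_k}\cap P_f=\emptyset$; the skeleton hypothesis $E\prec E_k$ is then used to show that $W_{n_k}$ lies in an unmarked component of $\cbar\setminus E_1$. Hence the target of the contraction always sits in the complement of the fixed continuum $E_1$, where Lemma~\ref{lem:orbifold} gives a single constant $M_1$. Since $W_{k-N}$ is already unmarked one has $k-n_k\ge N$, so the exponent in Lemma~\ref{lem:expanding} is at least $N$, giving the stated decay.
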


\begin{proof}
	Recall that $E$ is a skeleton of each $E_k$ rel $P_f$.
	By Lemma \ref{lem:orbifold} and the fact that $E_1$ is locally connected, the homotopic diameters of the components of $\cbar\sm E_1$ that avoid $P_f$ are bounded above by a constant $M_1$. Since $f_\star^N\{\Omega_k\}$ is not marked, there exists an integer $k_0\geq1$ such that $f^N(\Omega_k)\cap P_f=\emptyset$ for every $k\geq k_0$.
	
	Fix any integer $k>k_0$.
	For each $0\leq i\leq k$, we denote $W_i$ as the component of $\cbar\sm E_i$ such that $\partial W_i=f^{k-i}(\partial \Omega_k)$. Let 
	 $n_k\geq 1$ be the minimal integer with $W_{n_k}\cap P_f=\emptyset$, and let $D_1$ be the component of $\cbar\sm E_1$ containing $W_{n_k}$. 
	
	We claim that $D_1\cap P_f=\emptyset$. If $n_k=1$, then $D_1=W_{n_k}$, and this claim is true. If $n_k>1$, we have $W_{n_k-1}\cap P_f\neq \emptyset$ by the choice of $n_k$. Let $D$ denote the component of $\cbar\setminus E$ containing $W_{n_k-1}$. Since $E$ is a skeleton of $E_{n_k-1}$, it follows that $D\cap P_f=W_{n_k-1}\cap P_f$. Thus, there exist an annulus $A\subset D\setminus P_f$ bounded by $\partial D$ and a Jordan curve in $W_{n_k-1}$. Let $A_1$ be the component of $f^{-1}(A)$ containing $\partial W_{n_k}$. Then $A_1\cap P_f=\emptyset$ and $A_1\cup W_{n_k}=D_1$. The claim is proved.

By this claim, the homotopic diameter of $D_1$ is bounded above by $M_1$. Due to the choices of $k$ and $n_k$, the map $f^{k-n_k}:\Omega_k\to W_{n_k}$ is conformal, and  $k-n_k\geq N$. Thus, this lemma follows directly from Lemma \ref{lem:expanding}.
\end{proof}

\begin{proof}[Proof of Lemma \ref{lem:top}]
Given any component $D$ of $\cbar\sm K$, let $\{\Omega_k(D)\}$ be the end of $K$ such that $D\subset\Omega_k(D)$ for all $k\ge 0$. By Lemma \ref{lem:eventually-periodic}, the end $\{\Omega_k(D)\}$ is eventually $f_\star$-periodic and marked.
	
	First, assume that $\{\Omega_k\}=\{\Omega_k(D)\}$ is periodic under $f_\star$. Without loss of generality, we may assume that the period is one and that $f(\partial\Omega_k)=\partial\Omega_{k-1}$ for every $k\geq1$. Let $\g_0\subset\Omega_0$ be a Jordan curve separating $\partial\Omega_0$ from $P_f\cap\Omega_0$. Then there exists a unique component $\gamma_1$ of $f^{-1}(\gamma_0)$ contained in $\Omega_1$ that separates $\partial\Omega_1$ from $P_f\cap\Omega_1=P_f\cap \Omega_0$. Thus, there exists a homeomorphism $\theta_0:\cbar\to\cbar$ isotopic to $id$ rel $P_f$, such that $\theta_0(\g_0)=\g_1$. By lifting (Lemma \ref{lem:lift}), we obtain a sequence of homeomorphisms $\{\theta_k\}$ of $\cbar$ isotopic to $id$ rel $P_f$, such that
	$$
	f\circ\theta_{k+1}=\theta_k\circ f\quad\text{ on }\cbar.
	$$
	Set $\phi_k=\theta_k\circ\cdots\circ\theta_0$. Then $\g_{k+1}=\phi_k(\g_0)$. By Lemma \ref{thm:isotopy}, $\{\phi_k\}$ uniformly converges to a quotient map $\varphi$ of $\cbar$. Denote $\g=\varphi(\g_0)$. Then $f(\g)=\g$, and $\g$ is locally connected.
	
	According to Lemma \ref{lem:expanding}, the Hausdorff distance between $\partial\Omega_k$ and $\g_k$ converges to zero. Consequently, $\partial\Omega_k\to\g$ as $k\to\infty$. Thus, $\g\subset K$. Then $D$ lies in a component of $\cbar\setminus\g$.
	
	We claim that $D$ is simply a component of $\cbar\setminus\gamma$. If this is false, there exist a point $z\in \partial D$ not  in $\gamma$ and a neighborhood $W$ of $z$ disjoint from $\partial \Omega_k$ for every sufficiently large integer $k$. Since $W\cap D\neq\emptyset$, it follows that $W\subset \Omega_k$ for every  $k\geq 0$. In particular, $W$ is disjoint from every $E_k$, and hence avoids $K=\ov{\bigcup_{k\geq0} E_k}$. Thus, $W\subset D$, a contradiction.
	
	This claim implies that  $\partial D$ is locally connected since $\g$ is locally connected.

	Now, suppose that $\{\tilde{\Omega}_k\}=\{\Omega_k(D)\}$ is strictly eventually periodic under $f_\star$. Let $q>0$ be the smallest integer such that $\{\Omega_k\}=f_\star^q(\{\tilde{\Omega}_k\})$ is periodic.
	
	Let $\tilde{\gamma}_q$ be the component of $f^{-q}(\g_0)$ contained in $\tilde{\Omega}_q$ that separates $\partial\tilde{\Omega}_q$ from $\tilde{\Omega}_q\cap f^{-q}(P_f)$. For all $k\geq 0$, define a homeomorphism $\tilde{\phi}_{k}:=\theta_{q+k}\circ\cdots\circ\theta_q$. 
	Then 
\begin{enumerate}
\item $f^q\circ\tilde{\phi}_{k}(z)={\phi}_k\circ f^q(z)$ for every $z\in\cbar$;
\item $\tilde{\g}_{q+k+1}:=\tilde{\phi}_k(\tilde{\g}_q)$ is contained in $\tilde{\Omega}_{q+k+1}$ and isotopic to $\tilde{\g}_q$ rel $f^{-q}(P_f)$.
\end{enumerate}

	By a similar argument as in the periodic case, we can prove that the map $\tilde{\phi}_k$ uniformly converges to a quotient map $\tilde{\varphi}$, and $D$ is a component of $\cbar\setminus\tilde{\varphi}(\tilde{\g}_q)$. Thus, $\partial D$ is locally connected.

	It remains to show that the diameters of the components of $\ov\C\setminus K$ tend to $0$.
	
	Given any $\epsilon>0$,  there exist only  finitely many ends $\{\Omega_k\}$ with ${\rm diam}(\bigcap_{k\geq0}\ov{\Omega_k})\geq \epsilon$ by Lemma \ref{lem:eventually-periodic}. Therefore, we simply need to consider the components $D$ of $\cbar\setminus K$ for which $\{\Omega_k(D)\}$ are such  ends.
	As shown above, $D$ is a complementary component of a curve $\g_{D}=\lim_{k\to\infty}\partial \Omega_k(D)$. Since there exist finitely many curves $\g_{D}$, and only finitely many components of $\cbar\setminus \g_{D}$ have diameters larger than $\epsilon$, we complete the proof of the lemma.
\end{proof}

\begin{theorem}\label{thm:locally-connected}
	Every extremal  chain of a PCF rational map is locally connected.
\end{theorem}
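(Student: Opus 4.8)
The plan is to induct on the level $n$, showing that every level-$n$ extremal chain of the PCF rational map $f$ is locally connected, with essentially all of the analysis already packaged into Lemma~\ref{lem:top}. For the base case $n=0$, a level-$0$ extremal chain is the closure $\ov U$ of a Fatou domain $U$ of $f$; the local connectivity of $\ov U$ is the classical fact that Fatou components of subhyperbolic (in particular PCF) rational maps have locally connected closure, and it can also be obtained from the expansion of $f$ in the orbifold metric near $J_f$ (Lemma~\ref{lem:expanding}) by pulling back a B\"ottcher--round Jordan curve of the periodic domain in the orbit of $U$ and applying the shrinking lemma.

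For the inductive step in the periodic case, assume every level-$n$ extremal chain is locally connected and let $K\ne\cbar$ be a periodic level-$(n+1)$ extremal chain of period $p$ (the case $K=\cbar$ being trivial). By Lemma~\ref{lem:dyn-def}, $K=\ov{\bigcup_{k\ge0}E_k}$, where $E_0$ is the connected union of all periodic level-$n$ extremal chains contained in $K$, $f^p(E_0)=E_0$, and $E_k$ is the component of $f^{-kp}(E_0)$ containing $E_0$. Each component of $E_0$ is a level-$n$ extremal chain $\ne\cbar$, hence locally connected by the induction hypothesis and with boundary in $J_f$ by the discussion following Lemma~\ref{lem:chain-map}; since a connected finite union of locally connected continua is locally connected, $E_0$ is a locally connected continuum with $\partial E_0\subset J_f$. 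Thus $K$ is a growing continuum of the PCF map $f^p$ with locally connected generator $E_0$, and Lemma~\ref{lem:top} applied to $f^p$ gives that $K$ is locally connected.

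For the eventually periodic case, recall from Lemma~\ref{lem:chain-map} that every level-$(n+1)$ extremal chain $K$ is eventually periodic: there is $q\ge1$ with $f^q(K)=K^\ast$ a periodic level-$(n+1)$ extremal chain, which is locally connected by the previous step. It then suffices to prove the following pull-back assertion: if $X$ is a locally connected extremal chain of $f$ with $X\ne\cbar$, then every extremal chain $Y$ of the same level with $f(Y)=X$ is locally connected; applying this successively along the orbit $K\to f(K)\to\cdots\to f^{q-1}(K)\to K^\ast$ yields the theorem. This assertion is established by a pull-back argument parallel to the proof of Lemma~\ref{lem:top}: $f\colon Y\to X$ is proper, the complementary components of $Y$ map onto complementary components of $X$, their boundaries inherit local connectivity, and --- using the orbifold metric together with the estimates of Lemmas~\ref{lem:eventually-periodic} and~\ref{lem:top} --- their diameters tend to zero, whence $Y$ is locally connected.

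The main obstacle is precisely this pull-back assertion in the eventually periodic case. Unlike the periodic case, it cannot be deduced directly from Lemma~\ref{lem:top}, because a strictly preperiodic extremal chain satisfies $f^s(K)\ne K$ for every $s\ge1$ (distinct extremal chains of the same level cannot contain one another, so $f^s(K)\subset K$ would force $K$ periodic), and growing continua are forward invariant; hence a preperiodic extremal chain is not a growing continuum of any iterate of $f$, and the end-analysis of Lemma~\ref{lem:top} must be re-run by hand for $f\colon Y\to X$, even though it reuses the machinery of ends, the map $f_\star$, and the diameter estimates already developed there. A secondary point needing care is the base case $n=0$, which rests on the classical local connectivity of Fatou-component boundaries rather than on the growing-continuum formalism.
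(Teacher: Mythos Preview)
Your overall strategy---induct on the level, handle the base case via the classical local connectivity of Fatou-component closures, and reduce the periodic level-$(n+1)$ case to Lemma~\ref{lem:top} applied to $f^p$ with generator the union of periodic level-$n$ chains---matches the paper exactly.

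The preperiodic case, however, has a gap in your single-step pull-back formulation. Your claim that ``the complementary components of $Y$ map onto complementary components of $X$'' is not correct: a component $D$ of $\cbar\sm Y$ may contain other level-$(n+1)$ extremal chains $K_j$ from the decomposition $f^{-1}(X)=\bigcup K_i$ of Lemma~\ref{lem:chain-map}\,(2), so $f(D)$ need not lie in $\cbar\sm X$ and can in fact cover $\cbar$. Consequently there is no direct correspondence between components of $\cbar\sm Y$ and those of $\cbar\sm X$, and you cannot simply transport local connectivity of boundaries or diameter estimates across one step of $f$. Moreover, Lemma~\ref{lem:eventually-periodic} is stated for ends of a growing continuum (a nested sequence $\{\Omega_k\}$ coming from a fixed generator), and neither $Y$ nor a single complementary component of $Y$ carries this structure, so the lemma cannot be cited for the diameter decay.

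The paper's fix is not to iterate a one-step pull-back but to pull back the \emph{entire sequence} at once: with $f^q(K')=K$ periodic of period $p$ and generator $E$, set $E_k'$ to be the unique component of $f^{-q}(E_k)$ contained in $K'$ (uniqueness via Lemma~\ref{lem:deg}, since $E$ is a skeleton of each $E_k$), so that $E_k'\subset E_{k+1}'$ and $K'=\ov{\bigcup_k E_k'}$. One then defines ends of $K'$ from the sequence $\{E_k'\}$, checks that each end of $K'$ is sent by $f^q$ to an end of $K$, and re-runs the proof of Lemma~\ref{lem:top} verbatim using the periodic data for $K$. This recovers both the local connectivity of $\partial D$ for each component $D$ of $\cbar\sm K'$ and the diameter decay, without ever comparing $\cbar\sm K'$ directly to $\cbar\sm f(K')$. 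Your own diagnosis that the end-analysis ``must be re-run by hand'' is right; the point is that it should be re-run for the $q$-fold pull-back of the generating sequence, not for a bare one-step map $f\colon Y\to X$.
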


\begin{proof}
	Every level-$0$ extremal chain of a PCF rational map $f$ is clearly locally connected. Inductively, for $n\geq0$, assume that level-$n$ extremal chains are locally connected. If $K$ is a periodic level-$(n+1)$ extremal chain, then it is locally connected by Lemma \ref{lem:top} and the induction.

	Now, suppose that $K'$ is a strictly preperiodic level-$(n+1)$ extremal  chain such that $f^q(K')=K$, which is periodic with period $p$. Let $E$ be the union of all periodic level-$n$ extremal  chains contained in $K$, and let $E_k$ denote the component of $f^{-pk}(E)$ containing $E$ for every $k\geq 0$.
	We may assume that $E$ is a skeleton of every $E_k$ rel $P_f$. Then for each $k\ge 0$, there exists a unique component $E_k'$ of $f^{-q}(E_k)$ contained in $K'$ such that  $E_{k}'\subset E_{k+1}'$ and $K'=\ov{\bigcup_{k\ge 0} E_k'}$.\vspace{1pt}
	
The ends for $K'$ can  be similarly defined as in the periodic case. If $\{\Omega_k'\}$ is an end of $K'$, then there exists a unique end $\{\Omega_k\}$ of $K$ such that $f^q(\partial\Omega_k')=\partial\Omega_k$ for every sufficiently large integer $k$. Therefore, applying a similar argument as in the proof of Lemma \ref{lem:top}, we can establish the local connectivity of $K'$. The details are omitted.
\end{proof}

\subsection{Growing curves}
Let $f$ be a   PCF  rational map, and let  $K$ be a growing continuum generated by an $f$-invariant continuum $E$. As before, $E_k$ denotes the component of $f^{-k}(E)$ containing $E$, and $E$ is assumed to be a skeleton of $E_k$ (rel $P_f$) for every $k\geq0$.

A curve $\g: [0,1]\to K$ is called a {\bf growing curve} if, for any small number $\epsilon>0$, there exists an integer $k\geq0$ such that $\g[0, 1-\epsilon]\subset E_k$. The point $\g(1)$ is called the {\bf terminal} of $\g$.

By definition, any curve in $E_k$ is growing, including the trivial ones. Here, a curve is {\it trival} if its image is a singleton. Moreover, the image or lift of a growing curve in $K$ under $f$  is also a growing curve.

Growing curves will be crucial in constructing invariant graphs on a maximal Fatou chain in the next section. To this end, we aim to establish their existence through the following lemma.

\begin{lemma}\label{lem:growing}
 Suppose that $E$ is locally connected. Then the following statements hold:
\begin{enumerate}
\item Any point of $K$ is the terminal of a growing curve in $K$;

\item For any two points $a$ and $b$ in distinct components of $\cbar\sm K$, there exist two growing curves $\de_\pm\subset K$ with the same terminal, such that $E\cup\de_+\cup\de_-$ separates $a$ from $b$.
\end{enumerate}
\end{lemma}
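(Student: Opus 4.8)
The plan is to build growing curves by successively pulling back curves in the generator $E$ and using the expansion of $f$ near $J_f$ to control the tails, exactly in the spirit of the end-analysis carried out for Lemma~\ref{lem:top}. Fix the notation: $E_k$ is the component of $f^{-k}(E)$ containing $E$, $E$ is a skeleton of each $E_k$ rel $P_f$, and since $E$ is locally connected, Lemma~\ref{lem:top} gives that $K$ is locally connected, so each $E_k$ is locally connected (each $E_k = f^{-k}(E)\cap E_k'$ for the appropriate component, and local connectivity is inherited; alternatively one argues directly). In particular every $E_k$ is arcwise connected, and for any component $D$ of $\cbar\setminus K$, the nested sequence of complementary components $\{\Omega_k(D)\}$ is an end of $K$, with $\bigcap_k\overline{\Omega_k(D)}$ a continuum whose diameter is controlled by Lemma~\ref{lem:eventually-periodic} when the end is non-marked.

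\textbf{Part (1).} Let $z\in K$. If $z\in E_k$ for some $k$, the constant curve is a growing curve with terminal $z$, so assume $z\in K\setminus\bigcup_k E_k$. Choose points $z_k\in E_k$ with $z_k\to z$ (possible since $\bigcup_k E_k$ is dense in $K$). The idea is to connect $z_k$ to $z_{k+1}$ by an arc inside $E_{k+1}$ whose diameter is small, and concatenate. To get the diameter control: pick $z_0$ fixed; by local connectivity of $E_{k+1}$ and the fact that $z_k,z_{k+1}$ both lie in $E_{k+1}$, there is an arc $\beta_k\subset E_{k+1}$ joining them; we must arrange $\mathrm{diam}(\beta_k)\to 0$. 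Here is where I would use the structure: each $E_{k+1}$ is the increasing union of pullbacks, and a point $z\notin\bigcup_k E_k$ lies in deeper and deeper "fringe" pieces; more precisely, $z$ determines an end $\{\Omega_k(D)\}$ for each complementary component $D$ touching $z$, and because $z\notin E_k$, the piece of $E_{k+1}$ near $z$ that is "new" (i.e. in $E_{k+1}\setminus E_k$) sits inside a region that maps conformally by a high iterate onto a bounded-homotopic-diameter piece of $\cbar\setminus E_1$, so by Lemma~\ref{lem:expanding} its diameter is exponentially small. Thus $\beta_k$ can be chosen inside this shrinking region, giving $\mathrm{diam}(\beta_k)\le M\rho^{-k}$. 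Then $\gamma:=\overline{\bigcup_k \beta_k}$, reparametrized so that $\beta_k$ occupies $[1-2^{-k},1-2^{-k-1}]$, is a curve in $K$ with $\gamma(1)=z$; and $\gamma[0,1-\epsilon]\subset E_k$ for $k$ large, since only finitely many $\beta_j$ are needed. Hence $\gamma$ is a growing curve with terminal $z$.

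\textbf{Part (2).} Let $a,b$ lie in distinct components $D_a,D_b$ of $\cbar\setminus K$. Since $E$ is connected, $a$ and $b$ lie in (possibly the same, possibly different) components of $\cbar\setminus E$; in any case $K\setminus E$ is what separates them. The approach: find a point $w$ in $\partial D_a\cap\partial D_b$ if such exists, or more robustly, work at the level of $E_k$. Because $a,b$ are in distinct components of $\cbar\setminus K$, for $k$ large they lie in distinct components of $\cbar\setminus E_k$ (this follows since $E$ is a skeleton of $E_k$ and $b(E_k)$ stabilizes — if they were in the same component of $\cbar\setminus E_k$ for all $k$, they would be in the same component of $\cbar\setminus K$, contradiction). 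Fix such a $k_0$ and let $\sigma_{k_0}\subset E_{k_0}$ be an arc separating $a$ from $b$ together with... — actually, since $E_{k_0}$ is a locally connected continuum separating $a$ from $b$, there is a cross-cut type structure: choose an arc $\alpha_{k_0}\subset E_{k_0}$ such that $E\cup\alpha_{k_0}$ already separates $a$ from $b$ (possible because the "new" part $E_{k_0}\setminus E$ that does the separating is arcwise accessible). Now iterate downward/upward: pull back to get a growing refinement. The cleanest route is to show directly that $K$ separates $a$ from $b$ by an arc, and then that arc is a concatenation of a sub-arc of $E$ and two growing curves meeting at a common terminal $w$ — the terminal being the point where the separating arc "exits" $E$ into the fringe, reached as a growing curve by the argument of Part (1). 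So: take an arc $\tau\subset K$ separating $a$ from $b$ (exists by local connectivity of $K$ and the fact that $K$ is a separating continuum between $a$ and $b$); decompose $\tau$ into its maximal sub-arcs lying in $E$ and sub-arcs lying in $K\setminus E$; each of the latter is, by the argument in Part (1) applied to its two endpoints, realizable as (a pair of) growing curves; and by shrinking $\tau$ one reduces to the case $\tau = \delta_+\cup\delta_-$ with $\delta_\pm$ growing curves sharing a terminal and $E\cup\delta_+\cup\delta_-$ separating $a$ from $b$.

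\textbf{Main obstacle.} The hard part is the diameter control that makes the concatenated curves genuinely \emph{growing} (i.e. the tails $\gamma[1-\epsilon,1]$ shrink into deeper and deeper $E_k$): one must argue that the "new material" $E_{k+1}\setminus E_k$ accessible near a boundary point of $K$ lies in regions of exponentially small diameter, which requires identifying, for each fringe piece, a high iterate of $f$ mapping it conformally onto a piece of bounded homotopic diameter in $\cbar\setminus E_1$, and then invoking the expansion Lemma~\ref{lem:expanding} together with the homotopic-diameter bound from Lemma~\ref{lem:orbifold} — essentially reusing the machinery behind Lemma~\ref{lem:eventually-periodic} but now for the generator-side pieces rather than the complementary ends. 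The second delicate point in Part (2) is ensuring that the separating arc can be taken to meet $E$ in a controlled way (a single sub-arc, with the complement a pair of growing curves off a common terminal), which is a planar-topology argument using that both $E$ and $K$ are locally connected continua and $E$ is a skeleton of each $E_k$.
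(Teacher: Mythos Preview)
Your Part~(1) sketch is in the right spirit and close to the paper's approach, though the paper packages it more carefully: it first builds an \emph{equicontinuous} family $\Gamma_\infty$ of growing curves by lifting a fixed equicontinuous family $\{\beta_w:w\in E_1\}$ (from Lemma~\ref{lem:equicontinuous}) through successive $f^k$, and then takes its closure under uniform limits to obtain a \emph{sequentially compact} family $\Gamma$ (Proposition~\ref{pro:continuous}). Your direct ``join $z_k$ to $z_{k+1}$ by a small arc in $E_{k+1}$'' argument needs a uniform local-connectivity modulus for the varying continua $E_{k+1}$, which you don't establish; the paper sidesteps this by always lifting from the single continuum $E_1$.

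Part~(2) has a genuine gap. Your key claim---that $a$ and $b$ must lie in distinct components of $\cbar\setminus E_k$ for $k$ large, since otherwise they would lie in the same component of $\cbar\setminus K$---is false: this is exactly the hard case the paper treats. It is entirely possible that $a,b$ lie in the same component $\Omega_k$ of $\cbar\setminus E_k$ for \emph{every} $k$ (they determine the same end), yet are separated by $K=\overline{\bigcup_k E_k}$, because $K$ contains limit points not in any $E_k$. Your fallback route (take a separating arc $\tau\subset K$, decompose it into pieces in $E$ and pieces in $K\setminus E$, and declare the latter to be growing curves) also fails: an arc in $K\setminus E$ is not a growing curve in general, and Part~(1) only gives you growing curves \emph{to} the endpoints of such a piece, not a growing-curve realization of the piece itself.

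The paper's actual argument for Part~(2) is quite different and crucially uses the sequential compactness of $\Gamma$. One parametrizes $\partial U_a$ by $\eta:\R/\Z\to\partial U_a$, picks for each $t$ a growing curve $\delta_t\in\Gamma$ with terminal $\eta(t)$, and forms the curves $\ell_t^-=\delta_0\cdot\eta|_{[0,t]}\cdot\delta_t^{-1}$ and $\ell_t^+=\delta_t\cdot\eta|_{[t,1]}\cdot\delta_0^{-1}$. Since $\ell_t^-\cdot\ell_t^+$ always ``splits'' $\{a,b\}$ rel $E$, one of $\ell_t^\pm$ does; letting $t_*$ be the infimum of $t$ with $\ell_t^-$ splitting, one extracts (via sequential compactness of $\Gamma$) limits $\delta_\pm$ of $\delta_{s_n},\delta_{t_n}$ with $s_n\nearrow t_*\searrow t_n$, and a short homotopy argument shows $\delta_-\cdot\delta_+^{-1}$ splits $\{a,b\}$. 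The compactness of $\Gamma$ is essential here and is precisely what your Part~(1) construction, as written, does not provide.
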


Let $\g_1,\g_2:[0,1]\to \ov{\C}$ be two curves with $\g_1(1)=\g_2(0)$. The {\bf concatenation} $\g_1\cdot\g_2$  is a curve parameterized by
\[\g_1\cdot\g_2(t)=\left\{
                     \begin{array}{ll}
                       \g_1(2t) & \hbox{if $t\in[0,1/2]$,} \\
                       \g_2(2t-1) & \hbox{if $t\in[1/2,1]$.}
                     \end{array}
                   \right.
\]
If  $\g_1,\ldots,\g_n$ can  be successively concatenated,  their concatenation is parameterized by
\begin{equation}\label{eq:parameterize}
\text{$\g_1\cdot\g_2\cdots\g_n(t):=\g_1\cdot(\g_2\cdot(\cdots(\g_{n-1}\cdot\g_n)))(t)$,\quad $t\in[0,1]$}.
\end{equation}

\begin{proposition}\label{pro:continuous}
Suppose that $E$ is locally connected. Then there exists a family $\G$ of growing curves in $K$ such that any point of $K$ is the terminal of an element in $\G$, and that $\G$ is sequentially compact under uniform convergence, i.e., any infinite sequence in $\G$ has a convergent subsequence whose limit is also in $\G$.
\end{proposition}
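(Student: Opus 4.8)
The plan is to build the family $\G$ as a limit of finitely generated families, using a Cantor-type diagonal construction combined with the self-similarity coming from $f$. First I would fix, for each component $D$ of $E_1\setminus E$ (of which there are finitely many, since $E_1$ is compact and locally connected so $\partial E$ has finitely many components contributing such $D$'s), a reference growing curve obtained from Lemma~\ref{lem:growing}\,(1). Since $E$ is locally connected, so is each $E_k$ (being a finite union of homeomorphic copies of $E$ glued along the Julia set, or more directly by the argument behind Lemma~\ref{lem:top} applied to the truncated chain), hence $E_k$ is arcwise connected and locally arcwise connected. The key structural observation is that any growing curve $\g$ decomposes, up to reparametrization, as a concatenation $\g_0\cdot\g_1\cdot\g_2\cdots$ where $\g_0$ lies in $E$ and, for $k\ge 1$, $\g_k$ is a curve in $\overline{E_k}\setminus E_{k-1}$ whose lift under $f^{k}$ lands in one of the finitely many reference pieces; the terminal of $\g$ is $\lim_k \g_k(1)$, which exists precisely because the relevant nested ``ends'' $\{\Omega_j\}$ shrink (Lemma~\ref{lem:eventually-periodic} and the proof of Lemma~\ref{lem:top}).

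The construction of $\G$ then proceeds as follows. Let $\G_0$ be the (infinite, but ``bounded'') collection of all concatenations $\g_0\cdot\beta_1\cdots\beta_m$ where $\g_0$ is a geodesic-type arc in $E$ (I would use arcs in $E$ realizing minimal diameter between their endpoints, so that $\G\cap \{\text{curves in }E\}$ is already sequentially compact by Arzel\`a--Ascoli together with local connectivity of $E$), each $\beta_i$ is an $f^{-i}$-lift of a reference curve adapted to the nesting data, and $m$ is arbitrary. To ensure $\G$ is closed under uniform limits and that every point of $K$ is a terminal, I would take $\G$ to be the closure (under uniform convergence, after a fixed choice of parametrization normalization such as constant-speed in the spherical metric) of the set of all such finite concatenations, \emph{continued to the terminal}: that is, given a point $z\in K\setminus \bigcup_k E_k$, the nested sequence $\{\Omega_j(z)\}$ singles out at each scale which lift-piece to append, and the infinite concatenation converges uniformly because $\mathrm{diam}(\beta_i)\to 0$ — this last fact follows from Lemma~\ref{lem:eventually-periodic} (wandering/unmarked ends shrink exponentially) and, for the finitely many marked periodic ends, from the convergence $\partial\Omega_k\to\g$ established inside the proof of Lemma~\ref{lem:top}, which forces the ``tail annuli'' the curve must cross to shrink. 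For $z\in E_k$ the curve is just a finite concatenation ending with a minimal arc in the appropriate homeomorphic copy of $E$ inside $E_k$, so statement (1) of Lemma~\ref{lem:growing} is directly recovered.

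Sequential compactness is then verified by a diagonal argument across scales. Given a sequence $\{\g^{(n)}\}\subset \G$, parametrize each as $\g^{(n)}_0\cdot\g^{(n)}_1\cdot\g^{(n)}_2\cdots$ in the normalized way. There are finitely many combinatorial types for $\g^{(n)}_0$ up to which component of $E_1\setminus E$ it heads toward, and within each type the arcs in $E$ form a sequentially compact family (Arzel\`a--Ascoli plus uniform local connectivity of $E$, which gives equicontinuity of constant-speed parametrizations — this is the standard argument that a Peano continuum carries a ``uniformly continuous family of arcs''). Pass to a subsequence converging on the $E$-part; then on the first lift-piece there are again finitely many choices, pass to a further subsequence; iterate, and diagonalize. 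The uniform smallness $\mathrm{diam}(\g^{(n)}_i)\le M\rho^{-i}$ (uniform in $n$, by the same shrinking estimates) guarantees that the diagonal subsequence converges uniformly on all of $[0,1]$, not merely on compact subsets of $[0,1)$, and that the limit is again a growing curve with a well-defined terminal — hence lies in $\G$. Finally, statement (2) of Lemma~\ref{lem:growing}: given $a,b$ in distinct components of $\cbar\setminus K$, that lemma already produces $\de_\pm$; to put them in $\G$ I would re-run the construction of their terminal-tails using the nesting data of the end separating $a$ from $b$, which is exactly the mechanism $\G$ is built to contain.

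\textbf{Main obstacle.} The delicate point is the \emph{uniform} equicontinuity needed for the diagonal argument: I must choose parametrizations of all curves in $\G$ so that the whole family is equicontinuous, and show the uniform exponential bound $\mathrm{diam}(\g_i)\le M\rho^{-i}$ holds simultaneously for the $E$-pieces (via local connectivity: small diameter of the endpoint-pair forces small diameter of the chosen minimal arc — a quantitative Peano-continuum statement) and for the lift-pieces (via the expansion/shrinking lemmas). Marrying these two sources of smallness into a single modulus of continuity, and doing so uniformly over the infinitely many curves in $\G$ including the strictly-eventually-periodic and wandering ends, is where the real work lies; everything else is bookkeeping on concatenations and a routine diagonalization.
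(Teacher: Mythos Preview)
Your overall architecture---concatenate pieces scale by scale, use exponential shrinking of lifts to control tails, then extract compactness---is exactly the paper's strategy. Two concrete problems, however, derail your implementation. First, you invoke Lemma~\ref{lem:growing}\,(1) to produce reference curves, but that lemma is \emph{proved from} this proposition, so the citation is circular; what you actually need (a curve in $E_1$ from any $w\in E_1$ to $E$) follows directly from arcwise connectivity of the locally connected continuum $E_1$. Second, and more seriously, your claim that $E_1\setminus E$ has finitely many components is unjustified and false in general: local connectivity of $E_1$ gives no such finiteness. This breaks your diagonal argument at the step ``finitely many combinatorial types for $\g^{(n)}_0$,'' and the same issue recurs at every level.

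The paper sidesteps both problems by appealing to Lemma~\ref{lem:equicontinuous} (the ``quantitative Peano-continuum statement'' you correctly anticipated needing): it directly furnishes an \emph{equicontinuous} family $\G_0=\{\beta_w:w\in E_1\}$ of curves in $E_1$ joining each $w$ to $E$, with no finiteness hypothesis. Lifting by $f^k$ gives equicontinuous families $\G_k$ with uniform diameter bound $M\rho^{-k}$ (Lemma~\ref{lem:expanding}). The concatenations $\g_z=\beta_0\cdot\beta_1\cdots\beta_k$, with the dyadic parametrization~\eqref{eq:parameterize}, then form a family $\G_\infty$ which is shown equicontinuous by a direct $\epsilon$--$\delta$ estimate splitting $[0,1]$ into a finite prefix (handled by equicontinuity of $\G_0,\dots,\G_N$) and a tail (handled by the geometric series $\sum_{k\ge N}M\rho^{-k}$). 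Arzel\`a--Ascoli applied once to the closure $\G$ of $\G_\infty$ gives sequential compactness; no scale-by-scale diagonalization is needed. Terminals in $K\setminus\bigcup_k E_k$ are reached as uniform limits, not via a separate end-by-end construction.
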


\begin{proof}
 Since $E_1$ is locally connected, each point $w\in E_1$ can be joined to $E$ by a curve  $\beta_w\subset E_1$ with the following conditions: if $w\in E$, then $\beta_w\equiv w$; otherwise, it holds that $\beta_w(0)\in E$ and $\beta_w(0,1]\cap E=\emptyset$. By Lemma \ref{lem:equicontinuous},  we can require that $\G_0=\{\beta_w:w\in E_1\}$  is equicontinuous. Thus, the homotopic diameters of curves in $\G_0$ are bounded above by a constant.

For any integer $k\geq1$ and any point $z\in E_{k+1}$, set $w:=f^{k}(z)\in E_1$. If $w\in E_0$, define $\beta_z\equiv z$. Otherwise, since $E$ is a skeleton of $E_k$ rel $P_f$, we have $\beta_w(0,1]\cap P_f=\emptyset$. This implies that $\beta_w$ has a unique lift by $f^{k}$ based at $z$, which is defined as $\beta_z$.  Since $\G_0$ is equicontinuous, the collection $\G_k:=\{\beta_z,z\in E_{k+1}\}$ is also equicontinuous. According to Lemma \ref{lem:expanding}, each curve in $\G_k$ has a diameter bounded above by $M/\rho^k$ for some constants $M>0$ and $\rho>1$.

Now, for every $k\geq1$ and any point $z\in E_{k+1}$, we obtain a growing curve $\g_z:=\beta_0\cdot\beta_1\cdots \beta_k$ that joins $E$ to $z$ such that $\beta_i\in\G_i$ for every $i=0,\ldots,k$. By its parameterization given in \eqref{eq:parameterize}, it follows that
\begin{equation}\label{eq:growing}
\g_z\bigg[0,1-\frac{1}{2^k}\bigg]\subset E_k\quad\text{ for every }k\geq1.
\end{equation}

We claim that the family of curves $\G_\infty:=\{\g_z: z\in\bigcup_{k\geq1} E_k\}$ is equicontinuous.
Given any $\epsilon>0$, there exists an integer $N>0$ such that $M/(\rho^{ N-1}(\rho-1))<\epsilon$. Moreover, for every $k\geq0$, there exists $\de_k>0$ such that $|\beta(t_1)-\beta(t_2)|<\epsilon$ if $|t_1-t_2|<\de_k$ for any curve $\beta\in \G_k$.  Set $\de:=\min\{\de_0,\ldots,\de_N\}$. Let $\g=\beta_0\cdot\beta_1\cdots \beta_k$ be any element in $\G_\infty$. If $k\leq N$, according to the parameterization of $\g$, we have
\begin{equation}\label{eq:case1}
|\g(t_1)-\g(t_2)|<2\epsilon \quad \text{ as }  |t_1-t_2|<\de/2^{ N+1}.
\end{equation}
In the case of $k> N$,  the diameter of $\g[1-1/2^N,1]=\beta_N\cdots \beta_k$ is bounded above by $M/\rho^N+\cdots+M/\rho^k<M/(\rho^{ N-1}(\rho-1))<\epsilon$. Thus, $|\g(t_1)-\g(t_2)|<\epsilon$ when $t_1,t_2\ge 1-1/2^N$. If $t_1,t_2\in [0,1-1/2^{ N+1}]$, then  \eqref{eq:case1} holds. Thus, the claim is proved.

Let $\G$ be the union of $\G_\infty$ and the limit of every uniformly convergent sequence in $\G_\infty$. Then $\G$ is also equicontinuous. By the Ascoli-Arzel\`{a} theorem, $\G$ is a normal family. If $\g$ is the
limit of a uniformly convergent sequence in $\G$, then there exists a sequence of curves in $\G_\infty$ that also
uniformly converges to $\g$. Thus, $\G$ is sequentially compact. By \eqref{eq:growing}, for any $\gamma\in\Gamma$, we have $\g[0, 1-1/2^k]\subset E_k$ for every $k\geq0$. Hence, $\G$ consists of growing curves in $K$.

Fix a point $z\in K$. If $z\in E_k$ for some $k\ge0$, a curve in $\G_\infty$ joins $E$ to $z$. Otherwise, there exists a point $z_k\in E_k$ for every $k$ such that $z_k\to z$ as $k\to\infty$. For each $k$, let $\g_k$ be a curve in $\G_\infty$ joining $E$ to $z_k$. By taking a subsequence if necessary, the curve $\g_k$ uniformly converges to a curve $\g\in\G$, which joins $E$ to $z$.
\end{proof}

\begin{proof}[Proof of Lemma \ref{lem:growing}]
Statement (1) follows directly from Proposition \ref{pro:continuous}.

(2) If $a$ and $b$ belong to distinct components of $\cbar\sm E_m$ for some $m\ge 0$, we can choose the required curves $\de_\pm$ in $E_m$ since $E_m$ is locally connected. Thus, we assume that there exists an end $\{\Omega_k\}$ of $K$ such that $a,b\in\Omega_k$ for every $k\geq0$.

Let $U_a$ be the component of $\ov{\C}\setminus K$ containing $a$. Then $U_a$ is contained in each $\Omega_k$. Since $K$ is locally connected by Lemma \ref{lem:top}, it follows that $\partial U_a$ is locally connected. Let $\eta:\R/\Z\to \partial U_a$ be a parameterization of $\partial U_a$.

A curve $\g$ with endpoints in $E$ is said to {\bf split $\{a,b\}$} ({\bf rel $E$}) if $E$ contains a curve $\alpha$ with the same endpoints as those of $\g$ such that $\g\cdot \alpha^{-1}$ is not contractible in $\ov{\C}\setminus\{a,b\}$. Note that if $\g$ splits $\{a,b\}$, then $\g\cdot \alpha^{-1}$ is not contractible in $\ov{\C}\setminus \{a,b\}$ for \emph{any} curve $\alpha\subset E$ with the same endpoints as those of $\g$.

 According to Proposition \ref{pro:continuous}, for any  $t\in \R/\Z$, there exists a growing curve $\de_{t}\in\G$ with $\de_t(0)\in E$ and $\de_t(1)=\eta(t)\in \partial U_a$. Then for every $t\in\R/\Z$, we have two curves (see Figure \ref{fig:two-case})
$$
\ell^-_t:=\de_{0}\cdot \eta[0,t]\cdot \de_{t}^{-1}\text{\quad and\quad }\ell^+_t:=\de_{t}\cdot \eta[t,1]\cdot \de_{0}^{-1}.
$$
Since $\ell_t^-\cdot\ell_t^+=\de_0\cdot \eta\cdot \de_0^{-1}$, which splits $\{a,b\}$,  at least one of $\ell_t^{+}$ and $\ell_t^-$ splits $\{a,b\}$.

Note that $\ell_1^{-}=\de_0\cdot \eta\cdot \de_0^{-1}$, which splits $\{a,b\}$. Let $t_*$ denote the infimum of $t\in[0,1]$ such that $\ell_t^-$ splits $\{a,b\}$. Then there exists a sequence of decreasing numbers $\{t_n\}\subset [t_*,1]$ such that $t_n\to t_*$ and $\ell_{t_n}^-$ splits $\{a,b\}$.  Let $\{s_n\}\subset [0,t_*]$ be a sequence of increasing numbers converging to $t_*$. It follows that each $\ell_{s_n}^+$ splits $\{a,b\}$. Here, $t_n$ or $s_n$ are possibly constant for sufficiently large  $n$.
\begin{figure}[http]
\centering
\includegraphics[width=9.5cm]{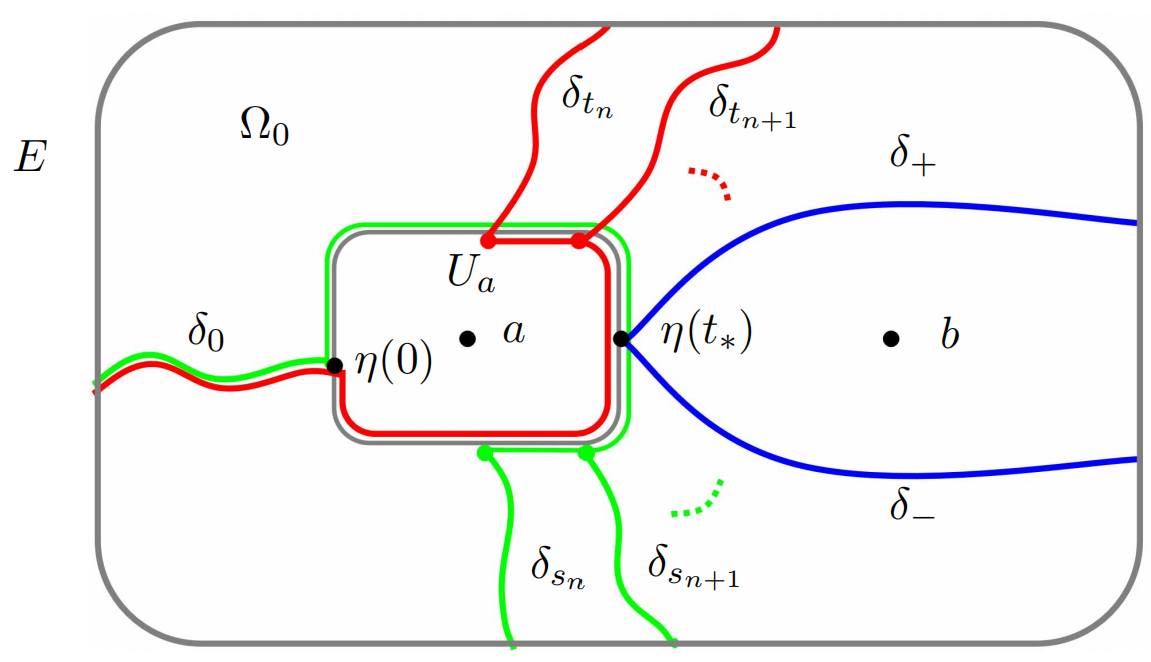}
\caption{Split $\{a,b\}$ by growing curves.}\label{fig:two-case}
\end{figure}

We claim that the curve $\de_{s_{n}}\cdot \eta[s_n,t_n]\cdot \de_{t_{n}}^{-1}$ splits $\{a,b\}$ for each $n\geq1$; see Figure \ref{fig:two-case}. Otherwise, since $$
\ell_{t_n}^-=\de_0\cdot \eta[0,t_n]\cdot\de_{t_{n}}^{-1}=(\de_0\cdot \eta[0,s_n]\cdot\de_{s_{n}}^{-1})\cdot(\de_{s_{n}}\cdot \eta[s_n,t_n]\cdot
\de_{t_{n}}^{-1})=\ell_{s_n}^{-}\cdot(\de_{s_{n}}\cdot \eta[s_n,t_n]\cdot \de_{t_{n}}^{-1})
$$
splits $\{a,b\}$, it follows  that $\ell_{s_n}^{-}$ splits $\{a,b\}$, which contradicts the choice of $t_*$.

 Since $\{\de_{s_n}\}$ and $\{\de_{t_n}\}$ are selected from a sequentially compact family  ${\G}$ of growing curves by Proposition \ref{pro:continuous}, we may assume that  $\{\de_{s_n}\}$ and $\{\de_{t_n}\}$ uniformly converge to growing curves $\de_-$ and $\de_+$, respectively. Consequently, both $\de_\pm$ join $E$ to $\eta(t_*)$, and  the curves $\de_{s_n}\cdot\eta[s_n,t_*]\cdot \de_-^{-1}$ and $\de_{+}\cdot\eta[t_*,t_n]\cdot\de_{t_n}^{-1}$ do not split $\{a,b\}$ for each sufficiently large integer $n$. Moreover, since
$$
\de_{s_{n}}\cdot \eta[s_n,t_n]\cdot \de_{t_{n}}^{-1}=(\de_{s_n}\cdot\eta[s_n,t_*]\cdot \de_-^{-1})\cdot(\de_-\cdot\de_+^{-1})\cdot(\de_{+}\cdot\eta[t_*,t_n]\cdot\de_{t_n}^{-1})
$$
splits $\{a,b\}$ by the claim above, it follows that $\de_-\cdot\de_+^{-1}$ splits $\{a,b\}$, and the lemma is proved.
\end{proof}

\subsection{Accesses within a growing continuum}
In order to construct invariant graphs within extremal chains, we need a sufficient number of preperiodic growing arcs. These arcs will be constructed in this and the next subsections.  

Let $(f,P)$ be a marked rational map. Suppose that $K$ is a growing continuum generated by an $f$-invariant and locally connected continuum $E$. We continue to assume that $E$ is a skeleton (rel $P$) of all $E_k$, where $E_k$ denotes the component of $f^{-k}(E)$ containing $E$. 

Let $P_0=P\setminus E$. Then $P_0\cap E_k=\emptyset$ for every $k\geq0$ since $E$ is a skeleton of $E_k$. Two growing curves $\alpha_1$ and $\alpha_2$ in $K$ with a common terminal $z$ are called {\bf equivalent} if there exist an integer $k\geq0$ and a curve $\de\subset E_k$  that joins $\alpha_1(0)$ to $\alpha_2(0)$, such that the closed curve $\g:=\alpha_1^{-1}\cdot\de\cdot\alpha_2$ is contractible in $\ov{\C}\setminus P_0$, i.e., there exists a continuous map $H: \mathbb{R}/\mathbb{Z}\times [0,1]\to \cbar$ such that the family of curves $\{H_s=H(\cdot, s), s\in[0, 1]\}$ satisfies
$$H_0=\gamma,\quad H_1\equiv\{z\}, \quad H_s(0)=z, \quad\textup{ and }\quad H_s(0, 1)\cap P_0=\emptyset,\quad \forall s\in(0, 1).$$
 This is clearly an equivalence relation. Note that $\g$ possibly passes through some points in $P\cap E$.
\begin{figure}[http]
	\centering
	\begin{tikzpicture}
	\node at (0,0){\includegraphics[width=8.5cm]{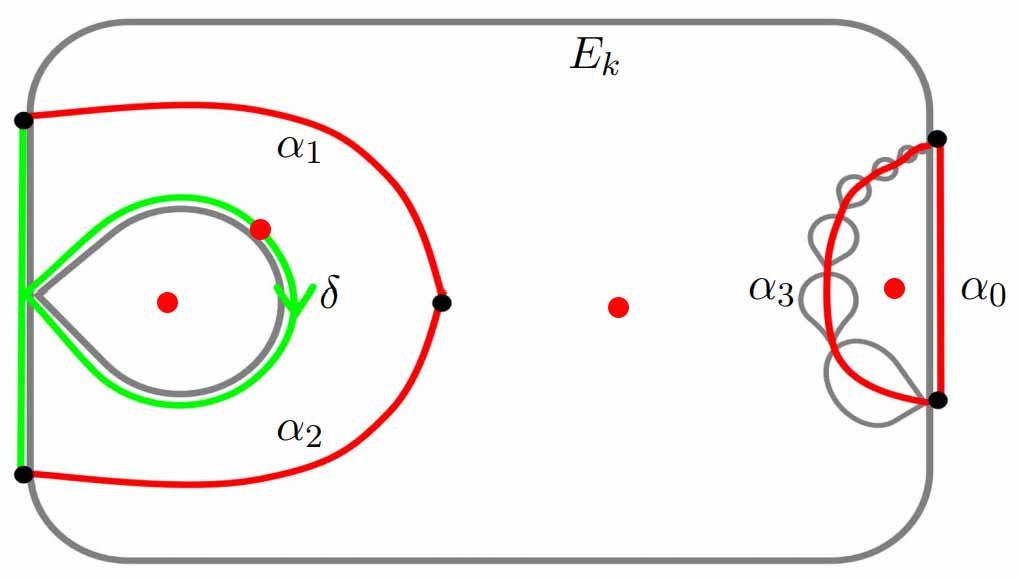}};
\node at (-0.32,-0.15){$z$};
\node at (3.85,1.2){ $z_1$};
	\end{tikzpicture}
  \caption{The equivalent growing curves $\alpha_1$ and $\alpha_2$, with marked points colored  red.}\label{fig:access}
\end{figure}

For each $k\geq0$, any two growing curves in $E_k$ with a common terminal are equivalent. A growing curve $\alpha$ is called {\bf infinitely growing} if it is not equivalent to any curve (including trivial ones) in $E_k$ for every $k\geq0$. By definition, infinitely growing curves cannot be trivial. In Figure \ref{fig:access},  the curve $\alpha_3$ is infinitely growing to $z_1$, while $\alpha_0$ is not.
An {\bf access} to $z$ is  an equivalence class of all infinitely growing curves to $z$.

By the \textbf{interior} of a curve $\g:[0, 1]\to \cbar$, we mean the set $\g(0, 1)$. The {sub-curve} $\g|_{[t_1,t_2]}$ of
$\g$ means a curve whose image equals $\g[t_1,t_2]$. An (open) arc $\g$ is called a \textbf{crosscut} of a domain $U\subset \cbar$ if $\g\subset \overline{U}$ with only the two endpoints in $\partial U$.

 Recall that two curves $\g_0,\g_1:[0,1]\to\cbar$ are   homotopic rel $P$ with endpoints fixed if there exists a continuous map $H:[0,1]\times [0,1]\to \cbar$ such that $H_0=\g_0,H_1=\g_1$, and each curve $H_s,s\in[0,1],$ has the same endpoints as  $\g_0$ with its interior disjoint from $P$.

\begin{proposition}\label{pro:disjoint}
	Let $\alpha,\alpha'\subset K$ be two growing curves with a common terminal $z$.
	\begin{enumerate}
	\item The curves $\alpha$ and $\alpha|_{[t,1]}$ are equivalent for any $t\in(0,1)$.
	
	\item If $\alpha(t, 1)\cap\alpha'(t, 1)\neq \emptyset$ for any $t\in(0, 1)$, then $\alpha$ and $\alpha'$ are equivalent.

	\item If $\alpha$ is infinitely growing, then for every sufficiently large integer $k$, there exists a number $t_k\in (0,1)$  such that $\alpha(t_k)\in E_k$ and $\alpha(t_k,1)\cap E_k=\emptyset$.  Moreover, the curve $\alpha|_{[t_k, 1]}$ contains an arc $\beta_k$ that is homotopic to $\alpha|_{[t_k, 1]}$ rel $P$ with endpoints fixed. In particular, $\beta_k$ lies in the same access to $z$ as $\alpha$.
	
\item  Suppose that $\alpha$ and $\alpha'$ belong to the same access to $z$, with their interiors disjoint from $P$. Then there exist an integer $m\geq0$ and a continuous family of curves $\{\alpha_s\}_{s\in [0, 1]}$ such that $\alpha_0=\alpha$, $\alpha_1=\alpha'$, and  each $\alpha_s$  joins $E_m$ to $z$ with its interior disjoint from $P$.
\end{enumerate}
\end{proposition}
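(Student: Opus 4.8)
The plan is to prove the four statements in order. I begin with the observation that a growing curve $\alpha$ automatically has interior disjoint from $P_0$: since $\alpha[0,1-\epsilon]\subset E_k$ for a suitable $k=k(\epsilon)$ and $P_0\cap E_k=\emptyset$, one gets $\alpha(0,1-\epsilon)\cap P_0=\emptyset$ for every $\epsilon>0$, hence $\alpha(0,1)\cap P_0=\emptyset$; and $\alpha(0)\in E$, so only the terminal of $\alpha$ can lie in $P_0$. I use this, together with the fact that each $E_k$ is locally connected (hence arcwise connected), that $E_k\subset E_{k+1}$, and that $P=(P\cap E)\sqcup P_0$, throughout.

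For statement (1): given $t\in(0,1)$, choose $\epsilon<1-t$ and $k$ with $\alpha[0,1-\epsilon]\subset E_k$; then $\delta:=\alpha|_{[0,t]}$ lies in $E_k$ and joins $\alpha(0)$ to $\alpha(t)=(\alpha|_{[t,1]})(0)$, and the closed curve $\alpha^{-1}\cdot\delta\cdot\alpha|_{[t,1]}$ is, up to reparametrisation, the loop $\alpha^{-1}\cdot\alpha$ at the terminal $z$. The standard fold-back homotopy contracts this loop to $z$ through loops whose non-basepoint part is contained in $\alpha(0,1)$, hence avoids $P_0$; so $\alpha$ and $\alpha|_{[t,1]}$ are equivalent. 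For statement (2): the hypothesis gives $w_n=\alpha(s_n)=\alpha'(s_n')$ with $s_n,s_n'\to1$, so $w_n\to z$; by statement (1) one replaces $\alpha,\alpha'$ by the tails $\alpha|_{[s_n,1]},\alpha'|_{[s_n',1]}$, both joining $w_n$ to $z$ inside an arbitrarily small disk $\D(z,\eta)$. Taking $\delta$ a short curve in $E_k$ near $w_n$ (or trivial), the connecting loop lies in $\D(z,\eta)$ and meets $P_0$ in at most the point $z$; such a loop contracts to $z$ within $\D(z,\eta)$ keeping interiors off $P_0$ (radially, in a local coordinate), giving the equivalence.

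For statement (3), the key preliminary fact is that an infinitely growing curve has $z\notin\bigcup_{j\ge0}E_j$. Indeed, if $z\in E_m$ then $z\notin P_0$, and combining statement (1) with the local connectivity of $E_k$ near $z$ one finds, for $k$ large, a curve $\delta\subset E_k$ from $\alpha(1-\epsilon)$ to $z$ contained in a tiny disk, whence $\alpha$ is equivalent to the trivial curve at $z$ in $E_k$ — contradicting that $\alpha$ is infinitely growing. Since $z\notin E_k$, the closed set $\alpha^{-1}(E_k)\subset[0,1]$ omits $1$, so $t_k:=\sup\alpha^{-1}(E_k)<1$, $\alpha(t_k)\in E_k$, and $\alpha(t_k,1)\cap E_k=\emptyset$; the growing property forces $t_k\to1$. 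For $k$ large the tail $\alpha[t_k,1]$ lies in a small disk about $z$; its interior avoids $P_0$ by the general fact above and, being small and since $z\notin P$, also avoids $P\cap E$, so $\alpha(t_k,1)\cap P=\emptyset$; discarding the loops of $\alpha|_{[t_k,1]}$ — which for $k$ large lie in a disk meeting $P$ trivially and so are contractible rel $P$ — yields a simple arc $\beta_k\subset\alpha[t_k,1]$ with the same endpoints, homotopic to $\alpha|_{[t_k,1]}$ rel $P$ with endpoints fixed. Finally a homotopy rel $P$ (a fortiori rel $P_0$) between curves with endpoints in $E_k$ preserves the equivalence class, so $\beta_k\sim\alpha|_{[t_k,1]}\sim\alpha$ by statement (1), placing $\beta_k$ in the same access.

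For statement (4): from the equivalence, elementary $\pi_1$ bookkeeping turns the contractibility (rel the basepoint $z$, inside $\cbar\setminus P_0$) of $\alpha^{-1}\cdot\delta\cdot\alpha'$ into a homotopy, rel endpoints and inside $\cbar\setminus P_0$, from $\alpha'$ to $\delta^{-1}\cdot\alpha$, where $\delta\subset E_k$ joins $\alpha(0)$ to $\alpha'(0)$. Using statement (3), I would pass to the arc representatives $\beta_k,\beta_k'$ (short arcs near $z$, from $E_k$ to $z$, interiors off $P$) and then interpolate: slide the free endpoint along $\delta$ to convert $\delta^{-1}\cdot\beta_k$ continuously into $\beta_k'$, keeping that endpoint on $E_m$ with $m\ge k$ and keeping interiors off $P$. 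The main obstacle is precisely this last step — promoting the combinatorial equivalence (a homotopy-class statement, rel $P_0$ only) to an honest continuous family of curves that stay anchored to $E_m$ and avoid all of $P$, not merely $P_0$. Here one must exploit that $\alpha,\alpha'$ already have interiors disjoint from $P$ and that $z\notin\bigcup_jE_j$, in order to push the connecting curve $\delta$ off the finitely many points of $P\cap E$ without leaving $E_m$ (enlarging $m$ if necessary, using the arcwise connectivity of $E_m$). Once the family is built over the arc representatives, statement (1) is applied to reattach the truncated initial portions and extend the family to one running from $\alpha$ to $\alpha'$, completing the proof.
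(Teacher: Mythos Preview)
Your arguments for statements (1) and (2) are essentially the same as the paper's and are fine.

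In statement (3), however, your ``key preliminary fact'' that an infinitely growing curve must have terminal $z\notin\bigcup_{j\ge0}E_j$ is \emph{false}, and this breaks your proof. The paper explicitly treats the case $z\in E$ in the proof of~(4) (``Assume $z\in E$. Since $\alpha$ and $\alpha'$ are infinitely growing, each component of $D\setminus\alpha$ and $D'\setminus\alpha'$ contains marked points''), so such curves do exist. Your proposed justification fails because when you pass to a tail $\alpha|_{[1-\epsilon,1]}$, the integer $k=k(\epsilon)$ with $\alpha(1-\epsilon)\in E_k$ grows with $1/\epsilon$; local connectivity of a \emph{varying} $E_{k(\epsilon)}$ at $z$ gives no uniform control on the size of a connected neighborhood, so you cannot conclude that $\alpha(1-\epsilon)$ and $z$ are joined by a short path in $E_{k(\epsilon)}$. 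The paper instead argues by contradiction for a \emph{fixed} $k$: if no such $t_k$ exists, there is a sequence $s_n\to1$ with $\alpha(s_n)\in E_k$ for that same $k$; then $z\in E_k$, and local connectivity of this one $E_k$ lets $\alpha(s_n)$ (for large $n$) be joined to $z$ inside $E_k\cap W$, forcing the tail to be equivalent to a curve in $E_k$. Note that consequently $t_k$ is \emph{not} $\sup\alpha^{-1}(E_k)$ in general (this equals $1$ when $z\in E_k$); it is the last parameter before~$1$ at which $\alpha$ leaves $E_k$.

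Because (3) was used to get $z\notin\bigcup_j E_j$, your sketch for (4) inherits this error and is also too vague at the crucial point. The paper's approach is different and more geometric: after using (3) to replace $\alpha,\alpha'$ by short arcs with disjoint interiors, they are crosscuts of a single complementary component $D$ of $\cbar\setminus E$ (the case $z\in E$ is handled by observing that both sides of each crosscut must contain marked points, forcing $D=D'$); then, since $\alpha$ and $\alpha'$ are in the same access, there is a simply connected component $D_*$ of $D\setminus(\alpha\cup\alpha')$ with $D_*\cap P=\emptyset$ and $\alpha,\alpha'\subset\partial D_*$, and the interpolating family is taken inside $\ov{D_*}$. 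Your ``slide the free endpoint along $\delta$'' argument does not produce this domain $D_*$ and does not explain how to keep interiors off \emph{all} of $P$ (not just $P_0$) while staying anchored to $E_m$.
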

\begin{proof}
We fix a disk $W$ such that $z\in W$ and $(W\setminus\{z\})\cap P=\emptyset$.

(1) The curve $\alpha|_{[0,t]}\subset E_k$ for some $k$, and $\alpha^{-1}\cdot\alpha|_{[0,t]}\cdot\alpha|_{[t,1]}$ is  contractible.\vspace{1pt}

(2)  There exist some $t, t'\in(0, 1)$ such that $\alpha(t)=\alpha'(t')$ and $\alpha|_{[t, 1]},\alpha'|_{[t', 1]}$ lie in $W$. It follows that $\alpha|_{[t, 1]}$ is equivalent to $\alpha'|_{[t', 1]}$, and thus $\alpha$ and $\alpha'$ are equivalent by statement (1). \vspace{1pt}


(3) To prove the existence of such $t_k$'s, suppose, to the contrary, that $\alpha(s_n)\in E_k$ for a sequence $\{s_n\}\subset(0,1)$ that converges to $1$ and a certain $k\geq0$. Then $z\in E_k$. Since $E_k$ is locally arcwise connected by Lemma \ref{lem:milnor}, there exists a curve $\gamma\subset E_k\cap W$ (possibly trivial) joining a certain $\alpha(s_n)$ to $z$. Thus, $\gamma^{-1}\cdot\alpha|_{[s_n, 1]}$ is contractible, which contradicts the assumption that $\alpha$ is infinitely growing.

By this statement, we can find $k_0>0$ such that $\alpha|_{[t_k,1]}\subset W$ and $z\not\in \alpha[t_k,1)$ for each $k>k_0$.
It follows that  $\alpha|_{[t_k,1]}$ contains an arc $\beta_k$ with endpoints $\alpha(t_k)$ and $z$. Then $\beta_k\subset W$, and its interior avoids $P$.
Hence, $\beta_k$ is homotopic to $\alpha|_{[t_k,1]}$ rel $P$ with endpoints fixed. 

(4)	
If $\alpha'$ is a sub-curve of $\alpha$, the conclusion is immediate. Thus, it suffices to prove the statement for a pair of sub-curves $\alpha|_{[t,1]}$ and $\alpha'|_{[t',1]}$ of $\alpha$ and $\alpha'$, respectively.

If $\alpha(t,1)\cap \alpha'(t,1)\not=\emptyset$ for any $t\in(0,1)$, then there exist $t,t'\in(0,1)$ such that $\alpha(t)=\alpha'(t')$ and $\alpha|_{[t,1]},\alpha'|_{[t',1]}\subset W$. Since the interiors of $\alpha$ and $\alpha'$ avoid $P$, it follows that $\alpha|_{[t,1]}$ and $\alpha'|_{[t',1]}$ are homotopic rel $P$ with endpoints fixed. Hence, statement (4) holds in this case. 

Otherwise, by statement (3), replacing $\alpha,\alpha'$ with their sub-curves, we can assume that $\alpha$ and $\alpha'$ are arcs with disjoint interiors such that $\alpha(0),\alpha'(0)\in E$ and $\alpha(0,1),\alpha'(0,1)\subset \cbar\setminus E$.  

Let $D$ and $D'$ be the components of $\cbar\setminus E$ containing $\alpha(0,1)$ and $\alpha'(0,1)$, respectively.
We claim that $D=D'$.
 If $z\not\in E$, the claim is immediate. Assume $z\in E$. Since  $\alpha$ and $\alpha'$ are infinitely growing,  each component of $D\setminus \alpha$ and $D'\setminus \alpha'$ contains marked points. This implies $D=D'$ since $\alpha$ and $\alpha'$ belong to the same access. The claim is proved.

Since $\alpha$ and $\alpha'$ are arcs with disjoint interiors and belong to the same access, there exists a simply connected domain $D_*$ of $D\setminus (\alpha\cup\alpha')$ such that $D_*\cap P=\emptyset$ and $\alpha,\alpha'\subset \partial D_*$. Then the desired family of curves $\{\alpha_s\}$ can be easily chosen within $\ov{D_*}$.
\end{proof}

\begin{proposition}\label{pro:alternative}
	Suppose that $G$ is a locally connected  skeleton of $E$. Let $\alpha_0,\alpha_1\subset K$ be two infinitely growing curves in the same access to $z$, with their initial points on $G$ and their interiors disjoint from $P$.
	Then there exists a continuous family of curves $\{\alpha_s\}_{s\in[0,1]}$ joining $G$ to $z$, such that the interior of each $\alpha_s$ is disjoint from $P$.
\end{proposition}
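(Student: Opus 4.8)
\textbf{Proof proposal for Proposition \ref{pro:alternative}.}

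The plan is to reduce the statement to Proposition \ref{pro:disjoint}\,(4) by first sliding the initial points of $\alpha_0$ and $\alpha_1$ from $G$ into $E$ along $G$ itself, and then applying the already-proved $E$-version of the homotopy. Concretely, recall that $G$ is a locally connected skeleton of $E$, so $G\cap P=E\cap P$ and $G\subset E$. First I would observe that since $G$ is a continuum contained in $E$ and $E$ is locally connected, and $G\cap P=E\cap P$, any curve joining a point of $G$ to a point of $G$ through $G$ has its interior meeting $P$ in at most the same points it would meet inside $E$; this will let me move endpoints without introducing new intersections with $P_0=P\setminus E$. The key point is that the notion of ``the same access'' and ``infinitely growing'' is insensitive to replacing the initial segment of the curve by a curve inside $E_k$ for any $k$, by the definition of equivalence and by Proposition \ref{pro:disjoint}\,(1)–(3).

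The key steps, in order, would be: (1) Using Proposition \ref{pro:disjoint}\,(3), replace $\alpha_0$ and $\alpha_1$ by sub-arcs $\beta_0=\alpha_0|_{[t,1]}$ and $\beta_1=\alpha_1|_{[t',1]}$ that are arcs with initial points in $E$, interiors disjoint from $E\cup P_0$ (hence disjoint from $P$ except possibly at endpoints, but we may arrange interiors disjoint from all of $P$), lying in the same access to $z$, and homotopic rel $P$ with endpoints fixed to $\alpha_0,\alpha_1$ respectively — these sub-arcs still begin on... wait, their new initial points $\alpha_0(t),\alpha_1(t')$ lie in $E$, not necessarily on $G$. (2) Apply Proposition \ref{pro:disjoint}\,(4) to $\beta_0$ and $\beta_1$: there exist an integer $m\geq 0$ and a continuous family $\{\beta_s\}_{s\in[0,1]}$ with $\beta_0$, $\beta_1$ as given, each $\beta_s$ joining $E_m$ to $z$ with interior disjoint from $P$. (3) Now I must connect $\beta_0$ back to $\alpha_0$ (and $\beta_1$ to $\alpha_1$) through a continuous family of curves joining $G$ to $z$: the arc $\alpha_0|_{[0,t]}$ runs from $G$ to $\alpha_0(t)\in E\subset E_m$; since $\alpha_0|_{[0,t]}\subset E_k\subset E_m$ for suitable $k$ (growing curve property), and $E_m$ is locally connected and arcwise connected (Lemma \ref{lem:milnor}), I can contract this initial segment within $E_m$ while keeping its terminal endpoint on the moving family — but I need the whole resulting curve to start on $G$, not merely in $E_m$. (4) Finally, I would re-route: since $G$ is a skeleton of $E$ and $G\subset E\subset E_m$, concatenate the pieces so that at every stage the curve begins on $G$; the freedom comes from the fact that within $E_m$ (locally connected, so locally arcwise connected) any two points of the same component can be joined by an arc, and the homotopy can be performed inside $E_m\setminus P_0$, which does not change the access.

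The main obstacle I anticipate is Step (4): ensuring that the interpolating family of curves can be taken to \emph{start on $G$} at every parameter value, rather than merely in $E_m$ or in $E$. The issue is that $\beta_s(0)$ traces some path in $E_m$ as $s$ varies, and I need to append to each $\beta_s$ an initial arc inside $E_m$ running from a point of $G$ to $\beta_s(0)$, continuously in $s$, with all these arcs having interiors disjoint from $P_0$. Since $E_m$ is locally connected, hence locally arcwise connected (Lemma \ref{lem:milnor}), and $G$ meets $P$ exactly where $E$ does, such a continuous selection of connecting arcs should be available; but making this rigorous requires a small argument about continuity of arc-selection in a locally connected continuum, possibly using the equicontinuous families of curves already constructed in Proposition \ref{pro:continuous} or an argument like the one in Lemma \ref{lem:growing}. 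Once the initial points are pinned to $G$ and all interiors avoid $P_0$, the fact that $G\cap P=E\cap P$ guarantees interiors avoid all of $P$ after a final small perturbation, and the proposition follows. I would also double-check the edge case $z\in E$ (hence $z\in G$ after using the skeleton property), where some curves may be trivial and the homotopy is essentially within $E_m\cap W$ for a small disk $W$ around $z$, handled exactly as in Proposition \ref{pro:disjoint}\,(4).
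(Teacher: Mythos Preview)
Your overall strategy---reduce to Proposition~\ref{pro:disjoint}\,(4) and then re-anchor the moving initial point $\beta_s(0)$ onto $G$---is exactly the paper's. Step~(1) is an unnecessary detour: since $\alpha_0,\alpha_1$ already have initial points on $G\subset E$ and interiors disjoint from $P$, the paper applies Proposition~\ref{pro:disjoint}\,(4) to them directly, obtaining $\{\beta_s\}$ with $\beta_0=\alpha_0$, $\beta_1=\alpha_1$ and $\delta(s):=\beta_s(0)$ a path in some $E_m$ with $\delta(0),\delta(1)\in G$.

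The genuine gap is in Step~(4). Your proposed tools---local arcwise connectivity of $E_m$, or the equicontinuous families of Proposition~\ref{pro:continuous}---do not produce connecting arcs with interiors avoiding $P$, and there is no reason a continuous arc-selection inside $E_m$ should exist with that constraint. The paper does \emph{not} stay inside $E_m$ for this step. Instead it works componentwise in $\cbar\setminus G$: on the set $X=\{s:\delta(s)\in G\}$ take $\eta_s\equiv\delta(s)$; on each complementary interval $(s_1,s_2)$ the excursion $\delta(s_1,s_2)$ lies in a single component $D$ of $\cbar\setminus G$. Here the \emph{separation} half of the skeleton property is essential (you only invoke $G\cap P=E\cap P$): because $G$ is a skeleton of $E_m$ and $\delta\subset E_m$, the arc $\delta(s_1,s_2)$ neither meets $P$ nor separates $P\cap D$, so it sits in an annulus $D\setminus\ov{D'}$ with $P\cap D\subset D'\Subset D$. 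In that annulus one writes down by hand a continuous family $\{\eta_s\}_{s\in[s_1,s_2]}$ from $\delta(s)$ to $\partial D\subset G$ with $\eta_s[0,1)\cap P=\emptyset$, collapsing to points at $s_1,s_2$. Setting $\alpha_s:=\eta_s^{-1}\cdot\beta_s$ finishes the proof. Your remark about fixing $P_0$ versus $P$ by a ``final small perturbation'' is also off: the avoidance of all of $P$ must be built into the construction of $\eta_s$, and the annulus argument does exactly this.
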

\begin{proof}
	Let $\{\beta_s\}_{s\in[0, 1]}$ be the family of curves derived from Proposition \ref{pro:disjoint}\,(4) such that $\alpha_0=\beta_0$ and $\alpha_1=\beta_1$. Then the curve $\de$ defined by $\delta(s):=\beta_s(0)$ lies in a certain $E_m$. We will construct a continuous family of curves $\{\eta_s\}_{s \in [0,1]}$ such that
\[
\eta_s(0) = \delta(s), \quad \eta_s(1) \in G, \quad\text{ and }\quad
\begin{cases}
\eta_s \equiv \eta_s(0) & \text{if } \eta_s(0) \in G, \\
\eta_s[0,1) \text{ avoids } P & \text{otherwise}.
\end{cases}
\]

		Then Proposition \ref{pro:alternative} holds by taking $\alpha_s:=\eta_s^{-1}\cdot\beta_s,s\in[0,1]$.\vspace{1pt}
	
Set $X=\{s\in[0,1]:\de(s)\in G\}$. Since $\de(0),\de(1)\in G$, each component of $[0,1]\setminus X$ is an open interval. If $s\in X$,  define $\eta_s\equiv\de(s)$. Let $(s_1,s_2)$ be a component of $[0,1]\setminus X$. Then there exists a component $D$ of $\cbar\setminus G$ such that $\de(s_1),\de(s_2)\in\partial D$ and $\de(s_1,s_2)\subset D$.

Since $\de\subset E_m$ and $G$ is a skeleton of $E_m$, it follows that $\de(s_1,s_2)$ avoids $P$ and does not separate $P$.  Consequently, there exists a disk $D'$ compactly contained in $D$ such that $P\cap D\subset D'$ and $\de(s_1,s_2)$ is contained in the annulus $D\setminus \ov{D'}$.
Thus, we can choose a continuous family of curves $\{\eta_s\}_{s\in[s_1,s_2]}$ such that $\eta_s(0)=\de(s)$, $\eta_s(1)\subset \partial D\subset G$, and $\eta_s(0,1)\subset D\setminus \ov{D'}$ for any $s\in(s_1,s_2)$, and that $\eta_{s_i}\equiv\de(s_i)$ for $i=1,2$. This completes the construction of $\{\eta_s\}_{s\in[0,1]}$.
\end{proof}

One main result of this subsection is the finiteness of accesses.

\begin{lemma}\label{lem:finite-access}
For any $z\in K$, there exist finitely many accesses to $z$.
\end{lemma}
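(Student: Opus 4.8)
The plan is to bound the number of accesses to a point $z\in K$ by studying how accesses behave under $f$, exploiting that $K$ is eventually periodic together with the finiteness of ramification of $f$ and the finiteness of marked ends. First I would reduce to the case where $K$ is $f$-invariant (replacing $f$ by an iterate, since every extremal chain is eventually periodic, and an access to $z$ maps forward to an access to $f(z)$; moreover a growing curve lifts to a growing curve, so pulling back accesses is controlled as well). The key dichotomy, already embedded in the preceding subsection, is that an infinitely growing curve $\alpha$ to $z$ eventually ``escapes'' every $E_k$: by Proposition~\ref{pro:disjoint}(3), for large $k$ there is $t_k$ with $\alpha(t_k)\in E_k$ and $\alpha(t_k,1)\cap E_k=\emptyset$, and the tail $\alpha|_{[t_k,1]}$ contains an arc $\beta_k$ in a fixed disk $W$ around $z$, homotopic to the tail rel $P$. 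This means an access is ultimately determined by which component $\Omega=\Omega(\alpha)$ of $\cbar\setminus E_k$ the tail lives in (for all large $k$), i.e.\ by the \emph{end} $\{\Omega_k\}$ of $K$ that the access ``points along.''

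\textbf{Key steps.} (1) Show that each access to $z$ determines a well-defined end $\{\Omega_k(\alpha)\}$ of $K$ with $z\in\ov{\Omega_k(\alpha)}$ for all $k$, using Proposition~\ref{pro:disjoint}(3); conversely, two accesses pointing along the \emph{same} end and having, for arbitrarily large $k$, tails that are homotopic rel $P$ inside $\ov{\Omega_k}$ must coincide (by Proposition~\ref{pro:disjoint}(2),(4)). (2) By Lemma~\ref{lem:eventually-periodic}, an end $\{\Omega_k\}$ with $\bigcap_k\ov{\Omega_k}\ni z$ and $z\notin$ the generator must be either eventually $f_\star$-periodic and marked, or its intersection is a singleton — but if the intersection is just $\{z\}$ and the end is $f_\star$-wandering, then no infinitely growing curve to $z$ can point along it except via tails that are arbitrarily small arcs in $W$; here one must argue these give only finitely many (in fact, essentially one per such end, and only finitely many such ends contain $z$, since the ends containing a fixed point $z$ are nested/finite in number at each level because $E_1$ is locally connected). (3) For an eventually $f_\star$-periodic marked end $\{\Omega_k\}$: replacing $f$ by a further iterate, assume $f_\star\{\Omega_k\}=\{\Omega_k\}$ and $f(\partial\Omega_{k+1})=\partial\Omega_k$. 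Then an access to $z$ along this end is carried by $f$ to an access to $f(z)$ along the same periodic end. If $z$ is periodic, one shows the map on accesses is injective-up-to-the-local-degree and that the total number is bounded by the number of ``sectors'' at $z$ inside $\Omega_k$ cut out by $f^{-k}(P)$-walls — a count that stabilizes because $f$ has finitely many critical values and $\#P<\infty$, just as in the classical Hubbard-tree / landing-ray argument (cf.\ the proof of Lemma~\ref{lem:finite}, using \cite[Lemma~18.8]{Mi1} in spirit). If $z$ is preperiodic, pull back finitely many times: $f^{-1}$ of an access to a point with finitely many accesses again yields finitely many, by the local injectivity of $f$ away from critical points and finiteness of the critical set. (4) Finally, since there are only finitely many ends $\{\Omega_k\}$ with $z\in\bigcap_k\ov{\Omega_k}$ (nestedness at each fixed level plus finiteness of components of $\cbar\setminus E_k$ meeting any neighborhood of $z$, which follows from local connectivity of $K$ via Lemma~\ref{lem:top}), and each contributes finitely many accesses, the total is finite.

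\textbf{Main obstacle.} The hard part is step (3) in the periodic-marked case: controlling how many distinct accesses along a single periodic end can land at a periodic point $z$. The naive bound ``one access per end'' fails because $\partial\Omega_k$ need not be a Jordan curve and $z$ may be a cut point of $K$ approached through the \emph{same} end in genuinely different homotopy classes. The resolution I would pursue is to set up, as in the proof of Lemma~\ref{lem:top}, the invariant curve $\gamma=\lim_k\partial\Omega_k$ (locally connected, $f(\gamma)=\gamma$) and observe that an access to $z$ along $\{\Omega_k\}$ is essentially an access to $z$ from within the domain bounded by $\gamma$ on the $E$-side, together with a lamination-type bookkeeping of the finitely many $f^{-k}(P)$-separating arcs that can accumulate at $z$; the self-map on these data is eventually conjugate to $w\mapsto w^d$ on a finite invariant set, which has finitely many periodic fibers. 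Care is needed because $\gamma$ is only a continuous image of a circle, so ``internal rays'' at $z$ must be replaced by the growing curves $\delta_t$ of Proposition~\ref{pro:continuous}; the compactness of the family $\G$ is what lets one extract, from infinitely many inequivalent accesses, a limiting growing curve and derive a contradiction with \cite[Lemma~18.8]{Mi1} applied to the induced finite invariant angle-set. I expect the bookkeeping, not any deep new idea, to be the bulk of the work.
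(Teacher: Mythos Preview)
Your approach is genuinely different from the paper's, and it contains a real gap.

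The paper's proof uses no dynamics whatsoever. It is a two-line counting argument: by Proposition~\ref{pro:disjoint}(1)--(3), one may replace any finite collection $\Delta$ of pairwise inequivalent accesses by arcs with pairwise disjoint interiors, each interior lying in a single component $D_\alpha$ of $\cbar\setminus E_m$ for one fixed large $m$. Since each arc is infinitely growing, $D_\alpha$ must meet $P$ (otherwise the arc would be equivalent to a curve in $E_m$), so at most $\#P$ components $D_\alpha$ occur. Within a fixed $D_\alpha$, if $k$ of these arcs have their interiors there, they cut $D_\alpha$ into $k$ or $k+1$ simply connected pieces; since the accesses are pairwise distinct, each piece must contain a point of $P$, so $k\le\#P$. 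Hence $\#\Delta\le(\#P)^2$.

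Your route instead pushes accesses forward under $f$, classifies them by the end $\{\Omega_k\}$ they point along, and then tries to bound the contribution of each end using periodicity and a landing-ray-style argument. The serious problem is step~(3): you only treat periodic and preperiodic $z$, but the lemma is stated for \emph{every} $z\in K$. For a wandering point $z$ sitting in the closure of a periodic marked end, iterating $f$ never brings you back to $z$, so neither the injectivity-up-to-local-degree argument nor the analogue of \cite[Lemma~18.8]{Mi1} gives you any control on the number of accesses at $z$ itself. Your step~(4) claim that only finitely many ends satisfy $z\in\bigcap_k\ov{\Omega_k}$ is also not justified (local connectivity gives finiteness at each fixed level $k$, but the count can grow with $k$), and in any case proving it looks harder than the lemma you are after.

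The conceptual point you are missing is that ``access'' is defined via contractibility rel $P_0=P\setminus E$, so finiteness of accesses is a purely combinatorial fact governed by $\#P$, not by the dynamics of $f$. The paper exploits this directly; the dynamics enters only later, in Proposition~\ref{lem:curve-to-arc}, where one wants accesses that are themselves (pre)periodic.
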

\begin{proof}
Let $\De$ be a finite collection of infinitely growing curves in $K$ that lie in pairwise distinct  accesses to $z$. It suffices to show that $\# \De\leq (\# P)^2$.

By Proposition \ref{pro:disjoint}\,(1)--(3), we may assume that all elements in $\De$ are arcs with pairwise disjoint interiors, such that $\alpha(0, 1)\subset D_\alpha$ and $\alpha(0)\in\partial D_\alpha$ for every $\alpha\in\De$, where $D_\alpha$ is a component of $\cbar\setminus E_m$ and $m$ is a sufficiently large integer independent of $\alpha$.
		
Note that every component $D_\alpha$ must intersect $P$. Thus, there exist at most $\# P$ such components. Suppose that a certain $D_\alpha$ contains the interiors of $k$ arcs in $\De$. Then these arcs divide $D_\alpha$ into $k$ or $k+1$ simply connected domains, each intersecting $P$. It follows that $k\leq \# P$. Therefore, we have $\#\Delta\leq (\# P)^2$.
\end{proof}

In the following, we will construct numerous preperiodic growing arcs in $K$ based on the above lemma. We first prove a lifting property for accesses.

\begin{lemma}\label{lem:lifting}
	Let $\alpha\subset K$ be an infinitely growing curve with terminal $z$. Then
	\begin{enumerate}
\item the curve $f\circ\alpha$ is also infinitely growing with terminal $f(z)$;
\item  if $\beta$ and $f\circ\alpha$ lie in the same access to $f(z)$, then there exists a curve $\tilde{\beta}$ in the same access as $\alpha$ such that $f\circ\tilde{\beta}=\beta$.
\end{enumerate}
\end{lemma}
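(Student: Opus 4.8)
The plan is to treat the two statements separately, with statement (1) essentially a bookkeeping exercise and statement (2) the substantive one requiring a lifting-of-curves argument compatible with the access equivalence relation.

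For statement (1), first observe that $f\circ\alpha$ is a growing curve: this is already recorded in the text (the image of a growing curve under $f$ is growing), so it suffices to show it is \emph{infinitely} growing. Suppose not; then $f\circ\alpha$ is equivalent, as a growing curve to $f(z)$, to some curve $\gamma\subset E_k$ for an integer $k\geq 0$. Unwinding the definition of equivalence, there is a curve $\delta\subset E_j$ (for some $j\geq 0$, which we may take $\geq k$) joining the initial points of $f\circ\alpha$ and $\gamma$ so that the loop $(f\circ\alpha)^{-1}\cdot\delta\cdot\gamma$ is contractible in $\cbar\setminus P_0$. Pulling back by $f$: since $E_{j}$ is a skeleton of $E_{j+1}$, the components of $f^{-1}(E_j)$ behave controllably, and $\alpha(0)\in E$ lies in a component of $f^{-1}(E_j)$ that contains $E$, namely $E_{j+1}$ up to the skeleton identification; so we can lift $\delta\cdot\gamma$ along the initial segment of $\alpha$ to a curve inside some $E_{j'}$ joining $\alpha(0)$ to another point of $E_{j'}$, and then lift the contracting homotopy (the loop avoids $P_0=P\setminus E$, hence its $f$-preimage pieces avoid $f^{-1}(P_0)\supset P_0$) to show $\alpha$ is equivalent to a curve in $E_{j'}$, contradicting that $\alpha$ is infinitely growing. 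The terminal of $f\circ\alpha$ is $f(z)$ by continuity. This disposes of (1).

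For statement (2), the approach is: first lift $f\circ\alpha$ itself, then transport the lift across the access. Precisely, by Proposition \ref{pro:disjoint}(3) we may replace $\alpha$ by a subarc so that $\alpha$ is an arc whose interior avoids $P$ and which lies, after its initial point, outside $E_k$ for all large $k$; correspondingly $f\circ\alpha$ is an arc whose interior avoids $P$ (since $f(P)\subset P$ and $\alpha$ can be chosen to avoid critical points in its interior after a further subarc replacement — here one uses that $z\in K\subset\cbar$ and near $z$ one stays in a small disk missing $P$). Now $\beta$ and $f\circ\alpha$ lie in the same access to $f(z)$; applying Proposition \ref{pro:disjoint}(4) to the pair $\{f\circ\alpha,\beta\}$ (after passing to suitable subarcs with interiors avoiding $P$) yields an integer $m\geq 0$ and a continuous family $\{\beta_s\}_{s\in[0,1]}$ with $\beta_0=f\circ\alpha$, $\beta_1=\beta$, each $\beta_s$ joining $E_m$ to $f(z)$ with interior disjoint from $P$. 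The key point is that the whole family $\{\beta_s\}$ is a homotopy (with moving initial point along a curve $\delta(s):=\beta_s(0)\subset E_m$, and fixed terminal $f(z)$) whose trace avoids $P$ except possibly at $f(z)$; since $f^{-1}(P)\supset P$ and $\alpha$ already lifts $\beta_0$ based at $z$, the homotopy lifting property for the covering $f:\cbar\setminus f^{-1}(P)\to\cbar\setminus P$ (exactly as invoked in Section \ref{sec:5}) produces a lifted family $\{\tilde\beta_s\}$ with $\tilde\beta_0=\alpha$ and $f\circ\tilde\beta_s=\beta_s$ for all $s$; set $\tilde\beta:=\tilde\beta_1$, so $f\circ\tilde\beta=\beta$. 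It remains to check $\tilde\beta$ lies in the same access to $z$ as $\alpha$: the lifted initial-point curve $s\mapsto\tilde\beta_s(0)$ is a lift of $\delta\subset E_m$ starting at $\alpha(0)\in E$, hence lies in the component $E_{m+1}$ of $f^{-1}(E_m)$ (using that $E$ is a skeleton of $E_{m+1}$ and $f(E)\subset E$); therefore $\tilde\beta_0$ and $\tilde\beta_1$ have initial points joined by a curve in $E_{m+1}$, and the loop $\tilde\beta_0^{-1}\cdot(\text{that curve})\cdot\tilde\beta_1$ is the $f$-lift of the null-homotopic-in-$\cbar\setminus P_0$ loop $\beta_0^{-1}\cdot\delta\cdot\beta_1$, hence null-homotopic in $\cbar\setminus f^{-1}(P_0)\subseteq\cbar\setminus P_0$; this is exactly the statement that $\tilde\beta$ and $\alpha$ are equivalent, i.e. lie in the same access.

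The main obstacle I anticipate is \textbf{controlling where the lifted initial points land}, i.e. ensuring the lift of $\delta\subset E_m$ based at $\alpha(0)$ stays inside a single $E_{m+1}$ rather than wandering into another component of $f^{-1}(E_m)$ or into $f^{-1}(E_m)\setminus E_{m+1}$; this is where the standing hypothesis that $E$ is a skeleton of every $E_k$ (and $f(E)\subset E$, so $E\subset E_{m+1}$) is essential, and one must argue that $\delta$, living in $E_m$ and joining two points each of which is an initial point of an infinitely growing curve, cannot ``escape''. A secondary technical nuisance is arranging at the outset that the relevant arcs have interiors disjoint from $P$ \emph{and} from the critical set of $f$ so that the covering-space homotopy lifting applies verbatim; both are handled by repeated use of Proposition \ref{pro:disjoint}(1)--(3) to pass to subarcs near $z$, together with the observation that an infinitely growing curve's terminal has a neighborhood meeting $P$ in at most the point itself.
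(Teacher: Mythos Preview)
Your argument for (2) is essentially the same as the paper's: pass to subarcs with interiors avoiding $P$ via Proposition~\ref{pro:disjoint}(3), invoke Proposition~\ref{pro:disjoint}(4) to get a continuous family, lift the family by $f$, and read off the access equivalence from the lifted family. The paper lifts the homotopy ``slice by slice'' (for each fixed $t\in(0,1)$ it lifts the curve $s\mapsto\gamma_s(t)$), which is the same thing you do, and then extends $\tilde\gamma_1$ to a full lift $\tilde\beta$ of $\beta$.

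Your argument for (1), however, has a genuine gap. The contracting homotopy you want to lift lives only in $\cbar\setminus P_0$ with $P_0=P\setminus E$, but $f$ is a branched covering with critical values in $P_f\subset P$; these may well lie in $P\cap E=P\setminus P_0$, so the homotopy can pass through critical values and there is no unique lift. (Your parenthetical claim $f^{-1}(P_0)\supset P_0$ is also not correct in general: a point $x\in P\setminus E$ can have $f(x)\in E$, since $f^{-1}(E)$ has components other than $E_1$.) Consequently you cannot conclude that the lifted path $\widetilde{\delta\cdot\gamma}$ closes up at $z$, nor that the resulting loop is contractible in $\cbar\setminus P_0$.

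The paper's proof of (1) avoids this by a geometric crosscut argument rather than a homotopy lift of the full loop. It first observes that $f(z)\in E_k$ (hence $z\in E_{k_0}$), then restricts to a terminal piece $\alpha|_{[t_m,1]}$ small enough that $f\circ\alpha|_{[t_m,1]}$ is homotopic \emph{rel $P$} (not merely rel $P_0$) to an arc $\gamma$ serving as a crosscut of a component $D_{m-1}$ of $\cbar\setminus E_{m-1}$. The hypothesis ``not infinitely growing'' translates into one side $D_*$ of $D_{m-1}\setminus\gamma$ being disjoint from $P$. One then lifts the configuration: the lift $\tilde\gamma$ (homotopic to $\alpha|_{[t_m,1]}$ rel $P$) bounds, together with a piece of $E_m$, a disk $\tilde D_*\subset f^{-1}(D_*)$ disjoint from $P$; this exhibits $\tilde\gamma$ as not infinitely growing, contradicting that $\alpha$ is. The point is that by shrinking to a terminal piece one upgrades ``contractible rel $P_0$'' to a statement about complementary domains missing all of $P$, which is what survives pullback by $f$.
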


\begin{proof}
	(1) To the contrary, suppose that $f\circ\alpha$ is not infinitely growing. Then $z$ must be contained in some $E_{k_0}$. 
	By Proposition \ref{pro:disjoint}\,(3), for each sufficiently large integer $k$, there exists a number $t_k\in (0,1)$ such that $\alpha(t_k)\in E_{k}$ and $\alpha(t_k,1)\cap E_{k}=\emptyset$. It follows that $f\circ \alpha(t_k)\in E_{k-1}$ and $f\circ \alpha(t_k,1)\subset D_{k-1}$ for a component $D_{k-1}$ of $\ov{\C}\setminus E_{k-1}$.
	
	Note that the diameter of $f\circ\alpha(t_k,1)$ tends to $0$ as $k\to\infty$. Then  there exists an arc  $\g\subset f\circ\alpha([t_m,1])$ that is homotopic to $f\circ \alpha|_{[t_m,1]}$ rel $P$ with endpoints fixed for a sufficiently large integer $m$. In particular, $\gamma$ is a crosscut of $D_{m-1}$.
	By homotopy lifting, we obtain a lift $\tilde{\g}$ of $\g$ by $f$ that is homotopic to $\alpha|_{[t_m,1]}$ rel $P$ with endpoints fixed. Thus, $\tilde{\gamma}$ is  infinitely growing.
	
	On the other hand, since $f\circ\alpha$ is assumed not to be  infinitely growing, one of the two components of $D_{m-1}\setminus \g$, denoted by $D_*$, avoids $P$. Thus, there exists a component $\tilde{D}_*$ of $f^{-1}(D_*)$ with $\tilde{\gamma}\subset \partial \tilde{D}_*$. Since $\tilde{D}_*\cap P=\emptyset$ and $\partial \tilde{D}_*\setminus \tilde{\gamma}\subset E_{m}$,  $\tilde{\gamma}$ is not infinitely growing, a contradiction.\vspace{2pt}
	
	(2) By statement (1), both $\beta$ and $f\circ\alpha$ are infinitely growing. Then by Proposition \ref{pro:disjoint}\,(3), we can find numbers $t_0, t_1\in(0, 1)$ such that $f\circ \alpha(t_0, 1)$ and $\beta(t_1, 1)$   are disjoint from $P$. By Proposition \ref{pro:disjoint}\,(4), there exists a continuous family of curves $\{\g_s\}_{s\in [0, 1]}$ joining some $E_m$ to $z$ such that $\g_0=f\circ\alpha|_{[t_0, 1]}$, $\g_1=\beta|_{[t_1, 1]}$, and the interior of each $\g_s$ is disjoint from $P$.
	
	For any $t\in(0, 1)$,  the curve $\{\g_s(t):s\in[0,1]\}$ has a unique lift based at the point $\alpha|_{[t_0,1]}(t)$. Thus, by the continuity of $f$, we obtain a continuous family of lifts $\{\tilde{\g}_s\}$ of $\{\g_s\}$ such that each $\tilde{\g}_s$ joins $E_{m+1}$ to $z$ with its interior avoiding $P$. This implies  that  $\tilde{\g}_0=\alpha|_{[t_0,1]}$ and $\tilde{\g}_1$ lie in the same access to $z$. 
	Since $f\circ\tilde{\g}_1=\beta|_{[t_1,1]}$, there exists a growing curve $\tilde{\beta}$ such that $f(\tilde{\beta})=\beta$ and $\tilde{\beta}|_{[t_1,1]}=\tilde{\g}_1$. Then $\alpha$ and $\tilde{\beta}$ lie in the same access by Proposition \ref{pro:disjoint}\,(1).
\end{proof}

\begin{proposition}\label{lem:curve-to-arc}
	Suppose that $E\subset J_f$ and $G$ is a locally connected and $f$-invariant continuum serving as a skeleton of $E$ rel $P$. Let $\alpha\subset K$ be an infinitely growing curve joining $G$ to a preperiodic point $z$. Then  there exists a growing arc $\beta$ in $K$ such that
\begin{enumerate}
\item the arc $\beta$ joins $G$ to $z$ and lies in the same access as $\alpha$;
\item for any $t\in(0,1)$, there exists an integer $n_t>0$ such that $f^{n_t}(\beta[0, t])\subset G$;
\item there exist two integers $q\geq0$ and $p\ge1$, such that $f^{q+p}(\beta)\subset f^q(\beta)\cup G$ and the growing curves $f^i(\beta),i=0,\ldots, q+p-1,$ lie in pairwise distinct accesses.
\end{enumerate}
\end{proposition}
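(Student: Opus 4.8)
The plan is to first replace $\alpha$ by a genuine arc in its access, then track the orbit of this arc under $f$ and use the finiteness of accesses to force periodicity.

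\textbf{Step 1: From a curve to an arc with a good skeleton structure.} By Proposition \ref{pro:disjoint}\,(3), for each sufficiently large $k$ the curve $\alpha$ contains an arc homotopic rel $P$ (endpoints fixed) to a sub-curve of $\alpha$ ending at $z$; combining this with Proposition \ref{pro:alternative} (applied to $\alpha$ and such an arc, which lie in the same access and can be taken to have initial points on $G$ and interiors disjoint from $P$), I would first arrange that $\alpha$ itself is an arc joining $G$ to $z$ with interior disjoint from $P$, still in the prescribed access. Next, to get property (2), I want $\alpha$ to be a \emph{growing arc}, i.e.\ for small $\epsilon>0$ some $E_k\supset \alpha[0,1-\epsilon]$. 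Since $\alpha$ is infinitely growing and $G$ is an $f$-invariant skeleton of $E$, I can pull back: each piece of $\alpha$ near $G$ is, after finitely many iterates, mapped into $G\subset E$, and because $G$ is a skeleton of every $E_k$, the portion of $\alpha$ already in $E_k$ grows with $k$. Concretely, using the construction in Proposition \ref{pro:continuous} (growing curves approximating points of $K$) together with Proposition \ref{pro:alternative}, I can homotope $\alpha$ rel its endpoints and rel $P$ to a growing arc $\beta_0$ in the same access. Property (2) then holds: for $t<1$, $\beta_0[0,t]\subset E_k$ for some $k$, and since $G$ is an $f$-invariant skeleton of $E$ and hence $f^k(E_k)=E\supset$ (a skeleton containing) $G$… more precisely one uses that $f^k(E_k)\subseteq E$ and $f$ restricted near $\partial E\subset J_f$ eventually pushes everything onto the skeleton $G$; I would spell this out using local connectivity of $G$ and that $\bigcup_n f^{-n}(G)\cap E_k$-type density arguments as in Lemma \ref{lem:eventually}.

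\textbf{Step 2: Orbit of the arc and periodicity of accesses.} Consider the sequence of growing curves $f^i(\beta_0)$, $i\ge 0$. By Lemma \ref{lem:lifting}\,(1) each is infinitely growing, with terminal $f^i(z)$, and since $z$ is preperiodic there are only finitely many terminals $f^i(z)$. By Lemma \ref{lem:finite-access}, each terminal has only finitely many accesses. Hence the pairs (terminal, access of $f^i(\beta_0)$) take only finitely many values, so there exist $q\ge 0$ and $p\ge 1$ with $f^{q+p}(\beta_0)$ and $f^q(\beta_0)$ in the same access to the same point $f^q(z)$; choosing $q$ minimal (for the chosen eventual period $p$, itself chosen minimal) ensures $f^0(\beta_0),\dots,f^{q+p-1}(\beta_0)$ lie in pairwise distinct accesses, which is property (3)'s last clause. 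It remains to upgrade "same access" to the actual inclusion $f^{q+p}(\beta)\subset f^q(\beta)\cup G$, and this is where I expect the main work.

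\textbf{Step 3: Realizing the periodic access by a genuine preperiodic arc — the hard part.} Here I would run a lifting/limiting argument. Since $f^{q+p}(\beta_0)$ and $f^q(\beta_0)$ are in the same access to $f^q(z)$, Lemma \ref{lem:lifting}\,(2) lets me lift: starting from $\beta' := f^q(\beta_0)$, there is a growing curve $\widetilde{\beta'}$ in the same access as $f^{q-1}(\beta_0)$ (iterating, in the same access as $\beta_0$) whose $f^p$-image is in the access of $\beta'$. Rather than a one-step correction, I would build a sequence: set $\beta_{j+1}$ to be the arc obtained from $\beta_j$ by replacing the "outer" part near the terminal with the appropriate lift under $f^p$ of the corresponding part of $f^q(\beta_j)$, using Proposition \ref{pro:disjoint}\,(3)–(4) and Proposition \ref{pro:alternative} to keep everything an arc joining $G$ to the preperiodic terminal with interior off $P$ and in the fixed access. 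The expansion of $f$ near $\partial E\subset J_f$ under the orbifold metric (Lemma \ref{lem:expanding}), together with the bounded homotopic diameters of complementary components (Lemma \ref{lem:orbifold}), forces the successive modifications to have geometrically shrinking diameter, so $\{\beta_j\}$ converges uniformly to a growing arc $\beta$. By construction $\beta$ lies in the same access as $\alpha$ (property (1)), the truncation property (2) is preserved in the limit (since for fixed $t<1$ the sub-arcs stabilize in some $E_k$), and passing to the limit in the relation defining $\beta_{j+1}$ from $\beta_j$ gives exactly $f^{q+p}(\beta)\subset f^q(\beta)\cup G$ (property (3)). The delicate point, and the main obstacle, is ensuring the modifications are \emph{consistent and convergent} — i.e.\ that the replacement near the terminal does not disturb the already-stabilized inner part, and that "same access" is genuinely closed under the uniform limit; this requires careful bookkeeping with the equivalence relation defining accesses, exactly the kind of homotopy-through-$\overline{\C}\setminus P_0$ arguments in Proposition \ref{pro:disjoint}, and the expansion estimate to get a contraction.
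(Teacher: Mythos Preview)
Your overall plan---find $q,p$ via the finiteness of accesses, then run a lift-and-converge argument using the expansion of $f$---matches the paper's strategy. However, there are two genuine gaps.

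\textbf{Property (2) does not follow from being ``growing''.} You repeatedly argue that $\beta_0[0,t]\subset E_k$ for some $k$ and then try to deduce that $f^{n_t}(\beta_0[0,t])\subset G$. But $f^k(E_k)\subset E$, not $G$; there is no mechanism forcing an arbitrary arc in $E$ to be eventually iterated into the skeleton $G$. Your appeal to Lemma~\ref{lem:eventually} is misplaced: that lemma is about \emph{circles} (which are rigid combinatorial objects in the circle-tree), not arbitrary sub-arcs of a growing curve. The paper avoids this problem entirely by building $\beta$ so that its initial segments lie in the \emph{pullbacks of $G$} from the start: after finding $q,p$, it uses Proposition~\ref{pro:alternative} to get a continuous family $\{\alpha_s\}_{s\in[0,1]}$, records the path of initial points $\delta_0(s):=\alpha_s(0)$ as a curve in $G$, then lifts the whole family by $f^p$ to get $\{\alpha_{s+1}\}$ and $\delta_1\subset G_p$, and iterates. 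The limit curve is then the concatenation $\delta_0\cdot\delta_1\cdots$ with $\delta_k\subset G_{pk}$, so property~(2) is automatic. Your Step~1 should be discarded; the construction that gives convergence in Step~3 is exactly what produces property~(2), and it must be set up with pieces in $G_{pk}$, not merely $E_k$.

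\textbf{The limit need not be an arc, and extracting one is nontrivial.} You assert in Step~3 that the modifications ``keep everything an arc'', but a uniform limit of arcs can easily fail to be injective, and you cannot simply pass to a sub-arc without destroying the invariance $f^{q+p}(\beta)\subset f^q(\beta)\cup G$. The paper treats this as a separate final step: working locally near the (by now periodic) terminal $z$ with nested disks $D_1\subset D_2$ on which $f^p$ is a homeomorphism, it shows there is a \emph{unique} arc $\lambda_i$ in the trace of the limit curve on $\partial\Omega_i$ joining $z$ to $\partial D_i$; uniqueness forces $f^p(\lambda_1)=\lambda_2\supset\lambda_1$, so $\lambda_1$ is the desired invariant arc. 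This local uniqueness argument is the missing idea in your Step~3.
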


\begin{proof}
	By Lemma \ref{lem:lifting}\,(1), the curves $f^i(\alpha),i\geq0,$ are all infinitely growing, with initial points in $G$.
	According to Lemma \ref{lem:finite-access}, there exist minimal integers $q\geq 0$ and $p\geq 1$ such that $f^{q+p}(\alpha)$ and $f^q(\alpha)$ lie in the same access to $w=f^q(z)$. Set ${\alpha}_0:=f^{p+q}(\alpha) $ and ${\alpha}_1:=f^q(\alpha)$. Then $f^p(\alpha_1)=\alpha_0$. By Lemma \ref{lem:lifting}\,(2), we may assume  the interior of $\alpha_0$ is disjoint from $P$. Then $\alpha_1$ joins $G_p$ to $z$, and its interior is also disjoint from $P$. For simplicity, set $G=G_p$ and $E=E_p$. 
	
	By Proposition \ref{pro:alternative}, we have a continuous family of curves $\{{\alpha}_s\}_{s\in[0,1]}$ joining $G$ to $w$ such that $\alpha_s(0,1)\cap P=\emptyset$ for all $s\in[0,1]$. Define a curve $\de_0:[0,1]\to G$ by $\de_0(s):={\alpha}_s(0)$. As shown in the proof of Lemma \ref{lem:lifting}, there exists a continuous family of curves $\{{\alpha}_{s+1}\}_{s\in[0,1]}$ joining $G_p$ to $w$ such that $f^p\circ {\alpha}_{s+1}={\alpha}_s$. Thus, $\alpha_1$ and $\alpha_2$ lie in the same access to $w$, and we obtain a curve $\de_1:[0,1]\to G_p$ defined by $\de_1(s):={\alpha}_{s+1}(0)$ such that $f^p\circ \de_1=\de_0$.
	
	Inductively, for every $k\geq1$, there exist a curve $\de_k\subset G_{pk}$ and a growing curve ${\alpha}_{k}$ such that
\begin{enumerate}
\item $f^p\circ\de_{k+1}=\de_{k}$ and $\de_{k}(1)=\de_{k+1}(0)$;
	
\item ${\alpha}_k(0)=\de_k(0)$, ${\alpha}_k(1)=w$, and $f^p\circ{\alpha}_{k+1}={\alpha}_k$; 
	
\item $\alpha_k$ lies in the same access as $\alpha_0$.\vspace{2pt}
\end{enumerate}
	For every $m\geq1$, define a growing curve $\ell_m:=\de_0\cdots\de_{m-1}\cdot\alpha_m$. By Proposition \ref{pro:disjoint}\,(1) and point (3) above, the curves $\ell_m$ and $\alpha_0$ lie in the same access to $w$ for every $m\geq1$.
	
	By Lemma \ref{lem:expanding}, the diameters of $\de_k$ and ${\alpha}_k$ exponentially decrease to $0$. Then ${\alpha}_k\to w$ as $k\to\infty$, and $\ell_m$ uniformly converges to a growing curve $\beta_{q+p}\subset K$ with terminal $w$ as $m\to\infty$. Clearly, $f^p(\beta_{q+p})\subset \beta_{q+p}\cup G$, and the curves $\beta_{q+p}$ and $\alpha_0$ lie in the same access.

By successively applying Lemma \ref{lem:lifting}, for each $i=1,\ldots,q+p$, there exists a curve $\beta_{q+p-i}$ joining $G_{i}$ to $f^{q+p-i}(z)$ such that $f^{i}(\beta_{q+p-i})=\beta_{q+p}$ and  $\beta_{q+p-i}$ and $\alpha_{q+p-i}$ lie in the same access to $f^{(q+p-i)}(z)$. By replacing $G$ with $G_{q+p}$, the curve $\beta_0$ satisfies all  requirements of the proposition, except that it may not be an arc.
	
	To complete the proof, it suffices to find an arc  $\beta\subset \beta_0$ joining $G$ to $z$ such that $f^{q+p}(\beta)\subset f^q(\beta)\cup G$. Without loss of generality, we can assume that $q=0$.
	
	\begin{figure}[http]
		\centering
		\includegraphics[width=10cm]{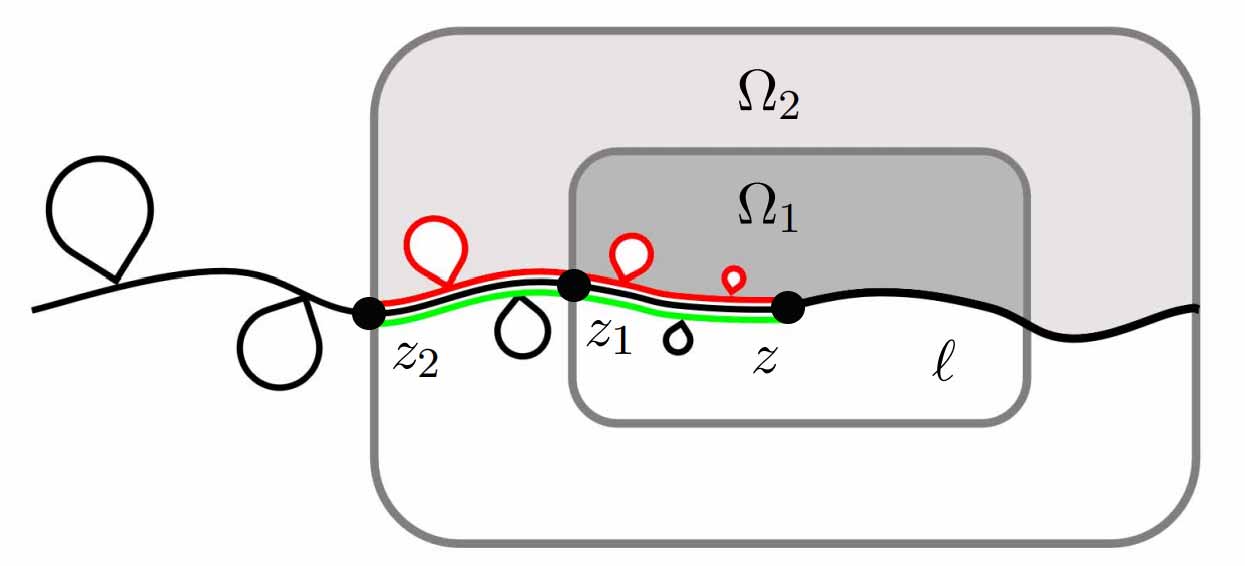}
		\caption{The curves $\eta_1$ and $\eta_2$ are shown in red, and the arcs $\lambda_1$ and $\lambda_2$ in green.}
\label{fig:arc}
	\end{figure}
	
	Take two small disks $D_1$ and $D_2$ containing $z$ such that $\ov{D_1}\subset D_2$ and $g=f^p: D_1\to D_2$ is a homeomorphism. Let $Y_i$ be the closure of the component of $D_i\cap \beta_0$ containing $z$ for $i=1,2$. Clearly, $Y_1\subset Y_2$. Let $\ell\subset D_2\sm Y_2$ be an open arc joining $z$ to a point in $\partial D_2\sm \beta_0$. Then for each $i$, the curves $\partial D_i$, $Y_i$, and $\ell$ bound a simply connected domain $\Omega_i$ with a locally connected boundary such that $\Omega_1\subset \Omega_2$; see Figure \ref{fig:arc}. Let $\eta_i=Y_i\cap \partial\Omega_i$ be the curve joining $z$ to some point $z_i\in\partial D_i$. Then $\eta_1$ is the closure of a component of $\eta_2\sm \{z_1\}$.
	Since $\beta_0$ is locally $g$-invariant near $z$, the map $g$ sends $Y_1, \eta_1$, and $z_1$ homeomorphically onto $Y_2, \eta_2$, and $z_2$, respectively.
	
	We claim that there exists a unique arc $\lambda_i\subset \eta_i$ joining $z$ and $z_i$ for $i=1, 2$. The existence of such an arc follows from the local connectivity of $\eta_i$. The curve $(\partial \Omega_i\sm\eta_i)\cup \lambda_i$ bounds a disk $W_i$ containing $\Omega_i$. Clearly, $\eta_i\subset\ov{W_i}$. Suppose $\lambda_i'$ is another such arc. Then $\partial W'_i\subset \ov{W_i}$ and  $\partial W_i\subset \ov{W'_i}$. Thus, $W_i=W_i'$, which implies  $\lambda_i=\lambda_i'$.
	
	Note that $g(\lambda_1)\subset \eta_2$ is an arc joining $z$ and $z_2$. By the uniqueness of $\lambda_1$ and $\lambda_2$, we have that $g(\lambda_1)=\lambda_2$ and $\lambda_1$ is the sub-arc of $\lambda_2$ from $z$ to $z_1$.
	
	Choose a sufficiently large integer $N$ such that $G_N$ contains $\lambda_2\setminus \lambda_1$, and define $\beta:=\lambda_1$. Then $\beta\subset \beta_0$ is an arc satisfying $f^p(\beta)\subset \beta\cup G_N$. The proof is completed by replacing $G$ with $G_N$.
\end{proof}

\subsection{Links between growing continua}\label{sec:max-inv}

In the previous subsection, we proved that if $z\in K$ is a preperiodic point, then there exists a preperiodic growing arc within any access to $z$. In this final part of Section \ref{sec:topology}, we aim to find abundant preperiodic points as terminals of growing curves.

Let $K_\pm$ be  growing continua generated by $f$-invariant and locally connected continua $E_\pm$, respectively, such that $E_-\cap E_+=\emptyset$. This implies that $E_{-,k}\cap E_{+,k'}=\emptyset$ for any $k, k'\geq 0$, where $E_{\pm, k}$ are the components of $f^{-k}(E_\pm)$ containing $E_\pm$, respectively. We continue to assume that $E_\pm$ serve as skeletons of $E_{\pm,k}$ (rel $P$) for every $k\ge0$.

A {\bf link} between $K_-$ and $K_+$ is a curve $\g$ with $\g(0)\in E_{-,k}$ and $\g(1)\in E_{+,k}$ for some $k\geq0$, such that one of the following two cases occurs:
\begin{itemize}
\item $\g$ is a growing curve in either $K_-$ or $K_+$ (\emph{one-sided link}); or 
\item $\g=\alpha_-\cdot\alpha_+^{-1}$, where $\alpha_\pm$ are growing curves in $K_\pm$, respectively, with a common terminal disjoint from both $P$ and any $E_{\pm,m}$ for $m\geq0$ (\emph{two-sided link}).
\end{itemize}

The  unique terminal $z$ of the growing curves in $\g$ is called
the {\bf infinity point} of the link $\g$. By definition, $\#\g^{-1}(z)=1$, and  it holds for a two-sided link that $\alpha_+\cap \alpha_-=\{z\}$. Moreover, a link $\g$ is  one-sided if and only if the infinity point is contained in a certain $E_{\pm, k}$, if and only if the infinity point is an endpoint of $\g$.

\begin{figure}[http]
	\centering
	\begin{tikzpicture}
		\node at (0,0){\includegraphics[width=14.5cm]{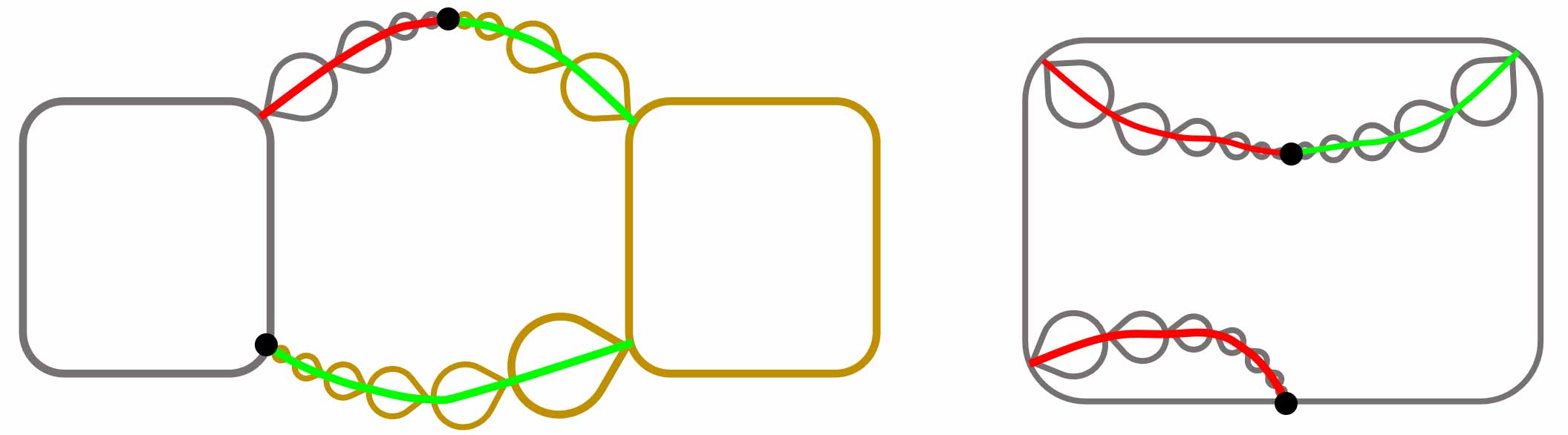}};
		\node at (4.7,-2.5){Self-links of $K$};
		\node at (-3,-2.5){Links between $K_\pm$};
		\node at (-3.15, -1.15){$\gamma'$};
		\node at (3.5, -0.5){$\gamma'$};
		\node at (4.725, 0.15){$\gamma$};
		\node at (-3.15, 1.25){$\gamma$};
	\end{tikzpicture}
	\caption{Two types of links}\label{fig:definition}
\end{figure}

The left image in Figure \ref{fig:definition} illustrates two types of links: the curve  $\g'$ is a one-sided link, while $\g$ is a two-sided link.

Set $P_0=P\setminus (E_+\cup E_-)$. Then $P_0$ is disjoint from $E_{\pm,m}$ for every $m\geq0$ since $E_\pm$ serve as skeletons of $E_{\pm,m}$, respectively. Two links $\g_1$ and $\g_2$ between $K_\pm$ are said to be {\bf equivalent} if there exist two curves $\de_\pm\subset E_{\pm,k}$ for some $k$, such that 
 $\de_-$ joins $\g_1(0) ,  \g_2(0)\in E_{-,k}$, 
$\de_+$ joins $\g_1(1) ,  \g_2(1)\in E_{+,k}$, and 
 the closed curve $\de_-\cdot\g_2\cdot\de_+^{-1}\cdot \g_1^{-1}$ is contractible in $\ov{\C}\setminus P_0$. \vspace{1pt}

This is an equivalence relation. 
Moreover,  link-equivalence is closely related to the concept of access defined in the previous subsection as follows:
\begin{itemize}
\item [(i)] If $\g\subset K_-$ is a one-sided link between $K_\pm$, then it must be an infinitely growing curve in $K_-$ since no $E_{-,k}$ contains the infinity point of $\g$. Moreover, any growing curve in the same access as $\g$ is also a link and  equivalent to $\g$ as a link. However, the converse does not hold since two equivalent one-sided links  may have distinct terminals.
\item [(ii)] If $\g=\alpha_-\cdot\alpha_+^{-1}$ is a two-sided link between $K_\pm$, then both $\alpha_\pm$ are infinitely growing. Moreover, if $\beta_\pm\subset K_\pm$ are growing curves in the same accesses as $\alpha_\pm$, respectively, then $\g':=\beta_-\cdot\beta_+^{-1}$ is a two-sided link equivalent to $\g$. 
\end{itemize}

Corresponding to Proposition \ref{pro:disjoint}, we have the following result for links.

\begin{proposition}\label{pro:disjoint1}
	Let $\g$ be a link between $K_\pm$. Then the following statements hold:
\begin{enumerate}
\item Any sub-curve of $\g$ joining $E_{\pm,k}$ for an integer $k$ is a link equivalent to $\g$.
\item For every sufficiently large integer $k$, there exist two numbers $t_{\pm,k}\in[0,1]$ such that $\g(t_{\pm, k})\in E_{\pm, k}$, respectively, and $\g(t_{-,k},t_{+,k})$ is disjoint from $E_{-,k}\cup E_{+,k}$. Moreover, $\g[t_{-,k}, t_{+,k}]$ contains an arc $\beta_k$ homotopic to $\g|_{[t_{-,k}, t_{+,k}]}$ rel $P$ with endpoints fixed. In particular, $\beta_k$ is a link between $K_\pm$ that is equivalent to $\g$ and has the same infinity point as $\g$.
\item Suppose that $\gamma$ and $\gamma'$ are equivalent links between $K_\pm$, with their interiors disjoint from $P$. Then there exist an integer $m\geq0$ and a continuous family of curves $\{\g_s\}_{s\in [0, 1]}$ such that $\gamma_0=\g$, $\g_1=\g'$, and  each $\g_s$ joins $E_{-,m}$ to $E_{+,m}$ with its interior  disjoint from $P$.		
\end{enumerate}
\end{proposition}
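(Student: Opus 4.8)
The plan is to mirror, almost verbatim, the proofs of Proposition \ref{pro:disjoint}\,(1)--(4), using the same local-connectivity and homotopy-lifting tools but now keeping track of the two ``ends'' of a link instead of the single terminal of an access. Throughout I fix, for each infinity point $z$, a disk $W$ with $z\in W$ and $(W\setminus\{z\})\cap P=\emptyset$; since link-equivalence only involves contractibility rel $P_0\subset P$, such local control suffices.

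For statement (1): a sub-curve $\g'$ of $\g$ joining $E_{-,k}$ to $E_{+,k}$ shares the infinity point $z$ with $\g$, and the ``leftover'' pieces $\g|_{[0,t_-]}$ and $\g|_{[t_+,1]}$ lie in $E_{-,k}$ and $E_{+,k}$ respectively (here I use that $E_{\pm}$ are skeletons and that these pieces are growing curves in $K_\mp$ or $K_\pm$; in the two-sided case $\g=\alpha_-\cdot\alpha_+^{-1}$ and I invoke Proposition \ref{pro:disjoint}\,(1) for each $\alpha_\pm$ separately). Concatenating with curves inside $E_{\pm,k}$ (which exist and are null-homotopic in the relevant sense by local arcwise connectivity, Lemma \ref{lem:milnor}) produces the required contraction, so $\g'\sim\g$.

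For statement (2): by Proposition \ref{pro:disjoint}\,(3) applied to each growing constituent of $\g$ (the curve itself if $\g$ is one-sided, or $\alpha_\pm$ if two-sided), for all large $k$ there are parameters $t_{\pm,k}$ with $\g(t_{\pm,k})\in E_{\pm,k}$ and $\g$ eventually leaving $E_{\pm,k}$ on the approach to $z$; disjointness of $E_{-,k}$ from $E_{+,k}$ is the standing hypothesis, so $\g(t_{-,k},t_{+,k})$ avoids $E_{-,k}\cup E_{+,k}$. For $k$ large the whole middle piece $\g|_{[t_{-,k},t_{+,k}]}$ lies in $W$ except for the two endpoints (using that the diameters of the tail pieces of the growing curves go to $0$, Lemma \ref{lem:eventually-periodic} / the diameter estimates behind Proposition \ref{pro:disjoint}\,(3)), so it contains an arc $\beta_k$ with the same endpoints, and $\beta_k$ is homotopic to $\g|_{[t_{-,k},t_{+,k}]}$ rel $P$ with endpoints fixed because the homotopy can be pushed inside $W\cup(\text{small neighborhoods of }E_{\pm,k})$. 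By (1), $\beta_k$ is an equivalent link with the same infinity point $z$.

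For statement (3): this is the exact analogue of Proposition \ref{pro:disjoint}\,(4), and I expect it to be the main obstacle, because one must produce a \emph{continuous family} of links, not merely an equivalence. First reduce, via (2), to the case where $\g,\g'$ are arcs with disjoint interiors, $\g(0),\g'(0)\in E_-$, $\g(1),\g'(1)\in E_+$, and interiors disjoint from $P$. If the interiors of $\g$ and $\g'$ accumulate on each other all the way to the infinity point(s), a direct homotopy inside $W$ handles it as in the two-sided subcase of Proposition \ref{pro:disjoint}\,(4). Otherwise let $D_-$ (resp. $D_+$) be the component of $\cbar\setminus E_-$ (resp. $\cbar\setminus E_+$) containing the relevant interior; the argument of Proposition \ref{pro:disjoint}\,(4) shows $\g$ and $\g'$ enter the \emph{same} such component on each side (because otherwise the bounded complementary pieces would separate $P$, contradicting equivalence/skeleton), so there is a simply connected domain $D_*$ of $(D_-\cup D_+)\setminus(\g\cup\g')$ — more precisely a ``strip'' in the complement of $E_{-,m}\cup E_{+,m}$ for a large common $m$, picked so $D_*\cap P=\emptyset$ and $\g,\g'\subset\partial D_*$ — within whose closure the continuous family $\{\g_s\}$ joining $E_{-,m}$ to $E_{+,m}$ is easily constructed. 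The subtlety is that in the two-sided case the family must move the infinity point continuously while keeping each $\g_s$ a genuine link (one endpoint on each side, interior off $P$); this is where I will lean on the already-proved Proposition \ref{pro:disjoint}\,(4) applied to the pairs $\alpha_-,\beta_-$ and $\alpha_+,\beta_+$ of growing constituents, gluing the two families along their common moving terminal, which traces out an arc inside $W$ and hence stays off $P$. Choosing $m$ large enough that all the $E_{\pm,m}$-pieces used in the gluing are absorbed completes the construction.
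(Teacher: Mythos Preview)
Your treatment of (1) and (2) is correct and is exactly what the paper does: both follow from Proposition~\ref{pro:disjoint}\,(1)--(3) via the relationship between link-equivalence and access-equivalence recorded just before the proposition.

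For (3), your argument is muddled in two respects. First, the clean case split is not ``interiors accumulate on each other vs.\ not'' but rather whether the infinity points of $\gamma$ and $\gamma'$ coincide. If they do, then $\gamma,\gamma'$ are either both one-sided in the same $K_\pm$ or both two-sided with a common terminal $z$; in either situation Proposition~\ref{pro:disjoint}\,(4) applies directly (in the two-sided case apply it to each pair $\alpha_\pm,\beta_\pm$ and concatenate, as you suggest at the end --- but note that here the terminal stays fixed at $z$, so there is no ``moving infinity point'' to worry about). If the infinity points differ, then after the reduction via (2) the arcs $\gamma,\gamma'$ are disjoint crosscuts of the unique annular-type component $A$ of $\cbar\setminus(E_-\cup E_+)$, and link-equivalence forces one of the two components of $A\setminus(\gamma\cup\gamma')$ to be a disk $D_*$ with $D_*\cap P=\emptyset$; the family $\{\gamma_s\}$ is then taken inside $\ov{D_*}$. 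Your ``strip in the complement of $E_{-,m}\cup E_{+,m}$'' is groping toward this $A$, but you never isolate it cleanly, and your proposed domain $(D_-\cup D_+)\setminus(\gamma\cup\gamma')$ is not the right object.

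Second, and more importantly, you impose an unnecessary constraint: the curves $\gamma_s$ are \emph{not} required to be links. The conclusion only asks that each $\gamma_s$ join $E_{-,m}$ to $E_{+,m}$ with interior disjoint from $P$. Your worry about ``keeping each $\gamma_s$ a genuine link'' and ``moving the infinity point continuously'' is a misreading of the statement and leads you into a gluing argument that is both more complicated than needed and ill-posed when the infinity points differ.
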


\begin{proof}
 According to the relationship between link-equivalence and access stated above this proposition, statements (1) and (2) follow directly from Proposition \ref{pro:disjoint}\,(1)--(3).

To prove statement (3), suppose first that
the infinity points of $\g$ and $\g'$ coincide. Then $\g$ and $\g'$ are either both one-sided links in one of $K_{\pm}$, or both two-sided links. In this case, statement (3) is an immediate consequence of Proposition \ref{pro:disjoint}\,(4).

If the infinity points of $\g$ and $\g'$ are distinct, by statements (1) and (2), we may assume  that $\g$ and $\g'$ are disjoint arcs serving as crosscuts of the unique annular component $A$  of $\cbar\setminus(E_{-}\cup E_{+})$.   Since $\g$ and $\g'$ are equivalent, there exists a simply connected component $D_*$ of $A\setminus (\g\cup \g')$ such that $\g,\g'\subset \partial D_*$ and $D_*\cap P=\emptyset$. The required curves $\{\g_s\}$ can be chosen within $\ov{D_*}$.  
\end{proof}

Based on this proposition, we can prove our desired result. 

\begin{proposition}\label{pro:criter}
Suppose that $K_\pm\subset J_f$ and $\gamma$ is a link between $K_\pm$. If the infinity point of $\g$ is wandering, then there exists a curve $\ell=\beta_-\cdot\beta_+^{-1}$ such that
\begin{enumerate}
\item  $\beta_\pm$ are growing curves in $K_\pm$, respectively, and their common terminal is preperiodic;
\item there exists a sequence of curves $\{\ell_k\}$ such that each $\ell_k$ is homotopic to $\g$ rel $P_0$ with endpoints fixed and $\ell_k\to \ell$ as $k\to\infty$.
\end{enumerate}
\end{proposition}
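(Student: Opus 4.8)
\textbf{Proof strategy for Proposition \ref{pro:criter}.} The plan is to mimic the construction in Proposition \ref{lem:curve-to-arc}, but working with links between $K_-$ and $K_+$ rather than with accesses to a single point. Since the infinity point $z$ of $\g$ is wandering, no iterate $f^i(z)$ is preperiodic, so the ``recurrence'' of accesses used in Proposition \ref{lem:curve-to-arc} is unavailable; instead the preperiodic terminal must be produced as a \emph{limit} of a telescoping concatenation of curves, exactly as $\beta_{q+p}$ was obtained there as a uniform limit $\lim_m\ell_m$. So the first step is a lifting lemma for links analogous to Lemma \ref{lem:lifting}: if $\g$ is a link between $K_\pm$ with infinity point $w$, then $f\circ\g$ is a link between $K_\pm$ with infinity point $f(w)$ (the one-sided/two-sided dichotomy is preserved because $f^{-1}(E_{\pm,k})\supset E_{\pm,k+1}$ and $f$ sends growing curves to growing curves by the remark after the definition of growing curves), and conversely any link equivalent to $f\circ\g$ admits a lift through $f$ lying in the same link-equivalence class as $\g$; this is proved by combining Proposition \ref{pro:disjoint1}\,(2)--(3) with homotopy lifting, just as in the proof of Lemma \ref{lem:lifting}.

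The second step is the main construction. By Proposition \ref{pro:disjoint1}\,(2) we may replace $\g$ by a homotopic (rel $P_0$, endpoints fixed) arc whose interior is disjoint from $P$ and which is a crosscut of the relevant component of $\cbar\setminus(E_-\cup E_+)$. The iterates $f^i(\g)$ are all links with infinity points $f^i(z)$. Because there are only finitely many link-equivalence classes joining $E_{-}$ to $E_{+}$ (this needs a finiteness statement parallel to Lemma \ref{lem:finite-access} --- the number of equivalence classes of links is bounded by $(\#P)^2$ by the same domain-counting argument, using that each crosscut of the annular or multiply-connected complement separates $P$), there exist minimal $q\ge 0$ and $p\ge 1$ with $f^{q+p}(\g)$ and $f^{q}(\g)$ in the same link-equivalence class. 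Setting $\g_1=f^q(\g)$ and $\g_0=f^{q+p}(\g)=f^p(\g_1)$, I would use Proposition \ref{pro:disjoint1}\,(3) to get a continuous family $\{\g_s\}_{s\in[0,1]}$ from $\g_0$ to $\g_1$ joining $E_{-,m}$ to $E_{+,m}$ with interiors off $P$, then pull this family back through $f^p$ inductively: the endpoint curves $s\mapsto \g_s(0)$ and $s\mapsto\g_s(1)$ lie in $E_{-,m}$ and $E_{+,m}$ respectively, and since $E_\pm$ are skeletons, these do not separate $P$, so each pullback family is again a legitimate family of links. Concatenating the resulting "bridge" curves $\de_{\pm,k}$ (in $E_{\pm,pk}$) with the tail curve gives, for each $m$, a link $\ell_m$ homotopic to $\g_0$ rel $P_0$; by Lemma \ref{lem:expanding} the diameters of $\de_{\pm,k}$ and of the tail curves decay geometrically, so $\ell_m$ converges uniformly to a link $\ell=\beta_-\cdot\beta_+^{-1}$ whose common terminal $w$ satisfies $f^p(w)=w$, i.e.\ $w$ is periodic; pulling back through $f^q$ as in the last part of Proposition \ref{lem:curve-to-arc} gives a preperiodic terminal. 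Finally, pulling the sequence $\ell_m$ (after the $f^{-q}$ step) back through the original iterates yields a sequence $\{\ell_k\}$ homotopic to $\g$ rel $P_0$ with endpoints fixed and converging to $\ell$, which is assertion (2).

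\textbf{Anticipated main obstacle.} The delicate point is bookkeeping the homotopy class correctly through all the pullbacks so that the final $\ell_k$ are genuinely homotopic to $\g$ \emph{rel $P_0$ with endpoints fixed} (not merely link-equivalent, which a priori allows sliding endpoints along $E_{\pm,k}$ and deforming across points of $P$ lying in $E_\pm$). The fix is the same device used throughout Section \ref{sec:topology}: because $E_\pm$ are skeletons of all $E_{\pm,k}$ rel $P$, the bridging curves $\de_{\pm,k}$ chosen inside $E_{\pm,pk}$ can always be taken disjoint from $P_0$ and non-separating for $P$, so concatenating them changes the curve only within its $P_0$-homotopy class; and the geometric decay from Lemma \ref{lem:expanding} guarantees the concatenations actually converge rather than merely being formally defined. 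The other place requiring care is verifying that the minimal $q,p$ exist, i.e.\ the finiteness of link-equivalence classes between $K_\pm$ --- but this follows from the bound $\#\Delta\le(\#P)^2$ argument of Lemma \ref{lem:finite-access} applied to the finitely many complementary components of $E_{-,m}\cup E_{+,m}$ meeting $P$, once one observes (using the remark after Definition \ref{def:extremal} and the skeleton property) that any link can be normalized to a crosscut of such a component.
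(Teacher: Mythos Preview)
Your proposal is correct and follows essentially the same route as the paper's proof: establish finiteness of link-equivalence classes, find $q,p$ with $f^{q+p}(\g)$ link-equivalent to $f^q(\g)$, use Proposition \ref{pro:disjoint1}\,(3) to get a continuous family, lift it iteratively through $f^p$, concatenate the endpoint curves $\de_{\pm,k}$, and invoke Lemma \ref{lem:expanding} to obtain convergence to a curve with periodic terminal. Two small remarks: (i) the paper obtains the cruder bound $(\#P)^6$ on the number of link-equivalence classes by first grouping links by infinity point and then invoking Lemma \ref{lem:finite-access} within each group, whereas your crosscut-counting sketch aims at $(\#P)^2$---either way only finiteness matters; (ii) the paper disposes of the $f^{-q}$ pullback by simply writing ``without loss of generality $q=0$,'' while you spell out the pullback step explicitly, which is arguably cleaner.
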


Note that the curve $\ell$ is not necessarily a link between $K_\pm$ since the common terminal of $\beta_\pm$ may be a marked point.  

\begin{proof}
We first claim that the links between $K_\pm$ belong to finitely many equivalence classes.

Let $\Sigma$ be a finite collection of links between $K_\pm$  in pairwise distinct equivalence classes. To prove the claim, it suffices to show that $\#\Sigma\leq (\#P)^6$.	
By Proposition \ref{pro:disjoint1},  we may assume \vspace{2pt}

$\bullet$ each curve in $\Sigma$ is an arc that serves as a crosscut of some component of $\ov\C\setminus (E_{-,m_0}\cup E_{+, m_0})$;
	
$\bullet$ if two arcs in $\Sigma$ have distinct infinity points, then they are disjoint.\vspace{2pt}
	
	Let $Z$ denote the set of infinity points of links in $\Sigma$. Decompose $\Sigma$ as $\Sigma=\bigcup_{z\in Z}\Sigma_z$, where $\Sigma_z$ is the collection of links in $\Sigma$ with infinity point $z$. Pick a representative element in each $\Sigma_z$ and denote their collection by $\Sigma_1$. Then $\#\Sigma_1=\# Z$ and the links in $\Sigma_1$ are disjoint.
By a similar argument as in the proof of Lemma \ref{lem:finite-access}, we have $\#\Sigma_1\leq (\#P)^2$. 
	
Fix  $z\in Z$. By the relationships (i) and (ii) between link-equivalence and access as stated before Proposition \ref{pro:disjoint1},
it follows from Lemma \ref{lem:finite-access} that $\#\Sigma_z\leq (\#P)^4$. Therefore, $\#\Sigma\leq (\# P)^6$, which proves the claim.

Since the infinity point $z$ of $\g$ is wandering, it cannot be iterated into $P$. Thus, for each $i\geq0$, the curve $f^i(\g)$ is a link between $K_\pm$.  By the claim above, there exist integers $q\geq 0$ and $p\geq 1$ such that $f^q(\g)$ and $f^{q+p}(\g)$ are equivalent. Set $\g_0:=f^{q+p}(\g)$ and $\g_1:=f^q(\g)$. 

By Proposition \ref{pro:disjoint1}\,(1), we may assume, by taking sub-curves if necessary,  that the interiors of $\gamma_0$ and $\gamma_1$ are disjoint from $P$. Then by Proposition \ref{pro:disjoint1}\,(3), there exists a continuous family $\{\gamma_s\}_{s\in[0, 1]}$ of curves joining $E_{\pm, k_0}$, with their interiors disjoint from $P$. Define two curves $\delta_{\pm,0}$  by $\delta_{-,0}(s):=\g_s(0)$ and  $\delta_{+,0}(s):=\g_s(1)$, $s\in[0,1]$. Then $\de_{\pm,0}\subset E_{\pm, k_0}$, respectively. 

Since $f^p(\gamma_1)=\gamma_0$, for any $t\in (0, 1)$, the curve $\{\g_s(t):s\in[0,1]\}$ has a unique lift by $f^p$ based at $\g_1(t)$, denoted by $\{\g_{s+1}(t):s\in[0,1]\}$. Therefore, we obtain a continuous family of curves $\{\gamma_{s+1}\}_{s\in[0, 1]}$ such that $f^p\circ \g_{s+1}=\g_s$. Consequently, $\g_2$ is a link between $K_\pm$ and equivalent to $\g_1$. Define two curves $\de_{\pm,1}$ by $\delta_{-,1}(s):=\gamma_{s+1}(0)$ and $\delta_{+,1}(s):=\gamma_{s+1}(1)$, $s\in[0,1]$. Then  $\de_{\pm,1}\subset E_{\pm, k_0+p}$ and  $f^{p}(\delta_{\pm, 1})=\delta_{\pm, 0}$, respectively.

Inductively applying the argument above, for each $k\geq1$, we obtain
\begin{itemize}
\item two equivalent links $\g_k$ and $\g_{k+1}$ between $K_\pm$ such that $f^p(\g_{k+1})=\g_k$;
\item a curve $\delta_{-, k}\subset E_{-, k_0+kp}$ joining $\g_k(0)$ to $\g_{k+1}(0)$ such that $f^p(\de_{-,k})=\de_{-,k-1}$; and 
\item a curve $\delta_{+, k}\subset E_{+, k_0+kp}$ joining $\g_k(1)$ to $\g_{k+1}(1)$ such that $f^p(\de_{+,k})=\de_{+,k-1}$.
\end{itemize}

Without loss of generality, we may assume that $q=0$. For each $m\geq1$, let $\beta_{-,m}$ and $\beta_{+,m}$ denote the concatenations of $\{\de_{-,k}\}_{k=1}^m$ and $\{\de_{+,k}\}_{k=1}^m$, respectively.
 By Lemma \ref{lem:expanding},  the diameters of $\g_k$ and $\delta_{\pm,k}$ exponentially decrease to $0$. It follows that ${\g}_k$ converges  to a point $x$ with $f^p(x)=x$, and that $\beta_{\pm,m}$ uniformly converges to growing curves $\beta_\pm$ in $K_\pm$, respectively, such that $\beta_\pm$ have the common terminal $x$.

For each $m\geq1$, define $\ell_m:=\beta_{-,m}\cdot{\g}_{m+1}\cdot\beta_{+,m}^{-1}$. Then $\ell_m$ is homotopic to $\g_1$ rel $P_0$ with endpoints fixed.  Immediately, $\ell_m$ converges to  $\ell:=\beta_-\cdot\beta_+^{-1}$ as $m\to\infty$.
\end{proof}

Finally, let $K$ be a growing continuum generated by an $f$-invariant and locally connected continuum $E$. Similar to the notion of links between $K_\pm$, we can define \emph{self-links of $K$}.

A {\bf self-link} of $K$ is a curve $\g\subset K$ with $\g(0),\g(1)\in E_{k}$  for some $k\geq0$ such that one of the following two cases occurs:
\begin{itemize}
\item $\g$ is an infinitely growing curve in $K$ (\emph{one-sided self-link}); or 
\item $\g=\alpha_-\cdot\alpha_+^{-1}$, where $\alpha_\pm$ are infinitely growing curves in  distinct accesses with a  common terminal that avoids both $P$ and every $E_k$ for $k\geq 0$ (\emph{two-sided self-link}).
\end{itemize}
 The unique terminal of the growing curves in $\g$ is called
the {\bf infinity point} of the self-link $\g$; see the right image of Figure \ref{fig:definition}.

Let $P_0=P\setminus E$. Two self-links $\g_1$ and $\g_2$ are called {\bf equivalent} if there exist two curves $\de_\pm\subset E_{k}$ for some $k$, such that 
$\de_-$ joins $\g_1(0)$ to $\g_2(0)$,
$\de_+$ joins $\g_1(1)$ to $\g_2(1)$, and 
the closed curve $\de_-\cdot\g_2\cdot\de_+^{-1}\cdot \g_1^{-1}$ is contractible in $\ov{\C}\setminus P_0$. 

Let $\g$ be a self-link of $K$, and let $z$ be the infinity point of $\g$. It is worth noting that $f\circ\g$ is also a self-link provided that $f(z)\notin P_0$. Indeed, if $\g$ is a one-sided self-link, this result  holds by Lemma \ref{lem:lifting}\,(1). In the case where $\g=\alpha_-\cdot\alpha_+^{-1}$ is a two-sided self-link, if the conclusion were false, then $f\circ\alpha_\pm$ would lie in the same access. Since $f$ is injective near $z$, it follows from Lemma \ref{lem:lifting}\,(2) that $\alpha_\pm$ lie in the same access to $z$, a contradiction.

With these definitions and a parallel argument, we can apply a similar argument as in the proof of Proposition \ref{pro:criter} to derive the following result. Details are omitted. 

\begin{proposition}\label{pro:criter1}
Suppose that $K\subset J_f$ and $\gamma$ is a self-link of $K$. If the infinity point of $\g$ is wandering, then there exists a curve $\ell=\beta_-\cdot\beta_+^{-1}$ such that
\begin{enumerate}
\item $\beta_\pm$ are growing curves in $K$, and their common terminal is preperiodic;
\item there exists a sequence of curves $\ell_k$ such that each $\ell_k$ is homotopic to $\g$ rel $P_0$ with endpoints fixed and $\ell_k\to \ell$ as $k\to\infty$.
\end{enumerate}
\end{proposition}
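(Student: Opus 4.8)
\textbf{Proof plan for Proposition \ref{pro:criter1}.}
The plan is to mirror the proof of Proposition \ref{pro:criter} almost line by line, with the two growing continua $K_\pm$ and their generators $E_\pm$ collapsed to a single growing continuum $K$ with generator $E$, and with ``links between $K_\pm$'' replaced throughout by ``self-links of $K$''. The three ingredients that must be re-established in this setting are: (a) self-links fall into finitely many equivalence classes; (b) the iterate $f\circ\g$ of a self-link remains a self-link as long as its infinity point avoids $P_0$; and (c) a ``pull-back and take the limit'' construction producing the curve $\ell=\beta_-\cdot\beta_+^{-1}$ together with the approximating sequence $\{\ell_k\}$. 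Ingredient (b) is exactly the remark stated in the paragraph preceding the proposition (using Lemma \ref{lem:lifting}\,(1) for one-sided self-links and Lemma \ref{lem:lifting}\,(2) plus local injectivity of $f$ near $z$ for two-sided ones), so it may simply be cited.

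For ingredient (a), first I would record the analogue of Proposition \ref{pro:disjoint1} for self-links: a sub-curve of a self-link joining $E_k$ for some $k$ is an equivalent self-link; for all large $k$ one can choose $t_{\pm,k}$ with $\g(t_{\pm,k})\in E_k$ and $\g(t_{-,k},t_{+,k})\cap E_k=\emptyset$, and $\g[t_{-,k},t_{+,k}]$ contains an arc $\beta_k$ homotopic to $\g|_{[t_{-,k},t_{+,k}]}$ rel $P$ with endpoints fixed; and two equivalent self-links with interiors off $P$ are joined by a continuous family $\{\g_s\}$ with interiors off $P$ and endpoints sweeping curves in $E_k$. These all follow from Proposition \ref{pro:disjoint}\,(1)--(4) via the relationships (i) and (ii) between self-link-equivalence and access (a one-sided self-link is an infinitely growing curve; a two-sided self-link is a concatenation $\alpha_-\cdot\alpha_+^{-1}$ of infinitely growing curves in distinct accesses with common terminal off $E_k$ and off $P_0$), exactly as Proposition \ref{pro:disjoint1} is deduced. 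Then the finiteness count: let $\Sigma$ be a finite set of pairwise inequivalent self-links; after the reductions above, realize each as an arc that is a crosscut of a component of $\ov\C\setminus E_{m_0}$, with arcs having distinct infinity points made disjoint. Group $\Sigma$ by infinity point, $\Sigma=\bigcup_{z\in Z}\Sigma_z$; choosing one representative per $z$ gives a disjoint family, so $\#Z\le(\#P)^2$ by the argument of Lemma \ref{lem:finite-access}; and for each fixed $z$, the relationships (i)--(ii) together with Lemma \ref{lem:finite-access} bound $\#\Sigma_z$ (at most $(\#P)^4$, since each of $\alpha_\pm$ ranges over finitely many accesses), giving $\#\Sigma\le(\#P)^6$.

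With (a) and (b) in hand, the construction of $\ell$ is the verbatim analogue of the last part of the proof of Proposition \ref{pro:criter}. Since the infinity point $z$ of $\g$ is wandering it never meets $P$, so every $f^i(\g)$ is a self-link; by (a) there are integers $q\ge0,\ p\ge1$ with $f^q(\g)$ and $f^{q+p}(\g)$ equivalent. Set $\g_0:=f^{q+p}(\g)$, $\g_1:=f^q(\g)$; after passing to sub-curves their interiors avoid $P$, so there is a continuous family $\{\g_s\}_{s\in[0,1]}$ joining $E_{k_0}$ to $E_{k_0}$ with interiors off $P$, whose endpoints trace curves $\de_{\pm,0}\subset E_{k_0}$. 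Lifting the family under $f^p$ repeatedly (using $f^p(\g_1)=\g_0$ and the unique path lifting for the interior points) produces, for each $k\ge1$, equivalent self-links $\g_k,\g_{k+1}$ with $f^p(\g_{k+1})=\g_k$ and curves $\de_{\pm,k}\subset E_{k_0+kp}$ with $f^p(\de_{\pm,k})=\de_{\pm,k-1}$ joining $\g_k$ to $\g_{k+1}$ at the two ends. Taking $q=0$ without loss, concatenating $\{\de_{\pm,k}\}_{k=1}^m$ to get $\beta_{\pm,m}$, and invoking Lemma \ref{lem:expanding} so that $\operatorname{diam}\g_k$ and $\operatorname{diam}\de_{\pm,k}$ decay exponentially, $\g_k$ converges to a point $x$ with $f^p(x)=x$, and $\beta_{\pm,m}\to\beta_\pm$, growing curves in $K$ with common terminal $x$; finally $\ell_m:=\beta_{-,m}\cdot\g_{m+1}\cdot\beta_{+,m}^{-1}$ is homotopic to $\g_1$ rel $P_0$ with endpoints fixed and $\ell_m\to\ell:=\beta_-\cdot\beta_+^{-1}$. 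I expect the main obstacle to be bookkeeping in ingredient (a): verifying carefully that a two-sided self-link can be replaced, within its equivalence class, by a crosscut-arc of a single complementary component of $E_{m_0}$ (so that distinct-infinity-point self-links can be made genuinely disjoint) — here one must be sure the two ``sides'' $\alpha_\pm$, living in distinct accesses of the \emph{same} $K$, do not force the arc to wrap around $E_{m_0}$ in an essential way; this is where the hypothesis that the common terminal avoids every $E_k$ is used, and it is the only place the single-continuum case genuinely differs from the two-continuum case of Proposition \ref{pro:criter}. Everything else is a routine transcription, so the write-up can legitimately say ``the proof is parallel to that of Proposition \ref{pro:criter}; we indicate only the modifications.''
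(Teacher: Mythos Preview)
Your proposal is correct and follows exactly the approach the paper indicates: the paper itself omits the proof, stating only that ``with these definitions and a parallel argument, we can apply a similar argument as in the proof of Proposition \ref{pro:criter} to derive the following result,'' and your plan carries out precisely that parallel argument with the appropriate bookkeeping for the single-continuum case.
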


\section{Invariant graphs in maximal Fatou chains}\label{sec:7}
In this section, we prove that every periodic level-$n$ extremal chain admits an invariant graph on the Julia set if $n\geq 1$. Our proof relies on the inductive construction and the topology of extremal chains established in Sections \ref{sec:chain} and \ref{sec:topology}, respectively.

\subsection{Invariant graphs associated with level-\texorpdfstring{0}{ } Fatou chains}
Let $(f,P)$ be a marked rational map. We will analyze the dynamics of $f$ on the union of periodic level-$0$ Fatou chains.

Suppose that $E$ is a component of the union of all periodic level-$0$ Fatou chains with period $p$. Let $K$ be the level-$1$ extremal chain containing $E$.  The main result of this subsection is as follows, which  generalizes Theorem \ref{thm:local}.

\begin{proposition}\label{prop:graph1}
	There exists a graph $G\subset K\cap J_f$ such that $f^p(G)\subset G$, which is isotopic to a skeleton of $\partial E$ rel $P$. Moreover, for each point $z\in G\sm E$, there exist an integer $n_0\geq 1$ and a component $D$ of $\cbar\sm E$ with $\ov{D}\cap P=\emptyset$ such that $f^{n_0p}(z)\in \ov{D}$.
\end{proposition}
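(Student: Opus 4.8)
\textbf{Proof proposal for Proposition \ref{prop:graph1}.}

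The plan is to reduce to the case $p=1$ by passing to the iterate $f^p$ (which does not affect the Julia set or the Fatou chain $K$), and then to mimic the proof of Theorem \ref{thm:local}, replacing the single fixed Fatou domain $U$ there by the periodic level-$0$ Fatou chain $E$. Concretely, write $E=\ov{U_1}\cup\cdots\cup\ov{U_r}$ as a finite union of closures of periodic Fatou domains joined at cut points. On each $\ov{U_i}$ one has, by Theorem \ref{thm:invariant-CT} (applied to the appropriate iterate of $f$ fixing $U_i$, together with the transport maps $f^j$ between the $U_i$), an $f^p$-invariant finite circle-tree $T_i\subset\partial U_i$ which is a skeleton of $\partial U_i$ rel $P$; setting $T:=T_1\cup\cdots\cup T_r$ (glued along the cut points of $E$) yields an $f^p$-invariant continuum in $\partial E$ that is a skeleton of $\partial E$ rel $P$. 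Here I would need to check that the union is again a finite circle-tree-like object and that $f^p(T)\subset T$: the key input is Lemma \ref{lem:tree-image} on images of finite circle-trees, combined with the fact that $f^p$ permutes the pieces $\ov{U_i}$ cyclically and that each $T_i$ is built to contain all marked points and marked circles on $\partial U_i$.

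Next I would construct the graph $G_1\subset T$ exactly as in Section 2.6: let $X_0$ be the union of $P$ with the cut points of $T$ (now including the cut points of $E$ itself, which are $f^p$-invariant), cut each regular circle $C$ of $T$ into $C^+\cup C^-$ using the two points of $X_0$ on it, and set $G_1:=T\sm\bigcup_C C^-$. Then $G_1$ is a graph and a skeleton of $\partial E$ rel $P$. The deformation-arc construction goes through verbatim: for a component $\alpha_1$ of $G_1\sm X_1$ (with $X_n:=f^{-np}(X_0)$), either $f^p(\alpha_1)\subset G_1$ or $f^p(\alpha_1)=C^-$ for a regular circle $C$, in which case one replaces $\alpha_1$ by the component $\alpha_1^+$ of $f^{-p}(C^+)$ isotopic to it, with an associated closed disk $B(\alpha_1)\subset\cbar\sm\ov{E}$. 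Lifting by Lemma \ref{lem:lift} produces graphs $G_n$ and isotopies $\Theta^n$ rel $P$ with $f^p(G_{n+1})\subset G_n$, and the composed homeomorphisms $\phi_n=\theta_{n-1}\circ\cdots\circ\theta_0$ converge by Lemma \ref{thm:isotopy} to a quotient map $\varphi$. Defining $G:=\varphi(G_1)$ gives $f^p(G)\subset G$ and $G\subset K\cap J_f$, since each $G_{n+1}$ lies in the component of $f^{-np}(E)$ containing $E$, hence in $K$, and the interiors of the $B(\alpha_n)$'s are disjoint from $\ov{E}$ so no Fatou points survive in the limit.

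To see $G$ is a graph (not just a continuum) and isotopic to $G_1$ rel $P$, I would reprove the analogues of Propositions \ref{prop:pearl}, \ref{prop:hn}, \ref{prop:convergence} and Corollary \ref{cor:C} in this setting; the only change is that $\partial U$ is replaced by $\partial E$, and Lemma \ref{lem:topology} is applied separately to each $\ov{U_i}$ (the disks $B(C)$ for regular circles $C$ of $T$ still have uniformly bounded homotopic diameter by Lemma \ref{lem:orbifold}, so Lemma \ref{lem:expanding} forces nested $B(\alpha_{n_i})$ to shrink to a point). The last sentence of the Proposition — that every $z\in G\sm E$ is eventually mapped into $\ov{D}$ for a component $D$ of $\cbar\sm E$ with $\ov{D}\cap P=\emptyset$ — follows as in Corollary \ref{cor:C}: taking $x\in G_1$ with $\varphi(x)=z$ and the smallest $n_0$ with $\phi_{n_0}(x)$ in a deformation arc $\alpha_{n_0}$, Proposition-\ref{prop:hn}(1)-analogue gives $z\in B(\alpha_{n_0})$, and $f^{n_0 p}(B(\alpha_{n_0}))=B(C^-)=\ov{D}$ for a regular circle $C$, which by definition is disjoint from $P$. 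The main obstacle I anticipate is the bookkeeping needed to verify that gluing the circle-trees $T_i$ along the (now dynamically meaningful) cut points of $E$ still yields an object to which Lemmas \ref{lem:span}, \ref{lem:operation}, \ref{lem:coincide} and \ref{lem:tree-image} apply — i.e. that "circle-tree of $\partial E$" is the right notion when $\partial E$ is not the boundary of a single Jordan domain; once that local-to-semi-local step is set up cleanly, the rest is a faithful transcription of Section 2.
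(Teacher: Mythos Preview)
Your overall strategy matches the paper's, but you underestimate the ``bookkeeping'' in a way that leaves a real gap. The union $T=\bigcup_i T_{U_i}$ is not a circle-tree of the boundary of a single simply connected domain, so Lemmas \ref{lem:span}--\ref{lem:tree-image} do not apply to it directly; more importantly, the Jordan curves in $T$ that bound complementary disks of $E$ are not just circles in some $\partial U_i$. The paper defines the right objects: set $T_*$ to be $T$ together with the $m$ Fatou domains in $E$, and call a \emph{boundary circle} of $T$ the boundary of any disk component of $\cbar\sm T_*$. Such a circle may be ``mixed'' --- composed of arcs from two or more intersection circles meeting at intersection points of $E$ (see $\partial D_1,\partial D_2$ in Figure \ref{fig:circle-intersection}). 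There can be infinitely many mixed boundary circles, and if you only remove $C^-$ for regular circles of the individual circle-trees $T_{U_i}$, the mixed ones all survive and your $G_1$ is not a graph. The paper's Lemma \ref{lem:regular} proves, via a separate combinatorial induction on $m$, that all but finitely many boundary circles are regular in the appropriate sense ($\#(C\cap X_0)=2$ and the bounded disk avoids $P$); it is \emph{these} circles that must be split into $C^{\pm}$.

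A second point you miss is that Proposition \ref{prop:pearl} fails verbatim and must be weakened to Proposition \ref{prop:pearl1}: two deformation disks $B(\alpha_m),B(\beta_n)$ can now meet in \emph{two} points of $X_n$, because when a mixed boundary circle has its two arcs $C^{\pm}$ on $\partial U_1$ and $\partial U_2$ with $U_1\neq U_2$, distinct complementary components of $\ov{U_1}\cup\ov{U_2}$ can share two intersection points rather than one. The downstream arguments (the analogues of Propositions \ref{prop:hn}, \ref{prop:convergence} and Corollary \ref{cor:C}) still go through with this weaker conclusion, since those intersection points lie in $X_n$ while deformation arcs avoid $X_n$; but you need to recognise that the statement itself changes and supply the modified proof rather than a ``faithful transcription''.
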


\begin{figure}[http]
	\centering
	\begin{tikzpicture}
		\node at (0,0){ \includegraphics[width=9cm]{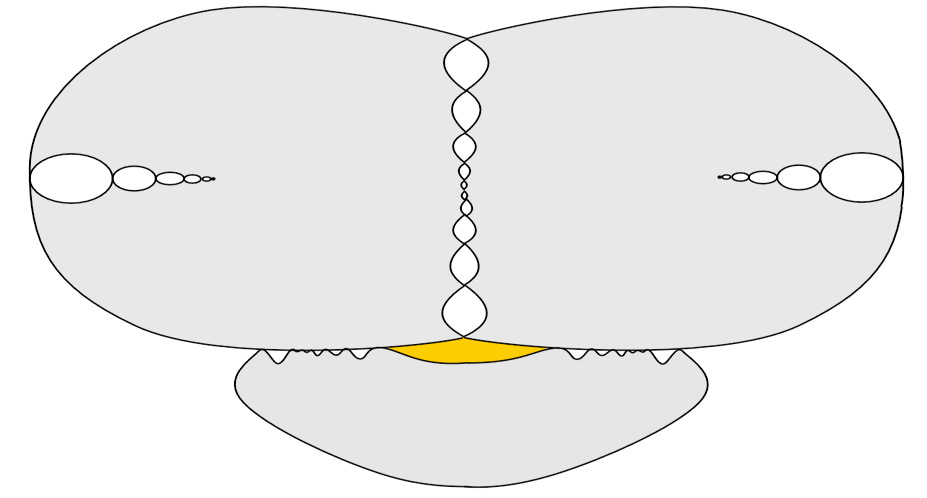}};
		\node at (-1.5,0.75) {$U_1$};
		\node at (1.5,0.75) {$U_2$};
		\node at (0,-1.9){$U_3$};
		\node at (-3,1.75) {$C_1$};
		\node at (3,1.75){$C_2$};
		\node at (1.25, -1.78){$C_3$};
		\node at (0, -1.35){$ D_1$};
		\node at (3,-2){$D_2$};
	\end{tikzpicture}
	\caption{The circle-graph $T$ of $E=\ov{U_1\cup U_2\cup U_3}$. The circles $C_i, i=1,2,3,$ are intersection circles, and the irregular boundary circles of $T$ are $\partial D_1$ and $\partial D_2$.}\label{fig:circle-intersection}
\end{figure}

If $E$ contains exactly one Fatou domain,  this proposition is a combination of Theorem \ref{thm:local} and Corollary \ref{cor:C}. Thus, we assume that $E$ contains $m\geq 2$ Fatou domains. The proof of Proposition \ref{prop:graph1} follows a similar approach as in that of Theorem \ref{thm:local}, with the distinction being the presence of intersection points between boundaries of different Fatou domains.

A point $x\in\partial E$ is called an {\bf intersection point} if it lies on the boundaries of at least two distinct Fatou domains within $E$. A circle $C\subset\partial E$ is called an {\bf intersection circle} if it lies on the boundary of a Fatou domain $U\subset E$ and  separates $U$ from another Fatou domain in $E$. Recall that a circle $C\subset\partial U$ is marked if it either intersects or separates $P$. Thus, every intersection circle is marked; see Figure \ref{fig:circle-intersection}.

By definition, each intersection point of $E$ is contained in an intersection circle, and conversely, each intersection circle of $E$ contains intersection points. Note that there exist at most $2(m-1)$ distinct intersection circles in $E$. Moreover, a component of $\cbar\sm E$ is not a disk if and only if its boundary contains an intersection circle. On the other hand, for each intersection circle $C$, there exists at most one component $D$ of $\cbar\sm E$ such that $C\subset\partial D$. Therefore, there exist at most $2(m-1)$ components of $\cbar\sm E$ that are not disks.

For each Fatou domain $U\subset E$, we denote by $T_{U}\subset\partial U$ the finite circle-tree spanned by $\partial U\cap P$ and all marked circles in $\partial U$; see Lemma \ref{lem:coincide} for background. Set
$$
T:=\bigcup_{U\subset E}T_{U}.
$$
Since the intersection points of $E$ are contained in the intersection circles, which are all marked,
it follows that $T$ is connected. By Lemmas \ref{lem:tree-image} and \ref{lem:coincide}, we also have $f^p(T)\subset T$. Moreover, $T$ is a skeleton of $\partial E$ (rel $P$) since each $T_{U}$ is a skeleton of $\partial U$.

Let $X_0$ be the union of $P$ together with all intersection points of $E$ and all cut points of $T_{U}$ for all Fatou domains $U\subset E$. Then $X_0$ is compact, and $f^p(X_0)\subset X_0$. Moreover, each component of $T\sm X_0$ is an open arc contained in a circle on the boundary of a Fatou domain in $E$.

There exist $m$ components of $\ov{\C}\setminus T$, each containing a Fatou domain in $E$. Let $T_*$ denote the union of $T$ and these $m$ components. Since $T_*$ contains all intersection circles of $E$, by the same reasoning as before, there exist at most $2(m-1)$ components of $\ov\C\setminus T_*$ that are not disks. Therefore, $T$ has at most $2(m-1)+m$ complementary components that are not disks.

By a {\bf boundary circle} of $T$, we mean the boundary of a component of $\ov\C\setminus T_*$ that is a disk. A boundary circle $C$ of $T$ is called {\bf regular} if $\# (C\cap X_0)=2$ and $\ov{D}\cap P=\emptyset$, where $D$ is the component of $\ov\C\setminus T$ with $\partial D=C$, and is called {\bf irregular} otherwise.

\begin{lemma}\label{lem:regular}
There exist finitely many irregular boundary circles of $T$.
\end{lemma}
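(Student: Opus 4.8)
The plan is to bound the number of irregular boundary circles of $T$ by controlling each of the two ways a boundary circle can fail to be regular: either its complementary disk $D$ meets $P$ (and hence $\ov D\cap P\neq\emptyset$), or the circle $C=\partial D$ carries more than two points of $X_0$. Let me split the argument along exactly these two failure modes, since the bounds come from rather different sources.

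First I would deal with the circles whose complementary disk intersects $P$. There are at most $\#P$ components of $\cbar\setminus T$ meeting $P$, hence at most $\#P$ boundary circles that are irregular for this reason; this is immediate and finite. The remaining case is a boundary circle $C=\partial D$ with $\ov D\cap P=\emptyset$ but $\#(C\cap X_0)\geq 3$. Here I would decompose $X_0$ according to its definition: $X_0$ is the union of $P$, the intersection points of $E$, and the cut points of $T_U$ over all Fatou domains $U\subset E$. Since $D$ is a disk not meeting $P$ and $C$ is its boundary, $C$ lies on the boundary of a single Fatou domain $U\subset E$ (if $C$ were an intersection circle, then $\ov D$ would be forced to meet $P$, because intersection circles are marked and a disk-type complementary component bounded by a marked circle must contain or be separated from a marked point — this needs a short verification using that $\partial D=C$ is the \emph{only} part of $\partial D$ in $\partial U$). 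So $C\cap X_0\subset (C\cap P)\cup(C\cap\{\text{intersection points}\})\cup(C\cap\{\text{cut points of }T_U\})$, and since $C\cap P=\emptyset$ and $C$ is not an intersection circle (so $C$ contains no intersection points of $E$), we get $C\cap X_0 = C\cap\{\text{cut points of }T_U\}$. Thus such a $C$ must be a cut circle of $T_U$ with $\mu_{T_U}(C)\geq 3$, i.e.\ a branched circle of $T_U$.

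The finiteness then follows from Lemma \ref{lem:number} applied to each finite circle-tree $T_U$: $T_U$ has only finitely many branched circles (the total defect $\sum(\mu_{T_U}(C_j)-2)$ equals $n_U-2$, where $n_U$ is the number of ends of $T_U$, so in particular the number of branched circles is finite). Summing over the $m<\infty$ Fatou domains $U\subset E$ gives a finite bound. Combining the two cases, the number of irregular boundary circles of $T$ is at most $\#P$ plus $\sum_{U\subset E}(\text{number of branched circles of }T_U)$, which is finite.

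The step I expect to be the main obstacle is the careful verification in the second case that a boundary circle $C=\partial D$ with $\ov D\cap P=\emptyset$ cannot be an intersection circle, and more generally that $C\cap X_0$ reduces to the cut points of a single $T_U$: one has to keep straight the distinction between $T$, $T_*$, and the individual $T_U$'s, use that $D$ is a \emph{disk} complementary component of $T_*$ (so $\partial D$ is a genuine Jordan curve lying in a single $\partial U$ and is not an intersection circle of $E$), and invoke that every intersection circle is marked to exclude that possibility. Everything else is bookkeeping with the already-established finiteness statements (Lemma \ref{lem:number}, the bound of $2(m-1)$ on intersection circles, and the finiteness of $\{U\subset E\}$).
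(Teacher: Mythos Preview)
Your argument has a genuine gap in the second case. You claim that a boundary circle $C=\partial D$ with $\ov D\cap P=\emptyset$ must lie entirely in $\partial U$ for a \emph{single} Fatou domain $U\subset E$, and then reduce to branched circles of the circle-tree $T_U$. But this is exactly what fails when $m\ge 2$. A disk component $D$ of $\cbar\setminus T_*$ can have a boundary that is a \emph{composite} Jordan curve: it is made up of several arcs, each lying on a different intersection circle $C_i\subset\partial U_i$, with the arcs meeting at intersection points of $E$. Such a $C$ is not a circle of any $\partial U$ at all, so Lemma~\ref{lem:topology}\,(3) does not apply and your reduction to $\mu_{T_U}(C)\ge 3$ is vacuous. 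Figure~\ref{fig:circle-intersection} illustrates this: the boundary of $D_1$ consists of three arcs, one from each of the intersection circles $C_1,C_2,C_3$, and contains three intersection points of $E$; this makes $\partial D_1$ irregular without $\partial D_1$ being a branched circle of any $T_{U_i}$. Your parenthetical only rules out $C$ being an intersection circle in its entirety, which is a different (and easier) statement.

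This composite case is precisely the heart of the lemma. One must show that among all disk components of $\cbar\setminus T_*$ whose boundaries are built from arcs of intersection circles, only finitely many have three or more intersection points on their boundary (or contain cut points of some $T_{U_i}$ along one of the arcs). The paper handles this with a separate topological claim proved by induction on the number of overlapping disks: for $n$ pairwise disjoint Jordan domains $\Omega_1,\dots,\Omega_n$ with connected union $B$, all but finitely many disk components of $\cbar\setminus B$ have exactly two intersection points on their boundary. This induction is where the actual work lies, and your outline contains no mechanism that would produce it.
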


\begin{proof}

	Let $D$ be a component of $\cbar\sm T_*$ that is a disk. Then either $D$ is a component of $\ov\C\setminus \ov{U}$ for a Fatou domain $U\subset E$, or the boundary $\partial D$ is composed of at least two arcs, which are sub-arcs of distinct intersection circles.
	
	In the former case, if $\partial D$ is a regular circle of $T_{U}$, then it is a regular boundary circle of $T$. Since $T_{U}$ contains finitely many irregular circles, there exist finitely many irregular boundary circles of $T$ of this type.
	
	In the latter case, the circle $\partial D$ of $T$ contains at least two intersection points, say $z_1$ and $z_2$. If $\partial D$ is irregular, then either $\ov{D}\cap P\neq \emptyset$; or $\partial D\sm\{z_1, z_2\}$ consists of two open arcs $\alpha_i\subset C_i$, $i=1,2$, where $C_i$ is a circle of $T_{U_i}$ for a Fatou domain $U_i\subset E$, such that $\alpha_1$ or $\alpha_2$ contains cut points of $T_{U_1}$ or $T_{U_2}$, respectively;
	 or $\partial D\cap X_0$ contains at least three intersection points. The number of components of the first type is clearly finite. Note that each $C_i$ is an intersection circle and  contains finitely many cut points of $T_{U_i}$. Then the number of components of the second type is also finite.  
	To complete the proof of the lemma, it suffices to verify the following claim.
	
	\vskip 0.2cm
	\emph{Claim}. Let $\Omega_1, \ldots, \Omega_n,$ with $n\geq 2$, be pairwise disjoint disks such that $B:=\bigcup_{i=1}^n\ov{\Omega_i}$ is connected. Considering the components of $\cbar\sm B$ that are disks,  the boundaries of all but finitely many of these components contain exactly two \emph{intersection points} of $B$, i.e., points belonging to at least two of $\partial{\Omega_1},\ldots,\partial{\Omega_n}$.
	\vskip 0.2cm

	First, suppose that $n=2$. If $\#(\partial\Omega_1\cap\partial\Omega_2)=1$, then $\cbar\sm B$ is connected, and $\partial B$ contains only one intersection point. If $\#(\partial\Omega_1\cap\partial\Omega_2)>1$, then the boundary of any component of $\cbar\sm B$ contains exactly two intersection points. Thus, the claim holds for $n=2$.
	
	By induction, we assume that the claim holds for $n\ge 2$. Let $\Omega_{0}$ be a disk disjoint from $\Omega_1,\ldots,\Omega_n$  such that both $\bigcup_{i=0}^n\ov{\Omega_i}$ and $B=\bigcup_{i=1}^n\ov{\Omega_i}$ are connected. Then $\Omega_0$ is contained in a component $D$ of $\cbar\sm B$.  The intersection points of $B\cup\ov{\Omega_0}$ are the union of the intersection points of $B$ together with $\partial \Omega_0\cap \partial D$.
	
	For any component $D'$ of $\cbar\sm B$ other than $D$, the points in $\partial{\Omega_0}\cap \partial D'$ are the intersection points of $B$ in $\partial D'$. Thus, it suffices to verify that the boundaries of all but finitely many components of $D\setminus\ov{\Omega_0}$ contain two intersection points of $\ov{\Omega_0}\cup B$.
	
	If $\partial D\cap \partial\Omega_0$ is a singleton, then $D\setminus \ov{\Omega_0}$ is connected. If $\#(\partial D\cap \partial\Omega_0)\geq2$,  except for finitely many ones, every component of $D\setminus\ov{\Omega_0}$ is a disk whose boundary contains exactly two points of $\partial D\cap \partial \Omega_0$ and consists of one open arc in $\partial D$ and the other in $\partial\Omega_0$. Thus, there exist finitely many components of $D\setminus \ov{\Omega_0}$ whose boundaries contain more than two intersection points of $\ov{\Omega_0}\cup B$, since $\partial D$ has finitely many intersection points of $B$. The claim is proved.
\end{proof}

\begin{proof}[Proof of Proposition \ref{prop:graph1}]
	We use a similar argument as in the proof of Theorem \ref{thm:local}. For a regular boundary circle $C$ of $T$, let $C^\pm$ denote the two components of $C\sm X_0$, and let $B(C^-)=B(C^+)$ denote the closure of the component of $\cbar\setminus T$ whose boundary is $C$. Set
	$$
	G_1=T\sm \bigcup C^-,
	$$
	where the union is taken over all regular boundary circles of $T$. By Lemma \ref{lem:regular}, $G_1$ is a graph serving as a skeleton of $\partial E$ rel $X_0$.
	
	Now, we construct $G_2\subset f^{-p}(G_1)$. For each $n\geq1$, set $X_n:=f^{-np}(X_0)$. Then $X_n\subset X_{n+1}$.  Note that if $z\in X_1\cap G_1$, then $f^p(z)\in X_0\cap T\subset G_1$. Thus, for a component $\alpha_1$ of $G_1\sm X_1$, its image $f^p(\alpha_1)$ is a component of $T\sm X_0$. 
\begin{itemize}
\item If $f(\alpha_1)=C^-$ for a regular boundary circle $C$ of $T$, since $C^+$ and $C^-$ are isotopic rel $X_0$, there exists a unique component $\alpha_1^+$ of $f^{-p}(C^+)$  isotopic to $\alpha_1$ rel $X_1$.  Such an arc $\alpha_1$  is called a {\it deformation arc of $G_1$}. Denote by $B(\alpha_1)$  the component of $f^{-p}(B(C^-))$ containing $\alpha_1$. Then $B(\alpha_1)$ is a closed disk such that $B(\alpha_1)\cap G_1=\ov{\alpha_1}$ and $B(\alpha_1)\cap X_1=\{\alpha_1(0),\alpha_1(1)\}$.
\item In the other case, we have $f^p(\alpha_1)\subset G_1$ by the construction of $G_1$.
\end{itemize}
	  
	Define the graph $G_2$ as 
	$$
	G_2:=\left(G_1\sm\bigcup\alpha_1\right)\cup\bigcup\alpha^+_1,
	$$
	where the union is taken over all deformation arcs of $G_1$. From the previous discussion, we have $f^p(G_2)\subset G_1$, and
	there exists an isotopy $\Theta^1:\cbar\times [0,1]\to\cbar$ rel $P$ such that $\Theta^1_t:=\Theta^1(\cdot,t)$ satisfies
	\begin{enumerate}
\item $\Theta^1_0=id$ on $\cbar$;
	
\item $\Theta^1_t(z)=z$ on a neighborhood of attracting cycles of $f$ for $t\in [0,1]$;
	
\item if $z\in G_1$ is not in any deformation arc, then $\Theta^1_t(z)=z$ for $t\in [0,1]$; and
	
\item if $\alpha_1$ is a deformation arc of $G_1$, then $\Theta^1_1(\alpha_1)=\alpha_1^+$ and $\Theta^1(\ov{\alpha_1}\times[0,1])=B(\alpha_1)$. 
	\end{enumerate}
	\noindent Consequently, $\theta_1(G_1)=G_2$ with $\theta_1:=\Theta^1_1$.
	
	By inductively applying Lemma \ref{lem:lift}, we obtain an isotopy $\Theta^n:\cbar\times [0,1]\to\cbar$ rel $P$ and a graph $G_{n+1}$ for each $n\geq1$, such that $\Theta^n_0=id$ and  $\Theta^{n}_t\circ f^p(z) =f^p\circ \Theta^{n+1}_t(z)$  for all $z\in\ov\C$, $t\in [0,1]$, and $G_{n+1}=\theta_n(G_n)$ with $\theta_n:=\Theta^{n}_1$.
	Thus, $f^p(G_{n+1})\subset G_{n}$. Besides, there exist some components of $G_n\setminus X_n$, called the {\it deformation arcs of $G_n$ $($under $\Theta^n)$}, such that
	\begin{itemize}
\item  if $z\in G_n$ is not in any deformation arc of $G_n$, then $\Theta^n_t(z)=z$ for $t\in [0,1]$;
\item  if $\alpha_n$ is a deformation arc of $G_n$, then 
	the deformation  of $\ov{\alpha_n}$ under $\Theta^n$, denoted by $B(\alpha_n)$, is a closed disk such that $B(\alpha_n)\cap G_n=\ov{\alpha_n}$ and $B(\alpha_n)\cap X_n=\{\alpha_n(0),\alpha_n(1)\}$.
	\end{itemize}
	
	Denote $\phi_n=\theta_{n-1}\circ\cdots\circ\theta_0$ for $n\ge 1$ with $\theta_0:=id$. Then $G_{n}=\phi_n(G_1)$. By Lemma \ref{thm:isotopy}, $\{\phi_n\}$ uniformly converges to a quotient map $\varphi$ of $\cbar$. It follows that $f^p(G)\subset G$ with $G:=\varphi(G_1)$.
	
	Fix a deformation arc $\alpha_n$ of $G_n$, $n\geq1$, and set $\alpha_{n-k}:=f^{kp}(\alpha_n)$ for  $0\leq k\leq n$. From the lifting construction of $\Theta^n$, it follows that   $\alpha_{n-k}$ is a deformation arc of $G_{n-k}$ and  $f^{kp}(B(\alpha_n))=B(\alpha_{n-k})$ for $0\leq k\leq n-1$, and that $\alpha_0=C^-$ for a regular boundary circle $C$ of $T$ and 
	$f^{np}:B(\alpha_n)\to B(\alpha_0)$ is a homeomorphism.

	\begin{proposition}\label{prop:pearl1}
		Let $\alpha_m$ and $\beta_n$ be two distinct deformation arcs of $G_m$ and $G_n$, respectively, with $m\geq n\geq1$. Then either $B(\alpha_m)\subset B(\beta_n)$, or $\#\,(B(\alpha_m)\cap B(\beta_n))\leq 2$.
	\end{proposition}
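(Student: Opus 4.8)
The plan is to imitate the proof of Proposition \ref{prop:pearl}: reduce to a ``base case'' by pulling back $np$ times, and there invoke the topology of $T$. Set $\ell:=m-n\geq 0$, $\alpha':=f^{np}(\alpha_m)$ and $\beta_0:=f^{np}(\beta_n)$. Then $\alpha'$ is a deformation arc of $G_\ell$ (with the convention $\alpha'=C_\alpha^-$, $B(\alpha')=B(C_\alpha^-)$, when $\ell=0$), while $\beta_0=C^-$ for a regular boundary circle $C$ of $T$, so that $B(\beta_0)=B(C^-)=\overline{D_\beta}$, where $D_\beta$ is the complementary disk of $\cbar\setminus T$ bounded by $C$ and $\overline{D_\beta}\cap P=\emptyset$. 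By the construction of the $B(\cdot)$'s, $f^{np}$ maps $B(\alpha_m)$ homeomorphically onto $B(\alpha')$ and $B(\beta_n)$ homeomorphically onto $B(\beta_0)$, and $B(\beta_n)$ is precisely the component of $f^{-np}(\overline{D_\beta})$ containing $\beta_n$. I first establish the base-case dichotomy: \emph{either $B(\alpha')\subset\overline{D_\beta}$, or $\#\bigl(B(\alpha')\cap\overline{D_\beta}\bigr)\leq 2$.} Granting this, the proposition follows exactly as in Proposition \ref{prop:pearl}: since $\overline{D_\beta}$ is a closed Jordan disk disjoint from $P\supset P_f$, hence from the critical values of $f^{np}$, the components of $f^{-np}(\overline{D_\beta})$ are pairwise disjoint closed Jordan disks on which $f^{np}$ is injective; in the first case $B(\alpha_m)$ is connected with $f^{np}(B(\alpha_m))=B(\alpha')\subset\overline{D_\beta}$, so it lies in one such component, which is either $B(\beta_n)$ (whence $B(\alpha_m)\subset B(\beta_n)$) or disjoint from it; in the second case $f^{np}\bigl(B(\alpha_m)\cap B(\beta_n)\bigr)\subset B(\alpha')\cap B(\beta_0)$ has at most two points, and $f^{np}$ is injective on $B(\alpha_m)$, so $\#\bigl(B(\alpha_m)\cap B(\beta_n)\bigr)\leq 2$.

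To prove the dichotomy, the first step is to show that ${\rm int}\,B(\alpha')$ lies in a single component $D'$ of $\cbar\setminus T_*$. Recall that $T_*$ is the union of $T$ with the components $W_i$ of $\cbar\setminus T$ containing the Fatou domains $U_i\subset E$, and that each $W_i$ is the simply connected domain bounded by $T_{U_i}$ (Corollary \ref{cor:circle-tree}); combining $f^p(T)\subset T$ (established above via Lemmas \ref{lem:tree-image} and \ref{lem:coincide}) with Lemma \ref{lem:tree-image} applied to each $T_{U_i}$ yields $f^p(T_*)\subset T_*$, hence $f^{-kp}(\cbar\setminus T_*)\subset\cbar\setminus T_*$ for all $k\geq 0$. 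Now $B(\alpha')$ is, by construction, a component of an iterated $f^{-\ell p}$-preimage of a disk $B(C'^-)=\overline{D_{C'}}$ with $\overline{D_{C'}}\cap P=\emptyset$, and ${\rm int}\,\overline{D_{C'}}=D_{C'}$ is a component of $\cbar\setminus T_*$; therefore ${\rm int}\,B(\alpha')\subset f^{-\ell p}(\cbar\setminus T_*)\subset\cbar\setminus T_*$ and, being connected, it lies in one component $D'$. If $D'=D_\beta$, then $B(\alpha')=\overline{{\rm int}\,B(\alpha')}\subset\overline{D_\beta}$, the first alternative. If $D'\neq D_\beta$, then, since $D'$ and $D_\beta$ are distinct components of $\cbar\setminus T_*$ and $C=\partial D_\beta\subset T_*$, a routine check gives $B(\alpha')\subset\overline{D'}$ and $\overline{D'}\cap\overline{D_\beta}=\partial D'\cap C$, so it remains to prove $\#(\partial D'\cap C)\leq 2$.

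This last bound is the crux, and the main obstacle. It comes from the local structure of $T_*$ near a \emph{regular} boundary circle $C$, which I will extract from the proof of Lemma \ref{lem:regular}: $C$ is either (i) the boundary of a complementary disk of $\overline{U_j}$ for some $U_j\subset E$, hence a single circle $\hat C\subset\partial U_j$; or (ii) a concatenation of at least two arcs, each a sub-arc of an intersection circle---and here the regularity condition $\#(C\cap X_0)=2$ forces exactly two arcs $\gamma_1,\gamma_2$, while Lemma \ref{lem:circle} (two circles in one $\partial U$ meet in at most one point) forces $\gamma_1\subset\hat C_1\subset\partial U_{j_1}$ and $\gamma_2\subset\hat C_2\subset\partial U_{j_2}$ with $U_{j_1}\neq U_{j_2}$, so the two junction points of $\gamma_1$ and $\gamma_2$ are intersection points of $E$. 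In either case these distinguished points are exactly $C\cap X_0$. Along the interior of any arc $\gamma\subset\hat C\subset\partial U_j$ of $C$, the side toward $U_j$ lies in $W_j\subset T_*$ and the opposite side is $D_\beta$; moreover an interior point of $\gamma$, being off $X_0$, lies in no intersection circle other than $\hat C$, so no component of $\cbar\setminus T_*$ other than $D_\beta$ can accumulate there. Hence $\partial D'\cap C\subset C\cap X_0$, which has at most two points. This completes the base case, and hence the proposition. The verifications of $f^p(T_*)\subset T_*$, of $\partial W_i=T_{U_i}$, and of the elementary topological facts about Jordan curves and complementary components used above are straightforward and will be written out in full.
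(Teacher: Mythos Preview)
Your reduction to the base case and the subsequent pullback argument are fine, and your dissection of the local structure of a regular boundary circle $C$ is correct. The gap is the assertion $f^p(T_*)\subset T_*$ (equivalently ${\rm int}\,B(\alpha')\subset\cbar\setminus T_*$), which you try to deduce from Lemma~\ref{lem:tree-image}; that lemma only controls images of circle-trees in $T$, not the open pieces $W_i\setminus\overline{U_i}$, and in fact the claim fails in general. The set $T_*\setminus E$ is typically nonempty: whenever $\partial U_i$ has a circle $C'$ with $\#(C'\cap T_{U_i})\le 1$ (there are infinitely many such circles if $\partial U_i$ is not itself a Jordan curve), the complementary disk $D'$ of $\overline{U_i}$ bounded by $C'$ satisfies $D'\subset W_i\subset T_*$ yet $D'\cap E=\emptyset$. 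There is no reason for $f^p(D')\subset T_*$; for instance, if $D'$ contains a component of $f^{-p}(E)\setminus E$, then $f^p(D')$ contains some $\overline{U_j}$ and hence meets both $T_*$ and $\cbar\setminus T_*$. Consequently ${\rm int}\,B(\alpha')$, while always contained in $\cbar\setminus E$ (and hence in $\cbar\setminus T$), may land inside some $W_i$ rather than in a component of $\cbar\setminus T_*$, and your argument breaks down.

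The paper repairs this by discarding $T_*$ and working directly with the forward-invariant set $E$, or rather with $\overline{U_1}$ (resp.\ $\overline{U_1}\cup\overline{U_2}$), where $U_1,U_2$ are the Fatou domains in $E$ whose boundaries contain $C^+,C^-$. Since $f^p(E)=E$, one gets ${\rm int}\,B(\alpha')\subset\cbar\setminus E\subset\cbar\setminus\overline{U_1}$, hence $B(\alpha')\subset\overline{D}$ for a single component $D$ of $\cbar\setminus\overline{U_1}$ (case $U_1=U_2$) or of $\cbar\setminus(\overline{U_1}\cup\overline{U_2})$ (case $U_1\neq U_2$). Then one compares $\overline{D}$ with $\overline{D_\beta}$ directly via Lemma~\ref{lem:circle}: in the first case two distinct complementary disks of $\overline{U_1}$ meet in at most one point, and in the second case at most two (the two points of $C\cap X_0$). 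This is exactly the same endgame you were aiming for, but reached through a set that is genuinely $f^p$-invariant.
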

	
	\begin{proof}
		Set $\beta_0:=f^{np}(\beta_n)$ and $\alpha_{m-n}:=f^{np}(\alpha_m)$. 
		We claim that either $B(\alpha_{m-n})\subset B(\beta_0)$, or $\#\,(B(\alpha_{m-n})\cap B(\beta_0))\leq 2$.
		Note that $\beta_0=C^-$ for a regular boundary circle $C$ of $T$.  The two open arcs  $C^\pm$ are contained in the boundaries of Fatou domains $U_1,U_2\subset E$, respectively.
		
		If $U_1=U_2$, the interior of $B(\beta_0)$ is a component of $\ov{\C}\setminus \ov{U_1}$, and $B(\alpha_{m-n})\subset \ov{D}$ for a component $D$ of $\ov{\C}\setminus \ov{U_1}$. Thus, either $B(\beta_0)=\ov D$ or $\#(B(\beta_0)\cap \ov{D})\leq 1$ by Lemma \ref{lem:circle}. Then the claim holds.
		
		If $U_1\not= U_2$, there exists a component $D$ of $\ov\C\setminus \ov{U_1}$ such that $U_2\subset D$ and  the interior of $B(\beta_0)$ is a component of $D\setminus \ov {U_2}$. Moreover, there exists a component $W$ of $\ov\C\setminus (\ov{U_1}\cup \ov{U_2})$ with $B(\alpha_{m-n})\subset \ov {W}$. If $W$ is a component of $\ov\C\setminus \ov{U_1}$ or $\ov\C\setminus \ov{U_2}$, then $\#(\ov {W}\cap B(\alpha_0))\leq 1$ by Lemma \ref{lem:circle}. Otherwise, $W$ is a component of $D\setminus\ov{U_2}$. In this case, either $\ov{W}=B(\beta_0)$, or $\ov{W}\cap B(\beta_0)$ consists of at most two intersection points in $X_0\cap C$. Then the claim also holds.
		
		The proposition follows directly from the above claim and a pullback argument.
	\end{proof}
	
	The remaining parts of the proof of Proposition \ref{prop:graph1} are the same as the corresponding parts of the proofs of Theorem \ref{thm:local} and Corollary \ref{cor:C}. We omit the details.
\end{proof}

\begin{corollary}\label{cor:disjoint}
	Suppose that $K\not= K'$ are  periodic level-$1$ extremal chains. Let $G\subset K$ and $G'\subset K'$ be invariant graphs  derived from Proposition \ref{prop:graph1}. Then $G\cap G'=\emptyset$.
\end{corollary}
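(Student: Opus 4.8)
The plan is to argue by contradiction; suppose $z\in G\cap G'$. Since $G\subset K\cap J_f$ and the interior of any extremal chain other than $\cbar$ lies in $F_f$ (recorded after Lemma \ref{lem:chain-map}), we get $G\subset\partial K$, and likewise $G'\subset\partial K'$, so $z\in\partial K\cap\partial K'\subset J_f$. Write $E$ and $E'$ for the generators of $K$ and $K'$; by Lemma \ref{lem:dyn-def} these are the unions of the periodic level-$0$ extremal chains contained in $K$ and $K'$, and by extremality each is a maximal connected union of periodic level-$0$ Fatou chains, i.e.\ a component of the union of all of them. Since $E=E'$ would force $K=K'$ by Lemma \ref{lem:E-chain}, we conclude $E\cap E'=\emptyset$. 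Moreover $f$ is postcritically finite, so every periodic Fatou domain is superattracting and contains a point of its superattracting cycle, which lies in $P_f\subset P$; hence $E\cap P\neq\emptyset$ and $E'\cap P\neq\emptyset$. This yields the key geometric fact used throughout: if $D$ is a component of $\cbar\setminus E$ with $\ov D\cap P=\emptyset$, then $E'$, being a continuum disjoint from $E$ and meeting $P$, lies in a component of $\cbar\setminus E$ other than $D$, so $\ov D\cap E'=\emptyset$; symmetrically $\ov{D'}\cap E=\emptyset$ for any component $D'$ of $\cbar\setminus E'$ with $\ov{D'}\cap P=\emptyset$.

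To handle periods I would pass to $q=\mathrm{lcm}(p,p')$: the notions of Fatou chain, extremal chain, generator, and ``periodic Fatou domain'' depend only on $F_f$, so $E,E',K,K'$ are unchanged, while $f^q(K)=K$, $f^q(K')=K'$, $f^q(E)=E$, $f^q(E')=E'$, $f^q(G)\subset G$, $f^q(G')\subset G'$, and the forward $f^q$-orbit of $z$ stays in $G\cap G'$. Since $E\cap E'=\emptyset$, at least one of $z\notin E$, $z\notin E'$ holds; by symmetry assume $z\in G\setminus E$. Applying the ``Moreover'' clause of Proposition \ref{prop:graph1} to $G$ gives $n_0\geq1$ and a component $D$ of $\cbar\setminus E$ with $\ov D\cap P=\emptyset$ such that $f^{n_0p}(z)\in\ov D$, whence $\ov D\cap E'=\emptyset$ by the key geometric fact. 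If $z\in E'$ then $f^{n_0p}(z)\in f^{n_0p}(E')\subset E'$ (as $f^{p'}(E')=E'$), so $f^{n_0p}(z)\in\ov D\cap E'=\emptyset$, a contradiction; thus it remains to treat the case $z\in G\setminus E$ and $z\in G'\setminus E'$.

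For that remaining case I would sharpen the ``Moreover'' clause. The components $D$ it produces are bounded by regular boundary circles of the circle-graph $T$ of $E$, and, exactly as in the analysis of regular circles in the proof of Theorem \ref{thm:eventually}, such disks fall into finitely many $f^p$-cycles, each member disjoint from $P$ and carried homeomorphically by $f^p$ to the next. Hence one obtains $N\geq1$ and such a component $D$ with $f^{Nq}(z)\in\ov D$, and the forward $f^q$-orbit of $z$ from time $Nq$ on stays in the union $R_E$ of the closures of these ``regular'' components of $\cbar\setminus E$. Since $R_E\cap E'=\emptyset$, the orbit then avoids $E'$, so (staying in $G'$) it lies in $G'\setminus E'$ eventually; applying the sharpened clause to $G'$ forces the orbit, from some later time on, to lie also in the analogous union $R_{E'}$ of closures of regular components of $\cbar\setminus E'$. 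Thus $f^{kq}(z)\in R_E\cap R_{E'}$ for all large $k$; as $R_E$ and $R_{E'}$ consist of finitely many continua and the orbit eventually cycles, the proof is finished once one shows that a regular component of $\cbar\setminus E$ and a regular component of $\cbar\setminus E'$ have disjoint closures. This separation statement is the step I expect to be the main obstacle: I would establish it by showing such a $D$ lies, together with $\ov D$, inside $K$ — by tracing the (eventually periodic, superattracting) Fatou domains contained in $D$ — so that if $\ov D$ met a regular component $\ov{D'}\subset K'$ of $\cbar\setminus E'$ we would have $\ov D\cap K'\neq\emptyset$ while $K'\not\subset\ov D$ (because $\ov D\cap E'=\emptyset$), forcing $K'$ to cross $\partial D\subset E$, which the same analysis of Fatou domains rules out. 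Modulo this separation lemma the corollary follows, everything else being routine given Sections \ref{sec:chain}--\ref{sec:topology} and Proposition \ref{prop:graph1}.
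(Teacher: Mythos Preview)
Your setup and the case $z\in E'$ are on the right track, though there is a slip: from $f^{p'}(E')=E'$ you cannot deduce $f^{n_0p}(E')\subset E'$ unless $p'\mid n_0p$. This is reparable by genuinely working with $f^q$ throughout (the paper simply assumes both $K,K'$ are $f$-invariant), but it foreshadows the real problem.

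The genuine gap is the case $z\notin E$, $z\notin E'$. Your ``sharpening'' of the Moreover clause --- that the regular disks fall into finitely many $f^p$-cycles so the orbit eventually stays in $R_E$ --- is not what the proof of Theorem~\ref{thm:eventually} shows. There the dichotomy for a regular circle $C$ says either $D_C$ contains a preimage of $U$ (finitely many such), or $f:\ov{D_C}\to f(\ov{D_C})$ is a homeomorphism; the conclusion is reached \emph{by contradiction} (persistence of the second type would produce wandering Fatou domains), not by showing the second-type disks are periodic. Once the orbit hits a first-type or irregular circle, nothing keeps the next iterate inside a regular disk, so you never obtain a common time at which the orbit lies in $R_E\cap R_{E'}$. (Your separation lemma is actually true, but for a simpler reason than your sketch: $\ov D\cap E'=\emptyset$ and $\ov{D'}\cap E=\emptyset$ already force $\ov D\cap\ov{D'}=\emptyset$; the assertion $\ov D\subset K$ is generally false.)

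The paper bypasses all of this with a short density argument you are missing. After reducing to the case where the entire forward orbit of $z$ avoids $E$ (if $f^m(z)\in E$ for some $m$ then $f^n(z)\in E$, hence $f^n(z)\notin E'$, for all $n\ge m$; swap roles), the Moreover clause gives $f^{n_0}(z)\in\ov D$ with $\ov D\cap P=\emptyset$, and since $f^{n_0}(z)\notin E$ this point lies in the \emph{open} set $D$. But $f^{n_0}(z)\in G'\subset K'=\ov{\bigcup_k E'_k}$, so some $E'_k$ meets $D$; being connected and disjoint from $E\supset\partial D$, this forces $E'\subset E'_k\subset D$, contradicting $E'\cap P\neq\emptyset$. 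The growing-continuum description $K'=\ov{\bigcup_k E'_k}$ is the missing idea.
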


\begin{proof}
	Without loss of generality, we may assume that both $K$ and $K'$ are $f$-invariant. Let $E$ and $E'$ denote the union of all periodic level-$0$ Fatou chains contained in $K$ and $K'$, respectively. Then $K=\overline{\bigcup_k E_k}$ and $K'=\overline{\bigcup E_k'}$.  Moreover, $E_k\cap E'_m=\emptyset$ for any $k, m\geq 0$.
	
	Suppose, to the contrary, that $G\cap G'$ contains a point $z$.  We can assume that $f^n(z)\not\in E$ for all $n\geq0$ since $E\cap E'=\emptyset$.
	Since  $E\cap E_k'=\emptyset$ for every $k\geq1$,  all $E_k'$ lie in the same component of $\ov\C\setminus E$. On the other hand,
	by Proposition \ref{prop:graph1}, there exist an integer $n_0\geq1$ and a component $D$ of $\ov\C\setminus E$ such that $\ov{D}\cap P=\emptyset$ and $f^{n_0}(z)\in \ov{D}$. Since $f^{n_0}(z)\not\in E$, we obtain $f^{n_0}(z)\in D$. Then $K'$ intersects $D$. It follows that  $E_k'$ intersects $D$ for a sufficiently large integer $k$, and hence $E'\subset D$. However, this contradicts $D\cap P=\emptyset$.
\end{proof}

\begin{corollary}\label{coro:added-corollary}
	Suppose that $K$ is an $f$-invariant level-$1$ extremal chain, and  $E$ is the union of boundaries of periodic Fatou domains in $K$. Let $G\subset K$ be the invariant graph obtained in Proposition \ref{prop:graph1}. Set $S:= E\cup G$.  Then, $S_n\subset K$ for $n\geq 1$, and $G_N$ is a skeleton of $S_n$ for some $N$ and all $n\geq N$, where $S_n$ and $G_n$ are the components of $f^{-n}(S)$ and $f^{-n}(G)$ containing $S$ and $G$, respectively.
\end{corollary}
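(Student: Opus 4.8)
The plan is to combine the results already established for $G$ via Proposition \ref{prop:graph1} with the general "stabilization" machinery of Section \ref{sec:chain} and Corollary \ref{cor:monotone}. First I would recall that $K$ is a growing continuum generated by $E$, since $K=\ov{\bigcup_{k\ge0}E_k}$ by Lemma \ref{lem:dyn-def}, where $E$ is the union of boundaries of the periodic Fatou domains in $K$ (this is exactly the generator described before Proposition \ref{prop:graph1}). Also, by Proposition \ref{prop:graph1}, $f(G)\subset G$ and $G$ is isotopic rel $P$ to a skeleton of $E=\partial E$; in particular $G\cup E=S$ is an $f$-invariant continuum contained in $K$, and $G$ is itself a skeleton of $E$ rel $P$ after passing through the isotopy (we may enlarge $P$ or use \eqref{eq:skeleton} to make this literal). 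The key point that must be checked is the containment $S_n\subset K$, where $S_n$ is the component of $f^{-n}(S)$ containing $S$.

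\textbf{Step 1: $S_n\subset K$.} Since $S\subset K$ and $K$ is a level-$1$ extremal chain, $f^{-1}(K)$ has, by Lemma \ref{lem:chain-map}(2), a component $K_1$ with $f(K_1)=K$; as $K$ is $f$-invariant, $K$ itself is this component, i.e.\ $K$ is a component of $f^{-1}(K)$. Hence the component of $f^{-1}(S)$ containing $S$ is contained in the component of $f^{-1}(K)$ containing $S$, which is $K$. Iterating, $S_n\subset K$ for all $n\ge1$. (Here one uses that $f^{-n}(S)\subset f^{-n}(K)$ and that the component of $f^{-n}(K)$ meeting $K$ is $K$ itself because $K=f^{-n}(K)\cap(\text{that component})$ by repeated application of Lemma \ref{lem:chain-map}(2).)

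\textbf{Step 2: the skeleton stabilizes.} Now $\{S_n\}$ is an increasing sequence of continua: $S_n\subset S_{n+1}$, because $f(S)\subset S$ implies $f(S_{n+1})\subset S_n$ by the usual pullback argument, hence $S_{n+1}\supset S_n$. Likewise $\{G_n\}$ is increasing and $G_n\subset S_n$. Applying Corollary \ref{cor:monotone}(2) to the nested sequence $\{S_n\}$, there is an integer $N_1$ such that $b(S_n)=b(S_{N_1})$ and $\#(S_n\cap P)=\#(S_{N_1}\cap P)$ for all $n\ge N_1$, and $S_{N_1}$ is a skeleton of $S_n$ for every $n\ge N_1$. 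Similarly, applying Corollary \ref{cor:monotone}(2) to $\{G_n\}$, there is $N_2$ with $G_{N_2}$ a skeleton of $G_n$ for all $n\ge N_2$. Finally, since $G$ is a skeleton of $S$ rel $P$, the pullback principle in Lemma \ref{lem:deg} gives that $G_n$ is a skeleton of $S_n$ for every $n$: indeed $G\subset S$ with $b(G)=b(S)$ and $\#(G\cap P)=\#(S\cap P)$ by \eqref{eq:skeleton}, and Lemma \ref{lem:deg} propagates this through $f^{-n}$, so $b(G_n)=b(S_n)$ and $\#(G_n\cap P)=\#(S_n\cap P)$, i.e.\ $G_n$ is a skeleton of $S_n$. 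Taking $N=\max\{N_1,N_2\}$ and using transitivity of "skeleton" (which follows immediately from the equivalence \eqref{eq:skeleton} applied twice), $G_N$ is a skeleton of $G_n$, which is a skeleton of $S_n$, hence $G_N$ is a skeleton of $S_n$ for every $n\ge N$.

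\textbf{Main obstacle.} The substantive point is Step 1 — making sure that taking pre-image components never escapes $K$; this is purely a matter of invoking Lemma \ref{lem:chain-map}(2) correctly and observing that an $f$-invariant extremal chain is a component of its own full pre-image. The remainder is bookkeeping with branched numbers via Corollary \ref{cor:monotone} and Lemma \ref{lem:deg}, together with the characterization \eqref{eq:skeleton} which makes "skeleton" a transitive relation detectable by the two integer invariants $b(\cdot)$ and $\#(\cdot\cap P)$. One minor care point: to quote Lemma \ref{lem:deg} one needs $G$ to be a genuine (not merely isotopic) skeleton of $S$; if $G$ is only isotopic rel $P$ to a skeleton $G_0\subset E$ of $E$, then replace $G$ by $G_0\cup E$ in the definition of $S$, or equivalently note that $G$ and $G_0$ have the same $b$ and $\#(\cdot\cap P)$, so $G$ is itself a skeleton of $S=G\cup E$. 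With that observed, the proof is complete.

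\begin{proof}
Since $K$ is $f$-invariant, Lemma \ref{lem:chain-map}(2) shows that $K$ is a component of $f^{-1}(K)$, and inductively a component of $f^{-n}(K)$ for every $n\ge1$. As $S\subset K$, the component $S_n$ of $f^{-n}(S)$ containing $S$ is contained in the component of $f^{-n}(K)$ containing $S$, namely $K$; thus $S_n\subset K$ for all $n\ge1$.

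From $f(S)\subset S$ we get $f(S_{n+1})\subset S_n$, hence $S_n\subset S_{n+1}$; likewise $G_n\subset G_{n+1}$ and $G_n\subset S_n$. By Proposition \ref{prop:graph1}, $G$ is a skeleton of $E=\partial E$ rel $P$ (replacing $G$ by $G_0\cup E$ if necessary, where $G_0\subset E$ is the skeleton isotopic to $G$; this does not change $b(\cdot)$ or $\#(\cdot\cap P)$). Hence $b(G)=b(S)$ and $\#(G\cap P)=\#(S\cap P)$ by \eqref{eq:skeleton}, where $S=G\cup E$. Applying Lemma \ref{lem:deg} repeatedly along $f^{-n}$ yields $b(G_n)=b(S_n)$ and $\#(G_n\cap P)=\#(S_n\cap P)$, so $G_n$ is a skeleton of $S_n$ for every $n\ge1$.

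By Corollary \ref{cor:monotone}(2) applied to $\{G_n\}$, there is an integer $N$ such that $G_N$ is a skeleton of $G_n$ for all $n\ge N$. Since the skeleton relation is transitive by \eqref{eq:skeleton}, $G_N$ is a skeleton of $G_n$, which in turn is a skeleton of $S_n$; therefore $G_N$ is a skeleton of $S_n$ for every $n\ge N$. Together with $S_n\subset K$, this completes the proof.
\end{proof}
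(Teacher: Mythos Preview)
Your Step~2 is fine: once you know $G$ is a skeleton of $S$ (which follows since $G\cap P=G_0\cap P=E\cap P$ and $G$ separates $P$ exactly as $G_0$, hence as $E$, does), Lemma~\ref{lem:deg} and Corollary~\ref{cor:monotone}(2) finish the job as you say.

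The gap is in Step~1. Lemma~\ref{lem:chain-map}(2) gives a \emph{decomposition} $f^{-1}(K)=\bigcup_i K_i$ into level-$1$ extremal chains, not a decomposition into topological connected components. Two distinct level-$1$ extremal chains are forbidden from meeting in $F_f$ (Definition~\ref{def:extremal}), but nothing prevents them from touching in $J_f$; indeed the proof of Lemma~\ref{lem:maximal} explicitly considers the possibility that a periodic extremal chain is \emph{not} a component of its own pre-image. So the sentence ``$K$ is a component of $f^{-1}(K)$'' is unjustified, and with it the containment $S_n\subset K$. The paper avoids this by going back to the explicit isotopy construction in the proof of Proposition~\ref{prop:graph1}: one lifts the isotopy $\Psi^0$ (which carries a graph $\Gamma_0\subset E$ to $G$ through graphs lying in the $E_k$'s) to an isotopy $\Psi^n$, so that $\Psi^n_{s_k}(\Gamma_n)\subset E_{k+n}\subset K$ for a sequence $s_k\to 1$; passing to the limit gives a component of $f^{-n}(G)$ inside $K$, which is then identified with $G_n$ via the common fixed set $X_0\cap E$. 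Since $E_n\subset K$ trivially, $S_n=E_n\cup G_n\subset K$ follows. You would need either to supply an argument that the extremal chains $K_i$ in the decomposition are actually pairwise disjoint (which is not established in the paper and may well fail at level $1$), or to import this constructive ingredient.
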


\begin{proof}
	By the construction of $G$, there exist a graph $\G_0$ serving as a skeleton of $E$ rel $P$  and an isotopy  $\Psi^0:\ov{\C}\times [0,1]\to \ov{\C}$ rel $X_0$  such that $ \Psi^0_0=id$, $ \Psi^0_1(\G_0)=G$, and $ \Psi^0_{s_k}(\G_0)\subset E_k$ for a sequence $\{s_k\}_{k\geq1}\subset (0,1)$ with $s_k\to 1$ as $k\to\infty$.
	
	Fix any $n\geq 1$. By Lemma \ref{lem:deg}, there exists a unique component $\G_n$ of $f^{-n}(\G_0)$ serving as a skeleton of $E_n$. Let $\Psi^n:\ov{\C}\times [0, 1]\to\ov{\C}$ rel $X_0$ be the lift of the isotopy $\Psi^0$ by $f^n$ such that $\Psi^n_0=id$. Then $\Psi^n_{s_k}(\Gamma_n)$ is contained in $ E_{k+n}$ and converges to $\Psi^n_1(\G_n)$ as $k\to\infty$, which is a component of $f^{-n}(G)$. Thus, $\Psi^n_1(\G_n)\subset K$. If $X_0\cap E=\emptyset$, then $\partial K=E$ is a Jordan curve, and this corollary clearly holds. Otherwise, we have $X_0\cap E\subset \Psi^n_1(\G_n)\cap G_n$. Therefore, $G_n=\Psi^n_1(\G_n)\subset K$.
	
	Note that both $E$ and  $G$ serve as skeletons of $S$. By Lemma \ref{lem:deg}, $E_n$ and $G_n$ are the unique components of $f^{-n}(E)$ and $f^{-n}(G)$ contained in $S_n$, respectively. Thus, $S_n= E_n\cup G_n\subset K$.  Finally, by Corollary  \ref{cor:monotone} and Lemma \ref{lem:deg}, there exists an $N>0$ such that $\G_N$ is a skeleton of $E_n$  for every $n\geq N$. Since  $\G_n\sim G_n$ rel $P$, the graph $G_N$ is a skeleton of $S_n$ for every $n\geq N$.
\end{proof}

\subsection{Invariant graphs on extremal chains}\label{sec:n-chain}
Let $(f,P)$ be a marked rational map with $J_f\not=\cbar$. The sketch for the construction of invariant graphs on extremal  chains is as follows.

Suppose that $E$ is the intersection of $J_f$ with a component of the union of all periodic level-$0$ Fatou chains.  Let $K$ be the intersection of $J_f$ with the level-$1$ extremal chain containing $E$. By Proposition \ref{prop:graph1}, there exists an invariant graph $G\subset K$ isotopic to a skeleton of $E$ rel $P$. To construct an invariant graph that serves as a skeleton of
$K$, a natural approach is to add a finite number of arcs to $G$ such that
\begin{enumerate}
\item the combined set of $G$ and the added arcs form a skeleton of $K$; and

\item  each added arc $\g$ is preperiodic with respect to $G$, i.e., there exist $q\geq0$ and $p\geq1$ such that $f^{q+p}(\g)\subset f^q(\g)\cup G$.
\end{enumerate}

Indeed, the first condition can be derived from  Lemma \ref{lem:growing}, while the second one follows from Propositions \ref{lem:curve-to-arc}, \ref{pro:criter}, and \ref{pro:criter1}. 

By employing a similar inductive argument, we can construct an invariant graph on any periodic level-$n$ extremal chain for every $n\geq1$.

\begin{proposition}\label{prop:graph}
	Let $(f, P)$ be a marked rational map, and let $K_1,\ldots,K_m$ be pairwise distinct continua such that each $K_i$ is the intersection of $J_f$ and a periodic level-$n$ extremal chain with $n\geq1$. Suppose that $\mathbf K=\bigcup_{i=1}^m K_i$ is connected and $f(\mathbf K)=\mathbf K$. Then there exists a graph $G$ serving as a skeleton of $\mathbf K$ rel $P$ such that $f(G)\subset G$.
\end{proposition}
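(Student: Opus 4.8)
The plan is to prove Proposition \ref{prop:graph} by induction on the level $n$, following the inductive architecture of extremal chains developed in Sections \ref{sec:chain} and \ref{sec:topology}. The base case $n=1$ is essentially handled by the results of the previous subsection: if $\mathbf K$ is a single $f$-invariant level-$1$ extremal chain, apply Proposition \ref{prop:graph1} to obtain a graph $G_0\subset \mathbf K$ that is a skeleton of $\partial E$ (where $E$ is the union of periodic level-$0$ chains), and then enlarge $G_0$ inside $\mathbf K$ by adding finitely many preperiodic growing arcs so that the result becomes a skeleton of all of $\mathbf K$; if $\mathbf K$ has several components forming a cycle, work with $f^m$ on one component and spread the graph around the cycle by taking $f$-images. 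For the inductive step, suppose the proposition holds for level $n$, and let $\mathbf K$ be a union of level-$(n+1)$ chains forming an $f$-invariant connected set. Reduce (by passing to $f^p$ and then distributing around the orbit) to the case of a single $f$-invariant periodic level-$(n+1)$ extremal chain $K$, generated (in the sense of Lemma \ref{lem:dyn-def}) by the continuum $E$ which is the union of all periodic level-$n$ extremal chains in $K$. By the induction hypothesis applied to $E$ (which is $f$-invariant and connected by Lemma \ref{lem:dyn-def}), there is an $f$-invariant graph $G_E\subset E$ that is a skeleton of $E$ rel $P$; moreover $G_E$ is locally connected, and $K$ is a growing continuum generated by $E$ in the sense of \eqref{eq:123}, with $E$ locally connected by Theorem \ref{thm:locally-connected}.

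\textbf{Enlarging the graph inside the growing continuum.} With $G_E$ in hand, the task becomes: add finitely many arcs in $K\setminus E$ to $G_E$ so that the enlarged graph $G$ is a skeleton of $K$ rel $P$ and satisfies $f(G)\subset G$. First, identify which additions are needed. By relation \eqref{eq:skeleton}, $G$ will be a skeleton of $K$ once $b(G)=b(K)$ and $\#(G\cap P)=\#(K\cap P)$. The marked points of $K$ not already in $E$, and the complementary components of $\cbar\setminus K$ that intersect $P$ but are not "seen" by $E$, are what force new arcs. Using Lemma \ref{lem:growing}: statement (1) gives, for each marked point $z\in (K\cap P)\setminus E$, a growing curve in $K$ with terminal $z$; statement (2) gives, for each pair of marked points in distinct components of $\cbar\setminus K$ not separated by $E$, a pair of growing curves $\delta_\pm\subset K$ with common terminal so that $E\cup\delta_+\cup\delta_-$ separates them. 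This produces a finite collection of growing curves whose union with $G_E$ is a skeleton of $K$ (finiteness: $\#P<\infty$ controls both the number of marked points and the number of separations needed). Each such growing curve is a one-sided self-link of $K$, a link between $K$ and another chain, or (in the two-sided case $\delta_-\cdot\delta_+^{-1}$) a two-sided self-link or link. Now make these curves preperiodic: if the infinity point of such a curve is preperiodic, apply Proposition \ref{lem:curve-to-arc} (with $G:=G_E$, which is $f$-invariant, locally connected, and a skeleton of $E$) to replace it by a growing \emph{arc} $\beta$ with $\beta(0)\in G_E$, terminal the same preperiodic point, and $f^{q+p}(\beta)\subset f^q(\beta)\cup G_E$; if the infinity point is wandering, first apply Proposition \ref{pro:criter} or \ref{pro:criter1} to homotope the link/self-link (rel $P_0$, endpoints fixed) arbitrarily close to a curve $\ell=\beta_-\cdot\beta_+^{-1}$ whose common terminal is \emph{preperiodic}, then apply Proposition \ref{lem:curve-to-arc} to $\beta_\pm$. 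Because the homotopies are rel $P_0$ with endpoints fixed, the replacement curves still achieve the required separations (a separation of two points of $P_0$ is a homotopy-invariant of the separating arc), so the combinatorial condition $b(G)=b(K)$, $\#(G\cap P)=\#(K\cap P)$ is preserved.

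\textbf{Closing up under $f$ and extracting the final graph.} After the above, $G_E$ together with finitely many preperiodic growing arcs $\beta^{(1)},\dots,\beta^{(r)}$ is a skeleton of $K$; add to this the finite set of forward orbits $\{f^i(\beta^{(j)}): 0\le i< q_j+p_j\}$ (each $f^i(\beta^{(j)})$ is again a growing arc in $K$, with initial point on $G_E$ since $f(G_E)\subset G_E$). Call the union $G_1$. Then $f(G_1)\subset G_1\cup(\text{terminal boundary arcs})$, and more precisely $f(G_1)$ differs from $G_1$ by deformation-type arcs exactly as in the proofs of Theorems \ref{thm:local} and \ref{thm:graph-maximal}: set $G_1':=$ the graph obtained by preparing $G_1$ (replacing boundary arcs of regular boundary circles by one of the two complementary open arcs, à la Proposition \ref{prop:graph1}) so that $f^{-1}(G_1')$ contains a graph $G_2$ isotopic to $G_1'$ rel $P$, via deformation arcs whose deformation disks $B(\alpha)$ shrink. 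Then lift inductively (Lemma \ref{lem:lift}) to get isotopies $\Theta^n$ and graphs $G_{n+1}=\theta_n(G_n)$ with $f(G_{n+1})\subset G_n$, and by Lemma \ref{thm:isotopy} the compositions $\phi_n=\theta_{n-1}\circ\cdots\circ\theta_0$ converge uniformly to a quotient map $\varphi$; put $G:=\varphi(G_1')$. One then checks, exactly as in the proof of Theorem \ref{thm:local} (using the pearl-chain Propositions \ref{prop:pearl}, \ref{prop:pearl1} and the homotopic-diameter shrinking from Lemma \ref{lem:expanding}, Lemma \ref{lem:orbifold}), that $\varphi$ collapses each fiber on $G_1'$ to a point or an arc, so $G$ is a graph homeomorphic to $G_1'$; that $f(G)\subset G$; and that $G\cap P=G_1'\cap P=K\cap P$ with the separation property preserved, i.e., $G$ is a skeleton of $K$ rel $P$. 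Finally, undo the reduction: spread $G$ around the orbit of $K$ under $f$ (taking $f$-images) and take unions over the components $K_i$ of $\mathbf K$ to obtain the desired skeleton of $\mathbf K$; distinct components give disjoint graphs by the analogue of Corollary \ref{cor:disjoint}.

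\textbf{Main obstacle.} The delicate point is \emph{keeping the finite collection of added arcs simultaneously preperiodic and combinatorially correct} while they live in $K\setminus E$ — the arcs produced by Lemma \ref{lem:growing} are only growing curves, a priori wandering with uncontrolled combinatorics, and one must invoke the self-link / link machinery of Section \ref{sec:max-inv} (Propositions \ref{pro:criter}, \ref{pro:criter1}) to push them to preperiodic terminals, then Proposition \ref{lem:curve-to-arc} to make them genuinely $f$-preperiodic arcs anchored on $G_E$, all without destroying the separations they were introduced to realize. A secondary technical burden is verifying that the deformation-arc/pearl-chain convergence argument of Theorem \ref{thm:local} goes through verbatim in the growing-continuum setting with $E$ (rather than $\overline U$) as the "core", which it does because the homotopic diameters of the relevant complementary pieces are uniformly bounded (Lemma \ref{lem:orbifold}) and $f$ is expanding in the orbifold metric near $J_f$ (Lemma \ref{lem:expanding}).
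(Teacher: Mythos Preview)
Your overall inductive architecture and your identification of the main obstacle are correct, and the tools you invoke (Lemma \ref{lem:growing}, Propositions \ref{lem:curve-to-arc}, \ref{pro:criter}, \ref{pro:criter1}) are exactly the right ones. However, there are two genuine gaps.

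First, the reduction ``pass to $f^p$ and work on a single $f$-invariant chain $K$'' is not available. The $K_i$ are \emph{not} components of $\mathbf K$: they are pairwise distinct level-$n$ extremal chains whose union is \emph{connected}, so they must touch one another in $J_f$. Even after passing to an iterate making each $K_i$ individually invariant, you still face several overlapping chains, and a skeleton of one $K_i$ alone will not separate marked points that are separated only by $K_i\cup K_j$ for $i\neq j$. The paper handles this head-on: in the Claim it considers, for each pair $x,y\in P$ separated by $\mathbf K$, the \emph{minimal} subcollection $K_1,\dots,K_s$ whose union separates them, builds a Jordan curve out of arcs $\alpha_i\subset K_i$, and then at each junction point $z\in\alpha_i\cap\alpha_{i'}$ uses Proposition \ref{pro:criter} on the \emph{link between $K_i$ and $K_{i'}$} (not a self-link) to obtain a curve $\eta_z=\beta_z'\cdot\beta_z^{-1}$ with preperiodic terminal. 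This inter-chain linking step is precisely what your reduction discards.

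Second, your ``closing up under $f$'' stage---running a fresh deformation-arc / pearl-chain convergence argument à la Theorem \ref{thm:local}---is both unnecessary and not well-posed here (there are no ``regular boundary circles'' to deform once you leave the level-$0$ setting). The paper avoids any second limiting procedure: after the Claim produces finitely many terminals, it takes their orbit $Q$, builds the graphs $G_i$ (Proposition \ref{prop:graph1}, Corollaries \ref{cor:disjoint}, \ref{coro:added-corollary}) so that $Q\cap E_i\subset G_i$, then forms a \emph{maximal} finite family $\Delta$ of accesses to points of $Q$, on which $f$ induces a self-map $f_h$. Proposition \ref{lem:curve-to-arc} is applied once per $f_h$-cycle to get an $f^p$-invariant arc $\delta_*$, and the remaining elements of $\Delta$ are filled in by \emph{lifts} of these $\delta_*$. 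The resulting set $G=(\bigcup_i G_{i,N})\cup(\bigcup_{\delta\in\Delta}\delta')$ satisfies $f(G)\subset G$ \emph{on the nose}---no further isotopy-and-limit is needed. (You also omit the $n=1$ subtlety that $G_i$ need not lie in $E_i$; the paper absorbs this via Corollary \ref{coro:added-corollary}, replacing $E_i$ by $S_i=E_i\cup G_i$.)
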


This proposition  immediately implies Theorem \ref{thm:graph-maximal}. It is worth mentioning that the proposition is false if the level $n=0$, as shown in Theorem \ref{thm:example}.

\begin{proof}
	The proof goes by induction on the level $n$. First, assume that $n=1$.
	
	For each $1\leq i\leq m$, let $E_i$ denote the union of boundaries of all periodic Fatou domains within $K_i$. By Lemma \ref{lem:dyn-def}, each $K_i$ is the growing continuum generated by $E_i$.
  As indicated at the beginning of Section \ref{sec:topology}, we may assume that $E_i$ is a skeleton of $E_{i,k}$ (rel $P$) for every $k\geq1$, where $E_{i,k}$ denotes the component of $f^{-p_ik}(E_i)$ containing $E_i$ and $p_i$ is the period of $E_i$.
	
	\vskip 0.2cm
	
	\noindent  \emph{Claim.} There exist infinitely growing curves $\g_1,\ldots,\g_r$  in $\textbf K$ with preperiodic terminals such that, by replacing each $E_i$ with $E_{i,  N}$ for a sufficiently large  integer $N$, the set $(\bigcup_{i=1}^m  E_{i})\cup(\bigcup_{j=1}^r \g_j)$ is a skeleton of $\mathbf K$.
	
	\begin{proof}[Proof of the Claim]
		Let $z$ be a marked point in $\mathbf K$. Then $z\in K_i$ for some $1\leq i\leq m$. If $z\not\in E_i$, by Lemma \ref{lem:growing}\,(1), there exists a growing curve $\alpha_z\subset K_i$ joining $E_i$ to $z$. Since $E_i$ is a skeleton of every $E_{i,k}$, it holds that $z\not\in \bigcup_{k>0} E_{i,k}$. Thus, $\alpha_z$ is infinitely growing.
		
		Suppose $x,y\in P$ are separated by $\mathbf K$. Then there exists a smallest integer $s\geq1$ such that, by re-enumerating $K_i$ if necessary, the points $x$ and $y$ are separated by the union of $K_1, \ldots, K_s$.
		
		In the case of $s=1$, if $x$ and $y$ are separated by $E_{1,k}$ for some $k\geq1$, then they are separated by $E_1$ since $E_1$ is a skeleton.  Otherwise, by Lemma \ref{lem:growing}\,(2), there exists a curve $\eta=\beta_-\cdot\beta_+^{-1}\subset K_1$  such that $E_1\cup\eta$ separates $x$ from $y$, where $\beta_\pm$ are growing curves in $K_1$. 
		
		If the common terminal $z$ of $\beta_{\pm}$ is disjoint from $E_{1,k}$ for all $k$, then the curve $\eta$ serves as a two-sided self-link of $K_1$ provided that $z\notin P$. If $z$ is contained in some $E_{1, k_0}$, then one of $\beta_\pm$, say $\beta_-$, is infinitely growing, and $\beta_-\cup E_{1,k_0}$ separates $x$ from $y$. In this case, $\beta_-$ serves as a one-sided self-link of $K_1$, and we  reset $\eta=\beta_-$.
		
		In both cases, we can apply Proposition \ref{pro:criter1} to the self-link $\eta$, and thus obtain a curve
		$\eta_z=\beta_z'\cdot\beta_z^{-1}\subset K_1$ such that the common terminal $z$ of the growing curves $\beta_z'$ and $\beta_z$ is preperiodic, and that $\eta_z\cup E_1$ separates $x$ from $y$. 
		 By replacing $E_1$ with some $E_{1, k}$, we may further assume that each of $\beta'_z$ and $\beta_z$ is either trivial or infinitely growing.
		
		In the case of $s=2$, let $D$ be the component of $\ov{\mathbb{C}}\sm  (K_1\cup K_2)$ containing $x$. Since $\partial D$ is locally connected by Theorem \ref{thm:locally-connected},  a Jordan curve $\alpha\subset\partial D$ separates $x$ from $y$. By the minimum of $s$, there exists a unique arc $\alpha_1$ among components of $\alpha\sm K_2$ such that $\alpha_1\cup K_2$ separates $x$ from $y$. Let $\alpha_2$ be an arc in $K_2$ with the same endpoints as $\alpha_1$. Then $\alpha_1\cup \alpha_2$ forms a Jordan curve that separates $x$ from $y$.
		For $s\geq 3$, with similar arguments, there exist arcs $\alpha_i\subset K_i$, $i=1,\ldots,s$, such that their union is a Jordan curve separating $x$ from $y$. Let $Z$ be the set of endpoints of the arcs $\alpha_1,\ldots,\alpha_s$. 
		
		Fix a point $z\in Z$. There exist exactly two distinct integers $i=i(z)$ and $i'=i'(z)$ among $\{1,\ldots,s\}$ such that $z\in \alpha_i\cap\alpha_i'\subset K_i\cap K_{i'}$. By Lemma \ref{lem:growing}\,(1), there exist growing curves $\tilde{\beta}_z$ and $\tilde{\beta}'_{z}$ in $K_i$ and $K_{i'}$, respectively, with the common terminal $z$.  We can further require that $\tilde{\beta}_z$ (resp., $\tilde{\beta}_z'$) is a trivial curve if $z\in E_{i, k_0}$ (resp., $E_{i', k_0}$) for some $k_0$. 
		
		If $z$ is preperiodic, we set $\beta_z=\tilde{\beta}_z$ and $\beta_z'=\tilde{\beta}_z'$. Otherwise,
		 $\tilde{\eta}_z=\tilde{\beta}_z'\cdot \tilde{\beta}_{z}^{-1}$ is a link between $K_{i}$ and $K_{i'}$. In particular, it is a two-sided link if and only if $z$ is disjoint from  $E_{i, k}$ and $E_{i', k}$ for all $k\geq0$. 
		In this case, we can apply Proposition \ref{pro:criter} to the link $\tilde{\eta}_z$ and obtain a curve ${\eta}_z={\beta}_z'\cdot{\beta}_z^{-1}$ such that $\tilde{\eta}_z$ and $\eta_z$ are homotopic rel $\{x, y\}$ with endpoints fixed, and the common terminal of the growing curves ${\beta}_z\subset K_i$ and ${\beta}_z'\subset K_{i'}$ are preperiodic. 
		
		By the minimality of $s$, for a sufficiently large integer $k_0$, the union of $\eta_z$, $z\in Z,$ and all $E_{j, k_0}$, $1\leq j\leq s$, is connected and separates $x$ from $y$. By replacing each $E_j$ with some $E_{j, k}$, we may assume 
		\begin{itemize}
	\item for each $z\in Z$, either $z\in E_i$ for some $E_i$, or $z$ avoids $E_{i,k}$ for all $1\leq i\leq s$ and $k\geq0$;
		
	\item each ${\beta}_z$ (resp., $\beta_z'$) is either trivial or infinitely growing.
		\end{itemize}
		
		Finally, the required growing curves $\g_1,\ldots,\g_r$ consist of all $\alpha_z$ and the non-trivial curves ${\beta}_z$ and ${\beta}_z'$ described above. Thus, the claim is proved.
	\end{proof}
	
	Let $Q\subset \mathbf{K}$ denote the set  of all points in the orbits of $\g_1(1),\ldots,\g_r(1)$. Then $f(Q)\subset Q$. According to Proposition \ref{prop:graph1} and Corollary \ref{cor:disjoint}, each $K_i$ contains a graph $G_i$ such that
	\begin{itemize}
	\item $G_i$ is a skeleton of $S_i:=G_i\cup E_i$ rel $P$ and  contains $Q\cap E_i$;
	
	\item $f(\bigcup_{i=1}^m G_i)\subset \bigcup_{i=1}^m G_i$ and $S_i\cap S_j=\emptyset$ if $i\not=j$.
	\end{itemize}
	
	By Corollary \ref{coro:added-corollary}, each $K_i$ is also the growing continuum generated by $S_i$. For every $k\geq1$, denote by $S_{i,k}$ and $G_{i,k}$ the components of the $k$-th pre-image by $f^{p_i}$ of $S_i$ and $G_i$, respectively, such that $S_i\subset S_{i,k}$ and $G_i\subset G_{i,k}$.
	
	Let $\De$ be a maximal collection of infinitely growing curves in $K_1,\ldots, K_m$, which have initial points in $\bigcup_{i=1}^m G_i$ and terminals in $Q$ and belong to pairwise distinct accesses. According to Lemma \ref{lem:finite-access}, $\De$ contains finitely many elements. The claim above implies that the union of $G_i$, $i=1,\ldots, m,$ together with all curves in $\De$, is a skeleton of $\mathbf K$ rel $P$.
	
	For any $\de\in \De$ with terminal $z:=\de(1)$, its image $f(\de)$ is an infinitely growing curve to $f(z)\in Q$ by Lemma \ref{lem:lifting}\,(1). By the maximality of $\De$, we obtain a self-map $f_h:\De\to \De$ such that $f_h(\de)$ is defined to be the unique element of $\De$ in the same access  as $f(\de)$.
	
	Mark a curve $\de_*$ in each cycle under $f_h$. Suppose that $\de_*\subset K_i$ with period $p$ under $f_h$. By Proposition \ref{lem:curve-to-arc}, we may assume that
	\begin{itemize}
	\item for any $t\in(0,1)$, there exists an integer $k>1$ such that $\de_*[0,t]\subset G_{i,k}$; and
	
\item  $\de_*$ is an $f^p$-invariant arc in the sense that $f^p(\de_*)\subset \de_*\cup G_i$.
	\end{itemize}
	Since $\Delta$ has finitely many elements, any curve $\de\in\De$ is eventually iterated by $f_h$ to a marked one $\de_*$. Let $q\geq 0$ be the smallest number such that $f_h^q(\de)=\de_*$. Assume $\de(0)\in G_j$.  By Lemma \ref{lem:lifting}\,(2), there exists a lift $\de'$ of $\de_*$ by $f^q$ that lies in the same access  as $\de$ and has the initial point in $G_{j,q}$.
	
	Let $N$ be a sufficiently large integer such that the initial point of each $\de'$ with $\de\in\De$ lies in $\bigcup_{i=1}^m G_{i, N}$.  Define $G:=(\bigcup_{i=1}^m G_{i, N})\cup(\bigcup_{\de\in\De} \de')$. The previous discussion shows that $f(G)\subset G$ and $G$ is a skeleton of $\mathbf K$ rel $P$. 
	
	Since the curves in $\De$ are infinitely growing and lie in pairwise distinct accesses, by Proposition \ref{pro:disjoint}\,(2), there exists  $\epsilon>0$ such that $\delta'[1-\epsilon,1)$ with $\delta\in\De$ are pairwise disjoint, each disjoint from $G_{i, N}, i=1,\ldots,m$. On the other hand, the arcs $\delta'[0,1-\epsilon],\delta\in\De$ are contained in $\bigcup_{i=1}^m G_{i, N_1}$ for some $N_1>N$. Thus, the locally branched points of $G$ are contained in those of $\bigcup_{i=1}^{m} G_{i, N_1}$ together with $Q$, which are finite. Thus, $G$ is a graph. Now, we have proved this proposition in the case of $n=1$.
	
	Suppose that the proposition holds for level-$n$ extremal chains with $n\geq 1$. Let $K_1,\ldots,K_m$ be pairwise distinct continua such that each $K_i$ is the intersection of $J_f$ and a periodic level-$(n+1)$ extremal chain.  For each $i\in\{1,\ldots,s\}$, denote by $E_i$ the intersections of $J_f$ and the union of periodic level-$n$ extremal chains within $K_i$.  
	Then $K_i$ is the growing continuum generated by $E_i$. By induction, there exists a graph $G_i$ serving as a skeleton of $E_i$ such that $f(\bigcup_{i=1}^m G_i)\subset \bigcup_{i=1}^m G_i$.
	
	Note that in this case, we have $G_i\subset  E_i$ and set $S_i:=E_i$. In contrast, in the case of $n=1$, the graph $G_i$ is not necessarily contained in $E_i$, and thus we performed a transformation from $E_i$ to $S_i=E_i\cup G_i$ by Corollary \ref{coro:added-corollary} therein. By a similar argument as in the case of $n=1$, we obtain the desired invariant graph $G\subset \mathbf K$.
\end{proof}

\section{Invariant graphs of rational maps}\label{sec:invariant-graphs}
Let $(f, P)$ be a marked rational map with $J_f\neq\cbar$. As stated in the introduction, it suffices to prove Proposition \ref{pro:pre} in order to construct the invariant graph required by Theorem \ref{thm:main}.

According to Corollary \ref{cor:desired} and Theorem \ref{thm:blow-up}, by possibly enlarging $P$, there exists a stable set $\KKK\subset J_f$ that induces a cluster-\Sie\ decomposition of $(f,P)$, such that the decomposition
$$\cbar=\mathcal{K}\sqcup \mathcal{V}\sqcup \mathcal{A}\sqcup \mathcal{S}$$
satisfies the following properties:
\begin{itemize}
\item [(P1)]  Each component of $\KKK$ contains points of $P$;

\item [(P2)] Every component of $\mathcal{V}$ is complex-type and disjoint from any attracting cycle of $f$; 

\item [(P3)] Every component of $\mathcal{S}$ is a simply connected domain of simple type;

\item [(P4)] Every component $A$ of $\mathcal{A}$ is an annulus of annular type. Moreover, if $A\cap f^{-1}(\mathcal{K})\neq \emptyset$, then $A$ contains an annular-type component of $f^{-1}(\mathcal{K})$.
\end{itemize}
\indent Therefore, it suffices to prove Proposition \ref{pro:pre} under the properties (P1)--(P4).\vspace{2pt}

 The proof of this proposition will be divided into three parts. First, we identify a graph in each component of
$\mathcal{E}=\KKK\sqcup \VVV$ such that their union is $f$-invariant. Next, we construct invariant arcs in $\mathcal{A}$ to connect these graphs together. Finally, we join every marked point in $\mathcal{S}\cap J_f$ to the previous graph.

\begin{proof}[Proof of Proposition \ref{pro:pre}]
	At the beginning, we select several specific marked points.

	In  each cycle of $\VVV$ under $f_{\#}$, we designate a {\it preferred} component $V$. Denote its period by $p$. For each $n\geq0$, let $V_n$ denote the unique complex-type component of $f^{-np}(V)$ contained in $V$.
	By Theorem \ref{thm:blow-up} and property (P2), there exists a marked \Sie\ rational map $(g,Q_g)$ as the blow-up by $\pi$ of the exact sub-system $f^p:V_1\to V$, i.e.,
	\begin{itemize}
		\item $\pi(J_g)=\bigcap \ov{V_{n}}$ and $\pi\circ g=f^p\circ \pi$ on $J_g$;
		\item $\pi$ sends the closure of each Fatou domain onto a component of $\cbar\setminus V_{n}$ for some $n\geq0$.
	\end{itemize}
	\noindent Due to property (P1), the marked set $Q_g$ coincides with the union of $\pi^{-1}(P\cap V)$ and the centers of  Fatou domains outside $\pi^{-1}({V})$.

	By  the conditions of the proposition, let $G_g\supset Q_g$ be a $g$-invariant regulated graph. Then for each Fatou domain $ D $ of $g$, the set $Y_{ D }:=G_g\cap\partial  D $ satisfies:
\begin{itemize}
\item  $g(Y_{ D })\subset Y_{ g( D )}$, and $Y_{ D }\not=\emptyset$ if $ D \cap Q_g\not=\emptyset$;
	
\item $Y_{D}$ is a finite set, and there exist only finitely many Fatou domains $ D $ such that $\#Y_{ D }\geq 3$.

\end{itemize}

	Since $V$ avoids the periodic Fatou domains by property (P2), the choice of $Q_g$ implies that $Y_{ V}:=\bigcup_{ D }\pi(Y_{ D })$ lies in $\partial V$ and each component of $\partial V$ intersects $Y_{V}$, where $ D $ ranges over all marked Fatou domains of $(g,Q_g)$. Moreover, we have $f^p(Y_{V})\subset Y_{V}$.
If $V'$ is another component of $\VVV$ such that $f^q_{\#}(V')=V$, set $Y_{ V'}:=f^{-q}(Y_{ V})\cap\partial V'$. 
Thus, 
$$Y_{\VVV}:=\bigcup Y_{V}$$
is an $f$-invariant and finite set in $\partial \VVV\subset \KKK$, where the union is taken over all components of $\VVV$.	
	
For a finitely connected domain $W$, an {\it oriented boundary component} of $W$ means a component of $\partial W$	equipped with an orientation pointing into $W$. 
	
	Let $\Lambda$ be the collection of oriented boundary components of all annuli in ${\rm Comp}(\AAA)$. Then any two elements of $\Lambda$ are distinct even if they overlap. 
	
	For any $\lambda\in\Lambda$, since $\lambda\subset\KKK$ and $\KKK$ is a stable set, there exists either an annular-type component $A_1$ of $f^{-1}(\AAA)$ or an annular-type component $V_1$ of $f^{-1}(\VVV)$ such that  $\lambda$ is an oriented boundary component of $A_1$ or $V_1$. Thus, its image $f(\lambda)$ is either also an element of $\Lambda$, or  an oriented boundary component of a certain $V\in{\rm Comp}(\VVV)$. Set
	\begin{equation}\label{eq:la}
		\Lambda_*=\{\lambda\in \Lambda: f^n(\lambda)\in\Lambda\text{ for all $n\geq0$}\}.
	\end{equation}
	Since $f(\partial \VVV)\subset\partial\VVV$, the orbit of any $\lambda\in\Lambda\setminus\Lambda_*$ will stay in $\partial\VVV$ after leaving $\Lambda$.

	By Theorem \ref{thm:renorm}, we can assign a point $z_\lambda$ to each element $\lambda\in\Lambda_*$ such that $f(z_{\lambda})=z_{f(\lambda)}$. Then the finite set $\{z_\lambda:\lambda\in \Lambda_*\}$ is $f$-invariant and contained in $\KKK$.
On the other hand, there exists an integer $M>0$ such that $f^{ M}(\lambda)\subset \partial\VVV$ for any $\lambda\in\Lambda\setminus\Lambda_*$. Since $f(Y_{\VVV})\subset Y_{\VVV}\subset \KKK$, we obtain an $f$-invariant and finite set 
$$Q:=(f^{- M}(Y_{\VVV})\cap \KKK)\,\bigcup\,\{z_\lambda:\lambda\in \Lambda_*\}\subset\KKK.$$
	\vskip 0.1cm
	{\bf  Part I. Construct invariant graphs in $\boldsymbol{\mathcal{E}=\KKK\sqcup\VVV}$.}
	\vspace{5pt}

By  Theorems \ref{thm:graph-maximal} and \ref{thm:renorm} and Lemma \ref{lem:deg}, each component $K$ of $\KKK$ contains a graph $G_{K}$ serving as a skeleton of $K$ rel $P\cup Q$ such that the union $\bigcup_{K} G_{K}$ is $f$-invariant.
	
	Let $V$ be a preferred $f_{\#}$-periodic component of $\VVV$ with period $p$. Denote by $\BB$ the collection of the complementary components of $V_{n}$ for all $n>0$.
	
	By Theorem \ref{thm:blow-up}, for each $B\in\BB$, $\pi^{-1}(B)=\overline{D}$ and $\pi^{-1}(\partial B)=\partial D$, where $D$ is a Fatou domain of $g$, and $\pi^{-1}(z)$ is a singleton if $z$ does not belong to any element of $\BB$. 

We set $\G:=\pi(G_g)$ and $Y_{B}:=\pi(Y_{D})$ with $B=\pi(\overline{D})$. According to the properties of $Y_{D}$ presented at the third paragraph of the proof, we have that
\begin{itemize}
\item $Y_{B}\subset\partial B$ and $f^p(Y_{B})\subset Y_{B'}$ if $\partial B'=f^p(\partial B)$;
		
\item $Y_{B}$ is a finite set and there exist only finitely many $B\in\BB$ with $\#Y_{B}\geq 3$;
		
\item $Y_{V}=\bigcup_{B} Y_{B}$ and $Y_{B}\neq\emptyset$, where $B$ is taken over all components of $\overline{\mathbb{C}}\setminus V$; 
\item if $z\in \G\setminus\bigcup_{B\in\BB} B$, then $z\in J_f$ and $f^p(z)\in \G$.
\end{itemize}

To obtain an $f^p$-invariant graph associated with $V$, we need to revise $\G\cap B$ to an appropriate graph $G_{B}$ for each $B\in\BB$ that intersects $\G$.

	If $B$ is a component of $\cbar\setminus V$, then $\partial B\subset K$ for a component $K$ of $\KKK$. Define $G_{B}=G_{K}$. Note that $G_{K}$ contains $Y_{B}$ by the choices of $Q$ and $G_{K}$.
	
	If $B$ is not a component of $\cbar\setminus V$, then $B\cap P=\emptyset$, and there exist a smallest positive integer $k$ and a component $B'$ of $\cbar\setminus V$ such that $\partial B$ is a component of $f^{-kp}(\partial B')$. Let $K$ and $K'$ be the components of $f^{-kp}(\KKK)$ containing $\partial B$ and $\partial B'$, respectively. Then $f^{kp}(K)=K'$.
	
	By Lemma \ref{lem:deg}, the set $\tilde{G}_{ B}=f^{-kp}(G_{B'})\cap K$ is a component of $f^{-kp}(G_{B'})$ contained in $B$. Thus, $\tilde{G}_{ B}$ is a graph. Since $f^{kp}(Y_{B})\subset Y_{B'}$, it follows that $Y_{B}\subset \tilde{G}_{ B}$. Define $G_{B}$ as follows:
	\begin{enumerate}
\item If $\# Y_{ B}\geq3$, set $G_{B}=\tilde{G}_{ B}$; if $\# Y_{ B}=1$, set $G_{B}=Y_{B}$;
	
\item If $\# Y_{ B}=2$, let $G_{B}$ be an arc in $\tilde{G}_{ B}$ joining the two points of $Y_{ B}$ such that $f^{kp}(G_{B})\subset G_{B'}$ and $f^p(G_{B})\subset G_{f^p(B)}$.

\end{enumerate}
	\noindent Thus, we obtain an $f^p$-invariant continuum
$$
G_{V}:=\bigg(\G\setminus \bigcup_{B\in\BB} B\bigg)\bigcup \bigg(\bigcup_{B\in\BB} G_{B}\bigg),
$$
	which lies in $J_f$ and contains $P\cap V$.  Since the diameters of $B\in\BB$ exponentially converge to zero by Lemma \ref{lem:expanding}, the continuum $G_{V}$ is a graph. 
	
	If $V'$ is a component of $\VVV$ such that $f^q_{\#}(V')=V$ for a smallest  $q\geq 1$, 
	then define $G_{V'}=f^{-q}(G_{V})\cap V'.$
	Note that the accumulation set of $G_{V'}$ on $\partial V'$ is contained in $Y_{V'}\subset Q$.

	 Define the set
	$$
	\GGG_{\EEE}:=\bigg(\bigcup_{K\in{\rm Comp}(\KKK)} G_{K}\bigg)\ \ \bigcup\bigg(\bigcup_{V\in{\rm Comp}(\VVV)} G_{V}\bigg),
	$$
	which is $f$-invariant and contains $Q$. Moreover, it satisfies the following two properties: 
	\begin{itemize}
	\item [(a)] For each component $E$ of $\mathcal{E}$,  the set $\GGG_{\mathcal{E}}\cap E$ is a graph serving as a skeleton of $E\cap J_f$ rel $P$;
    \item [(b)] For each component $V$ of $\VVV$ and any  component $V'$ of $f^{-1}(V)$, any pair of distinct boundary components $\lambda_\pm$ of ${V}'$ can be joined 
   by an arc in $f^{-1}(\GGG_{\mathcal{E}})$, which lies in the annulus $A(\lambda_+,\lambda_-)$ bounded by $\lambda_{\pm}$ and has the endpoints in $f^{-1}(Y_{V})$.
   \end{itemize}
  \vspace{-1.5pt}
  
 For property (a), it suffices to show 
the connectivity of $\GGG_{\mathcal{E}}\cap E$. Let $V\subset E$ be any component of $\VVV$. By construction, for each boundary component $\lambda$ of $V$, the accumulation points of $G_{V}$ on $\lambda$ are non-empty and lie in the graph $G_{K}$, where $K$ is a component of $\KKK$ contained in $E$ such that $\lambda\subset K$. This implies that $\GGG_{\EEE}\cap E$ is connected.

To prove property (b), we choose a sequence of domains $V_\epsilon$ compactly contained in $V$ that converges to $V$ as $\epsilon\to 0$, such that $V\setminus \ov{V_\epsilon}$ consists of annuli disjoint from $P$, and that $G_\epsilon=(V_\epsilon \cap G_{V})\cup\partial V_{\epsilon}$ is connected. Then each $G_\epsilon$ is a skeleton of $\ov{V_\epsilon}$ rel $P$, and $\lim_{\epsilon\to0} G_\epsilon=(V\cap G_{V})\cup \partial V$.

Set $V_{\epsilon}'=f^{-1}(V_\epsilon)\cap V'$. Then $V_\epsilon'$ is a domain, and each of its boundary components  is parallel to a component of $\partial V'$, and vice versa. Moreover, $\lim_{\epsilon\to0} \ov{V_\epsilon'}=\ov{V'}$. By Lemma \ref{lem:deg}, $G'_\epsilon:=f^{-1}(G_\epsilon)\cap \ov{V_\epsilon'}$ is connected. Thus, it contains all  components of $\partial V_\epsilon$. Consequently,  the Hausdorff limit $G'$ of ${G}'_\epsilon$ is connected and contains $\partial V'$. Moreover, ${G}'\cap {V}'=f^{-1}(G_{V})\cap {V}'$.

From the previous discussion, there exist pairwise disjoint open arcs $\alpha_1,\ldots,\alpha_m$ in $G'\cap V'$ and  components $\lambda_-=\lambda_1,\ldots,\lambda_{m+1}=\lambda_+$ of $\partial V'$ such that each $\alpha_i$ joins $\lambda_i$ to $\lambda_{i+1}$ and its endpoints belong to $f^{-1}(Y_{V})$. Note that for every $i\in\{2,\ldots,m-1\}$, $\lambda_i$ is contained in a component $K_i\subset A(\lambda_-,\lambda_+)$ of $f^{-1}(\KKK)$. Thus, we can find an arc $\beta_i\subset K_i$ joining $\alpha_{i-1}(1)$ to $\alpha_i(0)$ such that $f(\beta_i)\subset G_{f(K_i)}$. Finally, the arc $(\bigcup_{i=1}^m\alpha_i)\cup(\bigcup_{j=2}^{m-1}\beta_i)$ satisfies property (b).
	
	\vspace{5pt}
	
	{\bf Part II. Connect the graphs in $\boldsymbol{\mathcal{E}}$.}
	\vspace{5pt}
	
	By properties (P2)--(P4), any two components of $\GGG_{\mathcal{E}}$ are separated by a component of $\AAA$, and vice versa. Thus, to obtain a global invariant graph, we need to construct appropriate arcs serving as bridges that cross $\AAA$ and join components of $\GGG_{\mathcal{E}}$ together. 
	
	\vskip 0.15cm
	{\it Step 0. Assign a preperiodic point $x_\lambda\in Q$ to every $\lambda\in\Lambda$.}
\vskip 0.15cm
	
	Recall that $\Lambda$ is the collection of oriented boundary components of all annuli $A\in{\rm Comp}(\AAA)$ and $\Lambda_*\subset \Lambda$ consists of all elements  whose orbits under $f$  stay in $\Lambda$; see \eqref{eq:la}. We have assigned one point $x_\lambda\in\lambda$ for each  $\lambda\in\Lambda_*$ such that $f(x_\lambda)=x_{f(\lambda)}$ and $x_\lambda\in Q$. Thus, it remains to assign a point to each element of $\Lambda\setminus \Lambda_*$.
	
	Fix any $\lambda\in \Lambda\setminus\Lambda_*$. It is an oriented boundary component of a unique component $A$ of $\AAA$.
	
	If $f(\lambda)\subset \partial V$ for a component $V$ of $\VVV$, then there exists an annular-type component $V_1$ of $f^{-1}(V)$ contained in $A$ such that $\lambda$ is  an oriented boundary component of $V_1$. The boundary $\partial V_1$ has the other annular-type component $\lambda'$. By property (b) of $\GGG_{\EEE}$, there exists an open arc  $\beta\subset A(\lambda,\lambda')$ joining $\lambda$ to $\lambda'$, such that $f(\beta)\subset \GGG_{\EEE}$ and the endpoints of $\beta$ lie in $f^{-1}(Y_{V})$. Define $x_\lambda$ to be the endpoint of $\beta$ in $\lambda$. It follows that $x_\lambda$ belongs to $f^{-1}(Y_{V})\cap\KKK\subset Q$.
	
	If  $f(\lambda)\in \Lambda$ and $x_{f(\lambda)}\in f(\lambda)$ has been chosen, we assign a point $x_\lambda\in\lambda$ such that $f(x_\lambda)=x_{f(\lambda)}$.
	Then  $x_\lambda$ belongs to $Q$ by the definition of $Q$.
	\vskip 0.15cm

	{\it Step 1.} Construct the initial graph $G_0$.
	\vskip 0.15cm
	
	For each component $A$ of $\AAA$, we denote its two oriented boundary components  by $\lambda_{\pm, A}$.   Let $z_{\pm, A}\subset \lambda_{\pm, A}$ be the points  assigned to $\lambda_{\pm, A}$, respectively.
	
	If  $A$ intersects $f^{-1}(\KKK)$, we call it {\it intersection-type}; otherwise, $f(A)$ is still a component of $\AAA$.  In the latter case, there exists a smallest integer $n_{A}\geq 1$ such that $f^{n_{A}}(A)$ is an intersection-type component of $\AAA$ since $f$ has no Herman rings.
	
	We claim that there exists an open arc $\gamma_{A}$ joining $z_{\pm, A}$ in each component $A$ of $\AAA$ such that $f(\gamma_{A})=\gamma_{f(A)}$ when $A$ is not intersection-type.
	
First, we choose 	an open arc $\alpha_{A}$  with endpoints $z_{\pm, A}$ in each component $A$ of $\AAA$.
Fix an intersection-type component $A$ of $\AAA$.  For any component $A'$ of $\AAA$ with $f^{n( A')}(A')=A$, the curve $\alpha=f^{n( A')}(\alpha_{A'})$ lies in $A$ and joins $z_{\pm,  A}$. Consequently, $\alpha$ is homotopic to $\alpha_{A}$ with endpoints fixed, up to an $N(A')$-time twist around $A$. Let $N$ be the smallest common multiple of all such numbers $N(A')$ and set $\gamma_{A}=T^N(\alpha_{A})$, where $T(\cdot)$ denotes the twist map around $A$. Then $A'$ contains a unique component $\g_{A'}$ of $f^{- n( A')}(\g_{A})$  with endpoints $z_{\pm,A'}$. The claim is proved.

	Since the endpoints of each $\g_{A}$  belong to $Q\subset \GGG_{\mathcal{E}}$, the arc $\g_{A}$ joins the two components of $\GGG_{\mathcal{E}}$ adjacent to $A$ together. Thus, we obtain the initial graph
	$$
	G_0=\GGG_{\mathcal{E}}\cup\bigcup\g_{A},
	$$
	where $A$ ranges over all components of $\AAA$. The vertices of $G_0$ are composed of the points in $Q\cup(P\cap \GGG_{\EEE})$ and the locally branched points of $\GGG_{\EEE}$. Then each $\g_{A}$ is an edge of $G_0$.
	
	\vskip 0.15cm
	{\it Step 2.} Construct a graph $G_1\subset f^{-1}(G_0)$ isotopic to $G_0$.
	\vskip 0.15cm
	
	We first construct a curve $\gamma_{A}^1$ for each component $A$ of $\AAA$ such that  $\g_{A}^1(0,1)\subset A$, $f(\gamma_{A}^1)\subset G_0$, and $\gamma_{A}^1$ is homotopic to $\g_{A}$ (rel $P$) with endpoints fixed.
	
	If $A$ is not intersection-type, define $\g_{A}^1=\g_{A}$ by the claim in Step 1.
	
	If $A$ is intersection-type, let $A_1, \ldots, A_s$, with $s\geq 2,$  be the annular-type components of $A\setminus f^{-1}(\KKK)$ arranged from left to right by property (P4). Let $\lambda_{\pm, i}$ be the annular-type boundary components of $A_i$. Then $\lambda_{+, i}\cup \lambda_{-, i+1}$ is contained in an annular-type component $K_i$ of $f^{-1}(\KKK)$ for each $1\leq i\leq s-1$. By Lemma \ref{lem:deg}, $\G_i:=f^{-1}(G_{f(K_i)})\cap K_i$ is a graph serving as a skeleton of $K_i$.
	
	If $f(A_1)$ is a component of $\AAA$, let $\alpha_1$ be the lift of $\gamma_{f(A_1)}$ based at $z_{-, A}$. Otherwise, $f(A_1)$ is a component of $\VVV$. By property (b) of $\GGG_{\EEE}$ given in Part I and the choice of $z_{-, A}$ in Step $0$, there exists an open arc $\alpha_1\subset A_1$ that joins $z_{-,  A}$ to $\lambda_{+, 1}$ and satisfies  $f(\alpha_1)\subset \GGG_{\EEE}$. Similarly, we can find an open arc $\alpha_i\subset A_i\cap f^{-1}(G_0)$ for every $i\in\{2,\ldots,s\}$ such that $\alpha_i$ joins $\lambda_{\pm,i}$ and one endpoint of $\alpha_s$ is $z_{+,  A}$. Therefore, 
	the points $z_{\pm, A}$ can be connected by an open arc $\beta_{A}$ in $$\bigcup_{i=1}^{s}\alpha_i\cup\bigcup_{i=1}^{s-1}\G_i,$$ 
	and it holds that $\beta_{A}\subset A\cap f^{-1}(G_0)$.

	Note that $\beta_{A}$ is homotopic to $\gamma_{A}$ with endpoints fixed, up to an $m_{A}$-time twist around $A$. Since $\G_1$ is a skeleton of $K_1$, the graph $\G_1$ separates $\partial A$. Thus, we can find a curve $\beta\subset \G_1$ such that
	$\g_{A}^1=(\beta_{A}\setminus K_1)\cup \beta$
	is a curve homotopic to $\gamma_{A}$ rel $P$ with endpoints fixed.
	
Define a graph
	$$
	G_1:=\GGG_{\mathcal{E}}\cup\bigcup \gamma_{A}^1\subset f^{-1}(G_0),
	$$
	where $A$ ranges over all components of $\AAA$.
	 Although a certain $\g_{A}^1$ may have self-intersections, we also consider it an edge of $G_1$. 
	Thus, each edge of $G_0$ is homotopic rel $P$ to an edge of $G_1$ with endpoints fixed, and the homotopy is the identity when the edge is in $\GGG_{\EEE}$.
	
	For $n\geq0$, let $\AAA_n$ be the union of all annular-type components of $f^{-n}(\mathcal{A})$. Consequently, the components of $\AAA_n$ are annuli, and $\AAA_{n+1}\subset \AAA_n$. By inductively lifting the homotopy of the edges of $G_0$ and $G_1$, we obtain a graph 
	$
	G_n=\GGG_{\mathcal{E}}\bigcup\,(\cup \gamma_{A}^n)
	$
	for every $n\geq0$,	where $A$ runs over all components of $\AAA$, such that $f(G_{n+1})\subset G_n$, and the curves $\g_{A}^{n+1}$ and $\g_{A}^n$ are homotopic rel $P$ with endpoints fixed, which differ only within $\AAA_n$.

	Since the degree of $f^n$ on each component of $\AAA_n$ tends to $\infty$ as $n\to\infty$,  there exists an integer $N\geq 0$ such that the $n$-th lift of each $\gamma^1_{A}$ is an arc for every  $n\geq N$. Therefore, there exists a homeomorphism $h_0:\cbar\to \cbar$ that is isotopic to $id$ rel $\cbar\setminus \AAA_N$ such that $h_0(G_N)=G_{N+1}$. For the sake of simplicity, we assume that $N=0$.
	\vskip 0.15cm
	
	{\it Step 3.} Construct an invariant graph $G'$.
	\vskip 0.15cm
	
	 By Lemma \ref{lem:lift}, we get a sequence of homeomorphisms $\{h_n\}_{n\geq0}$ such that $h_n$ is isotopic to $id$ rel $\cbar\setminus f^{-n}(\AAA)$ and $h_n\circ f=f\circ h_{n+1}$ on $\ov{\C}$. Recursively define the graph $G_{n+1}=h_n(G_n)$. It then follows that
	\begin{equation}\label{eq:isotopy}
		h_n(x)=x \text{ if $x\in G_n\setminus\AAA_n\quad\textup{and}\quad h_n(x)\in \AAA_n$ if $x\in G_n\cap \AAA_n$.}
	\end{equation}
	
 Let $\phi_n:=h_n\circ \cdots \circ h_0$ for $n\geq 0$. By Lemma \ref{thm:isotopy}, $\phi_n$ uniformly converges to a quotient map $\phi:\ov\C\to\ov\C$. Thus, $G_{n+1}=\phi_n(G_0)$ converges to a continuum $G':=\phi(G_0)$ in the sense of the Hausdorff metric. Consequently, $f(G')\subset G'\subset J_f$.
	
In order to prove that $G'$ is a graph, it suffices to show that  $\phi^{-1}(z)\cap G_0$ is connected for any $z\in G'$. In other words, we will verify that, for any two distinct points $x, y\in G_0$ with $\phi(x)=\phi(y)$, there exists an arc $l_{x,y}\subset G_0$ joining $x$ and $y$ such that $\phi(l_{x,y})$ is a singleton. 
	
	Fix a pair of distinct points $x$ and $y$. Denote  $x_n=\phi_{n-1}(x)$ and $y_n=\phi_{n-1}(y)$, which lie in $G_n$. Since $\phi(x)=\phi(y)$, at least one of $x$ and $y$, say $x$, satisfies that $x_n\in \AAA_n$ for all $n$ by \eqref{eq:isotopy}.
	
	If $x_n$ and $y_n$ lie in the closure of the same component of $\AAA_n$ for each $n$, then $\phi([x,y])$ is a singleton, where $[x,y]$ denotes the arc in $G_0\cap \ov{\mathcal{A}}$ joining $x$ and $y$. Indeed, let $A_n$ be the component of $\AAA_n$ such that $x_n,y_n\in \ov{A_n}$. Then $(x_n,y_n)=\phi_{n-1}(x,y)$ is the open arc in $G_n\cap A_n$ joining $x_n$ and $y_n$. Since $f^n[x_n,y_n]$ is an arc contained in $G_0\cap\ov{\AAA}$, by Lemma \ref{lem:expanding}, the diameter of $[x_n,y_n]$ converges to zero as $n\to\infty$. Thus, $\phi[x,y]$ is a singleton.
	
	On the other hand, since $\phi(x)=\phi(y)$, it follows from \eqref{eq:isotopy} that $x_n$ and $y_n$ cannot be separated by components of $\AAA_n$ for each $n$. Hence, we are reduced to the case where there exists some $m\geq0$ such that  $x_m$ and $y_m$ are neither contained in the closure of a component of $\AAA_m$ nor separated by components of $\AAA_m$. Then there exist two possibilities:

Case 1.  $x_m \in A$ and $y_m\in E\setminus \lambda$, where $A$ is a component of $\AAA_m$, $E$ is a component of $\cbar\setminus \AAA_m$, and  $\lambda=E\cap \partial A$ is a boundary component of $A$.

In this case, let $z_\lambda\in\lambda$ be the assigned point to $\lambda$ given in Step 0. Then $z_\lambda\not=y_m$ and $y_m=\phi(y)$. Since $\phi(x)=\phi(y)$, the point $x_{m+k}$ must belong to the unique component of $\AAA_{m+k}$ whose boundary contains $\lambda$, for each $k\geq0$. However, by the previous discussion, we have $\phi(x)=z_\lambda$,  which contradicts the assumption that $\phi(x)=\phi(y)$.
    
Case 2.   $x_m\in A_1$ and $y_m\in A_2$, where  $A_1$ and $A_2$ are distinct components of $\AAA_m$, such that each $A_i$ has a boundary component $\lambda_i$ contained in a component $E$ of $\cbar\setminus\mathcal{A}_m$.\vspace{2pt}
	
	In this case, let $z_1\in\lambda_1$ and $z_2\in\lambda_2$ be the assigned points to $\lambda_1$ and $\lambda_2$, respectively. Similarly as above, the points $x_{m+k}$ and $z_{1}$ (resp., $y_{m+k}$ and $z_{2}$) belong to the closure of the same component of $\AAA_{m+k}$ for each $k\geq 0$. Therefore, $[x_{m+k},z_{1}]$ and $[z_{2},y_{m+k}]$ converge to $z_1$ and $z_2$, respectively. Since $\phi(x)=\phi(y)$, it follows that $z_{1}=z_{2}$. Thus, $\phi(l_{x,y})$ is a singleton with $l_{x,y}=\phi_{m-1}^{-1}([x_{m},z_{1}]\cup[z_{1},y_m])$.
	
	Therefore, $G'$ is an $f$-invariant graph, and by property (P3), its complementary components are all simply connected domains of simple type.
	\vspace{5pt}
	
	{\bf Part III. Completion of the proof of Proposition \ref{pro:pre}.}
	\vspace{5pt}
	
To complete the proof,	it remains to join the marked points in $\SSS\cap J_f$ to the graph $G'$. 

Since each complementary component of $G'$ contains at most one marked point, it follows that $f^{-n}(G')$ is connected for all $n>0$. By replacing $G'$ with $f^{-n}(G')$ if necessary, we may assume that each point of $P$ is either contained in $G'$ or never iterated into $G'$.
	
Let $K$ be the growing continuum generated by $G'$. It is clear that $K=J_f$. Let $z\in J_f$ be a point in $P\setminus G'$ with period $p$. According to Lemma \ref{lem:growing}\,(1), there exists an infinitely growing curve $\g$ in $K$ that joins $G'$ to $z$. Since each complementary component of $G'$ contains at most one point of $P$, the growing curve $f^p(\g)$ belongs to the same access to $z$ as $\g$. Therefore, by
Proposition \ref{lem:curve-to-arc}, we can assume that $\g$ is a growing arc in $K$ such that $f^p(\g)\subset \g\cup G'$. Consequently, the union of $G'$ and $\bigcup_{i=0}^{p-1}f^i(\g)$ is an $f$-invariant graph and  contains the orbit of $z$. 
	
 We repeat the process for each cycle in $(P\setminus G')\cap J_f$ and then take an $m$-th iterated pre-image for a sufficiently large integer $m$. The resulting graph $G$ is an $f$-invariant skeleton of $J_f$ rel $P$. This completes the proof of Proposition \ref{pro:pre}.
\end{proof}

\appendix
\section{}
\subsection{Orbifold metric and homotopic length}\label{app:1}

Let $f$ be a   PCF  rational map. Denote by $P_f'$ the post-critical points of $f$ in the Fatou set. Then there exists a complete metric $\omega$, called the {\bf orbifold metric}, on $\ov\C\setminus P_f'$; see \cite[Apendix A.10]{Mc3} or \cite[Section 19]{Mi1}, as well as \cite[Apendix A.10]{BM}.

This metric is induced by a conformal metric $\omega(z)|dz|$ with $\omega(z)$ smooth in
the complement of $P_f$, and has a singularity of the type
\[\omega=\frac{A(z_0)|dz|}{|z-z_0|^{1-1/n(z_0)}},\quad n(z_0)>1,\]
near each post-critical point $z_0\in J_f$. Moreover, we have $\|f'(z)\|_\omega>1$ when $z,f(z)\in \ov\C\setminus P_f'$; see \cite[Theorem 19.6]{Mi1} for  details.

Fix a compact set $\mathcal{O}\supset J_f$ such that $f^{-1}(\mathcal{O})\subset \mathcal{O}$ and $\ov\C\setminus \mathcal{O}$ is a small neighborhood of $P_f'$. Let $\sigma(z)|dz|$ be the standard spherical metric. There exist constants $C>0$ and $\rho>1$ such that
\begin{equation}\label{eq:555}
	\|f'(z)\|_\omega\geq \rho \quad\text{for } z\in f^{-1}(\mathcal{O}),
\end{equation}
and
\begin{equation}\label{eq:666}
	\sigma(z)\leq C\cdot \omega(z) \quad\text{for } z\in \ov\C\setminus P_f.
\end{equation}

Let $P\subset \cbar$ be a finite set in $\cbar$. Two curves $\g_0,\g_1:[0,1]\to\cbar$ are called {\bf homotopic rel $P$ with endpoints fixed} if there exists a continuous map $H:[0,1]\times [0,1]\to \cbar$ such that
\begin{itemize}
\item $H(\cdot,0)=\g_0$ and $H(\cdot,1)=\g_1$;

\item each curve $\g_s:=H(\cdot,s),s\in[0,1]$ has the same endpoints as  $\g_0$ and $\g_s(0, 1)\subset \cbar\setminus P$.
\end{itemize}

Let $\g:[0, 1]\to\cbar$ be a curve with $\g(0, 1)\cap P_f=\emptyset$. The {\bf homotopic length} of $\g$, denoted by $L_\omega[\g]$, is defined as the infimum of the lengths of curves under the orbifold metric, among all smooth curves that are homotopic to $\g$ rel $P_f$ with endpoints fixed.

By \eqref{eq:666}, we have
\begin{equation}\label{eq:777}
	{\rm dist}(\g(0),\g(1)):={\rm dist}_\sigma(\g(0),\g(1))\leq C\cdot L_\omega[\g].
\end{equation}

For a path-connected set $E\subset\cbar$, its {\bf homotopic diameter}  $\text{H-diam}_{\omega}(E)$ is defined as the supremum of homotopic lengths of all curves in $E$. It follows from \eqref{eq:777}  that
\begin{equation}\label{eq:888}
	{\rm diam}(E):={\rm diam}_\sigma(E)\leq C\cdot \text{H-diam}_\omega(E).
\end{equation}

\begin{lemma}\label{lem:expanding}
	Let $\g_n,\g\subset \mathcal{O}$ be curves such that $\gamma(0, 1)\cap P_f=\emptyset$ and $f^n:\g_n\to\g$ is a homeomorphism. Then $L_\omega[\g_n]\leq L_\omega[\g]/\rho^n.$ Moreover,
	suppose that $E$ and $E_n$ are two path-connected sets in $\mathcal{O}$ such that $f^n:E_n\to E$ is a homeomorphism and $\text{{\rm H}-{\rm diam}}_\omega(E)<\infty$. Then
	$${\rm diam}(E_n)\leq C\cdot \text{{\rm H}-{\rm diam}}_\omega(E_n)\leq C\cdot \text{{\rm H}-{\rm diam}}_\omega(E)/\rho^n.$$
\end{lemma}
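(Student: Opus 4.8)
\textbf{Proof strategy for Lemma \ref{lem:expanding}.}

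The plan is to deduce everything from the contraction inequality \eqref{eq:555}, namely $\|f'(z)\|_\omega\geq\rho$ on $f^{-1}(\mathcal O)$, together with the observation that the homotopic length is the right notion to track under pullback. First I would treat the statement about curves. Let $\g_n,\g\subset\mathcal O$ be curves with $\g(0,1)\cap P_f=\emptyset$ such that $f^n\colon\g_n\to\g$ is a homeomorphism; note the endpoints of $\g_n$ may lie in $f^{-n}(P_f)$, but its interior avoids $f^{-n}(P_f)\supset P_f$ (since $f^{-1}(\mathcal O)\subset\mathcal O$ forces $f^{-n}(P_f)\subset\mathcal O$ and $P_f\subset f^{-n}(P_f)$ would have to be checked — actually one only needs that $\g_n(0,1)\cap P_f=\emptyset$, which follows because $f^n$ maps it homeomorphically into $\g(0,1)$ while $P_f$ is forward invariant). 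Given any smooth curve $\tilde\g$ homotopic to $\g$ rel $P_f$ with endpoints fixed, by the homotopy lifting property for the covering $f^n\colon f^{-n}(\mathcal O)\setminus f^{-n}(P_f)\to\mathcal O\setminus P_f$ (using that $f^{-1}(\mathcal O)\subset\mathcal O$ so all the relevant preimages stay inside $\mathcal O$), the lift $\tilde\g_n$ of $\tilde\g$ based at $\g_n(0)$ is homotopic to $\g_n$ rel $f^{-n}(P_f)$, hence a fortiori rel $P_f$, with endpoints fixed. Since $f^n$ is holomorphic and $\|(f^n)'(z)\|_\omega=\prod_{j=0}^{n-1}\|f'(f^j(z))\|_\omega\geq\rho^n$ on $f^{-n}(\mathcal O)$ by the chain rule and \eqref{eq:555}, the $\omega$-length of $\tilde\g_n$ is at most $\rho^{-n}$ times the $\omega$-length of $\tilde\g$. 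Taking the infimum over $\tilde\g$ gives $L_\omega[\g_n]\leq L_\omega[\g]/\rho^n$.

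Next I would handle the statement about path-connected sets. Suppose $E,E_n\subset\mathcal O$ are path-connected with $f^n\colon E_n\to E$ a homeomorphism and $\text{H-diam}_\omega(E)<\infty$. For any curve $\g_n$ in $E_n$, its image $\g=f^n\circ\g_n$ is a curve in $E$; if $\g_n(0,1)\cap P_f\neq\emptyset$ one notes that a point of $\g_n\cap P_f$ maps under the homeomorphism $f^n$ to a point of $E\cap f^n(P_f)\subset E\cap P_f$, and since $P_f$ is finite and $f^n|_{E_n}$ is injective there are only finitely many such points, so one may apply the curve estimate to each sub-curve of $\g_n$ between consecutive such points (or simply argue that the interior of a generic sub-curve avoids $P_f$). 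Applying the curve case gives $L_\omega[\g_n]\leq L_\omega[\g]/\rho^n\leq\text{H-diam}_\omega(E)/\rho^n$. Taking the supremum over all curves $\g_n\subset E_n$ yields $\text{H-diam}_\omega(E_n)\leq\text{H-diam}_\omega(E)/\rho^n$. Finally, the inequality ${\rm diam}(E_n)\leq C\cdot\text{H-diam}_\omega(E_n)$ is exactly \eqref{eq:888}, which chains together to give the displayed conclusion.

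The main obstacle I anticipate is purely bookkeeping rather than conceptual: making precise the claim that lifts under $f^n$ respect the "homotopic rel $P_f$" equivalence, given that the natural thing one controls is "homotopic rel $f^{-n}(P_f)$." The point is that $f^{-n}(P_f)\supset P_f$ (because $P_f$ is forward invariant, so $P_f=f^n(P_f\cap f^{-n}(P_f))\subset\dots$; more simply, $z\in P_f\Rightarrow f^n(z)\in P_f\Rightarrow z\in f^{-n}(P_f)$), hence a homotopy rel the larger set $f^{-n}(P_f)$ is in particular a homotopy rel $P_f$, which is the direction needed. One must also be slightly careful that the covering space argument applies: $f^n\colon f^{-n}(\mathcal O\setminus P_f)\to\mathcal O\setminus P_f$ is a covering away from critical values, but since $f^{-n}(P_f)$ contains all the relevant branch values' preimages and $P_f$ contains all critical values of $f$ (hence of $f^n$), the restriction $f^n\colon f^{-n}(\mathcal O)\setminus f^{-n}(P_f)\to\mathcal O\setminus P_f$ is a genuine covering map, so path and homotopy lifting are available. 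Everything else is a routine application of the chain rule for the orbifold norm and the definitions.
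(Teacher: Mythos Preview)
Your proposal is correct and follows essentially the same approach as the paper. The paper's proof is considerably more terse---it simply asserts that the curve inequality ``follows from inequality \eqref{eq:555}'' without spelling out the lift-a-competitor argument, and then for the set statement picks an arbitrary curve $\alpha_n\subset E_n$, pushes it forward to $\alpha\subset E$, and applies the first part---but the underlying logic is identical to yours, and your additional bookkeeping about homotopy lifting rel $f^{-n}(P_f)$ versus rel $P_f$ fills in details the paper leaves implicit.
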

\begin{proof}
	The first conclusion follows from inequality \eqref{eq:555}.
	Choose any curve $\alpha_n\subset E_n$. Then $f^n:\alpha_n\to \alpha:=f^n(\alpha_n)\,(\subset E)$ is a homeomorphism. Thus,  $L_\omega[\alpha_n]\leq L_\omega[\alpha]/\rho^n\leq \text{H-diam}_\omega(E)/\rho^n$.  Since $\alpha_n$ is arbitrarily chosen, it holds that $\text{H-diam}_\omega(E_n)\leq \text{H-diam}_\omega(E)/\rho^n$.
\end{proof}

\subsection{Lifts of isotopies}\label{app:2}

Applying the usual homotopy lifting theorem for covering maps (see \cite[Proposition 1.30]{Ha}), it is not difficult to prove the following result about lifts of isotopies by rational maps. The details of the proof can be found in \cite[Proposition 11.3]{BM}.

\begin{lemma}\label{lem:lift}
	Suppose that $f,g:\ov\C\to\ov\C$ are   PCF  rational maps, and $h_0,\wt{h}_0:\ov\C\to\ov\C$ are homeomorphisms such that $h_0=\wt{h}_0$ on $P_f$ and $h_0\circ f=g\circ \wt{h}_0$ on $\ov\C$. Let $H:\ov\C\times [0,1]\to\ov\C $ be an isotopy rel $P_f$ with $H_0=h_0$. Then $H$ can be uniquely lifted to an isotopy $\wt{H}:\ov\C\times [0,1]\to \ov\C$ rel $f^{-1}(P_f)$ such that $\wt{H}_0=\wt{h}_0$ and $H_t\circ f=g\circ \wt{H}_t$ on $\ov\C$ for all $t\in[0,1]$.
\end{lemma}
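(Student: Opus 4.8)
The statement to be proved is Lemma \ref{lem:lift}, the isotopy lifting lemma for PCF rational maps. The plan is to reduce the lifting of the isotopy $H$ to the standard homotopy lifting theorem for covering spaces, applied to the restriction of $f$ over the complement of the marked sets, and then to argue that the lifted isotopy extends continuously and uniquely over the finitely many exceptional points.

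First I would set up the covering space. Let $Y = \ov\C \setminus g^{-1}(P_g)$, where $P_g$ denotes the post-critical set of $g$; since $g$ is PCF and $g(P_g)\subset P_g$, the map $g:\ov\C\setminus g^{-1}(P_g)\to \ov\C\setminus P_g$ is a covering map (all critical values lie in $P_g$, hence in $\ov\C \setminus g^{-1}(P_g)$ the map is an unbranched covering onto $\ov\C\setminus P_g$). A subtlety here is that we actually want the lift to be an isotopy rel $f^{-1}(P_f)$, and for the identity $h_0\circ f = g\circ \wt h_0$ together with $h_0(P_f) = \wt h_0(P_f)$ we must have $g(\wt h_0(f^{-1}(P_f)))\subset \wt h_0(f^{-1}(P_f))$-compatibility; I would note that $h_0(P_f)\subset P_g$ follows from $h_0$ being a conjugating homeomorphism on the marked sets, so that $\wt h_0$ maps $f^{-1}(P_f)$ into $g^{-1}(P_g)$, and similarly $H_t$ for each $t$ maps $P_f$ into $P_g$ (since $H$ is an isotopy rel $P_f$ and $H_0 = h_0$). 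I would then restrict attention to $f: \ov\C\setminus f^{-1}(P_f)\to \ov\C\setminus P_f$, also a covering map, and consider the homotopy $H$ as a map $\ov\C\setminus f^{-1}(P_f)\times [0,1]\to \ov\C\setminus P_f$ (using that $H_t$ preserves these punctured sets for every $t$, which must be checked from $H$ being an isotopy rel $P_f$ composed with $f$).

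Next, the homotopy lifting theorem (cf.\ \cite[Proposition 1.30]{Ha}) applied to the covering $g$ and the map $H\circ (f\times \mathrm{id}): (\ov\C\setminus f^{-1}(P_f))\times[0,1]\to \ov\C\setminus P_g$, with prescribed lift $\wt h_0$ at time $0$ (which is a valid lift precisely because $h_0\circ f = g\circ \wt h_0$), produces a unique continuous lift $\wt H: (\ov\C\setminus f^{-1}(P_f))\times [0,1]\to \ov\C\setminus g^{-1}(P_g)$ with $\wt H_0 = \wt h_0$ and $H_t\circ f = g\circ \wt H_t$. One then checks that each $\wt H_t$ is a homeomorphism: it is a proper local homeomorphism between surfaces of the same (finite) degree, using that $H_t$ is a homeomorphism and $f,g$ have equal degree; alternatively, run the same argument with $H_t^{-1}$ to produce a two-sided inverse. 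Finally, I would extend $\wt H$ over the punctures $f^{-1}(P_f)$: since $H_t$ fixes $P_f$ and $H$ is continuous, the standard removable-singularity / continuity argument (each puncture of $f^{-1}(P_f)$ has a punctured-disk neighborhood on which $\wt H_t$ is bounded and the family is continuous, so it extends continuously, and the extension still fixes $f^{-1}(P_f)$ because $H_t$ fixes $P_f$ and $\wt H_0$ does) gives an isotopy $\wt H:\ov\C\times[0,1]\to\ov\C$ rel $f^{-1}(P_f)$ with the required properties. Uniqueness of $\wt H$ follows from uniqueness in the homotopy lifting theorem on the dense open set and continuity on $\ov\C$.

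The main obstacle I anticipate is the bookkeeping around which punctured sets are preserved by $H_t$ and $\wt H_t$, and verifying that the relevant maps are genuine covering maps over the right complements — in particular that $f$ (resp.\ $g$) restricted to $\ov\C\setminus f^{-1}(P_f)$ (resp.\ $\ov\C\setminus g^{-1}(P_g)$) is unbranched, which uses that the critical values of a PCF map lie in its post-critical set. None of this is deep, but getting the marked sets to match up on both sides (so that the conjugating relation $h_0\circ f = g\circ\wt h_0$ and the ``rel $P_f$'' condition together force the lift to be ``rel $f^{-1}(P_f)$'') is the step that requires care. Everything else is a direct invocation of \cite[Proposition 1.30]{Ha} plus a routine puncture-filling argument, and the excerpt itself points to \cite[Proposition 11.3]{BM} for the full details, so I would keep the exposition brief and reference that source.
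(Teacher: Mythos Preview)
Your proposal is correct and follows exactly the approach the paper indicates: the paper does not give a proof at all, but simply says the result follows from the homotopy lifting theorem for covering maps (\cite[Proposition~1.30]{Ha}) and refers to \cite[Proposition~11.3]{BM} for the details. Your sketch --- restrict to the unbranched covers $f:\ov\C\setminus f^{-1}(P_f)\to\ov\C\setminus P_f$ and $g:\ov\C\setminus g^{-1}(P_g)\to\ov\C\setminus P_g$, apply homotopy lifting with initial lift $\wt h_0$, check each $\wt H_t$ is a homeomorphism, and extend continuously over the finitely many punctures --- is precisely what that reference does.
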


Let $(f,P)$ be a marked rational map, and let $\mathcal{O}$ be the compact set given in Appendix \ref{app:1}. Then $\DDD:=\ov\C\setminus \mathcal{O}$ is a small neighborhood of $P_f'$.

Let $\theta_0:\ov\C\to\ov\C$ be a homeomorphism  isotopic to $id$ rel $P\cup \DDD$. By Lemma \ref{lem:lift}, there exists a homeomorphism $\theta_1:\ov\C\to\ov\C$  isotopic to $id$ rel $P$ such that $\theta_0\circ f=f\circ\theta_{1}$. Inductively, we have a sequence of homeomorphisms $\{\theta_{n},n\geq1\}$ of $\cbar$ isotopic to $id$ rel $P$ such that $\theta_n\circ f=f\circ\theta_{n+1}$. Denote $\phi_n=\theta_{n-1}\circ\cdots\circ\theta_0$.

 A continuous onto map $\pi:\cbar\to\cbar$ is a {\bf quotient map} if $\pi^{-1}(z)$ is either a singleton or a full continuum for any point $z\in\cbar$.
\begin{lemma}\label{thm:isotopy}
	The sequence $\{\phi_n\}$ uniformly converges to a quotient map of $\cbar$ as $n\to\infty$.
\end{lemma}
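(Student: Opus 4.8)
\textbf{Proof proposal for Lemma \ref{thm:isotopy}.}

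The plan is to show that $\{\phi_n\}$ is a uniformly Cauchy sequence under the spherical metric, and then invoke the standard fact (as in \cite[Theorem 1.1]{CPT}, \cite[Lemma 3.1]{CPT}) that a uniform limit of quotient maps is a quotient map. The key quantitative input is the expansion estimate from Appendix \ref{app:1}: for each $t\in[0,1]$ and each point $z$, the curve $s\mapsto \theta_0(H^{(0)}(z,s))$ traced out by the isotopy joining $\mathrm{id}$ to $\theta_0$ has finite homotopic length, and because $\theta_0$ is the identity on $P\cup\DDD$, this length is bounded above by a constant $M_1$ uniformly in $z$. Indeed, writing $\{H^{0}_s\}_{s\in[0,1]}$ for the isotopy rel $P\cup\DDD$ with $H^0_0=\mathrm{id}$ and $H^0_1=\theta_0$, the curve $\gamma_z\colon s\mapsto H^0_s(z)$ is trivial (a point) whenever $z\in\DDD$, and for $z\in\mathcal O$ its homotopic length $L_\omega[\gamma_z]$ is continuous in $z$ on the compact set $\mathcal O$ and hence bounded by some $M_1$.

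First I would fix $n\ge 1$ and a point $z\in\cbar$, and analyze $\mathrm{dist}(\phi_n(z),\phi_{n+1}(z))=\mathrm{dist}(\phi_{n-1}(\theta_{n-1}\cdots(z))\,,\,\phi_{n-1}(\theta_n\cdots(z)))$; more efficiently, using $\phi_{n+1}=\theta_n\circ\phi_n$ is the wrong order, so instead I use that the isotopies $\{H^n_s\}$ obtained by successively lifting $\{H^0_s\}$ via Lemma \ref{lem:lift} satisfy $H^n_s\circ f = f\circ H^{n+1}_s$ and $H^n_0=\mathrm{id}$, $H^n_1=\theta_n$. Then the curve $\beta^n_w\colon s\mapsto H^n_s(w)$ is, for $w\notin f^{-n}(\DDD)$, a lift under $f^n$ of the curve $\gamma_{f^n(w)}$ based at $w$; here I use that $H^0_s$ moves no point of $P_f'$, so the lifts stay in $\mathcal O$ and Lemma \ref{lem:expanding} applies. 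Hence $L_\omega[\beta^n_w]\le L_\omega[\gamma_{f^n(w)}]/\rho^n\le M_1/\rho^n$, and by \eqref{eq:777}, $\mathrm{dist}(w,\theta_n(w))\le C M_1/\rho^n$ for all such $w$, while for $w\in f^{-n}(\DDD)$ we have $\theta_n(w)=w$ by construction, so the bound $\mathrm{dist}(w,\theta_n(w))\le CM_1\rho^{-n}$ holds for every $w\in\cbar$. Finally I would set $w=\phi_n(z)$, so that $\phi_{n+1}(z)=\theta_n(\phi_n(z))$, giving $\mathrm{dist}(\phi_n(z),\phi_{n+1}(z))\le CM_1\rho^{-n}$ uniformly in $z$; summing a geometric series shows $\{\phi_n\}$ is uniformly Cauchy, hence converges uniformly to a continuous map $\varphi\colon\cbar\to\cbar$.

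It remains to check that $\varphi$ is a quotient map and onto. Surjectivity follows since each $\phi_n$ is a homeomorphism, $\cbar$ is compact, and a uniform limit of surjections between compact spaces onto a metric space is surjective (the image is compact, hence closed, and dense because $\mathrm{dist}(\varphi,\phi_n)\to 0$). For the fibre structure, I would either quote \cite[Lemma 3.1]{CPT} directly — which asserts precisely that a uniform limit of quotient maps of $\cbar$ is again a quotient map, i.e.\ each point-preimage is a point or a full continuum — or reproduce its short argument: $\varphi^{-1}(z)=\bigcap_{\epsilon>0}\overline{\bigcup_{n\ge N(\epsilon)}\phi_n^{-1}(\overline{\mathbb D(z,\epsilon)})}$ is a nested intersection of compact connected full sets (each $\phi_n^{-1}(\overline{\mathbb D(z,\epsilon)})$ is a closed disk since $\phi_n$ is a homeomorphism), hence is itself compact, connected, and full.

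The main obstacle is purely the bookkeeping in the second paragraph: one must be careful that the successively lifted isotopies $\{H^n_s\}$ really do keep every point of $f^{-n}(P\cup\DDD)$ fixed and, crucially, that the relevant curves $\beta^n_w$ remain inside the forward-invariant compact set $\mathcal O$ on which \eqref{eq:555} gives genuine expansion — this is where the hypothesis that $\theta_0$ is isotopic to the identity \emph{rel} $P\cup\DDD$ (not merely rel $P$) is essential, since it is what forces the isotopy to avoid the neighborhood $\DDD$ of the super-attracting cycles where no expansion estimate is available. Once the uniform homotopic-length bound $M_1$ is in hand, everything else is the standard geometric-series / limit-of-quotient-maps machinery, and there is no further difficulty.
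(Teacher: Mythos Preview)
Your proposal is correct and follows essentially the same approach as the paper's own proof: bound the homotopic length of the isotopy track $\gamma_z(s)=\Theta^0_s(z)$ uniformly on $\mathcal O$, lift through $f^n$ to get $\mathrm{dist}(\phi_n(z),\phi_{n+1}(z))\le C L_0\rho^{-n}$, sum the geometric series for uniform convergence, and invoke \cite[Lemma~3.1]{CPT} for the quotient-map conclusion. The only cosmetic difference is notational (the paper sets $w=f^n(\phi_n(z))$ where you set $w=\phi_n(z)$), and your exposition has a brief false start on the composition order before settling on the correct $\phi_{n+1}=\theta_n\circ\phi_n$.
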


\begin{proof}
	Let $\Theta^0:\cbar\times [0,1]\to\cbar$ rel $P$ be an isotopy such that $\Theta^0_0=id$, $\Theta^0_1=\theta_0$, and $\Theta^0_t(z)=z$ for all $z\in P\cup\DDD$ and $t\in [0,1]$.
	By inductively applying Lemma \ref{lem:lift}, for each $n\geq1$, we obtain an isotopy $\Theta^n:\cbar\times [0,1]\to\cbar$ such that
	\begin{itemize}
	\item $\Theta^n_0=id$ and $\Theta^n_1=\theta_n$;
	
	\item $\Theta^n_t(z)=z$ for all $z\in f^{-n}(P\cup \DDD)$ and $t\in [0,1]$; and  
	
	\item $\Theta^{n}_t\circ f =f\circ \Theta^{n+1}_t$  for all $z\in\ov\C$ and $t\in [0,1]$.
	\end{itemize}
	
	For each point $z\in\cbar$, define a curve $\g_z:[0,1]\to\ov\C$ by $\g_z(t):=\Theta^0_t(z)$. From the compactness, there exists a constant $L_0$ such that $L_\omega[\g_z]\le L_0$ for all $z\in\cbar\setminus \DDD$. To prove the lemma, it suffices to show that there exist constants $M>0$ and $\rho>1$ such that for all $z\in\cbar$ and $n\geq 1$,
	$${\rm dist}(\phi_n(z),\phi_{n+1}(z))\leq M\rho^{-n}.$$
	
	Fix any $z\in\ov\C$ and $n\geq1$. Set $w=f^n(\phi_n(z))$. Let $\beta$ be the lift of $\gamma_w$ based at $\phi_n(z)$. The other endpoint of $\beta$ is $\phi_{n+1}(z)$. If $w\in P\cup\DDD$, then $\g_w$ is a singleton, and hence $\phi_n(z)=\phi_{n+1}(z)$. Otherwise, it follows from Lemma \ref{lem:expanding} and equality \eqref{eq:777} that $${\rm dist}(\phi_n(z),\phi_{n+1}(z))\leq CL_\omega[\beta]\leq CL_0\rho^{-n}.$$
	Thus, $\{\phi_n\}$ uniformly converges to a continuous map $\phi_\infty$ of $\cbar$ as $n\to\infty$. Since $\phi_\infty$ is a uniform limit of homeomorphisms, it is a quotient map; see e.g. \cite[Lemma 3.1]{CPT}.
\end{proof}

\subsection{Local connectivity}\label{app:3}
It is known that a continuum $E\subset\cbar$ is locally connected if and only if the boundary of each component of $\cbar\sm E$ is locally connected and the spherical diameters of components of $\cbar\sm E$ converge to zero; see e.g. \cite[Lemma 19.5]{Mi1}. We will show that

\begin{lemma}\label{lem:orbifold}
	Let $f$ be a   PCF  rational map, and let $E$ be a continuum with $\partial E\subset J_f$. Then $E$ is locally connected if and only if the boundary of each component of $\cbar\sm E$ is locally connected and the homotopic diameters of components of $\ov\C\setminus E$ disjoint from $P_f$ converge to zero.
\end{lemma}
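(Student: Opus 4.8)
The statement is a refinement of the classical Milnor--type criterion for local connectivity quoted just before it: $E$ is locally connected iff each boundary $\partial D$ (over components $D$ of $\cbar\setminus E$) is locally connected and $\operatorname{diam}_\sigma(D)\to 0$. What we must show is that, under the hypothesis $\partial E\subset J_f$, the spherical-diameter condition may be replaced by the \emph{homotopic}-diameter condition taken only over those $D$ disjoint from $P_f$. One direction is immediate from inequality \eqref{eq:888}: if $\text{H-diam}_\omega(D)\to 0$ over components disjoint from $P_f$, then $\operatorname{diam}_\sigma(D)\le C\cdot\text{H-diam}_\omega(D)\to 0$ over that subfamily; and there are only finitely many components of $\cbar\setminus E$ meeting the finite set $P_f$, so a separate argument (or the local connectivity of their boundaries together with finiteness) handles them — one shows directly that a component whose closure meets $P_f$ but which is ``small'' in the homotopic sense off $P_f$ is still spherically small. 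So the substance is the forward direction: assuming $E$ locally connected (hence each $\partial D$ locally connected and $\operatorname{diam}_\sigma(D)\to 0$), deduce that $\text{H-diam}_\omega(D)\to 0$ for the components $D$ disjoint from $P_f$.

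The plan for the forward direction is as follows. Fix $\varepsilon>0$; I want all but finitely many components $D$ disjoint from $P_f$ to satisfy $\text{H-diam}_\omega(D)<\varepsilon$. First I would pass to iterated preimages: since $\partial E\subset J_f$ and $f$ is expanding near $J_f$ in the orbifold metric (inequalities \eqref{eq:555}), if $D$ is disjoint from $P_f$ then $f$ maps a neighborhood of $\overline D$ homeomorphically to a neighborhood of $\overline{f(D)}$, so $f$ restricted to $\overline D$ is injective and $f(D)$ is again a component of $\cbar\setminus f(E)$ (here $f(E)$ need not equal $E$, but $f(E)$ is still a continuum with boundary in $J_f$, and local connectivity is preserved under the continuous image of a locally connected continuum). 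Iterating, $f^n:\overline D\to\overline{f^n(D)}$ is a homeomorphism as long as the forward orbit of $D$ avoids $P_f$. By Lemma \ref{lem:expanding}, $\text{H-diam}_\omega(D)\le \text{H-diam}_\omega(f^n(D))/\rho^n$, so it suffices to show that the homotopic diameters of the ``ancestors at the top of the orbit'' — the components $D'$ that map onto themselves or that have bounded backward depth — are uniformly bounded. Concretely, there is a uniform bound $m$ so that every component $D$ disjoint from $P_f$ either has $f^k(D)$ meeting $P_f$ for some $k\le m$, or $f^m(D)$ lies in a bounded list of ``eventually periodic'' components off $P_f$; in the latter case I reduce to showing $\text{H-diam}_\omega(D')<\infty$ for each such $D'$, and then the $\rho^{-n}$ contraction finishes it. The finiteness $\text{H-diam}_\omega(D')<\infty$ for a component $D'$ with $\overline{D'}\cap P_f=\emptyset$ and locally connected boundary follows because $\overline{D'}$ is a locally connected continuum contained in the compact set $\mathcal{O}\setminus(\text{nbhd of }P_f)$ on which $\omega$ is a smooth metric comparable to $\sigma$, and a locally connected continuum of finite spherical diameter has finite homotopic diameter when the relevant homotopies can be taken inside a fixed compact region away from $P_f$ — here one uses that $D'$, being disjoint from $P_f$, need not have any homotopy ``see'' a puncture, so homotopic length off $P_f$ is comparable to genuine $\omega$-length, which is finite by compactness and local connectivity (every pair of points in a locally connected continuum is joined by an arc of bounded diameter, hence bounded $\omega$-length).

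The main obstacle I anticipate is the bookkeeping around components that are \emph{not} disjoint from $P_f$ but are nested inside — or adjacent to — the small components, and the fact that $f(E)\ne E$ in general, so ``component of $\cbar\setminus E$'' is not literally preserved under $f$; one must carry along the whole sequence $E, f(E), f^2(E),\dots$ and argue uniformly. The cleanest route is probably to invoke the structure already developed in Section \ref{sec:topology} (ends of growing continua, Proposition \ref{prop:end-map}, Lemma \ref{lem:eventually-periodic}) rather than redo it: indeed $E$ generates a growing continuum whose complementary components are organized by ends, and Lemma \ref{lem:eventually-periodic} already gives the spherical/homotopic smallness of all but the finitely many marked ends, while the marked ends correspond exactly to components whose closures meet $P_f$. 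Thus the real content of the present lemma is largely a repackaging of Lemma \ref{lem:top} and Lemma \ref{lem:eventually-periodic}, together with the two-sided comparison \eqref{eq:888}; I would structure the write-up so that the forward direction cites Lemma \ref{lem:eventually-periodic} for the $\varepsilon$-reduction and proves only the ``finite homotopic diameter of an eventually periodic off-$P_f$ component'' statement from scratch, and the reverse direction is the one-line application of \eqref{eq:888} plus finiteness of components meeting $P_f$.
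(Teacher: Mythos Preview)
Your converse direction is fine and matches the paper: inequality \eqref{eq:888} plus the observation that only finitely many components meet the finite set $P_f$.

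Your forward direction, however, is both circular and inapplicable. First, Lemma~\ref{lem:eventually-periodic} \emph{uses} Lemma~\ref{lem:orbifold} in its proof (to get the uniform bound $M_1$ on homotopic diameters of unmarked components of $\cbar\sm E_1$), so citing it here is circular. Second, the continuum $E$ in this lemma is an arbitrary continuum with $\partial E\subset J_f$; it is not assumed to be $f$-invariant, nor part of a growing continuum, nor does $f(E)$ bear any controlled relation to $E$. Your whole strategy of iterating components under $f$ and invoking the end machinery from Section~\ref{sec:topology} therefore has no foothold. The hypothesis $\partial E\subset J_f$ is there only so that the orbifold metric is available near $\partial E$, not to feed $E$ into the dynamics.

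The paper's actual argument is a short, dynamics-free contradiction. Assume $E$ locally connected; then spherical diameters of components of $\cbar\sm E$ tend to zero by the classical criterion. If the homotopic diameters of the $P_f$-avoiding components $D_n$ did not tend to zero, pass to a subsequence with $\text{H-diam}_\omega(D_n)\ge\varepsilon_0$. Since $\operatorname{diam}_\sigma(D_n)\to 0$, the closures $\overline{D_n}$ accumulate at a single point $a\in E$. A round disk $\Delta$ of orbifold radius $\varepsilon_0/3$ about $a$ contains at most one point of $P_f$; for large $n$ we have $D_n\subset\Delta$, and since $D_n$ is simply connected (it is a complementary component of a continuum in the sphere) and disjoint from $P_f$, every curve in $D_n$ is homotopic rel $P_f$ to a curve in $\Delta$ of $\omega$-length at most $2\varepsilon_0/3$. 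This contradicts $\text{H-diam}_\omega(D_n)\ge\varepsilon_0$. That is the whole proof; no iteration, no ends.
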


\begin{proof}
	First, suppose that $E$ is locally connected. Since the homotopic lengths of curves in $\ov\C\setminus P_f$ vary continuously, each component of $\cbar\sm E$ disjoint from $P_f$ has a finite homotopic diameter. To the contrary, assume that $\{D_n\}$ is a sequence of components of $\cbar\sm E$ disjoint from $P_f$, such that $\text{H-diam}_\omega(D_n)\geq \epsilon_0>0$.  Since ${\rm diam}(D_n)\to 0$ as $n\to\infty$, by taking a subsequence, we may assume that $\{\ov{D_n}\}$ converges to a point $a\in E$.
	
	For any $\epsilon>0$, let $\Delta(\epsilon)$ be the round disk with center $a$ and orbifold radius $\epsilon$. Then $\Delta(\epsilon)$ contains at most one point of $P_f$ when $\epsilon$ is sufficiently small. On the other hand, for sufficiently large $n$, $D_n\subset\Delta(\epsilon_0/3)$. This implies that $\text{H-diam}_\omega(D_n)\le 2\epsilon_0/3$, a contradiction.
	
	The converse part of the lemma follows directly from \eqref{eq:888}.
\end{proof}

The following result is well known; see e.g. \cite[Lemmas 17.17 and 17.18]{Mi1}.

\begin{lemma}\label{lem:milnor}
	Let $X$ be a connected and compact metric space. If $X$ is locally connected, then it is arcwise connected and locally arcwise connected.
\end{lemma}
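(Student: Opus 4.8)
The plan is to prove the two assertions separately, in each case reducing to the classical principle that in a Hausdorff space any two distinct points joined by a path are also joined by an \emph{arc} whose image is contained in the image of the path. If $X$ is a single point both conclusions are vacuous, so I assume that $X$ is a \emph{Peano continuum}, i.e.\ a connected, locally connected, compact metric space with more than one point.

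First I would establish that $X$ is path-connected and locally path-connected. The key tool is the standard consequence of local connectivity together with compactness (sometimes called ``property S''): for every $\varepsilon>0$ there is a finite cover of $X$ by connected open sets of diameter $<\varepsilon$. Given $a,b\in X$, I would build inductively a sequence of finite chains $C_n$ of connected open sets of mesh $<2^{-n}$ running from $a$ to $b$, arranging that $C_{n+1}$ refines $C_n$ in the sense that each link of $C_{n+1}$ lies in a link of $C_n$ and consecutive links overlap. These chains then determine continuous maps $\gamma_n\colon[0,1]\to X$ with $\gamma_n(0)=a$, $\gamma_n(1)=b$ and $d(\gamma_{n+1},\gamma_n)<2^{-n}$, so the uniform limit $\gamma=\lim_n\gamma_n$ is a path from $a$ to $b$. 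The same chain construction, carried out inside a connected open subset $U\subset X$ (which is itself open and locally connected, and whose intersections with small connected neighborhoods are again connected open subsets of $U$), shows that every connected open subset of $X$ is path-connected. Since $X$ is locally connected, every point has a neighborhood basis of connected open sets, hence of path-connected open sets, so $X$ is locally path-connected.

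Next I would upgrade ``path'' to ``arc'' using the classical lemma: if $\sigma\colon[0,1]\to Y$ is a path in a Hausdorff space with $\sigma(0)\neq\sigma(1)$, then there is an arc $\alpha\colon[0,1]\to Y$ with $\alpha(0)=\sigma(0)$, $\alpha(1)=\sigma(1)$ and $\alpha([0,1])\subset\sigma([0,1])$. I would prove this by a Zorn's-lemma argument on the family of closed connected subsets of $\sigma([0,1])$ containing $\sigma(0)$ and $\sigma(1)$: by compactness a minimal such set $K$ exists, and a standard cut-point argument shows that a minimal $K$ must be homeomorphic to $[0,1]$ (if $K$ were not an arc, one could excise a proper piece and still have a smaller closed connected set joining the two endpoints, contradicting minimality). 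Feeding the path produced in the previous paragraph into this lemma yields arcwise connectedness of $X$; feeding in a path that lies in a path-connected open neighborhood $U$ of a given point yields an arc contained in $U$ (its image lies inside the image of the path), which gives local arcwise connectedness.

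The main obstacle is purely technical bookkeeping rather than conceptual: making the nested-chain construction precise so that the parametrizations $\gamma_n$ are well defined and converge uniformly, and carrying out the ``a minimal closed connected set joining two points is an arc'' step carefully. Both are entirely standard; indeed the statement is exactly \cite[Lemmas 17.17 and 17.18]{Mi1}, so in the final write-up I would simply cite those references, the argument above being the proof they encapsulate.
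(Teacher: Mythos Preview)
Your proposal is correct and matches the paper's approach exactly: the paper gives no proof at all but simply states that the result is well known and cites \cite[Lemmas 17.17 and 17.18]{Mi1}, which is precisely what you conclude you would do. Your additional sketch of the standard chain-construction and path-to-arc argument is a faithful outline of the classical proof behind that citation.
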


\begin{lemma}\label{lem:equicontinuous}
	Let $E\subset \C$ be a locally connected continuum. Then there exists a family of curves in $E$ that are equicontinuous such that any two  points of $E$ are joined by a curve in this family.
\end{lemma}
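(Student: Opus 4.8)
\textbf{Proof proposal for Lemma \ref{lem:equicontinuous}.}

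The plan is to produce, for every pair of points $a,b\in E$, a specific ``geodesic-like'' arc $\lambda_{a,b}\subset E$ in a way that depends uniformly on the endpoints, and then let the desired family be $\{\lambda_{a,b}: a,b\in E\}$ together with all sub-arcs and uniform limits of such arcs. Since $E$ is a locally connected continuum, Lemma \ref{lem:milnor} already tells us $E$ is arcwise connected and locally arcwise connected, so some arc joining $a$ to $b$ exists; the issue is equicontinuity, i.e.\ a modulus of continuity that does not depend on $a$ and $b$. First I would invoke the standard fact (a consequence of local connectivity of the compact set $E$, see e.g.\ \cite[\S17]{Mi1}) that $E$ is \emph{locally connected in a uniform way}: for every $\epsilon>0$ there is $\delta(\epsilon)>0$ such that any two points of $E$ at distance $<\delta(\epsilon)$ lie in a connected subset of $E$ of diameter $<\epsilon$. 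Equivalently, $E$ can be partitioned, at scale $\delta$, into finitely many connected pieces of small diameter; this is exactly the uniform local connectivity that will feed the equicontinuity estimate.

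Next I would build the arcs by a dyadic bisection / nerve argument. Fix a decreasing sequence $\epsilon_k\to 0$ and the corresponding $\delta_k=\delta(\epsilon_k)$. Given $a,b\in E$, choose a finite chain of connected subsets of $E$ of diameter $<\epsilon_0$ joining $a$ to $b$ (possible since $E$ is a locally connected continuum, hence the ``small-chain'' connectedness holds); then refine each link using the $\epsilon_1$-scale pieces, and so on, producing at stage $k$ a chain of connected sets of diameter $<\epsilon_k$ whose union is connected, contains $a,b$, and is nested/compatible between stages. Passing to the limit of the associated piecewise-``arc'' approximations (using local arcwise connectivity from Lemma \ref{lem:milnor} to realize each small link by an actual arc inside $E$, and Carathéodory-type pruning to excise loops so that the result is genuinely an arc) yields an arc $\lambda_{a,b}\subset E$ from $a$ to $b$. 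The key point of this construction is that the parameter interval $[0,1]$ gets subdivided dyadically in step with the chains, so that a sub-interval of length $\le 2^{-k}$ is mapped into one of the $\epsilon_k$-small connected pieces; this immediately gives the uniform modulus of continuity $|t-t'|\le 2^{-k}\Rightarrow |\lambda_{a,b}(t)-\lambda_{a,b}(t')|\le\epsilon_k$, independent of $a$ and $b$.

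Finally I would close the family under uniform limits and sub-arcs: define $\Gamma$ to be all such $\lambda_{a,b}$ together with uniform limits of sequences drawn from this set. Equicontinuity is preserved under uniform limits, so $\Gamma$ is equicontinuous; by Ascoli--Arzelà it is a normal family; and it still joins any two points of $E$ because the $\lambda_{a,b}$ already do. The main obstacle I anticipate is purely technical rather than conceptual: carrying out the bisection so that the output is an \emph{arc} (injective) rather than just a path — one must systematically remove sub-loops while keeping the dyadic time-parametrization compatible with the nested chains, and verify that the pruning does not destroy the uniform modulus of continuity. This is the standard ``arcwise connectivity from chain-connectivity'' argument for Peano continua (cf.\ the Hahn--Mazurkiewicz circle of ideas), and I would either adapt that classical proof or cite it; everything else — uniform local connectivity of compact locally connected sets, Ascoli--Arzelà, stability of equicontinuity under limits — is routine.
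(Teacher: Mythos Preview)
Your approach is correct and intrinsic: it is essentially the Hahn--Mazurkiewicz construction, and in fact the shortest route is to invoke that theorem directly --- take a continuous surjection $\pi:[0,1]\to E$, pick $s\in\pi^{-1}(a)$, $t\in\pi^{-1}(b)$, and set $\lambda_{a,b}(r)=\pi(s+r(t-s))$; since $|t-s|\le 1$ the whole family inherits the modulus of continuity of $\pi$, and no arc-pruning is needed because the lemma only asks for curves. (One small correction: your subdivision is not literally dyadic --- the branching factor at each stage is a bounded constant coming from property~S, not $2$ --- but the uniform bound is all that matters.) The paper, by contrast, uses the planar embedding rather than abstract uniform local connectivity: it starts from the obviously equicontinuous family of straight-line segments in $\C$ with endpoints in $E$, and for each maximal open sub-segment lying in a complementary component $U$ it substitutes the shorter boundary arc of $U$ cut off by that crosscut, transported via the Riemann map $\phi_U:U\to\D$. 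Equicontinuity of the modified family is then checked via two uniform estimates --- one on the boundary extensions of $\phi_U^{-1}$ across all components $U$, one on the diameters of crosscut shadows --- both consequences of the local connectivity of $E$. Your argument is more general (any Peano continuum) and conceptually cleaner; the paper's is more concrete, avoids citing Hahn--Mazurkiewicz, and produces curves built explicitly from boundary arcs of complementary domains, which meshes well with how $\partial E$ and the Riemann maps are already used in the surrounding dynamics.
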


\begin{proof}
	For any component $U$ of $\ov{\C}\setminus E$, we fix a Riemann mapping $\phi_{U}:U\to \D$. Since $\partial U$ is locally connected, $\phi_{U}^{-1}$ has a continuous extension from $\ov{\D}$ to $\ov{U}$. For any {\it crosscut} $\alpha$ of $U$, let $D(\alpha)$ denote the component of $U\setminus\alpha$ with a smaller diameter. Here, a crosscut of $U$ means an arc with its interior in $U$ and its endpoints on $\partial U$. By the local connectivity of $E$, for any $\epsilon>0$, there exists $\rho_{\epsilon}>0$ such that for each component $U$ of $\cbar\sm E$,
	\begin{enumerate}
	\item if the distance between $a,b\in\partial\D$ is bounded above by $\rho_{\epsilon}$, then $|\phi_{U}^{-1}(a)-\phi_{U}^{-1}(b)|<\epsilon$;
	
	\item if the diameter of a crosscut $\alpha$ of $U$ is bounded above by $\rho_{\epsilon}$, then $\textup{diam}(D(\alpha))<\epsilon$.
	\end{enumerate}
	
	Let $\G$ be the collection of all line segments with endpoints in $E$. We will revise each $\g\in\G$ to an arc $\tilde{\g}\subset E$ such that $\{\tilde{\g}:\g\in \G\}$ is equicontinuous.
	
	Fix  $\g\in\G$.  Denote $X_\g:=\{t\in[0,1]:\g(t)\in E\}$. Then for any component $I$ of $[0,1]\setminus X_\g$, the open segment $\alpha=\g(I)$ is a crosscut for some component $U$ of $\ov{\C}\setminus E$. Let $\tilde{\alpha}=\partial\phi_{U}(D(\alpha))\cap\partial \D$. Then there exists a linear map $h_{I}:\ov{\alpha}\to \tilde{\alpha}$. 
	
	Now, define a map $\tilde{\g} : [0, 1] \to E$ as
\[
\tilde{\g}(t) := \left\{
\begin{array}{ll}
	\g(t) & \hbox{if $t \in X_\g$,} \\[5pt]
	\phi_{U}^{-1} \circ h_{I} \circ \gamma(t) & \hbox{if $t \in I$ and $\g(I) \subset U$,}
\end{array}
\right.
\]
where $I$ is the component of $[0, 1] \setminus X_\g$ containing $t$.

We claim that $\tilde{\g}$ is a curve. To see this, let $\{I_n\}$ be a sequence of components of $[0, 1] \setminus X_\g$ converging to a point $t_*$. Let $U_n$ be the component of $\ov{\C} \setminus E$ such that $\alpha_n := \g(I_n)$ is a crosscut of $U_n$. Then $\textup{diam}(\alpha_n) \to 0$ as $n \to \infty$ by the continuity of $\g$. 

It follows from point (2) above that ${\rm diam}(D(\alpha_n)) \to 0$ as $n \to \infty$. Since $\tilde{\gamma}(I_n) = \partial D(\alpha_n) \cap \partial U$, it follows that $\tilde{\g}(I_n) \to \tilde{\g}(t_*)$ as $n \to \infty$. Thus, $\tilde{\g}$ is continuous, and the claim is proved.

	We will prove that the family of curves  $\{\tilde{\g},\g\in\G\}$ is equicontinuous. Given any $\epsilon>0$, since the family $\G$ is equicontinuous, there exists a number $\de>0$ such that $|\g(t_1)-\g(t_2)|<\textup{min}\{{\rho^2_\epsilon}/({2\pi}), \epsilon\}$ whenever $|t_1-t_2|<\de$ for every  $\g\in\G$.
	
	Fix any $\g\in\G$. If $t_1,t_2\in X_\g$, then $|\tilde{\g}(t_1)-\tilde{\g}(t_2)|=|\g(t_1)-\g(t_2)|<\epsilon$ whenever $|t_1-t_2|<\de$.
	
	We now assume that $t_1, t_2\in \ov{I}$ for a component $I$ of $[0,1]\setminus X_\g$. Let $\alpha=\gamma(I)$. If $\textup{diam}(\alpha)<\rho_\epsilon$,  point (2) above implies  $|\tilde{\g}(t_1)-\tilde{\g}(t_2)|\leq \textup{diam}(D(\alpha))<\epsilon$. Otherwise, we have $|h_{I}'|<2\pi/\rho_\epsilon$. In this case, if $|t_1-t_2|<\de$, it holds that
	\[|h_I\circ\g(t_1)-h_I\circ \g(t_2)|= |\g(t_1)-\g(t_2)|\cdot|h_I'|< \rho_\epsilon.\]
	It then follows from point (1) above that $|\tilde{\g}(t_1)-\tilde{\g}(t_2)|<\epsilon$.
	
	Finally, assume that $t_1$ and $t_2$ lie in the closures of distinct components $I_1$ and $I_2$ of $[0,1]\setminus X_\g$, respectively. If $|t_1-t_2|<\de$, the two endpoints $t'_1$ and $t_2'$ of $I_1$ and $I_2$ between $t_1$ and $t_2$ satisfy that $|t_1-t'_1|<\de$ and $|t_2-t'_2|<\de$. Then according to the previous two cases,
	\[|\tilde{\g}(t_1)-\tilde{\g}(t_2)|\leq |\tilde{\g}(t_1)-\tilde{\g}(t'_1)|+|\tilde{\g}(t'_1)-\tilde{\g}(t'_2)|+|\tilde{\g}(t'_2)-\tilde{\g}(t_2)|<3\epsilon.\]
	Therefore, the family $\{\tilde{\g},\g\in\G\}$ is equicontinuous.
\end{proof}


\end{document}